\date{}
\def\interior{\qopname\relax o{int}}
\def\tb{\qopname\relax o{tb}}
\def\lk{\qopname\relax o{lk}}
\theoremstyle{theorem}
\newtheorem{theo}{Theorem}[section]
\newtheorem{lemm}{Lemma}[section]
\newtheorem{prop}{Proposition}[section]
\newtheorem{coro}{Corollary}[section]
\theoremstyle{remark}
\newtheorem{rema}{Remark}[section]
\newtheorem{exam}{Example}[section]
\theoremstyle{definition}
\newtheorem{defi}{Definition}[section]
\newtheorem{conv}{Convention}[section]
\numberwithin{equation}{section}
\numberwithin{figure}{section}
\author {Ivan Dynnikov and Maxim Prasolov}
\address{Steklov Mathematical Institute of Russian Academy of Sciences, 8 Gubkina Str., Moscow 119991, Russia}
\email{dynnikov@mech.math.msu.su}
\email{0x00002a@gmail.com}
\thanks{This work was performed at the Steklov International Mathematical Center and supported by the Ministry of Science and Higher Education of the Russian Federation (agreement no. 075-15-2019-1614).
The work of M.\,Prasolov has also been partially supported by the Young Russian Mathematics award.}
\title{Rectangular diagrams of surfaces: distinguishing Legendrian knots}
\begin{document}
\maketitle

\begin{abstract}
In an earlier paper we introduced rectangular diagrams of surfaces and showed
that any isotopy class of a surface in the three-sphere can be presented
by a rectangular diagram. Here we study transformations of those diagrams
and introduce basic moves that allow the transition between diagrams representing
isotopic surfaces. We also introduce more general combinatorial
objects called mirror diagrams and various moves for them that can be
used to transform presentations of isotopic surfaces to each other. The moves are
divided into two (non-exclusive) types so that, vaguely speaking, type~I moves commute with type~II ones.
This commutation is the matter of the main technical result of the paper.
We use it as well as a relation of the moves to Giroux's convex
surfaces to propose a new method for distinguishing Legendrian knots.
We apply this method to show that two Legendrian knots having topological type $6_2$ are not equivalent.
More applications of the method will be the subject of subsequent papers.
\end{abstract}

\tableofcontents

\section{Introduction}

For a pre-introduction the reader is referred to the work~\cite{dp17}.

This paper is focused on combinatorial properties of certain objects
that generalize rectangular diagrams of links and provide for a nice way
to represent surfaces and ribbon graphs in the three-sphere. We call these objects rectangular
diagrams of surfaces, and mirror diagrams, respectively. The
combinatorial formalism we develop here is strongly related to two contact structures
of~$\mathbb S^3$, the standard one,~$\xi_+$, and its mirror image,~$\xi_-$.
This is by no means unexpected since rectangular diagrams of links
are already known to provide for a convenient framework to study Legendrian links in~$\mathbb S^3$.

Various diagrams of `rectangular' kind that we consider represent topological objects
that are in a nice position with respect to both contact structures $\xi_+$ and~$\xi_-$, simultaneously.
Namely, links represented in the `rectangular' way are Legendrian with respect to both contact structures, and surfaces represented
in the `rectangular' way
are convex in Giroux's sense, also with respect to both contact structures. So, each single `rectangular' object represents two topological
objects that are of interest from the contact topology point of view.
The key circumstance about these two objects is their mutual independence to
an extent allowed by the topological settings.

In the case of links, this independence, which is discovered in~\cite{DyPr},
means that, for any~$\xi_+$-Legendrian link type~$\mathscr L_+$ and~$\xi_-$-Legendrian link type~$\mathscr L_-$ which
belong
to the same topological link type, there is a rectangular diagram
representing both of them. Moreover, any stabilization and destabilization of one of these
Legendrian link types can be realized, without altering the other, by elementary moves of
rectangular diagrams.

The independence of any such~$\mathscr L_+$ and~~$\mathscr L_-$ has remarkable consequences, one of which
is a proof of the Jones conjecture~\cite{DyPr}. Another consequence is an algorithm for computing
the maximal possible Thurston--Bennequin number of Legendrian links having a given
topological link type.

In the present paper we formulate and prove a similar independence property for Giroux's convex surfaces.
A simplified version of this property says that if~$F_+$ and~$F_-$ are isotopic closed surfaces
in~$\mathbb S^3$ such that~$F_+$ is convex (in Giroux's sense) with respect to~$\xi_+$,
and~$F_-$ is convex with respect to~$\xi_-$, then there is a rectangular diagram of a surface~$\Pi$
such that the associated surface~$\widehat\Pi$ is isotopic to~$F_+$ through $\xi_+$-convex surfaces
and to~$F_-$ through $\xi_-$-convex surfaces. The importance of this fact comes from
the existence of an algorithm that, given certain combinatorial
information about~$F_+$ and~$F_-$, produces finitely many candidates for such~$\Pi$ among which at least one has the required properties.

The information supplied to the algorithm includes the combinatorial structure of the dividing sets~$\delta_+$ and~$\delta_-$ of~$F_+$ and~$F_-$,
respectively, and their mutual position.
By dividing sets one means certain one-dimensional submanifolds in Giroux's convex surfaces
defined with a reference to the respective contact structures.
The isotopy class of a dividing set is a very strong invariant of a convex surface as discovered by E.\,Giroux~\cite{Gi}.
The mutual position of~$\delta_+$ and~$\delta_-$ is understood as follows.
The surfaces~$F_+$ and~$F_-$ are identified using the given isotopy between them,
so~$\delta_+$ and~$\delta_-$ are viewed as submanifolds
in a single surface. After putting them in general position the union~$\delta_+\cup\delta_-$
is a finite one-dimensional CW-subcomplex of the surface.

The algorithm mentioned above takes a combinatorial description of~$\delta_+\cup\delta_-$ as the input
and produces finitely many rectangular diagrams of a surface such that at least one of them
has the desired properties, that is, represents a surface isotopic to~$F_+$ through $\xi_+$-convex
surfaces and to~$F_-$ through $\xi_-$-convex ones. This can be used,
for an arbitrary given one-dimensional submanifold~$\delta$ of a surface~$F\in\mathbb S^3$, to decide whether or not
the isotopy class of~$\delta$ can be realized by a dividing set of a convex surface isotopic
to~$F$.

This approach extends, with some limitations, to surfaces with Legendrian boundary, providing for a powerful tool
for distinguishing Legendrian knots.

The comparing of
Legendrian knots having the same classical invariants (which are topological type,
Thurston--Bennequin number, and rotation number) is a difficult problem in general. The first success in this direction
was made in Yu.\,Chekanov's work~\cite{che2002}, where he distinguished two
Legendrian knots having topological type~$5_2$
by means of new algebraic invariants. The latter were extracted from a differential graded algebra associated
with a Lagrangian projection of the knot. Similar construction appeared about the same time in
Ya.\,Eliashberg's work~\cite{El}.

Further algebraic invariants having proved useful to distinguish Legendrian knots are constructed
by D.\,Fuchs~\cite{fuchs2003}, L.\,Ng~\cite{ng2003,ng2011}, P.\,Pushkar' and Yu.\,Chekanov~\cite{PC2005}, and
by P.\,Ozsv\'ath, Z.\,Szab\'o, and D.\,Thurston~\cite{ost2008}.

However, algebraic invariants do not provide for a systematic way to compare Legendrian knots.
There are still many examples of pairs of Legendrian knot types for which
the existing evidence suggests that they are distinct
but the known computable invariants fail to confirm that. The reader is referred to the Legendrian knot atlas
by W.\,Chongchitmate and L.\,Ng~\cite{chong2013} for an overview of the state
of the art.

A number of classification
results on Legendrian knots, which are due to Y.\,Eliashberg and M.\,Fraser~\cite{EF1,EF2},
J.\,Etnyre and K.\,Honda~\cite{etho2001}, J.\,Etnyre, D.\,LaFountain, and B.\,Tosun\cite{etlafato2012},
J.\,Etnyre, L.\,Ng, and V.\,V\'ertesi~\cite{etngve2013}, J.\,Etnyre and V.\,V\'ertesi~\cite{etver2016},
are obtained by means of a different approach
based on the study of Giroux's convex surfaces and characteristic foliations.

Even if Legendrian types are classified for a given topological type of a knot,
it may be still difficult to recognize them if the knot is not Legendrian simple. This is where
the technique of the present paper may also be useful, since it allows one, under certain
circumstances, to find convex surfaces with desired structure of the dividing set
or to prove that no such surface exists. Combined with the ideas of the paper~\cite{hokama2000}
by K.\,Honda, W.\,Kazez, and G.\,Mati\'c,
where Haken hierarchies built from convex surfaces are studied, this approach
has a potential to yield a complete algorithm for comparing Legendrian knots.

The biggest difficulty with making this work
comes from the fact that the topological orientation-preserving symmetry group of a knot may
be infinite and is unknown in general\footnote{Since submitting the original version of this paper we have found a way
to overcome this difficulty. In a future paper we will show that
equivalence of Legendrian knots is decidable.}.
If this group is known
to be trivial for a given knot type, then comparing Legendrian knots of this
topological type becomes fairly easy as explained
in~\cite{dysha18,dysha20??}. In particular, Conjecture~1 of~\cite{dp17}
is confirmed there, as well as a number of conjectures of~\cite{chong2013}
about concrete knots with trivial orientation-preserving symmetry group. This includes
the topological types~$9_{42}$--$9_{45}$, $10_{128}$, and~$10_{160}$.

In order to illustrate our method here, we picked the simplest unresolved case
from~\cite{chong2013}, which deals with two conjecturably inequivalent Legendrian knots having
topological type~$6_2$. We show that the knots are indeed not
Legendrian equivalent. Up to this writing,
we have also confirmed, in a similar fashion, the conjectures of~\cite{chong2013} about
Legendrian knots having topological type~$7_6$ and maximizing
the Thurston--Bennequin number~\cite{dynn-pras-7-6}. This is the example we tried next after~$6_2$.
It looks quite feasible to resolve all open question in~\cite{chong2013}
in the near future by means of the method described in this paper.

The techniques developed in this paper was originally motivated by
an attempt to extend the monotonic simplification approach of~\cite{dyn06}
to general links. This approach utilized the ideas of the preceding
works by J.\,Birman and W.\,Menasco~\cite{bm4,bm5}, and P.\,Cromwell~\cite{crom}.
W.\,Menasco's work~\cite{men}, though not explicitly used here, has
given us a hint for understanding the deep connection between the monotonic simplification
approach and contact topology (the graphs~$G_{++}\cup G_{--}$
and~$G_{+-}\cup G_{-+}$
in that work are close analogues of the Giroux graphs, which are behind the scenes of our method as outlined
in Section~\ref{invariance-sec}).

\subsection{Prerequisites}
We will use the following definitions and notation from
paper~\cite{dp17}:
\begin{itemize}
\item coordinate system $(\theta,\varphi,\tau)$ on $\mathbb S^3$ coming from the join presentation $\mathbb S^3=\mathbb S^1*\mathbb S^1$,
\item torus projection,
\item rectangular diagram of a link,
\item link $\widehat R$ associated with a rectangular diagram of a link $R$,
\item connected component of a rectangular diagram of a link,
\item oriented rectangular diagram of a link,
\item cusp-free curve,
\item framed link,
\item framed rectangular diagram of a link,
\item rectangular diagram of a graph,
\item admissible framing of a link of the form~$\widehat R$,
\item graph $\widehat G$ associated with a rectangular diagram of a graph $G$,
\item a rectangle~$r\subset\mathbb T^2$,
\item tile~$\widehat r$ associated with a rectangle $r\in\mathbb T^2$;
\item rectangular diagram of a surface,
\item surface $\widehat\Pi$ associated with a rectangular diagram of a surface $\Pi$,
\item boundary of a rectangular diagram of a surface,
\item the contact structures $\xi_\pm$ on $\mathbb S^3$,
\item Thurston--Bennequin numbers $\tb_\pm(L)$,
\item relative Thurston--Bennequin numbers $\tb_\pm(L;F)$,
\item surface with corners,
\item contact line element field,
\item Giroux's convex surface,
\item $0$-arc and $-1$-arc,
\item ((very) nice) characteristic foliation,
\item the Giroux graph of a convex surface with a very nice characteristic foliation,
\item an extended Giroux graph of a convex surface with a very nice characteristic foliation,
\item equivalence of Giroux's convex surfaces with corners,
\item exchange moves of rectangular diagrams of links,
\item type I (type II) (de)stabilization of a rectangular diagram of a link.
\end{itemize}

\subsection{Some general conventions and notation}\label{general-conventions}
We work in the piecewise smooth category. Surfaces that we consider
are assumed to be $C^1$-smooth and are allowed to have corners at the boundary according to~\cite[Definition~5]{dp17}.
Isotopies of various objects in~$\mathbb S^3$ are understood as those that can
be extended to ambient piecewise-$C^1$ isotopies.

Unless otherwise specified, an isotopy of a surface
is assumed to be performed within the class of surfaces with corners
so that the tangent plane to the surface depends continuously on~$(x,t)$, where~$x$ is
a point of the surface, and~$t$ is the isotopy parameter.
This matters when we consider an isotopy of a surface with corners relative to its boundary---the tangent
plane to the surface at a singularity of the boundary must remain fixed during the isotopy.
More general isotopies, which are not required to keep the surface in the class of surfaces with corners,
are referred to as $C^0$-isotopies.
`An isotopy in the class of Giroux's convex surfaces' is understood as
stated in~\cite[Definition~25]{dp17}.

If~$X$ is an arc or a surface we use the notation~$\interior(X)$ for~$X\setminus\partial X$.

If~$\Pi$ is a rectangular diagram of a surface (see~\cite[Definition~1]{dp17}, by
\emph{a vertex of~$\Pi$} we call a vertex of any rectangle~$r\in\Pi$, and by
\emph{an occupied level} of~$\Pi$ we mean a meridian~$\{\theta\}\times\mathbb S^1$
or a longitude~$\mathbb S^1\times\{\varphi\}$ of~$\mathbb T^2=\mathbb S^1\times\mathbb S^1$
that contains a vertex of~$\Pi$ (see Subsection~\ref{notation-subsec} for details).

The intersection points of the associated surface~$\widehat\Pi$ (see~\cite[Definition~9]{dp17})
with the circles~$\mathbb S^1_{\tau=1}$ and~$\mathbb S^1_{\tau=0}$ are called \emph{the vertices} of~$\widehat\Pi$,
and the sides of the tiles in~$\widehat\Pi$ \emph{the edges} of~$\widehat\Pi$.

Throughout the paper~$\mathbb S^1$ stands for an \emph{oriented} circle. If~$p,q\in\mathbb S^1$ are two distinct
points, we denote by~$[p;q]$ the arc~$\alpha\subset\mathbb S^1$ such that, if we view it as a $1$-chain, then~$\partial\alpha=q-p$.
Accordingly, $[p;q)$, $(p;q]$, and~$(p;q)$ denote $[p;q]\setminus\{q\}$, $[p;q]\setminus\{p\}$,
and~$[p;q]\setminus\{p,q\}$, respectively.

We use a similar notation for intervals of meridians and longitudes of~$\mathbb T^2$ (which are also oriented).
For instance, if~$p$ and~$q$ are two distinct points of a meridian~$m$, then~$(p;q)$ and~$(q;p)$ refer
to disjoint intervals of~$m$.

In order to avoid confusion, we denote by~$(x,y)$ a point with coordinates~$x$  and~$y$, and by~$(x;y)$, $[x;y]$, etc., the intervals
between~$x$ and~$y$.

\subsection{Organization of the paper}
In Section~\ref{equivalence-of-legendrian-links-sec} we describe the main application of
the machinery developed in this paper. As an illustration of the method,
we demonstrate how two Legendrian knots whose
inequivalence has been previously unknown can be distinguished. Some technical
details of the proof are placed in the Appendix~A at the end of the paper.

In Section~\ref{basic-moves-sec} we introduce certain transformations, called basic moves,
of rectangular diagrams of surfaces, and discuss
their properties. The discussion of one of the basic moves, called a flype, is
postponed till Appendix~B since it is not involved in the proof of the main result.

Section~\ref{spatial-ribbon-graph-sec} consists mostly of definitions of various objects related to spatial
ribbon graphs. One of the ideas behind our approach is that dealing with ribbon graphs, which are essentially
one-dimensional objects, is much easier than dealing with surfaces, whereas ribbon graphs may carry enough information
about the surfaces that we want to study.

Spatial ribbon graphs are conveniently represented by yet another kind of rectangular diagrams,
which we call mirror diagrams. These are introduced in Section~\ref{mir-diagr-sec}.
The moves that generate important equivalence relations between mirror diagrams
are defined in Section~\ref{elementary-moves-sec}. In Section~\ref{neat-decomp-sec}
further moves are introduced, and various relations between them are established.

In section~\ref{invariance-sec} we discuss the connection between
contact topology and mirror diagrams in a more detail. Though this connection is the actual origin
of the ideas behind the key technical result of this paper, which is the relative
commutation theorem, its formalization
appears to us more difficult than presenting the proof in purely combinatorial terms. The latter is done
in Section~\ref{commutation-sec}, where the formulation of the commutation
theorems is also given.

\section{A test for equivalence of Legendrian links}\label{equivalence-of-legendrian-links-sec}

\subsection{Dividing configurations}

\begin{defi}\label{abstract-dividing-def}
Let $F$ be a compact surface (with or without boundary, not necessarily orientable and connected). By \emph{an abstract dividing set on $F$}
we mean an oriented one-dimensional submanifold $\delta\subset F$ such that $\delta\cap\partial F=\partial\delta$ and the following
holds: if $d\subset F$ is a closed embedded disc such that $d\cap\delta\subset\partial d$, then~$\partial d$
admits an orientation that agrees with the orientation of~$\delta$
on every arc in~$\partial d\cap\delta$. Additionally, each
non-orientable connected component of~$F$ is required to have a non-empty intersection with~$\delta$.
\end{defi}

Another characterization of an abstract dividing set~$\delta\subset F$ is as follows. Let~$F'$ be the compact surface obtained from~$F$
by cutting along~$\delta$ (so that~$\delta$ is doubled), and~$\pi:F'\rightarrow F$ be the natural projection.
Then~$\delta$ is an abstract dividing set if and only if $F'$
admits an orientation such that the induced orientation of~$\partial F'$ coincides on~$\pi^{-1}(\delta)$ with the one inherited from~$\delta$.

\begin{defi}
An ordered pair $D=(\delta^+,\delta^-)$ of abstract dividing sets on $F$ is called \emph{a dividing configuration on $F$}. A
dividing configuration~$(\delta^+,\delta^-)$ is called
\emph{admissible} if the following holds:
\begin{enumerate}
\item
$\delta^+$ and $\delta^-$ are transverse to one another;
\item
each connected component of~$\delta^+$ non-trivially intersects~$\delta^-$, and vice versa;
\item
the intersection $\delta^+\cap\delta^-$ is disjoint from $\partial F$;
\item
each connected component of $F\setminus(\delta^+\cup\delta^-)$ is
contractible and has either empty or contractible intersection with $\partial F$.
\end{enumerate}

Dividing configurations (or abstract dividing sets) related by a self-homeomorphism of $F$ homotopic to the identity
are regarded as \emph{equivalent}.

Two dividing configurations $(\delta_1^+,\delta_1^-)$, $(\delta_2^+,\delta_2^-)$ on $F$ are \emph{weakly equivalent}
if $\delta_1^+$ is equivalent to $\delta_2^+$ and~$\delta_1^-$ is equivalent to $\delta_2^-$, that is, if
there are two self-homeomorphisms of $F$ homotopic to the identity such that one of them takes $\delta_1^+$ to $\delta_2^+$,
and the other $\delta_1^-$ to $\delta_2^-$.
\end{defi}

\begin{lemm}\label{admissible-conf-exists-lem}
Let~$D=(\delta^+,\delta^-)$ be a dividing configuration on a compact surface~$F$.
An admissible dividing configuration on~$F$ weakly equivalent to~$D$
exists if and only if the following two conditions hold:
\begin{enumerate}
\item
any connected component of~$F$ has a non-empty intersection with both~$\delta^+$ and~$\delta^-$;
\item
any connected component of~$\partial F$ has a non-empty intersection with~$\delta^+\cup\delta^-$.
\end{enumerate}
\end{lemm}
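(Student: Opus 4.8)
The plan is to prove both directions of the equivalence, with the necessity of the two conditions being essentially immediate and the sufficiency requiring a constructive argument that modifies the given configuration by ambient homotopies.

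For necessity, suppose an admissible configuration $(\delta_1^+,\delta_1^-)$ weakly equivalent to $D$ exists. Condition~(1) of admissibility forces every component of $\delta_1^+$ to meet $\delta_1^-$ and vice versa; combined with condition~(4), which says the complementary regions are contractible, one sees that no component of $F$ can be disjoint from $\delta_1^+$: a component disjoint from $\delta_1^+$ would have to be a single contractible complementary region of $\delta_1^-$ alone with contractible (hence at most one) boundary pattern, but then it is a disc whose boundary contains no point of $\delta^\pm$, or a closed sphere, neither of which can carry a nonempty $\delta_1^-$ meeting it nontrivially while keeping all components of $\delta_1^-$ intersecting $\delta_1^+$. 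The same for $\delta_1^-$, giving condition~(1) of the lemma. Condition~(2) of the lemma follows from admissibility condition~(4): a boundary component disjoint from $\delta_1^+\cup\delta_1^-$ would lie in the closure of complementary regions all of which have contractible intersection with $\partial F$, forcing that boundary circle to bound in a way incompatible with contractibility of the regions. Since weak equivalence is by homeomorphisms homotopic to the identity, these obstructions transfer back to $D$ itself, because having nonempty intersection with a fixed submanifold up to isotopy is detected on connected components and on $\pi_0(\partial F)$.

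For sufficiency, assume conditions~(1) and~(2) hold. I would build $(\delta_1^+,\delta_1^-)$ by separately normalizing $\delta^+$ and $\delta^-$ within their isotopy classes (this is all weak equivalence allows, and all it requires). First, put $\delta^+$ and $\delta^-$ in general position so that $\delta^+\cup\delta^-$ is a finite $1$-complex, achieving admissibility condition~(1). Next, handle condition~(3): isotope $\delta^+$ and $\delta^-$ near $\partial F$ so that near the boundary each consists only of arcs running transversally to $\partial F$, and no intersection point of $\delta^+$ with $\delta^-$ lies on or near $\partial F$; condition~(2) of the lemma guarantees there is at least one such boundary arc on each boundary circle, which we will need. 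Then handle condition~(2) of admissibility by an "absorption" argument: if some component $c$ of $\delta^+$ is disjoint from $\delta^-$, then $c$ is a circle or an arc; using condition~(1) of the lemma, $\delta^-$ is nonempty on the component of $F$ containing $c$, so we may isotope $c$ across $F$ to create intersections with $\delta^-$ — concretely, push a finger of $c$ through a nearby strand of $\delta^-$, creating two transverse intersection points, which cannot later be removed because we only isotope, never homotope through crossings, when finalizing; iterate until every component of each $\delta^\pm$ meets the other. Finally, establish condition~(4) by a region-reduction procedure: if a complementary region $R$ of $\delta^+\cup\delta^-$ is not a disc, or is a disc meeting $\partial F$ in more than an arc, then its boundary in $F$ uses edges from both $\delta^+$ and $\delta^-$ alternately (no two consecutive $\delta^+$-edges, since such a bigon-like configuration could be removed, and similarly for $\delta^-$); a standard innermost-disc / Euler-characteristic count then lets one perform a finite sequence of isotopies of $\delta^+$ alone or $\delta^-$ alone — each one either splitting a bad region or reducing its complexity — until all regions are discs with contractible boundary trace, exactly as in the normalization of curves on surfaces for contact-geometric dividing sets.

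The main obstacle is the sufficiency direction, and within it the simultaneous control of all four admissibility conditions: each repair step for one condition risks violating another. I expect the cleanest route is to order the repairs as above and observe that (a) making components intersect (condition~2) only adds crossings and so cannot create a component disjoint from the other, and (b) the region-reduction step for condition~(4), done by isotopies supported away from $\partial F$ after condition~(3) is arranged, neither destroys existing crossings nor isolates a component; tracking this monovariant — for instance the pair (number of non-disc or bad regions, total number of intersection points) under a suitable lexicographic order, decreased by each move — is the heart of the argument. The role of the lemma's hypotheses is precisely to guarantee the base case: every component has some $\delta^\mp$-material to absorb into, and every boundary circle has an arc of $\delta^+\cup\delta^-$ to serve as the "seam" making the adjacent region's boundary trace contractible.
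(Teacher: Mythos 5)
Your proof takes a genuinely different route from the paper, and the sufficiency direction has a real gap in the final normalization step.

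The paper's argument is a one-shot construction: reduce to $F$ connected, make $\delta^+_1$ transverse to $\delta^-_1$, and then find a single null-homotopic simple closed curve $\gamma\subset F$, disjoint from $\partial F$ and from $\delta^+_1\cap\delta^-_1$, transverse to and meeting both $\delta^\pm_1$, such that $F\setminus(\delta^+_1\cup\delta^-_1\cup\gamma)$ already consists of discs with empty or contractible trace on $\partial F$. A double Dehn twist $\phi^2$ along $\gamma$ is homotopic to the identity (because $\gamma$ bounds), so $(\phi^2(\delta^+_1),\delta^-_1)$ is weakly equivalent to $D$, and the twist carries $\delta^+_1$ around $\gamma$ so that it visits every region and crosses $\delta^-_1$ wherever $\gamma$ did, forcing all four admissibility conditions at once. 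The single curve $\gamma$ does, in one stroke, what your procedure is trying to do by a sequence of finger pushes and region reductions.

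Your necessity argument is fine, though heavier than needed: that conditions~(1) and~(2) of the lemma are invariants of the weak equivalence class (because the two homeomorphisms are each homotopic to the identity and so preserve $\pi_0(F)$ and $\pi_0(\partial F)$) is the only nontrivial observation, and the rest follows directly from the admissibility conditions, as you say.

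The gap is in the sufficiency direction, precisely where you flag "the heart of the argument." Pushing a finger of $\delta^+$ across a single strand of $\delta^-$ creates a bigon; it adds two intersection points but does not change the homotopy type of $\delta^+$ and so cannot by itself cut a handle out of a bad complementary region. Concretely: take $F$ of genus $2$, $\delta^+$ a separating curve, $\delta^-$ a nonseparating curve on one side, disjoint from $\delta^+$. The side of $\delta^+$ not containing $\delta^-$ is a once-punctured torus, and no finger move of $\delta^+$ across strands of $\delta^-$ will make that region a disc, because after the move the region still contains a genus-one piece with no curve material in it. To repair this one must push $\delta^-$ (or $\delta^+$) entirely through the region along a path that wraps a handle, which is not a local "innermost-disc" move and which can reintroduce bigons elsewhere. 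You gesture at a lexicographic monovariant (bad regions, then crossing number), but you do not specify a move that strictly decreases it, and the obvious candidates (finger push, bigon reduction) pull in opposite directions on the second coordinate. This is the content that the paper replaces with the existence of the filling curve $\gamma$: once $\gamma$ is found, the verification that the twisted configuration is admissible is routine, whereas a curve-by-curve repair needs a termination argument that your proposal does not supply.
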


We omit the easy proof.

The homeomorphism
class of a compact surface~$F$ endowed with an admissible dividing configuration~$D=(\delta^+,\delta^-)$
is a simple combinatorial object that can be encoded as follows. Number all the intersection points of~$\delta^+$ with~$\delta^-$.
For~$\gamma$ a connected component of~$\delta^+$ or~$\delta^-$, denote by~$s(\gamma)$
the sequence obtained by listing the numbers of all the points from~$\delta^+\cap\delta^-$ contained in~$\gamma$
in the order they follow on~$\gamma$. If $\gamma$ is a closed curve, we start
from any of the points, go around~$\gamma$ once, and finish by repeating the starting number.
Let~$\gamma_i^+$, $i=1,\ldots,k$, be the connected components of~$\delta^+$,
and~$\gamma_i^-$, $i=1,\ldots,l$, the connected components of~$\delta^-$.

One can see that the homeomorphism class of~$(F,D)$
can be recovered from the following data:
$$\Bigl(\bigl\{s(\gamma_1^+),\ldots,s(\gamma_k^+)\bigr\},\bigl\{s(\gamma_1^-),\ldots,s(\gamma_l^-)\bigr\}\Bigr),$$
which will be referred to as \emph{a dividing code of~$(F,D)$}.
There is an arbitrariness in the definition of a dividing code, and there are certain restrictions on the data
that can occur as a dividing code of a surface endowed with a dividing configuration. We need not discuss
these issues here.

Note, however, that the set of all possible dividing codes of an admissible dividing configuration is
determined by any single one. Two dividing codes are said to be \emph{isomorphic} if they
are associated with the same homeomorphism class of surfaces endowed with an admissible dividing configuration.

\begin{defi}\label{canonic-def}
Let $\Pi$ be a rectangular diagram of a surface (see~\cite[Section~2]{dp17} for
the definition). A dividing configuration $D=(\delta^+,\delta^-)$
on the associated surface $\widehat\Pi$ will be said to be \emph{canonic}
if the intersection of $\delta^+$ (respectively, $\delta^-$) with every tile $\widehat r$, $r\in\Pi$,
is an arc connecting the midpoints of two opposite sides of $\widehat r$, and the functions
$\theta$, $\varphi$ (respectively, $-\theta$, $\varphi$) are locally increasing on~$\delta^+$ (respectively, $\delta^-$);
see Figure~\ref{canonic}.
\begin{figure}[ht]
\includegraphics[scale=0.7]{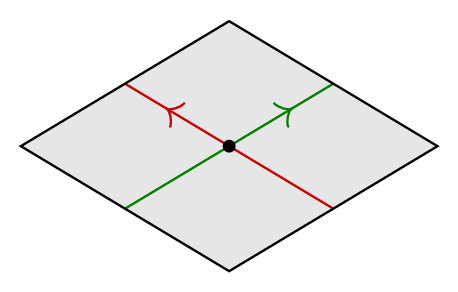}\put(-80,13){$\widehat r$}\put(-55,52){$\delta^+$}\put(-110,52){$\delta^-$}\hskip1cm
\includegraphics[scale=0.7]{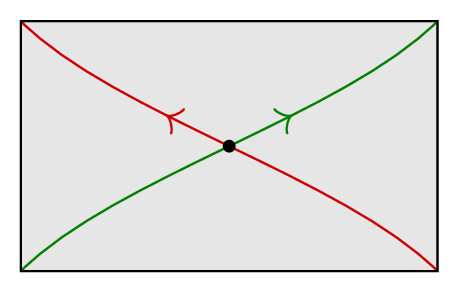}\put(-80,13){$r$}\put(-55,52){$\delta^+$}\put(-110,52){$\delta^-$}
\caption{A canonic dividing configuration in a single tile and its torus projection}\label{canonic}
\end{figure}

\end{defi}

This definition is motivated by the fact that any surface of the form $\widehat\Pi$ is convex in Giroux's sense
with respect to both contact structures $\xi_+$ and $\xi_-$,
and submanifolds~$\delta^\pm$ forming a canonic dividing configuration are suitable
for the respective dividing sets, see~\cite[Subsection~4.4]{dp17}.

Note that if~$(\delta^+,\delta^-)$ is a canonic dividing configuration of~$\widehat\Pi$, then
every tile of~$\widehat\Pi$ contains exactly one intersection point of~$\delta^+$ with~$\delta^-$.
Thus, there is a natural bijection~$\delta^+\cap\delta^-\leftrightarrow\Pi$.

A canonic dividing configuration of $\widehat\Pi$ is an object that
is dual to the tiling of~$\widehat\Pi$ in a natural sense. Namely, if $\widehat\Pi$ is a closed surface,
then its canonic dividing configuration makes up the~$1$-skeleton of a cell decomposition of~$\widehat\Pi$
dual to the tiling.

\begin{prop}
For any rectangular diagram of a surface~$\Pi$:

\emph{(i)} there exists a canonic dividing configuration of~$\widehat\Pi$;

\emph{(ii)} a canonic dividing configuration of~$\widehat\Pi$ is admissible;

\emph{(iii)} any two canonic dividing configurations of~$\widehat\Pi$
are equivalent.
\end{prop}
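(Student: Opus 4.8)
**The plan is to prove the three assertions more or less directly from the definitions, working tile by tile and then globally.**

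For part (i) I would construct a canonic dividing configuration explicitly. In each tile $\widehat r$ with $r\in\Pi$ one draws the two arcs prescribed by Definition~\ref{canonic-def}: the $\delta^+$-arc joining the midpoints of the pair of opposite sides whose torus projection runs from the bottom-left to the top-right corner of $r$, and the $\delta^-$-arc joining the midpoints of the other pair of opposite sides, oriented from the bottom-right to the top-left corner. The key point to check is that these local pieces glue consistently along the edges of $\widehat\Pi$: each edge of $\widehat\Pi$ is a side of exactly the tiles meeting along it, and the midpoint of that side is used by exactly one of $\delta^\pm$ from each adjacent tile, with matching orientation, because the combinatorics of how rectangles of $\Pi$ share vertices and sides is dictated by the definition of a rectangular diagram of a surface. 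So the union over all tiles is an embedded oriented $1$-submanifold $\delta^\pm\subset\widehat\Pi$ with $\delta^\pm\cap\partial\widehat\Pi=\partial\delta^\pm$, and by construction $\delta^+$ and $\delta^-$ meet transversely in a single point in each tile. It remains to verify the abstract-dividing-set condition (the orientability-of-the-cut-surface criterion from the remark after Definition~\ref{abstract-dividing-def}); this follows from the observation that cutting $\widehat\Pi$ along $\delta^+$ separates each tile into two triangles, each of which inherits a consistent orientation from the standard orientation of the corresponding rectangle $r\subset\mathbb T^2$ via the torus projection, and these orientations agree across edges.

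For part (ii), admissibility has four clauses. Transversality and clause~(3) (the intersection points avoid $\partial\widehat\Pi$) are immediate from the construction, since all intersection points lie in tile interiors. For clause~(4): cutting $\widehat\Pi$ along $\delta^+\cup\delta^-$ subdivides every tile into four quadrilateral pieces, each of which is a closed disc meeting $\partial\widehat\Pi$ in at most one arc, hence contractible with contractible-or-empty boundary intersection. Clause~(2), that every component of $\delta^+$ nontrivially meets $\delta^-$ and vice versa, follows because each component of $\delta^\pm$ passes through at least one tile and therefore hits a transverse intersection point; one only has to rule out a component of $\delta^+$ (or $\delta^-$) that lies in a single tile and closes up, which is impossible since a canonic arc in a tile always connects two distinct sides. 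The main obstacle here is bookkeeping: one must be careful that the cell decomposition dual to the tiling, mentioned in the text, is genuinely a CW decomposition of $\widehat\Pi$ — in particular that no two distinct tiles of $\widehat\Pi$ are identified and that the dual $1$-skeleton is embedded — but this is exactly what Definition~9 of~\cite{dp17} guarantees.

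For part (iii), I would argue that any two canonic dividing configurations $(\delta^+,\delta^-)$ and $(\tilde\delta^+,\tilde\delta^-)$ of the same $\widehat\Pi$ differ only by an isotopy of $\widehat\Pi$ fixing each tile setwise. Indeed, within a single tile $\widehat r$ the two $\delta^+$-arcs connect the midpoints of the same pair of opposite sides (the pair is determined by the requirement on the torus projection, which depends only on $r$, not on the choice of configuration), and similarly for $\delta^-$; since a tile is a disc and the endpoints on $\partial\widehat r$ are fixed, the two arc systems in $\widehat r$ are ambient isotopic rel the midpoints, and the isotopy can moreover be chosen to fix a neighborhood of $\partial\widehat r$ so that these tile-wise isotopies patch together into a self-homeomorphism of $\widehat\Pi$ isotopic to the identity carrying one configuration to the other. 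I expect the only delicate point to be arranging that the transverse intersection point inside each tile is carried to the right place simultaneously for $\delta^+$ and $\delta^-$, which is handled by noting that the pair of crossing arcs in a disc with prescribed boundary endpoints, meeting in one transverse point, is unique up to isotopy rel $\partial$.
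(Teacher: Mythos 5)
Your argument for part~(i) has a genuine flaw. You claim that, after cutting $\widehat\Pi$ along $\delta^+$, both pieces of each tile inherit the pullback orientation from the rectangle $r\subset\mathbb T^2$ via the torus projection, and that these orientations agree across the remaining edges of the cut surface. But if both halves of every tile carried the same orientation and all these matched along the edges of the cut surface, one could simply reglue each tile along its $\delta^+$-arc (the two halves agree there, since they come from the same oriented tile) and obtain a global orientation of $\widehat\Pi$ itself. That would prove $\widehat\Pi$ is always orientable, which is false: rectangular diagrams of surfaces can represent non-orientable surfaces, a possibility this paper explicitly allows (see for instance Lemma~\ref{admissible-conf-exists-lem}, which treats the non-orientable case). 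What the orientation compatibility condition of Definition~\ref{abstract-dividing-def} actually requires is an orientation of the cut surface that \emph{alternates}: the two halves of each tile must receive opposite orientations, say the half containing the top-left $\diagup$-edge takes the pullback orientation and the half containing the bottom-right $\diagup$-edge takes its reverse. One then has to check that this alternating choice is consistent along the full $\diagup$-edges and along the two halves of each cut $\diagdown$-edge, and that the induced boundary orientations on both copies of $\delta^+$ agree with the given orientation of $\delta^+$; the verification uses the way adjacent rectangles of $\Pi$ sit in $\mathbb T^2$ relative to the type of the vertex they share, which is exactly the step your argument skips.

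A smaller gap appears in part~(ii), clause~(4). The connected components of $\widehat\Pi\setminus(\delta^+\cup\delta^-)$ are not the four per-tile quadrilateral pieces you describe; they are the discs $\mathring a$ from Subsection~\ref{notation-subsec}, one for each occupied level~$a$, each being the union of one such local piece from every tile incident to the vertex $\widehat a$. The contractibility and the contractible-or-empty intersection with $\partial\widehat\Pi$ must therefore be established for these unions (for instance by observing that $\mathring a$ is a regular neighbourhood of the tiling vertex $\widehat a$ inside the surface with corners $\widehat\Pi$), not merely for the per-tile fragments. Part~(iii) looks sound, up to the standard care needed so that the tile-wise isotopies, being required to agree on shared edges, can simultaneously fix a neighbourhood of each $\partial\widehat r$ and still carry one arc system to the other near the fixed midpoints.
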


The proof is easy and left to the reader. We only remark that part~(i) of this proposition just says
that the oriented submanifolds~$\delta^\pm$ in Definition~\ref{canonic-def} satisfy
the orientation compatibility condition of Definition~\ref{abstract-dividing-def}.

\begin{defi}
Let  $D=(\delta^+,\delta^-)$ be a dividing configuration
on a compact surface $F$. By \emph{a realization of~$D$} we call
a pair~$(\Pi,\phi)$ in which $\Pi$ is a rectangular diagram of a surface, and~$\phi$ is
an embedding~$\phi:F\rightarrow\mathbb S^3$ such that $\phi(F)=\widehat\Pi$
and $\phi$ takes $D$ to a canonic dividing configuration on $\widehat\Pi$.

If $F\subset\mathbb S^3$ is
a compact surface embedded in $\mathbb S^3$, and $D$ is a dividing configuration on $F$,
then by \emph{a proper realization of $D$} we call a realization $(\Pi,\phi)$ such that
$\phi$ can be extended to an orientation-preserving self-homeomorphism of $\mathbb S^3$.
Such an extension will be assumed to be chosen for each proper realization and denoted
by the same letter as the embedding~$F\rightarrow\mathbb S^3$ being extended.
\end{defi}

All the useful information about a realization~$(\Pi,\phi)$ of~$D=(\delta^+,\delta^-)$ is actually contained in the diagram~$\Pi$
equipped with the induced correspondence between the rectangles of~$\Pi$ and the points from~$\delta^+\cap\delta^-$,
so we will often omit mentioning the corresponding embedding~$\phi$
and refer to the diagram~$\Pi$ as a realization of~$D$.

Suppose we are given a dividing code $\bigl(\{s_1^+,\ldots,s_{k}^+\},\{s_1^-,\ldots,s_l^-\}\bigr)$
of an admissible dividing configuration, and want to find its realizations.
If properness of the realization is not required, then this task amounts to finding
rectangular diagrams~$\Pi$ that admit a numeration of rectangles such
that every number assigned to a rectangle~$r\in\Pi$ appears
in the sequence $s_i^+$ for some~$i$ (and also in the sequence $s_j^-$ for some~$j$)
and the following holds:
\begin{enumerate}
\item the top right vertex of the $i$th rectangle of~$\Pi$
coincides with the bottom left vertex of the~$j$th one if and only if
$i$ is followed by~$j$
in one of the sequences~$s_m^+$, $m=1,\ldots,k$;
\item the top left vertex of the $i$th rectangle of~$\Pi$
coincides with the bottom right vertex of the~$j$th one if and only if
$i$ is followed by~$j$
in one of the sequences~$s_m^-$, $m=1,\ldots,l$.
\end{enumerate}

\begin{exam}\label{realizations-exam}
Shown in Figure~\ref{fig8-example} on the left is a Seifert surface
for the Figure Eight knot endowed with a dividing configuration~$D=(\delta^+,\delta^-)$, with~$\delta^+$ shown in green and~$\delta^-$ in red (see Convention~\ref{color-conv} below
for further clarification).
This configuration can be encoded as follows:
$$\{(1,2,3,4,5),(6),(7)\},\ \{(1,7,5,6,1),(2),(3),(4)\}.$$

The central picture in Figure~\ref{fig8-example} provides an example
of a proper realization of~$D$ (more precisely, it is a rectangular diagram
of a surface that gives rise to a proper realization of~$D$).
The intersection points of~$\delta^+$ and~$\delta^-$ and
the rectangles of the realization are numbered accordingly.

The right picture in Figure~\ref{fig8-example} provides an example of
a realization of~$D$ that is not proper. Indeed, one can see that
the boundary of the corresponding surface is unknotted.
\begin{figure}[ht]
\centerline{\includegraphics[width=120pt]{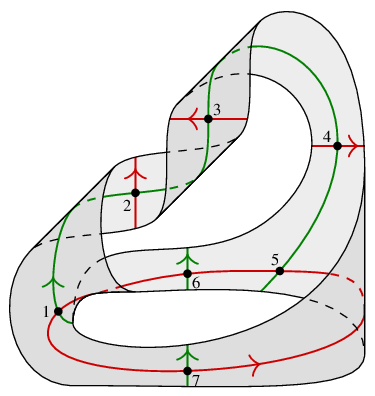}\hskip1cm\includegraphics[width=120pt]{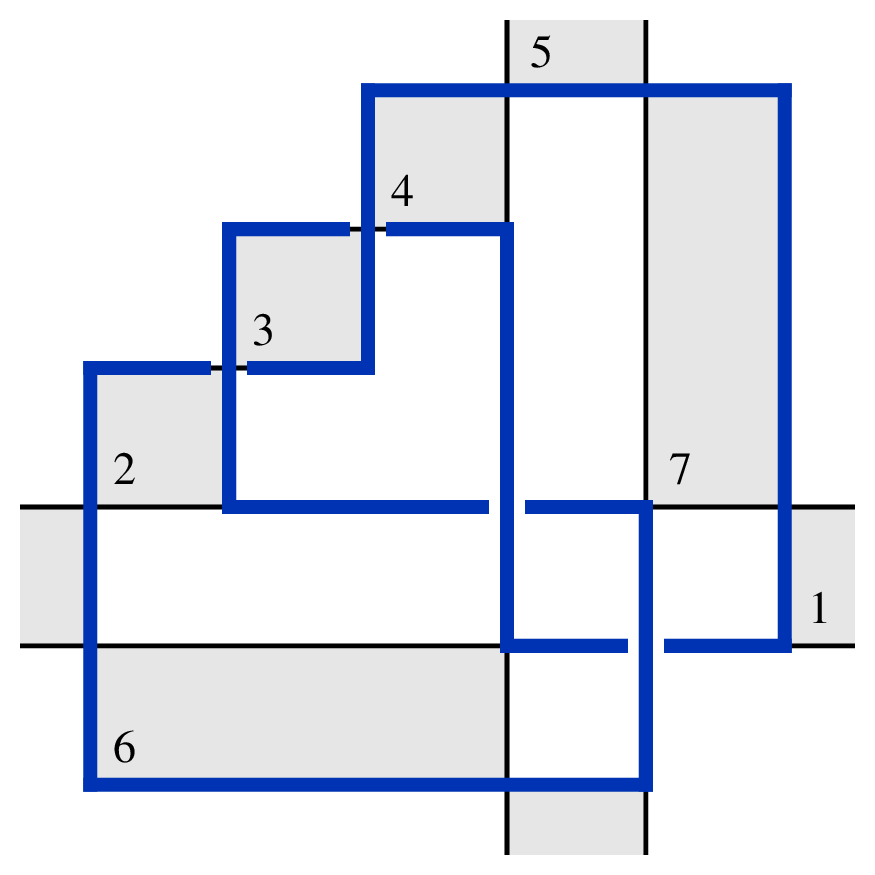}\hskip1cm\includegraphics[width=120pt]{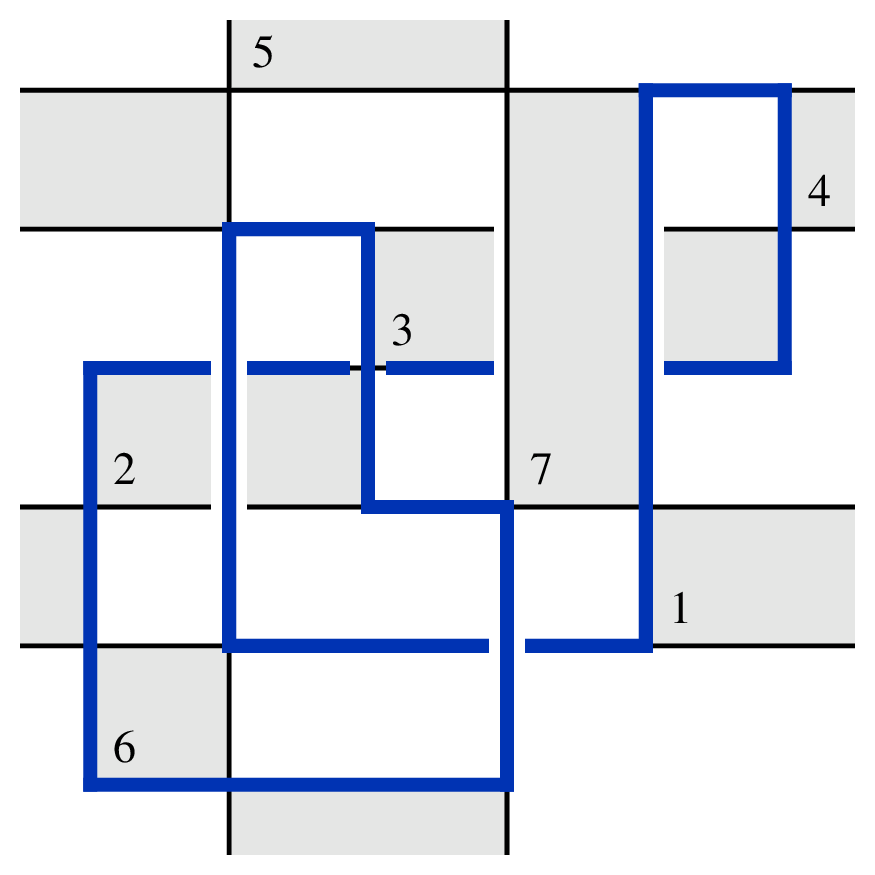}}
\caption{A surface with a dividing configuration (left), a proper realization of this configuration (center),
and a realization that is not proper (right)}\label{fig8-example}
\end{figure}
\end{exam}

\begin{conv}\label{color-conv}
The style in which the pictures in Figure~\ref{fig8-example} are drawn will be accepted throughout the paper. Namely,
if a picture is supposed to show a dividing configuration~$(\delta^+,\delta^-)$, then~$\delta^+$
will be shown in green and~$\delta^-$ in red.

A canonic dividing configuration~$(\delta^+,\delta^-)$ of a surface presented by a rectangular diagram is never
displayed at the diagram because it has a standard form in each rectangle (as shown in Figure~\ref{canonic})
and carries no additional information.
The rectangles are typically numbered, and the points of~$\delta^+\cap\delta^-$ are numbered
accordingly.

In our illustrations, rectangular diagrams of a surface will often be accompanied by a rectangular diagram of their boundary link
(as the two diagrams in Figure~\ref{fig8-example}). The latter will be drawn in the most natural way for human perception, which usually
corresponds neither to the boundary framing (see~\cite[Definitions~12--14 and Figure~12]{dp17}) nor to
the boundary circuits (see Definition~\ref{mirror-circuit-def} below). We draw the horizontal edges passing under all the
rectangles, and the vertical edges passing over all the rectangles
with exceptions made for the rectangles having a side that is contained in an edge. We find
this way of drawing our diagrams the most suggestive.
\end{conv}

\begin{defi}\label{comb-equiv-def}
Two rectangular diagrams of a surface (of a link, of a graph) are said to be
\emph{combinatorially equivalent} (or of the same \emph{combinatorial type}) if one can be taken to the other
by a self-homeomorphism of~$\mathbb T^2$ that has
the form~$(\theta,\varphi)\mapsto\bigl(f(\theta),g(\varphi)\bigr)$,
where~$f$ and~$g$ are orientation-preserving self-homeomorphisms of~$\mathbb S^1$.
\end{defi}

Example~\ref{realizations-exam} demonstrates that canonic dividing configurations of combinatorially non-equivalent diagrams
may have the same dividing code.
The following statement is obvious.

\begin{prop}\label{finite-prop}
If~$D$ is a canonic dividing configuration of a rectangular diagram~$\Pi$ of a surface, then
the isomorphism class of a dividing code of~$D$ is uniquely determined by
the combinatorial type of~$\Pi$.

For any~$n$, there are only finitely many combinatorial types of
rectangular diagrams of surfaces having exactly~$n$ rectangles.
\end{prop}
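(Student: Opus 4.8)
The plan is to establish the two assertions separately; each comes down to a short piece of combinatorial bookkeeping, which is why the statement is called obvious.

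For the first assertion I would proceed in two steps. \emph{Step~1: eliminate the choice of~$D$.} Since any two canonic dividing configurations of~$\widehat\Pi$ are equivalent (shown above) and any two equivalent admissible dividing configurations determine isomorphic dividing codes (indeed, each dividing code of one is also a dividing code of the other), the isomorphism class of a dividing code of~$D$ is independent of the choice of the canonic configuration~$D$ and depends on~$\Pi$ alone. \emph{Step~2: invariance under combinatorial equivalence.} Let $h\colon(\theta,\varphi)\mapsto\bigl(f(\theta),g(\varphi)\bigr)$, with~$f$ and~$g$ orientation preserving, be a combinatorial equivalence between~$\Pi$ and~$\Pi'$. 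Then~$h$ maps the rectangles of~$\Pi$ bijectively to those of~$\Pi'$ and, both of its factors being orientation preserving, carries the bottom-left and top-right (respectively, bottom-right and top-left) vertex of each rectangle to the vertices of the same names of the image rectangle. Hence~$h$ induces a homeomorphism $\widehat h\colon\widehat\Pi\to\widehat{\Pi'}$, which may be chosen to respect the tile decomposition together with the midpoints of the sides of the tiles; such an~$\widehat h$ takes a canonic dividing configuration~$(\delta^+,\delta^-)$ of~$\widehat\Pi$ to a canonic one of~$\widehat{\Pi'}$ and sends the unique point of~$\delta^+\cap\delta^-$ in a tile~$\widehat r$ to the unique such point in~$\widehat{h(r)}$. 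Numbering the rectangles of~$\Pi'$, and the points of its canonic configuration, consistently with those of~$\Pi$ via~$h$, one reads off literally the same dividing code on both sides. Combining Steps~1 and~2 gives the first assertion.

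For the second assertion I would use a crude count. A rectangular diagram of a surface with exactly~$n$ rectangles has at most~$2n$ occupied meridians and at most~$2n$ occupied longitudes, since each rectangle is the product of a proper arc of the~$\theta$-circle with a proper arc of the~$\varphi$-circle and thus has exactly two vertical and two horizontal sides. Modulo homeomorphisms of~$\mathbb T^2$ of the form $(\theta,\varphi)\mapsto\bigl(f(\theta),g(\varphi)\bigr)$ with~$f,g$ orientation preserving self-homeomorphisms of~$\mathbb S^1$, the set of occupied meridians (and, separately, the set of occupied longitudes) carries no information beyond its cardinality, because any two equinumerous finite subsets of the oriented circle~$\mathbb S^1$ can be matched by such a map. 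Therefore, up to the evident cyclic renumberings of the occupied meridians and of the occupied longitudes, the combinatorial type of~$\Pi$ is encoded by those two cardinalities together with, for each of the~$n$ rectangles, an ordered pair of distinct occupied meridians and an ordered pair of distinct occupied longitudes (bounding the arcs whose product is that rectangle). For fixed~$n$ each ingredient ranges over a finite set---there are at most~$\bigl(2n(2n-1)\bigr)^2$ candidate rectangles---so there are finitely many possibilities altogether, hence finitely many combinatorial types, and \emph{a fortiori} finitely many among the collections of rectangles that genuinely satisfy the axioms of a rectangular diagram of a surface.

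The argument presents no real difficulty; the one point I would state carefully is the fact used in both halves, namely that a product homeomorphism~$(f,g)$ with orientation preserving factors respects the distinguished labeling of the four corners of a rectangle. This is exactly what makes such a homeomorphism compatible with the canonic condition of Definition~\ref{canonic-def}---the torus projection of~$\delta^+$ must run from the bottom-left corner to the top-right corner of every rectangle, and symmetrically for~$\delta^-$---and hence with the resulting dividing code. Everything else is routine verification.
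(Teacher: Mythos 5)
Your proof is correct. The paper offers no argument for this proposition---it simply declares it \emph{obvious} and moves on---so there is no paper proof to compare against; your write-up is a sound verification, correctly identifying the one point that actually needs stating (that an orientation-preserving product homeomorphism of~$\mathbb T^2$ preserves the $\diagdown$/$\diagup$ labelling of rectangle corners, hence takes canonic dividing configurations to canonic ones), together with the standard finiteness count for the second assertion.
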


\subsection{An invariant of Legendrian links}

\begin{conv}
By a \emph{a link} in this paper, we mean a piecewise smooth link in $\mathbb S^3$ whose components are ordered.
Thus, by saying that a link $L=\cup_iL_i$ is taken to $L'=\cup_iL_i'$ by a homeomorphism $\phi$, where~$L_i$ and $L_i'$ are
connected components, $i=1,2,\ldots$, we mean that $\phi$ takes $L_i$ to $L_i'$ for each applicable~$i$. If the links $L$ and $L'$
are oriented (which will typically be the case), then by writing $\phi(L)=L'$ we assume that~$\phi$ preserves the orientation.

The same refers to rectangular diagrams of links: connected components are silently assumed to be ordered,
and the components of the associated link are assumed to be ordered respectively.
\end{conv}

\begin{defi}
Let $L$ be a fixed oriented link, and let $R$ be an oriented
rectangular diagram of a link such that~$L$ and $\widehat R$ are isotopic.
Let also $F\subset\mathbb S^3$ be a compact
surface such that $F\cap L$ is a sublink of~$\partial F$.
We say that~$F$ is \emph{$+$-compatible} (respectively, \emph{$-$-compatible}) \emph{with $R$} if for some (and then any)
orientation-preserving homeomorphism $\phi:\mathbb S^3\rightarrow\mathbb S^3$
taking $L$ to $\widehat R$ we have
$\tb_+(K,\phi(F))\leqslant0$ (respectively, $\tb_-(K,\phi(F))\leqslant0$) for any connected component $K$ of
$\phi(L\cap\partial F)$.
\end{defi}

This definition is motivated by Theorem~1 of~\cite{dp17}, which states, in the present terms, that the $C^0$-isotopy class of a surface~$F$
can be represented by a rectangular diagram so that a link~$L$ related to~$F$ as above
is simultaneously represented by a given rectangular diagram of a link~$R$, if and only if~$F$ is both $+$-compatible and $-$-compatible
with~$R$.

\begin{defi}
Let $L\subset\mathbb S^3$ be a fixed oriented link, and let $F\subset\mathbb S^3$ be a fixed compact surface
such that $F\cap L$ is a sublink of $\partial F$. Let also $R$ be an oriented rectangular diagram of a link.
We say that an abstract dividing set $\delta\subset F$ is \emph{properly $+$-realizable}
(respectively, \emph{properly $-$-realizable}) \emph{at $R$}
if there exists an abstract dividing set $\delta'\subset F$ and a proper realization $(\Pi,\phi)$
of $(\delta,\delta')$ (respectively, of $(\delta',\delta)$) such
that~$\phi(L)=\widehat R$.
In this case we also say that~$(\Pi,\phi)$  is \emph{a proper $+$-realization of~$\delta$}
(respectively, \emph{a proper $-$-realization of~$\delta$}) \emph{at}~$R$.

The set of equivalence classes of all properly $+$-realizable (respectively, properly $-$-realizable) abstract dividing sets
at~$R$ will be denoted by $\mathscr I_{F,L,+}(R)$ (respectively, $\mathscr I_{F,L,-}(R)$).
\end{defi}

\begin{theo}\label{invariant-thm}
Let $L\subset\mathbb S^3$ be a fixed oriented link, and let~$F\subset\mathbb S^3$ be a fixed compact surface
such that $F\cap L$ is a sublink of~$\partial F$. Let also $R$, $R'$ be oriented rectangular diagrams of a link such
that the links~$\widehat R$ and $\widehat R'$ are equivalent to $L$, and
$F$ is $-$-compatible with both of them.
If $\widehat R$ and $\widehat R'$ are equivalent as Legendrian links, then
$\mathscr I_{F,L,+}(R)=\mathscr I_{F,L,+}(R')$.

If one replaces the standard contact structure $\xi_+$ by its mirror image $\xi_-$
in the definition of a Legendrian link, then the above statement holds for $\mathscr I_{F,L,-}$
in place of $\mathscr I_{F,L,+}$ and $+$-compatibility in place of $-$-compatibility.
\end{theo}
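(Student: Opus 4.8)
The plan is to show that if $\widehat R$ and $\widehat R'$ are Legendrian equivalent (with respect to $\xi_+$), then one passes from $R$ to $R'$ by a finite sequence of elementary moves of rectangular diagrams of links, none of which destroys $-$-compatibility of $F$ or changes the set $\mathscr I_{F,L,+}$. The starting point is the standard fact (from \cite{DyPr} and the theory of rectangular/grid diagrams) that two rectangular diagrams of a link represent the same $\xi_+$-Legendrian link if and only if they are related by exchange moves and type~II (de)stabilizations only — the type~I (de)stabilizations are exactly the ones that change the $\xi_+$-Legendrian type. So the sequence connecting $R$ to $R'$ can be chosen to consist of exchange moves and type~II (de)stabilizations. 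It therefore suffices to verify the two claims, invariance of $\mathscr I_{F,L,+}$ and preservation of $-$-compatibility, under a single such move.

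First I would handle $-$-compatibility. This is a statement purely about $\tb_-(K,\phi(F))$ for boundary components $K$, and Theorem~1 of \cite{dp17} (invoked right after the definition of compatibility) tells us that $+$- and $-$-compatibility together are equivalent to representability of the pair $(F,L)$ by a rectangular diagram with the prescribed diagram of $L$. The key point is that a type~II (de)stabilization or exchange move on $R$ changes the $\xi_+$-Legendrian type but leaves the $\xi_-$-Legendrian type — in particular the numbers $\tb_-$ — unchanged; this is precisely the "mutual independence'' of $\mathscr L_+$ and $\mathscr L_-$ emphasized in the introduction. Hence $\tb_-(K,\phi(F))\leqslant 0$ is preserved, and $F$ stays $-$-compatible with the new diagram. (One should be slightly careful that $\tb_-$ is a relative Thurston–Bennequin number depending on $F$, not just on $L$; but the surface $F$ and its boundary framing are not touched by the move on $R$, so the relative invariant transforms the same way as in the link case.)

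Next, the invariance $\mathscr I_{F,L,+}(R)=\mathscr I_{F,L,+}(R')$ under one move. Unwinding the definitions, an element of $\mathscr I_{F,L,+}(R)$ is the equivalence class of an abstract dividing set $\delta\subset F$ for which there exist $\delta'$ and a proper realization $(\Pi,\phi)$ of $(\delta,\delta')$ with $\phi(L)=\widehat R$. Given such a $(\Pi,\phi)$ for $R$, I would produce, for each move taking $R$ to $R'$, a corresponding rectangular diagram of a surface $\Pi'$ with $\widehat{\Pi'}$ isotopic to $\widehat\Pi$ by an ambient orientation-preserving homeomorphism carrying $\widehat R$ to $\widehat R'$, together with a canonic dividing configuration on $\widehat{\Pi'}$ whose dividing code is isomorphic to that of the canonic configuration on $\widehat\Pi$ — so that the same $(\delta,\delta')$ is realized, hence $[\delta]\in\mathscr I_{F,L,+}(R')$. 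This is where the machinery announced later in the paper — the basic moves of rectangular diagrams of surfaces of Section~\ref{basic-moves-sec} — enters: an exchange move or a type~II (de)stabilization of the boundary link $\widehat R$ should lift to a basic move (or short sequence of basic moves) of a diagram of the surface that extends it, inducing only an exchange/stabilization-type modification of the tiling that does not alter the dual cell structure's combinatorics, i.e. the dividing code. Running the argument symmetrically from $R'$ to $R$ gives the reverse inclusion.

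The main obstacle is this last lifting step: one must show that a type~II (de)stabilization or exchange move performed on the boundary diagram $\widehat R$ can always be realized by corresponding moves on \emph{some} diagram $\Pi$ of a surface containing that boundary, in such a way that (a) the surface $\widehat\Pi$ is preserved up to the required isotopy, and (b) the canonic dividing configuration is preserved up to equivalence — equivalently, the dividing code is preserved up to isomorphism. The delicate case is type~II destabilization near the boundary, where a rectangle of $\Pi$ adjacent to the destabilized corner of $\widehat R$ must be removed or contracted; one has to check that the local picture always admits such a rectangle and that removing it is one of the basic moves of Section~\ref{basic-moves-sec} that leaves the dual $1$-skeleton unchanged. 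For exchange moves the analysis is easier, being essentially local and not affecting the number of rectangles. I expect these verifications to be carried out by a finite case analysis using the catalogue of basic moves, together with the compatibility bookkeeping from Theorem~1 of \cite{dp17} to guarantee that at each stage the intermediate diagram of the surface still exists.
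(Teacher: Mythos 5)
Your proposal goes wrong at the very first step: in the conventions of this paper it is the \emph{type~I} (de)stabilizations, not the type~II ones, that preserve the $\xi_+$-Legendrian type of $\widehat R$ (see Lemma~\ref{stab-lem} and the discussion immediately following Definition~\ref{stab-move-def}, or the usage in Proposition~\ref{6-2-prop}, where type~II moves connect the $\xi_-$-Legendrian equivalent diagrams $R_1$, $R_2$). Your write-up also contradicts itself: you first claim that type~II (de)stabilizations preserve the $\xi_+$-Legendrian type, and in the next paragraph you say a type~II (de)stabilization ``changes the $\xi_+$-Legendrian type but leaves the $\xi_-$-Legendrian type unchanged.'' Only the second statement agrees with the paper. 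The mislabelling is not a harmless relabelling: with the correct moves (type~I, which preserve $\tb_+$ but change $\tb_-$), the relative $\tb_-$ of a boundary component is \emph{not} constant along the connecting sequence, so $-$-compatibility of the intermediate diagrams is genuinely at risk. The paper handles this with an argument you are missing: the sequence guaranteed by \cite[Prop.~4.4]{ost2008} can be chosen so that all stabilizations precede all destabilizations, whence every intermediate $R_i$ is obtained from either $R$ or $R'$ by exchanges and stabilizations alone, and since stabilizations only decrease the $\tb$'s, $-$-compatibility propagates from the endpoints. Without this ordering argument your reduction to a single move is unjustified, and with your mislabelled moves you never even notice the problem, because you are under the false impression that $\tb_-$ is constant along the sequence.

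For the remaining single-move cases, your idea of lifting a boundary move to a move of a surface diagram while tracking the dividing configuration is the right instinct, but the paper carries this out at the level of mirror diagrams (via Proposition~\ref{subdiagram-move-prop} applied to the subdiagram $M_0\subset M(\Pi)$ encoding $\partial\Pi$, together with Theorem~\ref{relative-stable-equivalence-th} and Lemma~\ref{realization-invariance-lemm}), not via the basic moves of Section~\ref{basic-moves-sec} as you propose. In particular, in the type~I destabilization case the paper uses that $\tb_-(c)<0$ (which comes from the $-$-compatibility hypothesis, kept in force by the ordering trick above) to re-colour mirrors on the relevant boundary circuit by twist moves before undoing the stabilization; this is precisely the kind of local obstruction you flag as delicate, but it dissolves once the types are corrected and the ordering argument is in place.
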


This statement is a consequence of the representability result of~\cite{dp17}, properties of
dividing sets of Giroux's convex surfaces \cite{Gi}, and the fact
that, for a rectangular diagram of a surface~$\Pi$, the associated surface~$\widehat\Pi$
is convex in Giroux's sense with respect to both contact structures $\xi_\pm$, and a pair of dividing sets of
$\widehat\Pi$ with respect to $\xi_+$ and $\xi_-$ forms a canonic dividing configuration on $\widehat\Pi$.

A proof can also been given in a more combinatorial manner, which will be done in Section~\ref{invariance-sec}.

\subsection{Distinguishing Legendrian links}
In view of Theorem~\ref{invariant-thm}, to distinguish Legendrian links presented by oriented rectangular diagrams $R_1$ and $R_2$, say,
with~$\widehat R_1$ and~$\widehat R_2$ isotopic to a fixed oriented link~$L$,
it suffices to show that $\delta\in\mathscr I_{F,L,+}(R_1)$ and $\delta\notin\mathscr I_{F,L,+}(R_2)$ hold
for some surface $F\subset\mathbb S^3$ such that
$F\cap L$ is a sublink of $\partial F$ and $F$ is $-$-compatible with~$R_1$, $R_2$,
and an abstract dividing set $\delta$ on~$F$. Clearly, the claim $\delta\in\mathscr I_{F,L,+}(R)$, if true,
can be confirmed by producing a realization explicitly. The main achievement of this paper is a method for proving, under certain circumstances,
claims of the opposite kind,
$\delta\notin\mathscr I_{F,L,+}(R)$. The method is based on the following statement.

\begin{theo}\label{maintheo}
Let $L\subset\mathbb S^3$ be a fixed oriented link, $F\subset\mathbb S^3$ a fixed compact surface
such that $F\cap L$ is a sublink of~$\partial F$, and let $R$ be an oriented rectangular diagram of a link.
Let also $\delta_0$ and $\delta_1$ be two abstract dividing sets on~$F$. Assume that
there exist a proper $+$-realization $(\Pi_0,\phi_0)$ of $\delta_0$
and a proper $-$-realization $(\Pi_1,\phi_1)$ of $\delta_1$ at~$R$.
Assume also that there exists a $C^0$-isotopy $\phi_t$, $t\in[0;1]$, from $\phi_0$ to $\phi_1$
fixed at~$L$.

Then there exists an admissible dividing configuration $(\delta^+,\delta^-)$ weakly equivalent to $(\delta_0,\delta_1)$,
and for any such dividing configuration
there exist a proper realization $(\Pi,\xi)$ of $(\delta^+,\delta^-)$ and a rectangular diagram of a link~$R'$
such that~$\xi(L)=\widehat R'$ and~$R'$ is
obtained from~$R$ by a finite sequence of exchange moves.
\end{theo}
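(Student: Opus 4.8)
The plan is to reduce Theorem~\ref{maintheo} to the relative commutation theorem announced in Section~\ref{commutation-sec}, by first translating the given geometric data into the combinatorial language of mirror diagrams. The starting objects are two rectangular diagrams of surfaces, $\Pi_0$ and $\Pi_1$: by hypothesis $(\Pi_0,\phi_0)$ is a proper $+$-realization of $\delta_0$ at $R$, so $\widehat{\Pi_0}$ carries a canonic dividing configuration $(\delta_0,\delta_0')$ with $\phi_0(L)=\widehat R$, and similarly $(\Pi_1,\phi_1)$ realizes $(\delta_1',\delta_1)$ with $\phi_1(L)=\widehat R$. The $C^0$-isotopy $\phi_t$ fixed at $L$ lets us regard $\widehat{\Pi_0}$ and $\widehat{\Pi_1}$ as two surfaces in $\mathbb S^3$ that are $C^0$-isotopic rel $L$; pulling the canonic dividing configuration of $\widehat{\Pi_1}$ back through this isotopy, we obtain on $\widehat{\Pi_0}$ a second dividing set in the role of $\delta_1$, so that after putting everything in general position we have on a single abstract surface $F$ a dividing configuration weakly equivalent to $(\delta_0,\delta_1)$. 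The existence assertion of the theorem (that an \emph{admissible} representative exists) then follows from Lemma~\ref{admissible-conf-exists-lem}: conditions (1) and (2) of that lemma are forced by the fact that $\delta_0$ and $\delta_1$, each being half of a canonic dividing configuration of a diagram of a connected-component-meeting surface, already meet every component of $F$ and every boundary component.

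Next I would encode the two realizations as mirror diagrams. A rectangular diagram of a surface together with its canonic dividing configuration is precisely the sort of data that a mirror diagram packages, with the $+$-realization supplying the ``type~I'' structure (recording how the rectangles of $\Pi_0$ glue along their $\delta^+$-corners) and the $-$-realization supplying the ``type~II'' structure (the $\delta^-$-gluings coming from $\Pi_1$). The common boundary condition $\phi_0(L)=\phi_1(L)=\widehat R$ is what makes these two structures compatible on the same underlying combinatorial surface: both realizations see the \emph{same} rectangular diagram of a link $R$ as the boundary. At this point the isotopy $\phi_t$ rel $L$ guarantees that the two mirror-diagram structures are related by a sequence of elementary moves of the appropriate types, interleaving type~I moves (which adjust the $+$-side without touching the boundary link $R$) with type~II moves (which adjust the $-$-side), together with exchange moves. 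The relative commutation theorem then applies: it says, roughly, that such an interleaved sequence can be rearranged so that all type~II moves are performed first and all type~I moves afterwards, at the cost of replacing the exchange moves along $R$ by a possibly different but still exchange-move-equivalent diagram $R'$. Performing all the type~II moves first produces a single mirror diagram realizing $(\delta^+,\delta^-)$ (for the prescribed admissible configuration), which unpacks to the desired proper realization $(\Pi,\xi)$ with $\xi(L)=\widehat R'$, and $R'$ differs from $R$ only by exchange moves because the type~I moves never alter the boundary link up to exchange moves.

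The main obstacle I expect is the middle step: justifying that the $C^0$-isotopy $\phi_t$ fixed at $L$ really does translate into a combinatorial sequence of type~I/type~II moves and exchange moves connecting the two mirror-diagram presentations. A generic $C^0$-isotopy of surfaces need not respect the class of surfaces with corners, and it need not keep the surfaces convex with respect to either contact structure, so one must either invoke the connection between mirror diagrams and Giroux's convex surfaces developed in Section~\ref{invariance-sec} (to upgrade the bare $C^0$-isotopy to something that can be tracked move-by-move on one contact side at a time) or argue purely combinatorially by first rigidifying both surfaces into rectangular-diagram form and then appealing to a Reidemeister-type theorem for rectangular diagrams of surfaces. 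Handling the boundary carefully—ensuring the isotopy's being fixed at $L$ is exactly what lets the exchange moves on $R$ be the only boundary changes, rather than stabilizations—is where the bookkeeping is most delicate, and is the reason the statement outputs ``$R'$ obtained from $R$ by a finite sequence of exchange moves'' rather than a weaker conclusion allowing (de)stabilizations. Everything after the relative commutation theorem is then a matter of re-interpreting mirror diagrams as rectangular diagrams of surfaces and reading off the claimed realization.
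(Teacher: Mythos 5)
Your proposal follows the same overall skeleton as the paper's proof — pass to mirror diagrams, invoke the relative commutation theorem, and read off the realization at the junction where type~I moves hand off to type~II ones — but several of the steps you gloss over or flag as "to be resolved" are exactly where the real work lies, and in two places your reasoning as stated is actually incorrect.

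First, your justification for the existence of an admissible weak representative is wrong. You claim $\delta_0$ and $\delta_1$ "already meet \ldots every boundary component" of $F$; that is false in general. A connected component $\gamma$ of $\partial F$ with $\tb_+(\widehat Q_0;\widehat\Pi_0)=0$ is disjoint from $\delta_0$. The paper's Step~1 proves the weaker (but sufficient) fact that for each $\gamma$ \emph{at least one} of $\gamma\cap\delta_0$, $\gamma\cap\delta_1$ is non-empty, and this requires an actual argument: $-2\tb_+(Q_0;\Pi_0)-2\tb_-(Q_1;\Pi_1)=-2\tb_+(Q_0)-2\tb_-(Q_1)$, and this must be positive because there is a rectangular diagram realizing the pair $(\tb_+(Q_0),\tb_-(Q_1))$ by \cite[Theorem~7]{DyPr}. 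That input is missing from your proposal.

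Second, the concern you name — that a bare $C^0$-isotopy fixed at $L$ does not directly give a sequence of type~I/type~II elementary moves — is well placed, but you leave it unresolved, and resolving it is the content of Steps~2 and~3 of the paper's proof. You omit entirely the reduction to $L\subset\partial F$ (the paper attaches small annular collars along the components of $L\setminus\partial F$), and you omit the use of Proposition~\ref{twist-prop} to upgrade the $C^0$-isotopy to one respecting the class of surfaces with corners and preserving the tangent plane along $\widehat R$. Without those two reductions, Theorem~\ref{relative-stable-equivalence-th} does not apply and there is no sequence of elementary moves preserving the boundary circuits corresponding to $R$. This is not a minor bookkeeping issue: $\tb_+=0$ boundary components genuinely obstruct the naive version of the argument, which is why the twist-prop adjustment is done \emph{before} passing to mirror diagrams.

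Third, the conclusion is quantified over \emph{any} admissible $(\delta^+,\delta^-)$ weakly equivalent to $(\delta_0,\delta_1)$, but your proposed construction produces a realization only of the particular pair read off at the junction mirror diagram. The paper closes this gap in Step~8 via Proposition~\ref{weak-equivalence-via-moves} (which lets one pass between weakly equivalent admissible configurations by wrinkle and exchange moves on the \emph{rectangular} diagram, hence preserving the exchange-equivalence class of the boundary by Lemma~\ref{exchange-means-exchange-lem}); this step is absent from your proposal. Finally, as a small point, you have the direction of Theorem~\ref{commutation-2-thm} reversed (the theorem puts all type~I moves first, then all type~II), which is cosmetically harmless given the $\xi_+\leftrightarrow\xi_-$ symmetry but worth noting since it propagates into your description of what the junction diagram realizes.
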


\begin{proof}
We use here many intermediate results that are distributed over the Sections~\ref{basic-moves-sec}--\ref{commutation-sec}.
We also use terminology and notation introduced in those sections. So, logically this proof belongs to
the end of Section~\ref{commutation-sec}. However, we place it here in belief that
the reader will profit from seeing the general idea and the structure of the proof before falling into details (some of which may be
pretty boring).

By the assumption of the theorem $\Pi_0$ and $\Pi_1$ are rectangular diagrams of a surface such that
$\phi_i(F)=\widehat\Pi_i$, $i=0,1$. The embedding $\phi_0$ realizes the dividing configuration $(\delta_0,\delta_0')$
with some $\delta_0'$, and $\phi_1$ realizes $(\delta_1',\delta_1)$ with some $\delta_1'$.
We give the proof in eight steps, of which the most tricky is Step~6.

\medskip\noindent\emph{Step 1.} Prove the existence of an admissible dividing configuration~$(\delta^+,\delta^-)$
weakly equivalent to~$(\delta_0,\delta_1)$:\\
Let~$\gamma$ be a connected component of~$\partial F$, and let~$Q_0$ and~$Q_1$ be the components of~$\partial\Pi_0$ and~$\partial\Pi_1$,
respectively, corresponding to~$\gamma$, that is,
such that~$\widehat Q_0=\phi_0(\gamma)$ and~$\widehat Q_1=\phi_1(\gamma)$. Then the number of points
in~$\gamma\cap\delta_0$ (respectively, in~$\gamma\cap\delta_1$) is equal to~$-2\tb_+(Q_0;\Pi_0)$
(respectively, to~$-2\tb_-(Q_1;\Pi_1)$)\footnote{To simplify notation,
we define $\tb_\pm(Q;\Pi)$ as~$\tb_\pm\bigl(\widehat Q;\widehat\Pi\bigr)$.}. We also have
$$\tb_+(Q_0;\Pi_0)+\tb_-(Q_1;\Pi_1)=\tb_+(Q_0)-\lk\bigl(\gamma,\gamma^F\bigr)+\tb_-(Q_1)+\lk\bigl(\gamma,\gamma^F\bigr)=
\tb_+(Q_0)+\tb_-(Q_1).$$
The latter sum must be negative, since, by~\cite[Theorem~7]{DyPr}, there exists a rectangular diagram of a knot~$Q$ such that~$\bigl(\tb_+(Q),\tb_-(Q)\bigr)=\bigl(\tb_+(Q_0),\tb_-(Q_1)\bigr)$.
The number of vertices in~$Q$ equals~$-2\bigl(\tb_+(Q)+\tb_-(Q)\bigr)$,
hence~$\tb_+(Q)+\tb_-(Q)<0$.

Thus, for any connected component~$\gamma$ of~$\partial F$, at least one of the sets~$\gamma\cap\delta_0$ and~$\gamma\cap\delta_1$ is not empty. The intersection of any connected component of~$F$ with both~$\delta_0$ and~$\delta_1$ is also not empty,
since, by the assumption of the theorem, the abstract dividing sets~$\delta_0$ and~$\delta_1$ have realizations ($+$- or $-$-).
By~Lemma~\ref{admissible-conf-exists-lem} there exists an admissible dividing configuration~$(\delta^+,\delta^-)$
weakly equivalent to~$(\delta_0,\delta_1)$.

\medskip\noindent\emph{Step 2.} Reduce to the case when~$L\subset\partial F$:\\
Let~$\gamma$ be a connected component of~$L\setminus\partial F$, and let~$Q$ be the connected component of~$R$ corresponding to~$\gamma$. Let also
$$(\theta_1,\varphi_1),(\theta_1,\varphi_2),(\theta_2,\varphi_2),\ldots,(\theta_m,\varphi_m),(\theta_m,\varphi_1)$$
be the vertices of~$Q$ listed in the order they follow on~$Q$. For~$\varepsilon>0$ denote by~$\Pi^\varepsilon$
the following collection of rectangles:
$$\Pi^\varepsilon=\bigl\{[\theta_i;\theta_i+\varepsilon]\times[\varphi_i+\varepsilon;\varphi_{i+1}],
[\theta_i+\varepsilon;\theta_{i+1}]\times[\varphi_{i+1};\varphi_{i+1}+\varepsilon]\bigr\}_{i=1,\ldots,m},$$
where indices are regarded modulo~$m$. If~$\varepsilon$ is small enough, then~$\Pi^\varepsilon$
is a rectangular diagram of a surface, and the associated surface~$\widehat\Pi^\varepsilon$ is an annulus
disjoint from~$\widehat\Pi_0$ and~$\widehat\Pi_1$. Fix such an~$\varepsilon$ from now on.

We can find an annulus~$A\subset\mathbb S^3$
disjoint from~$F$ such that~$\gamma\subset\partial A$ and the linking number of the connected components
of~$\partial A$ is the same as that of the connected components of~$\partial\widehat\Pi^\varepsilon$. 
(All such annuli are pairwise $C^0$-isotopic relative to~$F\cup L$, so
the choice does not matter.)

We can extend~$\phi_0$ and~$\phi_1$ to~$A$ so that~$\phi_0(A)=\phi_1(A)=\widehat\Pi^\varepsilon$.
Then replace~$F$, $\Pi_0$, and~$\Pi_1$ by~$F\cup A$, $\Pi_0\cup\Pi^\varepsilon$, and~$\Pi_1\cup\Pi^\varepsilon$,
respectively. This reduces the number of connected components of~$L$ not contained in~$\partial F$,
whereas all assumptions of the theorem still hold after extending~$\delta_0$ and~$\delta_1$ to~$A$ accordingly.

Thus, we may assume from now on that~$L\subset\partial F$.

\medskip\noindent\emph{Step 3.} Reduce to the case when the isotopy~$\phi$ complies with the conventions introduced
in Subsection~\ref{general-conventions}, and for any point~$p\in\widehat R$,
the tangent plane to~$\phi_t(F)$ at~$p$ remains fixed during the isotopy:\\
There is no problem to make $\phi$ $C^1$-smooth outside~$\partial F$ and also at all points~$x\in\partial F$ such that~$\phi_t(x)$
is fixed and is not a singularity of~$\partial(\phi_t(F))$. This smoothness is assumed in the sequel.

Suppose that there is a connected component~$Q$ of~$R$ such that the tangent plane to~$\phi_t(F)$ at~$p\in\widehat Q\setminus\bigl(\mathbb
S^1_{\tau=0}\cup\mathbb S^1_{\tau=1}\bigr)$ varies when~$t$ runs from~$0$ to~$1$.
Suppose also that~$\tb_+(Q;\Pi_0)<0$ (observe that $\tb_+(Q;\Pi_0)=\tb_+(Q;\Pi_1)$).
It follows from Proposition~\ref{twist-prop} that
we can adjust~$\Pi_0$ and~$\phi$ so that:
\begin{enumerate}
\item
the assumptions of the theorem still hold;
\item
$\widehat\Pi_0$ is altered only slightly (in terms of the $C^0$-topology);
\item
for any connected component~$Q'\ne Q$ of~$R$, any point~$p\in\widehat Q'\setminus\bigl(\mathbb S^1_{\tau=0}\cup\mathbb
S^1_{\tau=1}\bigr)$, and~$t\in[0,1]$, the tangent plane to~$\phi_t(F)$ at~$p$ is unaltered;
\item
the tangent plane to~$\phi_t(F)$ at any point~$p\in\widehat Q$ is defined and does not depend on~$t$ after the adjustment.
\end{enumerate}
If~$\tb_-(Q;\Pi_0)<0$ we can similarly adjust~$\Pi_1$ instead of~$\Pi_0$.

At least one of the inequalities~$\tb_+(Q;\Pi_0)<0$
and~$\tb_-(Q;\Pi_0)<0$ must hold, since~$2(-\tb_+(Q;\Pi_0)-\tb_-(Q;\Pi_0))$ is the number of vertices
in~$Q$. Therefore, after doing finitely many such adjustments we come to
the situation in which the tangent plane to~$\phi_t(F)$ at any point~$p\in\widehat R$ does not depend on~$t$,
and~$\phi$ is an isotopy complying with the conventions introduced in Subsection~\ref{general-conventions}.
We assume this from now on.

\medskip\noindent\emph{Step 4.} Make holes (switch to mirror diagrams):\\
Let~$M=M(\Pi_0)$ and~$M'=M(\Pi_1)$ be the enhanced mirror diagrams associated
with~$\Pi_0$ and~$\Pi_1$, respectively (see Section~\ref{mir-diagr-sec}),
and let~$\eta$ be the morphism of the respective spatial ribbon graphs~$\widehat M\rightarrow\widehat M'$
induced by the isotopy~$\phi$ (see Subsection~\ref{morphisms-subsec} for the definition of a morphism),
that is, the one defined by~$\bigl(\widehat\Pi_0,\widehat\Pi_1,\phi_1\circ\phi_0^{-1}\bigr)\in\eta$.

The diagrams~$M$ and~$M'$ have a common simple collection~$C$ of boundary circuits that represents
the framed rectangular diagram of a link~$(R,f)$, where~$f$ is the framing opposite to~$f^{\Pi_0}|_R=f^{\Pi_1}|_R$.

\medskip\noindent\emph{Step 5.} Find a sequence of moves representing the isotopy:\\
By Theorem~\ref{relative-stable-equivalence-th} there exists a sequence of elementary moves preserving
all boundary circuits in~$C$
\begin{equation}\label{sequence}
M=M_0\xmapsto{\eta_1}M_1\xmapsto{\eta_2}M_2\xmapsto{\eta_3}\ldots\xmapsto{\eta_N}M_N=M'
\end{equation}
such that~$\eta=\eta_N\circ\ldots\circ\eta_2\circ\eta_1$.

\medskip\noindent\emph{Step 6.} Rearrange the moves using the relative commutation theorem:\\
By Proposition~\ref{obvious-prop-1} the enhanced mirror diagrams~$M$ and~$M'$
have simple essential boundary. Since~$C\subset\partial_{\mathrm e}M\cap\partial_{\mathrm e}M'$,
it follows from Proposition~\ref{simple-essential-boundary-implies-flexibility-prop} that~$M$
and~$M'$ are flexible relative to~$C$.

Now by Theorem~\ref{commutation-2-thm} the sequence~\eqref{sequence} can be 
modified so that, after the modification, it will consist of elementary moves together with
some number of jump moves, and remain $C$-delicate
(see Definition~\ref{c-delicate-def}), and for some $k\in[0,N]$
the $C$-delicate subsequences~$M_0\xmapsto{\eta_1}M_1\xmapsto{\eta_2}M_2\xmapsto{\eta_3}\ldots\xmapsto{\eta_k}M_k$
and~$M_k\xmapsto{\eta_{k+1}}M_{k+1}\xmapsto{\eta_{k+2}}M_{k+2}\xmapsto{\eta_{k+3}}\ldots\xmapsto{\eta_N}M_N$ will be
of type~I  and type~II, respectively. We fix these sequence and number~$k$ from now on
and denote the morphisms~$\eta_k\circ\ldots\circ\eta_2\circ\eta_1:\widehat M\rightarrow\widehat M_k$ and
$\eta_N\circ\ldots\eta_{k+2}\circ\eta_{k+1}:\widehat M_k\rightarrow M'$ by~$\eta_{\mathrm I}$ and
$\eta_{\mathrm{II}}$, respectively.

\medskip\noindent\emph{Step 7.} Patch the holes:\\
Let~$S$ be a surface carried by~$\widehat M_k$
such that some isotopy inducing the morphism~$\eta_{\mathrm I}$
brings~$\widehat\Pi_0$ to~$S$, and let~$\psi$ be a homeomorphism~$\widehat\Pi_0\rightarrow S$
such that~$(\widehat\Pi_0,S,\psi)\in\eta_{\mathrm I}$. Clearly we also have~$(\widehat\Pi_1,
S,\psi\circ\phi_0\circ\phi_1^{-1})\in\eta_{\mathrm{II}}$.

By Lemma~\ref{rectangular-representative-lem} the surface~$S$ can be chosen in
the form~$\widehat\Pi$ for some rectangular diagram of a surface~$\Pi$.
We assume for the rest of the proof that such a choice has been made.
Since the sequence~\eqref{sequence} is~$C$-delicate
we may also assume that~$\psi$ takes~$\widehat R$ to a union of
essential boundary circuits of~$\widehat M_k$ that has the form~$\widehat R'$,
where~$R'$ is a rectangular diagram of a link.
Moreover, it follows that such~$R'$ (viewed combinatorially) is obtained from~$R$ by a sequence of exchange moves.

 Let~$(\delta_+,\delta_-)$ be a canonic
dividing configuration of~$\widehat\Pi$.
Since the enhanced mirror diagrams~$M$ and~$M'$ are associated with rectangular diagrams of surfaces,
all inessential boundary circuits of~$M$ and~$M'$ are simultaneously
$+$-negligible and $-$-negligible (see Definition~\ref{neglibigle-circuit-def}).
It now follows from Lemma~\ref{realization-invariance-lemm}
that the abstract dividing set~$(\psi\circ\phi_0)(\delta_0)$ is equivalent to~$\delta_+$, and, by symmetry,~$(\psi\circ\phi_0)(\delta_1)$
is equivalent to~$\delta_-$. In other words, $(\Pi,\psi\circ\phi_0)$ is a realization of the dividing configuration~$\bigl((\psi\circ\varphi_0)^{-1}(\delta_+),
(\psi\circ\varphi_0)^{-1}(\delta_-)\bigr)$, which is
weakly equivalent to~$(\delta_0,\delta_1)$.
Since~$S=\widehat\Pi$ has been chosen isotopic to~$\widehat\Pi_0$, with
the isotopy realizing~$\eta_{\mathrm I}$, this realization is proper.
We also have~$\psi(\phi_0(L))=\widehat R'$.

\medskip\noindent\emph{Step 8.} Produce realizations for all admissible dividing configurations weakly equivalent to~$(\delta_0,\delta_1)$
from a single one:\\
By this point, we have shown that for \emph{some} dividing configuration on~$F$ weakly equivalent
to~$(\delta_0,\delta_1)$, there exists a proper realization~$(\Pi,\xi)$ such that~$\xi(L)$ has the form~$\widehat R'$,
where~$R'$ is obtained from~$R$ by a sequence of exchange moves.
It follows from Proposition~\ref{weak-equivalence-via-moves} and Lemma~\ref{exchange-means-exchange-lem}
that the same holds for \emph{any} admissible
dividing configuration on~$F$ weakly equivalent to~$(\delta_0,\delta_1)$, which concludes the proof of the theorem.
\end{proof}

Theorem~\ref{maintheo} immediately implies the following.

\begin{coro}\label{maincoro}
Let $L$ be a fixed oriented link, and let $F\subset\mathbb S^3$ be a fixed compact surface such that $F\cap L$ is a sublink of $\partial F$.
Let also $R$ be an oriented rectangular diagram of a link such that $\widehat R$ is isotopic to~$L$. Finally, let~$A$ be
a family of abstract dividing sets on~$F$ such that, for any orientation-preserving self-homeomorphism~$\phi$ of~$\mathbb S^3$
taking~$L$ to~$\widehat R$, there exist~$\delta'\in A$ and a proper $-$-realization~$(\Pi,\phi')$ of $\delta'$ at~$R$
with~$\phi'$ $C^0$-isotopic to~$\phi$ relative to~$L$.

Then, for an abstract dividing set $\delta\subset F$, we have $\delta\in\mathscr I_{F,L,+}(R)$
if and only if, for some $\delta'\in A$, the following two conditions hold:
\begin{enumerate}
\item
there exists an admissible dividing configuration~$(\delta^+,\delta^-)$ weakly equivalent to $(\delta,\delta')$;
\item
whenever~$(\delta^+,\delta^-)$ is such a configuration, there exist a rectangular diagram of a link $R'$ obtained from $R$
by a finite sequence of exchange moves, and a proper realization $(\Pi',\phi)$ of $(\delta^+,\delta^-)$
such that $\phi(L)=\widehat R'$.
\end{enumerate}
\end{coro}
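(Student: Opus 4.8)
The plan is to read both implications off Theorem~\ref{maintheo} together with the standing hypothesis on the family~$A$; beyond that only minor bookkeeping is needed to pass between the diagrams $R$ and $R'$ and between equivalent abstract dividing sets, which I would handle exactly as in Step~8 of the proof of Theorem~\ref{maintheo}.

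For the ``only if'' direction I would start from $\delta\in\mathscr I_{F,L,+}(R)$, unwind the definition to obtain a proper $+$-realization $(\Pi_0,\phi_0)$ of $\delta$ at $R$, and note that properness lets $\phi_0$ be extended to an orientation preserving self-homeomorphism of $\mathbb S^3$ taking $L$ to $\widehat R$. Feeding this extension into the hypothesis on $A$ produces some $\delta'\in A$, a proper $-$-realization $(\Pi_1,\phi_1)$ of $\delta'$ at $R$, and a $C^0$-isotopy from $\phi_0$ to $\phi_1$ fixed on $L$. At this point the hypotheses of Theorem~\ref{maintheo} are met with $\delta_0=\delta$ and $\delta_1=\delta'$, and its conclusion is---word for word---the conjunction of conditions~(1) and~(2) for this choice of $\delta'$. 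So this direction is essentially automatic.

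For the ``if'' direction I would take $\delta'\in A$ satisfying (1) and (2), fix (using~(1)) an admissible dividing configuration $(\delta^+,\delta^-)$ weakly equivalent to $(\delta,\delta')$, and apply~(2) to it, obtaining a diagram $R'$ obtained from $R$ by finitely many exchange moves together with a proper realization $(\Pi',\phi)$ of $(\delta^+,\delta^-)$ with $\phi(L)=\widehat R'$. This realization is, by definition, a witness that $\delta^+$ is properly $+$-realizable at $R'$ (with $\delta^-$ playing the role of the auxiliary dividing set), so the equivalence class of $\delta^+$---which coincides with that of $\delta$, since $\delta^+$ is equivalent to $\delta$---lies in $\mathscr I_{F,L,+}(R')$. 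It then remains to identify $\mathscr I_{F,L,+}(R')$ with $\mathscr I_{F,L,+}(R)$: exchange moves preserve the $\xi_+$-Legendrian type as well as $\tb_-$, and $F$ is $-$-compatible with $R$ (a proper $-$-realization at $R$ is already furnished by the hypothesis on $A$, and for any rectangular-diagram surface the relative $\tb_-$ of each boundary component is nonpositive), hence $F$ is also $-$-compatible with $R'$, so Theorem~\ref{invariant-thm} applies and yields the desired equality. Therefore $\delta\in\mathscr I_{F,L,+}(R)$.

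I do not anticipate a genuine obstacle: all the substance is contained in Theorem~\ref{maintheo}, and the corollary is just its reformulation adapted to the family~$A$. The one point that needs a little care is the ``if'' direction's passage from realizability at $R'$ to realizability at $R$, which I have routed through Theorem~\ref{invariant-thm}; alternatively this passage can be argued at the level of diagrams using Proposition~\ref{weak-equivalence-via-moves} and Lemma~\ref{exchange-means-exchange-lem}, which is precisely how the corresponding step is treated inside the proof of Theorem~\ref{maintheo}.
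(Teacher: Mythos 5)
Your proposal is correct and is essentially the unwrapping that the paper leaves implicit (the paper just announces that Theorem~\ref{maintheo} ``immediately implies'' the corollary). The ``only if'' direction is indeed a verbatim application of Theorem~\ref{maintheo} once the hypothesis on~$A$ is fed the extension of~$\phi_0$; the ``if'' direction needs the extra ingredient you supply, namely $\mathscr I_{F,L,+}(R')=\mathscr I_{F,L,+}(R)$, which does not come from Theorem~\ref{maintheo} alone and is correctly routed through Theorem~\ref{invariant-thm} after the (necessary, and correctly argued) checks that $\widehat R$ and~$\widehat R'$ are $\xi_+$-Legendrian equivalent and that $F$ is $-$-compatible with both diagrams; so ``immediate'' in the paper is a mild overstatement and your added step is genuinely needed.
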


\begin{defi}
A family~$A$ of abstract dividing sets satisfying
the assumptions of Corollary~\ref{maincoro} will be called \emph{$-$-representative
for~$R$}. One similarly defines a $+$-representative family, by applying
the symmetry~$\xi_+\leftrightarrow\xi_-$.
\end{defi}

Corollary~\ref{maincoro} is most useful when one can find a \emph{finite} $-$-representative family $A$ for~$R$.
In this case, it is a finite procedure to check whether or not $\delta\in\mathscr I_{F,L,+}(R)$.
Indeed, for each $\delta'\in A$ it is easy to find an admissible dividing configuration
$(\delta^+,\delta^-)$ weakly equivalent to $(\delta,\delta')$, and, due to Proposition~\ref{finite-prop}, each
dividing configuration admits, up to combinatorial equivalence,
only finitely many realizations, which can be searched.

For each found realization, one should, of course, check whether it is proper or not.
This amounts to comparing certain Haken three-manifolds with boundary pattern,
which is doable in general~\cite{mat} but sometimes is simply not needed because no realization
exists.

\subsection{An example: the knot~$6_2$}
We illustrate how the method described above works by proving the following statement.

\begin{prop}\label{6-2-prop}
The following two Legendrian knots, which have topological type $6_2$, presented by
front projections are not equivalent.

\nobreak
\vskip0.2cm
\centerline{\includegraphics[scale=0.48]{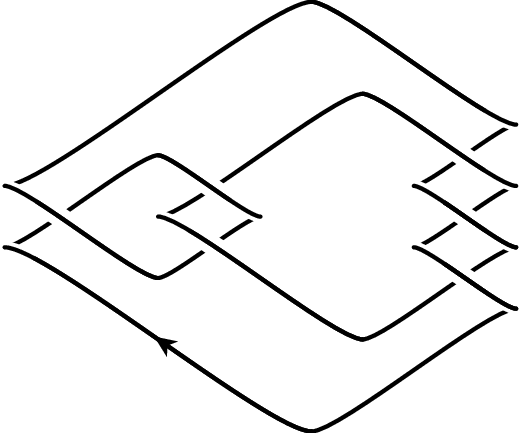}\put(0,0){$K_1$}
\hskip2cm
\includegraphics[scale=0.43]{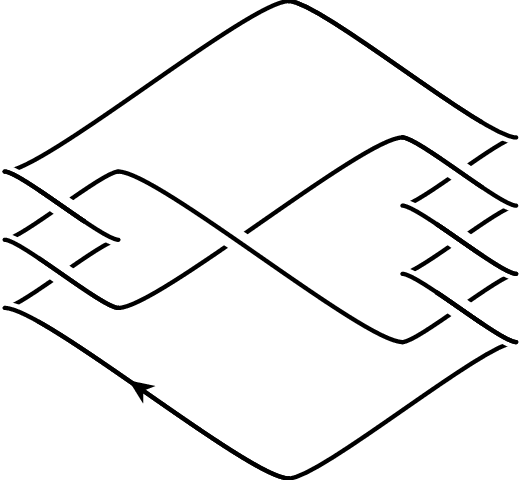}\put(0,0){$K_2$}}
\end{prop}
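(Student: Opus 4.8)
The plan is to apply Corollary~\ref{maincoro} together with Theorem~\ref{invariant-thm} to the two Legendrian knots $K_1$ and $K_2$. Concretely, I would first fix a convenient topological model: let $L$ be the underlying topological knot of type $6_2$, and let $R_1$, $R_2$ be rectangular diagrams of a link representing $K_1$ and $K_2$ as $\xi_+$-Legendrian knots (obtained from the given front projections in the standard way, so that $\tb_+$ and the rotation number match those read off from the fronts). One checks, first of all, that $K_1$ and $K_2$ have the same classical invariants, so that no elementary argument distinguishes them. The strategy is then to exhibit an auxiliary surface $F\subset\mathbb S^3$ with $F\cap L$ a sublink of $\partial F$ (in practice $F$ will be a Seifert surface, or a surface with $L$ as part of its boundary, chosen so that $F$ is $-$-compatible with both $R_1$ and $R_2$), together with an abstract dividing set $\delta$ on $F$, such that $\delta\in\mathscr I_{F,L,+}(R_1)$ but $\delta\notin\mathscr I_{F,L,+}(R_2)$. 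By Theorem~\ref{invariant-thm} this forces $K_1$ and $K_2$ to be inequivalent as $\xi_+$-Legendrian knots.

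The positive claim $\delta\in\mathscr I_{F,L,+}(R_1)$ is handled by producing an explicit proper $+$-realization: one writes down a rectangular diagram $\Pi$ of a surface, a canonic dividing configuration on $\widehat\Pi$, and an identification showing $\widehat\Pi$ is isotopic to $F$ with $\delta$ going to the green curve and $\partial\widehat\Pi$ containing $\widehat R_1$; properness is verified directly (or is automatic from the construction). The negative claim $\delta\notin\mathscr I_{F,L,+}(R_2)$ is where Corollary~\ref{maincoro} does the real work. For this I would first identify a \emph{finite} $-$-representative family $A$ of abstract dividing sets for $R_2$ — this is the step that requires genuine input about the $6_2$ knot, namely control of the $\xi_-$-Legendrian representatives of $L$ realized with non-positive relative Thurston--Bennequin number on $F$, equivalently control of the candidate red dividing sets. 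Given such finite $A$, Corollary~\ref{maincoro} reduces $\delta\in\mathscr I_{F,L,+}(R_2)$ to a finite search: for each $\delta'\in A$ one forms an admissible dividing configuration $(\delta^+,\delta^-)$ weakly equivalent to $(\delta,\delta')$, enumerates its finitely many combinatorial realizations (finiteness by Proposition~\ref{finite-prop}), and checks for each realization whether it is proper and whether its boundary diagram is obtained from $R_2$ by exchange moves. One then verifies that in every case at least one of these checks fails, so no realization survives and $\delta\notin\mathscr I_{F,L,+}(R_2)$.

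The main obstacle, and the place where most of the technical effort (and the Appendix) will go, is twofold: establishing that the family $A$ one writes down is actually $-$-representative — i.e.\ that \emph{every} orientation-preserving self-homeomorphism of $\mathbb S^3$ carrying $L$ to $\widehat R_2$ yields, after a $C^0$-isotopy rel $L$, a proper $-$-realization of some $\delta'\in A$ — and then carrying out the finite-but-large case analysis that rules out all realizations. The first part is delicate because a priori the orientation-preserving symmetry group of the knot, acting on dividing sets, could be large; one needs the symmetry group of $6_2$ to be small enough (it is finite here) and one needs a finiteness bound on the complexity of the relevant dividing sets, coming from the bound on $-\tb_-$ via the number of vertices as in Step~1 of the proof of Theorem~\ref{maintheo}. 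The second part is a bookkeeping task over combinatorial types of rectangular diagrams with a controlled number of rectangles, checking properness (comparing the complementary $3$-manifolds, or observing the boundary is unknotted as in Example~\ref{realizations-exam}) and checking the exchange-move condition; it is routine in principle but lengthy, which is why the detailed verification is deferred to the Appendix.
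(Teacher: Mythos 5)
Your plan reproduces the paper's strategy --- choose a Seifert surface $F$ and a dividing set $\delta$ with $\delta\in\mathscr I_{F,L,+}(R_1)$, then rule out $\delta\in\mathscr I_{F,L,+}(R_2)$ by Corollary~\ref{maincoro} with a finite $-$-representative family, and hand the exhaustive case check to an appendix --- but you leave open the one step that genuinely requires an idea, namely how to write down and \emph{verify} the $-$-representative family $A$. The paper's key move, missing from your account, is to establish that $\widehat R_1$ and $\widehat R_2$ are equivalent as \emph{$\xi_-$-Legendrian} knots, exhibited by an explicit sequence of exchange moves and type~II (de)stabilizations (Figure~\ref{transform}). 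Combined with $\tb_+(R_2)<0$, the $\xi_-$-version of Theorem~\ref{invariant-thm} then gives $\mathscr I_{F,K,-}(R_1)=\mathscr I_{F,K,-}(R_2)$, so the specific red curve $\delta_-$ of the known realization at $R_1$ admits a proper $-$-realization $(\Pi_1,\phi_1)$ at $R_2$; and the computation of the orientation-preserving symmetry group as $\mathbb Z_2$ (carried out via a concrete presentation of $\pi_1(\mathbb S^3\setminus\widehat R_1)$ and the fibration structure) shows that $A=\{\delta_-,\sigma(\delta_-)\}$ suffices, because any $\phi$ taking $L$ to $\widehat R_2$ is isotopic rel $L$ to $\phi_1$ or $\phi_1\circ\sigma$.

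Your alternative suggestion --- that finiteness of $A$ comes from ``a finiteness bound on the complexity of the relevant dividing sets, coming from the bound on $-\tb_-$'' --- does not work and is not the paper's argument. The $-\tb_-$ bound constrains only the number of endpoints of a candidate dividing set on $\partial F$, not its isotopy class; there is no a priori complexity bound on $\delta'$, and no finiteness statement follows from Step~1 of Theorem~\ref{maintheo}, which serves a different purpose (existence of an admissible configuration). What makes $A$ finite is the finiteness of the symmetry group alone; what pins down the actual elements of $A$ is the $\xi_-$-Legendrian equivalence. Once $A$ is fixed, the search is finite by Proposition~\ref{finite-prop}, as you say; the paper also observes that $R_2$ is rigid (admits no exchange moves), which reduces the exchange-move condition in Corollary~\ref{maincoro} to requiring $\partial\Pi'=R_2$ exactly, and it offers a softer alternative to the brute-force search via Lemma~\ref{non-reducible-lem} applied to the irreducible bigon in $(\delta_+,\delta_-)$.
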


\begin{proof}
Shown in Figure~\ref{r1r2}
\begin{figure}[ht]
\begin{tabular}{ccc}
\includegraphics[scale=0.4]{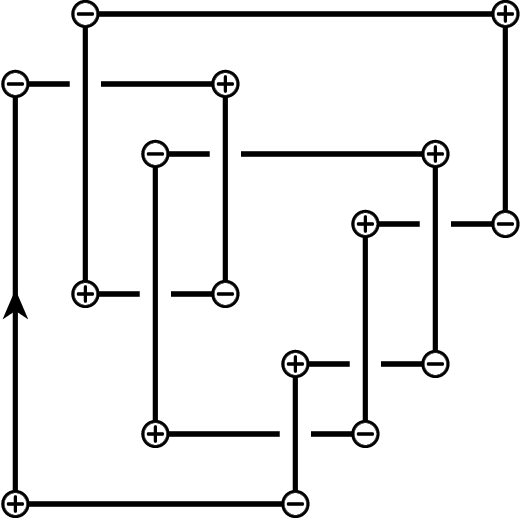}&\hbox to1cm{}&\includegraphics[scale=0.4]{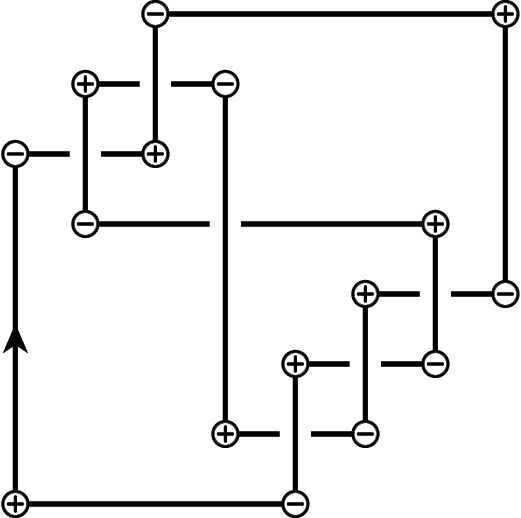}\\
$R_1$&&$R_2$
\end{tabular}
\caption{Rectangular presentations of $K_1$ and $K_2$}\label{r1r2}
\end{figure}
are oriented rectangular diagrams $R_1$ and $R_2$ such that
$\widehat R_i$ is equivalent to $K_i$ as a $\xi_+$-Legendrian knot, $i=1,2$.
The knots $\widehat R_1$ and $\widehat R_2$ are equivalent as $\xi_-$-Legendrian knots.
Indeed, there is a sequence of moves including exchange moves
and type~II stabilizations and destabilizations transforming $R_1$ to $R_2$. It is sketched in Figure~\ref{transform}.
\begin{figure}[ht]
\includegraphics[scale=0.4]{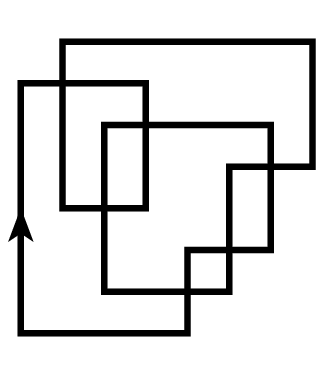}\raisebox{34pt}{$\rightarrow$}%
\includegraphics[scale=0.4]{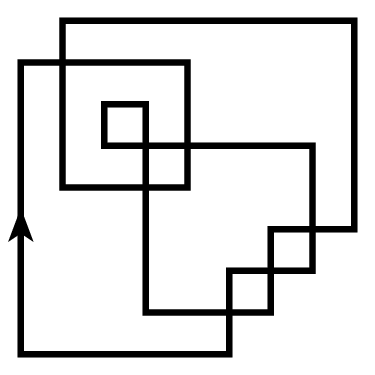}\raisebox{34pt}{$\rightarrow$}%
\includegraphics[scale=0.4]{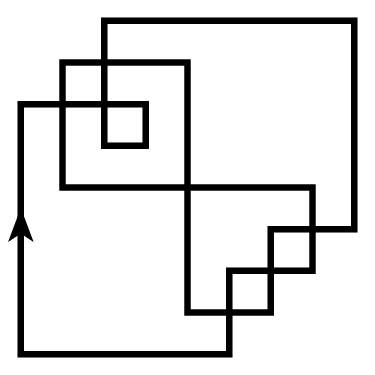}\raisebox{34pt}{$\rightarrow$}%
\includegraphics[scale=0.4]{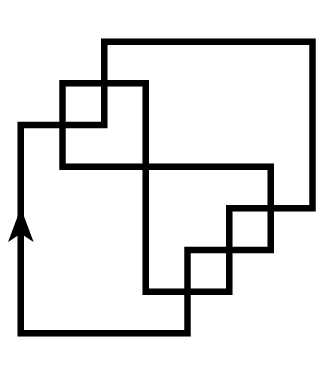}
\caption{Transforming $R_1$ to $R_2$ by exchange moves and type~II (de)stabilizations}\label{transform}
\end{figure}

One easily finds that $\tb_+(K_1)=\tb_+(K_2)=-7$, so, if $F$ is any Seifert surface for
$\widehat R_i$, $i=1,2$, then we have $\tb_+(\widehat R_i;F)=-7<0$.
Therefore, for any oriented knot $K$ having topological type $6_2$, and
any Seifert surface $F$ for $K$, we have by Theorem~\ref{invariant-thm}
$\mathscr I_{F,K,-}(R_1)=\mathscr I_{F,K,-}(R_2)$.

We are going to show
that $\mathscr I_{F,L,+}(R_1)\ne\mathscr I_{F,L,+}(R_2)$ for a specific choice
of $F$ and $L=\partial F$. This specific choice is presented in Figure~\ref{seifert} in the rectangular form.
\begin{figure}[ht]
\includegraphics[scale=0.8]{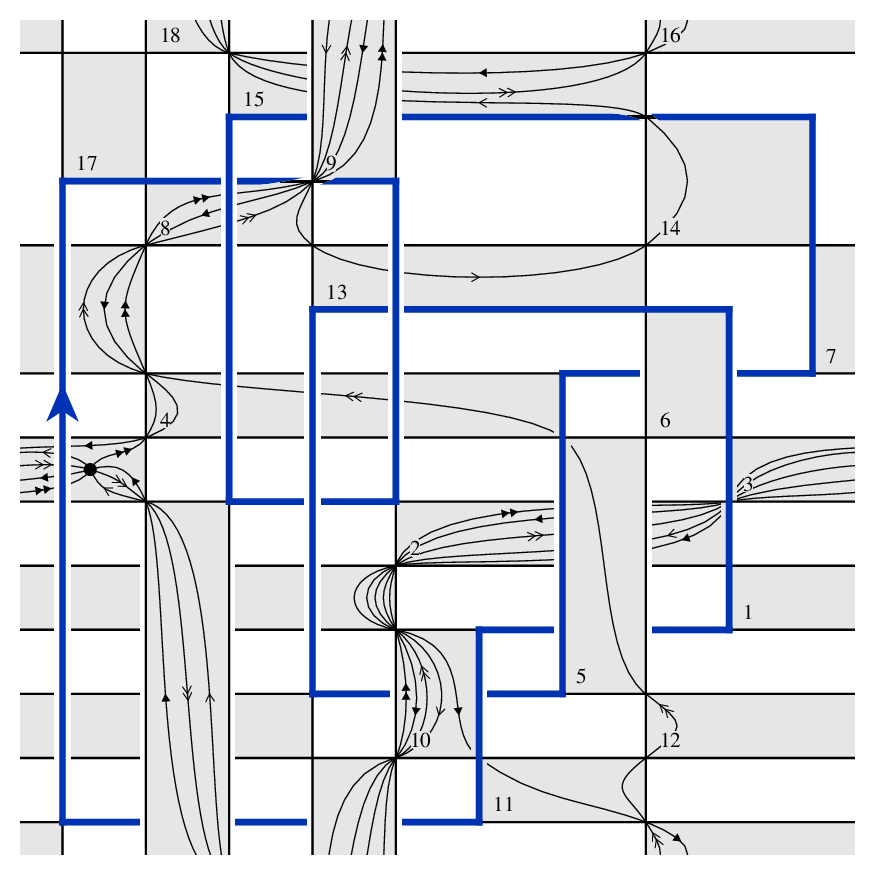}\hskip1cm\raisebox{80pt}{\includegraphics[scale=0.7]{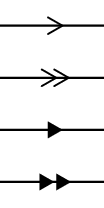}\put(-32,75){legend:}%
\put(3,60){$x_1$}\put(3,42.3){$x_2$}\put(3,24.6){$x_3$}\put(3,6.9){$x_4$}}
\caption{Rectangular diagram~$\Pi^*$ of a Seifert surface for $\widehat R_1$}\label{seifert}
\end{figure}
What is shown in Figure~\ref{seifert} is a rectangular diagram of a surface, which we denote by~$\Pi^*$,
together with the rectangular diagram of the boundary link, and some additional data.

It is a direct check that:
\begin{enumerate}
\item
the boundary of $\Pi^*$ coincides---combinatorially---with $R_1$ (provided that
the orientation of $\partial\Pi^*$ is chosen as in Figure~\ref{seifert});
\item
$\widehat\Pi$ is a genus two orientable surface with a single boundary component
(there are 18 rectangles in $\Pi^*$, which correspond
to the tiles of~$\widehat\Pi^*$, 44 vertices of~$\Pi^*$, which correspond to the sides of the tiles,
and 23 occupied levels, which correspond to the vertices of the tiles,
thus the Euler characteristic is $18-44+23=-3$);
\item
the loops on $\widehat\Pi^*$ whose torus projections are indicated in Figure~\ref{seifert} by curved lines with various arrowheads
generate the fundamental group of $\widehat\Pi^*$ and cut $\widehat\Pi^*$ into an octagon disc with a hole.
\end{enumerate}
Moreover, one can verify that, after the cutting, a canonic dividing configuration $(\delta_+,\delta_-)$
of $\widehat\Pi^*$ looks as shown in Figure~\ref{divconf}
on the left (with Convention~\ref{color-conv} in force).
\begin{figure}[ht]
\begin{tabular}{ccc}
\includegraphics[scale=0.48]{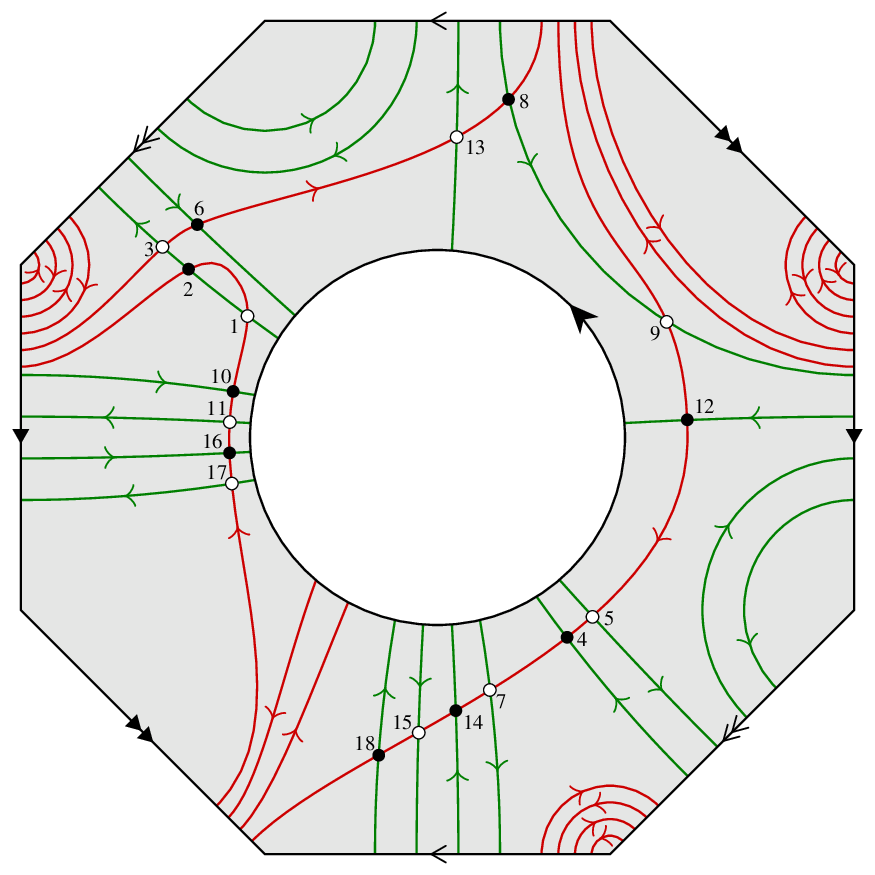}&\hbox to 0.5cm{}&\includegraphics[scale=0.48]{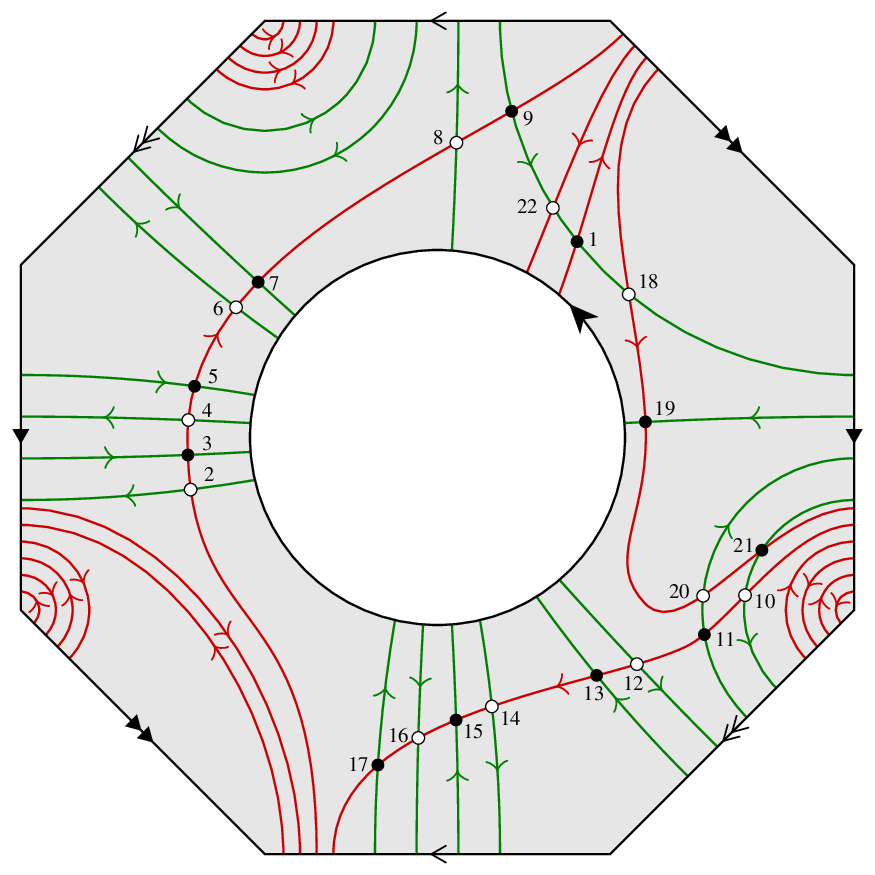}\\
\includegraphics{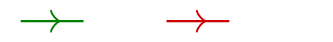}\put(-115,7){$\delta_+$}\put(-45,7){$\delta_1=\delta_-$}&&
\includegraphics{legend1.eps}\put(-115,7){$\delta_+$}\put(-45,7){$\delta_2=\sigma(\delta_-)$}
\end{tabular}
\caption{Dividing configurations $(\delta_+,\delta_-)$ and $(\delta_+,\sigma(\delta_-))$}\label{divconf}
\end{figure}
An intersection of $\delta_+$ and $\delta_-$ is marked by a black dot if
the surface is shown near this point in the same orientation in Figure~\ref{divconf}
as in the torus projection in Figure~\ref{seifert}, and by
a white dot otherwise.

Now we are going to apply Corollary~\ref{maincoro}, in which we put $L=\widehat R_1$, $R=R_2$, and $F=\widehat\Pi^*$.
By Theorem~\ref{invariant-thm}, since $\widehat R_1$ and~$\widehat R_2$ are equivalent as $\xi_-$-Legendrian
knots and~$\tb_+(R_2)=-7<0$, there exists a $-$-realization of~$\delta_-$ at~$R_2$. However, the one-element set $\{\delta_-\}$ is not known to be (and actually
is not) $-$-representative for~$R_2$,
so Corollary~\ref{maincoro} cannot be applied immediately.

To construct a $-$-representative family we must take into account the symmetry group
of the knot~$6_2$, that is, the mapping class group of the pair $(\mathbb S^3,\widehat R_1)$. This group
is known to be isomorphic to the dihedral group~$D_2\cong\mathbb Z_2\oplus\mathbb Z_2$, see~\cite{sak90,ks92}, but
this includes two elements inverting the orientation of~$\widehat R_1$, which do not bother us. The subgroup
of orientation-preserving elements is just $\mathbb Z_2$. The only nontrivial
element of this subgroup can be represented
by an orientation-preserving self-homeomorphism~$\sigma$ of $\mathbb S^3$ that
preserves the surface $\widehat\Pi^*$ and sends each generator $x_i$, $i=1,2,3,4$,
indicated in Figure~\ref{seifert} to its inverse.

In order to make the reader able to verify this, we note that the knot $6_2$
is fibered~\cite{stal61,mur63}
and has genus two~\cite{sei35}. So, the surface $\widehat\Pi^*$ can be
taken for a fiber. The loops $x_i$, $i=1,2,3,4$,
freely generate~$\pi_1(\widehat\Pi^*)$. The complement of $\widehat R_1$ can
be identified with the mapping torus of a self-homeomorphism of $\widehat\Pi^*\setminus\partial\widehat\Pi^*$
inducing the following automorphism of $\pi_1(\widehat\Pi^*)$:
$$x_1\mapsto x_1x_2x_1^2,\quad x_2\mapsto x_1^{-2}x_3x_1^{-1},\quad x_3\mapsto x_1x_4x_1^2,\quad x_4\mapsto x_1^{-1}.$$

Thus, the fundamental group of $\mathbb S^3\setminus\widehat R_1$ has the following
presentation:
$$\langle x_1,x_2,x_3,x_4,t\;;\;tx_1t^{-1}=x_1x_2x_1^2,\ tx_2t^{-1}=x_1^{-2}x_3x_1^{-1},\ tx_3t^{-1}=x_1x_4x_1^2,\ tx_4t^{-1}=x_1^{-1}\rangle.$$
All the generators of this presentation are shown in Figure~\ref{xxxxt}.
\begin{figure}[ht]
\includegraphics[scale=0.7]{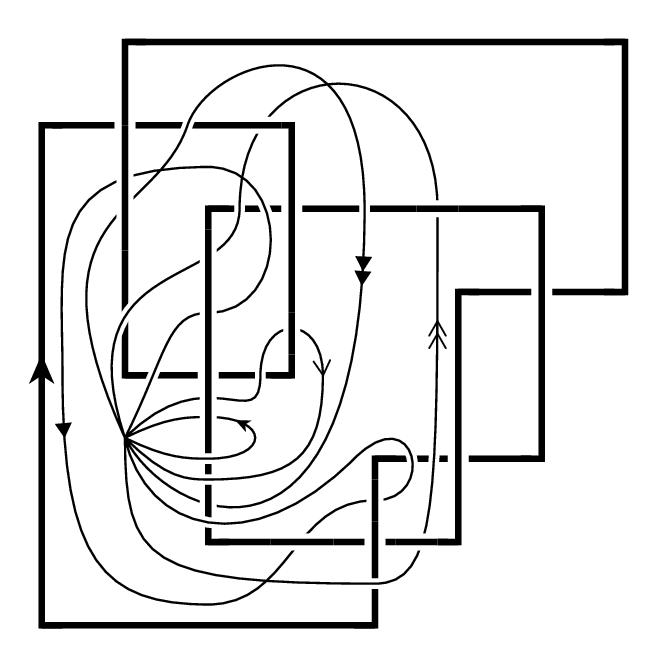}\put(-137,77){$t$}\put(-118,110){$x_1$}\put(-88,120){$x_2$}\put(-102,120){$x_4$}\put(-201,70){$x_3$}
\caption{Generators $x_1,x_2,x_3,x_4,t$ of $\pi_1(\mathbb S^3\setminus\widehat R_1)$}\label{xxxxt}
\end{figure}
One can check the above relations
directly by switching to the Wirtinger presentation.

Now one can easily see that the formulas
\begin{equation}\label{involution}
x_1\mapsto x_1^{-1},\quad x_2\mapsto x_2^{-1},\quad x_3\mapsto x_3^{-1},\quad x_4\mapsto x_4^{-1},\quad t\mapsto x_1t
\end{equation}
define an automorphism of $\pi_1(\mathbb S^3\setminus\widehat R_1)$, which
corresponds to a self-homeomorphism~$\sigma$ of $\mathbb S^3\setminus\widehat R_1$ (extendable
to the whole of~$\mathbb S^3$). We
claim that such a homeomorphism represents the sought-for element of the symmetry group. Indeed,
it follows from~\eqref{involution} that the orientations of $\widehat\Pi^*$, $\widehat R_1$, and $\mathbb S^3$ are preserved,
the automorphism has order two and represents a non-trivial element in the group
of outer automorphisms of~$\pi_1(\mathbb S^3\setminus\widehat R_1)$.

The homeomorphism~$\sigma$ can be chosen to be an involution. Moreover,
in terms of the punctured octagon obtained
from $\widehat\Pi^*$ by cutting along $x_i$, $i=1,2,3,4$, the map $\sigma|_{\widehat\Pi^*}$ can be turned
into a rotation by $\pi$ around the center.

We now claim that the family~$A=\{\delta_-,\sigma(\delta_-)\}$ is $-$-representative for~$R_2$. Indeed,
let~$(\Pi_1,\phi_1)$ be a proper $-$-realization of~$\delta_-$ at~$R_2$. Then~$(\Pi_1,\phi_1\circ\sigma)$
is a proper $-$-realization of~$\sigma(\delta_-)$. By construction, any
orientation-preserving self-homeomorphism of~$\mathbb S^3$ that takes $L=\widehat R_1$ to $\widehat R_2$
is isotopic relative to~$L$ either to~$\phi_1$ or to~$\phi_1\circ\sigma$, hence~$A$ is $-$-representative.

The right picture in Figure~\ref{divconf} shows the dividing configuration~$(\delta_+,\sigma(\delta_-))$,
which, for our particular choice of $\delta_\pm$ shown in Figure~\ref{divconf}, is admissible.

By Corollary~\ref{maincoro}, we
have $\delta_+\in\mathscr I_{\widehat\Pi^*,\widehat R_1,+}(R_2)$ if and only if there exists a proper realization $(\Pi',\phi)$
of one of the dividing configurations $(\delta_+,\delta_-)$ or $(\delta_+,\sigma(\delta_-))$, such that $\partial\Pi'$ is obtained
from $R_2$ by exchange moves. The diagram~$R_2$ does not admit any exchange move (we call such diagrams \emph{rigid}),
so, we may additionally demand that~$\partial\Pi'=R_2$.
An exhaustive search, which is finite (and very small in this case), of all (combinatorial types of) rectangular diagrams of surfaces $\Pi'$ that give rise to a realization of $(\delta_+,\delta_-)$
or $(\delta_+,\sigma(\delta_-))$ shows that none of them has $\partial\Pi'=R_2$. Therefore, $\delta_+\notin\mathscr I_{\widehat\Pi^*,\widehat R_1,+}$,
and hence the Legendrian knots $\widehat R_1$ and $\widehat R_2$ are not equivalent.

Moreover, the exhaustive search shows that the configuration $(\delta_+,\delta_-)$ has, up to combinatorial
equivalence, only one realization, the one we started with.
The configuration~$(\delta_+,\sigma(\delta_-))$ has only two realizations, which are shown in Figure~\ref{realizations-after-flip}.
\begin{figure}[ht]
\includegraphics[scale=0.5]{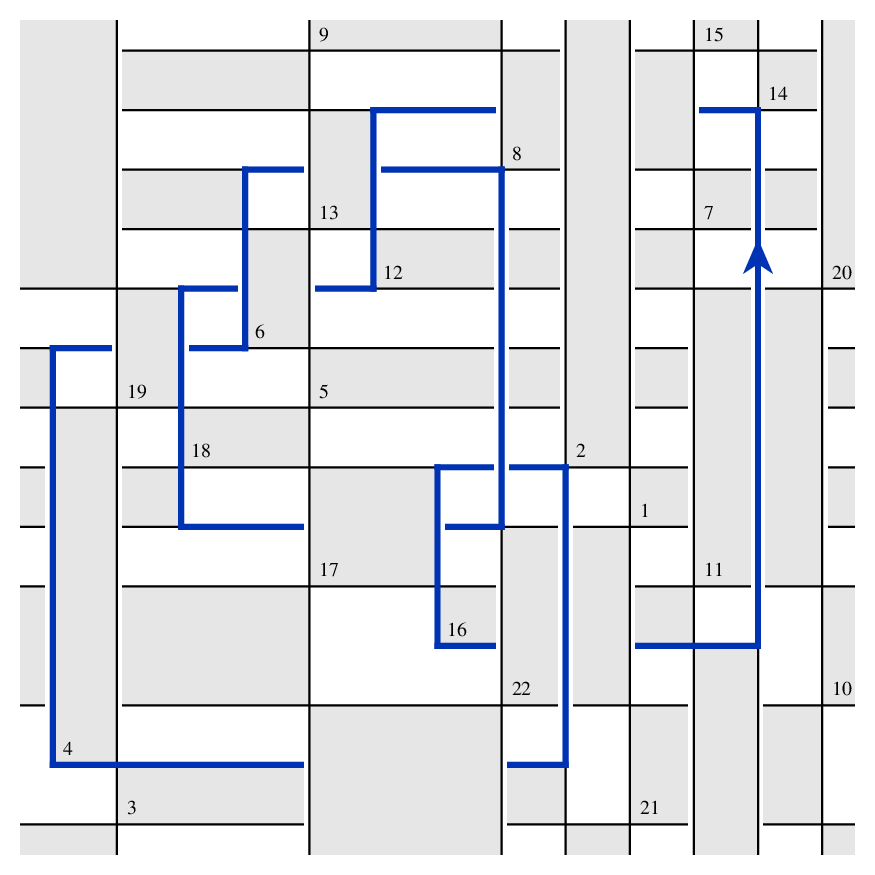}\hskip0.5cm\includegraphics[scale=0.5]{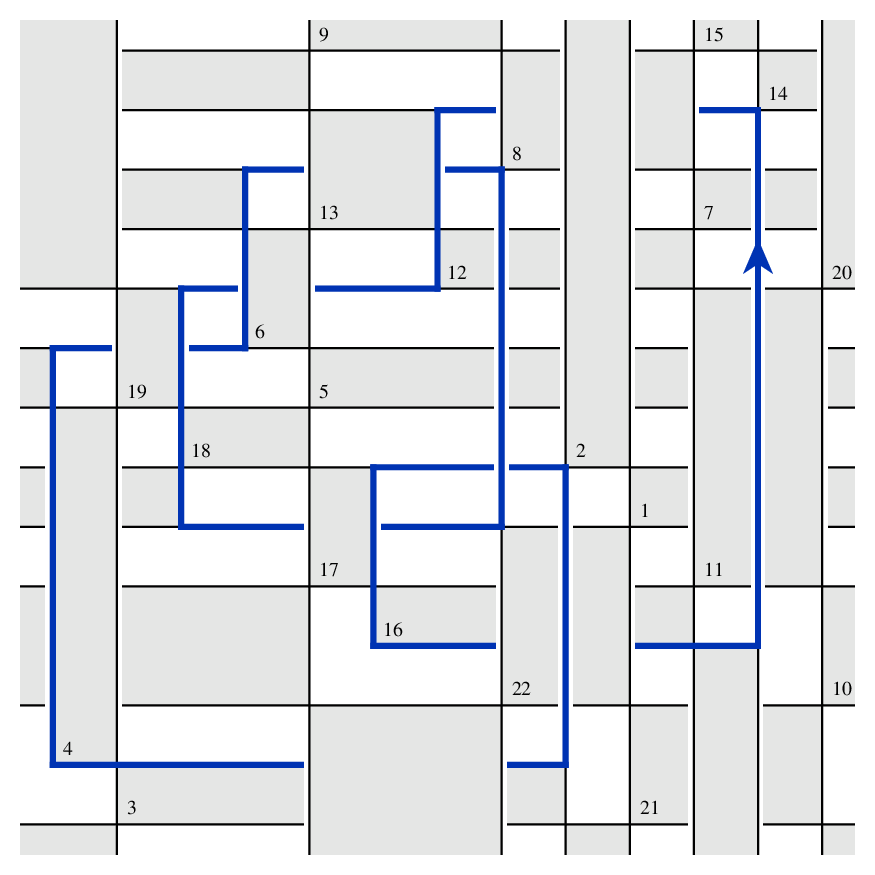}
\caption{The only two realizations of $(\delta_+,\sigma(\delta_-))$}\label{realizations-after-flip}
\end{figure}
(These realizations are proper as the boundary knots of the obtained surfaces have topological
type~$6_2$, and there is only one $C^0$-isotopy class of genus two Seifert surfaces for this knot.)

A proof that there are no more combinatorial types of realizations of~$(\delta_+,\delta_-)$ and~$(\delta_+,\sigma(\delta_-))$
is given in the Appendix~A. We also sketch here an `ideological' argument that allows one,
in this particular case, to see that there is no realization $(\Pi',\phi)$ of $(\delta_+,\delta_-)$
or $(\delta_+,\sigma(\delta_-))$ with $\partial\Pi'=R_2$.

For brevity, we will say that a dividing configuration $D$ is \emph{compatible} with a rectangular diagram of
a knot $R$ if there is a proper realization $(\Pi',\phi)$ of $D$ such that $\partial\Pi'$ can be obtained from $R$
by exchange moves.
We also say that two rectangular diagrams of a knot $R'$ and $R''$, say, are \emph{bi-Legendrian equivalent}
if the knots $\widehat R'$ and $\widehat R''$ are Legendrian equivalent with respect to
both contact structures $\xi_+$ and~$\xi_-$.

For an oriented rectangular diagram of a knot~$R$, we
denote by $\rho(R)$ the diagram obtained from~$R$ by reflecting in the origin, that is, by the map~$(\theta,\varphi)\mapsto(-\theta,-\varphi)$,
and reversing the orientation.
We learn from Figure~\ref{realizations-after-flip} that $(\delta_+,\sigma(\delta_-))$
is compatible with $\rho(R_1)$. The diagrams $R_1$ and $\rho(R_1)$
\begin{figure}[ht]
\begin{tabular}{ccccccccc}
\includegraphics[scale=0.3]{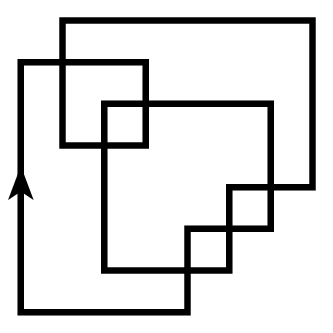}&
\raisebox{22pt}{$=$}&
\includegraphics[scale=0.3]{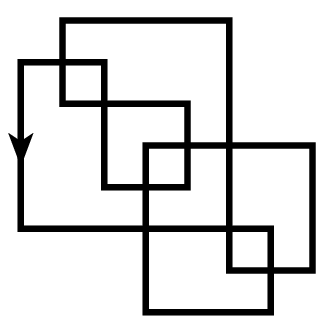}&
\raisebox{22pt}{$\stackrel{\mathrm I}\longleftrightarrow$}&
\includegraphics[scale=0.3]{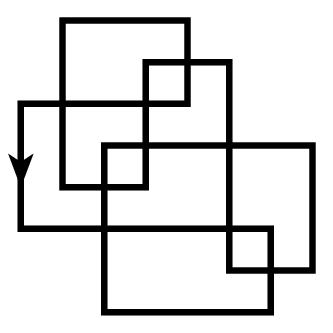}&
\raisebox{22pt}{$\longleftrightarrow$}&
\includegraphics[scale=0.3]{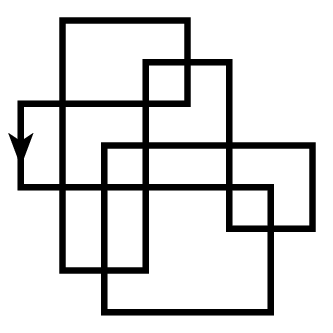}&&
\includegraphics[scale=0.3]{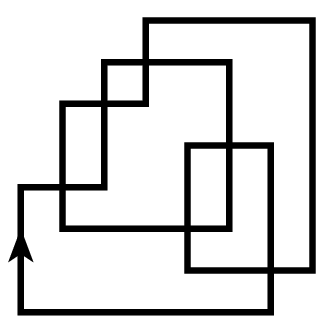}\put(0,0){$\rho(R_1)$}\\
\raisebox{5pt}{\rotatebox{-90}{$\longleftrightarrow$}}&&\raisebox{5pt}{\rotatebox{-90}{$\stackrel{\rotatebox{90}{\scriptsize II}}\longleftrightarrow$}}&&&&\rotatebox{-90}{$=$}&&
\raisebox{5pt}{\rotatebox{-90}{$\longleftrightarrow$}}\\[15pt]
\includegraphics[scale=0.3]{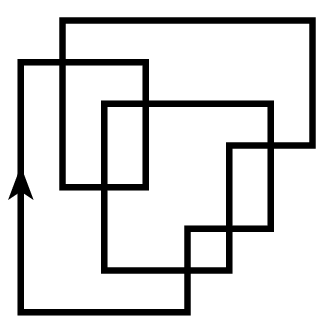}\put(-10,0){$R_1$}&&\includegraphics[scale=0.3]{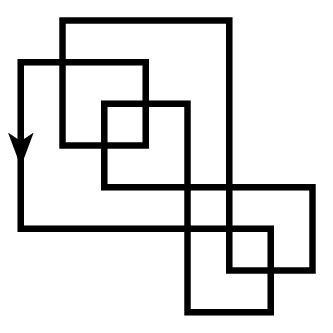}&
\raisebox{22pt}{$\stackrel{\mathrm{II}}\longleftrightarrow$}&
\includegraphics[scale=0.3]{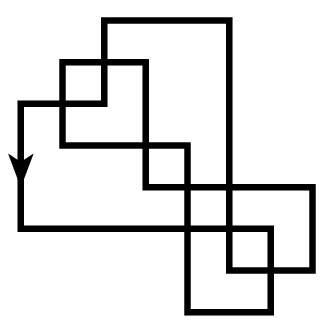}&
\raisebox{22pt}{$\stackrel{\mathrm{II}}\longleftrightarrow$}&
\includegraphics[scale=0.3]{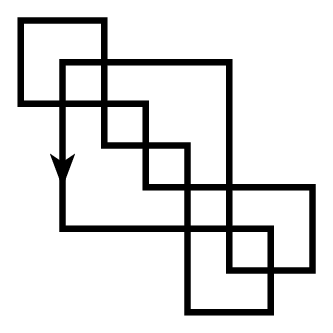}&
\raisebox{22pt}{$=$}&
\includegraphics[scale=0.3]{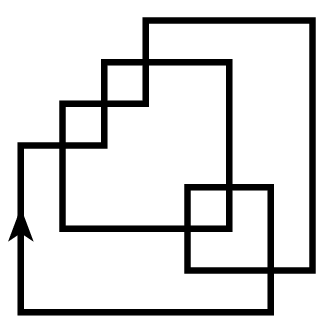}
\end{tabular}
\caption{Bi-Legendrian equivalence of~$R_1$ and~$\rho(R_1)$}\label{bi-leg-fig}
\end{figure}
are known to be bi-Legendrian equivalent~\cite{chong2013} (in the notation of~\cite{chong2013},
if~$R$ represents~$L$, then~$\rho(R)$ represents~$-\mu(L)$).
Two transitions between~$R_1$ and~$\rho(R_1)$ via elementary moves,
one without type~II (de)stabilizations and the other without type~I (de)stabilizations
are sketched in Figure~\ref{bi-leg-fig}, where each arrow marked~`I' or~`II' stands for
an operation that can be decomposed into a stabilization of
the respective type, a few exchange moves, and a destabilization of the same type as
the preceding stabilization. (These
operations are particular cases of flypes introduced in~\cite{dy03}.)
The unmarked arrows stand for (a composition of) exchange moves. The equality signs mean combinatorial equivalence.

One can deduce from the fact that~$(\delta_+,\delta_-)$ is compatible with~$R_1$ and~$(\delta_+,\sigma(\delta_-))$
is compatible with~$\rho(R_1)$ that whenever~$R$ is an oriented rectangular diagram of a knot
bi-Legendrian equivalent to~$R_1$, the following two conditions are equivalent:
\begin{enumerate}
\item
$(\delta_+,\delta_-)$ is compatible with~$R$;
\item
$(\delta_+,\sigma(\delta_-))$ is compatible with~$\rho(R)$.
\end{enumerate}

Now observe that $\delta_+$ and $\delta_-$ intersect in a `non-optimal' way: they have a bigon, that is, a disc
enclosed by two arcs one of which is a subset of $\delta_+$ and the other of $\delta_-$ (the endpoints of the arcs
are numbered~$1$ and~$2$ in the left picture in Figure~\ref{divconf}), and this bigon cannot be reduced,
since the reduction would produce a non-admissible dividing configuration. This means by Lemma~\ref{non-reducible-lem}
that the configuration $(\delta_+,\delta_-)$ is incompatible with any rigid diagram of a non-trivial knot. In particular,
it is incompatible with $R_2$ and~$\rho(R_2)$. This implies that neither $(\delta_+,\delta_-)$
nor $(\delta_+,\sigma(\delta_-))$ is compatible with $R_2$.
\end{proof}

\section{Basic moves of rectangular diagrams of surfaces}\label{basic-moves-sec}

In this section and in Appendix~B
we introduce moves of rectangular diagrams of surfaces that preserve the isotopy class of the represented surface, and allow transition
between diagrams representing isotopic surfaces.

We will refer to all the transformations of rectangular diagrams of surfaces
introduced below in this section and in Appendix~B as \emph{basic moves}. These include: (half-)wrinkle creation and reduction moves, (de)stabilization
moves, exchange moves, and flypes. Some of them are assigned \emph{a type} (I or~II),
and the others are \emph{neutral} (have no type).

\begin{theo}\label{equivalence-thm}
Let $\Pi$ and~$\Pi'$ be rectangular diagrams of a surface. The surfaces~$\widehat\Pi$ and~$\widehat\Pi'$
are isotopic if and only if~$\Pi'$ can be obtained from~$\Pi$ by a sequence
of basic moves.

The surfaces~$\widehat\Pi$ and~$\widehat\Pi'$ are equivalent as Giroux's convex surfaces with
respect to~$\xi_+$ \emph(respectively,~$\xi_-$\emph) if and only if~$\Pi'$ can be obtained from~$\Pi$ by a sequence
of type~I \emph(respectively, type~II\emph) and neutral basic moves.
\end{theo}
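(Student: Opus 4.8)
\textbf{Proof strategy for Theorem~\ref{equivalence-thm}.} The plan is to reduce the statement about isotopy of surfaces to the machinery of mirror diagrams and the relative commutation theorem, exactly mirroring the structure of the proof of Theorem~\ref{maintheo} but in a closed (boundaryless, or fixed-boundary) setting without the auxiliary data of a dividing set to keep track of. First I would establish the easy direction: each basic move, by its very definition in this section, will be seen to preserve the isotopy class of $\widehat\Pi$, and each type~I (respectively type~II) and neutral move will preserve the equivalence class of $\widehat\Pi$ as a $\xi_+$-convex (respectively $\xi_-$-convex) surface. This is a move-by-move verification carried out as each move is introduced, so by the time we reach the theorem it is essentially bookkeeping; I would collect these local statements into the ``if'' direction of both assertions.

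For the ``only if'' direction I would argue as follows. Suppose $\widehat\Pi$ and $\widehat\Pi'$ are isotopic. Pass to the associated enhanced mirror diagrams $M=M(\Pi)$ and $M'=M(\Pi')$, and let $\eta\colon\widehat M\to\widehat M'$ be the morphism of spatial ribbon graphs induced by the isotopy. By Theorem~\ref{relative-stable-equivalence-th} (applied with an empty, or boundary-only, collection of preserved circuits) there is a sequence of elementary moves of mirror diagrams realizing $\eta$; translating this sequence back to rectangular diagrams of surfaces via the correspondence between mirror diagrams and rectangular diagrams of surfaces yields a sequence of basic moves from $\Pi$ to $\Pi'$. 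For the refined (convex) statement, one uses the relative commutation theorem (Theorem~\ref{commutation-2-thm}) to split the sequence into a type~I block followed by a type~II block, but here we do not need to split it — instead, when $\widehat\Pi$ and $\widehat\Pi'$ are equivalent as $\xi_+$-convex surfaces, one shows that the morphism $\eta$ can be chosen within the class corresponding to type~II triviality, so that the commutation theorem allows all type~II moves to be cancelled or absorbed, leaving a sequence of type~I and neutral moves only. Symmetrically for $\xi_-$ with the roles of I and II exchanged.

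The main obstacle, I expect, is precisely this last point: showing that an equivalence of $\xi_+$-convex surfaces corresponds, at the level of mirror diagrams, to a morphism that is ``type~II trivial'' in the sense needed to invoke the commutation theorem and discard the type~II moves. Concretely one must relate Giroux's notion of equivalence of convex surfaces (from~\cite[Definition~25]{dp17}) — isotopy through $\xi_+$-convex surfaces keeping the $\xi_+$-dividing set controlled — to the combinatorial condition that the induced morphism lies in the subgroupoid generated by type~I and neutral moves. This is the place where the connection between contact topology and mirror diagrams discussed in Section~\ref{invariance-sec} does the real work: the $\xi_-$-dividing set (the $\delta^-$ of a canonic dividing configuration) is the combinatorial shadow of the type~II structure, and an isotopy through $\xi_+$-convex surfaces is free to move it arbitrarily, which is exactly what licenses the removal of type~II moves. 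I would handle this by citing the relevant invariance lemmas of Section~\ref{invariance-sec} (of the kind of Lemma~\ref{realization-invariance-lemm}) together with flexibility (Proposition~\ref{simple-essential-boundary-implies-flexibility-prop}), and then the commutation theorem finishes the argument. The rest — verifying that the translation between basic moves of surface diagrams and elementary moves of mirror diagrams is faithful and type-preserving — is routine once the dictionary of Sections~\ref{mir-diagr-sec}--\ref{elementary-moves-sec} is in place.
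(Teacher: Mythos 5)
The first thing to note is that the paper deliberately does \emph{not} prove Theorem~\ref{equivalence-thm}: immediately after the statement the authors write that they ``omit the proof of Theorem~\ref{equivalence-thm} and leave it for a future paper,'' keeping only the easy direction (basic moves preserve isotopy, respectively $\xi_\pm$-convex equivalence), which is distributed over Lemmas~\ref{wrinkle-creation-isotopy}, \ref{half-wrinkle-creation-isotopy}, \ref{stab-lem}, \ref{exchange-move-equivalence-lem}, \ref{flype-equivalence-lem}, and even there some details are skipped. So there is no paper proof to compare against; your proposal can only be judged on its own terms.

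On its own terms, there are two genuine gaps. First, the translation step is where the real work lies, and you pass over it. Theorem~\ref{relative-stable-equivalence-th} produces a sequence of elementary moves of \emph{mirror diagrams}, whose intermediate diagrams are not of the form $M(\Pi'')$ for any rectangular diagram of a surface $\Pi''$: the inessential boundary circuits will in general not be rectangle boundaries, and the essential boundary need not stay simple. Converting this sequence into a sequence of \emph{basic moves} of rectangular diagrams therefore requires either a normal form for intermediate mirror diagrams or a case-by-case argument showing that each elementary mirror move, when composed with rectangularizations at both ends (via something like Lemma~\ref{rectangular-representative-lem}), decomposes into basic moves. None of the results you cite supply this. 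Note that the proof of Theorem~\ref{maintheo} sidesteps this issue precisely because it only has to produce a single terminal realization, not a sequence of basic moves through intermediate rectangular diagrams.

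Second, your plan for the $\xi_+$-convex case does not close. Theorem~\ref{commutation-2-thm} only \emph{sorts} a mixed sequence into a type~I block followed by a type~II block; it does not and cannot eliminate the type~II block. You gesture at this (``we do not need to split it'') but then the alternative mechanism — that the morphism is ``type~II trivial'' and the type~II moves ``cancel or absorb'' — is not a mechanism the commutation theorem provides. What would actually do the job is the dictionary in Section~\ref{invariance-sec}: Theorem~\ref{type-i-moves-meaning-th}(ii) says $M$ and $M'$ are related by type~I elementary moves if and only if $\rho^+(M)$ and $\rho^+(M')$ are stably Legendrian equivalent as enhanced divided Legendrian ribbon graphs. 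Invoking this, however, requires the separate geometric input that $\xi_+$-convex equivalence of $\widehat\Pi$ and $\widehat\Pi'$ implies stable Legendrian equivalence of $\rho^+(M(\Pi))$ and $\rho^+(M(\Pi'))$ — essentially the unproved theorem relating convex isotopy to stable Legendrian equivalence of extended Giroux graphs, which the paper also states without proof. So the convex half of your argument rests on a theorem the paper itself defers, plus the commutation theorem applied for something it does not do.
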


Although the hard part of this theorem, namely, the sufficiency of the basic moves
for transition between isotopic (convex) surfaces, sounds as the most fundamental
result among the statements formulated in this section, it is not needed to
establish our main result, which is Theorem~\ref{maintheo}. For this reason,
and in order not to overload the paper,
we omit the proof of Theorem~\ref{equivalence-thm} here.
The proof of the first part of the theorem appears in~\cite{basicmoves}.
The proof of the second part will be published elsewhere.

We also skip some details of the proof that Giroux's convexity can be maintained in the transition from~$\widehat\Pi$
to~$\widehat\Pi'$ as stated in Theorem~\ref{equivalence-thm},
since we don't use the relation to Giroux's convex surfaces in this strong form.
What we do use is the invariance of the isotopy classes of certain dividing sets, which is a consequence of
the above mentioned relation, but can be established without reference to Giroux's convexity.
We also do use in the proof of Theorem~\ref{maintheo} some results
of this section of which the most crucial ones are Propositions \ref{twist-prop} and~\ref{weak-equivalence-via-moves}.

Now we proceed with the definition of basic moves.

\subsection{Notation. Vertex types}\label{notation-subsec}

We recall from~\cite{dp17} that we use the coordinate system $\theta,\varphi,\tau$ on~$\mathbb S^3$
coming from the join presentation of $\mathbb S^3$.
For $v=(\theta_0,\varphi_0)\in\mathbb T^2$ we denote by $\widehat v$ the arc written in these coordinates
as $\{(\theta_0,\varphi_0,\tau):\tau\in[0;1]\}$. For a rectangle $r=[\theta_1;\theta_2]\times[\varphi_1;\varphi_2]$
we denote by $\widehat r$ the tile associated with $r$ (see \cite[Subsection 2.3]{dp17}).

For~$\theta,\varphi\in\mathbb S^1$ we also use the notation $m_\theta$ for
the meridian $\{\theta\}\times\mathbb S^1\subset\mathbb T^2$, and
$\ell_\varphi$ for the longitude~$\mathbb S^1\times\{\varphi\}$.
By $\widehat m_\theta$ and $\widehat\ell_\varphi$ we denote the endpoints
of the arc~$\widehat{(\theta,\varphi)}$, lying on~$\mathbb S^1_{\tau=1}$
and~$\mathbb S^1_{\tau=0}$, respectively.

Let $\Pi$ be a rectangular diagram of a surface, and let $(\delta_+,\delta_-)$ be a canonic dividing
configuration of~$\widehat\Pi$. If~$r$ is a rectangle of $\Pi$, then the tile $\widehat r$
contains a unique intersection point of $\delta_+$ and $\delta_-$, which
we denote by~$\mathring r$. If $v$ is a vertex of $\Pi$, then there is
a unique arc of $(\delta_+\cup\delta_-)\setminus(\delta_+\cap\delta_-)$ intersecting~$\widehat v$,
which we denote by $\mathring v$. Finally, the closure of every connected component of
$\widehat\Pi\setminus(\delta_+\cup\delta_-)$ is a disc containing a unique vertex of the tiling
of $\widehat\Pi$, and if this vertex is $\widehat a$, where $a$ is an occupied level of~$\Pi$,
then the corresponding disc will be denoted
by $\mathring a$.

Summarizing, if $\Pi$ is a rectangular diagram of a surface we have the following one-to-one correspondences
between objects related to the diagram $\Pi$, to the associated surface $\widehat\Pi$, and to a canonic
dividing configuration $(\delta_+,\delta_-)$ of $\widehat\Pi$ (see Figure~\ref{correspondence}):

\centerline{\begin{tabular}{p{4cm}|l|p{4cm}}
$x$&$\widehat x$&$\mathring x$\\\hline
a rectangle of $\Pi$&a tile of $\widehat\Pi$&a point of $\delta_+\cap\delta_-$\\\hline
a vertex of $\Pi$&an edge of $\widehat\Pi$&a connected component of $(\delta_+\cup\delta_-)\setminus(\delta_+\cap\delta_-)$\\\hline
an occupied level
of $\Pi$&a vertex of $\widehat\Pi$&the closure of a connected component of $\widehat\Pi\setminus
(\delta_+\cup\delta_-)$
\end{tabular}}
\begin{figure}[ht]
\includegraphics{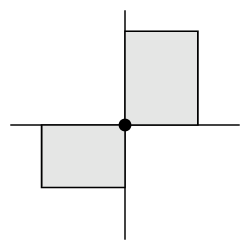}\put(-82,43){$r_1$}\put(-45,80){$r_2$}\put(-58,52){$v$}\put(-15,52){$\ell$}\put(-58,10){$m$}\hskip1cm
\includegraphics{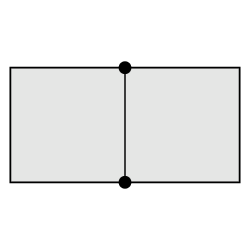}\put(-90,56){$\widehat r_1$}\put(-36,56){$\widehat r_2$}\put(-58,56){$\widehat v$}%
\put(-62,18){$\widehat\ell$}\put(-63,93){$\widehat m$}\hskip1cm
\includegraphics{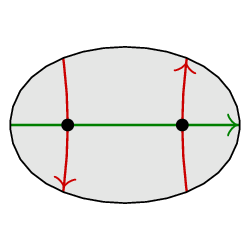}\put(-86,64){$\mathring r_1$}\put(-31,64){$\mathring r_2$}\put(-62,64){$\mathring v$}%
\put(-62,35){$\mathring\ell$}\put(-62,80){$\mathring m$}
\caption{Correspondence between objects related to~$\Pi$, to~$\widehat\Pi$, and to~$(\delta_+,\delta_-)$}\label{correspondence}
\end{figure}
\begin{defi}
If $r=[\theta_1;\theta_2]\times[\varphi_1;\varphi_2]\subset\mathbb T^2$ is a rectangle,
then the points $(\theta_1,\varphi_1)$, $(\theta_2,\varphi_2)$ will be called \emph{the $\diagdown$-vertices of $r$}
and the points $(\theta_1,\varphi_2)$, $(\theta_2,\varphi_1)$ \emph{the $\diagup$-vertices of $r$}.
A vertex $v$ is called \emph{a $\diagdown$-vertex} (respectively, \emph{a $\diagup$-vertex}) \emph{of a rectangular diagram $\Pi$}
if it is a $\diagdown$-vertex (respectively, a $\diagup$-vertex) of some rectangle $r\in\Pi$. It follows from the definition
of a rectangular diagram of a surface that no point can be simultaneously a $\diagdown$-vertex and a $\diagup$-vertex
of $\Pi$.

Equivalently, $v$ is a $\diagdown$-vertex (respectively, $\diagup$-vertex) of $\Pi$ if $\mathring v\subset\delta_+$
(respectively, $\mathring v\subset\delta_-$), where $(\delta_+,\delta_-)$ is a canonic dividing configuration of $\widehat\Pi$.
\end{defi}

\subsection{Wrinkle moves and stabilization moves}
The moves defined in this subsection are similar to each other in nature. However, wrinkle and half-wrinkle moves
are neutral, and (de)stabilization moves have type~I or type~II.

\begin{defi}\label{wrinkledef}
Let $\Pi$ be a rectangular diagram of a surface, and let $v_1=(\theta_0,\varphi_1)$ and $v_2=(\theta_0,\varphi_2)$
be a $\diagdown$-vertex and a $\diagup$-vertex of $\Pi$, respectively, lying on the same meridian $m_{\theta_0}$.
Choose an $\varepsilon>0$ so that no meridian in $[\theta_0-2\varepsilon;\theta_0+2\varepsilon]\times
\mathbb S^1\subset\mathbb T^2$  other than $m_{\theta_0}$ is an occupied level of $\Pi$.
Also choose an orientation-preserving self-homeomorphism~$\psi$ of the interval~$[\theta_0-2\varepsilon;\theta_0+2\varepsilon]$.

Let $\Pi'$ be the rectangular diagram of a surface obtained from $\Pi$ by making the following modifications:
\begin{enumerate}
\item
every rectangle of the form $[\theta_0;\theta_1]\times[\varphi';\varphi'']$ (respectively, $[\theta_1;\theta_0]\times[\varphi';\varphi'']$)
with $[\varphi';\varphi'']\subset[\varphi_1;\varphi_2]$ is replaced by
$[\psi(\theta_0+\varepsilon);\theta_1]\times[\varphi';\varphi'']$ (respectively, by $[\theta_1;\psi(\theta_0+\varepsilon)]\times[\varphi';\varphi'']$);
\item
every rectangle of the form $[\theta_0;\theta_1]\times[\varphi';\varphi'']$ (respectively, $[\theta_1;\theta_0]\times[\varphi';\varphi'']$)
with $[\varphi';\varphi'']\subset[\varphi_2;\varphi_1]$ is replaced by
$[\psi(\theta_0-\varepsilon);\theta_1]\times[\varphi';\varphi'']$ (respectively, by $[\theta_1;\psi(\theta_0-\varepsilon)]\times[\varphi';\varphi'']$);
\item
two new rectangles are added, $[\psi(\theta_0-\varepsilon);\psi(\theta_0)]\times[\varphi_1;\varphi_2]$ and
$[\psi(\theta_0);\psi(\theta_0+\varepsilon)]\times[\varphi_2;\varphi_1]$.
\end{enumerate}
Then we say that the passage from $\Pi$ to $\Pi'$ is \emph{a vertical wrinkle creation move}.
The inverse operation is referred to as \emph{a vertical wrinkle reduction move}.

\emph{Horizontal wrinkle creation} and \emph{reduction moves} are defined similarly with the roles of $\theta$ and $\varphi$
exchanged.
\end{defi}

A vertical wrinkle move is illustrated in Figure~\ref{wrinklemovefig}. The left pair of pictures shows how the rectangular
diagram changes,
\begin{figure}[ht]
\centerline{\includegraphics{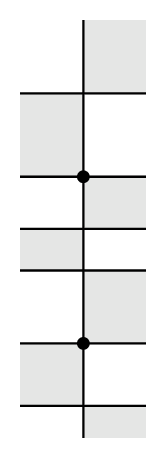}\put(-68,0){$\Pi$}
\put(-37,47){$v_1$}\put(-37,140){$v_2$}%
\put(-45,0){$m_{\theta_0}$}\put(-85,53){$\ell_{\varphi_1}$}\put(-85,133){$\ell_{\varphi_2}$}%
\raisebox{3.5cm}{$\longleftrightarrow$}\includegraphics{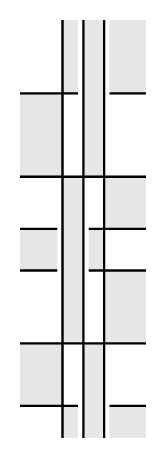}\put(-68,0){$\Pi'$}
\raisebox{3.5cm}{\begin{tabular}{ccc}
\includegraphics{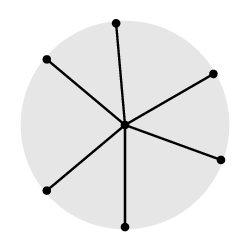}\put(-58,30){$\widehat v_1$}\put(-60,85){$\widehat v_2$}\put(-80,60){$\widehat m_{\theta_0}$}
&\hskip-3mm\raisebox{2cm}{$\longleftrightarrow$}\kern-3mm&\includegraphics{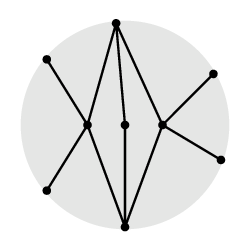}\\
\includegraphics{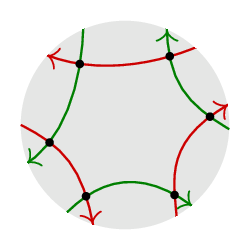}\put(-60,22){$\mathring v_1$}\put(-64,93){$\mathring v_2$}\put(-65,48){$\mathring m_{\theta_0}$}
&\hskip-3mm\raisebox{2cm}{$\longleftrightarrow$}\kern-3mm&\includegraphics{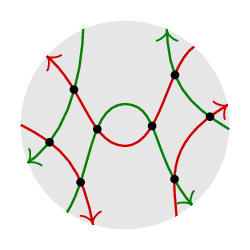}
\end{tabular}}}
\caption{A vertical wrinkle move}\label{wrinklemovefig}
\end{figure}
the top right pair of pictures shows how the corresponding tiling of $\widehat\Pi$ changes,
and the bottom right pair of pictures demonstrates the change in the canonic dividing
configuration of $\widehat\Pi$.

The combinatorial type of the obtained diagram~$\Pi'$ in Definition~\ref{wrinkledef}
does not depend on the homeomorphism~$\psi$, and, in many situations, the reader
may safely forget about~$\psi$ by assuming~$\psi=\mathrm{id}$. However,
the flexibility arising from the arbitrariness in the choice of~$\psi$ will sometimes be useful.
Namely, by choosing~$\psi$ so that~$\psi(\theta_0-\varepsilon)=\theta_0$
or~$\psi(\theta_0+\varepsilon)=\theta_0$ (we keep using the notation from Definition~\ref{wrinkledef})
we will reduce the number of rectangles of the diagram~$\Pi$
that are modified by the move. In particular, if~$m_{\theta_0}$ contains vertices of~$\partial\Pi$,
then an appropriate choice of~$\psi$ will allow us to keep the boundary of the diagram fixed,
that is, to have $\partial\Pi'=\partial\Pi$.

Similar homeomorphisms, also denoted by~$\psi$, appear in Definitions~\ref{half-wrinkle-def}, \ref{stab-move-def}, and~\ref{exchange-def}
for a like reason.

\begin{lemm}\label{wrinkle-creation-isotopy}
Let $\Pi\mapsto\Pi'$ be a wrinkle creation move, and let~$D$, $D'$ be canonic dividing
configurations of~$\widehat\Pi$ and~$\widehat\Pi'$, respectively.
Then there exists an isotopy bringing~$(\widehat\Pi,D)$ to~$(\widehat\Pi',D'')$, where~$D''$
is a dividing configuration on~$\widehat\Pi'$ weakly equivalent to~$D'$.
If, additionally, we have~$\partial\Pi=\partial\Pi'$, then the isotopy
can be chosen to be fixed on~$\partial\widehat\Pi$.

Moreover, $\widehat\Pi$ and~$\widehat\Pi'$ are isotopic in the class of
Giroux's convex surfaces with respect to either of the contact structures~$\xi_+$ and~$\xi_-$,
and if~$\partial\Pi=\partial\Pi'$, then the isotopy can be chosen to be fixed on~$\partial\widehat\Pi$.
\end{lemm}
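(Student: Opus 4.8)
The plan is to verify the claim locally, since a wrinkle creation move changes the diagram only in a small neighborhood $N=[\theta_0-2\varepsilon;\theta_0+2\varepsilon]\times\mathbb S^1$ of the meridian $m_{\theta_0}$ (in the vertical case; the horizontal case is symmetric). Outside a slightly larger neighborhood the diagrams $\Pi$ and $\Pi'$ literally coincide, and the surfaces $\widehat\Pi,\widehat\Pi'$ coincide outside the union of the tiles lying over $N$. So the isotopy will be chosen to be the identity outside this region, and all the work is to construct an explicit isotopy in the part of the surface lying over $N$ that carries the relevant tiling and canonic dividing configuration correctly. The bottom-right pair of pictures in Figure~\ref{wrinklemovefig} shows exactly what must happen to the dividing configuration: a pair of arcs $\mathring v_1\subset\delta_-$, $\mathring v_2\subset\delta_+$ that were incident to the single vertex-disc $\mathring m_{\theta_0}$ get replaced, after introducing the two new rectangles, by a local picture with two new intersection points of $\delta_+$ and $\delta_-$ and three vertex-discs; up to a homeomorphism of the surface homotopic to the identity this is precisely the statement that $D$ and the restriction of $D'$ to the modified region are weakly equivalent.

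Concretely, first I would fix the model. Using the parametrization of tiles from \cite[Subsection~2.3]{dp17}, the piece of $\widehat\Pi$ over $N$ is (up to $C^1$-diffeomorphism respecting the join coordinates) a union of ``vertical strips'' glued along the arcs $\widehat{(\theta_0,\varphi)}$; I would write down an explicit ambient isotopy of $\mathbb S^3$, supported in the solid-torus-like region sitting over $N$, that pushes the arc $\widehat m_{\theta_0}$ apart into the two arcs $\widehat{(\psi(\theta_0-\varepsilon),\cdot)}$-type edges and creates the two new tiles $[\psi(\theta_0-\varepsilon);\psi(\theta_0)]\times[\varphi_1;\varphi_2]$ and $[\psi(\theta_0);\psi(\theta_0+\varepsilon)]\times[\varphi_2;\varphi_1]$ with the correct orientations of their corners. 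Because the two new rectangles have their $\diagdown$- and $\diagup$-vertices prescribed, the images of $\delta_+$ and $\delta_-$ under this isotopy automatically satisfy the orientation-compatibility condition of Definition~\ref{abstract-dividing-def} in the new tiles, so the image $D''$ of $D$ is indeed a (possibly non-canonic) dividing configuration on $\widehat\Pi'$; by Proposition~2.4(iii)-type uniqueness, reformulated here in the ``weak'' language, $D''$ is weakly equivalent to the canonic $D'$ — one explicitly exhibits the two self-homeomorphisms of $\widehat\Pi'$ homotopic to identity straightening $D''_+$ to $\delta'_+$ and $D''_-$ to $\delta'_-$ inside the modified tiles. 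For the boundary-fixed refinement I would invoke the remark following Definition~\ref{wrinkledef}: choosing $\psi$ with $\psi(\theta_0-\varepsilon)=\theta_0$ (or $\psi(\theta_0+\varepsilon)=\theta_0$) makes the move touch only the rectangles on one side of $m_{\theta_0}$, so if $\partial\Pi=\partial\Pi'$ then no vertex of $\partial\Pi$ lies strictly between the modified meridians, hence the support of the isotopy can be taken disjoint from $\partial\widehat\Pi$.

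For the convexity statement, I would note that by \cite[Subsection~4.4]{dp17} every surface of the form $\widehat\Pi$ is $\xi_\pm$-convex with a canonic dividing configuration serving as a dividing set, and that Giroux's flexibility/isotopy extension theorem \cite{Gi} says two convex surfaces with isotopic dividing sets are isotopic through convex surfaces. Since the same local isotopy above can be performed inside an $I$-invariant neighborhood (the product of the modified region with a transverse interval) in which the characteristic foliation is controlled — this is the standard Giroux normal-form picture near a convex surface — the isotopy can be upgraded to one through $\xi_+$-convex surfaces, and by the $\xi_+\leftrightarrow\xi_-$ symmetry (reflection in the $\tau$ coordinate) also through $\xi_-$-convex surfaces; the boundary-fixed version follows by the same support argument, keeping in mind the convention from Subsection~\ref{general-conventions} that for surfaces with corners the isotopy fixes the tangent planes at boundary singularities.

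The main obstacle I expect is not the existence of \emph{some} local isotopy — the pictures make that evident — but checking that it can be realized by an \emph{ambient, piecewise-$C^1$} isotopy in the sense fixed in Subsection~\ref{general-conventions}, i.e.\ one keeping the surface in the class of surfaces with corners with continuously varying tangent planes, and simultaneously (i) respecting the join coordinate structure enough that the output is honestly $\widehat{\Pi'}$ for the specified $\Pi'$, and (ii) interacting correctly with the corners of $\widehat\Pi$ along $\partial\widehat\Pi$ when the boundary-fixed clause is in force. Making the new tiles appear with corners of exactly the prescribed combinatorial type, while the tangent plane stays continuous across the two brand-new edges $\widehat{(\psi(\theta_0),\varphi_1)}$ and $\widehat{(\psi(\theta_0),\varphi_2)}$, is the delicate bookkeeping; everything else is a routine ``do it in a collar'' argument.
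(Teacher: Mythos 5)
Your topological isotopy argument is sound and matches the paper's in spirit: both localize to a small region over the meridian $m_{\theta_0}$, observe that the surfaces coincide (or are $C^1$-close) outside it, and construct the isotopy inside. The paper phrases this slightly differently — it takes $U$ to be a neighborhood of $[\theta_0-\varepsilon;\theta_0+\varepsilon]*\mathbb S^1$ (a $3$-ball) and observes that both $\widehat\Pi\cap U$ and $\widehat\Pi'\cap U$ are $2$-discs with $\partial\widehat\Pi\cap U$, $\partial\widehat\Pi'\cap U$ each empty or a single arc, which yields both the isotopy and the boundary-fixed refinement. Your boundary-fixed clause is handled by a slightly stronger claim (that the isotopy support can be taken \emph{disjoint} from $\partial\widehat\Pi$); this is more than what the paper argues and not always true — the boundary may enter $U$, but since $\partial\widehat\Pi\cap U$ and $\partial\widehat\Pi'\cap U$ agree as single arcs, the isotopy can still be taken fixed there. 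Minor, but worth flagging.

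The genuine gap is in the Giroux-convexity half. You invoke a "Giroux's flexibility/isotopy extension theorem [Gi] saying two convex surfaces with isotopic dividing sets are isotopic through convex surfaces." No such theorem appears in \cite{Gi} at that level of generality: Giroux's flexibility (Realization Lemma) concerns altering the characteristic foliation on a \emph{fixed} convex surface, and the $I$-invariant-neighborhood picture only controls $C^1$-small perturbations of a fixed surface — it does not cover a modification that cuts the surface along $\widehat v_1\cup\widehat v_2$ and glues in a disc of two new tiles, which is exactly what a wrinkle creation does in the interesting region. Promoting a general topological isotopy between convex surfaces with matching dividing sets to one through convex surfaces is a substantive result requiring the discretization/bypass machinery (cf.\ Colin, Honda, etc.), and the paper deliberately avoids the general case by reducing to the case of a \emph{disc} meeting the $\xi_+$-dividing set in a \emph{single arc}: the only piece of $\delta_+$ involved is a subarc of $\mathring v_1$, and the modified region is a disc, so the disc-isotopy technique of \cite{gi1,mas1} applies. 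That reduction — "the modification is a disc whose intersection with the relevant dividing set is one arc" — is the missing structural observation in your write-up; without it the appeal to Giroux does not close the argument. Your $\xi_+\leftrightarrow\xi_-$ symmetry remark and the weak-equivalence accounting of $D''$ vs.\ $D'$ are fine as far as they go.
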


\begin{proof}
We use the notation from Definition~\ref{wrinkledef}. Let~$U$ be a small open neighborhood of
the domain~$[\theta_0-\varepsilon;\theta_0+\varepsilon]*\mathbb S^1\subset\mathbb S^3$. If~$U$ is chosen small enough,
then it is homeomorphic to a~three-ball and intersects each of the surfaces~$\widehat\Pi$ and~$\widehat\Pi'$
in a~two-disc. Indeed, the intersection~$\widehat\Pi\cap U$ is a small neighborhood
of the star graph in~$\widehat\Pi$ formed by the edges of the tiling that emanate from~$\widehat m_{\theta_0}$.
The intersection~$\widehat\Pi'\cap U$ is a small neighborhood, in~$\widehat\Pi'$,
of the union of the edges of the new tiling that emanate from~$\widehat m_{\theta_0+\varepsilon}$
and~$\widehat m_{\theta_0-\varepsilon}$ with the two new tiles.

One can also see that the surfaces~$\widehat\Pi$ and~$\widehat\Pi'$ are close to one another outside~$U$. Moreover, we
have~$\partial\widehat\Pi\setminus U=\partial\widehat\Pi'\setminus U$, and both intersections
$\partial\widehat\Pi\cap U$, $\partial\widehat\Pi'\cap U$ are either empty or consist of a single arc.
This implies the existence of the required isotopy from~$\widehat\Pi$ to $\widehat\Pi'$.

In order to see that an isotopy from~$\widehat\Pi$ to~$\widehat\Pi'$ can be performed in the class of Giroux's convex surfaces we
note that the only `large' modification of the surface~$\widehat\Pi$ occurs near the disc~$m_{\theta_0}*\mathbb S^1$. The modification consists in cutting~$\widehat\Pi$
along the arc $\widehat v_1\cup\widehat v_2$, then shifting the banks
of the cut off one another by a $C^1$-small deformation, and finally gluing a disc consisting of two new tiles
in the obtained hole.

Due to the symmetry between~$\xi_+$ and~$\xi_-$ it suffices to discuss the convexity issue with respect to~$\xi_+$.
We observe that the only portion of the dividing set~$\delta_+$ that is involved in
the `large' modification is a subarc of the arc~$\mathring v_1$, and we have just seen
that the portion of the surface affected by this modification is a disc. The technique of~\cite{gi1}
(see also~\cite{mas1}) allows to construct an isotopy for this modification that avoids non-convex surfaces.
\end{proof}

Now we direct our attention to the modification of a canonic dividing configuration that occurs as a
result of a wrinkle creation move. 

\begin{defi}\label{bigon-def}
Let~$D=(\delta_+,\delta_-)$ and~$D'=(\delta'_+,\delta'_-)$ be two dividing configurations on
a surface~$F$, such that they are isotopic outside of an open two-disc~$d\subset F$ and
satisfy the following conditions:
\begin{enumerate}
\item
each intersection $\delta_+\cap d$, $\delta_-\cap d$, $\delta'_+\cap d$, $\delta'_-\cap d$
is an arc;
\item
the arcs $\delta_+\cap d$ and $\delta_-\cap d$ are disjoint;
\item
the arcs $\delta'_+\cap d$ and $\delta'_-\cap d$ intersect transversely
in two points.
\end{enumerate}
Then we say that~$D'$ is obtained from~$D$ by \emph{a bigon creation},
and~$D$ from~$D'$ by \emph{a bigon reduction}. The disc in~$d$ enclosed by two
arcs one of which is contained in~$\delta'_+$ and the other in~$\delta'_-$
is called \emph{a bigon of~$(\delta_+',\delta_-')$}.
\end{defi}

One can see from Figure~\ref{wrinklemovefig} that the passage from~$D''$ to~$D'$ in
Lemma~\ref{wrinkle-creation-isotopy} is a composition of a bigon creation and an isotopy in~$\widehat\Pi'$.
The following Lemma shows that \emph{any} bigon creation can be `realized' by a wrinkle creation move.

\begin{lemm}\label{bigon-creation-lemm}
Let~$\Pi$ be a rectangular diagram of  a surface, and let $(\delta_+,\delta_-)$ be a canonic dividing configuration
of~$\widehat\Pi$. Let also~$(\delta'_+,\delta'_-)$ be a dividing configuration on~$\widehat\Pi$ obtained from~$(\delta_+,\delta_-)$
by a bigon creation. Then there exists a proper realization~$(\Pi',\phi)$ of~$(\delta'_+,\delta'_-)$
such that~$\Pi\mapsto\Pi'$ is a wrinkle creation move.
\end{lemm}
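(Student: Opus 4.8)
The plan is to reverse-engineer the wrinkle move from the given bigon creation by first locating, combinatorially, where the bigon is born. The bigon created by passing from $(\delta_+,\delta_-)$ to $(\delta'_+,\delta'_-)$ lives inside an open disc $d\subset\widehat\Pi$ disjoint from the rest of the dividing configuration; since $(\delta_+,\delta_-)$ is canonic, $d$ meets the tiling of $\widehat\Pi$ in a controlled way. First I would argue that, after an isotopy of $\widehat\Pi$ supported near $d$ (which changes neither the combinatorial type of $\Pi$ nor the weak-equivalence class of the configuration), we may assume $d$ is a small neighborhood of a single edge of the tiling, namely an edge of the form $\widehat v$ where $v$ is a vertex of $\Pi$, and that the two arcs $\delta'_+\cap d$, $\delta'_-\cap d$ cross the two ``halves'' of this edge. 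The point is that a bigon is a purely local object: two transverse arcs in a disc that are isotopic rel endpoints to two disjoint arcs — so its position is determined up to isotopy by which edge of the dual $1$-skeleton it straddles, and any edge can be used.

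Next I would identify the vertex $v$. Because $\mathring v$ is a single arc of $(\delta_+\cup\delta_-)\setminus(\delta_+\cap\delta_-)$, exactly one of the two cases $\mathring v\subset\delta_+$ or $\mathring v\subset\delta_-$ occurs; say $v$ is a $\diagdown$-vertex, lying on a meridian $m_{\theta_0}$, at longitude $\varphi_1$. To perform a \emph{vertical} wrinkle creation move in the sense of Definition~\ref{wrinkledef} we need a $\diagup$-vertex $v_2=(\theta_0,\varphi_2)$ on the \emph{same} meridian. If $m_{\theta_0}$ already carries such a $\diagup$-vertex we take it; otherwise $v$ is the unique vertex of $\Pi$ on $m_{\theta_0}$, and I would first apply a stabilization (or an auxiliary wrinkle, then reduction — but cleanest is to simply observe that Definition~\ref{wrinkledef} does not actually require $v_2$ to come from $\Pi$; it only requires that $m_{\theta_0}$ contain a $\diagdown$- and a $\diagup$-vertex). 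In fact the honest fix is: if $m_{\theta_0}$ has only the $\diagdown$-vertex $v$, the relevant edge $\widehat v$ is incident to tiles on one side only in a way that still allows the construction — alternatively reduce to the generic case by a preliminary combinatorial isotopy that places a suitable $\diagup$-vertex on $m_{\theta_0}$ without changing $\widehat\Pi$ up to the allowed isotopy. I would present the generic case in detail and remark that the degenerate case is handled identically.

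With $v_1=v$ and $v_2$ fixed on $m_{\theta_0}$, define $\Pi'$ by the vertical wrinkle creation move of Definition~\ref{wrinkledef} (choosing $\psi=\mathrm{id}$, say, and $\varepsilon$ as prescribed there). By Lemma~\ref{wrinkle-creation-isotopy}, $\widehat\Pi'$ is isotopic to $\widehat\Pi$, and tracking the canonic dividing configuration through that isotopy — exactly as illustrated in the bottom-right pair of pictures in Figure~\ref{wrinklemovefig} — shows that a canonic dividing configuration $D'_{\mathrm{can}}$ of $\widehat\Pi'$ pulls back, under the isotopy, to a configuration on $\widehat\Pi$ obtained from $(\delta_+,\delta_-)$ by exactly one bigon creation located at the edge $\widehat v_1$. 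Since all bigon creations at a given edge are equivalent up to isotopy supported near that edge, and since we arranged $(\delta'_+,\delta'_-)$ to have its bigon at $\widehat v$, the pulled-back configuration is equivalent to $(\delta'_+,\delta'_-)$. Composing, we obtain a homeomorphism $\phi\colon\widehat\Pi'\to\widehat\Pi$... — more precisely, taking $\phi$ to be the inverse of (a thickening to $\mathbb S^3$ of) this isotopy composed with the identification of $\widehat\Pi'$ with $\widehat\Pi'$, we get a realization $(\Pi',\phi)$ of $(\delta'_+,\delta'_-)$; it is \emph{proper} because the isotopy of Lemma~\ref{wrinkle-creation-isotopy} is ambient, hence extends to an orientation-preserving self-homeomorphism of $\mathbb S^3$. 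By construction $\Pi\mapsto\Pi'$ is a wrinkle creation move, as required.

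The step I expect to be the main obstacle is the reduction in the first paragraph: showing rigorously that \emph{any} bigon creation on $\widehat\Pi$ — whose disc $d$ could a priori sit anywhere and meet the tiling in a complicated pattern — can be normalized, by an isotopy not affecting the combinatorial type of $\Pi$ and not affecting the equivalence class of the target configuration, to one whose bigon straddles a single tiling edge $\widehat v$. The subtlety is that ``weak equivalence'' allows independent isotopies of $\delta'_+$ and $\delta'_-$, which is exactly the flexibility needed to slide the bigon along the dual $1$-skeleton to a standard location; making this precise (and checking it does not destroy admissibility or the canonic form away from $d$) is the technical heart of the argument.
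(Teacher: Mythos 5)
The normalization step in your first paragraph does not work, and this is not a detail you can fix downstream: it is where the argument fails. You claim that after an isotopy we may assume $d$ is a small neighborhood of a single tiling edge $\widehat v$, with both $\delta'_+\cap d$ and $\delta'_-\cap d$ crossing it. But a small neighborhood of a single tiling edge $\widehat v$ meets $\delta_+\cup\delta_-$ only in a subarc of $\mathring v$, which lies entirely in one of $\delta_+$, $\delta_-$ (depending on the type of $v$). Definition~\ref{bigon-def} requires $\delta_+\cap d$ and $\delta_-\cap d$ to \emph{both} be nonempty arcs, so $d$ must meet \emph{two} distinct dual arcs $\mathring v_1\subset\delta_+$ and $\mathring v_2\subset\delta_-$. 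The combinatorial location of a bigon creation is thus an ordered pair $(v_1,v_2)$ of a $\diagdown$-vertex and a $\diagup$-vertex, not a single edge; it is determined (up to the isotopies you are allowed) by $(\delta'_+,\delta'_-)$, and you are not free to slide it to a position of your choosing. Your slogan ``its position is determined by which edge of the dual $1$-skeleton it straddles'' is false for this reason.

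This error propagates. Because you have discarded $v_2$, you later try to conjure it by choosing \emph{any} $\diagup$-vertex on $m_{\theta_0}$, and you worry about the case where none exists (proposing to fix it by a stabilization, which would change the combinatorial type of $\partial\Pi$ and would not produce a realization of $(\delta'_+,\delta'_-)$ related to $\Pi$ by a single wrinkle creation move). Neither the arbitrary choice nor the worry is legitimate. The vertex $v_2$ is prescribed by the data, and the fact that $v_1$ and $v_2$ lie on a common occupied level is \emph{automatic}, not something to be arranged: the arcs $\delta_+\cap d\subset\mathring v_1$ and $\delta_-\cap d\subset\mathring v_2$ are disjoint arcs in the disc $d$, so the strip of $d$ between them lies in one connected component $\Omega$ of $\widehat\Pi\setminus(\delta_+\cup\delta_-)$; by the correspondence of Subsection~\ref{notation-subsec}, $\Omega=\mathring a$ for a single occupied level $a$, and $\mathring v_1,\mathring v_2\subset\partial\Omega$ forces $v_1,v_2\in a$. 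Once you know this, the rest of your outline (apply Definition~\ref{wrinkledef} with the given $v_1,v_2$, then use Lemma~\ref{wrinkle-creation-isotopy} to produce $\phi$ and check it sends $(\delta'_+,\delta'_-)$ to a canonic configuration) goes through with no normalization needed at all.
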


\begin{proof}
Let~$d$ be as in Definition~\ref{bigon-def}. Then the arcs~$\delta_+\cap d$ and~$\delta_-\cap d$
are subarcs of~$\mathring v_1$ and~$\mathring v_2$, respectively,
for some $\diagdown$-vertex~$v_1$ and $\diagup$-vertex~$v_2$ of~$\Pi$.
These two vertices lie on the same occupied level of~$\Pi$, since
there is clearly a connected component~$\Omega$ of~$\widehat\Pi\setminus(\delta_+\cup\delta_-)$
such that both~$\mathring v_1$ and~$\mathring v_2$
contribute to the boundary of~$\Omega$. Without loss of generality we
may assume that they lie on the same meridian~$m_{\theta_0}$. Then we can apply a wrinkle creation
move exactly as described in Definition~\ref{wrinkledef}. Let~$\Pi'$ be the obtained diagram,
and let~$\phi$ be a homeomorphism from~$\widehat\Pi$ to~$\widehat\Pi'$
that arises from the isotopy discussed in Lemma~\ref{wrinkle-creation-isotopy}.
Then~$\phi$ takes~$(\delta_+',\delta_-')$ to a dividing configuration on~$\widehat\Pi'$ equivalent
to a canonic one. The claim follows.
\end{proof}

\begin{defi}\label{half-wrinkle-def}
Let $\Pi$, $v_1$, $v_2$, and~$\psi$ be as in Definition~\ref{wrinkledef} and suppose
additionally that we have $v_1,v_2\in\partial\Pi$.
Let $\Pi'$ be obtained from $\Pi$ as described in Definition~\ref{wrinkledef} with the following one distinction:
\begin{itemize}
\item
if $\Pi$ has no rectangle of the form $[\theta_0;\theta_1]\times[\varphi';\varphi'']$ or
$[\theta_1;\theta_0]\times[\varphi';\varphi'']$ with $[\varphi';\varphi'']\subset[\varphi_1;\varphi_2]$,
we do not add the rectangle $[\psi(\theta_0);\psi(\theta_0+\varepsilon)]\times[\varphi_2;\varphi_1]$;
\item
if $\Pi$ has no rectangle of the form $[\theta_0;\theta_1]\times[\varphi';\varphi'']$ or
$[\theta_1;\theta_0]\times[\varphi';\varphi'']$ with $[\varphi';\varphi'']\subset[\varphi_2;\varphi_1]$,
we do not add the rectangle~$[\psi(\theta_0-\varepsilon);\psi(\theta_0)]\times[\varphi_1;\varphi_2]$.
\end{itemize}
One of these two cases must occur.

Then we say that the passage from $\Pi$ to $\Pi'$ is \emph{a vertical half-wrinkle creation move}, and the inverse operation is \emph{a vertical half-wrinkle reduction move}.

\emph{Horizontal half-wrinkle moves} are defined similarly with the roles of $\theta$ and $\varphi$
exchanged.
\end{defi}

A half-wrinkle move is illustrated in Figure~\ref{halfwrinklemovefig}.
\begin{figure}[ht]
\centerline{\includegraphics{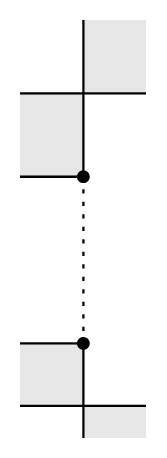}\put(-68,0){$\Pi$}\put(-45,0){$m_{\theta_0}$}\put(-37,60){$v_1$}\put(-37,127){$v_2$}%
\raisebox{3.5cm}{$\longleftrightarrow$}\includegraphics{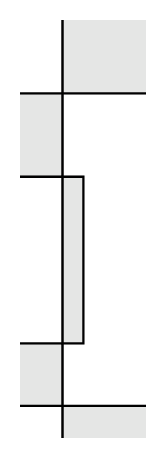}\put(-68,0){$\Pi'$}
\raisebox{3.5cm}{\begin{tabular}{ccc}
\includegraphics{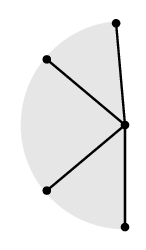}\put(-8,30){$\widehat v_1$}\put(-10,85){$\widehat v_2$}\put(-7,52){$\widehat m_{\theta_0}$}
&\raisebox{2cm}{$\longleftrightarrow$}&\includegraphics{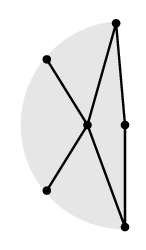}\\
\includegraphics{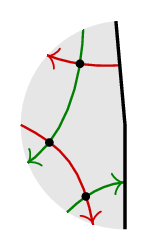}\put(-22,21){$\mathring v_1$}\put(-24,80){$\mathring v_2$}\put(-30,50){$\mathring m_{\theta_0}$}%
\put(-8,70){$\partial\widehat\Pi$}
&\raisebox{2cm}{$\longleftrightarrow$}&\includegraphics{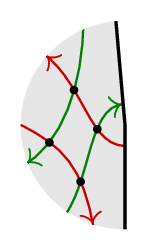}
\end{tabular}}}
\caption{A vertical half-wrinkle move}\label{halfwrinklemovefig}
\end{figure}
One can see that in the case $v_1,v_2\in\partial\Pi$ the respective wrinkle creation move can be decomposed into two half-wrinkle creation moves,
hence the name.

There is a full analogue of Lemma~\ref{wrinkle-creation-isotopy} for half-wrinkle moves.

\begin{lemm}\label{half-wrinkle-creation-isotopy}
Let $\Pi\mapsto\Pi'$ be a half-wrinkle creation move, and let~$D$, $D'$ be canonic dividing
configurations of~$\widehat\Pi$ and~$\widehat\Pi'$, respectively.
Then there exists a  $C^0$-isotopy that brings~$(\widehat\Pi,D)$ to~$(\widehat\Pi',D'')$, where~$D''$
is a dividing configuration on~$\widehat\Pi'$ weakly equivalent to~$D'$.
If, additionally, we have~$\partial\Pi=\partial\Pi'$, then the isotopy
can be chosen to be fixed on~$\partial\widehat\Pi$.

Moreover, $\widehat\Pi$ and~$\widehat\Pi'$ are isotopic in the class of
Giroux's convex surfaces with respect to either of the contact structures~$\xi_+$ and~$\xi_-$,
and if~$\partial\Pi=\partial\Pi'$, then the isotopy can be chosen to keep the boundary of the
surface arbitrarily $C^0$-close to~$\partial\widehat\Pi$.
\end{lemm}

We omit the proof, which is similar to that of Lemma~\ref{wrinkle-creation-isotopy}.

\begin{rema}
A small complication of the formulation of Lemma~\ref{half-wrinkle-creation-isotopy},
if compared to Lemma~\ref{wrinkle-creation-isotopy}, is due to the fact that the boundaries~$\partial\widehat\Pi$
and~$\partial\widehat\Pi'$ have singularities, some of which may need to be smoothed in order
to perform an isotopy from~$\widehat\Pi$ to~$\widehat\Pi'$ through Giroux's convex surfaces. This is because
the boundary framings induced by~$\Pi$ and~$\Pi'$ are now different, which was not the case in Lemma~\ref{wrinkle-creation-isotopy}.
\end{rema}

\begin{defi}
Let $(\delta_+,\delta_-)$ and~$(\delta'_+,\delta'_-)$ be two dividing configurations on a surface~$F$.
Assume that there is a closed disc~$b\subset F$ whose boundary consists of three arcs $\alpha,\beta,\gamma$,
such that~$\alpha\subset\delta'_+$, $\beta\subset\delta'_-$, and~$\gamma\subset\partial F$.
Such a disc will be called \emph{a half-bigon of~$(\delta'_+,\delta'_-)$}.

Assume also that there is an open neighborhood~$d$ of~$b$ in~$F$ homeomorphic to a half-disc such that the following holds:
\begin{enumerate}
\item
each intersection $\delta_+\cap d$, $\delta_-\cap d$, $\delta'_+\cap d$, $\delta'_-\cap d$
is a half-closed arc;
\item
the arcs $\delta_+\cap d$ and $\delta_-\cap d$ are disjoint;
\item
the arcs $\delta'_+\cap d$ and $\delta'_-\cap d$ intersect transversely
in a single point.\end{enumerate}
Then we say that the transition~$(\delta_+,\delta_-)\mapsto(\delta'_+,\delta'_-)$ is
\emph{a half-bigon creation} and the inverse one is \emph{a half-bigon reduction}.
\end{defi}

\begin{lemm}\label{half-bigon-creation-lemm}
Let~$\Pi$ be a rectangular diagram of  a surface, and let $(\delta_+,\delta_-)$ be a canonic dividing configuration
of~$\widehat\Pi$. Let also~$(\delta'_+,\delta'_-)$ be a dividing configuration obtained from~$(\delta_+,\delta_-)$
by a half-bigon creation. Then there exists a proper realization~$(\Pi',\phi)$ of~$(\delta'_+,\delta'_-)$
such that~$\Pi\mapsto\Pi'$ is a half-wrinkle creation move.
\end{lemm}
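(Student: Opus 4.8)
The plan is to mimic the proof of Lemma~\ref{bigon-creation-lemm}, adapting the argument to the half-disc setting. First I would use the characterization that in a canonic dividing configuration the only arcs of $\delta_+\cup\delta_-$ that meet a given edge $\widehat v$ of $\widehat\Pi$ is the single arc $\mathring v$, with $\mathring v\subset\delta_+$ precisely when $v$ is a $\diagdown$-vertex and $\mathring v\subset\delta_-$ precisely when $v$ is a $\diagup$-vertex. Since the half-bigon creation modifies $(\delta_+,\delta_-)$ only inside the half-disc $d$, and there $\delta_+\cap d$, $\delta_-\cap d$ are disjoint half-closed arcs, these arcs must be subarcs of $\mathring v_1$ and $\mathring v_2$ respectively for a unique $\diagdown$-vertex $v_1$ and a unique $\diagup$-vertex $v_2$ of $\Pi$; moreover, because $d$ is a neighbourhood of a half-bigon $b$ whose third side $\gamma$ lies on $\partial F$, both $v_1$ and $v_2$ must be vertices of $\partial\Pi$, which is exactly the extra hypothesis $v_1,v_2\in\partial\Pi$ needed to perform a half-wrinkle move.

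Next I would argue that $v_1$ and $v_2$ lie on a common occupied level of $\Pi$. The half-bigon $b$, being a disc in $\widehat\Pi$ bounded by $\alpha\subset\delta'_+$, $\beta\subset\delta'_-$, $\gamma\subset\partial\widehat\Pi$ whose interior is disjoint from $\delta_+\cup\delta_-$, is contained in the closure of a single connected component $\Omega$ of $\widehat\Pi\setminus(\delta_+\cup\delta_-)$; thus $\mathring v_1$ and $\mathring v_2$ both contribute to $\partial\Omega$, which is the closure of a component of $\widehat\Pi\setminus(\delta_+\cup\delta_-)$, hence (by the correspondence of Subsection~\ref{notation-subsec}) of the form $\mathring a$ for some occupied level $a$ of $\Pi$. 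Since edges emanating from a common vertex $\widehat a$ of $\widehat\Pi$ correspond to vertices of $\Pi$ on the level $a$, the vertices $v_1$ and $v_2$ lie on the same occupied level. Because this level carries a vertex of $\partial\Pi$ (namely $v_1$, which is a $\diagdown$-vertex of $\partial\Pi$, and $v_2$), and because one of the two alternatives in Definition~\ref{half-wrinkle-def} is forced by the position of the boundary arc $\gamma$, a half-wrinkle creation move is applicable at this level; without loss of generality assume it is a vertical one, at $m_{\theta_0}$.

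Then I would perform exactly the half-wrinkle creation move of Definition~\ref{half-wrinkle-def} at $v_1,v_2$ on $m_{\theta_0}$, obtaining $\Pi'$, and take for $\phi$ the homeomorphism $\widehat\Pi\to\widehat\Pi'$ coming from the $C^0$-isotopy of Lemma~\ref{half-wrinkle-creation-isotopy}. By that lemma $\phi$ carries a canonic dividing configuration of $\widehat\Pi$ to a dividing configuration on $\widehat\Pi'$ weakly equivalent to a canonic one, and inspection of Figure~\ref{halfwrinklemovefig} (bottom row) shows that the local modification of the configuration effected by the move is precisely a half-bigon creation inside a half-disc neighbourhood of $\widehat m_{\theta_0}$. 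Hence $\phi$ takes the given $(\delta'_+,\delta'_-)$, which differs from $(\delta_+,\delta_-)$ by the same local half-bigon creation, to a dividing configuration on $\widehat\Pi'$ equivalent to a canonic one; composing $\phi$ with the self-homeomorphism of $\widehat\Pi'$ realizing that equivalence (which extends over $\mathbb{S}^3$) gives the desired proper realization $(\Pi',\phi)$ of $(\delta'_+,\delta'_-)$.

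The main obstacle I anticipate is the bookkeeping needed to see that the half-bigon creation performed abstractly on $(\delta_+,\delta_-)$ matches, up to equivalence, the one produced by the half-wrinkle move — in other words, identifying the abstract half-disc $d$ with the concrete half-disc neighbourhood of $\widehat m_{\theta_0}$ in a way compatible with the isotopy of Lemma~\ref{half-wrinkle-creation-isotopy}. This is essentially a uniqueness statement for half-bigon creations supported in a given half-disc relative to a fixed pair of arcs $\mathring v_1,\mathring v_2$, and it is routine once one observes that a half-bigon creation is determined, up to isotopy fixing the complement of $d$, by the cyclic order in which $\alpha,\beta,\gamma$ bound $b$; but it requires care because of the corner of $\widehat\Pi$ lying on $\partial\widehat\Pi$ and the change of boundary framing noted in the Remark following Lemma~\ref{half-wrinkle-creation-isotopy}. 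The rest of the argument is a direct transcription of the proof of Lemma~\ref{bigon-creation-lemm}.
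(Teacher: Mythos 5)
Your proposal is correct and follows exactly the approach the paper intends: the paper omits this proof with the remark that it is ``similar to that of Lemma~\ref{bigon-creation-lemm},'' and you carry out precisely that adaptation, with the one genuinely new ingredient --- that the half-closed arcs $\delta_\pm\cap d$ touch $\partial F$, forcing $v_1,v_2\in\partial\Pi$, which is exactly the extra hypothesis in Definition~\ref{half-wrinkle-def} --- identified and handled correctly.
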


The proof, which is similar to that of Lemma~\ref{bigon-creation-lemm}, is omitted.

\begin{defi}\label{stab-move-def}
Let $\Pi$, $v_1$, $v_2$, and~$\psi$ be as in Definition~\ref{wrinkledef} except that $v_2\in m_{\theta_0}$ is not a vertex of $\Pi$ and,
moreover, $v_2$ does not belong to the boundary of any rectangle in~$\Pi$. Let $\Pi'$ be obtained from $\Pi$
by exactly the same modification as the one described in Definition~\ref{wrinkledef}. Then the passage
from $\Pi$ to $\Pi'$ is called \emph{a type I stabilization move} and the inverse one
\emph{a type I destabilization move}.

Similarly, if $v_2$ is as in Definition~\ref{wrinkledef}, and $v_1\in m_{\theta_0}$ does not belong to the boundary
of any rectangle, then the passage from $\Pi$ to $\Pi'$ is called \emph{a type II stabilization move} and
the inverse one \emph{a type II destabilization move}.

If the roles of $\theta$ and $\varphi$ coordinates are exchanged in this definition the obtained
moves are still called type~I or type~II (de)stabilization moves, respectively.
\end{defi}

An example of a (de)stabilization move is shown in Figure~\ref{boundarystabmovefig}.
\begin{figure}[ht]
\centerline{\includegraphics{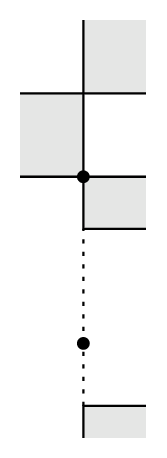}\put(-68,0){$\Pi$}\put(-45,0){$m_{\theta_0}$}\put(-37,47){$v_1$}\put(-37,140){$v_2$}%
\raisebox{3.5cm}{$\longleftrightarrow$}\includegraphics{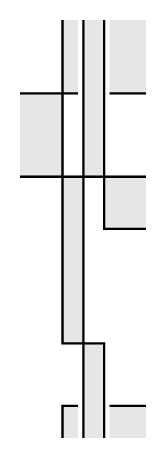}\put(-68,0){$\Pi'$}
\raisebox{3.5cm}{\begin{tabular}{ccc}
\includegraphics{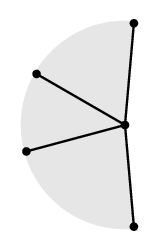}\put(-35,75){$\widehat v_2$}\put(-10,54){$\widehat m_{\theta_0}$}
&\raisebox{2cm}{$\longleftrightarrow$}&\includegraphics{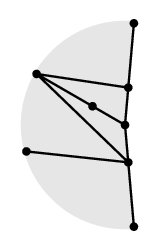}\\
\includegraphics{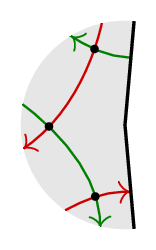}\put(-50,78){$\mathring v_2$}\put(-36,60){$\mathring m_{\theta_0}$}%
\put(-10,80){$\partial\widehat\Pi$}
&\raisebox{2cm}{$\longleftrightarrow$}&\includegraphics{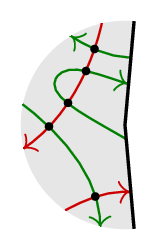}
\end{tabular}}}
\caption{A type II stabilization/destabilization moves}\label{boundarystabmovefig}
\end{figure}

\begin{lemm}\label{stab-lem}
Let $\Pi\mapsto\Pi'$ be a stabilization move, and let~$(\delta_+,\delta_-)$,
$(\delta_+',\delta_-')$ be canonic dividing configurations of~$\widehat\Pi$ and~$\widehat\Pi'$,
respectively. Then there is an isotopy that brings~$(\widehat\Pi,\delta_+)$
to~$(\widehat\Pi',\delta_+')$ if the stabilization is of type~I,
and~$(\widehat\Pi,\delta_-)$ to~$(\widehat\Pi',\delta_-')$ if the stabilization
is of type~II. Moreover, the isotopy can be chosen to keep the surface in the class of
Giroux's convex surfaces with respect to~$\xi_+$ if the stabilization is of type~I, and with
respect to~$\xi_-$ if the stabilization is of type~II.
\end{lemm}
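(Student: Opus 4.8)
The plan is to follow the proof of Lemma~\ref{wrinkle-creation-isotopy} almost verbatim, the only essential difference being that here we track a single dividing set --- $\delta_+$ for a type~I move, $\delta_-$ for a type~II one --- and that this dividing set is merely \emph{subdivided} by the move rather than acquiring a bigon.

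By the $\xi_+\leftrightarrow\xi_-$ symmetry it suffices to treat a type~I stabilization, and by interchanging the roles of $\theta$ and $\varphi$ we may assume it is vertical; we keep the notation of Definitions~\ref{wrinkledef} and~\ref{stab-move-def} and write $r_-=[\psi(\theta_0-\varepsilon);\psi(\theta_0)]\times[\varphi_1;\varphi_2]$ and $r_+=[\psi(\theta_0);\psi(\theta_0+\varepsilon)]\times[\varphi_2;\varphi_1]$ for the two new rectangles. First I would produce the isotopy of surfaces exactly as in Lemma~\ref{wrinkle-creation-isotopy}: choose a small open neighbourhood $U$ of $[\theta_0-\varepsilon;\theta_0+\varepsilon]*\mathbb S^1\subset\mathbb S^3$; for $U$ small it is a $3$-ball, $U\cap\widehat\Pi$ is a small neighbourhood in $\widehat\Pi$ of the star graph of tiling edges emanating from $\widehat m_{\theta_0}$ (hence a $2$-disc), and $U\cap\widehat\Pi'$ is a small neighbourhood of the edges emanating from $\widehat m_{\theta_0-\varepsilon}$ and $\widehat m_{\theta_0+\varepsilon}$ together with the two new tiles $\widehat r_-$, $\widehat r_+$, which is again a $2$-disc because $\widehat r_-$ and $\widehat r_+$ are attached to that star graph along arcs, without creating a handle. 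Outside $U$ the surfaces differ only by the reparametrisation $\psi$ of the $\theta$-coordinate near $\theta_0$, so there is an ambient isotopy, supported near $U$, carrying $\widehat\Pi$ to $\widehat\Pi'$ (and, unlike in Lemma~\ref{wrinkle-creation-isotopy}, no boundary constraint has to be respected).

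Next I would upgrade this isotopy so that it also carries $\delta_+$ onto a canonic dividing set $\delta_+'$ of $\widehat\Pi'$. For this I would spell out the effect of the move on the $\diagdown$-structure: the two copies of $v_1$ in $\Pi'$, namely $\bigl(\psi(\theta_0-\varepsilon),\varphi_1\bigr)$ and $\bigl(\psi(\theta_0+\varepsilon),\varphi_1\bigr)$, together with the new $\diagdown$-vertex $\bigl(\psi(\theta_0),\varphi_2\bigr)$, are the $\diagdown$-vertices of $r_-$ and $r_+$, and in $\widehat\Pi'$ they are joined through $\widehat r_-$ and $\widehat r_+$; consequently the arc $\mathring v_1\subset\delta_+$ is simply subdivided by the insertion of the two points $\mathring r_-,\mathring r_+\in\delta_+'\cap\delta_-'$, and $\delta_+$, viewed as a $1$-submanifold, is left unchanged up to isotopy --- no bigon is created in it. This is where the hypothesis that $v_2$ lies on the boundary of no rectangle of $\Pi$ enters, and it is precisely what distinguishes a stabilization from the wrinkle move of Lemma~\ref{wrinkle-creation-isotopy}, where a bigon \emph{is} created (the bottom row of Figure~\ref{wrinklemovefig}, with $v_2$ made free, is the local picture I have in mind). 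Since any two canonic dividing configurations of $\widehat\Pi'$ are equivalent, carrying $\delta_+$ to a canonic dividing set of $\widehat\Pi'$ is the first assertion. The convexity statement is then obtained as in Lemma~\ref{wrinkle-creation-isotopy}: the only ``large'' modification of $\widehat\Pi$ occurs near the disc $m_{\theta_0}$ and consists of cutting $\widehat\Pi$ along $\widehat v_1$, shifting the banks apart by a $C^1$-small deformation, and gluing in a disc made of the two new tiles; the only portion of $\delta_+$ involved is a subarc of $\mathring v_1$ and the affected part of the surface is a disc, so the technique of~\cite{gi1} (see also~\cite{mas1}) produces an isotopy through $\xi_+$-convex surfaces.

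The hard part will be the bookkeeping in the previous paragraph --- keeping precise track of which vertices, edges, and arcs of $\delta_+$ and $\delta_-$ are created, moved, or subdivided by the move, and verifying that $\delta_+$ is only subdivided while $\delta_-$ genuinely changes; it is routine but error-prone, and a careful local picture near $m_{\theta_0}$ (an analogue of Figure~\ref{wrinklemovefig} with $v_2$ free) makes it transparent. A minor additional point needing attention is the verification that $U\cap\widehat\Pi'$ is a disc, i.e.\ that attaching $\widehat r_-$ and $\widehat r_+$ to the edge star does not create a handle; this follows directly from the explicit description of $r_-$, $r_+$ and their attaching arcs.
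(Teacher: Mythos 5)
Your proof follows exactly the route the paper intends: the paper itself remarks that the argument is ``again similar to that of Lemma~\ref{wrinkle-creation-isotopy} and is omitted,'' and you correctly port the neighbourhood-$U$ isotopy argument, the $\xi_+\leftrightarrow\xi_-$ reduction, and the convexity argument. You also correctly identify the combinatorial difference that makes the statement about dividing sets stronger here: with $v_2$ free, the component of $\delta_+$ through $\mathring v_1$ is merely lengthened through the two new tiles (via $\mathring r_-$, $\mathring{(\psi(\theta_0),\varphi_2)}$, $\mathring r_+$), while $\delta_-$ gains a new boundary-to-boundary arc running through $\mathring r_-$, $\mathring{(\psi(\theta_0),\varphi_1)}$, $\mathring r_+$; no bigon is created because there is no pre-existing $\delta_-$-arc at $v_2$ for $\delta_+$ to link up with. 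So $\delta_+$ lands exactly on a canonic $\delta_+'$ up to isotopy, not merely up to weak equivalence.

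One point you gloss over is that, unlike the wrinkle case where $\partial\Pi=\partial\Pi'$ can be arranged, here the boundary link genuinely changes, so the ``convex isotopy'' of the last paragraph must move $\partial\widehat\Pi$. This is not a real obstruction --- the paper notes after the lemma that a type~I surface stabilization induces a type~I stabilization of $\partial\Pi$ as a rectangular diagram of a link, which preserves the $\xi_+$-Legendrian type of the boundary --- but the blanket invocation of~\cite{gi1}/\cite{mas1} should be accompanied by this observation, since those techniques as used in the proof of Lemma~\ref{wrinkle-creation-isotopy} are for discs with fixed Legendrian boundary, whereas here the boundary undergoes a (permitted) $\xi_+$-Legendrian isotopy. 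With that remark added the argument is complete and matches the paper's stated (but unwritten) proof.
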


The proof is again similar to that of Lemma~\ref{wrinkle-creation-isotopy} and is omitted. Note
that now the boundary of the surface is necessarily modified. This modification preserves
the equivalence class of~$\widehat{\partial\Pi}$ as a Legendrian
link with respect to~$\xi_+$ if the stabilization is of type~I, and with respect to~$\xi_-$ if the stabilization is of type~II.

Note also that if~$\Pi\mapsto\Pi'$ is a type~I (respectively, type~II)
stabilization of a rectangular diagram of a surface,
then~$\partial\Pi\mapsto\partial\Pi'$ is a type~I (respectively, type~II) stabilization of a rectangular diagram of a link
(in the generalized sense of~\cite{dyn06}).
Therefore, a~stabilization does always change the equivalence class of~$\widehat\Pi$ as a Giroux's convex surface
with respect to one of the contact structures~$\xi_-$ or~$\xi_+$, depending on whether it is of
type~I or type~II. This can be seen from the fact that the boundary link~$\partial\widehat\Pi$
undergoes a Legendrian stabilization with respect to one of the contact structures,
as well as from the fact that a new arc is added to the respective dividing set.

\begin{rema}
There are two more, naturally defined,
moves of rectangular diagrams of surfaces that induce a stabilization of the boundary.
They are shown in Figure~\ref{two-more-moves-fig}.
\begin{figure}[ht]
\includegraphics{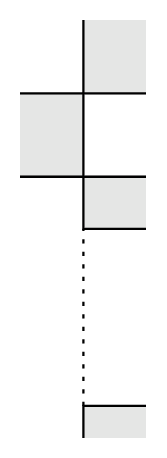}\raisebox{3.5cm}{$\longleftrightarrow$}\includegraphics{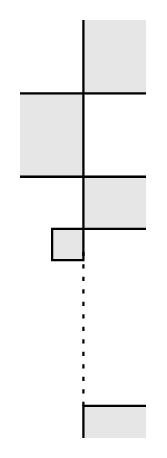}
\hskip2cm
\includegraphics{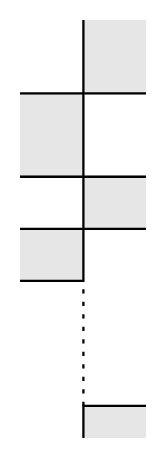}\raisebox{3.5cm}{$\longleftrightarrow$}\includegraphics{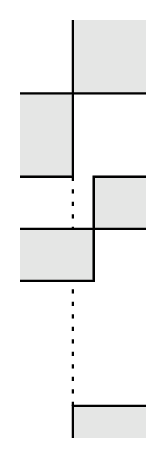}
\caption{Two more ways to stabilize the boundary}\label{two-more-moves-fig}
\end{figure}
The first one is an addition of a rectangle sharing exactly one vertex with the diagram.
The second move consists in a `splitting' of an occupied level containing an edge of
the boundary into two occupied levels close to the original one.
Though these moves look pretty simple we don't include them into the list of basic moves.
We leave it as an exercise to the reader to show explicitly that these moves can be decomposed
into basic moves.
\end{rema}

\subsection{Twisting a `rectangular' surface around the boundary}\label{twisting-subsec}
In \cite[Subsection~2.5]{dp17} we defined a framing of a cusp-free
link~$L$ in~$\mathbb S^3$ as a way, viewed up to smooth isotopy,
to attach a union of annuli (which should be
a surface with corners) to~$L$. Recall that a framing in our sense contains more information
than just the self-linking numbers of the components.
We also introduced in~\cite{dp17} the concept
of a framing of a rectangular diagram of a link as an ordering of each pair of
vertices that forms an edge of the diagram. When~$f$ is a framing we use
the signs~$<_f$ and~$>_f$ to denote the corresponding relation
(note that this is not an ordering, sometimes not even a partial ordering,
on the set all vertices of the diagram).

Let~$(R,f)$ be a framed rectangular diagram of a link, and let~$v$ be a vertex of~$R$.
Let also~$v_1$ and~$v_2$ be vertices of~$R$ such that~$\{v,v_1\}$ is a vertical edge of~$R$,
and~$\{v,v_2\}$ is a horizontal edge.
We call~$v$ \emph{a $\diagup$-vertex} (respectively, a $\diagdown$-vertex) of~$(R,f)$ if
$v_1<_fv<_fv_2$  or $v_1>_fv>_fv_2$ (respectively, $v_1<_fv>_fv_2$ or $v_1>_fv<_fv_2$).

If~$R$ is connected, then for any framing~$f$ of~$R$ there is a unique framing~$f'\ne f$ such
that~$(R,f)$ and~$(R,f')$ have the same set of~$\diagup$-vertices. This framing is defined
by~$u<_{f'}v\Leftrightarrow u>_fv$. We say that such~$f'$ is \emph{opposite} to~$f$.

Every rectangular diagram of a surface~$\Pi$ defines a framing on
the rectangular diagram of a link~$\partial\Pi$ through the rule given
in~\cite[Proposition~3]{dp17}. We call this framing \emph{the boundary
framing of~$\partial\Pi$ induced by~$\Pi$} and denote by~$f^\Pi$.
One can see that~$v$ is a $\diagup$-vertex (respectively, a $\diagdown$-vertex) of~$(\partial\Pi,f^\Pi)$
if and only if~$v$ is a $\diagup$-vertex (respectively, a $\diagdown$-vertex) of~$\Pi$.

When~$\Pi$ is a rectangular diagram of a surface and~$R$ is a rectangular diagram of a link such that~$R\subset\partial\Pi$,
we denote by~$\tb_+(R;\Pi)$ (respectively, by~$\tb_-(R;\Pi)$) the Thurston--Bennequin number~$\tb_+\bigl(\widehat R;\widehat\Pi\bigr)$
(respectively, $\tb_-\bigl(\widehat R;\widehat\Pi\bigr)$ (see \cite[Definitions~16 and~17]{dp17}). These numbers have a very simple combinatorial meaning: $-\tb_+(R;\Pi)$ (respectively,
$-\tb_-(R;\Pi)$) is one half of the number of~$\diagdown$-vertices (respectively, $\diagup$-vertices) of~$\Pi$ in~$R$.

\begin{prop}\label{twist-prop}
Let~$\Pi$ be a rectangular diagram of a surface, and let~$Q$ be a connected
component of the rectangular diagram of a link~$R=\partial\Pi$.
Let also~$F{}\subset\mathbb S^3$ be a compact surface with the following properties:
\begin{enumerate}
\item
$\partial F=\partial\widehat\Pi$;
\item
there is a tubular open neighborhood~$U$ of~$\widehat Q$ such that~$\overline U$ intersects
each of~$F$ and~$\widehat\Pi$ in an annulus, and we have
$F\setminus U=\widehat\Pi\setminus U$;
\item
at every point of~$\widehat Q$ the surface~$F$ is tangent either to~$\xi_+$ or~$\xi_-$.
\end{enumerate}

Then there exists a rectangular diagram of a surface~$\Pi'$ with the following properties:
\begin{enumerate}
\item
$\partial\Pi'=\partial\Pi$;
\item
there is a $C^0$-isotopy~$\phi$ from~$\widehat\Pi$ to~$\widehat\Pi'$
fixed on~$\partial\widehat\Pi$ and composed of two
isotopies: the first, $\phi'$, brings~$\widehat\Pi$ to~$F$, and the second, $\phi''$,
brings~$F$ to~$\widehat\Pi'$, such that the following holds:
\begin{enumerate}
\item
$\phi'$ is fixed outside~$U$;
\item
$\phi''$ is~$C^1$-smooth, it preserves the tangent plane to the surface at every point of~$\partial F$,
and for all~$t\in[0;1]$, $\phi''_t$ is~$C^1$-close to the identity outside~$U$;
\item
if~$\tb_+(Q;\Pi)<0$ (respectively, $\tb_-(Q;\Pi)<0$)
then~$\phi$ brings~$\delta_+$ (respectively, $\delta_-$) to an abstract
dividing set on~$\widehat\Pi'$ equivalent to~$\delta_+'$ (respectively, to~$\delta_-'$),
where~$(\delta_+,\delta_-)$ and~$(\delta_+',\delta_-')$ are canonic dividing configurations
of~$\widehat\Pi$ and~$\widehat\Pi'$, respectively.
\end{enumerate}
\end{enumerate}
\end{prop}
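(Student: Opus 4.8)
The plan is to reduce the statement to a purely local modification supported in the tubular neighborhood~$U$ of~$\widehat Q$, where everything can be described explicitly in terms of a rectangular diagram of an annulus together with a canonic dividing configuration. First I would observe that the hypotheses (i)--(iii) on~$F$ pin down~$F$ up to isotopy fixed outside~$U$: the intersection $F\cap\overline U$ is an annulus with the same boundary as $\widehat\Pi\cap\overline U$, and the tangency condition (iii) means that along~$\widehat Q$ the surface~$F$ differs from~$\widehat\Pi$ by a twist whose amount is controlled by which of $\xi_\pm$ it is tangent to; the relevant invariant is precisely the difference between the boundary framings, which in combinatorial terms is recorded by the $\diagdown$- and $\diagup$-vertices of~$\Pi$ lying on~$Q$, i.e.\ by $\tb_+(Q;\Pi)$ and $\tb_-(Q;\Pi)$. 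So the content is: a surface of the form $\widehat\Pi$ can be ``untwisted by one full turn along~$\widehat Q$'' by a combinatorial move on~$\Pi$ supported near~$Q$, provided the corresponding Thurston--Bennequin number is negative (which guarantees there is a $\diagdown$- or $\diagup$-vertex on~$Q$ to work with).

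Next I would construct~$\Pi'$ explicitly. Assuming $\tb_+(Q;\Pi)<0$, there is a $\diagdown$-vertex~$v$ of~$\Pi$ lying on~$Q$. Near~$v$ one can apply a sequence of the already-defined basic moves of Section~\ref{basic-moves-sec}---namely wrinkle/half-wrinkle creations and reductions together with exchange moves along~$Q$---whose net combinatorial effect is to cyclically permute the vertices of~$Q$ (a ``rotation'' of the rectangular knot~$Q$ by one vertex pair) while keeping $\partial\Pi$ combinatorially unchanged. Concretely, the diagram~$\Pi'$ is obtained by a finite composition of such local moves; since each of these individual moves keeps $\partial\Pi$ fixed (by the choice of the homeomorphism~$\psi$ as explained after Definition~\ref{wrinkledef}), we get $\partial\Pi'=\partial\Pi$. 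The $C^0$-isotopy~$\phi$ is then the concatenation of the isotopies furnished by Lemmas~\ref{wrinkle-creation-isotopy} and~\ref{half-wrinkle-creation-isotopy} for these moves; by the standard argument those isotopies can be taken fixed on $\partial\widehat\Pi$ and supported in an arbitrarily small neighborhood of the affected part of the diagram, which we arrange to lie inside~$U$. Splitting~$\phi$ as $\phi''\circ\phi'$ with $F$ in the middle is automatic once we know~$F$ is the unique (up to isotopy fixed outside~$U$) surface satisfying (i)--(iii): we let~$\phi'$ push $\widehat\Pi$ to~$F$ inside~$U$, and~$\phi''$ carry on to~$\widehat\Pi'$; the smoothness and tangent-plane assertions for~$\phi''$ come from arranging the remaining moves (those realized by half-wrinkle reductions smoothing the cusps of the boundary, cf.\ the Remark after Lemma~\ref{half-wrinkle-creation-isotopy}) to be performed by a $C^1$-small ambient deformation preserving the tangent plane along $\partial F$.

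Finally, for the dividing-set statement (2c), I would track the canonic dividing configuration through each elementary move using Lemmas~\ref{wrinkle-creation-isotopy}, \ref{half-wrinkle-creation-isotopy}, and~\ref{stab-lem}: each wrinkle or half-wrinkle creation changes~$\delta_+$ and~$\delta_-$ only by a bigon or half-bigon inside a disc, hence up to weak equivalence (and in particular preserves the equivalence class of $\delta_+$ taken alone, and that of $\delta_-$ taken alone). Composing these, $\phi$ carries $\delta_+$ to an abstract dividing set on $\widehat\Pi'$ equivalent to $\delta_+'$ in the case $\tb_+(Q;\Pi)<0$, and symmetrically for $\delta_-$ when $\tb_-(Q;\Pi)<0$. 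The main obstacle I anticipate is the second one: writing down the explicit finite sequence of basic moves that realizes the one-step rotation of~$Q$ while provably keeping $\partial\Pi$ fixed \emph{and} keeping the surface inside~$U$ throughout, and checking that the accumulated effect on the dividing configuration is exactly a chain of bigon/half-bigon moves rather than something that alters the isotopy class of $\delta_+$ (resp.\ $\delta_-$). This is where the hypothesis $\tb_\pm(Q;\Pi)<0$ is essential and where the bookkeeping is most delicate; once that local picture is set up, assembling~$\phi$, $\phi'$, $\phi''$ and verifying their stated regularity is routine.
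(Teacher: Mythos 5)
Your high-level intuition is right — the change happens locally near $\widehat Q$ and is realized by wrinkle-type moves, and the dividing-set claim follows by tracking those moves — but there is a genuine gap that the paper's proof handles by a different mechanism, and your description of the combinatorial move is not accurate.

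First, the "cyclic permutation of the vertices of $Q$" picture is wrong. The diagram $\partial\Pi=\partial\Pi'$ is literally unchanged and no permutation occurs; what actually happens in the paper's proof is a \emph{type-switch} of adjacent vertices on $Q$. A half-wrinkle creation move applied at an edge of $Q$ whose endpoints are a $\diagup$-vertex followed by a $\diagdown$-vertex turns them into a $\diagdown$-vertex followed by a $\diagup$-vertex (without changing $\partial\Pi$), and the paper measures progress by the "framing distance" $\sum_i |k_i|$, where $k_i\pi$ is the accumulated rotation of the tangent plane along the arc $\widehat v_i$ when deforming $\widehat\Pi$ to $F$; each half-wrinkle creation decrements this sum by one. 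Your proposal has no termination argument at all, and the cyclic-rotation heuristic does not supply one.

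Second, and more seriously, your construction works only when there is \emph{both} a $\diagup$-vertex and a $\diagdown$-vertex on $Q$ to pair up at an edge — in other words only when both $\tb_+(Q;\Pi)<0$ and $\tb_-(Q;\Pi)<0$. The statement must also cover $\tb_+(Q;\Pi)=0$ (resp.\ $\tb_-(Q;\Pi)=0$), in which case $Q$ carries only $\diagup$- (resp.\ $\diagdown$-) vertices and there is simply no $\diagdown$-vertex to apply your argument to. The paper handles this in Cases 4 and 5 by an entirely different construction: it attaches a narrow collar of tiles along $\widehat Q$ (the diagrams $\Pi_{\pm\varepsilon}$), shifting all occupied levels through $Q$ and adding a ring of thin rectangles, which increments or decrements all $k_i$ by one simultaneously. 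Crucially, this collar addition is \emph{not} a chain of bigon/half-bigon moves on the canonic dividing configuration: the new part of $\delta_+$ is a separating simple closed curve, so $\delta_+$ is \emph{not} preserved — which is exactly why conclusion (2c) is asserted only for the sign whose Thurston–Bennequin number is negative. Your claim that "each wrinkle or half-wrinkle creation ... preserves the equivalence class of $\delta_+$ taken alone, and that of $\delta_-$ taken alone" is correct for those moves, but applying it uniformly would give the (false) conclusion that \emph{both} $\delta_+$ and $\delta_-$ are always preserved; your scheme therefore cannot explain why the hypothesis $\tb_\pm(Q;\Pi)<0$ is needed in (2c), and you explicitly flag this as the "delicate bookkeeping" you haven't done. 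That bookkeeping is not a routine check — it requires introducing the collar construction, which is a genuinely new ingredient your proposal is missing.
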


\begin{proof}
It follows from the assumptions of the proposition that there exists a $C^0$-isotopy~$\psi:[0;1]\times\widehat\Pi\rightarrow\mathbb
S^3$ from~$\widehat\Pi$ to~$F$ such that~$\psi_t|_{\widehat\Pi\setminus U}=\mathrm{id}$ for all~$t\in[0;1]$.
Clearly, such an isotopy can be chosen to be $C^1$-smooth outside~$[0;1]\times\bigl(\widehat Q\cap(\mathbb S^1_{\tau=0}\cup
\mathbb S^1_{\tau=1})\bigr)$.

Let~$v_1,v_2,\ldots,v_n$ be all the vertices of~$Q$ numbered in the order they follow on~$Q$ (choose any
orientation of~$Q$ if no one is given), starting from an arbitrarily chosen vertex.
Their indices are regarded modulo~$n$, that is,~$v_{i+n}=v_i$.

For every vertex~$v$ of~$Q$ and each point~$p\in\interior(\widehat v)$, let~$\lambda(p)$
be the signed angle by which the tangent plane~$T_p\psi_t(\widehat\Pi)$ rotates around the arc~$\widehat v$
when~$t$ runs from~$0$ to~$1$.
The function~$\lambda$ can be extended continuously to the whole curve~$\widehat Q$.
Since the surfaces~$\widehat\Pi$ and~$F$ have the same tangent planes at any point~$p\in\widehat Q\cap\bigl(\mathbb S^1_{\tau=0}\cup
\mathbb S^1_{\tau=1}\bigr)$, the value of~$\lambda(p)$ at any such point is an integer multiple of~$\pi$.
Denote by~$k_i$ the integer such that~$\lambda(\text{the end point of }\widehat v_i)=k_i\pi$.

Denote by~$f$ the framing of~$Q$ corresponding to the admissible framing of~$\widehat Q$ induced by~$F$.
For this framing, a vertex~$v\in Q$ has type~`$\diagup$'
if~$F$ is tangent to~$\xi_+$ along~$\widehat v$, and type~`$\diagdown$' if~$F$ is tangent to~$\xi_-$ along~$\widehat v$.

The relative twist of~$\xi_-$ with respect to~$\xi_+$ along~$\widehat v$, $v\in\mathbb T^2$,
is equal to~$\pi$. Therefore,
$$k_i-k_{i-1}=\left\{\begin{aligned}
1&\text{ if }v_i\text{ is a $\diagup$-vertex of }(Q,f^\Pi)\text{ and a $\diagdown$-vertex of }(Q,f),\\
-1&\text{ if }v_i\text{ is a $\diagup$-vertex of }(Q,f)\text{ and a $\diagdown$-vertex of }(Q,f^\Pi),\\
0&\text{ if the types of $v$ as a vertex of }(Q,f)\text{ and }(Q,f^\Pi)\text{ coincide}.
\end{aligned}\right.$$

Now we proceed by induction in the sum~$\sum_{i=1}^n|k_i|$, which will be referred to as
\emph{the framing distance between~$\widehat\Pi$ and~$F$}. The induction base is trivial. Indeed,
if the framing distance between~$\widehat\Pi$ and~$F$ is zero,
then the isotopy~$\psi$ above can be chosen so that the tangent plane to the surface
be fixed at any point of~$\widehat Q$ during the isotopy. We simply take~$\Pi'=\Pi$.

Suppose that~$\sum_{i=1}^n|k_i|>0$. To make the induction step, we show how to find a rectangular
diagram of a surface~$\Pi_1$ such that the surface~$\widehat\Pi_1$ is $C^1$-close to~$\widehat\Pi$
outside~$U$, intersects~$\overline U$ in an annulus,
and has a smaller framing distance from~$F$ than~$\widehat\Pi$ has.
There are several cases to consider.

\medskip\noindent\emph{Case 1}:
there exist~$l\leqslant m$ such that
$k_l=k_{l+1}=\ldots=k_m>0$ and~$k_{l-1}=k_{m+1}=k_l-1$.

By construction, $v_l$ is a $\diagup$-vertex,
and~$v_{m+1}$ is a $\diagdown$-vertex of~$\Pi$. Therefore, there exists~$j\in[l;m]$
such that~$v_j$ is a $\diagup$-vertex, and~$v_{j+1}$ is a $\diagdown$-vertex of~$\Pi$.
By construction, the pair~$(v_j,v_{j+1})$ is an edge of~$Q$.

We can, therefore, apply a half-wrinkle creation move~$\Pi\mapsto\Pi_1$ so that the diagram is modified
in a small neighborhood of the occupied level of~$\Pi$ containing~$v_j$ and~$v_{j+1}$. Moreover,
we can ensure that the boundary of the diagram is not modified, $\partial\Pi_1=\partial\Pi$.
Then~$v_j$ will become a $\diagdown$-vertex, and~$v_{j+1}$ will become a $\diagup$-vertex of~$\Pi_1$.
One can see from this that the passage from~$\Pi$ to~$\Pi_1$ reduces the framing distance to~$F$ by~$1$.

The rectangle added to the diagram~$\Pi$ to obtain~$\Pi_1$
can be chosen
arbitrarily narrow, stretched in the direction of the respective edge of~$Q$. This
ensures that the corresponding tile is contained in~$U$, and that~$\widehat\Pi$ and~$\widehat\Pi_1$
are $C^1$-close to each other outside~$U$.
A $C^0$-isotopy from~$\widehat\Pi$ to~$\widehat\Pi_1$ arising in this way,
which perturbs the surfaces only slightly (in the $C^1$ sense) outside~$U$,
brings a canonic dividing configuration of~$\widehat\Pi$
to a dividing configuration weakly equivalent to a canonic dividing configuration of~$\widehat\Pi_1$
(see Lemma~\ref{half-wrinkle-creation-isotopy}).

\medskip\noindent\emph{Case 2}:
there exist~$l\leqslant m$ such that
$k_l=k_{l+1}=\ldots=k_m<0$ and~$k_{l-1}=k_{m+1}=k_l+1$.

This case is symmetric to the previous one and is left to the reader.

\medskip\noindent\emph{Case 3}:
all~$k_i$ are equal, and we have~$\tb_+(Q;\Pi),\tb_-(Q;\Pi)<0$.

If all~$k_i$ are positive, we find~$j$ such that~$v_j$ is a $\diagup$-vertex, and~$v_{j+1}$ is a $\diagdown$-vertex of~$\Pi$,
and then proceed as in Case~1. If all~$k_i$'s are negative, we
find~$j$ such that~$v_j$ is a $\diagdown$-vertex, and~$v_{j+1}$ is a $\diagup$-vertex of~$\Pi$,
and then proceed symmetrically to Case~1.

\medskip\noindent\emph{Case 4}:
all~$k_i$ are equal, and we have~$\tb_+(Q;\Pi)=0$.

In this case, $Q$ consists only of $\diagup$-vertices of~$\Pi$.

For an~$\varepsilon>0$ such that~$\varepsilon$ is smaller than the distance between any two parallel occupied levels of~$\Pi$,
we define rectangular diagrams of a surface~$\Pi_\varepsilon$ and~~$\Pi_{-\varepsilon}$ as follows.

To obtain~$\Pi_\varepsilon$, first, shift all occupied levels, vertical and horizontal, containing vertices of~$Q$ by~$-\varepsilon$.
Then, for any~$u=(\theta_1,\varphi_1)$ and~$v=(\theta_2,\varphi_2)$ such that~$\{u,v\}$ is an edge of~$Q$
and~$u<_{f^\Pi}v$, add the rectangle~$[\theta_1-\varepsilon,\theta_1]\times[\varphi_2,\varphi_1-\varepsilon]$
to the diagram
if~$\{u,v\}$ is a vertical edge of~$Q$, and the rectangle~$[\theta_2,\theta_1-\varepsilon]\times[\varphi_1-\varepsilon,\varphi_1]$
if~$\{u,v\}$ is a horizontal edge of~$Q$.
\begin{figure}[ht]
\includegraphics[scale=0.7]{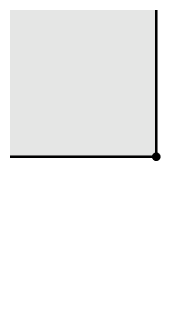}\put(-14,44){$v$}
\raisebox{48pt}{$\longrightarrow$}
\includegraphics[scale=0.7]{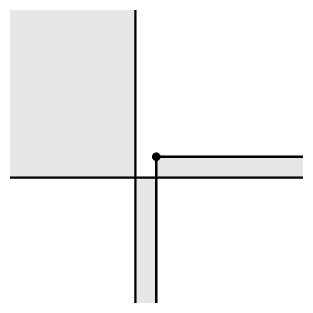}\put(-54,57){$v$}
\hskip1cm
\includegraphics[scale=0.7]{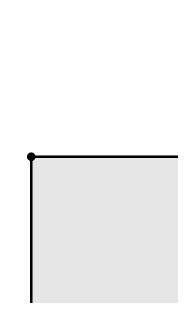}\put(-55,57){$v$}
\raisebox{48pt}{$\longrightarrow$}
\includegraphics[scale=0.7]{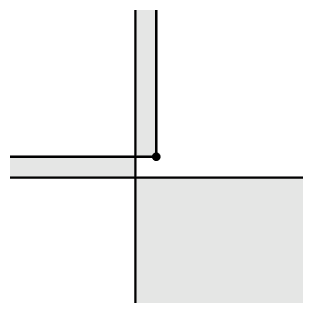}\put(-50,50){$v$}
\caption{Twisting~$\widehat\Pi$ around~$\widehat Q\subset\partial\widehat\Pi$ in the case~$\tb_+(Q;\Pi)=0$}\label{twist-tb=0}
\end{figure}
Figure~\ref{twist-tb=0} shows how the diagram is changed near a vertex~$v$ of~$Q$.

Similarly, to obtain~$\Pi_{-\varepsilon}$, first, shift all occupied levels, vertical and horizontal, containing vertices of~$Q$ by~$\varepsilon$.
Then for any~$u=(\theta_1,\varphi_1)$ and~$v=(\theta_2,\varphi_2)$ such that~$\{u,v\}$ is an edge of~$Q$
and~$u<_{f^\Pi}v$, add the rectangle~$[\theta_1,\theta_1+\varepsilon]\times[\varphi_2+\varepsilon,\varphi_1]$
to the diagram
if~$\{u,v\}$ is a vertical edge of~$Q$, and the rectangle~$[\theta_2+\varepsilon,\theta_1]\times[\varphi_1,\varphi_1+\varepsilon]$
if~$\{u,v\}$ is a horizontal edge of~$Q$.

One can see that if~$\varepsilon$ is small, then each of the surfaces~$\widehat\Pi_\varepsilon$
and~$\widehat\Pi_{-\varepsilon}$ is $C^1$-close to a surface
obtained from~$\widehat\Pi$ by attaching a narrow collar along~$\widehat Q$, and this collar has the form
of a union of tiles which correspond to the narrow rectangles added to the diagram.
For one of the passages~$\Pi\mapsto\Pi_\varepsilon$ and~$\Pi\mapsto\Pi_{-\varepsilon}$
all~$k_i$'s are incremented by~$1$, and for the other, all~$k_i$'s are dropped by~$1$.
So, the framing distances from the corresponding surfaces to~$F$ are changed by~$n$ and~$-n$, respectively.
We choose for~$\Pi_1$ the one of the two diagrams~$\Pi_\varepsilon$ and~$\Pi_{-\varepsilon}$ from which the framing distance to~$F$ is smaller. This choice depends on the orientation of~$Q$
and the framing~$Q^\Pi$.

Denote by~$(\delta^0_+,\delta^0_-)$ a canonic dividing configuration of the collar
added to~$\widehat\Pi$ to obtain~$\widehat\Pi_1$.
One can see that~$\delta^0_+$ is a separating simple closed curve, whereas~$\delta^0_-$
consists of non-separating arcs that prolong some connected components of~$\delta_-$.
Therefore, if~$(\delta_{1+},\delta_{1-})$ is a canonic dividing configuration of~$\widehat\Pi_1$,
then a $C^0$-small deformation that takes~$\widehat\Pi$ to~$\widehat\Pi_1$
also takes~$\delta_-$ to an abstract dividing set equivalent to~$\delta_{1-}$.

\medskip\noindent\emph{Case 5}:
all~$k_i$ are equal, and we have~$\tb_-(Q;\Pi)=0$.

This case is symmetric to Case~4.

The induction step follows.\end{proof}

\begin{rema}
In addition to the assertion of Proposition~\ref{twist-prop}, one can show
by means of the general theory of Giroux's convex surfaces that there
exists an isotopy from~$\widehat\Pi$ to~$\widehat\Pi'$ in the class of Giroux's
convex surfaces with respect to~$\xi_+$ if~$\tb_+(Q;\Pi)<0$, and with respect to~$\xi_-$ if~$\tb_-(Q;\Pi)<0$.
If both conditions hold, this fact also follows from Lemmas~\ref{wrinkle-creation-isotopy} and~\ref{half-wrinkle-creation-isotopy}
and the reasoning in the proof of Proposition~\ref{twist-prop}. If only one of the conditions holds,
one can construct a decomposition of the collar addition illustrated in Figure~\ref{twist-tb=0} into
basic moves excluding type~I (respectively, type~II) moves if~$\tb_+(Q;\Pi)=0$
(respectively, if~$\tb_-(Q;\Pi)=0$), and then apply the respective lemmas from this section.
\end{rema}

\subsection{Exchange moves}

Lemmas~\ref{bigon-creation-lemm} and~\ref{half-bigon-creation-lemm} say, vaguely speaking,
that any (half-)bigon creation in a canonic dividing configuration can be realized by applying a (half-)wrinkle move
to the corresponding diagram. Here we consider the inverse operations, (half-)bigon reductions, which require
more care.

If a canonic dividing configuration has a bigon or a half-bigon, this does not necessarily imply that
the (half-)bigon can be immediately reduced by a (half-)wrinkle move. One of the obstructions
comes from the fact that the two new rectangles created as a result of a wrinkle creation
(or one rectangle in the case of a half-wrinkle creation) are thin in the sense of the following definition.

\begin{defi}
A rectangle $r=[\theta_1;\theta_2]\times[\varphi_1;\varphi_2]$ of a rectangular diagram of a surface~$\Pi$
is called \emph{vertically thin} (respectively, \emph{horizontally thin})
if there are no vertical occupied levels of~$\Pi$ in the domain~$\theta\in(\theta_1;\theta_2)$
(respectively, in the domain~$\varphi\in(\varphi_1;\varphi_2)$).
\end{defi}

If a canonic dividing configuration of~$\widehat\Pi$ has a bigon, but the rectangles
corresponding to the bigon's corners are not thin, a wrinkle reduction reducing this bigon is not applicable to~$\Pi$.
Whether or not a rectangle is thin cannot, in general, be read off the combinatorial structure
of the canonic dividing configuration. However, some rectangles can be \emph{made}
thin by exchange moves, which are defined below, without modification of
that structure. Exchange moves are, by definition, neutral.

\begin{defi}\label{exchange-def}
Let $\Pi$ be a rectangular diagram of a surface, and let $\theta_1,\theta_2,\theta_3,\varphi_1,\varphi_2\in\mathbb S^1$ be such
that:
\begin{enumerate}
\item
we have $\theta_2\in(\theta_1;\theta_3)$;
\item
the rectangles $r_1=[\theta_1;\theta_2]\times[\varphi_1;\varphi_2]$ and $r_2=[\theta_2;\theta_3]\times[\varphi_2;\varphi_1]$
contain no vertices of $\Pi$;
\item
the vertices of $r_1$ and $r_2$ are disjoint from the rectangles of~$\Pi$.
\end{enumerate}
Let $f:\mathbb S^1\rightarrow\mathbb S^1$ be a map that is identical on $[\theta_3;\theta_1]$, and exchanges
the intervals $(\theta_1;\theta_2)$ and $(\theta_2;\theta_3)$:
$$f(\theta)=\left\{\begin{aligned}\theta-\theta_2+\theta_3,&\text{ if }\theta\in(\theta_1,\theta_2],\\
\theta-\theta_2+\theta_1,&\text{ if }\theta\in(\theta_2,\theta_3).
\end{aligned}\right.$$
Choose a self-homeomorphism~$\psi$ of~$\mathbb S^1$ identical on~$[\theta_3;\theta_1]$, and let
$$\Pi'=\bigl\{[\\\psi(f(\theta'));\psi(f(\theta''))]\times[\varphi';\varphi'']:[\theta';\theta'']\times[\varphi';\varphi'']\in\Pi\bigr\}.$$
Then we say that the passage from $\Pi$ to $\Pi'$, or the other way, is \emph{a vertical exchange move}.
An example is shown in Figure~\ref{exchangemovefig}.

\emph{A horizontal exchange move} is defined similarly with the roles of $\theta$ and $\varphi$ exchanged.
\end{defi}
\begin{figure}[ht]
\includegraphics{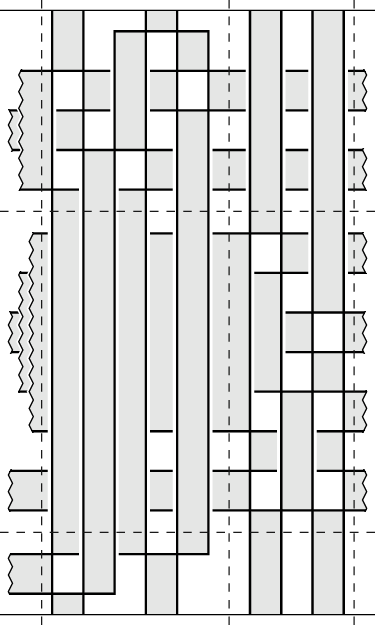}\put(-164,-10){$\theta_1$}\put(-73,-10){$\theta_2$}\put(-13,-10){$\theta_3$}%
\put(-195,42){$\varphi_1$}\put(-195,197){$\varphi_2$}
\hskip1cm\raisebox{145pt}{$\longleftrightarrow$}\hskip1cm
\includegraphics{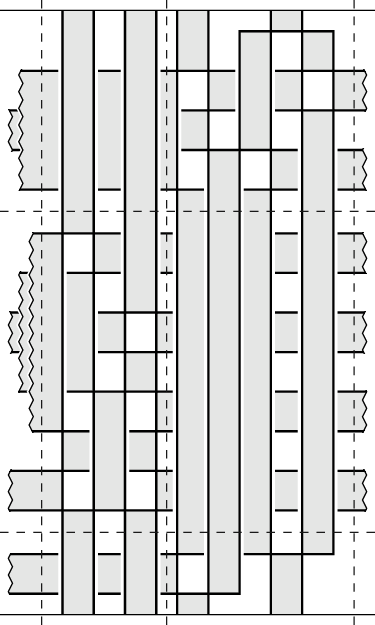}\put(-164,-10){$\theta_1$}\put(-130,-10){$\psi(\theta_1-\theta_2+\theta_3)$}\put(-13,-10){$\theta_3$}%
\put(-195,42){$\varphi_1$}\put(-195,197){$\varphi_2$}
\caption{An exchange move}\label{exchangemovefig}
\end{figure}

The following lemma is obvious.

\begin{lemm}\label{exchange-means-exchange-lem}
If~$\Pi\mapsto\Pi'$ is an exchange move of rectangular diagrams of a surface,
then the rectangular diagrams of a link~$\partial\Pi$ and~$\partial\Pi'$ are connected
by a finite sequence of exchange moves.
\end{lemm}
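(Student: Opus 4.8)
First, the plan is to read off the combinatorial effect of the move on the boundary diagram; I treat a vertical exchange move $\Pi\mapsto\Pi'$, the horizontal case following by exchanging the roles of $\theta$ and $\varphi$. Note that $m_{\theta_2}$ is not an occupied meridian of $\Pi$: indeed $\{\theta_2\}\times\mathbb S^1\subset r_1\cup r_2$, and $r_1\cup r_2$ contains no vertex of $\Pi$; hence $m_{\theta_2}$ is not an occupied meridian of $\partial\Pi$ either. Since the construction of the boundary of a rectangular diagram of a surface is combinatorial, $\partial\Pi'$ is obtained from $\partial\Pi$ by applying the substitution $(\theta,\varphi)\mapsto(\psi(f(\theta)),\varphi)$ to every vertex. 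The bijection $\psi\circ f$ is the identity on $[\theta_3;\theta_1]$ and carries $(\theta_1;\theta_2)$ and $(\theta_2;\theta_3)$ orientation-preservingly onto, respectively, the right and the left part of the arc $(\theta_1;\theta_3)$. Thus, writing $m_{a_1},\dots,m_{a_k}$ for the meridians occupied by $\partial\Pi$ inside $(\theta_1;\theta_2)$ and $m_{b_1},\dots,m_{b_l}$ for those occupied inside $(\theta_2;\theta_3)$, both lists in the cyclic order in which the meridians follow, the diagram $\partial\Pi'$ differs from $\partial\Pi$ only in that the two consecutive blocks $(m_{a_1},\dots,m_{a_k})$ and $(m_{b_1},\dots,m_{b_l})$ have traded places, each keeping its internal order, with all other vertices of $\partial\Pi$ left fixed.

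Next I extract the consequence of the hypothesis that $r_1$ and $r_2$ contain no vertex of $\Pi$. Because the closed rectangle $r_1=[\theta_1;\theta_2]\times[\varphi_1;\varphi_2]$ contains no vertex of $\Pi$, every vertex of $\Pi$ whose $\theta$-coordinate lies in $(\theta_1;\theta_2)$ has its $\varphi$-coordinate in the open complementary arc $(\varphi_2;\varphi_1)$; symmetrically, emptiness of $r_2=[\theta_2;\theta_3]\times[\varphi_2;\varphi_1]$ puts the $\varphi$-coordinate of every vertex of $\Pi$ with $\theta$-coordinate in $(\theta_2;\theta_3)$ into $(\varphi_1;\varphi_2)$. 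As the vertices of $\partial\Pi$ are among those of $\Pi$, the vertical edge of $\partial\Pi$ carried by any $m_{a_i}$ has both endpoints in $(\varphi_2;\varphi_1)$, while the one carried by any $m_{b_j}$ has both endpoints in the disjoint arc $(\varphi_1;\varphi_2)$. Hence for all $i,j$ the endpoints of these two edges lie in disjoint arcs of $\mathbb S^1$, so the pair of vertical edges on $m_{a_i}$ and $m_{b_j}$ is unlinked; this is the non-trivial half of the condition for an exchange move of rectangular diagrams of a link to interchange $m_{a_i}$ and $m_{b_j}$, the other half being neighbourliness, which is addressed below.

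Finally I realize the block exchange described above as a composition of such elementary exchange moves. The permutation carrying $(m_{a_1},\dots,m_{a_k},m_{b_1},\dots,m_{b_l})$ to $(m_{b_1},\dots,m_{b_l},m_{a_1},\dots,m_{a_k})$ is a product of transpositions of adjacent meridians, each interchanging one of the $m_{a_i}$ with one of the $m_{b_j}$: move $m_{b_1}$ leftwards past $m_{a_k}$, then past $m_{a_{k-1}}$, and so on down to $m_{a_1}$; then do the same with $m_{b_2}$, then with $m_{b_3}$, and so on. At each step the two meridians to be interchanged are neighbours (no occupied meridian lies strictly between them), and interchanging two meridians never changes which $\varphi$-coordinates occur on a given meridian, so the unlinkedness noted above persists throughout and every step is an admissible vertical exchange move of rectangular diagrams of a link. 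The composition of these $kl$ moves takes $\partial\Pi$ to $\partial\Pi'$ (up to the reparametrization freedom inherent in an exchange move), which proves the lemma.

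I expect the only point genuinely requiring an argument to be the short verification in the second step that the emptiness of $r_1$ and $r_2$ forces the unlinkedness condition needed for an exchange move of link diagrams; the rest is bookkeeping about cyclic orders of occupied meridians.
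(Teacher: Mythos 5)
The paper states this lemma without proof, declaring it obvious immediately before the statement, so there is no proof in the paper to compare against. Your argument is correct and is the natural one: read off that $\partial\Pi'$ is obtained from $\partial\Pi$ by exchanging the two blocks of occupied meridians in $(\theta_1;\theta_2)$ and $(\theta_2;\theta_3)$ (with $m_{\theta_2}$ unoccupied), observe that the emptiness of $r_1$ and $r_2$ forces every vertex with $\theta$-coordinate in $(\theta_1;\theta_2)$ to have $\varphi$-coordinate in $(\varphi_2;\varphi_1)$ and vice versa so the two families of vertical edges lie in disjoint arcs, and then realize the block exchange as a bubble-sort of $kl$ adjacent transpositions, each of which is a legal exchange move because unlinkedness persists (exchange moves never alter $\varphi$-coordinates). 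This is surely what the authors intend the reader to see.
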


\begin{lemm}\label{exchange-move-equivalence-lem}
Let $\Pi\mapsto\Pi'$ be an exchange move, and let~$D,D'$ be canonic dividing
configurations of~$\widehat\Pi$ and~$\widehat\Pi'$, respectively.
Then there exists an isotopy bringing~$(\widehat\Pi,D)$ to~$(\widehat\Pi',D')$.
Moreover, $\widehat\Pi$ and~$\widehat\Pi'$ are isotopic in the class of
Giroux's convex surfaces with respect to either of the contact structures~$\xi_+$ and~$\xi_-$.
\end{lemm}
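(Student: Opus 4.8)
The plan is to follow closely the model established by Lemma~\ref{wrinkle-creation-isotopy}, since an exchange move is a more localized modification than a wrinkle move. First I would localize the move: by Definition~\ref{exchange-def}, the passage from $\Pi$ to $\Pi'$ changes the diagram only inside the slab $[\theta_1;\theta_3]\times\mathbb S^1$, and one checks that $\widehat\Pi$ and $\widehat\Pi'$ coincide outside a small neighborhood $U$ of the solid region $[\theta_1;\theta_3]*\mathbb S^1\subset\mathbb S^3$. Since $r_1$ and $r_2$ contain no vertices of $\Pi$ and their vertices are disjoint from the rectangles of $\Pi$, the intersection $\widehat\Pi\cap U$ is a union of tiles and partial tiles fibered in the $\theta$-direction over the arc $[\theta_1;\theta_3]$, glued along some horizontal edges; the same is true for $\widehat\Pi'\cap U$, and the two pieces differ only by the reparametrization $f$ of the $\theta$-coordinate composed with $\psi$. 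Because $f$ (extended by $\psi$) is isotopic to the identity through self-homeomorphisms of $\mathbb S^1$ fixing $[\theta_3;\theta_1]$ pointwise, this reparametrization is realized by an ambient isotopy supported in $U$; this gives the desired isotopy from $(\widehat\Pi,D)$ to $(\widehat\Pi',D)$, and then we compose with an isotopy of $\widehat\Pi'$ taking $D$ to the chosen canonic configuration $D'$, which exists by part~(iii) of Proposition~1.1 (any two canonic dividing configurations of a surface of the form $\widehat\Pi'$ are equivalent). Note that here — unlike in Lemma~\ref{half-wrinkle-creation-isotopy} — no bigon or half-bigon is created or destroyed, so the combinatorial type of the canonic configuration is genuinely unchanged and we get equivalence, not merely weak equivalence.

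For the convexity statement I would, by the $\xi_+\leftrightarrow\xi_-$ symmetry, treat only $\xi_+$. The key point is that the isotopy just constructed is supported in the ball $U$ and there it acts on the disc (or union of discs) $\widehat\Pi\cap U$ essentially by sliding along $\theta$-levels. The characteristic foliation and the dividing set $\delta_+$ on $\widehat\Pi$ restricted to $U$ have a standard product form with respect to $\xi_+$, inherited from the standard form of $\xi_+$ on a neighborhood of $[\theta_1;\theta_3]*\mathbb S^1$; the exchange reparametrization preserves this product structure up to a $C^1$-small perturbation. Hence, just as in the last paragraph of the proof of Lemma~\ref{wrinkle-creation-isotopy}, the technique of~\cite{gi1} (or~\cite{mas1}) applies to produce an isotopy realizing this local modification through $\xi_+$-convex surfaces, and outside $U$ the surface is not moved at all.

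The main obstacle — really the only delicate point — will be verifying that the local picture inside $U$ is as clean as claimed, i.e. that $\widehat\Pi\cap U$ is a disjoint union of sub-tiles fibered over $[\theta_1;\theta_3]$ with no vertices of $\widehat\Pi$ (vertices of the tiling, not of $\Pi$) interfering, so that the $\theta$-reparametrization lifts to an ambient isotopy without having to push any vertex of $\widehat\Pi$ across $\mathbb S^1_{\tau=0}$ or $\mathbb S^1_{\tau=1}$. This follows from conditions (2) and~(3) of Definition~\ref{exchange-def}, but spelling out exactly which pieces of $\widehat\Pi$ meet $U$ and why they form a $\theta$-product requires a short case analysis on how rectangles of $\Pi$ can overlap the slab $[\theta_1;\theta_3]\times\mathbb S^1$. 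Once that bookkeeping is done, both assertions are immediate, and I would leave the routine verification to the reader much as the analogous lemmas in this subsection do.
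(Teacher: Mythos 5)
There is a genuine gap. Your first paragraph asserts that the passage from $\widehat\Pi$ to $\widehat\Pi'$ is realized by an ambient isotopy of $\mathbb S^3$ supported in $U$ that just slides $\theta$-levels, because ``$f$ (extended by $\psi$) is isotopic to the identity through self-homeomorphisms of $\mathbb S^1$''. But $f$ is \emph{not} a self-homeomorphism of $\mathbb S^1$: it has a jump discontinuity at $\theta_2$, precisely because it exchanges the two subintervals $(\theta_1;\theta_2)$ and $(\theta_2;\theta_3)$ and therefore changes the cyclic order of the occupied meridians lying in $(\theta_1;\theta_3)$. No isotopy of $\mathbb S^1$ can do that. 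Geometrically, the vertices $\widehat m_{\theta^i}$ of the tiling of $\widehat\Pi$ that lie on $\mathbb S^1_{\tau=1}$ must cross one another during the deformation, so the intermediate surfaces cannot all sit fibered over $\theta$-levels. This is exactly the difficulty the paper's proof attacks: it constructs a homotopy $\phi_t$ of the $1$-skeleton that \emph{does} produce collisions of the vertices $p_i$, and then perturbs by pushing each $p_i$ slightly into $\tau<1$, in the direction of $\varphi\in(\varphi_2;\varphi_1)$ or $\varphi\in(\varphi_1;\varphi_2)$ according to whether $\theta^i\in(\theta_1;\theta_2)$ or $(\theta_2;\theta_3)$. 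Condition~(2) of Definition~\ref{exchange-def} (no vertices in $r_1$, $r_2$) is what guarantees the edges incident to each $p_i$ all point into the correct $\varphi$-halfspace, so that this push is consistent; it is not the harmless bookkeeping you relegate to ``the reader'' at the end, it is the core of the argument.

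The second paragraph has the same blind spot. The piece $\widehat\Pi\cap U$ for an exchange move is not a disc or a union of discs; unlike for wrinkle moves, the ball $U$ here contains an arbitrary chunk of the diagram (everything whose $\theta$-support meets $[\theta_1;\theta_3]$), so the local argument via a disc replacement as in Lemma~\ref{wrinkle-creation-isotopy} does not apply. The paper instead extends the Legendrian isotopy of the graph $\widetilde{\mathscr G}$ to the surface piecewise: it covers $[0;1]$ by finitely many subintervals on which a convex extension exists, and bridges the seams using~\cite[Lemma~6]{dp17} applied to the disc complements of the graph, where the fact that each complementary disc meets the dividing set in a single arc is the relevant hypothesis. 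That global compactness argument is what actually carries the convexity claim; it cannot be replaced by a local Giroux-type modification inside $U$.
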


\begin{proof}
Due to symmetry it suffices to show the existence of an isotopy from~$(\widehat\Pi,D)$
to~$(\widehat\Pi',D')$ in the class of Giroux's convex surfaces with respect to~$\xi_+$.
We may assume without loss of generality that~$0\notin[\theta_1;\theta_3]$,
and treat the $\theta$-coordinate of any point from~$[\theta_1;\theta_3]*\mathbb S^1$
as a real number in the interval~$(0,2\pi)$.

We use the notation from Definition~\ref{exchange-def}.
Denote also by~$\widetilde{\mathscr G}$ the $1$-skeleton of the tiling of~$\widehat\Pi$,
that is,
$$\widetilde{\mathscr G}=\bigcup_{v\text{ is a vertex of }\Pi}\widehat v.$$
Similarly, let~$\widetilde{\mathscr G}'$ be the $1$-skeleton of the tiling of~$\widehat\Pi'$.

Let~$m_{\theta^1},\ldots,m_{\theta^k}$ be the occupied meridians of~$\Pi$ in the interval~$\theta\in[\theta_1;\theta_3]$,
listed in the increasing order, $\theta_1<\theta^1<\theta^2<\ldots<\theta^k<\theta_3$.
Denote~$\psi(f(\theta^i))$ by~${\theta'}^i$.
The points~$p_i=\widehat m_{\theta^i}$ and~$p_i'=\widehat m_{{\theta'}^i}$ are vertices of
$\widetilde{\mathscr G}$ and~$\widetilde{\mathscr G}'$, respectively, $i=1,\ldots,k$.
All the other vertices of~$\widetilde{\mathscr G}$
are the same as those of~$\widetilde{\mathscr G}'$.

Let~$\phi:[0;1]\times\widetilde{\mathscr G}\rightarrow\mathbb S^3$ be a homotopy such that:
\begin{enumerate}
\item
for all~$t\in[0,1]$ the image~$\phi_t(\widetilde{\mathscr G})$ has the form~$\widehat X$, $X\subset\mathbb T^2$;
\item
$\phi_0=\mathrm{id}|_{\widetilde{\mathscr G}}$\,;
\item
$\phi_t(p_i)=\widehat m_{(1-t)\theta^i+t{\theta'}^i}$, $i=1,\ldots,k$, $t\in[0;1]$;
\item
$\phi_t(p)=p$, $t\in[0;1]$, if $p\in\widetilde{\mathscr G}\cap(\mathbb S^1_{\tau=0}\cup\mathbb S^1_{\tau=1})$
is not in~$\{p_1,\ldots,p_k\}$.
\end{enumerate}
Such a homotopy is clearly unique.

The homotopy~$\phi$ is not an isotopy if~$\theta^1\in(\theta_1;\theta_2)$ and~$\theta^k\in(\theta_2;\theta_3)$,
but it is not very far from being an isotopy. For any~$t\in[0;1]$ the map~$\phi_t$ is an immersion and there are
only finitely many moments~$t$ when it is not an embedding. Namely, this occurs when~$\phi_t(p_i)=\phi_t(p_j)$
for some~$i$, $j$, $1\leqslant i<j\leqslant k$. In this case, we necessarily have~$\theta^i\in(\theta_1;\theta_2)$
and~$\theta^j\in(\theta_2;\theta_3)$.

For all~$t\in[0;1]$ the images of all edges of~$\widetilde{\mathscr G}$ incident to the vertex~$p_i$ under the map~$\phi_t$
are contained in the domain~$\mathbb S^1_{\tau=1}*(\varphi_2;\varphi_1)$ if~$\theta^i\in[\theta_1;\theta_2]$, and in the domain~$\mathbb S^1_{\tau=1}*(\varphi_1;\varphi_2)$ if~$\theta^i\in[\theta_2;\theta_3]$.
We can disturb~$\phi_t$ slightly so that for all~$t\in(0;1)$ the image~$\phi_t(e)$ of any edge~$e$ of~$\widetilde{\mathscr G}$
remains a Legendrian arc (with respect to~$\xi_+$), and the point~$\phi_t(p_i)$ is contained in the domain~$\varphi\in(\varphi_2;\varphi_1)$, $\tau<1$
if~$\theta^i\in(\theta_1;\theta_2)$, and in the domain~$\varphi\in(\varphi_1;\varphi_2)$, $\tau<1$ if~$\theta^i\in(\theta_2;\theta_3)$.

Then for each~$t\in[0;1]$ the map~$\phi_t$ becomes an embedding, and~$\phi$ becomes an isotopy from~$\widehat{\mathscr G}$
to~$\widehat{\mathscr G}'$ through Legendrian graphs.

The isotopy~$\phi$ can be extended to the hole surface~$\widehat\Pi$ so that~$\phi_1(\widehat\Pi)=\widehat\Pi'$.
Moreover, for any~$t_0\in[0;1]$ the extension can be chosen so
that the surface~$\phi_t(\widehat\Pi)$ is convex at the moment~$t=t_0$. This follows from the fact that convexity is a generic
property. It also follows that the surface~$\phi_t(\widehat\Pi)$ will remain convex for~$t$ close enough to~$t_0$.

Since the interval~$[0;1]$ is compact we can find finitely many points~$0=t_0<t_1<\ldots<t_n=1$ and
extensions~$\phi^1,\ldots,\phi^n$ of the isotopy~$\phi$ to~$\widehat\Pi$ such that the surface~$\phi^i_t(\widehat\Pi)$
is convex for~$t\in[t_{i-1};t_i]$. Denote by~$F_{i-1}$ the surface~$\phi^i_{t_{i-1}}(\widehat\Pi)$, and by~$F_i'$ the surface~$\phi^i_{t_i}(\widehat\Pi)$. So far we have isotopies from~$\widehat\Pi=F_0$ to~$F_1'$,
from~$F_1$ to~$F_2'$, \ldots, from~$F_{n-1}$ to~$F_n'=\widehat\Pi'$ through convex surfaces.
We also know that, for each~$i=1,\ldots,n-1$, the convex surfaces~$F_i'$ and~$F_i$ are isotopic
relative to~$\widetilde{\mathscr G}_i=\phi_{t_i}(\widetilde{\mathscr G})$.
By construction, the connected components of~$F_i'\setminus\widetilde{\mathscr G}_i$ and~$F_i\setminus\widetilde{\mathscr G}_i$
are open discs intersecting any dividing set of the respective surface in a single arc. By using~\cite[Lemma~6]{dp17}
one shows that there is an isotopy from~$F_i'$ to~$F_i$ through convex surfaces.

The constructed isotopy brings every tile of~$\widehat\Pi$ to a tile of~$\widehat\Pi'$, hence it brings~$D$
to a dividing configuration on~$\widehat\Pi'$ equivalent to~$D'$. The claim follows.
\end{proof}

\begin{lemm}\label{bigon-reduction-lemm}
Let~$\Pi$ be a rectangular diagram of  a surface, and let~$(\delta_+,\delta_-)$ be a canonic dividing configuration
of~$\widehat\Pi$. Let also~$(\delta'_+,\delta'_-)$ be an
admissible dividing configuration obtained from~$(\delta_+,\delta_-)$
by a (half-)bigon reduction. Then there exists a proper realization~$(\Pi',\phi)$ of~$(\delta'_+,\delta'_-)$
such that~$\Pi'$ can be obtained from~$\Pi$ by a sequence consisting of two or less exchange moves and
one (half-)wrinkle reduction.
\end{lemm}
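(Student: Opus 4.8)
\emph{Plan.}\ The idea is to locate, inside $\widehat\Pi$, the (half-)bigon of $(\delta_+,\delta_-)$ that is being reduced, to read off from it the rectangle(s) of $\Pi$ that are `responsible' for it, to make these rectangles thin by at most two exchange moves, and then to apply a (half-)wrinkle reduction at the corresponding occupied level, which literally undoes the bigon. First I would set up the bookkeeping. Since in Definition~\ref{bigon-def} the disc $d$ meets $\delta_+\cup\delta_-$ only in the two arcs $\delta_+\cap d$ and $\delta_-\cap d$, the (half-)bigon is a connected component of $\widehat\Pi\setminus(\delta_+\cup\delta_-)$, hence equals $\mathring a$ for a unique occupied level $a$ of $\Pi$; without loss of generality $a=m_{\theta_0}$ is a meridian. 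Because $\mathring a$ is a (half-)bigon, $m_{\theta_0}$ carries exactly two vertices of $\Pi$, say a $\diagdown$-vertex $u=(\theta_0,\varphi_1)$ and a $\diagup$-vertex $w=(\theta_0,\varphi_2)$; and $\{u,w\}$ is a pair of opposite corners of one rectangle $r_1$ of $\Pi$ on one side of $m_{\theta_0}$ and, in the full-bigon case, also of one rectangle $r_2$ on the other side, the two rectangles having complementary $\varphi$-extents — exactly the pair of tiles produced by a wrinkle creation move (Definition~\ref{wrinkledef}). In the half-bigon case only one of $r_1,r_2$ is present, one of $u,w$ lies on $\partial\Pi$, and the picture matches Definition~\ref{half-wrinkle-def}. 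If $r_1$ and $r_2$ are already vertically thin, then a (half-)wrinkle reduction at $m_{\theta_0}$ is applicable, and, read backwards, Lemmas~\ref{wrinkle-creation-isotopy} and~\ref{half-wrinkle-creation-isotopy} together with the discussion following them supply a $C^0$-isotopy carrying $(\widehat\Pi,(\delta_+,\delta_-))$, after the bigon reduction, to $(\widehat\Pi',D)$ with $D$ equivalent to a canonic dividing configuration of $\widehat\Pi'$; this gives the required realization, and it is proper because, by our conventions, the isotopy extends to an ambient one.

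Next I would remove the thinness obstruction. Suppose $r_1$, whose vertical sides lie on $m_{\theta_0}$ and on a meridian $m_{\theta_+}$, is not vertically thin, and let $m_{\sigma_1},\dots,m_{\sigma_p}$ be the occupied meridians strictly between them. Since $r_1$ is a rectangle of $\Pi$ it contains no vertex of $\Pi$, so every vertex of $\Pi$ lying on any $m_{\sigma_i}$ has $\varphi$-coordinate in the arc complementary to the $\varphi$-extent of $r_1$. This is precisely the situation in which a vertical exchange move (Definition~\ref{exchange-def}) can be chosen so as to carry the whole block $\{m_{\sigma_1},\dots,m_{\sigma_p}\}$ out of the $\theta$-span of $r_1$ — with the auxiliary exchange rectangles placed in the unoccupied band on the opposite $\varphi$-side and just astride $m_{\theta_0}$, so that their hypotheses (unoccupied corners, disjoint from rectangles of $\Pi$) hold — thereby making $r_1$ vertically thin; and by Lemma~\ref{exchange-move-equivalence-lem} the canonic dividing configuration is unchanged up to equivalence, so the bigon survives untouched. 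Performing the symmetric move for $r_2$ when needed, at most two exchange moves bring $\Pi$ to a diagram $\Pi_1$ in which $r_1$ and $r_2$ are both vertically thin, and the (half-)wrinkle reduction of the previous paragraph now applies to $\Pi_1$ and yields the desired $\Pi'$.

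It remains to assemble the pieces. Composing the isotopies attached to the exchange moves with the one attached to the (half-)wrinkle reduction produces a homeomorphism $\phi$ such that $(\Pi',\phi)$ is a realization of $(\delta'_+,\delta'_-)$; properness is again automatic since all of these isotopies are ambient. The hypothesis that $(\delta'_+,\delta'_-)$ is admissible is what guarantees that the diagram obtained from $\Pi_1$ by deleting $r_1$ (and $r_2$) and collapsing $m_{\theta_0}$ together with its two neighbouring meridians is a genuine rectangular diagram of a surface, so that the wrinkle reduction is indeed legitimate; and the total move count is two or fewer exchange moves followed by one (half-)wrinkle reduction, as claimed.

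The main obstacle I anticipate is the middle step: exhibiting, in all cases — including the half-bigon/boundary case and degenerate positions in which $m_{\theta_+}$, or the meridian next to $m_{\theta_0}$ on the other side, lies close to a level carrying vertices of $r_1$ or $r_2$ — a choice of auxiliary exchange rectangles that literally satisfies all three conditions of Definition~\ref{exchange-def}, and then checking that after these exchange moves the combinatorial picture around $m_{\theta_0}$ is exactly the output of a wrinkle creation on the resulting diagram, so that the wrinkle reduction really is its inverse. Once these combinatorial verifications are in place the rest is routine.
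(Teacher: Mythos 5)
Your overall strategy matches the paper's: identify the (half-)bigon with the disc $\mathring a$ dual to some occupied level, recognize the two adjacent rectangles, make them thin by exchange moves, and then undo a (half-)wrinkle. The admissibility hypothesis is correctly identified as what makes the wrinkle reduction legitimate, and the paper indeed proves this explicitly (the reduction is applicable iff at most one of the outer meridians carries boundary vertices, which is precisely when the merged region $\mathring m_{\widetilde\theta_1}\#\mathring m_{\widetilde\theta_3}$ doesn't meet $\partial\widehat\Pi$ in two components).

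Two points you should fix, though. First, a slip in the local description: the two vertices $u,w$ on $m_{\theta_0}$ are not \emph{opposite} corners of $r_1$ (or $r_2$) — since they lie on the same meridian they are the two endpoints of a common vertical side, i.e., adjacent corners. Your subsequent phrase ``complementary $\varphi$-extents'' shows you have the right picture, but the words contradict it. Second, and more substantively: the piece you flag as the ``main obstacle'' is exactly where your sketch goes astray, and the paper's resolution is different from what you suggest. You propose placing the auxiliary exchange rectangles ``in the unoccupied band on the opposite $\varphi$-side,'' but the exchange move of Definition~\ref{exchange-def} uses two rectangles whose $\varphi$-extents together cover all of $\mathbb S^1$, so they cannot both fit in a band. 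What the paper actually does is exploit the fact that the $\varphi$-extents of $r_1$ and $r_2$ are complementary: the first exchange takes the rectangle $[\widetilde\theta_1+\varepsilon;\widetilde\theta_2-\varepsilon]\times[\widetilde\varphi_1-\varepsilon;\widetilde\varphi_2+\varepsilon]$ (slightly inside $r_1$) and $[\widetilde\theta_2-\varepsilon;\widetilde\theta_3+\varepsilon]\times[\widetilde\varphi_2+\varepsilon;\widetilde\varphi_1-\varepsilon]$ (inside $r_2$'s $\varphi$-range but spanning past $m_{\widetilde\theta_2}$ on both sides), so the move carries $m_{\widetilde\theta_2}$ itself across, collapsing $r_1$ to width $2\varepsilon$; a second exchange of similar form then collapses (the image of) $r_2$. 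So it is not ``one exchange per rectangle moving a block into an unoccupied strip'' but two coupled exchanges that swap the meridian blocks lying in $r_1$'s and $r_2$'s $\theta$-spans and drag $m_{\widetilde\theta_2}$ along. You correctly suspected this was the crux; the fix is to replace the heuristic by the explicit coordinate choice.
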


\begin{proof}
Consider the case of a bigon reduction. The bigon of~$(\delta_+,\delta_-)$ that
is being reduced must be bounded
by two arcs of the form~$\mathring v_1$ and~$\mathring v_2$, where~$v_1$ is a $\diagup$-vertex
and~$v_2$ is a $\diagdown$-vertex of~$\Pi$. The common endpoints of~$\mathring v_1$
and~$\mathring v_2$ must have the form~$\mathring r_1$, $\mathring r_2$, where~$r_1$ and~$r_2$
are two rectangles of~$\Pi$ that share the vertices~$v_1$, $v_2$.

Without loss of generality we may assume that~$v_1$ and~$v_2$ lie at the same vertical occupied level of~$\Pi$,
which is~$\theta=\widetilde\theta_2$, and the rectangles~$r_1$ and~$r_2$
have form~$[\widetilde\theta_1;\widetilde\theta_2]\times[\widetilde\varphi_1;\widetilde\varphi_2]$
and~$[\widetilde\theta_2;\widetilde\theta_3]\times[\widetilde\varphi_2;\widetilde\varphi_1]$, respectively.
(Note that the situation~$\widetilde\theta_1=\widetilde\theta_3$ is impossible as otherwise~$\widehat r_1\cup\widehat r_2$
would be a sphere containing exactly two intersection points of~$\delta_+$ and~$\delta_-$, in which case
the reduction of the bigon would produce an inadmissible dividing configuration.)

Pick an~$\varepsilon>0$ smaller than one half of the distance between any two parallel occupied levels of~$\Pi$. Then~$\Pi$ satisfies
the conditions of Definition~\ref{exchange-def} after putting~$\theta_1=\widetilde\theta_1+\varepsilon$,
$\theta_2=\widetilde\theta_2-\varepsilon$, $\theta_3=\widetilde\theta_3+\varepsilon$, $\varphi_1=\widetilde\varphi_1-\varepsilon$,
$\varphi_2=\widetilde\varphi_2+\varepsilon$ (consult Figure~\ref{exchange-to-remove-wrinkle}), and
we can apply the respective exchange move (we put $\psi=\mathrm{id}$).
\begin{figure}[ht]
\
\includegraphics[scale=0.7]{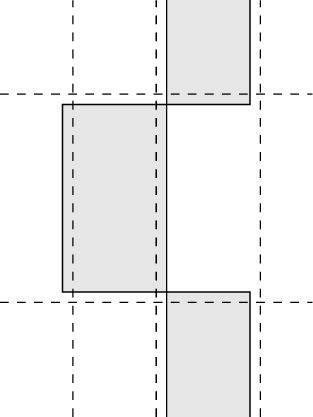}%
\put(-126,37){$\scriptstyle\widetilde\varphi_1-\varepsilon$}%
\put(-126,107){$\scriptstyle\widetilde\varphi_2+\varepsilon$}%
\put(-92,-10){$\scriptstyle\widetilde\theta_1+\varepsilon$}%
\put(-64,-10){$\scriptstyle\widetilde\theta_2-\varepsilon$}%
\put(-28,-10){$\scriptstyle\widetilde\theta_3+\varepsilon$}%
\put(-70,70){$r_1$}\put(-38,15){$r_2$}\put(-70,125){$X$}\put(-70,15){$X$}\put(-38,70){$Y$}
\hskip0.3cm\raisebox{68pt}{$\longrightarrow$}\hskip0.7cm
\includegraphics[scale=0.7]{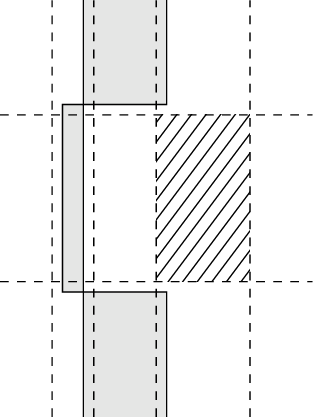}
\put(-126,44){$\scriptstyle\widetilde\varphi_1+\varepsilon$}%
\put(-126,100){$\scriptstyle\widetilde\varphi_2-\varepsilon$}%
\put(-99,-10){$\scriptstyle\widetilde\theta_1-\varepsilon$}%
\put(-64,-10){$\scriptstyle\widetilde\theta_4+\varepsilon$}%
\put(-25,-10){$\scriptstyle\widetilde\theta_3$}%
\put(-40,125){$X$}\put(-40,15){$X$}\put(-65,70){$Y$}
\hskip0.3cm\raisebox{68pt}{$\longrightarrow$}\hskip0.7cm
\includegraphics[scale=0.7]{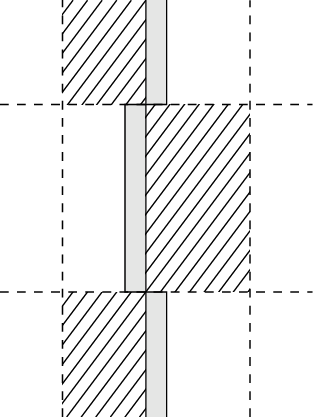}%
\put(-116,40.5){$\scriptstyle\widetilde\varphi_1$}%
\put(-116,103.5){$\scriptstyle\widetilde\varphi_2$}%
\put(-88,-10){$\scriptstyle\widetilde\theta_1$}%
\put(-60,-10){$\scriptstyle\widetilde\theta_4$}%
\put(-25,-10){$\scriptstyle\widetilde\theta_3$}%
\put(-40,125){$X$}\put(-40,15){$X$}\put(-77,70){$Y$}
\caption{Exchange moves in a bigon reduction procedure. The blocks of vertices $X$ and $Y$
move horizontally. The hatched areas are free of vertices of the diagram}\label{exchange-to-remove-wrinkle}
\end{figure}

As a result of the move the rectangles~$r_1$ and~$r_2$ are transformed into $[\widetilde\theta_1;\widetilde\theta_1+2\varepsilon]\times
[\varphi_1;\varphi_2]$ and~$[\widetilde\theta_1+2\varepsilon;\widetilde\theta_4+2\varepsilon]\times[\varphi_2;\varphi_1]$,
respectively, where~$\widetilde\theta_4=\widetilde\theta_1-\widetilde\theta_2+\widetilde\theta_3$.

The obtained diagram satisfies the conditions of Definition~\ref{exchange-def} for~$\theta_1=\widetilde\theta_1-\varepsilon$,
$\theta_2=\widetilde\theta_1+3\varepsilon$, $\theta_3=\widetilde\theta_4+\varepsilon$,
$\varphi_1=\widetilde\varphi_1+\varepsilon$, $\varphi_2=\widetilde\varphi_2-\varepsilon$.
After applying the respective exchange move the two rectangles that~$r_1$ and~$r_2$
are transformed to are thin.

As a result of the two exchange moves the combinatorial structure of the canonic configuration of~$\Pi$ is unaltered. Now the
bigon can be reduced by a wrinkle reduction move, provided that the move is applicable. This is the case if and only if
at most one of the meridians~$m_{\widetilde\theta_1}$ and~$m_{\widetilde\theta_3}$ contains vertices of~$\partial\Pi$, which is equivalent to saying that the bigon reduction yields an admissible dividing configuration.
Indeed, the admissibility of the configuration obtained by the bigon reduction can be violated
only at the region that is obtained by merging the discs~$\mathring m_{\widetilde\theta_1}$
and~$\mathring m_{\widetilde\theta_3}$, and this happens if and only if both
discs have a non-empty intersection with~$\partial\widehat\Pi$.

In the case of a half-bigon reduction the proof is similar and left to the reader.
\end{proof}

As one can see from the above proof, the hypothesis that a bigon reduction yields an admissible dividing configuration
is essential in Lemma~\ref{bigon-reduction-lemm}. The following example shows that this hypothesis does
not always hold.

\begin{exam}\label{bigon-exam}
The dividing configuration in the left picture of Figure~\ref{divconf} contains a bigon (with corners marked~$1$ and~$2$) that cannot be reduced. The respective rectangles (see Figure~\ref{seifert}) are horizontally thin but
a wrinkle reduction is not applicable since both rectangles contain boundary vertices of
the diagram.
\end{exam}

\begin{lemm}\label{non-reducible-lem}
Let~$D=(\delta_+,\delta_-)$ be  an admissible dividing configuration on a surface~$F$, and let~$b$
be a bigon of~$\delta_+$ and~$\delta_-$ such that the reduction of~$b$ produces a non-admissible
dividing configuration. Let also~$F_0$ be the connected component of~$F$ containing~$b$. Then at least one of the following holds:
\begin{enumerate}
\item
for any realization~$(\Pi,\phi)$ of~$D$, the rectangular diagram of a link~$\partial\Pi$ admits an exchange move;
\item
for any realization~$(\Pi,\phi)$ of~$D$, the boundary~$\partial\Pi$
represents an unknot;
\item
$F_0$ contains exactly two intersection points of~$\delta_+$
and~$\delta_-$, and~$F_0$ is homeomorphic to the sphere~$\mathbb S^2$.
\end{enumerate}
\end{lemm}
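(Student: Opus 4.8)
The plan is to analyze the combinatorial picture of the bigon $b$ near its two vertices and show that if neither obstruction (1) nor (2) holds, then the surface must close up into a sphere as in (3). Let $(\Pi,\phi)$ be an arbitrary realization of $D$; then $b$ is bounded by two arcs $\mathring v_1$ and $\mathring v_2$ with $v_1$ a $\diagup$-vertex and $v_2$ a $\diagdown$-vertex of $\Pi$ lying on a common occupied level (as in the proof of Lemma~\ref{bigon-reduction-lemm}), and the two endpoints of these arcs are $\mathring r_1,\mathring r_2$ for rectangles $r_1,r_2\in\Pi$ sharing $v_1$ and $v_2$. Without loss of generality $v_1,v_2$ lie on the meridian $m_{\widetilde\theta_2}$ and $r_1=[\widetilde\theta_1;\widetilde\theta_2]\times[\widetilde\varphi_1;\widetilde\varphi_2]$, $r_2=[\widetilde\theta_2;\widetilde\theta_3]\times[\widetilde\varphi_2;\widetilde\varphi_1]$. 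From the proof of Lemma~\ref{bigon-reduction-lemm} we know that, after at most two exchange moves, the bigon reduction fails to be applicable exactly when \emph{both} meridians $m_{\widetilde\theta_1}$ and $m_{\widetilde\theta_3}$ carry vertices of $\partial\Pi$; this is the hypothesis we are given (for every realization, since admissibility of the reduced configuration is a property of $D$ alone, not of the realization).

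The first step is to argue that I may assume $\partial\Pi$ admits no exchange move and is not trivial — otherwise (1) or (2) holds and we are done — and, under this assumption, to extract strong rigidity from the presence of the vertices of $\partial\Pi$ on $m_{\widetilde\theta_1}$ and $m_{\widetilde\theta_3}$. The key point is that $m_{\widetilde\theta_1}$ and $m_{\widetilde\theta_3}$ each meet $\partial\widehat\Pi$, so the discs $\mathring m_{\widetilde\theta_1}$ and $\mathring m_{\widetilde\theta_3}$ both touch the boundary; reducing the bigon merges these two discs into a single region with \emph{two} boundary intersections with $\partial\widehat\Pi$, which is what violates admissibility. I would then trace through what rectangles can be incident to $v_1$ and $v_2$: besides $r_1,r_2$, any further rectangle at $v_1$ or $v_2$ occupies the opposite quadrants, and its far side lies on $m_{\widetilde\theta_1}$ or $m_{\widetilde\theta_3}$; rigidity (no exchange move) of $\partial\Pi$ forces these meridians to be as crowded as possible, pinning down the local combinatorial type. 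The main work is a finite case analysis: enumerate the possible configurations of rectangles around $r_1\cup r_2$ compatible with (i) $D$ being admissible, (ii) $\partial\Pi$ rigid, (iii) the bigon non-reducible, and show that the only surviving case is the one in which $F_0$ consists of precisely $r_1,r_2$ and two boundary rectangles glued up to form $\mathbb S^2$ — i.e. a four-tile sphere with two $\delta_+\cap\delta_-$ points, which is exactly conclusion (3).

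The step I expect to be the genuine obstacle is showing that \emph{rigidity of $\partial\Pi$ plus non-reducibility of the bigon really does force $F_0$ to be so small}: a priori one could imagine a large diagram whose boundary happens to be rigid for global reasons while the bigon region is non-reducible for the local reason above. The resolution should go through the observation that, after the two exchange moves from Lemma~\ref{bigon-reduction-lemm}, the two rectangles $r_1',r_2'$ replacing $r_1,r_2$ are thin, so the only remaining obstruction to the wrinkle reduction is the boundary vertices on $m_{\widetilde\theta_1}$ and $m_{\widetilde\theta_3}$; a boundary vertex on a meridian adjacent to a thin rectangle typically participates in an exchange move of $\partial\Pi$ unless the whole component of $\partial\Pi$ is a single rectangle. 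Ruling out exchange moves on $\partial\Pi$ then collapses everything, and one checks directly that the resulting minimal diagram has $\widehat\Pi$ a sphere with the stated two intersection points. I would present the global-to-local reduction carefully and relegate the finite enumeration of local pictures to a short case list, as is done elsewhere in the paper.
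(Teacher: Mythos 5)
You have the right general strategy for distinguishing cases (1) and (2) (make $r_1,r_2$ thin by exchange moves, note that non-reducibility forces boundary edges on both parallel levels flanking the bigon, then argue that a rigid $\partial\Pi$ forces the diagram to collapse to a single rectangle), and this matches the paper in spirit. However, there is a genuine gap in how you handle case (3).

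You describe case (3) as ``a four-tile sphere with two $\delta_+\cap\delta_-$ points, which is exactly conclusion (3)''. That is internally inconsistent (four tiles would give four, not two, intersection points of $\delta_+$ with $\delta_-$), and more importantly it misidentifies what case (3) is. The lemma's case (3) says $F_0$ contains \emph{exactly two} intersection points of $\delta_+$ and $\delta_-$, i.e.\ a \emph{two}-tile sphere. This happens precisely when the two rectangles $r_1,r_2$ already share all four of their vertices, so $\widehat r_1\cup\widehat r_2\cong\mathbb S^2$ is a closed component with no boundary at all. In that situation there is no occupied level $x$ containing the ``two shared vertices'' (they share all four), there are no boundary edges on the flanking levels, and the whole machinery of exchange moves and rigidity of $\partial\Pi$ does not even get started on $F_0$. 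Case (3) therefore cannot be the endpoint of a collapse argument — it must be split off as a degenerate base case \emph{before} you assume $r_1$ and $r_2$ share only two vertices. The paper does exactly that as its first step. What your collapse argument actually produces, when there \emph{is} boundary, is case (2): $\partial\Pi=\{v_1,v_2,v_3,v_4\}$ is a single rectangle. You have quietly conflated the trivial-boundary case (2) with the closed-sphere case (3); these are disjoint possibilities and your plan as written would not deliver conclusion (3) correctly.

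One smaller remark: once you set aside the sphere case, the clean way to finish is not a ``finite enumeration of local pictures'' but the observation used in the paper, namely that if the four perpendicular levels through the boundary vertices on $x_1$ and $x_2$ are distinct then the edges $\{v_1,v_2\}$ and $\{v_3,v_4\}$ are directly interchangeable, and if two of them coincide one gets a short edge and then either an exchange move exists or $\partial\Pi$ has only these four vertices. Spelling that out would make your sketch complete once the case (3) issue above is fixed.
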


\begin{proof}
Let~$(\Pi,\phi)$ be a realization of~$D$, and let~$r_1$, $r_2$ be the rectangles corresponding
to the points in~$\delta_+\cap\delta_-\cap\partial b$.
Suppose that~$\widehat r_1\cup\widehat r_2$ is not homeomorphic to~$\mathbb S^2$ (otherwise Case~(3) occurs).
Then~$r_1$ and~$r_2$ share exactly two vertices. Let~$x$ be the occupied level containing these vertices. The rectangles~$r_1$ and~$r_2$
can be made thin by exchange moves on~$\Pi$. This may already
result in an exchange move on~$\partial\Pi$, in which case we are done. Suppose otherwise.

Since the reduction of~$b$ produces a non-admissible dividing configuration,
both occupied levels parallel to~$x$ that contain unshared vertices of~$r_1$ and~$r_2$ contain edges of~$\partial\Pi$.
Let~$x_1$, $x_2$ be these occupied levels, and let~$\{v_1,v_2\}=\partial\Pi\cap x_1$,
$\{v_3,v_4\}=\partial\Pi\cap  x_2$.
Denote by $y_1,y_2,y_3,y_4$ the occupied levels of~$\Pi$ perpendicular to~$x$ and passing through~$v_1,v_2,v_3,v_4$, respectively.

If~$y_1$, $y_2$, $y_3$, $y_4$ are pairwise distinct,
then the edges~$\{v_1,v_2\}$, $\{v_3,v_4\}$ can be exchanged.
Suppose that some of~$y_1$, $y_2$, $y_3$, $y_4$ coincide.

An edge~$\{v',v''\}$ of a rectangular diagram of a link~$R$ will be said to be~\emph{short}
if there is an annulus~$A\subset\mathbb T^2$ of the form~$[\theta_1;\theta_2]\times\mathbb S^1$
or~$\mathbb S^1\times[\varphi_1;\varphi_2]$ such that~$v'$ and~$v''$ lie on different components of~$\partial A$,
and there are no vertices of~$R$ in~$\interior(A)$. Such an edge can
always be exchanged with one of the neighboring parallel edges unless
the other two edges contained in~$\partial A$ are also short.

Without loss of generality we may assume~$y_1=y_3$.
Then~$\{v_1,v_3\}$ is a short edge of~$\partial\Pi$, which means that
either~$\partial\Pi$ admits an exchange move
or all edges of the connected component of~$\partial\Pi$ containing~$v_1$ are short. In the latter case one can see that~$\partial\Pi$ represents an unknot.
\end{proof}

\subsection{Weak equivalence of dividing configurations via neutral basic moves}

\begin{prop}\label{weak-equivalence-via-moves}
Let $F\subset\mathbb S^3$ be a compact surface, and let~$D=(\delta_+,\delta_-)$ be a dividing configuration
on~$F$. Let also~$(\Pi,\phi)$ be a proper realization of~$D$.
Then for any admissible dividing configuration~$D'=(\delta_+',\delta_-')$ on~$F$ weakly equivalent to~$D$, there exists a proper realization $(\Pi',\phi')$ of~$D'$ such that $\Pi'$ can be obtained
from~$\Pi$ by finitely many wrinkle, half-wrinkle, and exchange moves.
\end{prop}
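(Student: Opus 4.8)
The plan is to recast the statement as an assertion about elementary modifications of dividing configurations, and then to carry those modifications out by neutral basic moves, using the lemmas of this section.

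\emph{From neutral moves to configuration modifications.} The key is the following dictionary. Suppose $D_1$ is an admissible configuration on $F$, $(\Pi_1,\phi_1)$ is a proper realization of $D_1$, and $D_2$ is an admissible configuration obtained from $D_1$ by one of: (a) an ambient isotopy of $F$ isotopic to the identity; (b) a bigon or half-bigon creation; (c) a bigon or half-bigon reduction. Then $D_2$ has a proper realization $(\Pi_2,\phi_2)$ with $\Pi_2$ obtained from $\Pi_1$ by finitely many wrinkle, half-wrinkle and exchange moves. Indeed, for~(a) one uses that a self-homeomorphism of $F$ homotopic to the identity is isotopic to the identity, and that, $F$ being a submanifold of $\mathbb S^3$, such an isotopy extends to an orientation-preserving ambient isotopy of $\mathbb S^3$; writing $D_2=g(D_1)$, the pair $(\Pi_1,\phi_1\circ g^{-1})$ is a proper realization of $D_2$, so $\Pi_2=\Pi_1$. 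For~(b) one applies Lemma~\ref{bigon-creation-lemm} (respectively Lemma~\ref{half-bigon-creation-lemm}) to $\Pi_1$ and to the modification $\phi_1(D_2)$ of the canonic configuration of $\widehat{\Pi_1}$, obtaining $\Pi_2$ from $\Pi_1$ by one (half-)wrinkle creation. For~(c), since $D_2$ is admissible, Lemma~\ref{bigon-reduction-lemm} gives $\Pi_2$ from $\Pi_1$ by at most two exchange moves and one (half-)wrinkle reduction, the canonic configuration being unchanged up to equivalence by Lemma~\ref{exchange-move-equivalence-lem}. Consequently it is enough to exhibit a finite chain $D=D_0,D_1,\dots,D_n=D'$ of \emph{admissible} configurations on $F$ in which every $D_{i+1}$ is obtained from $D_i$ by a move of type~(a), (b) or~(c); applying the dictionary inductively from the given $(\Pi,\phi)$ then produces $(\Pi',\phi')$ together with the required sequence of neutral moves.

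\emph{Building the chain.} A single move of type~(a), using an equivalence taking $\delta_+$ to $\delta_+'$, reduces us to the case $\delta_+=\delta_+'$; then $\delta_-$ and $\delta_-'$ are isotopic $1$-submanifolds of $F$ and both pairs are admissible. Pick an ambient isotopy of $F$ carrying $\delta_-$ to $\delta_-'$ without creating or destroying components, and put it in general position relative to the fixed submanifold $\delta_+$. Because $\delta_+$ is held fixed, its components are pairwise disjoint, and the moving curve stays embedded, there are no triple points and no self-tangencies, so the only codimension-one events are interior tangencies of the moving curve with $\delta_+$ (bigon creations and reductions) and tangencies occurring at $\partial F$ (half-bigon creations and reductions); thus the isotopy is realized, up to ambient isotopies isotopic to the identity, by a finite composition of such moves. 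This composition can be rearranged so that all creations precede all reductions: moves supported in disjoint discs commute, and a reduction immediately followed by a creation can be traded, after an ambient isotopy isotopic to the identity, for a creation followed by a reduction, since the finger realizing the creation can be inserted before the reduction is carried out. Let $D^{*}=(\delta_+,\delta_-^{*})$ be the configuration reached after all creations have been performed. It is obtained from $D$ by creations only; since applying a bigon or half-bigon creation to an admissible configuration yields an admissible one --- which is checked by a local inspection inside the (half-)disc supporting the move, conditions~(1)--(3) of admissibility being immediate and condition~(4) following because the new regions inside the disc are again discs --- every configuration between $D$ and $D^{*}$ is admissible. Likewise $D^{*}$ is obtained from $D'$ by creations only, so $D'$ is obtained from $D^{*}$ by the reverse reductions, all intermediate configurations being admissible as well. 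Concatenating gives the desired chain.

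I expect the real obstacle to be this last rearrangement step --- producing a common ``bigon expansion'' $D^{*}$ of $D$ and $D'$ and verifying that admissibility is maintained throughout --- while the remainder is bookkeeping built on Lemmas~\ref{bigon-creation-lemm}, \ref{half-bigon-creation-lemm}, \ref{bigon-reduction-lemm} and~\ref{exchange-move-equivalence-lem} applied to the given realization $(\Pi,\phi)$.
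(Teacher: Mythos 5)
Your ``dictionary'' step --- converting ambient isotopies, bigon/half-bigon creations, and bigon/half-bigon reductions into wrinkle, half-wrinkle, and exchange moves on the realizing diagram via Lemmas~\ref{bigon-creation-lemm}, \ref{half-bigon-creation-lemm}, \ref{bigon-reduction-lemm}, and~\ref{exchange-move-equivalence-lem} --- is exactly the preliminary reduction the paper makes, and is correct as you state it. The divergence, and the gap, is in how you build the chain from~$D$ to~$D'$.

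You track bigons of the pair $(\delta_+,\text{moving }\delta_-)$ along an ambient isotopy and then assert that the resulting sequence of creations and reductions can always be rearranged so that all creations precede all reductions, producing a common ``bigon expansion''~$D^*$. This confluence claim is the whole content of the argument, you flag it yourself as ``the real obstacle,'' and it is not proved. The one sentence offered in its favor --- that ``the finger realizing the creation can be inserted before the reduction is carried out'' --- is false as a general principle: a creation occurring after a reduction is a tangency of $\delta_+$ with $\delta_-$ \emph{in its post-reduction position}, and it can be supported in precisely the region that the reduction has just vacated; if you try to perform it first, it either is not a bigon creation at all (the relevant piece of $\delta_-$ is already on the other side of $\delta_+$) or it produces a bigon nested inside the one that is still waiting to be reduced, after which that outer bigon is no longer an innermost bigon and the ``reduction'' is not a bigon reduction. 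Nothing in the proposal rules this out. The paper sidesteps this confluence question entirely: after arranging $\delta_+=\delta_+'$, it reduces (half-)bigons of the pair $(\delta_-,\delta_-')$ --- not of $(\delta_+,\delta_-)$ --- and whenever $\delta_+$ is in the way of such a reduction, it pushes $\delta_+$ aside by \emph{creations} of bigons of $D$ and $D'$ (which preserve admissibility, so no reordering is ever needed); it then removes the remaining differences component by component using annuli, half-annuli, and M\"obius neighborhoods, with a separate short argument for $F\cong\mathbb S^2$, a case your outline does not address. Unless you can actually establish the ``all creations first'' confluence for sequences of $(\delta_+,\delta_-)$-bigon moves between admissible configurations, your chain-building step does not go through as written.
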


\begin{proof}
Due to Lemmas~\ref{bigon-creation-lemm}, \ref{half-bigon-creation-lemm}, and~\ref{bigon-reduction-lemm} it suffices
to show that~$D'$ can be obtained from~$D$ by a sequence of isotopies and
(half-)bigon reductions and creations so
that every step of the process results in an admissible dividing configuration.

We may assume without loss of generality that the surface~$F$ is connected. Otherwise
the procedure below should be repeated for each connected component of~$F$.

Suppose that~$F$ is not homeomorphic to the sphere~$\mathbb S^2$.

The admissibility of a dividing configuration is preserved under isotopies and
(half-)bigon creations, so our strategy is to modify~$D$ and~$D'$ by isotopies
and (half-)bigon creations so that they eventually coincide.

We start by applying an isotopy to either~$D$ or~$D'$ so as to have~$\delta_+=\delta_+'$.
We assume this equality to hold from now on. By a small perturbation of~$\delta_+$
we can make it transverse to~$\delta_-$ and~$\delta_-'$.

As follows from the results of~\cite{giroux01}
if~$\delta$ is a realizable abstract dividing set on~$F\not\cong\mathbb S^2$, then any closed connected
component of~$\delta$
is homotopically non-trivial in~$F$. However, a connected component of~$\delta$
can be an arc that cuts off a half-disc from~$F$. We call such an arc \emph{trivial}.
We will now reduce the general case to the case when~$\delta_-$ has no trivial arcs.

To this end, for each arc of~$\delta_-'$, we pull its endpoints along~$\partial F$ toward the respective points
of~$\partial\delta_-$ so as to have~$\partial\delta_-=\partial\delta_-'$ and
to make~$\delta_-$ and~$\delta_-'$ isotopic relative to~$\partial\delta_-$.
The idea is illustrated in Figure~\ref{pulling-endpoints}.
\begin{figure}[ht]
\includegraphics{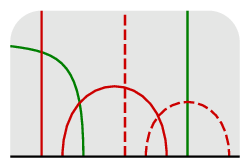}\put(-65,-5){$\partial F$}
\hskip1cm\raisebox{37pt}{$\longrightarrow$}\hskip1cm
\includegraphics{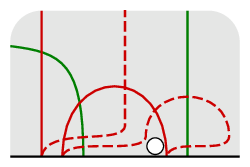}\put(-65,-5){$\partial F$}\put(-38,-3){$\scriptstyle\mathscr U$}\put(-46.5,3.5){$\nwarrow$}
\hskip1cm
\includegraphics{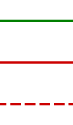}\put(-35,62){legend:}\put(3,8){$\delta_-'$}\put(3,28){$\delta_-$}\put(3,48){$\delta_+=\delta_+'$}
\caption{Pulling the endpoints of~$\delta_-'$ toward the respective points of~$\partial\delta_-$}\label{pulling-endpoints}
\end{figure}
This can be done so that~$D'$ undergoes only isotopies and half-bigon creations.

Now there is a union~$\mathscr U\subset F$ of finitely many open discs such that
\begin{enumerate}
\item
the closure~$\overline{\mathscr U}$ of~$\mathscr U$ is
disjoint from~$\delta_+$, $\delta_-$, and~$\delta_-'$;
\item
$D$ and~$D'$ are still weakly equivalent
on~$F\setminus\mathscr U$;
\item
no arc in~$\delta_-$ is trivial in~$F\setminus\mathscr U$.
\end{enumerate}
The discs can be chosen in a small vicinity of the common endpoints of~$\delta_-$ and~$\delta_-'$.

In what follows we will apply isotopies to~$\delta_\pm$ and~$\delta_\pm'$ so that during
the isotopies the curves will remain disjoint from~$\overline{\mathscr U}$.

By a small perturbation of~$\delta_-'$ we make it transverse to~$\delta_-$, and
make~$\partial\delta_-'$ disjoint from~$\partial\delta_-$.
Let~$b$ be a (half-)bigon of~$(\delta_-,\delta_-')$ disjoint from~$\mathscr U$.
By applying, if necessary, an isotopy to~$\delta_+=\delta_+'$ that results
in creation of (half-)bigons of~$D$ and/or~$D'$ we
can achieve that the intersection~$\delta_+\cap b$ consists
of arcs having one endpoint at~$\delta_-$ and the other at~$\delta_-'$;
see Figure~\ref{bigon-creation-isotopy}.
\begin{figure}[ht]
\includegraphics{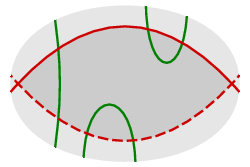}\put(-62,37){$b$}
\hskip0.2cm\raisebox{37pt}{$\longrightarrow$}\hskip0.2cm
\includegraphics{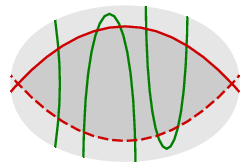}
\hskip0.2cm\raisebox{37pt}{$\longrightarrow$}\hskip0.2cm
\includegraphics{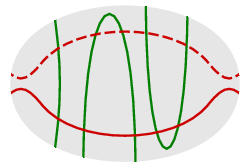}\\[5mm]
\includegraphics{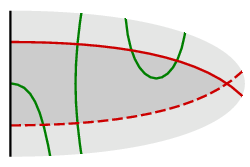}\put(-70,37){$b$}
\hskip0.2cm\raisebox{37pt}{$\longrightarrow$}\hskip0.2cm
\includegraphics{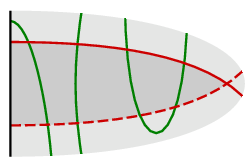}
\hskip0.2cm\raisebox{37pt}{$\longrightarrow$}\hskip0.2cm
\includegraphics{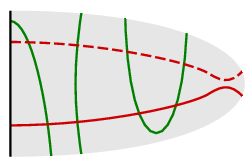}
\caption{Reducing (half-)bigons of~$(\delta_-,\delta_-')$. See Figure~\ref{pulling-endpoints} for the legend}\label{bigon-creation-isotopy}
\end{figure}
Then we can reduce the (half-)bigon~$b$ so that the equivalence classes of~$D$ and~$D'$ remain
unchanged.

Proceeding as above we reduce all bigons and half-bigons of~$(\delta_-,\delta_-')$ contained in~$F\setminus\mathscr U$.
Since there is an isotopy from~$\delta_-$ to $\delta_-'$  in~$F\setminus\mathscr U$ that
keeps~$\partial\delta_-$ in~$\partial F$, after completing the (half-)bigon reductions the abstract dividing sets~$\delta_-$
and~$\delta_-'$ are almost disjoint in the following sense.

{Two abstract dividing sets~$\delta_1,\delta_2\subset F$ are called \emph{almost disjoint}
if, for any connected components~$\gamma_1$, $\gamma_2$ of
$\delta_1$, $\delta_2$, respectively, one of the following holds:
\begin{enumerate}
\item
$\gamma_1$ and~$\gamma_2$ are disjoint;
\item
$\gamma_1$ and~$\gamma_2$ are one-sided closed curves isotopic to one another
and having a single intersection point.
\end{enumerate}}

Let~$\gamma$ be a two-sided closed connected component of~$\delta_-$. Since~$\delta_-$
and~$\delta_-'$ are almost disjoint, and~$\gamma$ is homotopically non-trivial in~$F\setminus\mathscr U$,
there exists an annulus~$A\subset F\setminus\overline{\mathscr U}$ such that its intersection with~$\delta_-\cup\delta_-'$
consists of all the connected components of~$\delta_-$ and~$\delta_-'$
that are isotopic either to~$\gamma$ or~$-\gamma$ (where~$-\gamma$ stands for~$\gamma$ with
reversed orientation) and all these components are homotopically non-trivial in~$A$.

By applying, if necessary, an isotopy to~$\delta_+$ that results
in creation of bigons of~$D$ and/or~$D'$ we can make the
intersection~$\delta_+\cap A$ consist of non-separating
arcs in~$A$ each of which intersects each of the connected components of~$\delta_-$ and~$\delta_-'$
exactly once; see Figure~\ref{annulus-reduction}.
\begin{figure}[ht]
\includegraphics{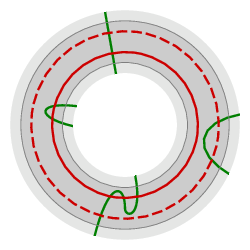}\put(-88,90){$A$}
\hskip0.2cm\raisebox{57pt}{$\longrightarrow$}\hskip0.2cm
\includegraphics{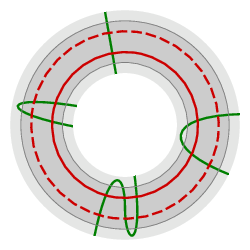}
\hskip0.2cm\raisebox{57pt}{$\longrightarrow$}\hskip0.2cm
\includegraphics{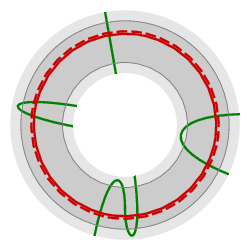}
\caption{Finally making $\delta_-$ and~$\delta_-'$ coinciding. See Figure~\ref{pulling-endpoints} for the legend}\label{annulus-reduction}
\end{figure}
Then~we apply an isotopy to~$\delta_-\cap A$ that will bring it to $\delta_-'\cap A$
keeping inside~$A$, without altering the equivalence class of~$D$. After that
we will have~$\delta_-\cap A=\delta_-'\cap A$.

A similar procedure works for connected components of~$\delta_-$ that are arcs.
Instead of an annulus we consider a `half-annulus', and also allow half-bigon
creations for~$D$ and~$D'$.

Finally, let~$\gamma$ be a one-sided closed component of~$\delta_-$. There is
a connected component~$\gamma'$ of~$\delta_-'$ isotopic to~$\gamma$, and
we have~$|\gamma\cap\gamma'|=1$. These two curves enclose a disc
with two boundary points identified, and a small neighborhood of this disc
is homeomorphic to a M\"obius strip; see Figure~\ref{mobius}.
\begin{figure}[ht]
\includegraphics{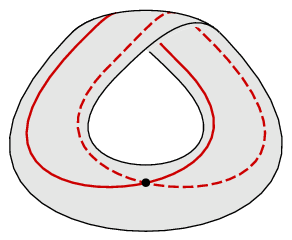}
\caption{An unavoidable intersection of two isotopic one-sided simple closed curves}\label{mobius}
\end{figure}
The intersection of~$\delta_+$ with this disc is treated in the same way as in the
case of an ordinary bigon (see Figure~\ref{bigon-creation-isotopy}), and after
that~$\gamma$ can be isotoped to~$\gamma'$ without distorting the equivalence
class of~$D$.

We repeat this procedure for any connected component of~$\delta_-$ that does not yet
coincide with the respective component of~$\delta_-'$, and keep doing so until we get~$\delta_-=\delta_-'$.
The case~$F\not\cong\mathbb S^2$ is done.

Now suppose that~$F$ is homeomorphic to~$\mathbb S^2$. Since~$D$ is a realizable
configuration, it follows from the results of~\cite{giroux01} that each of~$\delta_+$ and~$\delta_-$
is a single circle. A dividing configuration~$(\delta_1,\delta_2)$ on~$\mathbb S^2$ with~$\delta_1\cong\delta_2\cong\mathbb S^1$
is admissible if and only if~$\delta_1$ and~$\delta_2$ intersect transversely and~$\delta_1\cap\delta_2\ne\varnothing$.

So, for~$F\cong\mathbb S^2$ we go another way: we keep reducing bigons of~$D$ until
we have~$|\delta_+\cap\delta_-|=2$. This produces only admissible configurations during the process.
Do the same with~$D'$. The configurations~$D$ and~$D'$ become equivalent.
\end{proof}

\section{Spatial ribbon graphs}\label{spatial-ribbon-graph-sec}
The aim of this section is mainly to fix the terminology that we operate in the sequel.
In simple words, \emph{a spatial ribbon graph} is the $1$-skeleton of a cell decomposition of a
compact surface embedded in~$\mathbb S^3$, endowed with an additional structure that allows
to recover the surface up to certain equivalence (which amounts to an isotopy if the surface is connected).
\emph{The boundary circuits} of a spatial ribbon graph encode the boundary components
of the represented surface and the attachment maps of the two-cells.

The role of
\emph{an enhancement} of a spatial ribbon graph is to distinguish
between these two types of boundary circuits. However, for all important applications
known by far, it suffices to work with the `default' choice of enhancement,
by which all boundary circuits that \emph{can} be attachment maps of a two-cell (such boundary
circuits are called \emph{patchable}) are declared as such. So, at the expense of losing
some generality, the reader may safely ignore any mentioning of enhancements.

\emph{Morphisms} of spatial ribbon graphs are designed to represent homeomorphisms
of the surfaces represented by the graphs, viewed up to isotopy.

\subsection{The basic concept}
\begin{defi}
By \emph{a spatial graph} we mean a finite one-dimensional (or zero-dimensional) CW-complex
$\Gamma$ embedded in $\mathbb R^3$ so that the restriction of the embedding to each
$1$-cell is piecewise smooth.

We put no restrictions on the presence of multiple edges or loops in spatial graphs and
don't demand spatial graphs to be connected.
We do not distinguish between two spatial graphs if they coincide as subsets of $\mathbb R^3$,
which means that any edge of a spatial graph can be subdivided by new vertices, and this is not regarded
as a change of the graph.

Points of~$\Gamma$ that have no neighborhood homeomorphic to an open interval
are called \emph{true vertices} of~$\Gamma$.
\end{defi}

\begin{defi}
By \emph{a simple circuit} in a spatial graph~$\Gamma$ we mean any simple closed curve in~$\Gamma$.

By \emph{an} \emph{almost simple circuit} in~$\Gamma$ we mean a closed curve in $\Gamma$ that
does not pass more than twice through any point of $\Gamma$ which is not a true vertex.
More precisely, an almost simple circuit is an equivalence class of continuous maps $f:\mathbb S^1\rightarrow\Gamma$
such that $|f^{-1}(p)|\leqslant2$ whenever~$p\in\Gamma$ is not a true vertex of~$\Gamma$, where two maps $f$ and $g$
are regarded equivalent if $f\circ h_1=g\circ h_2$
for some degree-$\pm1$ continuous maps~$h_1,h_2:\mathbb S^1\rightarrow\mathbb S^1$.
\end{defi}

Note that an almost simple circuit can be presented by a constant map, that is, a map
whose image is a single point (which should be a true vertex).

\begin{defi}
Let $\Gamma$ be a spatial graph, and let~$F$ be a compact surface embedded in $\mathbb S^3$ such
that~$\Gamma\subset F$. Let also~$\gamma$ be an almost simple circuit in $\Gamma$, and $\widetilde\gamma$
a connected component of $\partial F$. We say that $\gamma$ is \emph{a projection of $\widetilde\gamma$ to $\Gamma$ in $F$}
if there is a map $f:\mathbb S^1\times[0;1]\rightarrow\mathbb S^3$ such that:
\begin{enumerate}
\item
$F\cup f(\mathbb S^1\times[0;1])$ is a compact surface;
\item
the restriction $f|_{\mathbb S^1\times[0;1)}$ is an embedding;
\item
$f|_{\mathbb S^1\times\{1\}}$ represents $\gamma$;
\item
there exists a continuous map $g:\mathbb S^1\rightarrow[0;1]$ such that the map
$t\mapsto f(t,g(t))$, $t\in\mathbb S^1$, is a parametrization of $\widetilde\gamma$.
\end{enumerate}
\end{defi}

\begin{defi}\label{ribbon-graph-def}
\emph{A spatial ribbon graph} is a $4$-tuple $\rho=(\Gamma_\rho,\partial\rho,V_\rho,S_\rho)$ in which $\Gamma_\rho$ is a spatial graph,
$\partial\rho$ is a finite family of (not necessarily distinct) almost simple circuits in~$\Gamma_\rho$,
$V_\rho$ is a finite subset of~$\Gamma_\rho$ containing all true vertices of~$\Gamma_\rho$
and intersecting any simple closed curve in~$\Gamma_\rho$ at least twice,
and $S_\rho$ is a non-empty class of compact surfaces each
endowed with a bijection between~$\partial\rho$ and the set of boundary components of $\partial F$,
such that:
\begin{enumerate}
\item
for any surface $F\in S_\rho$ the following holds:
\begin{enumerate}
\item
$F$ contains $\Gamma_\rho$;
\item
the inclusion $\Gamma_\rho\hookrightarrow F$ is a homotopy equivalence;
\item
for any connected component~$\gamma$ of $\partial F$ the corresponding circuit
in~$\partial\rho$ is a projection of~$\gamma$ to~$\Gamma_\rho$ in~$F$;
\end{enumerate}
\item
for any two surfaces~$F,F'\in S_\rho$ the following holds:
\begin{enumerate}
\item
if~$p\in V_\rho$, then
the tangent planes~$T_pF$, $T_pF'$ to~$F$ and~$F'$ at~$p$ coincide;
\item
there is an isotopy from~$F$ to~$F'$ within the class~$S_\rho$;
\end{enumerate}
\item
$S_\rho$ is maximal, that is, for any surface~$F'$ endowed with
a bijection between connected components of~$\partial F'$ and the elements of~$\partial\rho$
such that~$F'$ can be obtained from some~$F\in S_\rho$
by an isotopy that keeps the tangent plane to the surface at any point $p\in V_\rho$
fixed, we have~$F'\in S_\rho$.
\end{enumerate}

The elements of $\partial\rho$ are called \emph{boundary circuits} of~$\rho$,
and the set~$\partial\rho$ \emph{the boundary of~$\rho$}. The elements of~$V_\rho$
are called \emph{vertices of~$\rho$}, and the closure of any connected
component of~$\Gamma_\rho\setminus V_\rho$ is called \emph{an edge of~$\rho$}.
\end{defi}

An explanation is in order.

If the graph~$\Gamma_\rho$ in this definition
has a connected component~$\gamma$ homeomorphic to a circle, then~$\partial\rho$
may contain two copies of~$\gamma$, which are regarded as different elements of~$\partial\rho$.
Unless we need to deal with such graphs a simpler definition can be used instead:
a spatial ribbon graph is a triple $\rho=(\Gamma_\rho,V_\rho,S_\rho)$
in which $\Gamma_\rho$ is a spatial graph, $V_\rho$ is a finite subset of~$\Gamma_\rho$
containing all true vertices of~$\Gamma_\rho$ and intersecting any simple closed
curve in~$\Gamma_\rho$ at least twice, and $S_\rho$ is
a non-empty isotopy class of compact surfaces in~$\mathbb S^3$ having~$\Gamma_\rho$
as a deformation retract and fixed tangent planes at all points in~$V_\rho$.
Boundary circuits of~$\rho$ are then defined
as the projections of boundary components of any surface~$F\in S_\rho$.

Indeed, let $\Gamma$ be a spatial graph, and let $F$ be a compact surface containing $\Gamma$
and retracting to~$\Gamma$.
Let also~$V$ be a finite subset of~$\Gamma$ containing all true vertices and intersecting
each simple closed curve in~$\Gamma$ at least twice. One can see that every connected component of $\partial F$
has a unique projection to~$\Gamma$ in~$F$, so, by demanding $S_\rho\ni F$, a spatial ribbon graph~$\rho$
(in the sense of Definition~\ref{ribbon-graph-def})
with~$\Gamma_\rho=\Gamma$, $V_\rho=V$, and~$\partial\rho$ being the family of projections
of the connected components of~$\partial F$ to~$\Gamma$ in~$F$ is defined uniquely.
If~$\Gamma$ has no circular connected components, then
no two boundary components of~$\partial F$ have the same projection, so,
there is a unique bijection between~$\partial\rho$ and the set of connected components
of~$\partial F$ satisfying condition~(1c) from Definition~\ref{ribbon-graph-def}.

However, the simpler definition may cause ambiguity in certain context if a connected
component of~$\Gamma$ is homeomorphic to~$\mathbb S^1$ and
the corresponding connected component of~$F$ is an annulus. In this case, two
boundary components of~$\partial F$ have the same projections to~$\Gamma$,
and thus
there is no one-to-one correspondence between boundary components of~$F$ and
boundary circuits of the spatial ribbon graph defined by~$\Gamma$, $F$, and~$V$.

\begin{defi}
Let~$\gamma$ be a simple
boundary circuit of a spatial ribbon graph~$\rho$.
Then, clearly, there is a surface with corners~$F\in S_\rho$ such that the
connected component of~$\partial F$ corresponding to~$\gamma$
coincides with~$\gamma$.

In this situation, the framing of~$\gamma$ obtained by restricting the boundary framing induced by~$F$ to~$\gamma$ (see~\cite[Definitions~12 and~13]{dp17}) is said to be \emph{induced by~$\rho$}. Clearly,
it does not depend on a particular choice of~$F$.

The induced framing is similarly defined for any link whose components are pairwise disjoint simple boundary
circuits of~$\rho$.
\end{defi}

\subsection{Enhanced and patched spatial ribbon graphs}

\begin{defi}
Let~$F$ be a compact surface in~$\mathbb S^3$. A boundary component~$\gamma\subset\partial F$ is called
\emph{patchable} if there is an embedded closed disc~$d\subset\mathbb S^3$ such that~$\partial d\cap F=\gamma$.

A boundary circuit of a spatial ribbon graph~$\rho$ is called~\emph{patchable} if
the corresponding boundary component of some (and then any) surface~$F\in S_\rho$
is patchable.

Let~$\gamma_1,\ldots,\gamma_k$ be some patchable connected components of~$\partial F$.
We say that a compact embedded surface~$F'\subset\mathbb S^3$ is obtained from~$F$ by
\emph{patching the holes~$\gamma_1,\ldots,\gamma_k$}
if~$F'$ contains~$F$ and~$F'\setminus F$ is a union of open discs~$d_1,\ldots,d_k$ such
that~$\partial\overline d_i=\gamma_i$,
$i=1,\ldots,k$. The boundary components~$\gamma_i$ will then be said to be~\emph{patched}
in~$F'$.
\end{defi}

\begin{defi}
Let~$\gamma$ be a patchable boundary circuit of a spatial ribbon graph~$\rho$, and let~$p:\mathbb D^2\rightarrow\mathbb S^3$
be a continuous map from a closed two-disc~$\mathbb D^2$ to the three-sphere such that:
\begin{enumerate}
\item
the restriction of~$p$ to~$\interior(\mathbb D^2)$ is an embedding;
\item
the image of~$\interior(\mathbb D^2)$ under~$p$ is disjoint from~$\Gamma_\rho$;
\item
the restriction of~$p$ to~$\partial\mathbb D^2$ represents~$\gamma$;
\item
there is a surface~$F\in S_\rho$ such that~$F\cup p(\mathbb D^2)$ is a surface
obtained from~$F$ by patching the hole corresponding to~$\gamma$.
\end{enumerate}
Then we say that the pair~$(\mathbb D^2,p)$ is \emph{a patching disc for~$\gamma$ in~$\rho$}.
Two patching discs~$(\mathbb D^2_1,p_1)$ and~$(\mathbb D^2_2,p_2)$ are \emph{equivalent}
if
$p_1(\mathbb D^2_1)=p_2(\mathbb D^2_2)$.
\end{defi}

Since the equivalence class of a patching disc~$(\mathbb D^2,p)$ is defined uniquely by the
image~$p(\mathbb D^2)$, we will sometimes refer to the latter as a patching disc. Note, however, that~$p(\mathbb D^2)$
is generally not a disc and even not necessarily a surface (for instance, it can be a disc with two boundary points
identified).

\begin{defi}
By \emph{an enhanced spatial ribbon graph} we call a $5$-tuple~$\rho=(\Gamma_\rho,\partial\rho,V_\rho,S_\rho,H_\rho)$
in which~$\rho_0=(\Gamma_\rho,\partial\rho,V_\rho,S_\rho)$ is a spatial ribbon graph and~$H_\rho$ is
a subset of the set of patchable boundary circuits of~$\rho_0$.

A boundary circuit~$\gamma\in\partial\rho$ is said to be \emph{an inessential boundary circuit} (or \emph{a hole}) of~$\rho$
if~$\gamma\in H_\rho$, and otherwise
\emph{an essential boundary circuit of~$\rho$}. For any~$F\in S_\rho$, connected components of~$\partial F$ corresponding to essential (respectively, inessential)
boundary circuits of~$\rho$ also are called essential (respectively, inessential).
Inessential boundary components of~$\partial F$ are also called~\emph{holes}.

By \emph{a patching} of an enhanced spatial ribbon graph~$\rho$ we mean
an isotopy class of pairs of compact surfaces~$(F_0,F)$ such that~$F_0\in S_\rho$
and~$F$ is obtained by patching all the holes of~$F_0$.
An enhanced spatial ribbon graph endowed with a patching is
called \emph{a patched spatial ribbon graph}.
\end{defi}

In other words, an enhanced spatial ribbon graph is a spatial ribbon graph with a declaration
which of the patchable boundary circuits are subject to patching, but without
specifying a concrete way of patching.
If the underlying graph~$\Gamma_\rho$ is connected then the
complement to a regular neighborhood of~$\Gamma_\rho$
is an irreducible manifold, which implies that up to isotopy there is only one way to patch each patchable boundary circuit.
So, in this case, enhancement is just equivalent to patching.

\begin{defi}
The set of all essential boundary circuits of an enhanced ribbon graph~$\rho$
is called \emph{the essential boundary of~$\rho$} and denoted by~$\partial_{\mathrm e}\rho$.
For any~$F\in S_\rho$ the union of essential connected components of~$\partial F$
is denoted by~$\partial_{\mathrm e}F$.
\end{defi}

\begin{defi}\label{properly-carried-by-graph-def}
Let~$(\rho,P)$ be a patched spatial ribbon graph.
A surface~$F$ is said to be \emph{carried by~$(\rho,P)$} if~$(F_0,F)\in P$ for some~$F_0\in S_\rho$.
If, additionally, $\partial F=\cup_{K\in\partial_{\mathrm e}\rho}K$ holds we
say that~$F$ is \emph{properly carried by~$(\rho,P)$}.

A surface is said to be \emph{carried} (respectively, \emph{properly carried}) by an enhanced spatial ribbon graph~$\rho$
if it is carried (respectively, properly carried) by~$(\rho,P)$ for some patching~$P$ of~$\rho$.
\end{defi}

One can see that a surface properly carried by~$(\rho,P)$ exists if and only if~$\partial_{\mathrm e}\rho$
consists of pairwise disjoint simple circuits.

\begin{defi}
Let~$\rho$ and~$\rho'$ be two enhanced spatial ribbon graphs,
and let~$L$ be a framed link whose components are common essential boundary circuits of~$\rho$ and~$\rho'$
endowed with the induced framing, which is the same for~$\rho$ and~$\rho'$.

The enhanced spatial ribbon graphs $\rho$ and $\rho'$ are said to be \emph{stably equivalent
relative to~$L$} if
$V_\rho\cap L=V_{\rho'}\cap L$ and
one can choose their patchings~$P$ and~$P'$, respectively, so that for some (and then any) pairs of surfaces~$(F_0,F)\in P$ and~$(F_0',F')\in P'$ the following holds:
\begin{enumerate}
\item
$L\subset\partial F$, $L\subset\partial F'$ (as a framed link);
\item
$F$ can be brought to~$F'$ by a isotopy fixed on~$L$ that
preserves the tangent plane to~$F$ at any point~$p\in V_\rho\cap L$
and brings essential (respectively, inessential) boundary components of~$F$ to essential
(respectively, inessential) boundary components of~$F'$.
\end{enumerate}

\end{defi}

Note that, if enhanced spatial ribbon graphs~$\rho$ and~$\rho'$ are stably equivalent relative to~$L$,
then for \emph{any} patching~$P$ of~$\rho$
one can find a patching~$P'$ of~$\rho'$ such that the patched spatial ribbon graphs~$(\rho,P)$ and~$(\rho',P')$
are stably equivalent. This implies, in particular, that stable equivalence is transitive, which justifies the term `equivalence'.

\subsection{Morphisms of spatial ribbon graphs}\label{morphisms-subsec}
For an enhanced spatial ribbon graph~$\rho$ we denote by~$\widetilde S_\rho$
the class of all surfaces~$F$ that can be obtained from a surface~$F_0\in S_\rho$
by \emph{a partial patching}, that is, by patching any family of holes (including the extreme cases of no holes and all holes).
Each surface~$F\in\widetilde S_\rho$ is assumed to be
endowed with a bijection inherited from some~$F_0\subset F$, $F_0\in S_\rho$, between connected components of~$\partial F$
and the boundary circuits of~$\rho$ that are not patched in~$F$.

In particular,~$\widetilde S_\rho$ contains all surfaces from~$S_\rho$ and all surfaces carried by~$\rho$.

\begin{defi}
Let~$\rho_1$ and~$\rho_2$ be enhanced spatial ribbon graphs. By \emph{a morphism} from~$\rho_1$ to~$\rho_2$
we call a maximal non-empty class~$\eta$ of triples~$(F_1,F_2,h)$ in which~$F_1\in\widetilde S_{\rho_1}$, $F_2\in\widetilde S_{\rho_2}$,
and~$h:F_1\rightarrow F_2$ is an embedding, such that
\begin{enumerate}
\item
for any~$(F_1,F_2,h)\in\eta$ there exist surfaces~$F_1'$ and~$F_2'$ carried by~$\rho_1$ and~$\rho_2$, respectively,
and a homeomorphism~$h':F_1'\rightarrow F_2'$ such that~$F_1'\supset F_1$, $F_2'\supset F_2$,
$h'|_{F_1}=h$, and~$(F_1',F_2',h')\in\eta$;
\item
for any two triples~$(F_1,F_2,h),(F_1,F_2,h')\in\eta$ such that~$F_i$ is carried by~$\rho_i$, $i=1,2$, the embeddings~$h$ and~$h'$
are isotopic;
\item
if~$(F_1,F_2,h)\in\eta$ and there are isotopies from~$F_1$ and~$F_2$ to surfaces~$F_1'$ and~$F_2'$ within the classes~$\widetilde S_{\rho_1}$
and~$\widetilde S_{\rho_2}$, respectively, inducing homeomorphisms~$h_1:F_1\rightarrow F_1'$ and~$h_2:F_2\rightarrow F_2'$,
then~$(F_1',F_2',h_2\circ h\circ h_1^{-1})\in\eta$.
\end{enumerate}
\end{defi}

Informally speaking, a morphism from~$\rho$ to~$\rho'$ is a way, viewed up to isotopy, to identify surfaces
carried by~$\rho$ with those carried by~$\rho'$. In order to fix such an identification it suffices to choose
a single homeomorphism from a surface~$F$ carried by~$\rho$ to a surface~$F'$ carried by~$\rho'$. Since we
view it up to isotopy it suffices to know the restriction of this homeomorphism to any subsurface~$F_0\subset F$, $F_0\in\widetilde S_\rho$.
So the idea is to include into~$\eta$ all maps that give rise, after patching the holes, to the same, up to isotopy,
identification between surfaces carried by~$\rho$ with surfaces carried by~$\rho'$.

\emph{The composition}~$\eta_2\circ\eta_1$
of two composable morphisms~$\eta_1:\rho_1\rightarrow\rho_2$ and~$\eta_2:\rho_2\rightarrow\rho_3$ is defined in the obvious way:
if~$(F_1,F_2,h_1)\in\eta_1$ and~$(F_2,F_3,h_2)\in\eta_2$, then~$(F_1,F_3,h_2\circ h_1)\in\eta_2\circ\eta_1$.
Clearly, there is an identity morphism for every enhanced spatial ribbon graph, and every morphism has an inverse.

\subsection{Handle moves of spatial ribbon graphs}
\begin{defi}\label{handle-addition-def}
Let~$\rho$ and~$\rho'$ be enhanced spatial ribbon graphs, and let $d$ be a patching disc for~$\rho'$
such that the following holds:
\begin{enumerate}
\item
$\Gamma_{\rho'}\setminus\Gamma_\rho$ is an open simple arc;
\item
for some~$F\in S_\rho$ and~$F'\in S_{\rho'}$ we have
$F'\cup d=F$ and
$\partial_{\mathrm e}F=\partial_{\mathrm e}F'$;
\item
$V_{\rho'}\cap\Gamma_\rho=V_\rho$.
\end{enumerate}
Then we say that the transition from~$\rho$ to~$\rho'$ is \emph{a handle addition}, and
the inverse operation is \emph{a handle removal}.

The patching disc $d$ will be said to be \emph{associated} with the handle addition~$\rho\mapsto\rho'$.
We also associate with this move a morphism~$\eta$ from~$\rho$ to~$\rho'$ by requesting that~$(F,F,\mathrm{id}|_F)\in\eta$.
To state that~$\eta$ is the associated morphism we write~$\rho\xmapsto\eta\rho'$.
\end{defi}

This definition means, in particular, that for every surface~$F'\in S_{\rho'}$ there is a surface~$F\in S_\rho$ (which is not
the same as in the definition above) such
that~$F'$ is obtained from~$F$ by
attaching a $1$-handle to~$\partial F$; see Figure~\ref{attaching-handle}.
\begin{figure}[ht]
\includegraphics[scale=1.2]{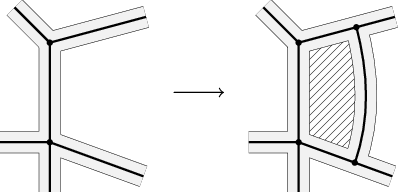}\put(-150,23){$F$}\put(-8,23){$F'$}
\caption{Handle addition}\label{attaching-handle}
\end{figure}
This is done so that all but exactly one of the boundary components of~$F$
are preserved, and the exceptional one $\gamma$, say, is replaced by two components~$\gamma'_1$, $\gamma_2'$, say,
of~$\partial F'$. If~$\gamma$ is an essential component of~$\partial F$, then exactly one of~$\gamma_1'$ and~$\gamma_2'$
is an essential component of~$\partial F'$. Otherwise both of them are inessential. Any connected component of~$\partial F$
that is also contained in~$\partial F'$ is essential for~$F'$ if and only if it is essential for~$F$.

\begin{lemm}\label{add-points-to-V-lem}
Let~$\rho$ and~$\rho'$ be enhanced spatial ribbon graphs such that~$\Gamma_{\rho'}=\Gamma_\rho$, $\partial\rho'=\partial\rho$,
$H_{\rho'}=H_\rho$, $V_{\rho'}\supset V_\rho$, $F_{\rho'}\subset F_\rho$.
Then~$\rho$ and~$\rho'$ are related by a finite sequence of handle additions and removals
preserving any boundary circuit~$c\in\partial\rho$ that satisfies~$c\cap V_\rho=c\cap V_{\rho'}$.
\end{lemm}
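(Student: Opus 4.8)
The idea is to add the new points of $V_{\rho'}$ one at a time, realizing each addition by a sequence of a handle addition followed by a handle removal, and then to observe that this process never disturbs a boundary circuit on which the two vertex sets already agree. First I would reduce to the case where $V_{\rho'}\setminus V_\rho$ is a single point $p$, since the general case follows by induction on $|V_{\rho'}\setminus V_\rho|$: having an intermediate ribbon graph $\rho''$ with $\Gamma_{\rho''}=\Gamma_\rho$, $\partial\rho''=\partial\rho$, $H_{\rho''}=H_\rho$, and $V_\rho\subsetneq V_{\rho''}\subseteq V_{\rho'}$ is allowed, and any boundary circuit $c$ with $c\cap V_\rho = c\cap V_{\rho'}$ automatically satisfies $c\cap V_\rho = c\cap V_{\rho''}$ as well. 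So fix a point $p\in\Gamma_\rho\setminus V_\rho$ and set $V_{\rho'}=V_\rho\cup\{p\}$ (keeping $\Gamma$, $\partial\rho$, $H$, and the surface class — more precisely $F_{\rho'}\subset F_\rho$ — otherwise unchanged; note that enlarging $V$ shrinks the permissible surface class, which is why the inclusion $F_{\rho'}\subset F_\rho$ holds).

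Next I would construct an auxiliary enhanced spatial ribbon graph $\sigma$ obtained from $\rho$ by a handle addition $\rho\xmapsto{\eta_1}\sigma$ whose associated patching disc $d$ is a small disc attached near $p$: concretely, take $F\in S_\rho$, choose a short unknotted arc $a$ that leaves $\partial F$ near the boundary point closest to $p$, runs alongside $\Gamma_\rho$ past $p$, and returns to $\partial F$, bounding together with a strip of $F$ a patching disc $d$ with $F = (F\setminus(\text{strip}))\cup d$; this is a $1$-handle attachment in the sense of the discussion following Definition~\ref{handle-addition-def}. When choosing $\sigma$ I take $V_\sigma = V_\rho\cup\{p\}$ together with the new true vertices created on $\Gamma_\rho$ where $a$ attaches (there must be at least two, to satisfy the condition that $V$ meet every simple closed curve twice, and the definition of handle addition demands $V_\sigma\cap\Gamma_\rho\supseteq V_\rho$, here with the extra point $p$ included). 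Then the inverse move, a handle removal $\sigma\xmapsto{\eta_2}\rho'$, collapses the handle back but now records $p$ as a vertex of $\Gamma_{\rho'}=\Gamma_\rho$; composing, $\rho\xmapsto{\eta_2\circ\eta_1}\rho'$ realizes exactly the desired change. By making the arc $a$ attach in a small neighborhood of the boundary point nearest $p$, the only boundary circuit of $\rho$ affected by either move is the one passing through that neighborhood, and since that circuit passes through $p$, it is \emph{not} one of the circuits $c$ with $c\cap V_\rho = c\cap V_{\rho'}$ (such a circuit avoids $p$ entirely); hence all of the circuits we must preserve are preserved.

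The main obstacle I anticipate is not topological but bookkeeping: one has to verify carefully that the intermediate object $\sigma$ genuinely satisfies all the axioms of an enhanced spatial ribbon graph (Definition~\ref{ribbon-graph-def}), in particular that $V_\sigma$ still meets every simple closed curve in $\Gamma_\sigma$ at least twice after adding the extra arc and the new true vertices, and that the homotopy-equivalence and projection conditions hold for the enlarged surface class; and then that the patching disc $d$ is genuinely a patching disc for $\sigma$ in the sense required (this needs the new hole created by the handle addition to be inessential and $\partial_{\mathrm e}$ to be unchanged, which is arranged by declaring the appropriate new boundary circuit a hole, i.e.\ adding it to $H_\sigma$, while keeping $H_{\rho'}=H_\rho$). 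One also needs the arc $a$ to be chosen unknotted and sufficiently close to $\partial F$ so that the two moves are genuinely inverse handle moves and so that they commute with — i.e.\ do not touch — the boundary circuits to be preserved; this is where the freedom to place $a$ in an arbitrarily small neighborhood of a boundary point of $F$ near $p$ is used, and it is routine but must be stated explicitly. Once this is set up, the induction on $|V_{\rho'}\setminus V_\rho|$ closes immediately.
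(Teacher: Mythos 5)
Your plan runs into a definitional obstruction that makes the proposed two-move sequence impossible. Definition~\ref{handle-addition-def}(3) requires that a handle addition $\rho\to\rho'$ satisfy $V_{\rho'}\cap\Gamma_\rho = V_\rho$, with equality — you quote this condition as ``$V_\sigma\cap\Gamma_\rho\supseteq V_\rho$'' and propose to add $p$ there, but the equality forbids exactly that: since $p\in\Gamma_\rho$ and $p\notin V_\rho$, you cannot have $p\in V_\sigma$ after a single handle addition $\rho\to\sigma$. And a handle removal $\sigma\to\rho'$ is, by definition, the inverse of a handle addition $\rho'\to\sigma$, which again requires $V_\sigma\cap\Gamma_{\rho'}=V_{\rho'}$; so $V_{\rho'}\subset V_\sigma$, and a removal can only \emph{drop} vertices, never create new ones on $\Gamma_{\rho'}$. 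In short, neither move in your addition-then-removal pair can ever put $p$ into the vertex set of a graph in which $p$ already lies, so your $\rho'$ cannot have $p\in V_{\rho'}$.

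The fix requires that $p$ temporarily leave the underlying graph. The paper does this with four moves: let $\alpha$ be the edge of $\rho$ containing $p$, choose a parallel arc $\beta$ in a surface $F\in S_{\rho'}$ cobounding a disc $d$ with $\alpha$ and with $\partial\beta=\partial\alpha$, then (i) add $\beta$ (handle addition), (ii) remove $\alpha$ (handle removal) — now $p\notin\Gamma_{\rho_2}$ — (iii) add $\alpha$ back (handle addition), at which point condition (3) no longer constrains $\interior(\alpha)$, so one may take $\interior(\alpha)\cap V_{\rho_3}=\{p\}$, and finally (iv) remove $\beta$. The ``remove $\alpha$ before re-adding it'' step is precisely what your plan is missing and is the only way to change the vertex decoration on an edge of the existing graph.
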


\begin{proof}
Clearly it suffices to prove the statement in the case when~$V_{\rho'}\setminus V_\rho$ is a single point~$p$.
This is assumed below.

Let~$\alpha$ be the edge of~$\rho$ that contains~$p$.
Pick a surface~$F\in S_{\rho'}$.
There is another simple arc $\beta$, say, in~$F$ such that~$\beta\cap\Gamma_\rho=\partial\beta=\partial\alpha$
and the arcs~$\alpha$, $\beta$ co-bound a disc~$d\subset F$.

Let~$\rho\mapsto\rho_1\mapsto\rho_2\mapsto\rho_3$ be a sequence of two handle additions and one handle removal, following in the alternating order,
associated with~$d$: we add~$\beta$, then remove~$\alpha$, then put~$\alpha$ back.
The move~$\rho_2\mapsto\rho_3$, which puts~$\alpha$ back, can be chosen so that~$\interior(\alpha)\cap V_{\rho_3}=\{p\}$.
Then~$\rho_3\mapsto\rho'$ will be a handle removal, which completes the job.
\end{proof}

\begin{prop}\label{stable-equivalence-of-ribbon-graphs-prop}
Let~$\rho$ and~$\rho'$ be two enhanced spatial ribbon graphs,
and let~$L$ be a framed link whose components are common essential boundary circuits of~$\rho$ and~$\rho'$
endowed with the induced framing, which is the same for~$\rho$ and~$\rho'$.
Let also~$\eta$ be a morphism from~$\rho$ to~$\rho'$.
Then the following two conditions are equivalent:
\begin{enumerate}
\item
$\rho$ and~$\rho'$ are stably equivalent relative to~$L$, and some isotopy realizing this stable equivalence
induces the morphism~$\eta$;
\item
there exists a sequence of handle additions and removals
$$\rho=\rho_0\xmapsto{\eta_1}\rho_1\ldots\xmapsto{\eta_N}\rho_N=\rho'$$
each of which preserves the framed boundary circuits contained in~$L$, and we have
$\eta=\eta_N\circ\ldots\circ\eta_1$.
\end{enumerate}
\end{prop}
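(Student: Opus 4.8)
The plan is to establish the two implications separately, with the bulk of the work being the implication (1)$\Rightarrow$(2). The direction (2)$\Rightarrow$(1) is essentially immediate: each handle addition or removal $\rho_{i-1}\xmapsto{\eta_i}\rho_i$ preserving the framed boundary circuits contained in $L$ comes, by Definition~\ref{handle-addition-def}, with an associated isotopy of surfaces which is the identity on the relevant subsurface and hence, after patching, fixes $L$ as a framed link and preserves the tangent plane at the points of $V_\rho\cap L$ (which, being vertices on the fixed part, are not touched). Composing these isotopies across the whole sequence produces an isotopy realizing a stable equivalence of $\rho$ and $\rho'$ relative to $L$ whose induced morphism is $\eta_N\circ\cdots\circ\eta_1=\eta$, as required. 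One only needs to check that inessential-versus-essential status of boundary circuits is respected throughout, which is part of the definition of a handle move.

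For (1)$\Rightarrow$(2), first I would reduce to a convenient normal form for the vertex sets. By hypothesis there are surfaces $F\in S_\rho$, $F'\in S_{\rho'}$ with $L\subset\partial F$, $L\subset\partial F'$ as framed links, $V_\rho\cap L=V_{\rho'}\cap L$, and an isotopy $\Phi$ from $F$ to $F'$ fixed on $L$, preserving tangent planes along $V_\rho\cap L$, and respecting essential/inessential boundary components; moreover this $\Phi$ induces $\eta$. Using Lemma~\ref{add-points-to-V-lem} I can first enlarge the vertex set of $\rho$ (and correspondingly $\rho'$) by finitely many handle additions and removals that preserve the circuits in $L$, so that, after these preliminary moves, the two spatial ribbon graphs have a common underlying graph with all of $\Phi$'s ``extra structure'' recorded at vertices. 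The point is that after such an enlargement one may assume $\Gamma_\rho\subset F'$ and $\Gamma_{\rho'}\subset F$, i.e. both graphs sit inside a single surface on which $\Phi$ acts, so that the difference between $\rho$ and $\rho'$ is entirely a matter of how two deformation-retract spines of the same surface are placed.

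The main step is then to decompose the isotopy $\Phi$ into handle moves. The standard idea is to triangulate the isotopy: choose a generic $1$-parameter family and subdivide $[0;1]$ into finitely many subintervals so that on each the relevant combinatorial data changes by one ``elementary'' event, which on the level of spines of a surface can be realized by adding an arc (a handle addition) and then deleting an old arc (a handle removal), exactly as in the proof of Lemma~\ref{add-points-to-V-lem}: given two spines $\Gamma,\Gamma'$ of the same surface $S$ differing by replacement of one edge, one inserts a parallel arc $\beta$ cobounding a disc with the old arc $\alpha$, then removes $\alpha$. Iterating over the subintervals and keeping all arcs, discs, and patching data disjoint from $L$ (which is possible because $\Phi$ is fixed on $L$ and the isotopy preserves the tangent planes at $V_\rho\cap L$, so a tubular neighborhood of $L$ can be left untouched throughout), one obtains the desired sequence $\rho=\rho_0\xmapsto{\eta_1}\cdots\xmapsto{\eta_N}\rho'$. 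Finally I would check that the composite morphism equals $\eta$: each elementary move is equipped with its associated morphism via the identity-on-$F$ condition of Definition~\ref{handle-addition-def}, so the composite is the morphism induced by the whole isotopy $\Phi$, which is $\eta$ by assumption; and the preliminary Lemma~\ref{add-points-to-V-lem} moves likewise carry morphisms compatible with identity maps.

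The step I expect to be the main obstacle is making the ``triangulation of the isotopy'' rigorous in the present piecewise-smooth, with-corners, ribbon-graph setting: one must argue that a generic isotopy of surfaces relative to a fixed framed sublink can be broken into finitely many pieces each of which is, up to an ambient isotopy keeping the graph inside the surface, a single handle-addition-then-removal, and one must track the essential/inessential labelling of boundary circuits (and the patching discs certifying patchability) across every such elementary event. Controlling the patchings — ensuring that each intermediate $\rho_i$ admits the requisite patching discs for its holes and that handle moves carry these along — is where the bookkeeping is heaviest; the geometric core, however, is just the elementary arc-swap already used in Lemma~\ref{add-points-to-V-lem}.
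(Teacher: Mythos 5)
Your direction of (2)$\Rightarrow$(1) and your general instinct to break the isotopy into small pieces match the paper, but there is a genuine gap in the step where you claim that, after applying Lemma~\ref{add-points-to-V-lem}, ``one may assume $\Gamma_\rho\subset F'$ and $\Gamma_{\rho'}\subset F$''. That lemma only subdivides edges of a \emph{fixed} underlying graph by enlarging $V_\rho$; it does not change the embedded graph $\Gamma_\rho$ at all, so it cannot force one graph into the other's surface. Getting the two graphs to sit in a common surface is precisely the geometric content of (1)$\Rightarrow$(2) and cannot be achieved by vertex bookkeeping alone.

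The paper's argument is organized differently, and in a way that sidesteps this issue. First it treats the special case in which there already is a single surface $F$ carried by both $\rho$ and $\rho'$ with $(F,F,\mathrm{id}|_F)\in\eta$: after using Lemma~\ref{add-points-to-V-lem} only to arrange that the \emph{finite} intersection $\Gamma_\rho\cap\Gamma_{\rho'}\setminus L$ lies in $V_\rho\cap V_{\rho'}$, one builds a \emph{common supergraph} $\Gamma\supset\Gamma_\rho\cup\Gamma_{\rho'}$ inside $F$, turns it into a ribbon graph $\rho''$, and then passes $\rho\to\rho''\to\rho'$ by pure handle additions followed by pure handle removals. This is cleaner and more robust than trying to realize the change of spine as a chain of single arc-swaps. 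Second, the general case is reduced to the special one by a concrete chopping of the isotopy: one produces a finite chain of surfaces $F=F_1,\ldots,F_n=F'$ where each step $F_i\to F_{i+1}$ is supported in a ball $B$ with $B\cap F_i$ an open disc or half-disc disjoint from $L$, and one inserts an intermediate ribbon graph $\rho_i$ whose underlying graph avoids that disc, so that $F_i$ and $F_{i+1}$ are \emph{both} carried by $\rho_i$ and $\rho_{i+1}$; then the special case applies. Your ``triangulate the isotopy'' sketch gestures at this but does not supply the common-supergraph lemma that actually lets two spines of a common surface be compared, nor the disc-avoidance that keeps the intermediate ribbon graphs defined; those are the two ingredients you would need to add to close the argument.
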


\begin{proof}
The implication~$(2)\Rightarrow(1)$ is easy and is left to the reader. We will prove the converse one.

Assume that Condition~(1) holds, and suppose, additionally, that there is a surface~$F$ carried by both~$\rho$ and~$\rho'$ such
that~$\partial F\supset L$ and~$(F,F,\mathrm{id}|_F)\in\eta$. Suppose
also that the intersection $\Gamma_\rho\cap\Gamma_{\rho'}\setminus L$ is finite.

By Lemma~\ref{add-points-to-V-lem} we may arbitrarily subdivide edges of a spatial ribbon graph
by means of handle additions and removals, without touching the boundary circuits
disjoint from the interior of these edges. Therefore, we may assume
without loss of generality that~$\Gamma_\rho\cap\Gamma_{\rho'}\setminus L$
is a subset of~$V_\rho\cap V_{\rho'}$.

There exists a graph~$\Gamma\subset F$ containing~$\Gamma_\rho\cup\Gamma_{\rho'}$
and having no one-valent vertices and no true vertices outside~$V_\rho\cup V_{\rho'}$. It can be turned into an enhanced spatial ribbon graph~$\rho''$ with~$\Gamma_{\rho''}=\Gamma$,
$V_{\rho''}=V_\rho\cup V_{\rho'}$,
so that~$F$ be carried by~$\rho''$. One can see that~$\rho''$ can then be obtained from each of~$\rho'$ and~$\rho''$
by a sequence of handle additions, and all the associated morphisms of spatial ribbon graphs will
contain the triple~$(F,F,\mathrm{id}|_F)$, hence Condition~(2) holds in this case.

Now we drop the assumption on the finiteness of~$\Gamma_\rho\cap\Gamma_{\rho'}\setminus L$.
One can find an enhanced spatial ribbon graph~$\rho''$ carrying~$F$ such that
$V_{\rho''}\cap L=V_\rho\cap L$, and
both~$\Gamma_\rho\cap\Gamma_{\rho''}\setminus L$
and~$\Gamma_{\rho'}\cap\Gamma_{\rho''}\setminus L$ are finite. By the above argument
there are sequences of handle additions and removals producing~$\rho''$ from~$\rho$, and~$\rho'$ from~$\rho''$,
and all the associated morphisms contain the triple~$(F,F,\mathrm{id}|_F)$.
Hence Condition~(2) holds in this case, too.

Now let~$\rho$ and~$\rho'$ be arbitrary enhanced spatial ribbon graphs and let~$\eta$ be a morphism
from~$\rho$ to~$\rho'$. Assume that Condition~(1) holds, that is, there are
surfaces~$F$ and~$F'$ carried by~$\rho$ and~$\rho'$, respectively, and an isotopy from~$F$ to~$F'$ fixed on~$L$
and inducing the morphism~$\eta$.

One can find a sequence of surfaces~$F=F_1,F_2,\ldots,F_n=F'$ such that, for every~$i=1,\ldots,n-1$,
there is an isotopy from~$F_i$ to $F_{i+1}$ 
fixed outside of an open ball that intersects~$F_i$ in
an open disc (or a half-disc)~$d_i$ satisfying~$d_i\cap L=\varnothing$ and~$\overline{F_i\setminus\overline{d_i}}\supset V_\rho\cap L$.
Such surfaces and isotopies can be chosen so that the isotopy from~$F$ to~$F'$ obtained by concatenating
the latter is arbitrarily close to the original one, and hence also induces~$\eta$.
For every~$i=1,\ldots,n-1$, choose an enhanced spatial ribbon graph~$\rho_i$ with~$V_{\rho_i}\cap L=V_\rho\cap L$ so that~$\Gamma_{\rho_i}$
is contained in~$F_i\setminus d_i$, and~$F_i$ is carried by~$\rho_i$. Set also~$\rho_0=\rho$ and~$\rho_n=\rho'$.

By construction, each surface~$F_i$, $i=1,\ldots,n$, contains~$\Gamma_{\rho_i}$
and~$\Gamma_{\rho_{i-1}}$.
Therefore, for each~$i=1,\ldots,n$, there exists a sequence of handle additions and removals
that produces~$\rho_i$ from~$\rho_{i-1}$.
Choose morphisms~$\eta_i$ from~$\rho_{i-1}$ to~$\rho_i$ so that~$(F_i,F_i,\mathrm{id}|_{F_i})\in\eta_i$,
$i=1,\ldots,n$.
Clearly we will have~$\eta=\eta_n\circ\ldots\circ\eta_1$.
This implies Condition~(2) in the general case.\end{proof}

\section{Mirror diagrams}\label{mir-diagr-sec}

\subsection{Representing ribbon graphs by mirror diagrams}\label{mirror-diagram-definition-subsec}
Here comes our main instrument.
\begin{defi}
By \emph{a mirror diagram} we call a $4$-tuple $M=(\Theta_M,\Phi_M,E_M,T_M)$ in which
$(\Theta_M,\Phi_M,E_M)$ is a rectangular diagram of a graph (see~\cite[Definition~22]{dp17}) and $T_M$ is a map
from $E_M$ to the set of two symbols, `$\diagdown$' and `$\diagup$'.
An element $v\in E_M$ such that $T_M(v)=`\diagdown\text'$ (respectively, $T_M(v)=`\diagup\text'$) is referred
to as \emph{a $\diagdown$-mirror} (respectively, \emph{a $\diagup$-mirror}).
We also call~$T_M(v)$ \emph{the type} of the mirror~$v$.
The meridians~$m_\theta$, $\theta\in\Theta_M$, and the longitudes $\ell_\varphi$, $\varphi\in\Phi_M$, are called \emph{vertical and horizontal occupied levels of $M$}, respectively.
The set $\{m_\theta:\theta\in\Theta_M\}\cup\{\ell_\varphi:\varphi\in\Phi_M\}$
of all occupied levels of~$M$ is denoted by~$L_M$.
\end{defi}

To represent a mirror diagram $M$ graphically we draw all occupied levels of $M$
in a square, which is thought of as a cut torus, and mark each mirror by a dash
slanted at $\pm\pi/4$ according to the type of the mirror.
When there is no mirror at the intersection of two occupied levels we
draw the vertical occupied level passing over the horizontal one; see Figure~\ref{mirrordiagram}.
\begin{figure}[ht]
\includegraphics{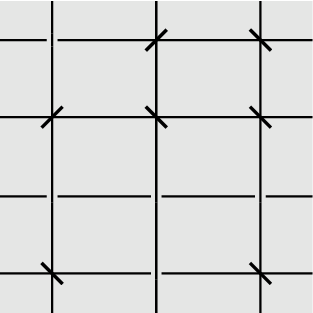}
\caption{A mirror diagram}\label{mirrordiagram}
\end{figure}

\begin{defi}\label{mirror-circuit-def}
\emph{A boundary circuit} of a mirror diagram $M$ is a closed path of a ray of light
traveling along the occupied levels of $M$ and getting reflected off the mirrors of $M$;
see Figure~\ref{circuit}. In particular, if some occupied level of~$M$ contains no mirrors, then this
level alone forms a boundary circuit.
\begin{figure}[ht]
\includegraphics{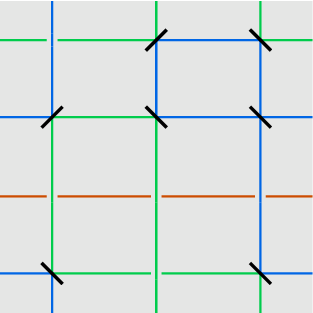}
\caption{Boundary circuits of a mirror diagram. Each circuit is shown in different color}\label{circuit}
\end{figure}
The set of all boundary circuits of~$M$ will be denoted by~$\partial M$ and called \emph{the boundary of~$M$}.
\end{defi}

Mirror diagrams represent spatial ribbon graphs in~$\mathbb S^3$ as we now describe.

Let $M$ be a mirror diagram. Denote by $G_M$ the rectangular diagram of a graph $(\Theta_M,\Phi_M,E_M)$.
All edges of the graph~$\widehat G_M$ are tangent to both contact structures $\xi_+$ and $\xi_-$ (see \cite[Section~3]{dp17}),
hence there exists a compact surface~$F$ such that (recall that we don't demand the surfaces to be $C^2$-smooth):
\begin{enumerate}
\item
$\widehat G_M$ is contained in the interior of~$F$;
\item
$F$ is tangent to $\xi_+$ along $\widehat\mu$ if $\mu$ is a $\diagup$-mirror of~$M$;
\item
$F$ is tangent to $\xi_-$ along $\widehat\mu$ if $\mu$ is a $\diagdown$-mirror of~$M$.
\end{enumerate}

Pick such a surface $F$ and fix it from now on. For a subset $X\subset\mathbb S^3$
and a real $\varepsilon>0$ we denote by~$U_\varepsilon(X)$ the open $\varepsilon$-neighborhood
of~$X$, and by $\overline X$ the closure of~$X$.

For a small enough $\varepsilon>0$ the closure of the intersection $F\cap U_\varepsilon\bigl(\widehat G_M\cap(\mathbb S^1_{\tau=0}\cup
\mathbb S^1_{\tau=1})\bigr)$ is a union of pairwise disjoint discs,
each containing a single point in $\widehat G_M\cap(\mathbb S^1_{\tau=0}\cup
\mathbb S^1_{\tau=1})$, and being such that the torus projection of its boundary is close to the corresponding occupied level of~$M$.
We fix such an~$\varepsilon$ and, for each~$x\in L_M$, denote by $\wideparen x$ the disc
corresponding to~$x$.

Now take $\delta>0$ much smaller than~$\varepsilon$ so that
the closure of $F\cap U_\delta(\widehat G_M)\setminus U_\varepsilon(\mathbb S^1_{\tau=0}\cup
\mathbb S^1_{\tau=1})$ is a union of pairwise disjoint discs, each intersecting a single arc
of the form~$\widehat\mu$, $\mu\in E_M$. We denote the disc
intersecting $\widehat\mu$, where~$\mu\in E_M$, by $\wideparen\mu$.
It has the form of a narrow strip joining the discs
$\wideparen m$ and $\wideparen\ell$, where~$m$ and~$\ell$
are the meridian and the longitude through~$\mu$,
and twisted around~$\widehat\mu$ by~$\pi/2$ in
the direction that depends on the type of the mirror~$\mu$ in~$M$;
see Figure~\ref{surface-by-mirror-diagram}.
\begin{figure}[ht]
\includegraphics{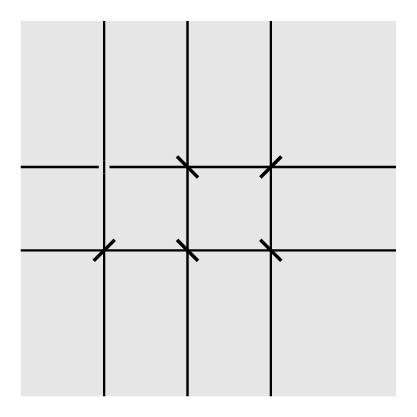}\put(-188,14){$M$}\put(-148,14){$m_1$}\put(-108,14){$m_2$}\put(-68,14){$m_3$}%
\put(-20,70){$\ell_1$}\put(-20,110){$\ell_2$}\put(-148,70){$\mu_1$}\put(-108,70){$\mu_2$}\put(-68,70){$\mu_3$}%
\put(-108,110){$\mu_4$}\put(-68,110){$\mu_5$}\hskip1cm
\includegraphics[width=200pt]{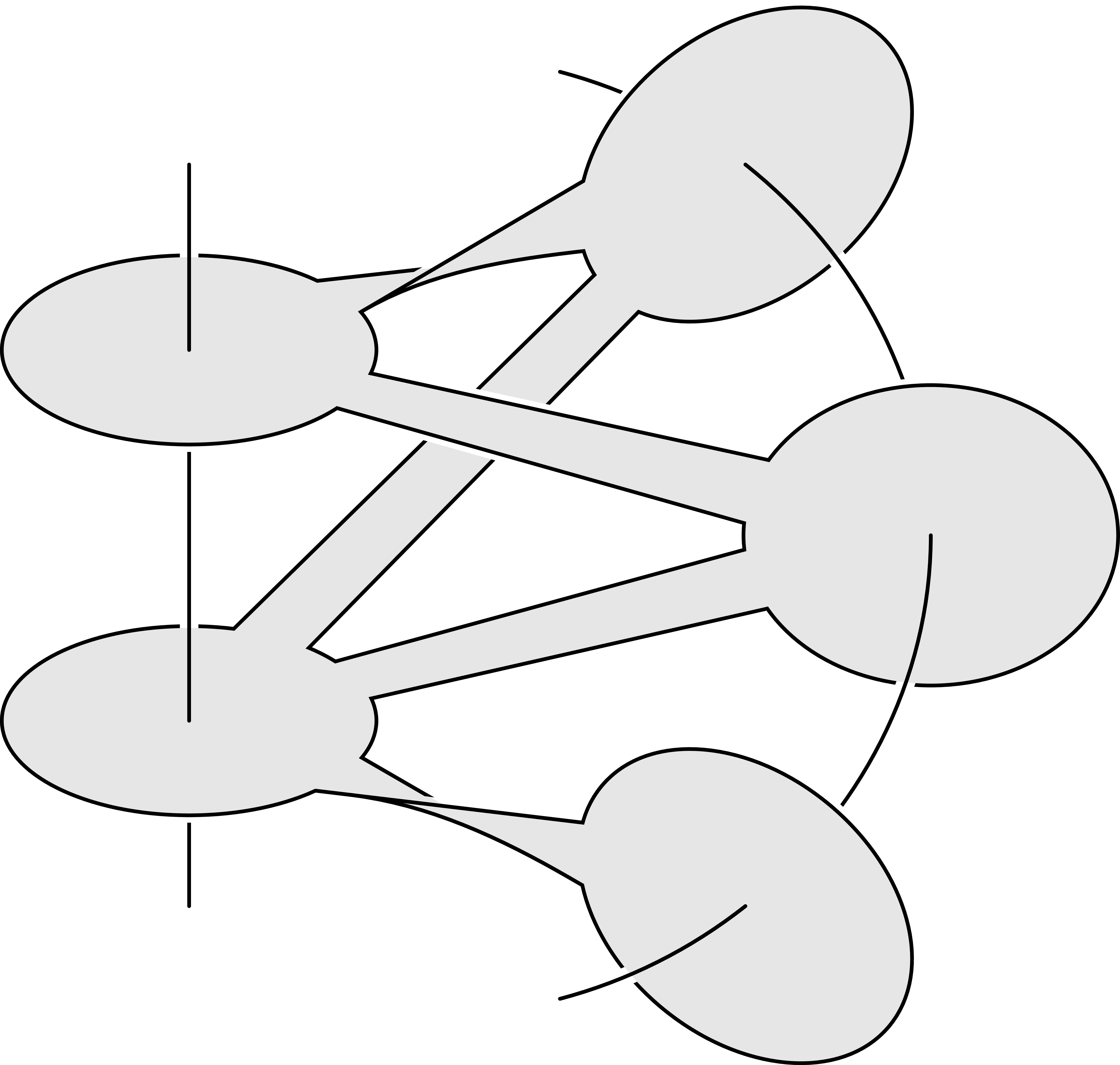}\put(-200,10){$\wideparen M$}%
\put(-62,15){$\wideparen m_1$}\put(-25,90){$\wideparen m_2$}\put(-58,167){$\wideparen m_3$}%
\put(-185,60){$\wideparen\ell_1$}\put(-185,125){$\wideparen\ell_2$}\put(-120,173){$\mathbb S^1_{\tau=1}$}%
\put(-170,165){$\mathbb S^1_{\tau=0}$}\put(-120,33){$\wideparen\mu_1$}\put(-95,65){$\wideparen\mu_2$}%
\put(-122,90){$\wideparen\mu_3$}\put(-100,95){$\wideparen\mu_4$}\put(-124,150){$\wideparen\mu_5$}
\caption{A mirror diagram~$M$ and the corresponding surface $\wideparen M$}\label{surface-by-mirror-diagram}
\end{figure}

The union $\bigl(\cup_{x\in L_M}\wideparen x\bigr)\cup\bigl(\cup_{\mu\in E_M}\wideparen\mu\bigr)$
is a surface with corners, which we denote by~$\wideparen M$. It is easy to see that~$\wideparen M$
retracts to~$\widehat G_M$, and the torus projection of each connected component of~$\partial\wideparen M$ approximates
a
unique boundary circuit of~$M$ (the smaller~$\varepsilon$ and~$\delta$, the better is the accuracy).
This establishes a natural one-to-one correspondence
between the boundary components of~$\wideparen M$ and the boundary circuits of~$M$.

More formally, this correspondence is defined as follows. Let~$x$ be an occupied level of~$M$,
and let~$\mu_1,\ldots,\mu_k$ be all mirrors of~$M$ lying on~$x$ numbered according
to their cyclic order in~$x$. There is a natural one-to-one
correspondence between connected components of~$x\setminus\{\mu_1,\ldots,\mu_k\}$ and
the connected components of~$\wideparen x\cap\partial\wideparen M$.
Namely, if~$k\geqslant2$ and~$\alpha=(\mu_i,\mu_{i+1(\mathrm{mod}\,k)})$, then the component
of~$\wideparen x\cap\partial\wideparen M$ corresponding to~$\alpha$, which we denote by~$\wideparen\alpha$,
is defined by requesting that the torus projection of~$\partial\wideparen\alpha$
projects into a subinterval of~$(\mu_i,\mu_{i+1(\mathrm{mod}\,k)})$ along the $\varphi$-direction
if~$x$ is a longitude, and into a subinterval of~$(\mu_i,\mu_{i+1(\mathrm{mod}\,k)})$ along
the $\theta$-direction
if~$x$ is a meridian.
If~$k=0$ or~$k=1$, then both sets~$x\setminus\{\mu_1,\ldots,\mu_k\}$
and~$\wideparen x\cap\partial\wideparen M$ are connected, so the bijection in question is the only possible one.
If~$c$ is the boundary circuit of~$M$ that contains~$\alpha$, then the respective
connected component of~$\partial\wideparen M$, which we denote by~$\wideparen c$, is
the one that contains~$\wideparen\alpha$. One can see that this rule being
applied to all occupied levels~$x$ establishes a one-to-one correspondence between~$\partial M$
and the set of all connected components of~$\partial\wideparen M$.

\begin{defi}\label{ass-spatial-rib-graph-def}
\emph{The spatial ribbon graph $\widehat M$ associated with a mirror diagram~$M$}
is defined by putting
$$\Gamma_{\widehat M}=\widehat G_M,\qquad V_{\widehat M}=\widehat
G_M\cap(\mathbb S^1_{\tau=0}\cup\mathbb S^1_{\tau=1}),$$
and letting $S_{\widehat M}$ be such that~$\wideparen M\in S_{\widehat M}$.
This means, in particular, that~$\partial\widehat M$ consists of the projections of
the connected components of~$\partial\wideparen M$ to~$\widehat G_M$ in~$\wideparen M$.
According to the discussion above there is a natural bijection between~$\partial\widehat M$
and~$\partial M$.

A mirror diagram~$M$ is called \emph{connected} if so is the graph~$\widehat G_M$.
A mirror diagram~$M_0$ is referred to as \emph{a connected component} of
a mirror diagram~$M$ if~$\widehat M_0$ is a connected component of~$\widehat M$.
\end{defi}

Note that a spatial ribbon graph associated with a mirror diagram~$M$ is a well defined
object whereas the definition of the surface~$\wideparen M$ contains a large arbitrariness.
However, we will often refer to surfaces of the form~$\wideparen M$ in our
explanations, silently assuming
that some reasonable choice has been made for all such surfaces that are being currently considered,
and this choice is natural in the present context.

Note also that if~$M$ is a mirror diagram, then any surface~$F\in S_{\widehat M}$
is orthogonal to~$\mathbb S^1_{\tau=0}\cup\mathbb S^1_{\tau=1}$ at
any point in~$\Gamma_{\widehat M}\cap\bigl(\mathbb S^1_{\tau=0}\cup\mathbb S^1_{\tau=1}\bigr)$.

\begin{defi}
Let~$\rho$ be a(n enhanced) spatial ribbon graph, and let~$F_0$, $F_1$ be two
surfaces.
We say that~$F_0$ and~$F_1$ are \emph{isotopic relative to~$\rho$}
if there is an isotopy from~$F_0$ to~$F_1$ fixed on~$\Gamma_\rho$
through surfaces each of which contains a subsurface from~$S_\rho$.
In particular, $F_0$ and~$F_1$ are meant to contain subsurfaces~$F_0',F_1'\in S_\rho$, respectively.
\end{defi}

Practically, this definition means that~$F_0$ (equivalently, $F_1$) contains a subsurface from~$S_\rho$,
and there is an isotopy from~$F_0$ to~$F_1$
that is fixed on~$\Gamma_\rho$ and preserves the tangent plane to the surface at any 
vertex of~$\rho$.

\begin{lemm}\label{rectangular-representative-lem}
Let~$M$ be a mirror diagram, and let~$F$ be a compact surface
containing a subsurface~$F'\in S_{\widehat M}$.
Suppose that each connected component of~$\partial F$ is either contained in~$\Gamma_{\widehat M}$
or disjoint from~$\Gamma_{\widehat M}$. Then there exists a rectangular
diagram of a surface~$\Pi$ such that~$\widehat\Pi$ is isotopic to~$F$
relative to~$\widehat M$.
\end{lemm}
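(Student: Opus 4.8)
The plan is to build $\Pi$ by first reducing to a controlled normal form for $F$ near $\widehat M$, and then engulfing the rest of $F$ into a rectangular diagram by an explicit tiling argument. First I would observe that, by hypothesis, $F$ contains a subsurface $F'\in S_{\widehat M}$, so $F$ is obtained from (an isotopic copy of) $\wideparen M$ by attaching a compact surface $F\setminus\interior(F')$ along some of the boundary circuits of $\wideparen M$, namely those boundary circuits of $M$ that are \emph{not} components of $\partial F$. Since an isotopy relative to $\widehat M$ is allowed to move everything except $\Gamma_{\widehat M}$ and the tangent planes at $V_{\widehat M}$, I may first isotope $F$ so that, in a neighborhood of $\Gamma_{\widehat M}$, it literally coincides with a surface of the form $\wideparen M$ built from a standard choice of the discs $\wideparen x$, $x\in L_M$, and the twisted strips $\wideparen\mu$, $\mu\in E_M$. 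The key point here is that each such piece already has the shape of a union of tiles: the discs $\wideparen x$ over an occupied level, and the strips $\wideparen\mu$, can be presented by narrow rectangles in $\mathbb T^2$ stretched along the relevant occupied levels, exactly as in the construction following Definition~\ref{mirror-circuit-def}. So the portion of $F$ near $\widehat M$ is already `rectangular'.

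Next I would deal with the part of $F$ away from $\widehat M$. Cut $F$ along the boundary $\partial\wideparen M$; what remains is a compact surface $F_1=\overline{F\setminus\wideparen M}$ whose boundary consists of two kinds of curves: the components of $\partial F$ disjoint from $\Gamma_{\widehat M}$ (these are genuine boundary of $F$ and must be kept), and the curves $\wideparen c$, $c\in\partial M$, along which $F_1$ is glued to $\wideparen M$ (these must be realized by the boundaries of the tiles coming from $\wideparen M$). The latter curves are, up to isotopy, the torus-projection circuits of the corresponding boundary circuits of $M$, hence each is cusp-free and bounds in $F$ on the $\wideparen M$-side. The plan is to apply the representability result for surfaces with boundary of~\cite{dp17} — the same machinery that produces rectangular diagrams of surfaces with prescribed `rectangular' boundary — to $F_1$ with the prescribed rectangular boundary inherited from the tiles of $\wideparen M$ and a chosen rectangular presentation of the components of $\partial F$ disjoint from $\Gamma_{\widehat M}$. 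Concretely, after an isotopy of $F$ supported away from $\Gamma_{\widehat M}$, one arranges $F_1$ to lie in a standard position with respect to the torus projection so that it becomes a union of tiles matching $\wideparen M$ along $\partial\wideparen M$; gluing the tile-sets together produces a single rectangular diagram of a surface $\Pi$ with $\widehat\Pi=\wideparen M\cup F_1$.

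Finally I would check that $\widehat\Pi$ is isotopic to $F$ \emph{relative to} $\widehat M$, not merely isotopic. This holds because the isotopies used above — the initial normalization near $\Gamma_{\widehat M}$ and the engulfing of $F_1$ — were chosen to fix $\Gamma_{\widehat M}=\widehat G_M$ pointwise and to fix the tangent plane to the surface at every point of $V_{\widehat M}=\widehat G_M\cap(\mathbb S^1_{\tau=0}\cup\mathbb S^1_{\tau=1})$; moreover every intermediate surface contains the fixed copy of $\wideparen M$, hence a subsurface from $S_{\widehat M}$, so the isotopy is relative to $\widehat M$ in the sense of the definition preceding the lemma. The main obstacle, I expect, is the second step: making precise that $F_1$, with the tile-induced boundary on the $\wideparen M$-side, admits a rectangular diagram presentation keeping that boundary fixed. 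This is where one must invoke (and carefully cite the hypotheses of) the relative representability theorem from~\cite{dp17}, verifying in particular that the prescribed boundary is cusp-free and that its framing is the one induced by $\wideparen M$, and that the hypothesis `each component of $\partial F$ is either contained in $\Gamma_{\widehat M}$ or disjoint from it' is exactly what is needed to separate the two types of boundary curves cleanly. The rest is routine bookkeeping with tiles and occupied levels.
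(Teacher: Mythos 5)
Your outline differs from the paper's in a structurally important way and has a genuine gap. The paper does \emph{not} cut $F$ along $\partial\wideparen M$ and apply a relative representability theorem to the complementary piece $F_1$. Instead it works with the whole surface~$F$, in three steps: (i)~isotope $F$ relative to~$\widehat M$ so that it is tangent to~$\xi_+$ along~$\widehat\mu$ for $\diagup$-mirrors~$\mu$ and to~$\xi_-$ along~$\widehat\mu$ for $\diagdown$-mirrors; (ii)~isotope the free boundary link~$L=\partial F\setminus\Gamma_{\widehat M}$, relative to~$\Gamma_{\widehat M}$, to a rectangular form~$\widehat R$ whose components have arbitrarily negative~$\tb_\pm$ (the analogue of~\cite[Lemma~2]{dp17}), thereby ensuring~$\tb_\pm(K;F')\leqslant 0$ for \emph{every} boundary component~$K$ of the isotoped surface~$F'$; (iii)~apply~\cite[Proposition~5]{dp17}, which fixes only the graph~$\Gamma_{\widehat M}\cup\widehat R$ (together with the edge-tangency data), to push~$F'$ into the form~$\widehat\Pi$. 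Nothing in~\cite{dp17} is invoked that would fix an entire subsurface like~$\wideparen M$; only a graph with prescribed tangencies and a rectangular boundary link are held fixed.

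The concrete gap in your proposal is the Thurston--Bennequin compatibility. The representability machinery of~\cite{dp17} (Theorem~1 / Proposition~5) requires $\tb_+(K;F)\leqslant 0$ and $\tb_-(K;F)\leqslant 0$ for every boundary component~$K$. For the boundary components contained in~$\Gamma_{\widehat M}$ this comes for free from the mirror structure, but for the components of~$\partial F$ disjoint from~$\Gamma_{\widehat M}$ it fails in general for a naively chosen rectangular presentation. You mention checking cusp-freeness and framing, but omit the step that actually makes the hypothesis true: destabilizing the free boundary components until their absolute $\tb_\pm$ are so negative that the relative numbers with respect to~$F'$ become nonpositive. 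Without this step the cited representability result simply does not apply. A secondary issue: your step of first rigidifying a neighborhood of~$\Gamma_{\widehat M}$ into a fixed $\wideparen M$, then treating that subsurface as boundary data for~$F_1$, presupposes a relative representability statement that keeps a two-dimensional piece fixed; what is actually available keeps only a graph fixed and reshapes the discs and strips of~$\wideparen M$ as part of the isotopy. Your plan would need either a separate lemma justifying that stronger relative statement, or (more simply) to follow the paper's route and not cut at all.
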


\begin{proof}
Since~$F\supset F'\in S_{\widehat M}$, the surface~$F$ is isotopic relative to~$\widehat M$ to a surface
which is tangent to~$\xi_+$
along~$\widehat\mu$ if~$\mu$ is a $\diagup$-mirror of~$M$, and to~$\xi_-$
if~$\mu$ is a $\diagdown$-mirror. So, in what follows we assume that this tangency condition holds for~$F$ from the beginning,
and use only isotopies
preserving it.

Denote by~$L$ the link~$\partial F\setminus\Gamma_{\widehat M}$.
By analogy with~\cite[Lemma~2]{dp17} one can show that,
for any~$k\in\mathbb Z$, there exists a rectangular diagram of a link~$R$ such
that~$\widehat R$ is isotopic to~$L$ relative to~$\Gamma_{\widehat M}$, and each connected component of~$R$ has Thurston--Bennequin numbers~$\tb_\pm$
smaller than~$k$. The formulation of~\cite[Lemma~2]{dp17} is different
from this claim only in involving a rectangular diagram of a link in place
of a more general rectangular diagram of a graph, which is~$G_M$,
but this does not affect the proof.

By choosing~$k$ small enough and taking~$R$ as above one can find a surface~$F'$
such that
\begin{enumerate}
\item
$\partial F'\setminus\Gamma_{\widehat M}=\widehat R$;
\item
for any connected component~$K$ of~$\partial F'$ we have~$\tb_+(K;F'),\tb_-(K;F')\leqslant0$;
\item
there is an isotopy from~$F$ to~$F'$
fixed on~$\Gamma_{\widehat M}$ and preserving the tangency with~$\xi_+$
or~$\xi_-$ along~$\Gamma_{\widehat M}$.
\end{enumerate}

By~\cite[Proposition~5]{dp17} there exists an isotopy fixed on~$\Gamma_{\widehat M}\cup\widehat R$
and preserving the tangency with~$\xi_+$ or~$\xi_-$ along~$\Gamma_{\widehat M}$,
from~$F'$ to a surface of the form~$\widehat\Pi$, where~$\Pi$ is a rectangular diagram of a surface.
\end{proof}

\begin{defi}
A boundary circuit of a mirror diagram~$M$ is called \emph{patchable} if
the corresponding boundary circuit of~$\widehat M$ is patchable.
\end{defi}

\begin{defi}
\emph{An enhanced mirror diagram} is a $5$-tuple~$M=(\Theta_M,\Phi_M,E_M,T_M,H_M)$ in which
$M_0=(\Theta_M,\Phi_M,E_M,T_M)$ is a mirror diagram, and~$H_M$ is a subset
of the set of patchable boundary circuits of~$M_0$. The non-patchable boundary circuits of~$M_0$
as well as the patchable ones that are not elements of~$H_M$ are called \emph{essential boundary circuits of~$M$}.
The elements of~$H_M$ are called
\emph{inessential boundary circuits of~$M$} or \emph{holes}.
The set~$(\partial M)\setminus H_M$ is called \emph{the essential boundary of~$M$} and denoted by~$\partial_{\mathrm e}M$.

If~$M$ is an enhanced mirror diagram we denote by~$\widehat M$
the enhanced spatial ribbon graph having~$\widehat M_0$ as the underlying spatial ribbon graph,
such that the inessential boundary circuits of~$\widehat M$ are exactly those
that correspond to the elements of~$H_M$.

Every mirror diagram~$M$ will be also regarded as an enhanced one with~$H_M$
being the set of all patchable boundary circuits of~$M$.
\end{defi}

\subsection{Simple boundary circuits and framed rectangular diagrams of links}\label{simple-circuits-subsec}

\begin{defi}
A boundary circuit of a mirror diagram is said to be \emph{simple} if it hits:
\begin{enumerate}
\item
at least one mirror,
\item
each mirror at most once, and
\item
at most two mirrors at each occupied level.
\end{enumerate}
A collection of simple boundary circuits whose union satisfies~(2) and~(3) above
is also termed \emph{simple}.
\end{defi}

Recall that a framing~$f$ of a rectangular diagram of a link is an ordering~$>_f$ of each
pair of vertices forming an edge~\cite[Definition~14]{dp17} (see also Subsection~\ref{twisting-subsec} above).

A simple collection of boundary circuits of a mirror diagram is an object
of exactly the same kind as the one used to represent a framed rectangular
diagram of a link (see Definition 14 in~\cite{dp17} and the discussion afterwards).
Thus, with every simple collection~$C=\{c_1,\ldots,c_k\}$ of boundary circuits of a mirror diagram
we associate a framed rectangular diagram of a link~$(R(C),f(C))$
by letting~$R(C)$ be the set of all mirrors hit by the circuits~$c_i$, $i=1,\ldots,k$, (with the types of the mirrors forgotten),
and letting~$f(C)$ be a framing of~$R(C)$ such that for any edge~$\{v_1,v_2\}$ of~$R(C)$
we have~$v_2>_{f(C)}v_1$ if and only if~$[v_1;v_2]\subset\bigcup\limits_{i=1}^kc_i$.

For a framing~$f$ of a rectangular diagram of a link~$R$, we denote by~$\widehat f$ the respective
admissible framing of the link~$\widehat R$.

For a boundary circuit~$c$ of a mirror diagram~$M$ we denote by~$\widehat c$
the respective boundary circuit of the spatial ribbon graph~$\widehat M$. The following
statement, proof of which is left to the reader, is an easy consequence of the definitions.

\begin{prop}
A boundary circuit~$c$ of a mirror diagram~$M$ is simple if and only if so is the
boundary circuit~$\widehat c$ of~$\widehat M$.

A collection~$C=\{c_1,\ldots,c_k\}$ of boundary circuits of a mirror diagram~$M$
is simple if and only if the boundary circuits~$\widehat c_1,\ldots,\widehat c_k$
of~$\widehat M$ are all simple and pairwise disjoint. In this
case we have~$\widehat{R(C)}=\widehat c_1\cup\ldots\cup\widehat c_k$.
Moreover, the framing induced by~$\widehat M$ on~$\widehat{R(C)}$ is
opposite to~$\widehat{f(C)}$.
\end{prop}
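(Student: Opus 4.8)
The plan is to unwind the constructions of the surface $\wideparen M$ and of the associated spatial ribbon graph $\widehat M$ so as to obtain an explicit picture of the circuit $\widehat c$. Write the boundary circuit $c$ as a cyclic word $\alpha_1,\mu_1,\alpha_2,\mu_2,\ldots,\alpha_n,\mu_n$, where $\mu_1,\ldots,\mu_n$ are the mirrors (listed with multiplicity) off which the ray of light reflects, in the order of reflection, and $\alpha_j=(\mu_{j-1},\mu_j)$ is the mirror-free subinterval of an occupied level traversed by $c$ between the two reflections. Following the corresponding boundary component $\wideparen c\subset\partial\wideparen M$ through the pieces of $\wideparen M$, one sees that it runs along the arc $\wideparen{\alpha_j}$ lying in the disc $\wideparen x$, where $x$ is the occupied level containing $\alpha_j$, and then along one lateral side of the strip $\wideparen{\mu_j}$. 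Since $\wideparen x$ retracts onto the union of the germs at the corresponding vertex of $\widehat G_M$ of the edges $\widehat\mu$ with $\mu$ lying on $x$, while each strip $\wideparen{\mu_j}$ retracts onto the whole edge $\widehat{\mu_j}$ (its lateral side projecting onto all of $\widehat{\mu_j}$), the projection of $\wideparen c$ to $\Gamma_{\widehat M}=\widehat G_M$ is precisely the closed edge-path $\widehat{\mu_1}\cup\widehat{\mu_2}\cup\cdots\cup\widehat{\mu_n}$, in which $\widehat{\mu_j}$ and $\widehat{\mu_{j+1}}$ meet at the vertex $\widehat m_\theta$ or $\widehat\ell_\varphi$ singled out by the reflection at $\mu_j$. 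In the torus projection this path is the union of meridional and longitudinal segments determined by $\mu_1,\ldots,\mu_n$; that is, $\widehat c$ is exactly the curve $\widehat{R(\{c\})}$ associated with the (multi)set of hit mirrors.

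Given this description, the first assertion is a translation of definitions. The requirement that $c$ hit each mirror at most once is equivalent to $\widehat c$ traversing each edge $\widehat\mu$ at most once; the requirements that $c$ hit at least one mirror and at most two at any level are equivalent to $\widehat c$ being non-constant and passing through each vertex $\widehat m_\theta$, $\widehat\ell_\varphi$ at most once --- here one uses that a ray entering an occupied level must leave it, so that a simple circuit meets every occupied level it visits in exactly two of its hit mirrors, and that visiting a level twice would produce at least three hit mirrors on it. Taken together, these conditions say exactly that $\widehat c$ is an embedded closed curve, hence a simple closed curve in $\Gamma_{\widehat M}$; and it is automatically cusp-free, since its torus projection is the rectangular-diagram picture of $R(\{c\})$ and thus a union of meridional and longitudinal segments meeting only at right angles. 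The degenerate case of a mirror-free occupied level needs no special argument: such a circuit is non-simple, and the corresponding $\widehat c$ is a constant circuit, so neither side of the equivalence holds.

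For a collection $C=\{c_1,\ldots,c_k\}$ one applies the previous paragraph to each $c_i$, and then notes that $\widehat c_i$ and $\widehat c_j$ with $i\ne j$ meet in $\Gamma_{\widehat M}$ precisely when either a mirror is hit by both $c_i$ and $c_j$, in which case $\bigcup_i c_i$ hits that mirror at least twice, or an occupied level carries mirrors hit by both, in which case --- since each of $c_i$, $c_j$ hits exactly two mirrors on that level --- $\bigcup_i c_i$ hits at least three mirrors there. Hence the $\widehat c_i$ are simple and pairwise disjoint if and only if $\bigcup_i c_i$ hits each mirror at most once and has at most two mirrors on each level, which together with each $c_i$ hitting at least one mirror is the definition of $C$ being simple. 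When this holds, the rectangular diagrams $R(\{c_i\})$ are pairwise disjoint (they share neither mirrors nor occupied levels), so $R(C)$ is their disjoint union and therefore $\widehat{R(C)}=\bigcup_i\widehat{R(\{c_i\})}=\bigcup_i\widehat c_i$, which is the third assertion.

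The one step I expect to be more than a matter of unwinding definitions is the identification of framings. The framing that $\widehat M$ induces on the simple link $\widehat{R(C)}$ is, by definition, the boundary framing of the surface $\wideparen M$ restricted to $\widehat{R(C)}$; near each arc $\widehat\mu$ it is read off the $\pi/2$ twist of the strip $\wideparen\mu$, whose sense is prescribed by the type $T_M(\mu)$ of the mirror. On the other hand $\widehat{f(C)}$ is the admissible framing of $\widehat{R(C)}$ corresponding, via the rule of \cite[Proposition~3]{dp17}, to the ordering $>_{f(C)}$ of the edges of $R(C)$, which records for each edge which of the two subarcs of the ambient occupied level belongs to $\bigcup_i c_i$. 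The plan is to verify, one occupied level at a time, that the reflection law for a $\diagdown$-mirror, respectively a $\diagup$-mirror, sends the boundary of $\wideparen M$ along the side of $\widehat\mu$ that corresponds under \cite[Proposition~3]{dp17} to the subarc of that level traversed by the circuits; this is a finite check against the orientation conventions of \cite[Definitions~12--14]{dp17}, and completing it yields the asserted equality of the framing induced by $\widehat M$ with $\widehat{f(C)}$.
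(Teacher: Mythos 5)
The paper leaves this proof to the reader, so there is no argument of theirs to compare against. Your overall approach is right: you unwind $\wideparen M$, identify the projection of $\wideparen c$ to $\Gamma_{\widehat M}$ as the cyclic path through the edges $\widehat\mu_j$, and translate the three conditions defining a simple circuit of $M$ into non-constancy, edge-embeddedness, and vertex-embeddedness of $\widehat c$, noting correctly that visits to a level produce hits in pairs; the treatment of collections and of $\widehat{R(C)}=\bigcup_i\widehat c_i$ is also correct.

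Two things need repair. First, the cusp-freeness justification misidentifies the torus projection: each arc $\widehat\mu$ has \emph{constant} $(\theta,\varphi)$, so the torus projection of $\widehat c$ is merely the finite set of hit mirrors, not the grid picture, and there are no ``right angles'' to appeal to (the vertices of $\widehat c$ lie on $\mathbb S^1_{\tau=0}\cup\mathbb S^1_{\tau=1}$, outside the domain of the projection). Cusp-freeness is a local statement at $\widehat m_\theta$ and $\widehat\ell_\varphi$: in join coordinates the velocity of $\widehat{(\theta,\varphi)}$ at $\widehat m_\theta$ is proportional to $(-e^{i\varphi},0)$, so if $\widehat c$ enters along $\widehat\mu_1=\widehat{(\theta,\varphi_1)}$ and leaves along $\widehat\mu_2=\widehat{(\theta,\varphi_2)}$ its incoming and outgoing velocities are proportional to $-e^{i\varphi_1}$ and $+e^{i\varphi_2}$, which are antiparallel only when $\varphi_1=\varphi_2$, i.e.\ $\mu_1=\mu_2$ --- exactly what simplicity of $c$ forbids; the check at $\widehat\ell_\varphi$ is identical. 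Second, the final clause of the proposition (that the framing induced by $\widehat M$ on $\widehat{R(C)}$ equals $\widehat{f(C)}$ --- presumably what $\widehat{f(R)}$ is meant to denote) is only set up, not proved: you state precisely what must be checked at each mirror type against the conventions of \cite{dp17}, but explicitly defer the verification, so that part of the proposition is not actually established.
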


Recall that links presented by rectangular diagrams are Legendrian with respect to both contact
structures~$\xi_+$ and~$\xi_-$. The following statement is an easy observation, proof of which is also omitted.

\begin{prop}
Let~$c$ be a simple boundary circuit of a mirror diagram~$M$, and let~$F\in S_{\widehat M}$ be a surface
such that~$\widehat c\subset\partial F$. Let also~$k$ and~$l$ be the numbers of times
the circuit~$c$ hits $\diagup$-mirrors and $\diagdown$-mirrors, respectively.
Then the Thurston--Bennequin numbers of~$\widehat c$ relative to~$F$ are as follows:
$$\tb_+(\widehat c;F)=-l/2,\quad\tb_-(\widehat c;F)=-k/2.$$
\end{prop}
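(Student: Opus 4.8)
The plan is to reduce the computation to the combinatorial description of the relative Thurston--Bennequin numbers of a rectangular diagram of a surface, which is already available to us. First I would recall that, by \cite[Definitions~16 and~17]{dp17}, the number $\tb_\pm(\widehat c;F)$ depends only on the curve $\widehat c$ together with the framing of $\widehat c$ induced by $F$ (the contact framing coming from $\xi_\pm$ is intrinsic). Since $\widehat c\subset\Gamma_{\widehat M}$ and any two surfaces in $S_{\widehat M}$ are isotopic relative to $\widehat M$, they induce one and the same framing along $\widehat c$, the framing induced by $\widehat M$; so I may replace $F$ by any surface carrying $\widehat c$ as a boundary component with this same induced framing. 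Applying Lemma~\ref{rectangular-representative-lem} to a surface in $S_{\widehat M}$ having $\widehat c$ as a boundary component (such a surface exists because $c$ is simple), after pushing the remaining boundary components off $\Gamma_{\widehat M}$, I would pass to a surface of the form $\widehat\Pi$, where $\Pi$ is a rectangular diagram of a surface and $\widehat c=\widehat Q$ for some connected component $Q$ of $\partial\Pi$.

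Next I would match mirror types with vertex types. In such a $\Pi$ the vertices of $Q$ are exactly the mirrors $\mu_1,\dots,\mu_n$ hit by $c$, and since $c$ is simple each of them is hit once, so $n=k+l$. By the construction of the class $S_{\widehat M}$, any surface carried by $\widehat M$ — in particular $\widehat\Pi$, the tangency being preserved by the isotopy of Lemma~\ref{rectangular-representative-lem} — is tangent to $\xi_+$ along $\widehat\mu_i$ when $\mu_i$ is a $\diagup$-mirror of $M$, and tangent to $\xi_-$ along $\widehat\mu_i$ when $\mu_i$ is a $\diagdown$-mirror. On the other hand, a vertex of a rectangular diagram of a surface $\Pi$ has type `$\diagup$' (respectively `$\diagdown$') precisely when $\widehat\Pi$ is tangent to $\xi_+$ (respectively $\xi_-$) along the corresponding arc. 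Hence $\mu_i$ is a $\diagup$-vertex of $\Pi$ if and only if it is a $\diagup$-mirror of $M$, so $Q$ contains exactly $k$ $\diagup$-vertices and $l$ $\diagdown$-vertices of $\Pi$.

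Finally I would invoke the combinatorial meaning of the relative Thurston--Bennequin numbers: $-\tb_+(Q;\Pi)$ equals one half of the number of $\diagdown$-vertices of $\Pi$ lying in $Q$, that is $l/2$, and $-\tb_-(Q;\Pi)$ equals one half of the number of $\diagup$-vertices of $\Pi$ in $Q$, that is $k/2$. Since $\tb_\pm(Q;\Pi)=\tb_\pm(\widehat Q;\widehat\Pi)=\tb_\pm(\widehat c;\widehat\Pi)=\tb_\pm(\widehat c;F)$, the last equality holding because $\widehat\Pi$ and $F$ induce the same framing on $\widehat c$, this yields $\tb_+(\widehat c;F)=-l/2$ and $\tb_-(\widehat c;F)=-k/2$. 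I expect the only genuinely delicate point to be the reduction in the first paragraph — arranging Lemma~\ref{rectangular-representative-lem} so that $\widehat c$ persists as a boundary component of $\widehat\Pi$ with its induced framing unchanged; everything else is bookkeeping. An alternative that sidesteps rectangular diagrams of surfaces is to argue directly with the framed rectangular diagram of a link $(R(\{c\}),f(\{c\}))$, which by the preceding Proposition carries exactly the framing induced by $\widehat M$ on $\widehat c$, and to check by a local analysis at each mirror (matching the slant of the mirror and the reflection direction of the ray against the $<_{f(\{c\})}$-ordering of the two incident edges) that $\mu_i$ is a $\diagup$-vertex of $(R(\{c\}),f(\{c\}))$ precisely when it is a $\diagup$-mirror of $M$; that local computation would then be the main obstacle in this variant.
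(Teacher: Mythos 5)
Your proof is correct. The paper omits its own proof (it calls the statement ``an easy observation''), so there is nothing to compare against, but every step you take is consistent with the paper's apparatus. The three ingredients are exactly the right ones: (i) $\tb_\pm(\widehat c;F)$ depends only on the framing of $\widehat c$ induced by the spatial ribbon graph $\widehat M$, which the paper says explicitly is independent of the choice of $F\in S_{\widehat M}$; (ii) Lemma~\ref{rectangular-representative-lem} (together with the tangency-preservation noted at the start of its proof) produces a $\widehat\Pi$ that is still tangent to $\xi_+$ along $\widehat\mu$ for $\diagup$-mirrors and to $\xi_-$ for $\diagdown$-mirrors, and the paper's Definition~\ref{ass-mir-diagr-def} confirms that the vertex type in $\Pi$ agrees with the mirror type in $M$; (iii) the stated combinatorial formula $-\tb_+(Q;\Pi)=\tfrac12\#\{\diagdown\text{-vertices of }\Pi\text{ on }Q\}$ then gives $-l/2$, and symmetrically for $\tb_-$. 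One small point you gloss over: passing from $F$ to the enlarged surface with boundary pushed off $\Gamma_{\widehat M}$ changes the surface, so you should note that a sublink of the boundary inherits the same framing from either surface — but this is immediate. Your second, ``alternative'' route (comparing mirror slants directly against the ordering $<_{f(\{c\})}$ on $R(\{c\})$) is closer in spirit to what the authors likely had in mind when calling the statement easy, and avoids the detour through Lemma~\ref{rectangular-representative-lem} entirely; with the preceding proposition identifying $\widehat{R(\{c\})}=\widehat c$ with the $\widehat M$-induced framing, and the remark that opposite framings have the same set of $\diagup$-vertices, the local check at each mirror is indeed all that remains.
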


This motivates the following definition.

\begin{defi}
Let~$c$ be a boundary circuit of a mirror diagram~$M$ such that~$c$ hits $\diagup$-mirrors and $\diagdown$-mirrors
$k$ times and~$l$ times, respectively. By \emph{the Thurston--Bennequin numbers of~$c$}, denoted~$\tb_+(c)$ and~$\tb_-(c)$,
we mean the numbers~$-l/2$ and~$-k/2$, respectively.
\end{defi}

It is elementary to see that~$\tb_+(c)$ and~$\tb_-(c)$ are always integers even if~$c$ is not a simple boundary circuit.

\subsection{Rectangular diagrams of a surface as mirror diagrams}\label{rectangular=mirror-subsec}

\begin{defi}\label{ass-mir-diagr-def}
For every rectangular diagram of a surface~$\Pi$, \emph{the associated enhanced mirror diagram~$M(\Pi)$}
is defined as follows:
\begin{itemize}
\item
$L_{M(\Pi)}$ is the set of all occupied levels of~$\Pi$;
\item
$E_{M(\Pi)}$ is the set of vertices of~$\Pi$, each of which has the same type (`$\diagup$' or `$\diagdown$') in~$M(\Pi)$ as
in~$\Pi$;
\item
$H_{M(\Pi)}=\{\partial r:r\in\Pi\}$.
\end{itemize}
\end{defi}

\begin{defi}\label{properly-carried-by-mir-diag-def}
Let~$M$ be an enhanced mirror diagram.
We say that a rectangular diagram of a surface~$\Pi$ is \emph{carried} (respectively,
\emph{properly carried}) \emph{by}~$M$ if
the surface~$\widehat\Pi$ is carried (respectively, properly carried) by~$\widehat M$.
\end{defi}

Propositions~\ref{obvious-prop-1} and~\ref{obvious-prop-2} below follow immediately from these definitions.

\begin{prop}\label{obvious-prop-1}
Any rectangular diagram of a surface~$\Pi$ is properly carried by~$M(\Pi)$.

The map~$\Pi\mapsto M(\Pi)$ is a one-to-one correspondence between rectangular diagrams of a surface
and enhanced mirror diagrams~$M$ satisfying the following two conditions:
\begin{enumerate}
\item
each inessential boundary circuit of~$M$ has the form of the boundary
of a rectangle~$r\subset\mathbb T^2$ such that the interior of~$r$ is disjoint from~$E_M$;
\item
the essential boundary~$\partial_{\mathrm e}M$ of~$M$ is simple.
\end{enumerate}
\end{prop}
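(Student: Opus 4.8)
The plan is to unwind both assertions straight from the definitions of the associated enhanced mirror diagram $M(\Pi)$ (Definition~\ref{ass-mir-diagr-def}) and of the surface $\wideparen M$ attached to a mirror diagram (Subsection~\ref{mirror-diagram-definition-subsec}); the only substantive ingredient is the elementary combinatorics of a light ray reflecting off mirrors.

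For the first assertion, I would take for $\wideparen{M(\Pi)}$ a small $\varepsilon$-neighbourhood, inside $\widehat\Pi$, of the $1$-skeleton of the tiling of $\widehat\Pi$. This is a genuine member of $S_{\widehat{M(\Pi)}}$: by Definition~\ref{ass-mir-diagr-def} the mirrors of $M(\Pi)$ are precisely the vertices of $\Pi$ with the same types, so along each arc $\widehat v$ the surface $\widehat\Pi$ (hence $\wideparen{M(\Pi)}$) is tangent to $\xi_+$ or $\xi_-$ exactly as the mirror $v$ dictates. Every $2$-cell of the tiling is a tile $\widehat r$, $r\in\Pi$, and, pushed slightly off $\wideparen{M(\Pi)}$, it is a patching disc for the boundary circuit of $\widehat{M(\Pi)}$ corresponding to $\partial r$; patching all of these holes reconstitutes $\widehat\Pi$, so $\widehat\Pi$ is carried by $\widehat{M(\Pi)}$. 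Under the bijection of Subsection~\ref{mirror-diagram-definition-subsec} between $\partial M(\Pi)$ and the boundary components of $\wideparen{M(\Pi)}$, the inessential circuits $H_{M(\Pi)}=\{\partial r:r\in\Pi\}$ match the components that get capped off, and the essential circuits match $\partial\widehat\Pi=\widehat{\partial\Pi}$ (using $\partial\widehat\Pi=\widehat{\partial\Pi}$ from~\cite{dp17}). Thus $\partial_{\mathrm e}M(\Pi)$ corresponds circuit by circuit to the components of $\partial\Pi$, so $\partial\widehat\Pi=\bigcup_{K\in\partial_{\mathrm e}\widehat{M(\Pi)}}K$ and $\widehat\Pi$ is in fact \emph{properly} carried by $M(\Pi)$.

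For the bijection, injectivity is immediate: $H_{M(\Pi)}=\{\partial r:r\in\Pi\}$ literally records the set $\Pi$, since the circuit $\partial r$ traverses the four sides of $r$ and so determines $r$, whence $\Pi=\{r:\partial r\in H_{M(\Pi)}\}$ is recoverable from $M(\Pi)$. The same observations show $M(\Pi)$ satisfies (1) and (2): (1) is Definition~\ref{ass-mir-diagr-def} itself, and (2) holds because the essential circuits of $M(\Pi)$ are the components of $\partial\Pi$, and a rectangular diagram of a link meets each occupied level in two vertices and each vertex in two edges, so these circuits collectively hit every mirror at most once and at most two mirrors per level. For surjectivity, given $M$ with (1) and (2), I would set $\Pi=\{r:\partial r\in H_M\}$ (with $r$ the rectangle supplied by~(1) for each inessential circuit) and verify $M(\Pi)=M$. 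Here $H_{M(\Pi)}=H_M$ by construction; and the vertices of $\Pi$, with types, make up all of $E_M$ with types, because (i) each corner of $r\in\Pi$ is a mirror of $\partial r\in H_M$ whose $\diagdown/\diagup$ type agrees with its type as a vertex of $r$ (forced by the ray closing up around $\partial r$), and (ii) conversely the reflection at a mirror $\mu$ pairs the four ray-directions at $\mu$ into two, so $\mu$ is visited by boundary circuits of $M$ exactly twice; were both visits by essential circuits, the union of $\partial_{\mathrm e}M$ would hit $\mu$ twice, contradicting~(2); hence some visit is by an inessential circuit $\partial r$, i.e.\ $\mu$ is a corner of some $r\in\Pi$. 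Consequently the occupied levels of $\Pi$ coincide with $L_M$, and $M(\Pi)=M$.

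The one step requiring real care — and the main obstacle — is checking that $\Pi=\{r:\partial r\in H_M\}$ is a rectangular diagram of a surface in the sense of~\cite[Definition~1]{dp17}. The interior of each $r\in\Pi$ is free of vertices of $\Pi$ by~(1), the relative interiors of its sides are too (a mirror there would reroute the circuit $\partial r$), and the axiom governing the local pattern of rectangles around a shared vertex follows from the same direction-pairing analysis at the corresponding mirror used in point~(ii): that pairing admits exactly the ways two tiles may abut along $\widehat v$ so as to keep $\widehat\Pi$ a surface, and excludes the rest. This reflection bookkeeping at mirrors, together with the use of hypothesis~(2) to guarantee that no mirror is left ``unused'' by the inessential circuits, is the technical crux; everything else is a routine chase through the definitions.
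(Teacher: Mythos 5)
Your proof is correct and is the expected unwinding of the definitions; the paper itself offers no argument, declaring that Propositions~\ref{obvious-prop-1} and~\ref{obvious-prop-2} ``follow immediately from these definitions.'' You have rightly isolated the only nontrivial point (that $\Pi=\{r:\partial r\in H_M\}$ recovers a rectangular diagram of a surface, via the direction‑pairing at each mirror and the use of hypothesis~(2) to show every mirror is a corner of some $r$), and the rest is the routine check you describe.
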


\begin{prop}\label{obvious-prop-2}
Let~$M$ be a mirror diagram, and let~$\Pi$ be a rectangular diagram of a surface properly carried
by~$M$. Then the rectangular diagram of the link~$R(\partial_{\mathrm e}M)$ coincides with~$\partial\Pi$,
whereas the framing~$f(\partial_{\mathrm e}M)$ is opposite to the boundary framing of~$\partial\Pi$
induced by~$\Pi$.
\end{prop}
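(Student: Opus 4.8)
The plan is to unwind the definitions involved — ``properly carried,'' the rectangular diagram $R(\partial_{\mathrm e}M)$ and framing $f(\partial_{\mathrm e}M)$ attached to a simple collection of boundary circuits, and the boundary framing $f^\Pi$ of \cite[Proposition~3]{dp17} — and to feed them into the correspondence between simple boundary circuits of a mirror diagram and those of the associated spatial ribbon graph, namely the (unnamed) Proposition stated just before Definition~\ref{ass-mir-diagr-def}.

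First I would identify the underlying rectangular diagrams of links. Since $\Pi$ is properly carried by $M$, the surface $\widehat\Pi$ is properly carried by $\widehat M$; by Definition~\ref{properly-carried-by-graph-def} this means there is a surface $F_0\in S_{\widehat M}$ from which $\widehat\Pi$ is obtained by patching all holes, and $\partial\widehat\Pi=\bigcup_{K\in\partial_{\mathrm e}\widehat M}K$. By the observation following Definition~\ref{properly-carried-by-graph-def}, the mere existence of a properly carried surface forces $\partial_{\mathrm e}\widehat M$ to be a collection of pairwise disjoint simple circuits; by the cited Proposition this is equivalent to $\partial_{\mathrm e}M$ being a simple collection, so $R(\partial_{\mathrm e}M)$ and $f(\partial_{\mathrm e}M)$ are defined, and moreover $\widehat{R(\partial_{\mathrm e}M)}=\bigcup_{c\in\partial_{\mathrm e}M}\widehat c=\partial_{\mathrm e}\widehat M$. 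On the other hand $\partial_{\mathrm e}\widehat M=\partial\widehat\Pi=\widehat{\partial\Pi}$, the last equality being the very definition of the boundary of a rectangular diagram of a surface \cite{dp17}. Hence $\widehat{\partial\Pi}=\widehat{R(\partial_{\mathrm e}M)}$, and since $\widehat{\,\cdot\,}$ is injective on rectangular diagrams of links this yields $\partial\Pi=R(\partial_{\mathrm e}M)$ combinatorially.

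Next I would compare the two framings carried by this common diagram. The same Proposition tells us that the framing induced by the spatial ribbon graph $\widehat M$ on $\widehat{R(\partial_{\mathrm e}M)}$ equals $\widehat{f(\partial_{\mathrm e}M)}$; by its definition this induced framing is the boundary framing of any surface in $S_{\widehat M}$ restricted to the essential boundary, in particular that of $F_0$. Because $\widehat\Pi$ is obtained from $F_0$ by patching holes — an operation affecting neither $\partial_{\mathrm e}F_0=\partial\widehat\Pi$ nor a collar of it inside the surface — the boundary framing that $\widehat\Pi$ itself induces on $\partial\widehat\Pi$ (in the sense of \cite[Definitions~12 and~13]{dp17}) coincides with $\widehat{f(\partial_{\mathrm e}M)}$. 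Finally, by \cite[Proposition~3]{dp17} — equivalently, by the description of the $\diagup$- and $\diagdown$-vertices of $(\partial\Pi,f^\Pi)$ recalled in Subsection~\ref{notation-subsec} — the admissible framing $\widehat{f^\Pi}$ is the one opposite to the framing $\widehat\Pi$ induces along its boundary. Combining the last two sentences, $\widehat{f^\Pi}$ is opposite to $\widehat{f(\partial_{\mathrm e}M)}$, whence $f^\Pi$ is opposite to $f(\partial_{\mathrm e}M)$, as claimed.

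The only step that is not purely formal is the sign bookkeeping at the end. One must make sure that, for each edge $\{v_1,v_2\}$ of $\partial\Pi$, the essential boundary circuit of $M$ traverses the arc of the occupied level lying on the side opposite to the rectangle of $\Pi$ carrying that edge; this is visible from the construction of $\wideparen M$ in Figure~\ref{surface-by-mirror-diagram} together with the drawing convention that an empty crossing has the vertical level passing over the horizontal one, and it amounts to a single local check. It is nonetheless the one place a careful reader should verify, since it is what turns ``coincides'' into ``opposite.''
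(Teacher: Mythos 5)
The paper gives no proof of this proposition, asserting only that it ``follows immediately from the definitions,'' so there is no paper argument to compare against; what you supply is a reasonable rendering of what ``immediate'' hides. The first half is clean: properness of the carrying forces $\partial_{\mathrm e}\widehat M$ to be a union of pairwise disjoint simple circuits, the cited proposition on simple boundary circuits identifies $\widehat{R(\partial_{\mathrm e}M)}$ with that union, and since $\partial\widehat\Pi$ is the same set and $\widehat{\,\cdot\,}$ is injective on rectangular diagrams of links, $R(\partial_{\mathrm e}M)=\partial\Pi$. Likewise, your chain $\widehat{f(\partial_{\mathrm e}M)}=\text{framing induced by }\widehat M=\text{boundary framing of }F_0=\text{boundary framing of }\widehat\Pi$ is valid, as the patching happens away from a collar of the essential boundary.

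The second half, however, contains a claim that is not actually true as stated. You write that the fact that $\widehat{f^\Pi}$ is opposite to the framing $\widehat\Pi$ induces on its boundary follows ``equivalently, by the description of the $\diagup$- and $\diagdown$-vertices of $(\partial\Pi,f^\Pi)$ recalled in Subsection~\ref{notation-subsec}.'' That description cannot determine the sign: by the paper's own discussion a framing $f$ and its opposite $f'$ have \emph{identical} sets of $\diagup$- and $\diagdown$-vertices, so saying $v$ is a $\diagup$-vertex of $(\partial\Pi,f^\Pi)$ iff it is one of $\Pi$ pins down $f^\Pi$ only up to replacing it with its opposite. Hence the ``equivalently'' is a non-sequitur, and this step genuinely requires unwinding the explicit rule in \cite[Proposition~3]{dp17}, not merely the $\diagup/\diagdown$-vertex characterization. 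Moreover, the assertion that $\widehat{f^\Pi}$ is \emph{opposite} to the framing $\widehat\Pi$ induces reads as a contradiction with the very name ``boundary framing of $\partial\Pi$ induced by $\Pi$''; there is a real sign subtlety here about which side of $\widehat{\partial\Pi}$ the hat-correspondence of \cite{dp17} chooses, and stating the outcome without the local check is precisely the gap you yourself flag in your last paragraph. That paragraph is the right diagnosis: the content of the proposition \emph{is} that sign check, and it needs to be carried out by comparing, edge by edge, the arc of the occupied level inside a rectangle of $\Pi$ (which governs $f^\Pi$) with the arc traversed by the essential circuit (which governs $f(\partial_{\mathrm e}M)$). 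The incidental appeal to the pictorial convention about over/under crossings is a red herring, though — that convention is purely graphical and carries no topological or combinatorial information that could resolve a sign.
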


\begin{prop}
Let~$M$ be an enhanced mirror diagram. A rectangular diagram of a surface properly carried by~$M$ exists
if and only if the essential boundary of~$M$ is simple.
\end{prop}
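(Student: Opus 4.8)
The plan is to translate the assertion into the language of spatial ribbon graphs, where both implications become short once the already recorded facts are assembled. By Definition~\ref{properly-carried-by-mir-diag-def}, a rectangular diagram of a surface $\Pi$ is properly carried by $M$ exactly when the surface $\widehat\Pi$ is properly carried by the enhanced spatial ribbon graph $\widehat M$. By the observation following Definition~\ref{properly-carried-by-graph-def}, a surface properly carried by an enhanced spatial ribbon graph $\rho$ exists if and only if $\partial_{\mathrm e}\rho$ consists of pairwise disjoint simple circuits, a condition that does not depend on the choice of a patching of $\rho$. Finally, the proposition of Subsection~\ref{simple-circuits-subsec}, applied to $\rho=\widehat M$, identifies this condition with the simplicity of $\partial_{\mathrm e}M$.

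The ``only if'' direction is then formal. If some rectangular diagram of a surface $\Pi$ is properly carried by $M$, then $\widehat\Pi$ is a surface properly carried by $\widehat M$, so $\partial_{\mathrm e}\widehat M$ consists of pairwise disjoint simple circuits, and hence $\partial_{\mathrm e}M$ is simple.

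For the ``if'' direction, assume $\partial_{\mathrm e}M$ is simple. Then $\partial_{\mathrm e}\widehat M$ consists of pairwise disjoint simple circuits, so there is a surface $F$ properly carried by $\widehat M$. Such an $F$ contains a subsurface belonging to $S_{\widehat M}$, and $\partial F=\bigcup_{c\in\partial_{\mathrm e}M}\widehat c$; since each $\widehat c$ is a simple closed curve in $\Gamma_{\widehat M}$, the whole of $\partial F$ lies in $\Gamma_{\widehat M}$. I would feed $F$ into Lemma~\ref{rectangular-representative-lem}, obtaining a rectangular diagram of a surface $\Pi$ with $\widehat\Pi$ isotopic to $F$ relative to $\widehat M$, and then argue that $\widehat\Pi$ is again \emph{properly} carried by $\widehat M$; by Definition~\ref{properly-carried-by-mir-diag-def} this finishes the proof.

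The point that requires genuine checking, and which I expect to be the main obstacle, is precisely this last claim. Two things must be verified. First, $\partial\widehat\Pi=\partial F$: the isotopy realizing ``isotopic relative to $\widehat M$'' is by definition fixed on $\Gamma_{\widehat M}$ and identifies $\widehat\Pi$ with $F$ homeomorphically, so its restriction to $\Gamma_{\widehat M}$ is the identity and the preimage of $\Gamma_{\widehat M}$ is exactly $\Gamma_{\widehat M}$; as $\partial F\subset\Gamma_{\widehat M}$, it follows that $\partial\widehat\Pi\subset\Gamma_{\widehat M}$ and $\partial\widehat\Pi=\partial F$. Second, $\widehat\Pi$ is carried by $\widehat M$: the same isotopy keeps a subsurface from $S_{\widehat M}$ inside the surface throughout, so, run backwards, it transports the patching of $\widehat M$ witnessing that $F$ is carried by $\widehat M$ to one witnessing that $\widehat\Pi$ is carried by $\widehat M$. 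Combining the two, $\widehat\Pi$ is properly carried by $\widehat M$, hence $\Pi$ is properly carried by $M$; everything else is an unwinding of the definitions and of the dictionary between mirror diagrams and spatial ribbon graphs established earlier in the section.
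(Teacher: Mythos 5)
Your proof is correct and follows essentially the same route as the paper: the ``only if'' direction is the same formal unwinding, and for the ``if'' direction the paper invokes Proposition~5 of~\cite{dp17} directly (with~$X_1=E_M$, $X_2=\varnothing$), which is precisely what Lemma~\ref{rectangular-representative-lem} packages; your additional verification that~$\widehat\Pi$ remains \emph{properly} carried (checking~$\partial\widehat\Pi=\partial F$ and that the isotopy respects~$S_{\widehat M}$ and the patching) makes explicit a point the paper leaves to the reader, but adds no new idea.
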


\begin{proof}
The `only if' direction is obvious.

To prove the `if' direction note that the simplicity of~$\partial_{\mathrm e}M$ implies that there exists
a surface~$F$ properly carried by~$\widehat M$. Proposition~5 of~\cite{dp17} (in which we put~$X_1=E_M$, $X_2=\varnothing$)
implies that there exists a surface of the form~$\widehat\Pi$ isotopic to~$F$ relative to~$\Gamma_{\widehat M}$,
where~$\Pi$ is a rectangular diagram of a surface. Such a diagram~$\Pi$ is properly carried by~$M$.
\end{proof}

\subsection{Canonic dividing configurations}

\begin{defi}\label{canonic-dividing-for-mirror-diagrams-def}
Let~$M$ be a mirror diagram. A dividing configuration~$(\delta_+,\delta_-)$ on
a surface~$F\in S_{\widehat M}$ is said to be \emph{a canonic dividing configuration of~$F$}
if the following holds:
\begin{enumerate}
\item
$\delta_+\cup\delta_-$ is a union of pairwise disjoint arcs each intersecting~$\Gamma_{\widehat M}$ exactly once;
\item
$\delta_+$ (respectively,~$\delta_-$) intersects each arc of the form~$\widehat\mu$, where~$\mu$ is a $\diagdown$-mirror
(respectively, a $\diagup$-mirror) of~$M$,
exactly once, at the midpoint ($\tau=1/2$), and the intersection is transverse;
\item
the coorientation of each arc~$\widehat\mu\subset\Gamma_{\widehat M}$ defined by~$\delta_+\cup\delta_-$
at the midpoint agrees with
the one induced on~$\widehat\mu$ by the gradient of~$\varphi|_F$ near the circle~$\mathbb S^1_{\tau=1}$.
\end{enumerate}
\end{defi}

One can see that a canonic dividing configuration on any surface~$F\in S_\rho$ is defined uniquely up to
equivalence. On can also see that it is never admissible.

\begin{exam}
A canonic dividing configuration on the surface~$\wideparen M$ can be obtained as follows.
Take for~$\delta_+$ (respectively, for~$\delta_-$) the intersection of all strips of the form~$\wideparen\mu$,
where~$\mu$ is a $\diagdown$-mirror (respectively, a $\diagup$-mirror), with the torus~$\mathbb T^2_{\tau=1/2}$, and orient it so that the coordinate~$\varphi$ increases on each connected component of~$\delta_+\cup\delta_-$
in the positive direction.
\end{exam}

We conclude the section by the following statement, which is not formally needed
to establish our main results, but reveals the differential-geometric meaning of canonic dividing configurations.

\begin{prop}
Let~$M$ be an enhanced mirror diagram with simple essential boundary,
and let~$F$ be a surface properly carried by~$\widehat M$
such that~$F$ is convex in Giroux's sense with respect to both contact structures~$\xi_+$ and~$\xi_-$
(for instance, a surface of the form~$\widehat\Pi$ with~$\Pi$ a rectangular
diagram of a surface properly carried by~$M$). Let~$\delta_+$ and~$\delta_-$
be dividing sets of~$F$ corresponding to~$\xi_+$ and~$\xi_-$, respectively.

Then for any sufficiently small closed
neighborhood~$F_0$ of~$\Gamma_{\widehat M}$ in~$F$
the pair~$(\delta_+\cap F_0,\delta_-\cap F_0)$ is a dividing configuration in~$F_0$
equivalent to a canonic one.
\end{prop}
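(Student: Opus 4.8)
The plan is to combine Giroux's uniqueness of dividing sets with the explicit description of the dividing sets of $\widehat\Pi$ recalled in \cite[Subsection~4.4]{dp17}, so as to reduce the statement to a local inspection near $\Gamma_{\widehat M}$. Since $\partial_{\mathrm e}M$ is simple, there is a rectangular diagram of a surface $\Pi$ properly carried by $M$ (Subsection~\ref{rectangular=mirror-subsec}), so both $F$ and $\widehat\Pi$ are properly carried by $\widehat M$. Each of them therefore contains a surface from $S_{\widehat M}$ on a neighborhood of $\Gamma_{\widehat M}$, and every surface in $S_{\widehat M}$ has the same germ along $\Gamma_{\widehat M}$ as $\wideparen M$ — the ribbon around each arc $\widehat\mu$, $\mu\in E_M$, twists by $\pm\pi/2$ in the direction fixed by the type of $\mu$, and this is an isotopy invariant of the ribbon relative to its endpoints in $V_{\widehat M}$. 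Hence the germs of $F$ and of $\widehat\Pi$ along $\Gamma_{\widehat M}$ are isotopic rel $\Gamma_{\widehat M}$. Both surfaces are convex with respect to $\xi_+$ and to $\xi_-$, so by a relative form of Giroux's uniqueness theorem — which follows from the flexibility of convex surfaces via the technique of \cite{gi1} (see also \cite{mas1}) already used in Section~\ref{basic-moves-sec} — the dividing sets $\delta_+,\delta_-$ of $F$ can be isotoped to coincide, on a neighborhood of $\Gamma_{\widehat M}$, with the corresponding dividing sets of $\widehat\Pi$. Choosing $F_0$ small enough, it then suffices to prove the proposition for $F=\widehat\Pi$.

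For $F=\widehat\Pi$, by \cite[Subsection~4.4]{dp17} a canonic dividing configuration $(\delta^+,\delta^-)$ of $\widehat\Pi$ in the sense of Definition~\ref{canonic-def} (its component through a tile $\widehat r$ projecting to the diagonal of $r$) may be taken for the pair of dividing sets. In this configuration $\delta^+$ meets an arc $\widehat v$ precisely when $v$ is a $\diagdown$-vertex, and then transversally in the single point $\widehat v\cap\mathbb T^2_{\tau=1/2}$, where it crosses from one tile to the adjacent one; inside each tile it runs through the interior, away from all arcs $\widehat v$ with $v$ a $\diagup$-vertex and away from all vertices of $\widehat\Pi$. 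The coorientation it induces on $\widehat v$ there is that of increasing $\varphi$, which is condition~(3) of Definition~\ref{canonic-dividing-for-mirror-diagrams-def}; symmetrically $\delta^-$ meets exactly the arcs $\widehat v$ with $v$ a $\diagup$-vertex, once each, with the same coorientation. Consequently, if $F_0$ is a sufficiently thin closed neighborhood of $\Gamma_{\widehat M}=\bigcup_v\widehat v$ in $\widehat\Pi$, the intersections $\delta^+\cap F_0$ and $\delta^-\cap F_0$ consist exactly of pairwise disjoint short arcs, one crossing each $\diagdown$-mirror arc (respectively each $\diagup$-mirror arc), each meeting $\Gamma_{\widehat M}$ once transversally at its midpoint and cooriented as prescribed. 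Since any embedded disc in $F_0$ is also one in $\widehat\Pi$, the pair $(\delta^+\cap F_0,\delta^-\cap F_0)$ consists of abstract dividing sets, and it satisfies conditions~(1)--(3) of Definition~\ref{canonic-dividing-for-mirror-diagrams-def}; thus it is a canonic dividing configuration of $F_0$, and in particular equivalent to one.

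The step I expect to be the main obstacle is the relative uniqueness invoked in the reduction: one must check carefully that two surfaces convex with respect to a given contact structure and having isotopic germs along $\Gamma_{\widehat M}$ have dividing sets that can be isotoped to agree near $\Gamma_{\widehat M}$, and that for $F_0$ small enough no closed components of $\delta_\pm$ survive inside $F_0$; both are standard consequences of convex-surface flexibility but need to be spelled out. An alternative route, avoiding the passage through $\widehat\Pi$ altogether, is to analyze $\xi_+F$ and $\xi_-F$ directly in explicit local models for the pair $(\xi_+,\xi_-)$ near each arc $\widehat\mu$ (where $F$ is tangent to exactly one of $\xi_\pm$, so that the dividing set of the other crosses $\widehat\mu$ once and the dividing set of the same one avoids $\widehat\mu$) and near each point of $V_{\widehat M}$ (where $F$ is orthogonal to $\mathbb S^1_{\tau=0}\cup\mathbb S^1_{\tau=1}$), and to read off the dividing sets there; in that route the vertex models are the fiddliest part.
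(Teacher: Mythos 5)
Your main route takes a genuinely different path from the paper's, and the reduction step is where it has a gap. You want to pass to $F=\widehat\Pi$ by arguing that, since the germs of $F$ and $\widehat\Pi$ along $\Gamma_{\widehat M}$ are isotopic rel $\Gamma_{\widehat M}$, a ``relative form of Giroux's uniqueness'' lets you bring $\delta_\pm$ of $F$ to coincide near $\Gamma_{\widehat M}$ with the canonic dividing sets of $\widehat\Pi$. But Giroux's uniqueness is a statement about the full surface: it yields an isotopy of dividing sets inside a fixed convex surface, or between two convex surfaces that are already joined by an isotopy through convex surfaces. To apply it here you would first have to upgrade your germ-isotopy to a global isotopy $F\rightsquigarrow\widehat\Pi$ through convex surfaces that is fixed on $\Gamma_{\widehat M}$, and then argue that the resulting ambient isotopy of dividing sets can be localized near $\Gamma_{\widehat M}$. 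Neither step is supplied, and neither is routine: a germ-isotopy of surfaces need not extend through convex surfaces, and the dividing set of a convex surface near an interior Legendrian graph is determined only once a transverse contact vector field is fixed globally, so two surfaces with the same germ may a priori have different dividing sets there. You flag the issue, but pointing at the flexibility technique of~\cite{gi1} is not a substitute for closing it.

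The alternative you mention at the end is essentially the paper's argument, and it is both shorter and more robust because it never invokes uniqueness of dividing sets and never passes through $\widehat\Pi$. The paper writes $\delta_+$ as the zero locus, on $F$, of $\alpha(l)$ for a standard contact form $\alpha$ and a transverse contact line element field $l$, and observes that along a regular fiber $\beta$ of the $\xi_+$-characteristic foliation the sign of $\alpha(v)$ can change only from $+$ to $-$; hence a $0$-arc is disjoint from $\delta_+$, while a $-1$-arc meets it exactly once. Since the total rotation of $\xi_+(p)$ relative to $T_pF$ along $\widehat\mu$ is $0$ for a $\diagup$-mirror $\mu$ and $-\pi$ for a $\diagdown$-mirror $\mu$, the arc $\widehat\mu$ decomposes into singularities and $0$-arcs in the first case (so $\widehat\mu\cap\delta_+=\varnothing$) and has in addition exactly one $-1$-arc in the second (so $|\widehat\mu\cap\delta_+|=1$); by symmetry the same holds for $\delta_-$ with the mirror types exchanged. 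For a sufficiently thin $F_0$ this forces $(\delta_+\cap F_0,\delta_-\cap F_0)$ to satisfy conditions (1)--(3) of Definition~\ref{canonic-dividing-for-mirror-diagrams-def} up to equivalence. If you rewrite your proof along these lines, the only ingredients you need are the $0$-arc/$-1$-arc vocabulary of~\cite[Subsection~4.3]{dp17} and the monotonicity of $\alpha(l)$ along Legendrian leaves; you avoid the relative-uniqueness claim entirely.
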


\begin{proof}
Consider the contact structure~$\xi_+$. The dividing set~$\delta_+$ can be
represented as the set of zeros of the restriction of the two-valued function~$\alpha(l)$ to the surface~$F$, where~$l$ is a contact
line element field on~$\mathbb S^3$ transverse to~$F$, and~$\alpha$ is a standard contact form (that is, a $1$-form
such that~$\xi_+=\ker\alpha$).

Let~$\beta$ be a regular fiber of the characteristic
foliation~$\mathscr F$ of the surface~$F$ approaching singularities at the endpoints. In a small neighborhood
of~$\beta$ the field~$l$ has the form~$\{v,-v\}$, where~$v$ is a contact vector field.
Fix such~$v$ from now on and orient~$\beta$ so that~$\alpha(w)>0$ whenever~$u$ is a positive tangent vector to~$\beta$,
and $w$ is a tangent vector to~$F$ such that~$(u,w,v)$ is a positively oriented basis of the three-space.

One can show that if we traverse~$\beta$ in the positive direction, then the sign of~$\alpha(v)$
can change only from~`$+$' to~`$-$'
but not the other way. So, it may change at most once.
If~$\beta$ is a $0$-arc (see~\cite[Subsection~4.3]{dp17} for the definition), then the signs
of~$\alpha(v)$ at the endpoints of~$\beta$ coincide, and if~$\beta$ is a $-1$-arc,
then the signs are opposite. Therefore, any~$0$-arc
is disjoint from~$\delta_+$, whereas any $-1$-arc
intersects~$\delta_+$ exactly once.

Note also that
$\delta_+$ must be disjoint from the singularities of~$\mathscr F$,
and no regular fiber of~$\mathscr F$ can be a $1$-arc as
the surface~$F$ is assumed to be convex.

If~$\mu$ is a $\diagup$-mirror of~$M$, then~$\widehat\mu$ is an arc with singularities of~$\mathscr F$
at the endpoints. The total rotation of the contact plane~$\xi_+(p)$ with respect to the tangent
plane~$T_pF$ when~$p$ traverses~$\widehat\mu$ is zero. Therefore, $\widehat\mu$
consists of singularities of~$\mathscr F$ and $0$-arcs, and thus is disjoint from~$\delta_+$.

If~$\mu$ is a $\diagdown$-mirror, then the total rotation of the contact plane~$\xi_+(p)$ with respect to the tangent
plane~$T_pF$ when~$p$ traverses~$\widehat\mu$ is equal to~$-\pi$.
Therefore, $\widehat\mu$
consists of singularities of~$\mathscr F$, some number (may be zero) of $0$-arcs,
and a single $-1$-arc, and hence intersects~$\delta_+$ exactly once.

By symmetry an edge~$\widehat\mu$ intersects~$\delta_-$ once if~$\mu$ is a $\diagup$-mirror
and do not intersect it if~$\mu$ is a $\diagdown$-mirror. The claim follows.
\end{proof}

\section{Elementary moves of mirror diagrams}\label{elementary-moves-sec}
\subsection{Conventions}\label{conventions-subsec}
Denote by $r_-$, $r_|$, $r_\diagup$, $r_\diagdown$ the reflections of the torus $\mathbb T^2$
about the lines $\varphi=0$, $\theta=0$, $\varphi=\theta$, and $\varphi=-\theta$, respectively:
$$r_-(\theta,\varphi)=(\theta,-\varphi),\quad
r_|(\theta,\varphi)=(-\theta,\varphi),\quad
r_\diagup(\theta,\varphi)=(\varphi,\theta),\quad
r_\diagdown(\theta,\varphi)=(-\varphi,-\theta).$$
For a mirror diagram $M$ we define
$$\begin{aligned}r_-(M)&=(\Theta_M,-\Phi_M,r_-(E_M),\nu\circ T_M\circ r_-),&
r_|(M)&=(-\Theta_M,\Phi_M,r_|(E_M),\nu\circ T_M\circ r_|),\\
r_\diagup(M)&=(\Phi_M,\Theta_M,r_\diagup(E_M),T_M\circ r_\diagup),&
r_\diagdown(M)&=(-\Phi_M,-\Theta_M,r_\diagdown(E_M),T_M\circ r_\diagdown),
\end{aligned}$$
where $-X$ with $X\subset\mathbb S^1$, stands for $\{-x:x\in X\}$, and $\nu$
is the map exchanging the symbols `$\diagdown$' and `$\diagup$'.

\begin{conv}\label{symmetries}
Whenever we give a name to a transformation
$M\mapsto M'$ of mirror diagrams, the same name
is meant to be given to the transformations $r_-(M)\mapsto r_-(M')$,
$r_|(M)\mapsto r_|(M')$, $r_\diagdown(M)\mapsto r_\diagdown(M')$,
and $r_\diagup(M)\mapsto r_\diagup(M')$.

Whenever a transformation $M\mapsto M'$ is said to be of type~I
it is meant that the transformations
$r_\diagdown(M)\mapsto r_\diagdown(M')$, $r_\diagup(M)\mapsto r_\diagup(M')$
are of type~I, and that the transformations $r_-(M)\mapsto r_-(M')$,
$r_|(M)\mapsto r_|(M')$ are of type~II.

Due to this symmetry, any statement about the moves has an equivalent `dual'
one obtained by exchanging types~I and~II of moves and
types `$\diagdown$' and `$\diagup$' of mirrors. We may, without a special notice,
refer to a previously proven statement when the dual one should be used instead.
\end{conv}

\begin{conv}
To represent graphically a transformation of mirror diagrams we often draw a portion
of each diagram where the change occurs. Indicated occupied levels may have arbitrarily
many mirrors outside the shown region unless their endpoints
are marked by perpendicular dashes: \includegraphics{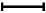}. In the latter case,
the occupied level is supposed to contain mirrors only in the shown portion of the diagram.

A dotted line \includegraphics{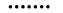} in the pictures of transformations indicate
an arbitrary family of parallel occupied levels, which is supposed to be unchanged by the transformation.
A gray box marked with a letter stands for an arbitrary family of mirrors at the intersection of
the indicated occupied levels unless otherwise specified. The family of mirrors inside such boxes may
be shifted as a whole by the considered moves.
\end{conv}

\begin{conv}
Sometimes, in order to illustrate a transformation~$M\mapsto M'$ of mirror diagrams
we draw the parts of the surfaces~$\wideparen M$ and~$\wideparen M'$
in which the surfaces differ, without pretending to show realistically their positions
in the three-space. However, if two discs of the form~$\wideparen x$, $\wideparen x'$,
where~$x$ and~$x'$ are parallel occupied levels, are close to one another according
to the context, we may draw them overlapping, one over the other:

\centerline{\includegraphics{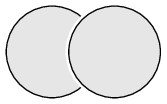}\put(-58,21){$\wideparen x$}\put(-28,21){$\wideparen x'$}}

Every strip of the form~$\wideparen\mu$, where~$\mu$ is a mirror of a mirror diagram~$M$, is marked in our
pictures by a small arc with endpoints at~$\partial\wideparen M$, intersecting~$\widehat\mu$ exactly once.
The arc is made red or green depending on the type of the mirror. If~$\mu$ is of type~`$\diagdown$' this arc is green,
and otherwise red:

\centerline{\includegraphics{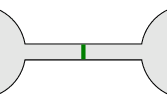}\put(-55,10){type `$\diagdown$'}
\hskip2cm
\includegraphics{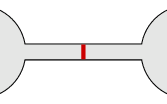}\put(-55,10){type `$\diagup$'}
\hskip.3cm\raisebox{20pt}{.}}
\end{conv}

These arcs represent a canonic dividing configuration~$(\delta_+,\delta_-)$ of~$\wideparen M$ with~$\delta_+$
shown in green and~$\delta_-$ in red, which accords with Convention~\ref{color-conv}.
We do not show the orientation of these arcs in the pictures.

\begin{conv}
Quite commonly, objects like rectangular diagrams are viewed up to
\emph{combinatorial equivalence}, without distinction between
two diagrams that can be obtained from one another by a self-homeomorphism of $\mathbb T^2$
of the form $(\theta,\varphi)\mapsto\bigl(f(\theta),g(\varphi)\bigr)$, where $f$ and $g$
are degree $1$ self-homeomorphisms of $\mathbb S^1$. This is \emph{not} the case
here. When we deal with mirror diagrams the exact position of some occupied levels does matter.
More precisely, when a diagram transforms, it is important to observe which mirrors
stay fixed and which are altered. For instance, from combinatorial point of view
a transformation may look as an exchange of two occupied levels. But geometrically
one of the levels may stay fixed, and it will matter which one of the two does so and
which one moves.
\end{conv}

\subsection{Elementary moves of mirror diagrams (without enhancement)}
The transformations of mirror diagrams introduced in this section, namely, extension/elimination, elementary bypass addition/removal,
and slide moves, are called \emph{elementary moves}. First we introduce them for ordinary mirror diagrams
and then extend the definitions to enhanced ones.

\begin{defi}\label{ext-move-def}
Let $M$ and $M'$ be mirror diagrams such that $M'$ is obtained from~$M$ by adding
a new occupied level and a $\diagup$-mirror at the intersection of the new occupied level with
an existing occupied level of $M$. Then we say that the passage $M\mapsto M'$ is
\emph{a type~I extension move}, and the inverse operation \emph{a type~I elimination move};
see Figure~\ref{mir-elem-moves}~(a).
\end{defi}

\begin{defi}\label{mirr-bypass-def}
Let $M$ and $M'$ be mirror diagrams such that for some $\theta_1\ne\theta_2$,
$\varphi_1\ne\varphi_2$ the following holds:
\begin{enumerate}
\item
$(\theta_1,\varphi_2)$ and $(\theta_2,\varphi_1)$ are $\diagup$-mirrors of both $M$ and $M'$;
\item
$(\theta_1,\varphi_1)$ is a $\diagdown$-mirror of both $M$ and $M'$;
\item
$(\theta_2,\varphi_2)$ is a $\diagdown$-mirror of $M$ but not of $M'$;
\item
there are no more mirrors of $M$ or~$M'$ in $r=[\theta_1;\theta_2]\times[\varphi_1;\varphi_2]$;
\item
$M$ and $M'$ have the same set of mirrors outside $r$ and the same set of occupied levels.
\end{enumerate}
Then we say that~$M'$ is obtained from $M$ by \emph{a type~I elementary bypass removal},
and $M$ is obtained from~$M'$ by \emph{a type~I elementary bypass addition}; see Figure~\ref{mir-elem-moves}~(b).
\end{defi}

\begin{defi}\label{mirr-slide-def}
Let $M$ and $M'$ be mirror diagrams such that for some $\theta_1\ne\theta_2$,
$\varphi_1\ne\varphi_2$ the following holds:
\begin{enumerate}
\item
$(\theta_1,\varphi_1)$, $(\theta_1,\varphi_2)$ and $(\theta_2,\varphi_1)$ are $\diagup$-mirrors of $M$;
\item
$(\theta_1,\varphi_2)$, $(\theta_2,\varphi_1)$ and $(\theta_2,\varphi_2)$ are $\diagup$-mirrors of $M'$;
\item
there are no more mirrors of $M$ or~$M'$ in $r=[\theta_1;\theta_2]\times[\varphi_1;\varphi_2]$;
\item
$M$ and $M'$ have the same set of mirrors outside $r$ and the same sets of occupied levels.
\end{enumerate}
Then we say that $M'$ is obtained from $M$ by \emph{a type~I slide move}; see Figure~\ref{mir-elem-moves}~(c).
\begin{figure}[ht]
\raisebox{50pt}{(a)}\includegraphics{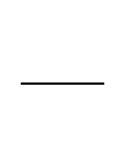}\raisebox{38pt}{$\longleftrightarrow$}\includegraphics{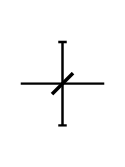}\hskip1cm
\includegraphics{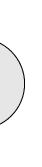}\raisebox{38pt}{$\longleftrightarrow$}\hskip2.2mm \includegraphics{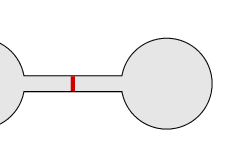}
\\
\raisebox{60pt}{(b)}\hskip.5cm\includegraphics{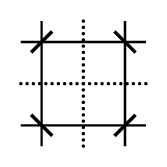}\put(-66,3){$m_{\theta_1}$}\put(-26,3){$m_{\theta_2}$}%
\put(-85,18){$\ell_{\varphi_1}$}\put(-85,58){$\ell_{\varphi_2}$}%
\raisebox{38pt}{$\longleftrightarrow$}\includegraphics{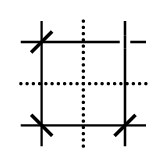}\hskip1cm
\includegraphics[height=80pt]{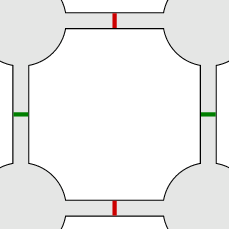}\ \raisebox{38pt}{$\longleftrightarrow$}\ \includegraphics[height=80pt]{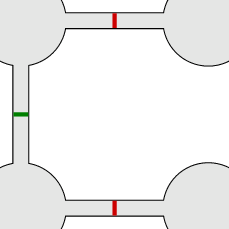}
\\[5mm]
\raisebox{60pt}{(c)}\hskip.5cm\includegraphics{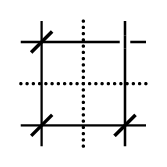}\put(-66,3){$m_{\theta_1}$}\put(-26,3){$m_{\theta_2}$}%
\put(-85,18){$\ell_{\varphi_1}$}\put(-85,58){$\ell_{\varphi_2}$}%
\raisebox{38pt}{$\longleftrightarrow$}\includegraphics{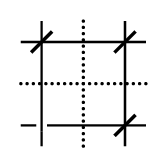}\hskip1cm
\includegraphics[height=80pt]{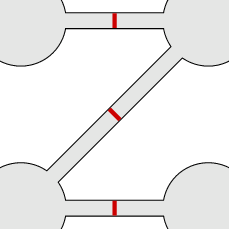}\ \raisebox{38pt}{$\longleftrightarrow$}\ \includegraphics[height=80pt]{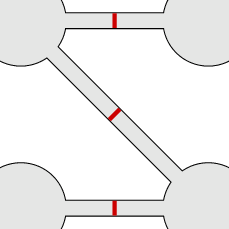}
\caption{Type~I elementary moves of mirror diagrams: (a) extension/elimination; (b) elementary bypass removal/addition; (c) slide}\label{mir-elem-moves}
\end{figure}
\end{defi}

Observe that, due to Convention~\ref{symmetries}, the inverse operation to a slide move is
also a slide move, and of the same type.

\begin{rema}As one can learn from Figure~\ref{slide} any slide move can be decomposed
into a sequence of moves including extension, elimination, elementary bypass addition, and elementary bypass removal
moves. So, the set of transformations that are decomposable into elementary moves
does not depend on whether or not the latter include slide moves.
\begin{figure}[ht]
\begin{tabular}{ccccc}
\includegraphics{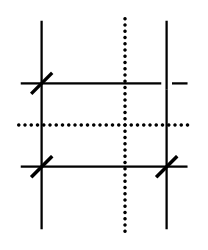}&\raisebox{58pt}{$\longrightarrow$}&
\includegraphics{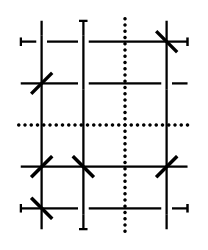}&\raisebox{58pt}{$\longrightarrow$}&
\includegraphics{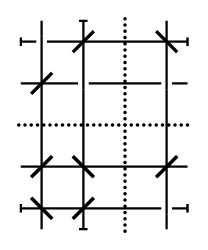}\\
&&&&$\downarrow$\\
\includegraphics{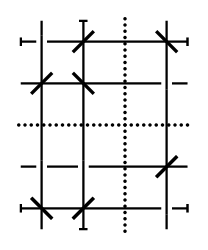}&\raisebox{58pt}{$\longleftarrow$}&
\includegraphics{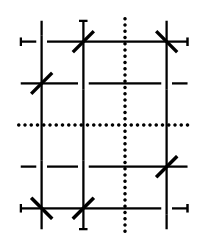}&\raisebox{58pt}{$\longleftarrow$}&
\includegraphics{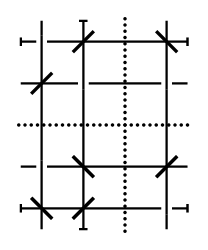}\\
$\downarrow$\\
\includegraphics{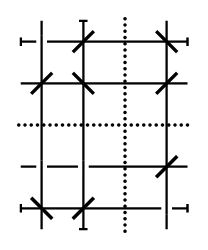}&\raisebox{58pt}{$\longrightarrow$}&
\includegraphics{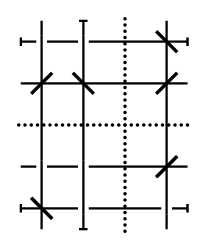}&\raisebox{58pt}{$\longrightarrow$}&
\includegraphics{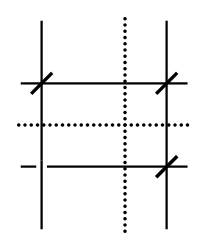}\\
\end{tabular}
\caption{Decomposition of a slide move into elementary moves of other kinds}\label{slide}
\end{figure}
However, we will need to work with transformations that are decomposable
into elementary moves of one type, either I or II, and then
the inclusion of slides into elementary moves will matter. One can see that
the decomposition in Figure~\ref{slide} involves moves of both types. This
cannot be avoided in general.
\end{rema}

\subsection{Elementary moves of enhanced mirror diagrams}
The occupied levels at which mirrors are explicitly
mentioned in Definitions~\ref{ext-move-def}, \ref{mirr-bypass-def}, and~\ref{mirr-slide-def}
and shown in Figure~\ref{mir-elem-moves}
are referred to as \emph{involved} in the respective move.

\begin{defi}\label{elem-move-enhanced-def}
Let~$M$ and~$M'$ be enhanced mirror diagrams, and let~$M_0$, $M_0'$ be the underlying
mirror diagrams (with enhancement forgotten). Assume that~$M_0\mapsto M_0'$ is an elementary move.
Then the move~$M\mapsto M'$ bears the same name (extension, elimination, elementary bypass addition, elementary bypass removal,
or slide) and the same type (I or II) as~$M_0\mapsto M_0'$ does, provided that:
\begin{enumerate}
\item
for any boundary circuit~$c$ of~$M$ that shares a non-trivial
subinterval of an occupied level not involved in the move with a boundary circuit~$c'$ of~$M'$,
either both~$c$ and~$c'$ are essential or both are inessential (for~$M$ and~$M'$, respectively);
\item
the number of essential boundary circuits of~$M$ is equal to that of~$M'$.
\end{enumerate}
\end{defi}

We now explain in more detail what this definition means for each kind of elementary moves. First, note
that for any elementary move~$M\mapsto M'$ any common boundary
circuit of~$M$ and~$M'$ is either essential for both of them or inessential for both of them.

Let~$M\mapsto M'$ be an extension or elimination move. Then there are unique~$c\in\partial M$ and~$c'\in\partial M'$
such that~$c\notin\partial M'$ and~$c'\notin\partial M$. The essentialness or inessentialness of any boundary circuit
in~$\partial M\setminus\{c\}{}=\partial M'\setminus\{c'\}$
is preserved. The boundary circuit~$c$ is essential for~$M$ if and only if so is~$c'$ for~$M'$.

Now let~$M\mapsto M'$ be an elementary bypass removal. We use the notation from Definition~\ref{mirr-bypass-def}.
$M$ has two boundary circuits that are not in~$\partial M'$, one of which is~$c_1=\partial r$. Denote
the other by~$c_2$. They are replaced in~$M'$ by a single boundary circuit $c'$
that can be viewed as the connected sum of~$c_1$ and~$c_2$.
The boundary circuit~$c_1$ of~$M$ must be inessential. The boundary circuit~$c_2$
is essential if and only if so is~$c'$.
The essentialness of any boundary circuit in~$\partial M\setminus\{c_1,c_2\}=
\partial M'\setminus\{c'\}$ is preserved by the move.

Finally let~$M\mapsto M'$ be a slide move. We use the notation from Definition~\ref{mirr-slide-def}.
There are two, not necessarily distinct, boundary circuits~$c_1$, $c_2$
of~$M$ that are replaced by another two, $c_1'$, $c_2'$, say,
so that~$c_i'$ coincides with~$c_i$ outside~$r$; see Figure~\ref{c1c2}.
\begin{figure}[ht]
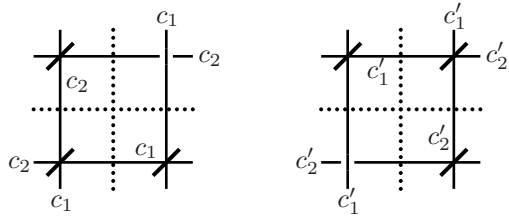

\includegraphics{mir-move5.eps}\put(-64,3){$c_1$}\put(-32,23){$c_1$}\put(-24,73){$c_1$}%
\put(-80,18){$c_2$}\put(-58,48){$c_2$}\put(-8,58){$c_2$}
\hskip1cm\includegraphics{mir-move6.eps}\put(-64,3){$c_1'$}\put(-30,27){$c_2'$}\put(-24,73){$c_1'$}%
\put(-80,18){$c_2'$}\put(-53,52){$c_1'$}\put(-8,58){$c_2'$}
\caption{The correspondence between modified boundary circuits before and after a slide move}\label{c1c2}
\end{figure}
We have~$c_1=c_2$
if and only if~$c_1'=c_2'$. The circuit~$c_i$ is essential for~$M$
if and only if so is~$c_i'$ for $M'$, $i=1,2$.
The essentialness of any boundary circuit from~$\partial M\setminus\{c_1,c_2\}=
\partial M'\setminus\{c_1',c_2'\}$ is preserved by the move.

\begin{rema}
Let~$M_0$ be the underlying mirror diagram of an enhanced mirror diagram~$M$,
and let~$M_0\mapsto M_0'$ be an elementary move. If this move is an extension, elimination,
or a slide move, then there exists a unique enhanced mirror diagram~$M'$ with
the underlying mirror diagram~$M_0'$ such that~$M\mapsto M'$ is also
an extension, elimination, or slide move, respectively.

If~$M_0\mapsto M_0'$ is an elementary bypass removal, then the respective~$M'$
may not exist. Indeed, a boundary circuit of~$M$ having form of the boundary of a rectangle,
which disappears in~$M'$, must be inessential for~$M'$ to exist. If it is essential, the move does not fit into
the definition of the enhanced version of the move. If~$M'$ exists, then it is unique
in this case.

If~$M_0\mapsto M_0'$ is an elementary bypass addition,
then the respective~$M'$ always exists but it may not be unique. This arises from the fact that
$M_0$ and~$M_0'$ may fit into the settings of Definition~\ref{mirr-bypass-def}, with Convention~\ref{symmetries} in force,
in two different ways.

To see this let~$r_1,r_2\subset\mathbb T^2$
two rectangles sharing a vertex and being disjoint otherwise. Suppose that both~$\partial r_1$ and~$\partial r_2$
are boundary circuits of~$M_0'$ and there are no mirrors of~$M_0'$ inside the rectangles~$r_1$, $r_2$.
Suppose also that the mirror added by the move~$M_0\mapsto M_0'$ is the common vertex of~$r_1$ and~$r_2$.
Denote by~$c$ the boundary circuit of~$M$ that is replaced by~$\partial r_1$, $\partial r_2$
as a result of the move. If~$c$ is essential for~$M$, then exactly one of~$\partial r_1$, $\partial r_2$
must be essential for~$M'$, but it may be either of them.

A similar situation occurs when two rectangles whose boundaries are boundary circuits of~$M_0'$
share two or four vertices.

It can also happen that a transformation~$M\mapsto M'$ fits into the definition
of a slide move in two different ways, but in this case only one boundary circuit
is modified, so the enhancement of~$M$ prescribes that of~$M'$.
\end{rema}

\begin{rema}
Conditions~(1) and~(2) of Definition~\ref{elem-move-enhanced-def} are always
satisfied when all patchable boundary circuits of~$M$ and~$M'$ are inessential, which
is the `default' option.
\end{rema}

\begin{defi}
For every elementary move~$M\mapsto M'$ of enhanced mirror diagrams we define
\emph{the associated morphism} of the respective enhanced spatial ribbon graphs as follows.

If~$M\mapsto M'$ is an extension move, the associated morphism~$\eta$
is defined by~$\bigl(\wideparen M',\wideparen M',\mathrm{id}|_{\wideparen M'}\bigr)\in\eta$.

If~$M\mapsto M'$ is an elementary bypass addition, then any surface carried by~$\widehat M'$ is also
carried by~$\widehat M$. Take any such surface~$F$. The associated morphism~$\eta$ is defined
by the condition~$(F,F,\mathrm{id}|_F)\in\eta$.

For elimination moves and elementary bypass removals the associated
morphisms are defined as the inverses to the morphisms associated
with the inverses of the respective moves.

Now let~$M\mapsto M'$ be a slide move. We use the notation from Definition~\ref{mirr-slide-def}.
Denote also the mirrors at~$(\theta_1,\varphi_1)$, $(\theta_2,\varphi_1)$, $(\theta_1,\varphi_2)$,
$(\theta_2,\varphi_2)$ by~$\mu_1$, $\mu_2$, $\mu_3$, $\mu_4$, respectively,
and the discs
$$\wideparen m_{\theta_1}\cup\wideparen m_{\theta_2}\cup\wideparen\ell_{\varphi_1}\cup
\wideparen\ell_{\varphi_2}\cup\wideparen\mu_1\cup\wideparen\mu_2\cup\wideparen\mu_3\subset\wideparen M
\quad\text{ and }\quad
\wideparen m_{\theta_1}\cup\wideparen m_{\theta_2}\cup\wideparen\ell_{\varphi_1}\cup
\wideparen\ell_{\varphi_2}\cup\wideparen\mu_2\cup\wideparen\mu_3\cup\wideparen\mu_4\subset\wideparen M'$$
by~$d$ and~$d'$, respectively.
Let~$h$ be a homeomorphism from~$\wideparen M$ to~$\wideparen M'$ such that
$h|_{M\setminus d}=\mathrm{id}$. If~$d$ is not a connected component of~$\wideparen M$,
then this determines~$h$ up to isotopy. If~$d$ is a connected component of~$\wideparen M$
we demand additionally that~$h|_d:d\rightarrow d'$ is orientation-preserving, where
the orientations of~$d$ and~$d'$ are chosen so that they agree on~$d\cap d'$.

The associated morphism~$\eta$ is now defined by the condition~$(\wideparen M,\wideparen M',h)\in\eta$.

To state that~$\eta$ is the morphism associated with the move~$M\mapsto M'$ we write~$M\xmapsto\eta M'$.
\end{defi}

\subsection{Stable equivalence via elementary moves}
We now express relative stable equivalence of spatial ribbon graphs presented
by mirror diagrams, in combinatorial terms.

\begin{theo}\label{relative-stable-equivalence-th}
Let~$M$ and~$M'$ be enhanced mirror diagrams, and let $C=\{c_1,\ldots,c_k\}$ be
a simple family of their common essential boundary circuits. Let also~$\eta$ be a morphism from~$\widehat M$
to~$\widehat M'$.
The following two conditions are equivalent.
\begin{enumerate}
\item
The enhanced ribbon
graphs~$\widehat M$ and~$\widehat M'$ are stably equivalent relative to~$L=\bigcup\limits_{i=1}^k\widehat c_i$, and some isotopy realizing this stable equivalence induces~$\eta$.
\item
There exists a sequence of elementary moves
$$M=M_0\xmapsto{\eta_1}M_1\xmapsto{\eta_2}\ldots\xmapsto{\eta_n}M_n=M'$$
preserving all the boundary circuits in~$C$, such that~$\eta=\eta_n\circ\ldots\circ\eta_1$.
\end{enumerate}
\end{theo}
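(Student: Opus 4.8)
The implication $(2)\Rightarrow(1)$ is the easy direction: each elementary move of enhanced mirror diagrams comes with an associated morphism realized by an explicit local isotopy of the carried surfaces, and these local isotopies fix the occupied levels not involved in the move, hence in particular fix $L$ (since the moves preserve the circuits in $C$). Concatenating them gives an isotopy from a surface carried by $\widehat M$ to one carried by $\widehat M'$, fixed on $L$, inducing $\eta_n\circ\cdots\circ\eta_1=\eta$; this witnesses stable equivalence relative to $L$. I would spell out that "preserving a boundary circuit" for an elementary move means the circuit survives unchanged through each $M_i$, so that it is available as $L$ throughout; the only subtlety is bookkeeping of essential/inessential status, which is built into Definition~\ref{elem-move-enhanced-def}.

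For the substantive direction $(1)\Rightarrow(2)$, the plan is to reduce to Proposition~\ref{stable-equivalence-of-ribbon-graphs-prop}, which already expresses relative stable equivalence of spatial ribbon graphs via handle additions and removals. So first I would invoke that proposition to obtain a sequence of handle moves $\widehat M=\rho_0\xmapsto{\eta_1}\rho_1\xmapsto{}\cdots\xmapsto{\eta_N}\rho_N=\widehat M'$, each preserving the framed circuits in $L$, composing to $\eta$. The key step is then to \emph{realize each intermediate $\rho_i$ by a mirror diagram} and each handle move by a sequence of elementary moves. For the first part one uses Proposition~\ref{all-ribbon-graphs-represented-by-mir-diagrams}: inductively, having a mirror diagram $M_{i-1}$ with $\widehat M_{i-1}$ equivalent to $\rho_{i-1}$ relative to the already-constructed part, one produces $M_i$ with $\widehat M_i$ equivalent to $\rho_i$ relative to $\widehat M_{i-1}$ (and relative to $C$). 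The heart of the matter is to show that an elementary handle addition of spatial ribbon graphs, when both source and target are presented by mirror diagrams sharing the relevant structure, can be decomposed into extension, elimination, elementary bypass, and slide moves.

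The handle-move realization step is where I expect the real work to lie. A handle addition attaches a $1$-handle to $\partial F$ along a patching disc; in mirror-diagram terms this should correspond to adding a new occupied level carrying one new mirror (an extension move) followed by "routing" that arc through the diagram by a controlled sequence of elementary bypass additions/removals and slide moves until it reaches the correct topological position, then possibly an elimination to dispose of an auxiliary level. The precise combinatorial lemma I would isolate is: if $M$ and $M'$ are mirror diagrams, $C$ a simple family of common essential boundary circuits, and there is a handle addition $\widehat M\xmapsto{\eta_0}\widehat M'$ preserving $L=\bigcup\widehat c_i$, then $M'$ is obtained from $M$ by elementary moves preserving $C$ and composing to $\eta_0$. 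Proving this requires: (i) normalizing the patching disc so it meets $\Gamma_{\widehat M}$ only along edges and in a standard way near the occupied levels; (ii) translating each elementary isotopy of the disc (finger moves, passing over/under occupied levels, trading which disc of the form $\wideparen x$ lies "on top") into one of the named elementary moves, carefully tracking mirror types and the essential/inessential status of boundary circuits so the conditions of Definition~\ref{elem-move-enhanced-def} hold; and (iii) checking that $C$ is untouched because the routing can be confined to a region disjoint from the circuits of $C$ (this uses that the handle addition preserved $L$, so the patching disc can be taken away from $L$). A subsidiary nuisance is matching the \emph{morphisms}, not just the diagrams: one must choose the elementary moves' associated morphisms consistently, which is where Lemma~\ref{add-points-to-V-lem}-type flexibility (subdividing edges) and the uniqueness clauses in the definitions of the associated morphisms get used. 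Once the handle-move lemma is in place, stringing together the realizations of $\rho_0,\ldots,\rho_N$ yields the desired sequence of elementary moves from $M$ to $M'$, and composing the associated morphisms gives $\eta$; the final loose end is that the last diagram produced equals $M'$ up to the allowed identifications, which follows from $\widehat M_N$ being equivalent to $\widehat M'$ relative to everything fixed, together with Proposition~\ref{obvious-prop-1}-style rigidity of the correspondence between diagrams and their essential boundary.
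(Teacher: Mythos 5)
Your plan shares the paper's top-level architecture: invoke Proposition~\ref{stable-equivalence-of-ribbon-graphs-prop} to get a chain of handle additions/removals from $\widehat M$ to $\widehat M'$ preserving $L$, then realize the intermediate ribbon graphs by mirror diagrams and decompose each handle move into elementary moves. But the paper's implementation (Lemmas~\ref{handle-reduction-seq-lem} and~\ref{handle-reduction-decomp-lem}) differs from yours in the two places where your sketch is vaguest. First, rather than realizing each $\rho_i$ inductively via Proposition~\ref{all-ribbon-graphs-represented-by-mir-diagrams} (which only yields $\widehat M_i$ \emph{equivalent} to $\rho_i$, leaving a loose end at $i=N$ where you need $M_N=M'$ on the nose, not merely equivalent), the paper chooses the guiding sequence of ribbon graphs to sit inside a single ambient surface $F$ containing $\Gamma_{\widehat{M\cup M'}}$, replaces $F$ by a rectangular surface $\widehat\Pi$ by Lemma~\ref{rectangular-representative-lem}, and then refines the tiling by wrinkle creations so that every $\Gamma_{\rho_i}$ is a subcomplex of the $1$-skeleton; at that point each $\rho_i$ is literally of the form $\widehat M_i$, with $M_0=M$ and $M_N=M'$ by construction. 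Second, for the handle-addition-to-elementary-moves step, your sketch relies on ``routing'' the new arc by a controlled sequence of bypasses and slides guided by isotopies of the patching disc (finger moves, passing over/under levels); the paper instead uses that the patching disc is now a union of tiles $\bigcup_{r\in\Pi}\widehat r$ and inducts on the number of tiles, cutting along an arc $\alpha$ in the $1$-skeleton with at least one endpoint on $\Gamma_{\widehat M}$ at each step. This gives a clean complexity measure and a base case of a single tile (handled by at most three extensions and one elementary bypass addition). Without the tile-based setup, your routing step has no visible induction parameter, no clear reason it terminates, and no explicit mechanism ensuring the circuits of $C$ are untouched. The morphism-matching concern you flag at the end is also handled for free once everything lives in the fixed surface $\widehat\Pi$: the triple $(F,F,\mathrm{id}|_F)$ represents every associated morphism, so no separate bookkeeping is needed.
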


\begin{proof}
The part~(2)$\Rightarrow$(1) is quite obvious and left to the reader. The inverse implication is
a consequence of Lemmas~\ref{handle-reduction-seq-lem} and~\ref{handle-reduction-decomp-lem} below.
\end{proof}

\begin{lemm}\label{handle-reduction-seq-lem}
Let~$M$ and~$M'$ be enhanced mirror diagrams, and let~$C=\{c_1,\ldots,c_k\}$ be
a simple family of their common essential boundary circuits.
If the enhanced ribbon
graphs~$\widehat M$ and~$\widehat M'$ are stably equivalent relative to
the framed link~$L=\bigcup\limits_{i=1}^k\widehat c_i$,
and~$\eta$ is a morphism from~$\widehat M$ to~$\widehat M'$ induced by an isotopy
realizing the stable equivalence,
then there exists a sequence of enhanced mirror diagrams
$$M=M_0,M_1,\ldots,M_n=M'$$
such that each transition~$\widehat M_{i-1}\xmapsto{\eta_i}\widehat M_i$, $i=1,\ldots,n$,
is a handle addition or a handle removal preserving all the boundary circuits in~$L$,
and we have~$\eta=\eta_n\circ\ldots\circ\eta_1$.
\end{lemm}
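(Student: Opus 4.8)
The plan is to reduce the assertion to Proposition~\ref{stable-equivalence-of-ribbon-graphs-prop}, which already gives us, under the stated hypotheses, a sequence of handle additions and removals $\widehat M=\rho_0\xmapsto{\eta_1}\rho_1\xmapsto{\eta_2}\cdots\xmapsto{\eta_N}\rho_N=\widehat M'$ each preserving the framed boundary circuits in $L$ and with $\eta=\eta_N\circ\cdots\circ\eta_1$. The only gap between that output and the statement we want is that the intermediate spatial ribbon graphs $\rho_i$ produced by Proposition~\ref{stable-equivalence-of-ribbon-graphs-prop} are abstract (enhanced) spatial ribbon graphs, not a priori of the form $\widehat{M_i}$ for enhanced mirror diagrams $M_i$. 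So the heart of the proof is to replace each $\rho_i$ by an equivalent (relative to the common boundary) $\widehat{M_i}$ while keeping the individual moves of the ``handle'' type and keeping the morphisms straight.

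First I would note that each $\rho_i$ contains $\widehat{M_0}$ as a spatial ribbon subgraph in a way compatible with the handle moves: indeed a handle addition/removal only modifies the graph by an open simple arc and keeps the essential boundary fixed, so inductively $\widehat{M_0}$ (more precisely the part of $\Gamma_{\widehat M}$ carrying $C$) persists as a subgraph of $\Gamma_{\rho_i}$, and all the boundary circuits in $L$ remain common essential boundary circuits. Hence I may apply Proposition~\ref{all-ribbon-graphs-represented-by-mir-diagrams} (with $M_0$ replaced by a suitable mirror diagram carrying $C$) to each $\rho_i$ to obtain an enhanced mirror diagram $M_i$ with $\widehat{M_i}$ equivalent to $\rho_i$ relative to $L$; for $i=0$ and $i=N$ I simply take $M_i=M$ and $M_i=M'$ respectively, which is legitimate since $\widehat M$ and $\widehat{M}$ are trivially equivalent relative to $L$. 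Fix homeomorphisms realizing these equivalences and let $\zeta_i:\rho_i\to\widehat{M_i}$ be the induced isomorphisms of enhanced spatial ribbon graphs (with $\zeta_0=\zeta_N=\mathrm{id}$).

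Next I would transport the handle moves across these isomorphisms. Given the handle move $\rho_{i-1}\xmapsto{\eta_i}\rho_i$ and the isomorphisms $\zeta_{i-1},\zeta_i$, the composite $\zeta_i\circ\eta_i\circ\zeta_{i-1}^{-1}:\widehat{M_{i-1}}\to\widehat{M_i}$ realizes a stable equivalence relative to $L$; it need not be a single handle move, but by Proposition~\ref{stable-equivalence-of-ribbon-graphs-prop} again (applied now between $\widehat{M_{i-1}}$ and $\widehat{M_i}$, which already are mirror-diagram ribbon graphs) it \emph{is} a finite composition of handle additions and removals preserving $L$, say $\widehat{M_{i-1}}=\sigma_{i,0}\to\sigma_{i,1}\to\cdots\to\sigma_{i,m_i}=\widehat{M_i}$ with associated morphisms composing to $\zeta_i\circ\eta_i\circ\zeta_{i-1}^{-1}$. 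Each $\sigma_{i,j}$ again carries $C$ and contains the relevant subgraph, so by Proposition~\ref{all-ribbon-graphs-represented-by-mir-diagrams} I replace it by an enhanced mirror diagram $N_{i,j}$ (with $N_{i,0}=M_{i-1}$, $N_{i,m_i}=M_i$), and by Lemma~\ref{add-points-to-V-lem} I may further subdivide edges so that consecutive $N_{i,j}$, $N_{i,j+1}$ differ by a handle move \emph{between mirror-diagram ribbon graphs}. Concatenating over $i=1,\dots,N$ and relabelling gives the desired sequence $M=M_0,M_1,\dots,M_n=M'$ of enhanced mirror diagrams with each $\widehat{M_{i-1}}\xmapsto{\eta_i}\widehat{M_i}$ a handle addition or removal preserving $L$; the composition of the associated morphisms equals $\zeta_N\circ\eta_N\circ\cdots\circ\eta_1\circ\zeta_0^{-1}=\eta$ by construction.

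The main obstacle I anticipate is bookkeeping of the morphisms rather than any topological difficulty: one must be careful that at each substitution the ``associated morphism'' of a handle move is exactly the one matching the chosen homeomorphisms of surfaces, so that the telescoping product really collapses to the prescribed $\eta$ and not merely to something isotopic to it. This is handled by always fixing a concrete surface $F$ carried by all the ribbon graphs involved in a given block and demanding $(F,F,\mathrm{id}|_F)$ lie in each morphism, exactly as in the proof of Proposition~\ref{stable-equivalence-of-ribbon-graphs-prop}; with that convention the compatibility is automatic. A secondary point to check is that Proposition~\ref{all-ribbon-graphs-represented-by-mir-diagrams} can be applied ``relative to'' the part of $\widehat M$ carrying the whole family $C$ (not just a single mirror diagram $M_0$), but this is immediate since $C$ being simple means it is realized by pairwise disjoint simple boundary circuits, hence lies in a single mirror diagram which we may take as the reference; everything else is routine.
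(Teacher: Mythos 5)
Your proposal has a genuine gap in the ``transport'' step, and the recursion you set up to repair it never terminates.

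The problem is that Proposition~\ref{all-ribbon-graphs-represented-by-mir-diagrams}, applied independently to each intermediate ribbon graph $\rho_i$ (or $\sigma_{i,j}$), only produces a mirror diagram $M_i$ (or $N_{i,j}$) whose associated ribbon graph is \emph{equivalent} to $\rho_i$ --- equivalent up to ambient isotopy, not equal as a subset of~$\mathbb S^3$. A handle addition/removal (Definition~\ref{handle-addition-def}) requires the two underlying spatial graphs $\Gamma$ to satisfy a literal containment: one must be obtained from the other by adding a single open arc. After you replace each $\rho_i$ by an independently chosen $\widehat{M_i}$, the graphs $\Gamma_{\widehat{M_{i-1}}}$ and $\Gamma_{\widehat{M_i}}$ are unrelated as subsets of~$\mathbb S^3$, so $\widehat{M_{i-1}}\to\widehat{M_i}$ is not a handle move. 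You acknowledge this and invoke Proposition~\ref{stable-equivalence-of-ribbon-graphs-prop} again, obtaining a new chain $\sigma_{i,0},\dots,\sigma_{i,m_i}$; but replacing each $\sigma_{i,j}$ by a mirror-diagram model $N_{i,j}$ via Proposition~\ref{all-ribbon-graphs-represented-by-mir-diagrams} reintroduces exactly the same defect one level down. Lemma~\ref{add-points-to-V-lem} cannot close this gap: it only allows subdividing edges of a \emph{single} mirror diagram by adding vertices, keeping $\Gamma$ fixed; it does not align two distinct mirror diagrams whose underlying graphs do not already satisfy the required containment. Nothing in the proposal ever forces a pair of consecutive graphs to actually nest, so the reduction is circular.

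The paper's proof resolves this by doing the rectangularization \emph{globally rather than termwise}. After a preliminary step that removes spurious coincidences in $E_M\cap E_{M'}$ (so that generic choices can be made), it chooses the guiding sequence $\rho_0,\dots,\rho_N$ so generically that all the graphs $\Gamma_{\rho_i}$ lie inside a \emph{single} compact surface $F$; then it straightens $F$ into $\widehat\Pi$ for a rectangular diagram of a surface $\Pi$ via Lemma~\ref{rectangular-representative-lem}; and finally it applies wrinkle creation moves to $\Pi$ to refine the tiling so finely (outside a small neighborhood of $\Gamma_{\widehat{M\cup M'}}$) that the whole union $\Gamma=\bigcup_i\Gamma_{\rho_i}$ lands in the $1$-skeleton of the tiling. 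After that, every $\rho_i$ is of the form $\widehat{M_i}$ for genuine enhanced mirror diagrams $M_i$, and the containment relations defining the handle moves are preserved automatically because everything happens inside one fixed surface. That simultaneous embedding step is the idea your proposal is missing; the morphism bookkeeping you worry about is indeed the easy part, but it is downstream of having the $\Gamma_{\rho_i}$ nest correctly in the first place.
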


\begin{proof}
We construct the sought-for sequence of handle additions and removals in four
steps.

\smallskip\noindent\emph{Step 1.}
Reduce to the case when~$E_M\cap E_{M'}$ consists only of the mirrors that are hit by
the boundary circuits~$c_i$, $i=1,\ldots,k$:

Suppose that both~$M$ and~$M'$ have a mirror at~$(\theta_0,\varphi_0)$
(not necessarily of the same type) that is not hit by any~$c_i$. Suppose also
that it is a~$\diagup$-mirror of~$M$ (the case of a $\diagdown$-mirror is similar). Choose~$\varepsilon>0$
so small that~$\varnothing=(\theta_0;\theta_0+\varepsilon]\cap\Theta_M=(\theta_0;\theta_0+\varepsilon]\cap\Theta_{M'}=[\varphi_0-\varepsilon;\varphi_0)\cap\Phi_M=[\varphi_0-\varepsilon;\varphi_0)\cap\Phi_{M'}$.

Let~$M_1$ be a mirror diagram obtained from~$M$ by the following sequence of elementary moves (see Figure~\ref{stabilization-decomp-fig}):
\begin{itemize}
\item
two extension moves that add new occupied levels,~$m_{\theta_0+\varepsilon}$ and~$\ell_{\varphi_0-\varepsilon}$,
and two $\diagdown$-mirrors at~$(\theta_0+\varepsilon,\varphi_0)$
and~$(\theta_0,\varphi_0-\varepsilon)$;
\item
an elementary bypass addition that inserts a $\diagup$-mirror at~$(\theta_0+\varepsilon,\varphi_0-\varepsilon)$.
\end{itemize}
Denote by~$\eta_1$ the morphism from~$\widehat M$ to~$\widehat M_1$ that is the composition of the morphisms
associated with these moves.

Let~$M_2$ be the diagram obtained from~$M_1$ by an elementary bypass removal that deletes the mirror at~$(\theta_0,\varphi_0)$,
and let~$\eta_2$ be the associated morphism from~$\widehat M_1$ to~$\widehat M_2$.
\begin{figure}[ht]
\includegraphics{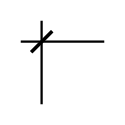}\put(-45,53){$m_{\theta_0}$}\put(-64,38){$\ell_{\varphi_0}$}\put(-35,0){$M$}\raisebox{28pt}{$\longrightarrow$}%
\includegraphics{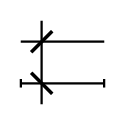}\raisebox{28pt}{$\longrightarrow$}%
\includegraphics{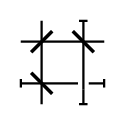}\raisebox{28pt}{$\longrightarrow$}%
\includegraphics{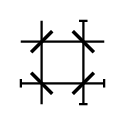}\put(-35,0){$M_1$}\raisebox{28pt}{$\longrightarrow$}%
\includegraphics{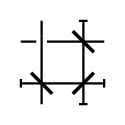}\put(-35,0){$M_2$}
\caption{Removing an unwanted mirror by elementary moves}\label{stabilization-decomp-fig}
\end{figure}
One can see that~$\widehat M\mapsto\widehat M_1$ is a handle addition, and~$\widehat M_1\mapsto\widehat M_2$ is
a handle removal.

The enhanced spatial ribbon graphs~$\widehat M_2$ and~$\widehat
M'$ are still stably equivalent relative to~$L$, and there is an isotopy realizing this stable
equivalence that induces~$\eta\circ\eta_1^{-1}\circ\eta_2^{-1}$. The set~$E_{M_2}$ shares fewer elements with~$E_{M'}$
than~$E_M$ does. By repeating this step finitely many times we get an enhanced mirror diagram~$M''$
that is obtained from~$M$ by a sequence of elementary moves preserving~$L$, and is such that all mirrors in~$E_{M''}\cap E_{M'}$
are hit by some~$c_i$, $i\in\{1,\ldots,k\}$. We redenote~$M''$ by~$M$ and proceed to the next step.

\smallskip\noindent\emph{Step 2.}
Find a transition from~$\widehat M$ to~$\widehat M'$ via spatial ribbon graphs, and put them into a single surface~$F$:

Denote by~$M\cup M'$ the mirror diagram defined by the following conditions:
$L_{M\cup M'}=L_M\cup L_{M'}$, $E_{M\cup M'}=E_M\cup E_{M'}$,
$T_{M\cup M'}|_{E_M}=T_M$, $T_{M\cup M'}|_{E_{M'}}=T_{M'}$. It exists since~$T_M$ and~$T_{M'}$ agree on~$E_M\cap E_{M'}$.

By Proposition~\ref{stable-equivalence-of-ribbon-graphs-prop} there exists a sequence of handle additions and removals
$$\widehat M=\rho_0\xmapsto{\eta_1}\rho_1\xmapsto{\eta_2}\ldots\xmapsto{\eta_N}\rho_N=\widehat M'$$
preserving the boundary circuits~$\widehat c_i$, $i=1,\ldots,k$, and such that~$\eta=\eta_N\circ\ldots\circ\eta_2\circ\eta_1$. Such a sequence
will be called \emph{a guiding sequence} for the transition~$M\mapsto M'$.

A generic choice of a guiding sequence ensures that the edges of the graphs~$\Gamma_{\rho_i}$
that are added or removed in the course of transforming~$\Gamma_{\rho_0}$ into~$\Gamma_{\rho_N}$
do not have unnecessary intersections with each other, and
all the graphs~$\Gamma_{\rho_i}$, $i=0,\ldots,N$,
are contained in a single compact surface~$F$ such that, for each~$i$, there is a surface~$F_i\in S_{\rho_i}$
contained in~$F$ and containing an open neighborhood of~$\Gamma_{\rho_i}$ in~$F$.
Let~$\Gamma$ be the union~$\bigcup\limits_{i=1}^k\Gamma_{\rho_k}$.
We may also assume that~$F$ is tangent to~$\xi_+$ along~$\widehat\mu$ if~$\mu$ is a~$\diagup$-mirror
of~$M\cup M'$ and to~$\xi_-$ if~$\mu$ is a $\diagdown$-mirror of~$M\cup M'$.

\smallskip\noindent\emph{Step 3.}
Make the surface~$F$ `rectangular':

By Lemma~\ref{rectangular-representative-lem} there is an isotopy relative to~$\widehat{M\cup M'}$
from~$F$ to a surface of the form~$\widehat\Pi$, where~$\Pi$ is a rectangular diagram of a surface.
This isotopy takes~$\rho_i$, $i=0,\ldots,N$, to spatial ribbon graphs that still form a guiding
sequence for the transition~$M\mapsto M'$. So we may assume from the beginning that~$F=\widehat\Pi$.

\smallskip\noindent\emph{Step 4.}
Make~$\Gamma$ a subcomplex of the tiling of~$\Pi$:

Fix an open neighborhood~$U\subset\widehat\Pi$ of~$\Gamma_{\widehat{M\cup M'}}$ such that~$U$ retracts to~$\Gamma_{\widehat{M\cup M'}}$,
and~$U\cap\Gamma_{\rho_i}$ retracts to~$\Gamma_{\widehat{M\cup M'}}$ for any~$i=1,\ldots,N-1$. By using 
wrinkle creation moves
we can find a rectangular diagram of a surface~$\Pi_1$ and
a homeomorphism~$h:\widehat\Pi\rightarrow\widehat\Pi_1$ such that
\begin{enumerate}
\item
$\Pi_1$ is obtained from~$\Pi$ by finitely many wrinkle creation moves;
\item
there is an isotopy relative to~$\widehat{M\cup M'}$ from~$\mathrm{id}|_{\widehat\Pi}$ to~$h$;
\item
the preimage of the tiling of~$\widehat\Pi_1$ under~$h$ is arbitrarily fine outside~$U$, i.e. for any rectangle~$r\in\Pi_1$ the diameter of~$h^{-1}(\widehat r)\setminus U$ is smaller than any~$\delta>0$ chosen in advance.
\end{enumerate}
This means that such~$\Pi_1$ and~$h$ can be chosen so that~$h(\Gamma)$
is contained in the $1$-skeleton of the tiling of~$\widehat\Pi_1$.

We replace~$\Pi$ by~$\Pi_1$, and $\rho_i$ by~$h(\rho_i)$, $i=0,1,\ldots,N$.
The new~$\rho_i$'s still form a guiding sequence for the transition~$M\mapsto M'$ and
each~$\rho_i$ now has the form~$\widehat M_i$, where~$M_i$ is an enhanced mirror diagram.
This completes the proof.
\end{proof}

\begin{lemm}\label{handle-reduction-decomp-lem}
Let~$M$ and~$M'$ be enhanced mirror diagrams such that~$\widehat M\xmapsto\eta\widehat M'$ is a handle
addition. Then there is a sequence
$$M=M_0\xmapsto{\eta_1}M_1\xmapsto{\eta_2}\ldots\xmapsto{\eta_n}M_n=M'$$
of elementary moves preserving all common boundary circuits of~$M$ and~$M'$
such that~$\eta=\eta_n\circ\ldots\circ\eta_2\circ\eta_1$.
\end{lemm}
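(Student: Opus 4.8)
The plan is to realise the handle addition $\widehat M\xmapsto\eta\widehat M'$ by building up, one arc at a time, the single new edge that distinguishes $\widehat M'$ from $\widehat M$: most of it will be grown by extension moves, and the final attachment will be done by a ``stabilise--manipulate--destabilise'' bundle of elementary moves.

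First I would analyse the combinatorial shape of a handle addition between mirror diagrams. By Definition~\ref{handle-addition-def} we have $\Gamma_{\widehat M'}=\Gamma_{\widehat M}\cup a$, where $a$ is a simple arc with $\overline a\cap\Gamma_{\widehat M}=\partial\overline a$ and $V_{\widehat M'}\cap\Gamma_{\widehat M}=V_{\widehat M}$. Since $\Gamma_{\widehat M'}=\widehat G_{M'}$ is the union of the arcs $\widehat{\mu'}$, $\mu'\in E_{M'}$, the arc $a$ is a ``staircase'' $\widehat{\mu'_1}\cup\dots\cup\widehat{\mu'_j}$ built from arcs over mirrors $\mu'_i\in E_{M'}\setminus E_M$ which are consecutively incident along occupied levels of $M'$, the endpoints of $a$ being vertices of $\widehat M$ and the internal vertices of the staircase being vertices of $\widehat M'$. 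A short comparison of $\widehat G_M$ and $\widehat G_{M'}$ then shows that in fact $E_{M'}=E_M\cup\{\mu'_1,\dots,\mu'_j\}$ and that $\Theta_{M'},\Phi_{M'}$ arise from $\Theta_M,\Phi_M$ by adjoining exactly the coordinates of the $\mu'_i$ sitting on new levels, so that $M'$ is just $M$ with the staircase adjoined (up to the enhancement, which must be tracked). I would also fix, once and for all, a surface $F\in S_{\widehat M}$ and a patching disc $d$ realising the handle addition, with $F=F'\cup d$, $F'\in S_{\widehat M'}$, $\partial_{\mathrm e}F=\partial_{\mathrm e}F'$, chosen so that $d$ contains, for each $i$, a sub-band along $\widehat{\mu'_i}$ isotopic rel $\Gamma_{\widehat M'}$ to the strip $\wideparen{\mu'_i}$; this choice is what makes each intermediate transition recognisable as a handle addition again.

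The induction is on the number $j$ of arcs in the staircase. Peel off the terminal arc $\widehat{\mu'_1}$. \emph{Case A}: $\mu'_1$ lies at the crossing of a level occupied in $M$ with a level not occupied in $M$; then adjoining it is precisely an extension move $M\xmapsto{\eta_1}M_1$, taken in whichever of the four symmetric variants of Convention~\ref{symmetries} installs a mirror of the correct type on the new level. \emph{Case B}: $\mu'_1$ lies at a crossing of two levels already occupied in $M$; then I realise its addition by the composite ``introduce two auxiliary occupied levels parallel and close to these two by a pair of extension moves, install $\mu'_1$ by an elementary bypass addition, and remove the auxiliary levels by elimination and slide moves'', which is the reverse of the manipulation in Figure~\ref{stabilization-decomp-fig} in the proof of Lemma~\ref{handle-reduction-seq-lem}, and let $\eta_1$ be the composite of the associated morphisms. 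In both cases $\Gamma_{\widehat M_1}=\Gamma_{\widehat M}\cup\widehat{\mu'_1}$; conditions (1) and (3) of Definition~\ref{handle-addition-def} for $\widehat M_1\to\widehat M'$ are immediate, and (2) follows by deleting from $d$ the sub-band over $\widehat{\mu'_1}$, so $\widehat M_1\to\widehat M'$ is again a handle addition, now with staircase of length $j-1$ and associated morphism $\eta\circ\eta_1^{-1}$. The inductive hypothesis decomposes it into elementary moves preserving all common boundary circuits of $M_1$ and $M'$, and prepending $\eta_1$ yields the decomposition of $\eta$. At each step one checks that the enhancement is transported so that $M\mapsto M_1$ is a genuine elementary move of \emph{enhanced} mirror diagrams in the sense of Definition~\ref{elem-move-enhanced-def}: the boundary circuit of $M_1$ produced by the move is declared essential resp.\ inessential according to the status in $M'$ of the circuit it eventually becomes; and one checks that every circuit common to $M$ and $M'$ is preserved, which is automatic, since such a circuit meets no crossing that is occupied in $M'$ but not in $M$, hence is not the circuit that the move alters.

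The step requiring most care is \emph{Case B}: adjoining a single new mirror at a crossing of two already occupied levels while reproducing \emph{precisely} the prescribed morphism and keeping the circuits of $C$ intact. Here I would verify, as in the proof of Lemma~\ref{handle-reduction-seq-lem}, that the two auxiliary levels may be chosen close enough to the target crossing to carry no other mirrors and to meet no crossing used by a circuit of $C$, and that the elementary bypass addition fits the local picture; and I would check directly from the definitions of the associated morphisms that the composite of the morphisms in the bundle equals the handle-addition morphism of $\widehat M\to\widehat M_1$ (both contain $(F,F,\mathrm{id}|_F)$ for a suitable surface $F$). If preferred, Case B can be confined to the last peeled arc only, by first bringing $a$ into general position with respect to the occupied levels of $M$ by slide and extension moves, after which every earlier arc falls under Case A. With these verifications the induction closes; combined with Lemma~\ref{handle-reduction-seq-lem} this also completes the proof of Theorem~\ref{relative-stable-equivalence-th}.
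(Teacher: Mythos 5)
Your approach---peel the staircase $a=\widehat{\mu'_1}\cup\dots\cup\widehat{\mu'_j}$ one edge at a time---is genuinely different from the paper's, which fixes a patching disc $d$ realised inside a surface of the form $\widehat\Pi$ (obtained via Lemma~\ref{rectangular-representative-lem}) and inducts on the number of tiles of $\widehat\Pi$ making up $d$, at each step cutting $d$ by an arc $\alpha$ so that the resulting two handle additions and the handle removal of $\alpha$ all have strictly smaller tile count. Your inductive step (Case~A) is fine: for $j>1$ the terminal arc always has one endpoint on a level not yet occupied, so peeling it is an extension move, and what remains is again a handle addition of a shorter staircase.

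The problem is the base case, your Case~B with $j=1$, which is not actually handled. An elementary bypass addition at $(\theta_0,\varphi_0)$ requires \emph{three} mirrors of the correct types at the other three corners of some rectangle $r$ with no further mirrors inside $r$ (Definition~\ref{mirr-bypass-def}). Two extensions on freshly inserted auxiliary levels can supply only two of these corners; the third lies at the crossing of the two new auxiliary levels, where it can be reached neither by an extension move (both levels are now occupied) nor by a bypass addition that does not already presuppose the very mirror you are trying to install---the construction is circular. Reversing the manoeuvre of Figure~\ref{stabilization-decomp-fig} does not help either: that sequence \emph{relocates} an existing mirror to a nearby crossing of two fresh levels and, reversed, it relocates one back; it never creates a mirror at a pre-existing crossing from nothing. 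The obstruction is genuine, not a matter of choosing a cleverer bundle: when $j=1$ the patching disc of the handle addition $\widehat M\to\widehat M'$ may still consist of arbitrarily many tiles (think of $\widehat M$ a large tree closed up into a single loop by one new edge), so your ``base'' case is just as hard as the lemma itself. That is precisely why the paper measures complexity by tile count rather than by the length of $a$: its true base case, a patching disc consisting of a single tile $\widehat r$, does reduce to at most three extensions followed by one elementary bypass addition, because there the last missing corner of $r$ has three companions of the required types already in place.
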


\begin{proof}
By Lemma~\ref{rectangular-representative-lem} a surface~$F\in\widetilde S_{\widehat M}$
containing a patching disc associated with the move~$M\mapsto M'$ can
be found in the form~$\widehat \Pi$, where~$\Pi$ is
a rectangular diagram of a surface. We fix the surface~$F$ from now on.
In the construction below, for all~$i$ we will have~$F\in\widetilde S_{\widehat M_i}$
and~$(F,F,\mathrm{id}|_F)\in\eta_i$, so the equality~$\eta=\eta_n\circ\ldots\circ\eta_2\circ\eta_1$
will hold trivially.

The number of tiles of~$F=\widehat\Pi$
that form a patching disc associated with the handle addition~$\widehat M\mapsto\widehat M'$
will be called \emph{the complexity} of this handle addition.
The proof of the Lemma is by induction in this complexity.

Suppose that the move~$M\mapsto M'$ has complexity~$1$. This means
that there is a patching disc of the form~$\widehat r$, $r\in\Pi$. Clearly, $M'$ can be obtained from~$M$
by zero, one, two, or~three extension moves followed
by an elementary bypass addition. This gives the induction base.

Suppose that the move~$M\mapsto M'$ has complexity~$k>1$. Let~$d\subset F$ be a patching disc
associated with the move~$M\mapsto M'$ consisting of~$k$ tiles of~$F$. The disc~$d$ can be cut
into two nontrivial parts by a simple arc~$\alpha\subset\bigcup\limits_{i=1}^k\partial\widehat r_i$
such that at least one of the endpoints of~$\alpha$
lies in~$\Gamma_{\widehat M}$. Denote by~$M_1$ and~$M_1'$ the mirror diagrams defined by the following conditions:
\begin{enumerate}
\item
$\Gamma_{\widehat M_1}=\Gamma_{\widehat M}\cup\alpha$, $\Gamma_{\widehat M_1'}=\Gamma_{\widehat M'}\cup\alpha$;
\item
some surfaces~$F_1\in S_{\widehat M_1}$ and~$F_1'\in S_{\widehat M_1'}$ are tangent to~$d$
along~$d\cap\Gamma_{\widehat M_1}$ and~$d\cap\Gamma_{\widehat M_1'}$,
respectively.
\end{enumerate}
There are the following two cases to consider.

\smallskip\emph{Case 1.}
Both endpoints of~$\alpha$ lie in~$\Gamma_{\widehat M}$. Then for appropriate enhancements of~$M_1$ and~$M_1'$
both transitions~$M\mapsto M_1$ and~$M_1\mapsto M_1'$
are handle additions of complexity smaller than~$k$, and the transition~$M_1'\mapsto M'$ is a handle removal of complexity smaller than~$k$;
see Figure~\ref{handle-induction-case1-fig}.
\begin{figure}[ht]
\includegraphics{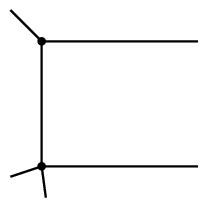}\put(-55,5){$\Gamma_{\widehat M}$}\raisebox{48pt}{$\longrightarrow$}%
\includegraphics{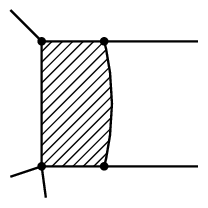}\put(-55,5){$\Gamma_{\widehat M_1}$}\put(-44,48){$\alpha$}\raisebox{48pt}{$\longrightarrow$}%
\includegraphics{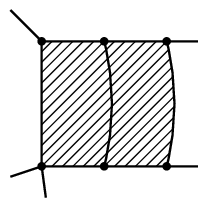}\put(-55,5){$\Gamma_{\widehat M_1'}$}\raisebox{48pt}{$\longrightarrow$}%
\includegraphics{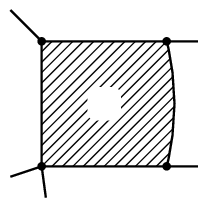}\put(-53,47){$d$}\put(-55,5){$\Gamma_{\widehat M'}$}
\caption{Induction step in Lemma~\ref{handle-reduction-decomp-lem}, Case~$1$}\label{handle-induction-case1-fig}
\end{figure}

\smallskip\emph{Case 2.}
Only one endpoint of~$\alpha$ lies in~$\Gamma_{\widehat M}$. Then for appropriate enhancements of~$M_1$ and~$M_1'$ the transition~$M\mapsto M_1$ decomposes into a few
extension moves. The transition~$M_1\mapsto M_1'$ decomposes into two handle additions of complexity smaller than~$k$.
Finally, the transition~$M_1'\mapsto M'$ is again a handle removal of complexity smaller than~$k$;
see Figure~\ref{handle-induction-case2-fig}.
\begin{figure}[ht]
\includegraphics{handle-induction1.eps}\put(-55,5){$\Gamma_{\widehat M}$}\raisebox{48pt}{$\longrightarrow$}%
\includegraphics{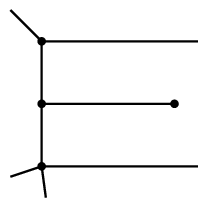}\put(-55,5){$\Gamma_{\widehat M_1}$}\put(-50,53){$\alpha$}\raisebox{48pt}{$\longrightarrow$}%
\includegraphics{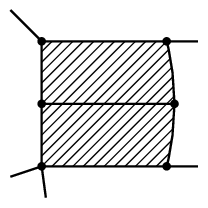}\put(-55,5){$\Gamma_{\widehat M_1'}$}\raisebox{48pt}{$\longrightarrow$}%
\includegraphics{handle-induction4.eps}\put(-53,47){$d$}\put(-55,5){$\Gamma_{\widehat M'}$}
\caption{Induction step in Lemma~\ref{handle-reduction-decomp-lem}, Case~$2$}\label{handle-induction-case2-fig}
\end{figure}

Both cases give the induction step.
\end{proof}

\section{More transformations. Neat decompositions}\label{neat-decomp-sec}
\subsection{Preliminary remarks and conventions}
This is a purely technical section in which we introduce further families of moves of mirror diagrams that are
needed to establish our main results,
and show how to decompose the new moves into elementary ones.

For brevity we give definitions in terms of ordinary, that is, not enhanced, mirror diagrams,
and extend them to enhanced mirror diagrams by using the following convention.

\begin{conv}\label{moves-extension-conv}
Definition~\ref{elem-move-enhanced-def} extends to all the moves defined in this section,
with involved occupied levels understood in a broader sense. Namely,
on occupied level is involved if it contains the mirrors explicitly mentioned
in the definition of the move and/or mirrors whose position is changed
by the move.

Additionally, if the definition of a move~$M\mapsto M'$ prescribes~$M$ or~$M'$ to have a circuit~$c$
that hits exactly four mirrors, each on one side
(this occurs in the cases of bridge, wrinkle, and double split moves),
then~$c$ is demanded to be inessential in the extension of the definition to enhanced mirror diagrams.
\end{conv}

\begin{defi}
Let~$M\mapsto M'$ be an elementary move or
one of the moves defined in this section, and let~$c$ and~$c'$ be essential boundary circuits of~$M$ and~$M'$,
respectively. We say that the move~$M\mapsto M'$ \emph{transforms~$c$ to~$c'$} if~$c$ and~$c'$ either share
a nondegenerate interval of a common occupied level of~$M$ and~$M'$ which is not involved in the move,
or both~$c$ and~$c'$ are contained entirely in the occupied levels involved in the move.

We also use this terminology for subsets of the essential boundaries of~$M$ and~$M'$: $C\subset\partial_{\mathrm e}M$
is transformed to~$C'\subset\partial_{\mathrm e}M$ if each~$c\in C$ is transformed to some~$c'\in C'$,
and for each~$c'\in C'$ there is a~$c\in C$ transformed to~$c'$.
\end{defi}

For each kind of moves~$M\mapsto M'$ that we define in this section, it is elementary to show that the relation `$c$ is transformed to~$c'$'
is a one-to-one correspondence between essential boundaries~$\partial_{\mathrm e}M$ and~$\partial_{\mathrm e}M'$.

Every move~$M\mapsto M'$ that we define below is accompanied by an associated morphism of the
respective enhanced spatial ribbon graphs~$\widehat M\rightarrow\widehat M'$. By
writing~$M\xmapsto\eta M'$ we always mean that~$\eta$ is the associated morphism.

\begin{defi}\label{neat-def}
Let a transformation $M\xmapsto\eta M'$ of enhanced mirror diagrams be decomposed into
a sequence of other transformations:
\begin{equation}\label{neat-eq}
M=M_0\xmapsto{\eta_1}M_1\xmapsto{\eta_2}\ldots\xmapsto{\eta_k}M_k=M',
\end{equation}
each endowed with a morphism of the respective enhanced spatial ribbon graphs,
and let~$C$ be a collection of boundary circuits of~$M$ preserved by the transformation~$M\xmapsto\eta M'$.
We say that decomposition~\eqref{neat-eq} is \emph{$C$-neat} if the following holds:
\begin{enumerate}
\item
each boundary circuit in~$C$ is
preserved by all the transformations $M_{i-1}\mapsto M_i$,
$i=1,\ldots,k$;
\item
for any~$0\leqslant i_1<i_2\leqslant k$,
whenever an essential boundary circuit~$c$ of~$M_{i_1}$ and the respective
boundary circuit~$c'$ of~$M_{i_2}$ have negative Thurston--Bennequin number~$\tb_+$
(respectively,~$\tb_-$), that is, hit at least one $\diagdown$-mirror (respectively, $\diagup$-mirror),
so have the corresponding boundary circuits of all~$M_j$'s with~$j\in[i_1;i_2]$;
\item
$\eta=\eta_k\circ\ldots\circ\eta_2\circ\eta_1$.
\end{enumerate}
If~$C$ includes all boundary circuits of~$M$ preserved by the transformation~$M\xmapsto\eta M'$, a $C$-neat decomposition~\eqref{neat-eq} is called~\emph{neat} for short.
\end{defi}

We will often use the following obvious properties of $C$-neat decompositions.

\begin{lemm}\label{neat-properties-lem}
Let~$M_0\xmapsto{\eta_1}M_1\xmapsto{\eta_2}\ldots\xmapsto{\eta_k}M_k$ be a decomposition
of a transformation~$M_0\xmapsto\eta M_k$ of enhanced mirror diagrams preserving
a collection~$C$ of boundary circuits,
and let~$i,j$ be such that~$0\leqslant i<j\leqslant k$.

The following
statements are equivalent:
\begin{enumerate}
\item
the decomposition~$M_0\xmapsto{\eta_1}M_1\xmapsto{\eta_2}\ldots\xmapsto{\eta_k}M_k$ of~$M_0\xmapsto\eta M_k$ 
is $C$-neat;
\item
the decomposition~$M_k\xmapsto{\eta_k^{-1}}\ldots\xmapsto{\eta_2^{-1}}M_1\xmapsto{\eta_1^{-1}}M_0$
of~$M_k\xmapsto{\eta^{-1}}M_0$ is $C$-neat;
\item
the decompositions~$M_0\xmapsto{\eta_1}M_1\xmapsto{\eta_2}\ldots\xmapsto{\eta_i}M_i
\xmapsto{\eta_j\circ\eta_{j-1}\circ\ldots\circ\eta_{i+1}}M_j\xmapsto{\eta_{j+1}}\ldots\xmapsto{\eta_k}M_k$
and~$M_i\xmapsto{\eta_{i+1}}M_{i+1}\xmapsto{\eta_{i+2}}\ldots\xmapsto{\eta_j}M_j$
of~$M_0\xmapsto\eta M_k$ and~$M_i\xmapsto{\eta_j\circ\eta_{j-1}\circ\ldots\circ\eta_{i+1}}M_j$,
respectively,
are $C$-neat.
\end{enumerate}
\end{lemm}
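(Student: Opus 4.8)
The plan is to verify the three conditions of Definition~\ref{neat-def} directly for each claimed equivalence, observing that all three conditions are manifestly symmetric or compositional in nature. The key point is that a $C$-neat decomposition is characterized by three requirements: (1) every circuit in $C$ is preserved by each individual transformation, (2) a monotonicity condition on negative Thurston--Bennequin numbers along the sequence, and (3) the morphism $\eta$ factors as the composition of the $\eta_i$. Since I am assuming any result stated earlier, the only work is routine bookkeeping; the point of the lemma is to record these properties for later use.

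First I would treat the equivalence $(1)\Leftrightarrow(2)$, which is the reversal symmetry. Condition~(1) of Definition~\ref{neat-def} for the reversed sequence asks that each circuit in $C$ is preserved by $M_i\xmapsto{\eta_i^{-1}}M_{i-1}$; but a circuit is preserved by a transformation exactly when it is preserved by its inverse, so this is immediate. For Condition~(2), note that the correspondence between essential boundary circuits of $M_{i_1}$ and $M_{i_2}$ is the same whether one reads the sequence forwards or backwards, and the hypothesis ``$c$ and $c'$ both hit at least one $\diagdown$-mirror'' (equivalently, have $\tb_+<0$) is symmetric in $c$ and $c'$; the conclusion concerns all intermediate $M_j$ with $j\in[i_1,i_2]$, which is likewise reversal-invariant. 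For Condition~(3), $\eta=\eta_k\circ\cdots\circ\eta_1$ is equivalent to $\eta^{-1}=\eta_1^{-1}\circ\cdots\circ\eta_k^{-1}$ by taking inverses, using that morphisms of enhanced spatial ribbon graphs are invertible (as noted after the definition of composition in Subsection~\ref{morphisms-subsec}).

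Next I would handle $(1)\Leftrightarrow(3)$, the splitting/regrouping symmetry. For the forward direction, assume the full sequence is $C$-neat. Condition~(1) for each of the two regrouped sequences: the subsequence $M_i\xmapsto{\eta_{i+1}}\cdots\xmapsto{\eta_j}M_j$ consists of transformations already in the original list, so each preserves every circuit in $C$; and in the first regrouped sequence the composite transformation $M_i\xmapsto{\eta_j\circ\cdots\circ\eta_{i+1}}M_j$ preserves each circuit in $C$ because each factor does. Condition~(2): any pair of indices in a regrouped sequence corresponds, after inserting the omitted intermediate diagrams, to a pair of indices in the original sequence, and the original Condition~(2) gives the required monotonicity; for the composite step $M_i\to M_j$ one uses the intermediate diagrams $M_{i+1},\dots,M_{j-1}$ which all lie in $[i,j]$. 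Condition~(3) is just associativity of composition of morphisms. The converse direction is obtained by concatenating: if both regrouped sequences are $C$-neat, then Conditions~(1) and~(3) for the full sequence follow immediately, and Condition~(2) for an arbitrary pair $0\leqslant i_1<i_2\leqslant k$ is obtained by a short case analysis according to whether $i_1,i_2$ both lie in $[0,i]\cup[j,k]$, both lie in $[i,j]$, or straddle the boundary---in each case the relevant instance of Condition~(2) for one of the two $C$-neat subsequences applies, together with the observation that the intermediate diagrams of one regrouping refine those of the other.

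I do not expect a genuine obstacle here; the lemma is labelled ``obvious'' in the text and the proof is a matter of unwinding definitions. The only place requiring minor care is Condition~(2) in the concatenation direction of $(3)\Rightarrow(1)$, where one must check that a pair of indices straddling the splitting point $i\leqslant\cdot\leqslant j$ is still covered---but this is handled by noting that the composite step $M_i\xmapsto{\eta_j\circ\cdots\circ\eta_{i+1}}M_j$ in the first regrouped sequence, being $C$-neat, already encodes the monotonicity of $\tb_\pm$ across the whole block $[i,j]$ via its own intermediate diagrams, and one then splices this with the monotonicity on $[0,i]$ and $[j,k]$ coming from the second regrouped sequence. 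Accordingly I would write the proof as three short paragraphs, one per equivalence, each just checking the three numbered conditions.
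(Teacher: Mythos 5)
The paper gives no proof of this lemma, declaring it ``obvious,'' so your approach of unwinding Definition~\ref{neat-def} is the only reasonable one and the reversal and refinement directions ($(1)\Leftrightarrow(2)$ and $(1)\Rightarrow(3)$) are handled correctly. The genuine problem is in $(3)\Rightarrow(1)$.

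Your description of the straddling case is internally confused: the composite step $M_i\xmapsto{\eta_j\circ\cdots\circ\eta_{i+1}}M_j$ appears in the \emph{first} regrouped sequence as a \emph{single} step with no intermediate diagrams, so it cannot ``encode the monotonicity of $\tb_\pm$ across $[i,j]$ via its own intermediate diagrams''; that information lives in the \emph{second} regrouped sequence, and the monotonicity on $[0,i]\cup[j,k]$ comes from the first, not the second as you wrote. But the deeper issue is that even after untangling this, the argument does not close. Take $a<i<b<j$ with $\tb_+(c_a)<0$ and $\tb_+(c_b)<0$, and suppose $\tb_+(c_i)\geqslant 0$, $\tb_+(c_j)\geqslant 0$. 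Condition~(2) of Definition~\ref{neat-def} for the first regrouped sequence sees only the indices $\{0,\dots,i,j,\dots,k\}$; since $c_i$ and $c_j$ are both non-negative, it imposes no constraint on anything past~$a$. Condition~(2) for the second regrouped sequence sees only $\{i,\dots,j\}$; since $c_i$ and $c_j$ are non-negative, it likewise imposes no constraint involving~$c_b$. So both regrouped sequences can be $C$-neat while the full sequence violates condition~(2) at the pair $(a,b)$. Concretely, with $k=3$, $i=1$, $j=3$ and $\tb_+$-values $(-1,0,-1,0)$ along a non-$C$ circuit, the decomposition $M_0\to M_1\to M_3$ and the sub-decomposition $M_1\to M_2\to M_3$ each satisfy condition~(2) vacuously, yet the full sequence fails it at the pair $(0,2)$.

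You should therefore not claim $(3)\Rightarrow(1)$ is ``routine bookkeeping.'' What actually holds is that $(3)$ together with the extra datum $\tb_+(c_i)<0$ (or $\tb_+(c_j)<0$) for the circuit in question does imply the monotonicity across the splice point, which is the pattern that arises when the lemma is used in tandem with Lemma~\ref{safe-lem} (the recursive construction described immediately after Lemma~\ref{safe-lem} in the paper always inserts a new diagram whose $\tb_\pm$ is controlled by the endpoints of the step being split). A correct write-up must either add such a hypothesis explicitly, or prove the invariant ``the set of indices with negative $\tb_\pm$ is an interval'' is preserved under each of the two recursive operations the paper actually performs, rather than claiming the bare regrouping equivalence.
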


\begin{defi}\label{safe-def}
A transformation~$M\xmapsto\eta M'$ of enhanced mirror diagrams is said to be
\emph{safe-to-bring-forward}
if, for any essential boundary circuit~$c\in\partial M$ and the corresponding
boundary circuit~$c'\in\partial M'$, we have~$\tb_+(c)<0\Rightarrow\tb_+(c')<0$
and~$\tb_-(c)<0\Rightarrow\tb_-(c')<0$.

If these conditions hold only for~$\tb_+$ (respectively, for~$\tb_-$),
then the transformation~$M\xmapsto\eta M'$ is said to be
\emph{$+$-safe-to-bring-forward} (respectively, \emph{$-$-safe-to-bring-forward}).

The inverse of a safe-to-bring-forward (respectively, $+$-safe-to-bring-forward or
$-$-safe-to-bring-forward) transformation is said to be
\emph{safe-to-postpone} (respectively, $+$-safe-to-postpone
or $-$-safe-to-postpone).

If a transformation is safe-to-bring-forward and safe-to-postpone, it is
said to be \emph{safe}. Similarly, for $\pm$-versions.
\end{defi}

The reason for these names is the following statement, which is also obvious.

\begin{lemm}\label{safe-lem}
Let~$M_0\xmapsto{\eta_1}M_1\xmapsto{\eta_2}M_2$ be
a decomposition of a transformation~$M_0\xmapsto\eta M_2$,
$\eta=\eta_2\circ\eta_1$,
and let~$C\subset\partial M_0$,
be a collection of boundary circuits preserved by
all three transformations~$M_0\xmapsto\eta M_2$, $M_0\xmapsto{\eta_1}M_1$,
and~$M_1\xmapsto{\eta_2}M_2$. Then the decomposition~$M_0\xmapsto{\eta_1}M_1\xmapsto{\eta_2}M_2$ is $C$-neat
once at least one of the following holds:
\begin{enumerate}
\item
$M_0\xmapsto{\eta_1}M_1$ is safe-to-bring-forward;
\item
$M_1\xmapsto{\eta_2}M_2$ is safe-to-postpone;
\item
$M_0\xmapsto{\eta_1}M_1$ is $+$-safe-to-bring-forward and $M_1\xmapsto{\eta_2}M_2$
is $-$-safe-to-postpone;
\item
$M_0\xmapsto{\eta_1}M_1$ is $-$-safe-to-bring-forward and $M_1\xmapsto{\eta_2}M_2$
is $+$-safe-to-postpone.
\end{enumerate}
\end{lemm}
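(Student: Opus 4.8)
The plan is to observe that the statement is nothing more than an unwinding of Definitions~\ref{neat-def} and~\ref{safe-def}, and to organize that unwinding as follows. First I would note that among the three defining conditions of a $C$-neat decomposition in Definition~\ref{neat-def}, condition~(1) (preservation of the circuits in~$C$) and condition~(3) ($\eta=\eta_2\circ\eta_1$) are explicitly part of the hypotheses of the lemma, so only condition~(2) requires an argument. For the two-step sequence $M_0\xmapsto{\eta_1}M_1\xmapsto{\eta_2}M_2$ the index pairs $(i_1,i_2)$ with $0\leqslant i_1<i_2\leqslant 2$ are $(0,1)$, $(1,2)$ and $(0,2)$; for the first two the closed interval $[i_1,i_2]$ contains no index strictly between its endpoints, so condition~(2) is vacuous there. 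Hence the entire content of the lemma is the single instance $(i_1,i_2)=(0,2)$.

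Next I would fix an essential boundary circuit $c\in\partial_{\mathrm e}M_0$ and let $c_1\in\partial_{\mathrm e}M_1$ and $c'\in\partial_{\mathrm e}M_2$ be the boundary circuits to which $c$ is transformed by $\eta_1$ and by $\eta$, respectively; these are well defined since, as recalled for every kind of move considered, the relation ``is transformed to'' is a one-to-one correspondence between the essential boundaries. Condition~(2) for the pair $(0,2)$ then says precisely: if $\tb_+(c)<0$ and $\tb_+(c')<0$ then $\tb_+(c_1)<0$, and the same with $\tb_-$ in place of $\tb_+$. I would then record the relevant consequences of Definition~\ref{safe-def}: $M_0\xmapsto{\eta_1}M_1$ being $+$-safe-to-bring-forward means exactly $\tb_+(c)<0\Rightarrow\tb_+(c_1)<0$; $M_1\xmapsto{\eta_2}M_2$ being $+$-safe-to-postpone means $M_2\xmapsto{\eta_2^{-1}}M_1$ is $+$-safe-to-bring-forward, i.e. $\tb_+(c')<0\Rightarrow\tb_+(c_1)<0$; the analogous statements hold for $\tb_-$; and the unsigned notions are by definition the conjunction of their $+$- and $-$-versions.

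Finally I would dispose of the four alternatives. In case~(1), both required implications (for $\tb_+$ and for $\tb_-$) follow at once from $M_0\xmapsto{\eta_1}M_1$ being safe-to-bring-forward, using only $\tb_\pm(c)<0$. In case~(2), they follow from $M_1\xmapsto{\eta_2}M_2$ being safe-to-postpone, using only $\tb_\pm(c')<0$. In case~(3), the $\tb_+$-implication comes from $M_0\xmapsto{\eta_1}M_1$ being $+$-safe-to-bring-forward (applied to $\tb_+(c)<0$) and the $\tb_-$-implication from $M_1\xmapsto{\eta_2}M_2$ being $-$-safe-to-postpone (applied to $\tb_-(c')<0$); case~(4) is obtained by exchanging the roles of $\tb_+$ and $\tb_-$. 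In each case condition~(2) of Definition~\ref{neat-def} holds, and combined with conditions~(1) and~(3), which hold by hypothesis, this shows that the decomposition $M_0\xmapsto{\eta_1}M_1\xmapsto{\eta_2}M_2$ is $C$-neat.

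I do not expect a genuine obstacle here—the lemma is, as already indicated in the text, obvious—so the ``hard part'' is merely bookkeeping: the reduction of condition~(2) to the single index pair $(0,2)$, and the observation that an unsigned safety hypothesis splits into its two signed halves, which is exactly what makes the mixed cases~(3) and~(4) go through. I would keep the written proof to a few lines, essentially the three steps above.
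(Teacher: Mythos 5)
Your proof is correct. The paper declares Lemma~\ref{safe-lem} ``obvious'' and gives no argument; your unwinding of Definitions~\ref{neat-def} and~\ref{safe-def}---reducing condition~(2) to the single index pair $(0,2)$ and splitting each of the four hypotheses into its $\tb_+$- and $\tb_-$-halves---is exactly the intended (and essentially only) reasoning.
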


In what follows, all decompositions that are claimed to be neat can be
obtained from the trivial one (just a single transformation)
by recursively applying one of the following two operations or their inverses:
\begin{enumerate}
\item
replacing some move in the decomposition by a sequence of two moves for which
one of the cases~(1)--(4) of Lemma~\ref{safe-lem} occurs;
\item
append a safe-to-postpone move at the end of the sequence or a safe-to-bring-forward at the beginning.
\end{enumerate}
Due to Lemmas~\ref{neat-properties-lem} and~\ref{safe-lem}
this guarantees that all the obtained decompositions obey Condition~(2) of Definition~\ref{neat-def}.

Observe that all type~I elementary moves
are $+$-safe, and all type~II elementary moves are $-$-safe. Slide moves of both types
are just safe. Type~I extension moves and elementary bypass removals are $-$-safe-to-bring-forward,
whereas type~I elimination moves and elementary bypass additions are $-$-safe-to-postpone.
Symmetrically for type~II moves.

Condition~(3) in Definition~\ref{neat-def} is very important, but in the situations we need
to consider, either it holds trivially or follows from an easy check. For instance, if the modified
part of the surface is a disc some part of whose boundary is fixed, then we need not worry about Condition~(3),
since it holds true automatically.

For the reasons mentioned above, when we prove below that some decomposition is neat, we concentrate
on showing that the result of the obtained transformation is what we claim it to be, and don't always
provide separate comments why Conditions~(2) and~(3) of Definition~\ref{neat-def} are satisfied.

\subsection{Neutral moves}
It is important for us to distinguish between type~I and type~II elementary moves. However, there
are certain transformations of mirror diagrams that can be decomposed into
a few elementary moves of either type (and thus are safe). We call such transformations \emph{neutral moves}.
They include additions and removals of a bridge, jump moves, wrinkle creations and reductions, and double split and merge moves,
which are defined below.

\begin{defi}
Let $M$ and $M'$ be mirror diagrams such that for some $\theta_1\ne\theta_2$,
$\varphi_1\ne\varphi_2$ the following holds:
\begin{enumerate}
\item
$\mu_1=(\theta_1,\varphi_1)$ is a $\diagdown$-mirror of both $M$ and $M'$;
\item
$\mu_2=(\theta_2,\varphi_1)$ is a $\diagup$-mirror of both $M$ and $M'$;
\item
$\ell_{\varphi_2}$ is not an occupied level of $M$;
\item
$\mu_3=(\theta_1,\varphi_2)$ is a $\diagup$-mirror of $M'$;
\item
$\mu_4=(\theta_2,\varphi_2)$ is a $\diagdown$-mirror of $M'$;
\item
there are no other mirrors of $M$ or~$M'$ in $r=[\theta_1;\theta_2]\times[\varphi_1;\varphi_2]$;
\item
$M$ and $M'$ have the same set of mirrors outside $r$, and we have $\Theta_M=\Theta_{M'}$,
$\Phi_M=\Phi_{M'}\setminus\{\varphi_2\}$.
\end{enumerate}
Then we say that $M'$ is obtained from $M$ by \emph{adding a bridge},
and $M$ is obtained from $M'$ by \emph{removing a bridge}; see Figure~\ref{bridge-twist}~(a).

The morphism~$\eta$ associated with the bridge addition~$M\mapsto M'$ is defined
by~$\bigl(\wideparen M,\wideparen M,\mathrm{id}|_{\wideparen M}\bigr)\in\eta$.
\end{defi}

\begin{defi}\label{twist-def}
Let $M$ and $M'$ be mirror diagrams such that for some $\theta_1\ne\theta_2$ and
$\varphi_0$ the following holds:
\begin{enumerate}
\item
$\mu_1=(\theta_1,\varphi_0)$ is a $\diagdown$-mirror of $M$ and a $\diagup$-mirror of $M'$;
\item
$\mu_2=(\theta_2,\varphi_0)$ is a $\diagdown$-mirror of $M'$ and a $\diagup$-mirror of $M$;
\item
there are no more mirrors of $M$ or~$M'$ at $\ell_{\varphi_0}$;
\item
$M$ and $M'$ have the same set of mirrors outside $\ell_{\varphi_0}$, and the same sets of occupied levels.
\end{enumerate}
Then we say that $M$ and $M'$ are obtained from one another by \emph{a twist move}; see Figure~\ref{bridge-twist}~(b).

The morphism~$\eta$ associated with the twist move~$M\mapsto M'$ is defined by the condition~$(\wideparen M,\wideparen M',h)\in\eta$,
where~$h$ is a homeomorphism~$\wideparen M\rightarrow\wideparen M'$ identical outside of~$\wideparen\mu_1\cup
\wideparen\mu_2\cup\wideparen\ell_{\varphi_0}$.
\end{defi}
\begin{figure}[ht]
\raisebox{60pt}{(a)}\includegraphics{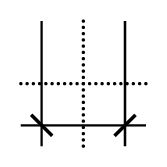}\put(-66,3){$m_{\theta_1}$}\put(-26,3){$m_{\theta_2}$}%
\put(-85,18){$\ell_{\varphi_1}$}\put(-74,28){$\mu_1$}\put(-14,28){$\mu_2$}
\raisebox{38pt}{$\longleftrightarrow$}\includegraphics{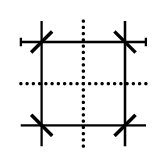}\put(-85,58){$\ell_{\varphi_2}$}%
\put(-74,28){$\mu_1$}\put(-14,28){$\mu_2$}\put(-74,50){$\mu_3$}\put(-14,50){$\mu_4$}
\hskip.7cm
\raisebox{15pt}{\includegraphics[scale=0.65]{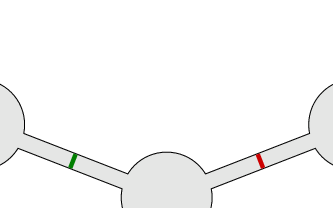}\put(-57,3){$\wideparen\ell_{\varphi_1}$}%
\put(-90,4){$\wideparen\mu_1$}\put(-22,4){$\wideparen\mu_2$}}%
\raisebox{38pt}{\ $\longleftrightarrow$\ }\raisebox{15pt}{\includegraphics[scale=0.65]{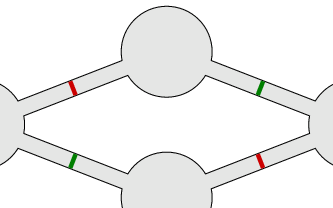}\put(-57,3){$\wideparen\ell_{\varphi_1}$}%
\put(-90,4){$\wideparen\mu_1$}\put(-22,4){$\wideparen\mu_2$}\put(-90,45){$\wideparen\mu_3$}\put(-22,45){$\wideparen\mu_4$}%
\put(-57,44){$\wideparen\ell_{\varphi_2}$}}
\\
\raisebox{45pt}{(b)}\includegraphics{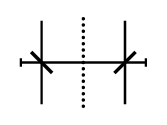}\put(-66,3){$m_{\theta_1}$}\put(-26,3){$m_{\theta_2}$}%
\put(-85,28){$\ell_{\varphi_0}$}\put(-74,38){$\mu_1$}\put(-14,38){$\mu_2$}%
\raisebox{28pt}{$\longleftrightarrow$}\includegraphics{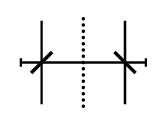}%
\hskip.7cm
\raisebox{14pt}{\includegraphics[scale=0.65]{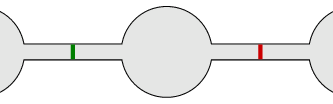}\put(-57,13){$\wideparen\ell_{\varphi_0}$}%
\put(-90,4){$\wideparen\mu_1$}\put(-22,4){$\wideparen\mu_2$}}%
\raisebox{28pt}{\ $\longleftrightarrow$\ }\raisebox{14pt}{\includegraphics[scale=0.65]{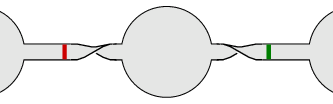}}
\caption{(a) Bridge moves and (b) twist moves}\label{bridge-twist}
\end{figure}

\begin{defi}\label{mir-jump-def}
Let $M$ be a mirror diagrams such that $\ell_{\varphi_1}$ is an occupied level of $M$ and $\ell_{\varphi_2}$
is not an occupied level of $M$. Assume that for some $\theta_1\ne\theta_2$ all mirrors at $\ell_{\varphi_1}$
occur in the interval $(\theta_1;\theta_2)\times\{\varphi_1\}$, and there are no mirrors of $M$ in
$(\theta_1;\theta_2)\times(\varphi_1;\varphi_2)$. Let $M'$ be obtained from $M$ by replacing
the occupied level $\ell_{\varphi_1}$ with $\ell_{\varphi_2}$, and
each mirror of the form $(\theta,\varphi_1)$ with a mirror at~$(\theta,\varphi_2)$ of the same type.
Then we say that $M\mapsto M'$ is \emph{a jump move}; see Figure~\ref{jump-fig}.

The morphism~$\eta$ associated with the jump move~$M\mapsto M'$ is defined by the condition~$(\wideparen M,\wideparen M',h)\in\eta$,
where~$h$ is a homeomorphism~$\wideparen M\rightarrow\wideparen M'$ such that~$h|_{\wideparen y}$
is an orientation-preserving self-homeomorphism of~$\wideparen y$
whenever~$y$ is an occupied level of~$M$ distinct from~$\ell_{\varphi_1}$, or a mirror located outside~$\ell_{\varphi_1}$,
and~$h$ takes~$\wideparen\ell_{\varphi_1}$ to~$\wideparen\ell_{\varphi_2}$ preserving the coorientation defined
by~$d\varphi$.
\end{defi}

\begin{figure}[ht]
\includegraphics{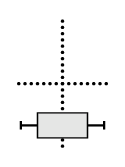}\put(-35,16.5){$X$}\put(-65,18){$\ell_{\varphi_1}$}%
\ \raisebox{38pt}{$\longleftrightarrow$}\ %
\includegraphics{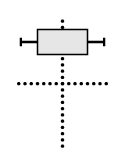}\put(-35,56.5){$X$}\put(-8,58){$\ell_{\varphi_2}$}%
\caption{A jump move}\label{jump-fig}
\end{figure}
\begin{figure}[ht]
\includegraphics{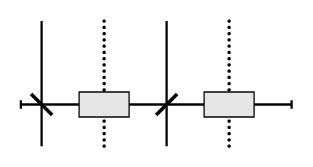}\put(-105,26.5){$X$}\put(-43,26.5){$Y$}%
\put(-136,3){$m_{\theta_1}$}\put(-76,3){$m_{\theta_2}$}%
\put(-155,28){$\ell_{\varphi_0}$}\put(-143,40){$\mu_1$}\put(-67,40){$\mu_2$}%
\raisebox{38pt}{$\longleftrightarrow$}%
\includegraphics{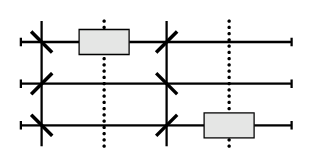}\put(-105,56.5){$X$}\put(-43,16.5){$Y$}%
\put(-136,3){$m_{\theta_1}$}\put(-76,3){$m_{\theta_2}$}%
\put(-8,18){$\ell_{\varphi_1}$}\put(-8,38){$\ell_{\varphi_2}$}\put(-8,58){$\ell_{\varphi_3}$}\put(-124,11){$\mu_1'$}%
\put(-128,30){$\mu_1''$}\put(-144,70){$\mu_1'''$}\put(-87,11){$\mu_2'$}\put(-83,30){$\mu_2''$}\put(-67,70){$\mu_2'''$}
\\[5mm]\includegraphics{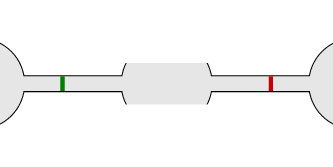}\put(-83,36){$\wideparen\ell_{\varphi_0}$}\put(-130,26){$\wideparen\mu_1$}%
\put(-38,26){$\wideparen\mu_2$}\
\ \raisebox{36pt}{$\longleftrightarrow$}\ \ \includegraphics{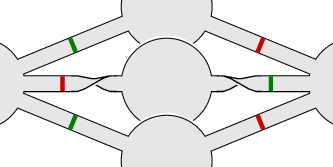}\put(-83,5){$\wideparen\ell_{\varphi_1}$}%
\put(-83,36){$\wideparen\ell_{\varphi_2}$}\put(-83,67){$\wideparen\ell_{\varphi_3}$}%
\put(-130,6){$\wideparen\mu_1'$}\put(-120,28){$\wideparen\mu_1''$}\put(-130,68){$\wideparen\mu_1'''$}
\put(-38,6){$\wideparen\mu_2'$}\put(-50,28){$\wideparen\mu_2''$}\put(-38,68){$\wideparen\mu_2'''$}
\caption{Wrinkle creation and reduction moves}\label{mir-wrinkle}
\end{figure}

\begin{defi}\label{wrinkle-def}
Let a mirror diagram $M$ have a $\diagdown$-mirror~$\mu_1$ at $(\theta_1,\varphi_0)$ and a $\diagup$-mirror~$\mu_2$ at
$(\theta_2,\varphi_0)$. Let~$\varphi_0,\varphi_1,\varphi_2,\varphi_3{}\in\mathbb S^1$
be such that~$\varphi_0\in[\varphi_1;\varphi_3]$, $\varphi_2\in(\varphi_1;\varphi_3)$, and there are no horizontal occupied levels $\ell_\varphi$ of $M$
with $\varphi\in[\varphi_1;\varphi_3]$, $\varphi\ne\varphi_0$. Let
a mirror diagram $M'$ be obtained from $M$
by:
\begin{enumerate}
\item
replacing the occupied level~$\ell_{\varphi_0}$ by three occupied levels~$\ell_{\varphi_i}$, $i=1,2,3$;
\item
replacing each mirror at $(\theta,\varphi_0)$,
with $\theta\in(\theta_1;\theta_2)$, by a mirror at $(\theta,\varphi_3)$ of the same type;
\item
replacing each mirror at $(\theta,\varphi_0)$,
with $\theta\in(\theta_2;\theta_1)$, by a mirror at $(\theta,\varphi_1)$ of the same type;
\item
replacing the mirrors $\mu_1$ and~$\mu_2$
by~$\diagdown$-mirrors~$\mu_1'=(\theta_1,\varphi_1)$, $\mu_2''=(\theta_2,\varphi_2)$, $\mu_1'''=(\theta_1,\varphi_3)$
and~$\diagup$-mirrors~$\mu_2'=(\theta_2,\varphi_1)$, $\mu_1''=(\theta_1,\varphi_2)$, $\mu_2'''=(\theta_2,\varphi_3)$.
\end{enumerate}
Then we say that $M\mapsto M'$ is \emph{a wrinkle creation move}, and the inverse passage
is \emph{a wrinkle reduction move}; see Figure~\ref{mir-wrinkle}.
The mirrors~$\mu_1$ and~$\mu_2$ of~$M$ are referred
to as \emph{the ramification mirrors} of the move~$M\mapsto M'$.

The morphism~$\eta$ associated with the wrinkle move~$M\mapsto M'$ is defined as follows.
Let~$F$ be a surface obtained from~$\wideparen M'$ by patching the two new holes,
that is, the holes corresponding to the two boundary circuits of~$M'$ that hit the mirrors
at~$\ell_{\varphi_2}$.

We demand~$(\wideparen M,F,h)\in\eta$,
where~$h$ is a homeomorphism~$\wideparen M\rightarrow F$ such that~$h|_{\wideparen y}$
is an orientation-preserving self-homeomorphism of~$\wideparen y$
whenever~$y$ is an occupied level of~$M$ different from~$\ell_{\varphi_0}$, or a mirror located outside~$\ell_{\varphi_0}$.
\end{defi}

\begin{defi}\label{double-split-def}
Let~$M$ and~$M'$ be as in Definition~\ref{wrinkle-def}, and let~$M''$ be obtained from~$M'$
by removing the occupied level~$\ell_{\varphi_2}$ together with the mirrors~$\mu_1''$, $\mu_2''$.
Then the transition~$M\mapsto M''$ is called \emph{a double split move}, and
the inverse one \emph{a double merge move}; see Figure~\ref{double-split-fig}. One can see that~$M'\mapsto M''$
is a bridge removal.
The mirrors~$\mu_1$ and~$\mu_2$ of~$M$
are called the \emph{splitting mirrors} of the move~$M\mapsto M''$,
and the intervals~$\{\theta_1\}\times(\varphi_1;\varphi_3)$, $\{\theta_2\}\times(\varphi_1;\varphi_3)$
\emph{the $\diagdown$-splitting gap} and \emph{the $\diagup$-splitting gap}, respectively.

\begin{figure}[ht]
\includegraphics{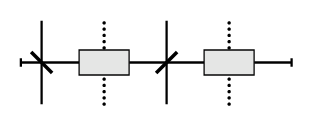}\put(-105,26.5){$X$}\put(-43,26.5){$Y$}%
\put(-136,3){$m_{\theta_1}$}\put(-76,3){$m_{\theta_2}$}%
\put(-155,28){$\ell_{\varphi_0}$}\put(-143,40){$\mu_1$}\put(-67,40){$\mu_2$}%
\raisebox{28pt}{$\longleftrightarrow$}%
\includegraphics{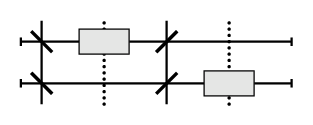}\put(-105,36.5){$X$}\put(-43,16.5){$Y$}%
\put(-136,3){$m_{\theta_1}$}\put(-76,3){$m_{\theta_2}$}%
\put(-8,18){$\ell_{\varphi_1}$}\put(-8,38){$\ell_{\varphi_3}$}
\\[5mm]\includegraphics{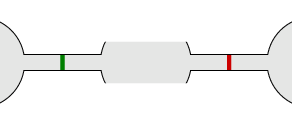}\put(-75,26){$\wideparen\ell_{\varphi_0}$}\put(-122,16){$\wideparen\mu_1$}%
\put(-30,16){$\wideparen\mu_2$}\ \ \raisebox{27pt}{$\longleftrightarrow$}\ \ \includegraphics{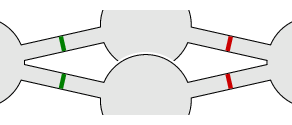}%
\put(-75,12){$\wideparen\ell_{\varphi_1}$}\put(-75,42){$\wideparen\ell_{\varphi_3}$}
\caption{Double split and double merge moves}\label{double-split-fig}
\end{figure}
The morphism associated with the move~$M\mapsto M''$
is defined as the composition of the morphisms associated with the wrinkle creation move~$M\mapsto M'$
and the bridge removal~$M'\mapsto M''$.
\end{defi}

Recall that the symmetries~$r_\diagdown$, $r_\diagup$, $r_-$, and~$r_|$ should be
applied to all the definitions of our moves, see Convention~\ref{symmetries}.
Recall also that these definitions are supposed to be extended
to the case of enhanced mirror diagrams, see Convention~\ref{moves-extension-conv}.

\begin{lemm}\label{neutral-move-decomposition-lem}
Every bridge, twist, jump, wrinkle creation, wrinkle reduction, double split, or double merge move of enhanced mirror diagrams admits
a neat decomposition into type $T$ elementary moves with
$T$ being any of~I and~II.
\end{lemm}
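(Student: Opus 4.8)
The plan is to establish the statement for $T=\mathrm I$ and deduce the case $T=\mathrm{II}$ from symmetry. By Convention~\ref{symmetries} the reflection $r_-$ carries every type~I elementary move to a type~II elementary move and vice versa, carries each of the seven classes of neutral moves to itself, and preserves neatness of a decomposition (the conditions of Definition~\ref{neat-def} are symmetric under exchanging $\tb_+$ with $\tb_-$). Hence a neat decomposition of an arbitrary neutral move into type~I elementary moves yields, after applying $r_-$ to it, a neat decomposition of an arbitrary neutral move into type~II elementary moves, so it is enough to produce type~I decompositions.

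Next I would shorten the list of moves to be treated. By Definition~\ref{double-split-def} a wrinkle creation move $M\mapsto M'$ is the composition of a double split move $M\mapsto M''$ and a bridge addition $M''\mapsto M'$, with the associated morphisms composing correctly, so wrinkle moves reduce to double split moves and bridge moves. Since a composition of neat decompositions into elementary moves is again such a decomposition (as follows from Lemma~\ref{neat-properties-lem}), it suffices to give type~I decompositions of bridge, jump, double split, and twist moves. These I would present, in this order, by explicit pictures, reusing earlier ones as subroutines; for each I would check, by reading off the surfaces $\wideparen{M_i}$, that the composite transformation and its morphism agree with the move as defined. In outline: a bridge addition is a type~I extension introducing the level $\ell_{\varphi_2}$ and the $\diagup$-mirror $\mu_3$, followed by a type~I elementary bypass addition introducing the $\diagdown$-mirror $\mu_4$ (the bridge removal being the inverse); a jump move is obtained by creating the target level with an auxiliary $\diagup$-mirror by a type~I extension, transporting the mirrors of the moving block onto it one by one (a $\diagup$-mirror by a type~I slide, a $\diagdown$-mirror by a type~I elementary bypass addition followed by a type~I elementary bypass removal), and removing the original level by a type~I elimination; a double split move is handled in the same spirit, duplicating the relevant level; and a twist move is realized by placing a bridge next to $\ell_{\varphi_0}$, using jump-type transport to absorb the mirrors $\mu_1,\mu_2$ into the bridge, and removing the surplus level, the net effect being that the bridge mirrors take over the roles of $\mu_1,\mu_2$ with their types switched. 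All auxiliary occupied levels are placed so as not to meet any boundary circuit that the overall move is required to preserve, and one checks routinely that every intermediate configuration is a genuine enhanced mirror diagram with enhancement compatible with Convention~\ref{moves-extension-conv}.

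The neatness of these decompositions I would not verify individually but obtain from the mechanism explained after Lemma~\ref{safe-lem}: each decomposition above can be assembled from the trivial one by repeatedly splitting a move into two moves for which one of the cases~(1)--(4) of Lemma~\ref{safe-lem} applies, or by attaching a safe-to-postpone move at the end or a safe-to-bring-forward move at the beginning. This is available because all type~I elementary moves are $+$-safe, type~I extension and elementary bypass removal moves are moreover $-$-safe-to-bring-forward, type~I elimination and elementary bypass addition moves are $-$-safe-to-postpone, and slide moves are safe; concretely, one performs the enlarging moves (extensions and bypass removals) first and the shrinking ones (eliminations and bypass additions) last, with slides interspersed freely. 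Condition~(1) of Definition~\ref{neat-def} holds by the placement of auxiliary levels just mentioned, Condition~(2) follows from Lemmas~\ref{neat-properties-lem} and~\ref{safe-lem}, and Condition~(3) holds because in each case the region of the surface being altered is a disc (or a controlled union of discs) whose boundary is partly fixed, so the composite morphism is forced to equal the prescribed $\eta$.

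The step I expect to be the main obstacle is the twist move. The other neutral moves either introduce or remove levels and mirrors or relocate them, operations close in spirit to elementary moves, whereas a twist changes the \emph{types} of two mirrors, which is not an elementary move of either single type; one is therefore forced through an auxiliary construction, and one must verify simultaneously that this construction (i)~realizes exactly the associated morphism of Definition~\ref{twist-def}, (ii)~can be carried out by type~I elementary moves alone --- the naive routes mix the two types, as already the decomposition of a slide in Figure~\ref{slide} does --- and (iii)~leaves the prescribed boundary circuits undisturbed. The jump and double split moves involve a milder form of the same issue, namely transporting a $\diagdown$-mirror using only type~I moves; I would settle the jump move first and reuse it in the twist and double split arguments.
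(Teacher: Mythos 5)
Your overall framework matches the paper's: reduce by the symmetry of Convention~\ref{symmetries} to the case $T=\mathrm I$, discharge the bridge move by a type~I extension plus a type~I elementary bypass addition, give explicit pictorial decompositions of the remaining moves, recycle earlier decompositions as subroutines, and get neatness from the mechanism described around Lemma~\ref{safe-lem}. However there is a concrete gap in the jump move. You propose to transport mirrors ``one by one,'' a $\diagup$-mirror by a type~I slide and a $\diagdown$-mirror by a type~I bypass addition followed by a bypass removal. Neither mechanism does what you want: a type~I slide moves a $\diagup$-mirror between \emph{opposite} corners of a rectangle, so it changes both coordinates and cannot relocate a mirror from $(\theta,\varphi_1)$ to $(\theta,\varphi_2)$; and a type~I elementary bypass addition requires two $\diagup$-mirrors and a $\diagdown$-mirror already sitting at the other three corners of a mirror-free rectangle, a configuration you do not have available when the only thing on the target level is a lone auxiliary $\diagup$-mirror. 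The paper's actual decomposition is an induction that peels mirrors off one end of the moving block, essentially two at a time, with a three-way case split on the types of the two extremal mirrors (and an auxiliary $\diagup$-mirror inserted when the extremal mirror has the wrong type). You would probably rediscover this structure while trying to make ``one by one'' precise, but as stated the transport step would fail.

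Conversely, you overestimate the difficulty of the twist move, which you expect to be the main obstacle. The paper handles it with a short trick: realize a twist as a pure composition of six bridge additions and removals (add a bridge at $\ell_{\varphi_0+\varepsilon}$, remove one at $\ell_{\varphi_0}$, add one at $\ell_{\varphi_0-\varepsilon}$, remove one at $\ell_{\varphi_0+\varepsilon}$, add one at $\ell_{\varphi_0}$, remove one at $\ell_{\varphi_0-\varepsilon}$). This reduces the twist directly to the bridge case, which is trivial, so no ``jump-type transport'' is needed there at all. A minor further remark: the paper handles wrinkle moves directly by an induction that uses jump and bridge moves as subroutines and then obtains the double split move as a wrinkle move followed by a bridge removal, whereas you invert this, treating double split as primitive and expressing wrinkle in terms of it; that inversion is harmless, but it means the burden that you deferred to the jump move argument reappears in your double split argument, so the gap above must be closed there too.
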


\begin{proof}
Due to the symmetry of the definitions of the moves it suffices to produce a neat decomposition into type~I moves.
It is elementary to see in each case below that the decomposition is neat (see the discussion after Definition~\ref{neat-def}).

\medskip\noindent\emph{Bridge moves}.
Bridge addition consists in adding a new occupied level and two mirrors at it. This can be done in two steps:
(1) add a new occupied level and the one of the two mirrors that has type $\diagup$; (2) add the second mirror, which
is of type~$\diagdown$.
The first operation is a type~I extension move, and the second a type~I elementary bypass addition.

\medskip\noindent\emph{Twist moves}. A twist move can be neatly decomposed into several bridge moves.
Indeed, let $\ell_\varphi$ be the occupied level at which one can perform a twist move. Let $\varepsilon>0$
be so small that there are no other occupied levels $\ell_{\varphi'}$ of the diagram with $\varphi'\in[\varphi-\varepsilon;\varphi+\varepsilon]$.
Add a bridge at $\ell_{\varphi+\varepsilon}$ and remove the bridge at~$\ell_\varphi$. The result
is almost what we want but the new pair of mirrors occurs at a level different from~$\ell_\varphi$.
To fix this add a bridge at $\ell_{\varphi-\varepsilon}$, remove the one at $\ell_{\varphi+\varepsilon}$,
add one at $\ell_\varphi$, and finally, remove the one at~$\ell_{\varphi-\varepsilon}$.

Since the statement has already been settled for bridge moves it also holds for twist moves.

\medskip\noindent\emph{Jump moves}.
We use the notation from Definition~\ref{mir-jump-def}, and
proceed by induction in $n$,
where $n$ is the number of mirrors at the level $\ell_{\varphi_1}$ excluding the `rightmost' one
if it is a $\diagup$-mirror.

If $n=0$, then either there are no mirrors at $\ell_{\varphi_1}$,
or there is a single $\diagup$-mirror.
In the latter case, the jump move is a composition of a type~I extension move (which adds the new
mirror and the occupied level~$\ell_{\varphi_2}$ to the diagram) and an elimination move (which removes
the old mirror and the occupied level~$\ell_{\varphi_1}$).

If there are no mirrors at~$\ell_{\varphi_1}$ choose a free meridian~$m_{\theta_0}$,
add~$m_{\theta_0}$ and a $\diagup$-mirror at~$\ell_{\varphi_1}\cap m_{\theta_0}$
by an extension move, then proceed as before to replace~$\ell_{\varphi_1}$ by~$\ell_{\varphi_2}$,
and finally, eliminate the mirror at~$\ell_{\varphi_2}\cap m_{\theta_0}$ and the occupied level~$m_{\theta_0}$.

For the induction step there are the following three cases to consider.

\medskip\noindent\emph{Case 1}:
the two `rightmost' mirrors at $\ell_{\varphi_1}$ are of the $\diagup$-type. We proceed as shown in Figure~\ref{jump-case-1}.
\begin{figure}[ht]
\begin{tabular}{ccccc}
\includegraphics{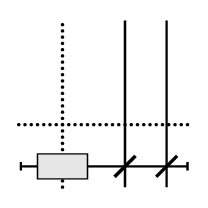}\put(-75,17){$X$}&\raisebox{47pt}{$\longrightarrow$}&
\includegraphics{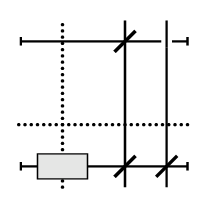}\put(-75,17){$X$}&\raisebox{47pt}{$\longrightarrow$}&
\includegraphics{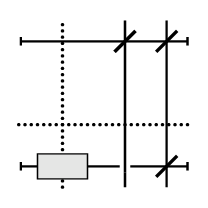}\put(-75,17){$X$}\\&&&&$\big\downarrow$\\
\includegraphics{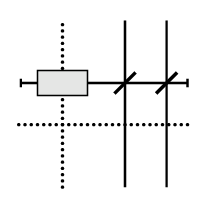}\put(-75,57){$X$}&\raisebox{47pt}{$\longleftarrow$}&
\includegraphics{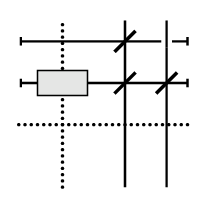}\put(-75,57){$X$}&\raisebox{47pt}{$\longleftarrow$}&
\includegraphics{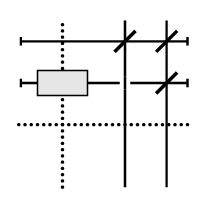}\put(-75,57){$X$}
\end{tabular}
\caption{Induction step for the decomposition of a jump move, Case~1}\label{jump-case-1}
\end{figure}

\medskip\noindent\emph{Case 2}:
the `rightmost' mirror at $\ell_{\varphi_1}$ is of the $\diagup$-type, and the preceding one of the $\diagdown$-type.
We proceed as shown in Figure~\ref{jump-case-2}.
\begin{figure}[ht]
\begin{tabular}{ccccc}
\includegraphics{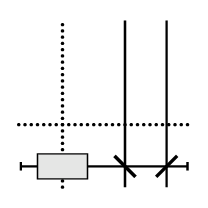}\put(-75,17){$X$}&\raisebox{47pt}{$\longrightarrow$}&
\includegraphics{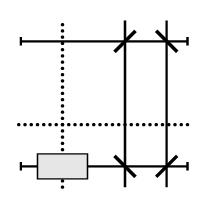}\put(-75,17){$X$}&\raisebox{47pt}{$\longrightarrow$}&
\includegraphics{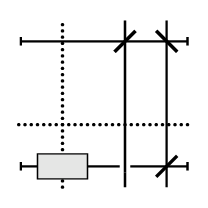}\put(-75,17){$X$}\\&&&&$\big\downarrow$\\
\includegraphics{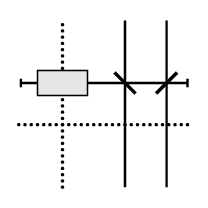}\put(-75,57){$X$}&\raisebox{47pt}{$\longleftarrow$}&
\includegraphics{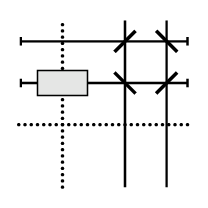}\put(-75,57){$X$}&\raisebox{47pt}{$\longleftarrow$}&
\includegraphics{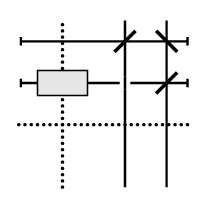}\put(-75,57){$X$}
\end{tabular}
\caption{Induction step for the decomposition of a jump move, Case~2}\label{jump-case-2}
\end{figure}

\medskip\noindent\emph{Case 3}:
the `rightmost' mirror at $\ell_{\varphi_1}$ is of the $\diagdown$-type. Then we
add, by a type~I extension move, a $\diagup$-mirror on the right of it at~$\ell_{\varphi_1}$ together with a new vertical occupied
level, then proceed as in Case~2,
and finally apply an elimination move to erase the `auxiliary' occupied level and the mirror on it.

All this is done by elementary type~I moves and bridge moves, which have already been shown to be neatly decomposable
into type~I elementary moves.

\medskip\noindent\emph{Wrinkle moves}.
We use the notation from Definition~\ref{wrinkle-def}. We assume that the mirrors in the
gray box~$Y$ stay fixed, that is,~$\varphi_0=\varphi_1$. The case when the mirrors in~$X$ stay fixed is
symmetric to this one. If none of the cases occur we first apply a jump move (which
is neatly decomposed into type~I elementary moves as shown above)
to take the block~$Y$ to the desired position and then proceed as described below.

The proof is by induction in the number $n$ of mirrors
of $M$ located in the block~$X$, that is, in $(\theta_1;\theta_2)\times\{\varphi_1\}$. If $n=0$ the wrinkle creation move
decomposes neatly into two bridge moves, which have already been shown to be neatly decomposable
into type~I elementary moves.

The induction step is similar to that in the case of a jump move. It is sketched in Figure~\ref{mir-wrinkle-induction},
\begin{figure}[ht]
\begin{tabular}{ccc}
\includegraphics{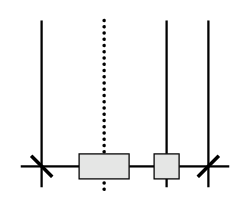}\put(-75,17){$X$}\put(-44,17){$Z$}
&\raisebox{47pt}{$\longrightarrow$}&
\includegraphics{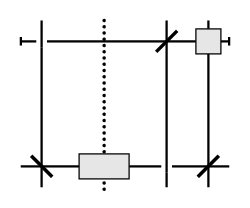}\put(-75,17){$X$}\put(-24,77){$Z$}\\
&&$\big\downarrow$\\
\includegraphics{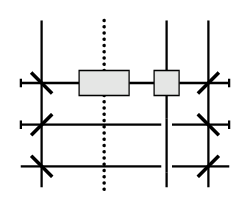}\put(-75,57){$X$}\put(-44,57){$Z$}
&\raisebox{47pt}{$\longleftarrow$}&
\includegraphics{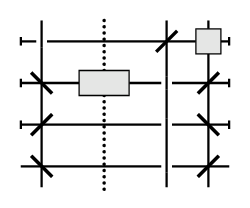}\put(-75,57){$X$}\put(-24,77){$Z$}
\end{tabular}
\caption{Induction step for the decomposition of a wrinkle move}\label{mir-wrinkle-induction}
\end{figure}
where~$Z$ stands for a mirror of either type.

\medskip\noindent\emph{Double split and double merge moves}. These are, by definition, neatly decomposed into a wrinkle move and a bridge
move, which have been discussed above.
\end{proof}

\subsection{Split moves and merge moves}
The moves defined below are similar in nature to double split moves, but unlike the latter they are not neutral.

\begin{defi}\label{split-def}
Let $\theta_1,\theta_2,\varphi_0,\varphi_1,\varphi_2{}\in\mathbb S^1$ be such that
$\theta_1\ne\theta_2$, $\varphi_1\ne\varphi_2$, $\varphi_0\in[\varphi_1;\varphi_2]$,
and let $M$ be a mirror diagram with a $\diagup$-mirror~$\mu$ at $(\theta_2,\varphi_0)$, such
that $M$ has no occupied level $\ell_\varphi$ with $\varphi\in[\varphi_1;\varphi_2]$, $\varphi\ne\varphi_0$,
and no mirror at~$(\theta_1,\varphi_0)$.
Let $M'$ be a mirror diagram obtained from $M$ by the following alterations:
\begin{enumerate}
\item
replacing the occupied level~$\ell_{\varphi_0}$ by two occupied levels~$\ell_{\varphi_1}$ and~$\ell_{\varphi_2}$;
\item
replacing~$\mu$ by two~$\diagup$-mirrors~$\mu'$ and $\mu''$
at~$(\theta_2,\varphi_1)$ and~$(\theta_2,\varphi_2)$, respectively;
\item
replacing each mirror at $(\theta,\varphi_0)$ with $\theta\in(\theta_1;\theta_2)$ by a mirror
at $(\theta,\varphi_2)$ of the same type;
\item
replacing each mirror at $(\theta,\varphi_0)$ with $\theta\in(\theta_2;\theta_1)$ by a mirror
at $(\theta,\varphi_1)$ of the same type.
\end{enumerate}
Then we say that the passage $M\mapsto M'$ is \emph{a type~I split move}, and the inverse one is \emph{a type~I merge move}; see Figure~\ref{split-merge-type-I}.
\begin{figure}[ht]
\includegraphics{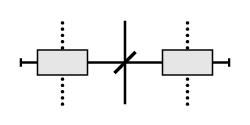}\put(-95,26.5){$X$}\put(-33,26.5){$Y$}\put(-125,28){$\ell_{\varphi_0}$}\put(-66,3){$m_{\theta_2}$}%
\put(-72,20){$\mu$}
\hskip.5cm\raisebox{28pt}{$\longleftrightarrow$}\hskip.5cm%
\includegraphics{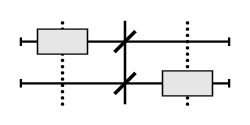}\put(-95,36.5){$X$}\put(-33,16.5){$Y$}\put(-75,10){$\mu'$}\put(-53,45){$\mu''$}%
\put(-8,18){$\ell_{\varphi_1}$}\put(-8,38){$\ell_{\varphi_2}$}\put(-66,3){$m_{\theta_2}$}\\
\includegraphics{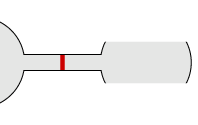}\put(-30,27){$\wideparen\ell_{\varphi_0}$}\put(-68,17){$\wideparen\mu$}
\hskip.5cm\raisebox{28pt}{$\longleftrightarrow$}\hskip.5cm%
\includegraphics{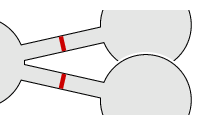}\put(-30,12){$\wideparen\ell_{\varphi_1}$}\put(-30,42){$\wideparen\ell_{\varphi_2}$}%
\put(-70,7){$\wideparen\mu'$}\put(-70,47){$\wideparen\mu''$}
\caption{Type I split/merge moves}\label{split-merge-type-I}
\end{figure}
The level $\ell_{\varphi_0}$ is said to be \emph{split} in the transition from $M$ to $M'$,
and the levels $\ell_{\varphi_1}$ and~$\ell_{\varphi_2}$ are said to be \emph{merged} in the inverse
transition.

The mirror~$\mu$ is referred to as \emph{the splitting mirror},
the point~$(\theta_1,\varphi_0)$ as \emph{the snip point},
and the interval~$\{\theta_2\}\times(\varphi_1;\varphi_2)$ as \emph{the splitting gap} of the split move~$M\mapsto M'$.

The morphism~$\eta$ associated with the merge move~$M'\mapsto M$ is defined by~$\bigl(\wideparen M',\wideparen M,h\bigr)\in\eta$,
where~$h$ is an embedding~$\wideparen M'\rightarrow\wideparen M$ such that~$h|_{\wideparen y}$ is an orientation-preserving
self-homeomorphism of~$\wideparen y$ whenever~$y$ is an occupied level of~$M'$ distinct from~$\ell_{\varphi_1}$, $\ell_{\varphi_2}$,
or a mirror located outside of these two levels, and~$h(\wideparen\ell_{\varphi_1}\cup\wideparen\ell_{\varphi_2})\subset\wideparen\ell_{\varphi_0}$.
\end{defi}

\begin{lemm}\label{split-move-decomposition-lem}
A type~I (respectively, type~II) split move of mirror diagrams admits a neat decomposition
into type~I (respectively, type~II) elementary moves.
\end{lemm}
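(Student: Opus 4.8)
The plan is to follow the scheme used in the proof of Lemma~\ref{neutral-move-decomposition-lem} for jump and wrinkle moves. By Convention~\ref{symmetries} it suffices to decompose a type~I split move, the type~II case then following by applying $r_-$. Using the notation of Definition~\ref{split-def} (with the move understood for enhanced mirror diagrams via Convention~\ref{moves-extension-conv}), I would first reduce to the case $\varphi_0=\varphi_1$, that is, to the case in which the block $Y$ of mirrors at $(\theta,\varphi_0)$ with $\theta\in(\theta_2;\theta_1)$ stays fixed. Indeed, if $\varphi_0\in(\varphi_1;\varphi_2)$ one precedes the split move by a jump move (Definition~\ref{mir-jump-def}) that slides $\ell_{\varphi_0}$ down to $\ell_{\varphi_1}$; this jump move already admits a neat decomposition into type~I elementary moves by Lemma~\ref{neutral-move-decomposition-lem}, and it is glued to the rest using Lemma~\ref{neat-properties-lem}(3). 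From now on $Y$ is fixed, and the split move moves only the mirror $\mu$ and the block $X$ of mirrors at $(\theta,\varphi_0)$ with $\theta\in(\theta_1;\theta_2)$, the block $X$ going up to $\ell_{\varphi_2}$. The proof then proceeds by induction on $n=|X|$.

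For the base case $n=0$ the split move is literally a single type~I extension move (Definition~\ref{ext-move-def}): it adjoins the new occupied level $\ell_{\varphi_2}$ and the new $\diagup$-mirror $\mu''=(\theta_2,\varphi_2)$ at its intersection with the occupied meridian $m_{\theta_2}$, leaving $\mu=\mu'$ and all other data in place, and this is a neat decomposition. For the induction step ($n\geqslant1$) I would first perform that extension move and then transport the mirrors of $X$ one at a time off $\ell_{\varphi_0}$ and up onto $\ell_{\varphi_2}$ across the splitting gap, starting from the extreme mirror $\nu$ of $X$ adjacent to $\mu$. Pushing $\nu$ over is accomplished by a short sequence of type~I elementary moves (extension, elimination, elementary bypass addition and removal, and slide) together with bridge moves, which are neutral and hence neatly decomposable into type~I elementary moves by Lemma~\ref{neutral-move-decomposition-lem}; the precise sequence depends on the types of $\nu$ and of its neighbouring mirror, and is organized exactly as in the decomposition of a jump move, the relevant pictures being obvious analogues of Figures~\ref{jump-case-1} and~\ref{jump-case-2}. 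After this step one is left with a type~I split move with $|X|=n-1$, to which the induction hypothesis applies; composing with Lemma~\ref{neat-properties-lem}(3) finishes the step.

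It remains to check that the resulting decomposition is neat, which is the routine argument described after Definition~\ref{neat-def}: every move used is either a type~I elementary move (hence $+$-safe) or a neutral move (hence safe), so Condition~(2) of Definition~\ref{neat-def} follows from Lemma~\ref{safe-lem}; Condition~(3) is automatic because the only part of a carried surface that is altered is a disc, together with a collar along $\widehat\mu$, whose boundary is kept fixed outside the involved occupied levels; and Condition~(1), preservation of the prescribed essential boundary circuits, is read directly off the figures, using that all occupied levels touched by the sequence lie in a thin horizontal band around $\ell_{\varphi_0}$ which may be taken disjoint from the circuits in $C$. The main obstacle is making the case analysis in the induction step fully precise, namely exhibiting the short elementary-move sequences that carry $\nu$ over the splitting gap while keeping the snip point $(\theta_1,\varphi_0)$ and the splitting gap $\{\theta_2\}\times(\varphi_1;\varphi_2)$ free of mirrors throughout and respecting the bijection between the essential boundaries of the intermediate diagrams; this is combinatorially fiddly, but entirely parallel to the jump- and wrinkle-move cases already treated in Lemma~\ref{neutral-move-decomposition-lem}.
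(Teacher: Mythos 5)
Your proposal follows exactly the approach the paper indicates---the paper's own proof is literally the one-line hint ``by induction, similar to that in the cases of a jump move and of a wrinkle move in Lemma~\ref{neutral-move-decomposition-lem},'' so your task is to reconstruct the induction. The appeal to Convention~\ref{symmetries}, the preliminary jump move to set $\varphi_0=\varphi_1$, and the base case (a single type~I extension move) are all correct, as is the routine verification of Conditions~(2) and~(3) of Definition~\ref{neat-def} via $+$-safety.

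However, the induction step as you wrote it does not close up. Having performed the extension move first (creating $\ell_{\varphi_2}$ and $\mu''$) and then pushed one mirror $\nu$ of $X$ across, the remaining transformation is the transfer of $X\setminus\{\nu\}$ from $\ell_{\varphi_1}$ to the \emph{already-existing} level $\ell_{\varphi_2}$; this violates the precondition of Definition~\ref{split-def} that there be no occupied levels in $[\varphi_1;\varphi_2]$ other than $\ell_{\varphi_0}$, so ``one is left with a type~I split move with $|X|=n-1$'' is not literally true and the induction hypothesis does not apply as stated. Compare Figure~\ref{mir-wrinkle-induction}: in the wrinkle-move induction the extreme mirror $Z$ of the block is first parked on a \emph{new auxiliary level outside} the active band (top-left $\to$ top-right), then the wrinkle move with $|X|=n-1$ is applied---\emph{this} is where the new levels of the wrinkle get created (top-right $\to$ bottom-right)---and only afterwards is $Z$ brought down to its final destination (bottom-right $\to$ bottom-left). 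Your split-move induction should be reorganized the same way: first move one mirror of $X$ to a temporary level parallel to $\ell_{\varphi_0}$ and disjoint from $[\varphi_1;\varphi_2]$ by a short sequence of type~I elementary moves and neutral moves (the case analysis on the types of the mirrors being exactly the one you gesture at), then apply the induction hypothesis to the genuine split move with block $X\setminus\{\nu\}$, and finally transfer $\nu$ onto $\ell_{\varphi_2}$. With this fix your argument goes through; the rest of your write-up, including the use of Lemmas~\ref{safe-lem} and~\ref{neat-properties-lem}, is sound.
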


\begin{proof}
The proof is by induction, which is similar to that in the cases of a jump move and of a wrinkle move
in Lemma~\ref{neutral-move-decomposition-lem}. We leave it to the reader.
\end{proof}

\begin{lemm}\label{slide-bypass-neat-decomp-lem}
Any type~II slide move and type II elementary bypass addition or removal admits a neat decomposition into
a sequence of moves from the following list:
\begin{itemize}
\item
type~II split moves,
\item
type~II merge moves,
\item
neutral moves.
\end{itemize}
\end{lemm}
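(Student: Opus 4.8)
The plan is to reduce each of the three kinds of type~II moves (slide, elementary bypass addition, elementary bypass removal) to combinations of type~II split and merge moves together with neutral moves, exploiting the symmetry conventions (Convention~\ref{symmetries}) so that it suffices to handle one representative of each move up to the reflections $r_\diagdown,r_\diagup,r_-,r_|$. Throughout, whenever I claim a decomposition is neat I will appeal to the discussion following Definition~\ref{neat-def}: it is enough to check that the composite transformation does what I claim (Condition~(3) holds automatically since the modified part of the surface is a disc with part of its boundary fixed), and that Conditions~(1)--(2) follow from Lemma~\ref{safe-lem} because split, merge, and neutral moves are appropriately safe ($-$-safe for type~II).

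First I would treat the type~II elementary bypass removal. By Convention~\ref{symmetries} a type~II elementary bypass removal is obtained from Definition~\ref{mirr-bypass-def} by applying $r_-$ or $r_|$, so it deletes a $\diagup$-mirror from the corner of a rectangle whose other three corners carry one $\diagdown$-mirror and two $\diagup$-mirrors. The idea is to first perform a type~II split move that splits the horizontal occupied level through the two $\diagup$-corner mirrors, choosing the snip point so that the splitting gap separates the $\diagup$-mirror to be deleted from the rest; after the split this $\diagup$-mirror sits alone on a new occupied level bounding (together with one of the other mirrors) a small rectangle whose boundary is an inessential circuit. That configuration is then exactly a double merge / bridge-type situation, and the mirror together with its level can be eliminated by a bridge removal, which is neutral (Lemma~\ref{neutral-move-decomposition-lem}). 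Composing a type~II split, possibly a jump move to reposition a block, and a bridge removal reproduces the bypass removal; its inverse gives the bypass addition decomposition.

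Next I would handle the type~II slide move. Again using Convention~\ref{symmetries}, the relevant representative moves one $\diagup$-mirror across a rectangle so that a triple of $\diagup$-mirrors at three corners is replaced by a triple at the three opposite corners (the same combinatorics as Definition~\ref{mirr-slide-def} reflected). The natural strategy is the one already used in Figure~\ref{slide}, but now keeping everything of a single type: split the horizontal level carrying two of the three mirrors into two levels by a type~II split move, which converts the slide's rectangle into two stacked rectangles; then the mirror that must move can be transported along its meridian by a jump move (neutral); finally a type~II merge move re-amalgamates the two horizontal levels into one, landing the mirrors in the slide-target position. One checks by following the boundary circuits through Figure~\ref{c1c2} that the composite is exactly the prescribed slide and that no intermediate essential circuit changes the sign of its $\tb_\pm$, so Lemma~\ref{safe-lem}(2) applies and the decomposition is neat.

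The main obstacle I expect is bookkeeping the enhancement and the boundary circuits: a split move splits one occupied level into two and correspondingly splits or reroutes boundary circuits, and I must make sure the circuit that becomes the small rectangular (hence inessential) boundary in the intermediate diagrams is indeed the one that Convention~\ref{moves-extension-conv} allows to be inessential, while the essential circuits in $C$ are preserved step by step. In the bypass case there is an additional subtlety, flagged in the remark after Definition~\ref{elem-move-enhanced-def}, that an elementary bypass addition may fit the definition in two ways when two rectangles share vertices; I will need to choose the split's snip point and the enhancement of the intermediate diagram so that the bridge move at the end produces the correct one of the two possible essential/inessential labellings. Once these labelling choices are pinned down, verifying neatness is routine by the scheme described after Definition~\ref{neat-def}, since all moves used are $-$-safe (Lemma~\ref{split-move-decomposition-lem} supplies the type~II split/merge decompositions into type~II elementary moves if one later wants them, but for this lemma split and merge moves are themselves in the allowed list), so Conditions~(1)--(3) of Definition~\ref{neat-def} hold and the proof is complete.
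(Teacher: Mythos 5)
Your overall factorization agrees with the paper's (Figures~\ref{slide-decomposition-fig} and~\ref{bypass-decomposition-fig}): a slide decomposes as a type~II split, a jump move, and a type~II merge, and a bypass addition as a bridge addition followed by a type~II merge, with the removal being the reversed sequence. However, the description of the bypass case gets the mirror types wrong: under the type-exchanging reflections of Convention~\ref{symmetries}, a type~II elementary bypass removal deletes a $\diagup$-mirror whose remaining three corner neighbours carry one $\diagup$-mirror and two $\diagdown$-mirrors (not one $\diagdown$ and two $\diagup$), and the two $\diagup$-corners are diagonally opposite, so they do not lie on a common horizontal occupied level as you assert. Consequently the split you invoke must be keyed to one of the $\diagdown$-mirrors --- a type~II split uses a $\diagdown$-mirror as its splitting mirror, by Definition~\ref{split-def} read through Convention~\ref{symmetries} --- and the intermediate configuration you describe, with the deleted $\diagup$-mirror isolated on a level shared with the opposite $\diagup$-corner, does not arise. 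With those labels corrected the argument is the paper's.
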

\begin{proof}
A decomposition of a slide move is shown in Figure~\ref{slide-decomposition-fig}. We need one split move, then a jump move, and finally a merge move.
\begin{figure}[ht]
\includegraphics{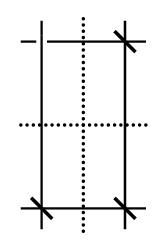}\raisebox{58pt}{$\longrightarrow$}
\includegraphics{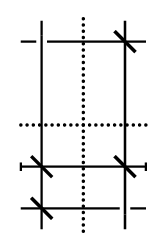}\raisebox{58pt}{$\longrightarrow$}
\includegraphics{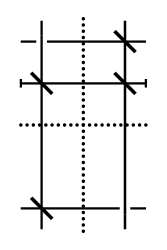}\raisebox{58pt}{$\longrightarrow$}
\includegraphics{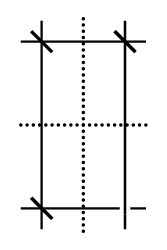}
\caption{A neat decomposition of a type~II slide move into type~II split/merge moves and a jump move}\label{slide-decomposition-fig}
\end{figure}

A decomposition of a type~II elementary bypass addition is shown in Figure~\ref{bypass-decomposition-fig}.
We need a bridge addition
and then a type~II merge move. For a type~II elementary bypass removal the sequence is inverse, and we have a type~II split move
followed by a bridge removal.
\begin{figure}[ht]
\includegraphics{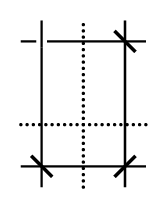}\raisebox{48pt}{$\longrightarrow$}
\includegraphics{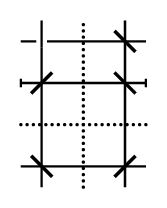}\raisebox{48pt}{$\longrightarrow$}
\includegraphics{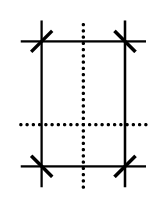}
\caption{A neat decomposition of a type~II elementary bypass addition into
a bridge move and a type~II merge move}\label{bypass-decomposition-fig}
\end{figure}
\end{proof}

\begin{lemm}\label{extension-neat-decomp-lem}
Let~$M\mapsto M'$ be a type~II extension move of enhanced mirror diagrams, and let~$c$
be the boundary circuit of~$M$ modified by this move.
Assume that~$\tb_+(c)<0$. Then the move~$M\mapsto M'$ admits
a neat decomposition into type~II split moves and neutral moves.
\end{lemm}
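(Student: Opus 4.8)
The plan is to realize the type~II extension move $M\mapsto M'$ (with its associated morphism~$\eta$) as a sequence all of whose moves are neutral except for a single type~II split move, chosen so that the $\diagdown$-mirror created by that split is exactly the mirror added by the extension. By the $r_\diagup$-symmetry of Convention~\ref{symmetries} I may assume that the new occupied level of $M\mapsto M'$ is a longitude $\ell_{\varphi_0}$ and that the new $\diagdown$-mirror $\mu$ lies at $(\theta_0,\varphi_0)=\ell_{\varphi_0}\cap m_{\theta_0}$ for an occupied meridian $m_{\theta_0}$ of~$M$ (recall that, via $r_-$ or $r_|$, a type~II extension in the sense of Definition~\ref{ext-move-def} adds a $\diagdown$-mirror). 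Since $\ell_{\varphi_0}$ carries only $\mu$ in $M'$, the boundary circuit $c'$ of $M'$ corresponding to $c$ is obtained from $c$ by inserting, at the point $(\theta_0,\varphi_0)$ where $c$ runs along $m_{\theta_0}$, a detour once around $\ell_{\varphi_0}$; hence $\tb_-(c')=\tb_-(c)$ while $\tb_+(c')=\tb_+(c)-1<0$. The hypothesis $\tb_+(c)<0$ says precisely that $c$ already meets some $\diagdown$-mirror different from $\mu$; fix one such mirror $\nu$, lying on a longitude $\ell_{\psi}$ and meridian $m_{\chi}$.

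The construction proceeds in three stages. In the \emph{preparatory} stage I would apply neutral moves --- jump moves to bring $\ell_\psi$ next to the level $\varphi_0$, together with bridge additions and removals that follow the run of $c$ along $m_{\theta_0}$ towards $\nu$ --- so as to reach a diagram in which $m_{\theta_0}$ carries a $\diagdown$-mirror lying on the transform of~$c$ and immediately followed along $c$ by a short mirror-free arc of the longitude through it, all of this without disturbing~$\nu$; when $\nu$ already lies on $m_{\theta_0}$ no preparation is needed, and this is the base case of an induction described below. In the \emph{main} stage I would apply one type~II split move (Definition~\ref{split-def}, read through the appropriate symmetry) on that longitude, with splitting mirror the $\diagdown$-mirror just produced, with snip point chosen in the adjacent mirror-free arc so that exactly one of the two resulting longitudes carries a single mirror, and with the new longitude placed at $\varphi_0$: by construction the new longitude is $\ell_{\varphi_0}$ with its mirror at $(\theta_0,\varphi_0)$, and the other resulting longitude sits in the old position. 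In the \emph{concluding} stage I would undo the preparatory modifications by the inverse (again neutral) moves, arriving at~$M'$. That the end result is exactly~$M'$, and that $\eta$ factors as the composition of the associated morphisms --- Condition~(3) of Definition~\ref{neat-def} --- is automatic because the whole modification is supported in a disc whose boundary stays fixed.

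It then remains to check that this decomposition is \emph{neat}. Condition~(1) is clear since all the intermediate moves act locally and so preserve every circuit outside their support. For Condition~(2) the key observation is that $\nu$ is never disturbed, so the transform of $c$ passes through $\nu$ in every intermediate diagram and hence has $\tb_+<0$ throughout, matching $\tb_+(c)<0$ and $\tb_+(c')<0$; and if in addition $\tb_-(c)<0$, then one arranges the preparatory moves to act away from a $\diagup$-mirror witnessing this, so that $\tb_-<0$ holds throughout as well. For every other circuit none of the moves changes the Thurston--Bennequin numbers, so there is nothing to check. (One could alternatively phrase this using the recipe described after Definition~\ref{neat-def} and Lemma~\ref{safe-lem}, since the neutral moves are safe by Lemma~\ref{neutral-move-decomposition-lem} and the type~II split move is $-$-safe by Lemma~\ref{split-move-decomposition-lem}.)

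The main obstacle is the preparatory stage: producing a $\diagdown$-mirror on $m_{\theta_0}$ that lies on $c$, using only neutral moves and leaving $\nu$ in place. I expect this to be handled by induction on the number of reflections $c$ undergoes between the site $(\theta_0,\varphi_0)$ and $\nu$. In the inductive step, when the first mirror $c$ meets along $m_{\theta_0}$ (travelling towards $\nu$) is a $\diagup$-mirror $\mu_1$, one reflects onto the next, necessarily adjacent, mirror $\mu_2$ on the longitude through $\mu_1$ and performs a bridge move on the pair $(\mu_1,\mu_2)$ (possibly after a jump move), thereby strictly reducing the number of reflections, in the manner of the bridge-and-jump manoeuvres used for jump and wrinkle moves in Lemma~\ref{neutral-move-decomposition-lem} (compare Figures~\ref{jump-case-1} and~\ref{jump-case-2}). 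A minor bookkeeping point is that each bridge addition also inserts an auxiliary $\diagup$-mirror; these must be tracked and cancelled in the concluding stage, which is possible because bridge additions and removals occur in matched pairs, and one must also keep these auxiliary mirrors clear of the $\diagup$-mirror used in the $\tb_-$ part of the neatness check.
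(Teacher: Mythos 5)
The paper's proof is forward-only and strictly simpler: it chooses an existing $\diagdown$-mirror $\mu_0$ on $c$ and inducts on the number of hops along $c$ from the insertion site to $\mu_0$. In the base case ($\mu_0$ adjacent to the insertion site) the extension \emph{is} a type~II split at $\mu_0$ followed by a jump; in the inductive step the paper inserts the new mirror one hop closer (decomposed by the induction hypothesis) and then transfers it outward with a single bridge-addition/bridge-removal pair. There is no ``undo'' phase: the split happens at $\mu_0$ once, at the start, and everything after that is a forward walk.

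Your prepare--split--undo structure is in the same spirit (one type~II split plus neutral moves) but carries two undischarged obligations that the paper's structure avoids, and I don't think either is a mere bookkeeping point.

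First, the preparation. You need to \emph{create} a $\diagdown$-mirror on $m_{\theta_0}$ using only bridge and jump moves while leaving $\nu$ untouched. A bridge addition has a non-trivial precondition: the two ``anchor'' mirrors $\mu_1,\mu_2$ on the existing level must be of opposite types (cf.\ Definition of the bridge move, including its images under $r_-,r_|,r_\diagup,r_\diagdown$). Walking along $c$ from the insertion site towards $\nu$, the consecutive mirrors you meet need not alternate in type, so the bridge you propose to perform on the first encountered pair $(\mu_1,\mu_2)$ is simply not available when both are $\diagup$. You gesture at the case analysis from the jump-move decomposition (Figures~\ref{jump-case-1}, \ref{jump-case-2}), but those manoeuvres are adapted to a different goal (shifting a whole occupied level) and the translation to ``produce a lone $\diagdown$-mirror at a prescribed intersection'' is not immediate. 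In the paper's argument this problem never arises, because the $\diagdown$-mirror that is walked already exists (it is produced by the type~II split at $\mu_0$), and the bridge pair in each hop manifestly applies.

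Second, the undo. After the split the diagram has gained an occupied level and a mirror and has reshuffled the longitude carrying the prepared $\diagdown$-mirror. The inverse neutral moves you want to apply act on a different diagram than the ones you applied in preparation, so their applicability is not automatic, and the assertion that ``the whole modification is supported in a disc whose boundary stays fixed'' is a topological heuristic, not a verification that the concrete bridge removals and jumps exist and compose to $M'$. Effectively you are asserting a commutation of neutral moves past the split, which would itself need an argument; the paper's one-directional decomposition makes the question moot.

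One smaller inaccuracy: it is not true that ``for every other circuit none of the moves changes the Thurston--Bennequin numbers.'' Each bridge addition splits off a new inessential boundary circuit and alters the circuit it is performed on, so Condition~(2) of Definition~\ref{neat-def} does need to be checked move by move rather than dismissed; the paper handles this implicitly because all its moves are either $-$-safe (type~II split) or safe (bridge, jump) in the sense of Definition~\ref{safe-def}, which is the cleaner route you briefly mention at the end of your third paragraph and should in fact be the primary justification rather than a parenthetical alternative.
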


\begin{proof}
The proof is by induction
in the smallest number of hops it takes to go along~$c$ from the position
where a new $\diagdown$-mirror, which we denote by~$\mu$, is inserted to an existing~$\diagdown$-mirror,
which we denote by~$\mu_0$.

If~$\mu_0$ is `immediately visible' from~$\mu$, that is, if~$\mu$ is inserted in a straight line segment contained in~$c$
one of whose endpoints is~$\mu_0$. The extension move can be decomposed neatly into a type~II split move and
a jump move; see Figure~\ref{extension-induction-base}.
\begin{figure}[ht]
\includegraphics{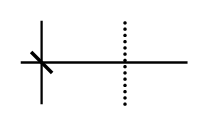}\put(-78,45){$c$}\put(-15,33){$c$}\put(-93,22){$\mu_0$}
\raisebox{28pt}{$\longrightarrow$}
\includegraphics{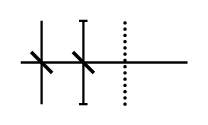}\put(-93,22){$\mu_0$}\raisebox{28pt}{$\longrightarrow$}
\includegraphics{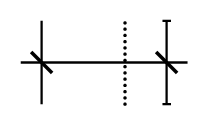}\put(-93,22){$\mu_0$}\put(-14,22){$\mu$}
\caption{Decomposition of an extension move into a split move and a jump move}\label{extension-induction-base}
\end{figure}
This gives the induction base.

If~$\mu$ is in~$k$ hops from~$\mu_0$, $k>1$, insert a~$\diagdown$-mirror~$\mu'$ in a straight-line portion of~$c$ 
adjacent to the one containing~$\mu$, so that the new mirror is
in~$k-1$ hops from~$\mu_0$, and then apply two bridge moves as shown in Figure~\ref{extension-induction-step}
to `transfer' it to the desired position.
\begin{figure}[ht]
\includegraphics{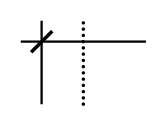}\put(-68,11){$c$}\put(-15,43){$c$}
\raisebox{28pt}{$\longrightarrow$}
\includegraphics{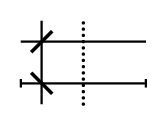}\put(-70,10){$\mu'$}\raisebox{28pt}{$\longrightarrow$}
\includegraphics{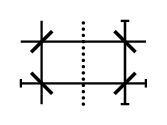}\put(-14,32){$\mu$}\put(-70,10){$\mu'$}\raisebox{28pt}{$\longrightarrow$}
\includegraphics{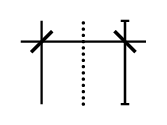}\put(-14,32){$\mu$}
\caption{Adding a $\diagdown$-mirror and `moving' it along the boundary circuit~$c$ by bridge moves}\label{extension-induction-step}
\end{figure}
This gives the induction step.
\end{proof}

\begin{lemm}\label{split-into-bypas-decomp-lem}
Any type~II split move of enhanced mirror diagrams admits a neat decomposition
into a sequence of moves one of which is a type~II elementary bypass removal,
and the others are type~I elementary moves.
\end{lemm}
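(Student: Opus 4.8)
The plan is to reverse the logic of Lemma~\ref{slide-bypass-neat-decomp-lem}, where a type~II elementary bypass addition was neatly decomposed into a bridge addition followed by a type~II merge move. Taking the inverse of that statement (and invoking Lemma~\ref{neat-properties-lem}(2), which says the reverse of a $C$-neat decomposition is $C$-neat), a type~II elementary bypass \emph{removal} neatly decomposes into a type~II split move followed by a bridge removal. Composing the resulting sequence, we see that a type~II split move is related to a type~II elementary bypass removal by a bridge move. More precisely, if $M\mapsto M'$ is a type~II split move, I would like to exhibit a diagram $\widetilde M$ obtained from $M$ by a bridge addition, such that $\widetilde M\mapsto M'$ is a type~II elementary bypass removal, or nearly so.

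Concretely, I would use the notation from Definition~\ref{split-def} (with the symmetry $r_\diagup$ or $r_-$ in force to pass to type~II). The split move takes the occupied level $\ell_{\varphi_0}$ carrying a splitting $\diagdown$-mirror $\mu$ (type~II version) and a snip point, and replaces it by two levels $\ell_{\varphi_1}$, $\ell_{\varphi_2}$. The idea is to first add a bridge near $\ell_{\varphi_0}$ — that is, introduce an auxiliary occupied level together with two new mirrors forming the required $\diagdown/\diagup$ pair at the snip-point meridian and the splitting meridian — so that the configuration at the four relevant positions $(\theta_1;\theta_2)\times$\{two $\varphi$-levels\} matches the pattern of Definition~\ref{mirr-bypass-def} (up to the symmetry). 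Then erasing the one mirror that must disappear is exactly a type~II elementary bypass removal, and the blocks $X$, $Y$ of untouched mirrors end up on the correct levels $\ell_{\varphi_1}$, $\ell_{\varphi_2}$, reproducing $M'$. The associated morphisms multiply correctly because the bridge-addition morphism is the identity on $\wideparen M$ and the bypass-removal morphism is the inverse of a bypass-addition morphism, which is again an identity-type morphism; matching these against the morphism $\eta$ associated with the merge move (Definition~\ref{split-def}) is a routine check, and Condition~(3) of Definition~\ref{neat-def} holds automatically since the modified portion is a disc with part of its boundary fixed.

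For neatness, I would appeal to the discussion following Definition~\ref{neat-def} and to Lemma~\ref{safe-lem}. Bridge moves are neutral, hence safe; a type~II elementary bypass removal is $-$-safe (all type~II elementary moves are $-$-safe, as noted after Lemma~\ref{safe-lem}). So a two-step decomposition ``bridge addition, then type~II elementary bypass removal'' satisfies one of the cases of Lemma~\ref{safe-lem}, and together with Condition~(3) being automatic this yields a neat decomposition. The main obstacle will be purely bookkeeping: I must make sure the bridge is added on the correct side (so that after the bypass removal the leftover mirrors sit on $\ell_{\varphi_1}$ and $\ell_{\varphi_2}$ with the right orientations of the strips $\wideparen\mu'$, $\wideparen\mu''$), and that the enhancement — which of the transient four-mirror boundary circuits is declared inessential, as required by Convention~\ref{moves-extension-conv} — is consistent throughout so that Definition~\ref{elem-move-enhanced-def}'s conditions (1),(2) hold at each step.

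I expect the verification itself to be short; the only genuinely delicate point is confirming that the intermediate diagram $\widetilde M$ genuinely fits the (symmetrized) pattern of Definition~\ref{mirr-bypass-def} and not some nearby configuration, because Definition~\ref{mirr-bypass-def} is quite rigid about which three positions carry mirrors and which single position loses one. A picture analogous to Figures~\ref{bypass-decomposition-fig} and~\ref{split-merge-type-I}, read in reverse, will make this transparent, and I would include such a figure. Once that alignment is confirmed, the composition of morphisms and the neatness follow as above, completing the proof.
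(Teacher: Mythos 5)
Your two-move plan (``bridge addition, then type~II elementary bypass removal'') cannot reproduce a type~II split move, and the obstruction is not bookkeeping but structural. First, a bridge addition (Definition~7.1) requires \emph{two} existing mirrors of opposite types on the occupied level in question, with no mirrors between them; but in a split move there is a single mirror $\mu$ at $(\theta_2,\varphi_0)$ and, by hypothesis, \emph{no} mirror at the snip point $(\theta_1,\varphi_0)$, so the bridge you describe has nothing to attach its second end to. Second, and more fundamentally, both a bridge addition and an elementary bypass removal leave the positions of all pre-existing mirrors on $\ell_{\varphi_0}$ untouched: the bridge only adds a new level and two mirrors on it, and the bypass removal only deletes one mirror. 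The split move, however, must \emph{redistribute} the blocks $X$ and $Y$ of mirrors from $\ell_{\varphi_0}$ onto the two new levels $\ell_{\varphi_1},\ell_{\varphi_2}$. No composition of a bridge addition and a single bypass removal can effect such a redistribution, so the intermediate diagram $\widetilde M$ you want does not exist. Your algebraic derivation from Lemma~6.5 is also not sound as a reduction: Lemma~6.5 shows that a \emph{particular} split (the one occurring inside the rigid rectangle of a bypass removal) factors a bypass removal, but this does not let you run the factorisation backwards from an \emph{arbitrary} split.

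The paper's proof uses a five-step decomposition (Figure~7.17): a type~I extension move that first creates a mirror at the snip point, then a wrinkle creation (which is what actually redistributes the blocks $X$, $Y$ onto separate levels), then a type~II elementary bypass removal, then a bridge removal and a type~I elimination to clean up. The wrinkle creation and bridge moves are then neatly decomposed into type~I elementary moves via Lemma~7.1, so that exactly one type~II elementary move (the bypass removal) survives, as the statement requires. The ingredient your proposal is missing is precisely the wrinkle creation: that is the only move in the package that performs the level-splitting bookkeeping a split move requires, and it cannot be simulated by a bridge alone.
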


\begin{proof}
 Figure~\ref{split-decomp}
demonstrates a decomposition into a sequence of the following moves:
\begin{figure}[ht]
\begin{tabular}{ccccc}
\includegraphics{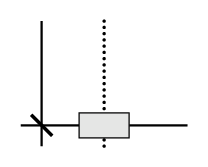}\put(-55,16.5){$X$}&\raisebox{38pt}{$\longrightarrow$}&
\includegraphics{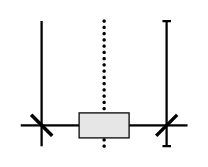}\put(-55,16.5){$X$}&\raisebox{38pt}{$\longrightarrow$}&
\includegraphics{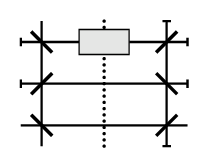}\put(-55,56.5){$X$}\\
&&&&$\big\downarrow$\\
\includegraphics{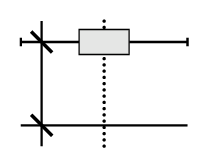}\put(-55,56.5){$X$}&\raisebox{38pt}{$\longleftarrow$}&
\includegraphics{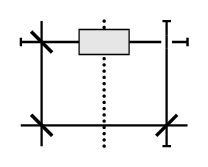}\put(-55,56.5){$X$}&\raisebox{38pt}{$\longleftarrow$}&
\includegraphics{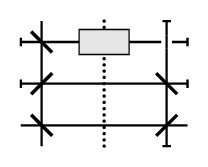}\put(-55,56.5){$X$}
\end{tabular}
\caption{A neat decomposition of a type~II split move into a type~II elementary bypass removal, type~I moves, and neutral moves}\label{split-decomp}
\end{figure}
a type~I extension move, a wrinkle creation, a type~II bypass removal,
a bridge removal, a type~I elimination move. The wrinkle and bridge
moves should be then decomposed neatly into type~I moves, which is possible by
Lemma~\ref{neutral-move-decomposition-lem}.
\end{proof}

\subsection{Handle decomposition of~$\wideparen M$ and partial homeomorphisms~$h_M^{M'}$}\label{hMM-subsec}
Surfaces of the form~$\wideparen M$, where~$M$ is a mirror diagram,
naturally come with a handle decomposition. The $0$-handles of the decomposition
are discs~$\wideparen x$, $x\in L_M$, and the $1$-handles
are strips~$\wideparen\mu$, $\mu\in E_M$. Additionally, every $1$-handle
bears a \emph{type}, which is the type of the respective mirror of~$M$ and is either `$\diagdown$' or `$\diagup$'.

For certain previously defined moves~$M\mapsto M'$ of mirror diagrams, we introduce here
a partial homeomorphism~$h_M^{M'}$ from~$\wideparen M$ to~$\wideparen M'$ preserving the handle decomposition.
By this we mean a homeomorphism from a compact subsurface~$F\subset\wideparen M$
to a compact subsurface~$F'\subset\wideparen M'$
such that~$F$ and~$F'$ inherit
a handle decomposition structure from~$M$ and~$M'$, respectively,
and~$h_M^{M'}$ takes $0$-handles of~$F$ to $0$-handles of~$F'$ and $1$-handles of~$F$
to $1$-handles of~$F'$. The partial homeomorphism~$h_M^{M'}$ will also respect the type of
each $1$-handle and the coorientations of $0$-handles defined by the normal
vector fields to~$\xi_+$ and~$\xi_-$.

The partial homeomorphism~$h_M^{M'}$ will be also extendable to the whole of~$\mathbb S^3$
and, after extension, take some surface carried by~$\widehat M$ to a surface carried by~$\widehat M'$
so as to induce the morphism~$\widehat M\rightarrow\widehat M'$ associated with the move~$M\mapsto M'$.
In each case, the surfaces~$F$ and~$F'$ are obtained from~$\wideparen M$ and~$\wideparen M'$ by `as little modifications as possible' for
allowing~$h_M^{M'}$ to exist. The modifications will include the trivial one (no modification at all), removing
some handles, and cutting along a normal arc (see below).

If~$h_M^{M'}$ is already defined we set~$h_{M'}^M=\bigl(h_M^{M'}\bigr)^{-1}$ for the inverse move.

The moves for which we introduce the partial homeomorphism~$h_M^{M'}$ include the following ones:
extension/elimination, elementary bypass addition/removal, jump, wrinkle creation/reduction, and split/merge moves.

Note that~$h_M^{M'}$ will depend not only on the diagrams~$M$ and~$M'$ but also
on how the move~$M\mapsto M'$ is interpreted. For instance, sometimes a transformation
of mirror diagrams can be viewed as an extension move and as a split move,
and the respective partial homeomorphisms~$h_M^{M'}$ will be quite different.

Let~$M$ be a(n enhanced) mirror diagram, and let~$\mu$ be a mirror of~$M$.
By \emph{the lateral boundary} of the strip~$\wideparen\mu$ we call the
closure of~$\partial\wideparen\mu\setminus\bigcup_{x\in L_M}\wideparen x$.

\begin{defi}
Let~$M$ be a(n enhanced) mirror diagram. A simple arc~$\beta$ in the surface~$\wideparen M$ is \emph{normal}
if the following holds:
\begin{enumerate}
\item
$\beta$ is transverse to~$\partial\wideparen x$ for every occupied level~$x$ of~$M$;
\item
the intersection of~$\beta$ with every strip~$\wideparen\mu$, $\mu\in E_M$,
consists of arcs each of which has endpoints at the boundaries of two different discs of the form~$\wideparen x$, $x\in L_M$,
and is disjoint from the lateral boundary of~$\wideparen\mu$;
\item
$\partial\beta\subset\bigcup_{x\in L_M}\partial\wideparen x$.
\end{enumerate}
If~$\beta\subset\wideparen M$ is a normal arc, then by \emph{cutting~$\wideparen M$ along~$\beta$}
we mean removing from~$\wideparen M$ a small open neighborhood~$U$ of~$\beta$ such
that for any~$y\in L_M$ or~$y\in E_M$ the subset~$\wideparen y\setminus U$ is a deformation retract of~$\wideparen y\setminus\beta$.
\end{defi}

It follows from this definition that the surface obtained by cutting~$\wideparen M$ along a normal arc
inherits the handle decomposition structure.

\begin{defi}\label{successor-def}
Let~$M$ and~$M'$ be enhanced mirror diagrams, and let~$M\mapsto M'$ be one of the moves for which
we define below the partial homeomorphism~$h_M^{M'}$. Let also~$y$ be an occupied level
or a mirror of~$M$, and~$y'$ an occupied level or a mirror, respectively, of~$M'$.
We say that~$y$ is \emph{a predecessor} of~$y'$, and~$y'$ is \emph{a successor} of~$y$
for the move~$M\mapsto M'$ if~$h_M^{M'}$ takes some portion of~$\wideparen y$
to some portion of~$\wideparen y'$. We use the notation~$y\mapsto y'$ to express this fact.
\end{defi}

Now we consider the moves~$M\mapsto M'$ for which we define the partial homeomorphism~$h_M^{M'}$, case by case.

\smallskip\noindent\emph{Case~1}: $M\mapsto M'$ is an extension move.\\
The domain~$F$ and the image~$F'$ of~$h_M^{M'}$ is the whole of~$\wideparen M$, and~$h_M^{M'}$ is just the identity map.
The new occupied level and the new mirror of~$M'$ have no predecessors.
For the inverse move these occupied level and mirror of~$M'$ have no successors.

All the other occupied levels and mirrors
of~$M$ and~$M'$ are the same, so each of them is a predecessor and a successor of itself for
both moves~$M\mapsto M'$ and~$M'\mapsto M$.

\smallskip\noindent\emph{Case 2}: $M\mapsto M'$ is an elementary bypass addition.\\
Similarly to the previous case we have~$F=F'=\wideparen M$, $h_M^{M'}=\mathrm{id}|_F$.
The mirror of~$M'$ that is not present in~$M$ has no predecessor for the move~$M\mapsto M'$, and no
successor for the inverse move. All the mirrors and occupied levels of~$M$ are successors and predecessors of themselves
for both moves.

\smallskip\noindent\emph{Case 3}: $M\mapsto M'$ is a jump move.\\
The partial homeomorphism~$h_M^{M'}$ is a homeomorphism~$\wideparen M\rightarrow\wideparen M'$
that takes each handle~$\wideparen y$, where~$y\in E_M$
or~$y\in L_M$, to itself if~$y$ is preserved by the move, and to~$\wideparen y'$ if Definition~\ref{mir-jump-def}
says that~$y$ is replaced by~$y'$ in~$M'$. In the latter case, if~$y$ and~$y'$ are occupied
levels, the homeomorphism~$h_M^{M'}$ restricted to~$\wideparen y$ is demanded to preserve the orientation
that comes from the coorientation of~$\mathbb S^1_{\tau=0}\cup\mathbb S^1_{\tau=1}$.

\smallskip\noindent\emph{Case 4}: $M\mapsto M'$ is a wrinkle creation.\\
We use the notation from Definition~\ref{wrinkle-def}.
From the combinatorial point of view, we have the following predecessor/successor pairs:
\begin{itemize}
\item
$\mu_1\mapsto\mu_1'$, $\mu_1\mapsto\mu_1'''$,
$\mu_2\mapsto\mu_2'$, $\mu_2\mapsto\mu_2'''$,
\item
$\ell_{\varphi_0}\mapsto\ell_{\varphi_1}$, $\ell_{\varphi_0}\mapsto\ell_{\varphi_3}$,
\item
$(\theta,\varphi_0)\mapsto(\theta,\varphi_3)$ if~$\theta\in(\theta_1;\theta_2)$ and~$(\theta,\varphi_0)\in E_M$,
\item
$(\theta,\varphi_0)\mapsto(\theta,\varphi_1)$ if~$\theta\in(\theta_2;\theta_1)$ and~$(\theta,\varphi_0)\in E_M$,
\item
$y\mapsto y$ if~$y\in E_M\setminus\ell_{\varphi_0}$ or~$y\in L_M\setminus\{\ell_{\varphi_0}\}$.
\end{itemize}

The mirrors~$\mu_1''$, $\mu_2''$ and the occupied
level~$\ell_{\varphi_2}$ of~$M'$ have no predecessors. Let~$M''$ be the mirror diagram obtained by removing
these three elements from~$M'$ (this is a bridge removal). We put~$F'=\wideparen M''$, and this is the image
of~$h_M^{M'}$; see Figure~\ref{del-bridge-wrinkle-move-fig}.
\begin{figure}[ht]
\includegraphics{surf-wrinkle2.eps}\put(-87,-15){$\wideparen M'$}\put(-83,5){$\wideparen\ell_{\varphi_1}$}%
\put(-83,36){$\wideparen\ell_{\varphi_2}$}\put(-83,67){$\wideparen\ell_{\varphi_3}$}%
\put(-130,6){$\wideparen\mu_1'$}\put(-120,28){$\wideparen\mu_1''$}\put(-130,68){$\wideparen\mu_1'''$}
\put(-38,6){$\wideparen\mu_2'$}\put(-50,28){$\wideparen\mu_2''$}\put(-38,68){$\wideparen\mu_2'''$}
\hskip1cm\raisebox{37pt}{$\longrightarrow$}\hskip1cm%
\includegraphics{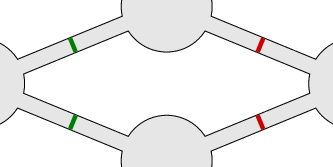}\put(-100,-15){$F'=\wideparen M''$}
\caption{The image of~$h_M^{M'}$ in the case
of a wrinkle creation}\label{del-bridge-wrinkle-move-fig}
\end{figure}

Let~$\beta$ be a normal arc in~$\wideparen M$ that cuts the strips~$\wideparen\mu_1$, $\wideparen\mu_2$
and the disc~$\wideparen\ell_{\varphi_0}$ into halves, and is disjoint from the interiors of all other handles in~$\wideparen M$.
We let the domain of~$h_M^{M'}$ be a surface~$F$ obtained from~$\wideparen M$ by cutting it along~$\beta$; see
Figure~\ref{cut-for-wrinkle-move-fig}.
\begin{figure}[ht]
\includegraphics[scale=0.8]{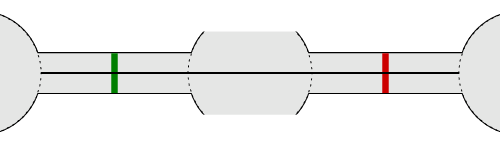}%
\put(-98,19){$\beta$}\put(-102,0){$\wideparen M$}
\hskip.5cm\raisebox{25pt}{$\longrightarrow$}\hskip.5cm%
\includegraphics[scale=0.8]{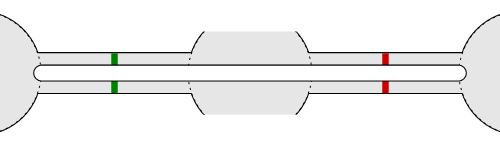}\put(-102,0){$F$}%
\caption{The domain of~$h_M^{M'}$ in the case
of a wrinkle creation}\label{cut-for-wrinkle-move-fig}
\end{figure}

The surfaces~$F$ and~$F'$ have the same handle decomposition structure, and the homeomorphism~$h_M^{M'}:F\rightarrow F'$
is defined naturally.

\smallskip\noindent\emph{Case 5}: $M\mapsto M'$ is a double split move.\\
$F$, $F'$ and~$h_M^{M'}$ are exactly as in the previous case. The difference
is that now~$h_M^{M'}$ is an onto map. The splitting mirrors and the occupied level of~$M$
containing them each have two successors in~$M'$,
all the other elements of~$M$ have a unique successor. All mirrors and occupied levels of~$M'$
have a unique predecessor in~$M$.

\smallskip\noindent\emph{Case 6}: $M\mapsto M'$ is a split move.\\
We use the notation from Definition~\ref{split-def}.
We have the following predecessor/successor pairs for the split move
\begin{itemize}
\item
$M\mapsto M'$: $\mu\mapsto\mu'$, $\mu\mapsto\mu''$,
\item
$\ell_{\varphi_0}\mapsto\ell_{\varphi_1}$, $\ell_{\varphi_0}\mapsto\ell_{\varphi_2}$,
\item
$(\theta,\varphi_0)\mapsto(\theta,\varphi_2)$ if~$\theta\in(\theta_1;\theta_2)$ and~$(\theta,\varphi_0)\in E_M$,
\item
$(\theta,\varphi_0)\mapsto(\theta,\varphi_1)$ if~$\theta\in(\theta_2;\theta_1)$ and~$(\theta,\varphi_0)\in E_M$,
\item
$y\mapsto y$ if~$y\in E_M\setminus\ell_{\varphi_0}$ or~$y\in L_M\setminus\{\ell_{\varphi_0}\}$.
\end{itemize}

The image~$F'$ of~$h_M^{M'}$ is the whole of~$\wideparen M'$. The domain of~$h_M^{M'}$ is
obtained from~$\wideparen M$ by cutting along an arc~$\beta$ that cuts the strip~$\wideparen\mu$
and the disc~$\wideparen\ell_{\varphi_0}$ into halves; see Figure~\ref{cut-for-split-fig}.

\begin{figure}[ht]
\includegraphics{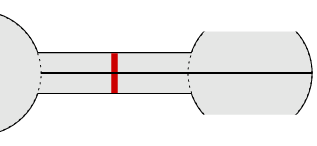}\put(-40,26){$\beta$}\put(-90,0){$\wideparen M$}%
\hskip.5cm\raisebox{32pt}{$\longrightarrow$}\hskip.5cm%
\includegraphics{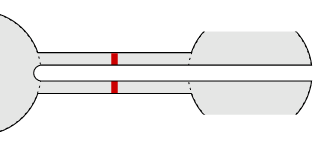}\put(-90,0){$F$}%
\caption{The domain of~$h_M^{M'}$ in the case
of a split move}\label{cut-for-split-fig}
\end{figure}

\section{Mirror diagrams and the standard contact structure}\label{invariance-sec}

In contrast to the previous section, this one is mostly ideological. We start by
discussing the connection between the formalism of mirror diagrams, Legendrian graphs,
and Giroux's convex surfaces.
When the nature of this connection is understood the results of this paper
including the commutation theorems in Section~\ref{commutation-sec}
should appear less surprising. However, we present their proofs
mostly in combinatorial terms, keeping the reference to contact topology, which
is the actual source of the ideas, to a minimum.
In particular, among numerous definitions and statements given in this section,
we \emph{formally} use in the sequel only Definitions~\ref{neglibigle-circuit-def} and~\ref{coherent-mirror-def},
and Lemmas~\ref{remove-coherent-mirror-lem} and~\ref{rem-obst-lem}.
We also omit the `enhanced' versions, which
can be easily guessed, of Definitions~\ref{div-spa-rib-gra-def} and~\ref{negl-hole-def}--\ref{propag-shri-def},
as well as of Proposition~\ref{equivalence-of-divided-graphs-prop},
Theorem~\ref{convex-surf-through-div-leg-graph-th}, and Theorem~\ref{type-i-moves-meaning-th}. Without enhancements, `essential' and `inessential' referring to a boundary circuit or a boundary component mean `non-patchable'
and `patchable', respectively.

We conclude this section by proving Theorem~\ref{invariant-thm} in a combinatorial manner.

\subsection{Legendrian graphs}
Recall from~\cite[Section~3]{dp17} that \emph{a Legendrian graph} is a spatial graph~$\Gamma$ whose
edges are everywhere tangent to the standard contact structure~$\xi_+$
and such that any simple arc in~$\Gamma$ is cusp-free.

\begin{defi}
A spatial ribbon graph~$\rho$ is said to be~\emph{a Legendrian ribbon graph} (with respect to
the standard contact structure~$\xi_+$) if~$\Gamma_\rho$ is a Legendrian
graph, all edges of~$\rho$ are smooth arcs, and there is a surface~$F\in S_\rho$ tangent to~$\xi_+$ at all points of~$\Gamma_\rho$.
\end{defi}

\begin{rema}
The surface~$F$ in this definition need not, and typically cannot, be $C^2$-smooth.
\end{rema}

It is easy to see that a Legendrian ribbon graph~$\rho$ is uniquely determined by the spatial graph~$\Gamma_\rho$
and the set of vertices~$V_\rho$,
and, moreover, $\Gamma_\rho$ may be an arbitrary Legendrian graph.
So, there is no essential difference between the concept of a Legendrian graph and that of a Legendrian
ribbon graph. However, the context of spatial ribbon graphs suggests to introduce a weaker equivalence relation
than the one that looks more natural in the context of Legendrian graphs (see~\cite[Definition~21]{dp17}).

\begin{defi}\label{leg-rib-graph-equiv-def}
Two Legendrian ribbon graphs~$\rho_0$ and~$\rho_1$ are said to be \emph{Legendrian equivalent}
(with respect to~$\xi_+$) if there is an isotopy~$F_t$, $t\in[0,1]$, from a surface~$F_0\in S_{\rho_0}$ to a surface~$F_1\in S_{\rho_1}$
such that, for any~$t\in[0,1]$, the surface~$F_t$ is tangent to~$\xi_+$ along a Legendrian graph~$\Gamma_t$
which is a deformation retract of~$F_t$.
\end{defi}

In terms of the respective Legendrian graphs this equivalence relation coincides with the one introduced in~\cite{BaIs09}
and studied also in~\cite{p14}, where
Legendrian graphs are considered modulo isotopy in the class of Legendrian graphs and edge contraction/blow-up operations.

Generalized rectangular diagrams of~\cite{p14} are, in the present terms, essentially the same thing as rectangular diagrams of a graph.
(In contrast to~\cite{p14}, we allow Legendrian graphs to have isolated vertices. This difference plays no important role.)
If~$G$ is such a diagram then~$\widehat G$ is a Legendrian graph. The associated Legendrian ribbon graph
is then~$\widehat M$, where~$M$ is the mirror diagram obtained from~$G$ by assigning the~`$\diagup$' type to every vertex of~$G$.

With this correspondence, the vertex addition moves in~\cite{p14} translate exactly into type~I extension moves
of mirror diagrams,
the end shifts of type L in~\cite{p14}
translate into moves that are decomposed easily into type~I extension and slide moves,
and the commutations in~\cite{p14} become a particular case of jump moves if the mirror
diagrams are viewed up to combinatorial equivalence.

On the other hand, the type~I slide moves translate back to transformations that are easily decomposed into
elementary moves of type~L from~\cite{p14}. Thus, from Theorems 3.2 and 2.2 of~\cite{p14} and Lemma~\ref{neutral-move-decomposition-lem}
above we have the following.

\begin{theo}\label{equivalence-of-legendrian-graphs-thm}
Any Legendrian ribbon graph is Legendrian equivalent to a Legendrian ribbon graph of the form~$\widehat M$,
where~$M$ is a mirror diagram having only~$\diagup$-mirrors.

Let~$M_1$ and~$M_2$ be mirror diagrams having only~$\diagup$-mirrors. The Legendrian ribbon graphs~$\widehat M_1$
and~$\widehat M_2$ are Legendrian equivalent if and only if~$M_1$ and~$M_2$ can be obtained from each
other by type~I elementary moves (which can only be extension, elimination, and slide moves).
\end{theo}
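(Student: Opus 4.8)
The plan is to reduce everything to the classification of generalized rectangular diagrams of Legendrian graphs obtained in \cite{p14}, using the dictionary sketched above, and then to quote Theorems 3.2 and 2.2 of \cite{p14} together with Lemma~\ref{neutral-move-decomposition-lem}. First I would pin down the dictionary precisely. A Legendrian ribbon graph $\rho$ with only smooth edges is, as already observed, completely determined by the pair $(\Gamma_\rho,V_\rho)$, and $\Gamma_\rho$ may be an arbitrary Legendrian graph; on the other hand a mirror diagram $M$ all of whose mirrors have type `$\diagup$' is exactly a rectangular diagram of a graph $G$ in the sense of \cite{dp17} together with the constant typing $T_M\equiv{}$`$\diagup$', and then $\Gamma_{\widehat M}=\widehat G$ is a Legendrian graph with $V_{\widehat M}=\widehat G\cap(\mathbb S^1_{\tau=0}\cup\mathbb S^1_{\tau=1})$. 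So, viewed up to combinatorial equivalence, mirror diagrams with only $\diagup$-mirrors correspond bijectively to the generalized rectangular diagrams of \cite{p14} (allowing isolated vertices is harmless). I would also check that the Legendrian equivalence of Definition~\ref{leg-rib-graph-equiv-def} translates into the relation on Legendrian graphs used in \cite{BaIs09,p14}, namely isotopy through Legendrian graphs plus edge contraction/blow-up: a contraction or blow-up is precisely the effect on $\Gamma_\rho$ of deleting or inserting a point of $V_\rho$, and it is realised by the ambient ribbon-surface isotopy $F_t$ that the definition permits (this is the content of the remark following Definition~\ref{leg-rib-graph-equiv-def}).

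With the dictionary in place, the first assertion is a direct restatement of the existence theorem of \cite{p14} (Theorem~3.2 there): every Legendrian graph is Legendrian equivalent to one of the form $\widehat G$ for a generalized rectangular diagram $G$. Pushing this through the correspondence yields a mirror diagram $M$ with only $\diagup$-mirrors such that $\widehat M$ is Legendrian equivalent to the given Legendrian ribbon graph.

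For the second assertion I would match the two move packages in both directions. The `if' direction is easy: a type~I extension or elimination move (Definition~\ref{ext-move-def}) is a blow-up or contraction at a new, respectively superfluous, vertex, and a type~I slide move (Definition~\ref{mirr-slide-def}) only slides the endpoint of one edge of $\widehat G$ along another edge, which is a Legendrian isotopy; hence each such move keeps the ribbon graph inside a single Legendrian equivalence class. For the `only if' direction, suppose $\widehat M_1$ and $\widehat M_2$ are Legendrian equivalent. By Theorem~2.2 of \cite{p14} the corresponding generalized rectangular diagrams are connected by a finite sequence of the moves of \cite{p14}: vertex additions/deletions, type~L end shifts, and commutations. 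Under the dictionary, a vertex addition/deletion is exactly a type~I extension/elimination move; a type~L end shift decomposes into type~I extension and slide moves; and a commutation becomes a jump move (Definition~\ref{mir-jump-def}) once the diagrams are viewed up to combinatorial equivalence. A jump move is neutral, so by Lemma~\ref{neutral-move-decomposition-lem} it admits a decomposition into type~I elementary moves — which, among elementary moves, can only be extension, elimination, and slide moves. Finally, combinatorial equivalence between two mirror diagrams with only $\diagup$-mirrors is itself realised by type~I elementary moves: to move an occupied level past an unoccupied one it suffices, for instance, to insert an auxiliary occupied level together with a $\diagup$-mirror by an extension move, jump it across, and then eliminate it. Concatenating all these conversions turns the \cite{p14} sequence into a sequence of type~I elementary moves from $M_1$ to $M_2$.

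The hard part will be the bookkeeping around combinatorial equivalence: \cite{p14} and the notion of a generalized rectangular diagram are insensitive to the exact positions of occupied levels, while mirror diagrams are not, so one must verify that every combinatorial adjustment hidden inside the \cite{p14} commutations and inside the translation of end shifts can be performed by honest type~I elementary moves without ever creating a $\diagdown$-mirror and without disturbing the rest of the diagram. A secondary point requiring care is the equivalence of Definition~\ref{leg-rib-graph-equiv-def} with the \cite{BaIs09}/\cite{p14} relation — one must ensure the ribbon-surface isotopy there may always be chosen to keep the surface a tangent ribbon of a shrinking or growing Legendrian graph, so that edge contractions and blow-ups are exactly accounted for.
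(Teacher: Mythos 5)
Your proposal reproduces the paper's own argument exactly: the theorem is deduced from Theorems 3.2 and 2.2 of \cite{p14} and Lemma~\ref{neutral-move-decomposition-lem}, via the dictionary between mirror diagrams having only $\diagup$-mirrors and the generalized rectangular diagrams of \cite{p14} (vertex additions $\leftrightarrow$ type~I extensions, type~L end shifts $\leftrightarrow$ extensions and slides, commutations $\leftrightarrow$ jump moves up to combinatorial equivalence), which is precisely the content of the discussion preceding the theorem. The loose end you flag --- verifying that the combinatorial adjustments and jump-move decompositions never introduce a $\diagdown$-mirror, so that the resulting type~I elementary moves are indeed only extensions, eliminations, and slides --- is a genuine subtlety that the paper's terse derivation leaves implicit as well.
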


Note that no analogue of stable equivalence makes sense for Legendrian ribbon graphs. This is because
there cannot exist any `non-trivial' patchable boundary circuits of such a graph. More precisely, a boundary circuit
of a Legendrian ribbon graph~$\rho$ is patchable if and only if it belongs to a contractible
connected component of~$\Gamma_\rho$ (that is, a tree). This can be deduced from the well known result of Bennequin~\cite{ben}
on the non-existence of overtwisted discs (i.e.\ discs tangent to~$\xi_+$ along the whole boundary), and
this implies, in particular, that handle addition is impossible if we restrict ourselves to the class of Legendrian
ribbon graphs.

Note also that if a mirror diagram has only $\diagup$-mirrors, then the only elementary moves applicable to it
are extensions, eliminations, and slides (which keep the diagram free of $\diagdown$-mirrors).

As the following statement shows, the equivalence relation for Legendrian ribbon graphs introduced in Definition~\ref{leg-rib-graph-equiv-def}
is quite natural in the context of Giroux's convex surfaces.
By~$\mathscr G(F)$ we denote the Giroux graph of~$F$ (see~\cite[Definition~27]{dp17}).

\begin{theo}
\emph{(i)} Let~$F$ and~$F'$ be convex surfaces with Legendrian boundaries and very nice characteristic foliations.
Assume that they are isotopic in the class of convex surfaces. Then
the Legendrian ribbon graphs associated with Legendrian graphs~$\mathscr G(F)$ and~$\mathscr G(F')$
are Legendrian equivalent.

\emph{(ii)} Let~$F$ be a convex surface with Legendrian boundary and very nice characteristic foliation,
and let~$\rho$ be a Legendrian ribbon graph
equivalent to the one associated with~$\mathscr G(F)$. Then there exists
a convex surface~$F'$ isotopic to~$F$ through convex surfaces such that
\begin{enumerate}
\item
$\partial F'$ is Legendrian;
\item
the characteristic foliation of~$F'$
is very nice;
\item
$\mathscr G(F')$ coincides with~$\Gamma_\rho$.
\end{enumerate}
\end{theo}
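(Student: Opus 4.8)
The plan is to reduce this statement about Giroux's convex surfaces to the combinatorial equivalence of Legendrian ribbon graphs established in Theorem~\ref{equivalence-of-legendrian-graphs-thm}, by linking both the Giroux graph construction and the isotopy in the class of convex surfaces to the mirror diagram formalism. The two parts are mirror images of one another in the usual sense: part~(i) says that isotopy through convex surfaces forces Legendrian equivalence of the associated Legendrian ribbon graphs, and part~(ii) is the converse realizability statement. So I would set up the common machinery first and then treat the two directions.

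For part~(i), the first step is to recall that for a convex surface $F$ with Legendrian boundary and very nice characteristic foliation, a small neighborhood of the Giroux graph $\mathscr G(F)$ in $F$ carries the structure we need: $\mathscr G(F)$ is a Legendrian graph and there is a surface in the corresponding $S_\rho$ tangent to $\xi_+$ along it, so the associated Legendrian ribbon graph $\rho(F)$ is well defined (this is essentially~\cite[Definition~27]{dp17} together with the definition of a Legendrian ribbon graph above). Next, given an isotopy $F_t$, $t\in[0;1]$, through convex surfaces from $F$ to $F'$, I would perturb it so that at all but finitely many moments the characteristic foliation is very nice, and so that at the exceptional moments exactly one elementary bifurcation of the foliation (a retrogradient saddle connection, a birth/death of a pair of singularities, or the passage of a $0$-arc or $-1$-arc through a critical configuration) occurs. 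The key point is that each such elementary bifurcation changes $\mathscr G(F_t)$ either not at all (up to isotopy of Legendrian graphs) or by an edge contraction/blow-up, hence by a move that is an isotopy of Legendrian graphs or an elementary move in the sense of~\cite{p14,BaIs09}; between the bifurcation moments $\mathscr G(F_t)$ moves by an ambient Legendrian isotopy. Concatenating these, $\rho(F)$ and $\rho(F')$ are connected by Legendrian isotopies and edge contractions/blow-ups, which is exactly Legendrian equivalence of Legendrian ribbon graphs by the discussion following Definition~\ref{leg-rib-graph-equiv-def}. I would also invoke Theorem~\ref{equivalence-of-legendrian-graphs-thm} to replace each $\rho(F_t)$, if desired, by one of the form $\widehat M$ with $M$ having only $\diagup$-mirrors, so that the equivalence is witnessed by type~I elementary moves; but strictly this is not needed for (i).

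For part~(ii), I would argue as follows. Start with $F$ convex with Legendrian boundary and very nice characteristic foliation, so $\rho(F)$ is defined, and suppose $\rho$ is Legendrian equivalent to $\rho(F)$. By Theorem~\ref{equivalence-of-legendrian-graphs-thm} both $\rho$ and $\rho(F)$ are Legendrian equivalent to $\widehat M$-type ribbon graphs, and any two such that are Legendrian equivalent are related by a finite sequence of type~I elementary moves (extensions, eliminations, slides). So it suffices to realize a single such elementary move at the level of convex surfaces: given a convex surface whose Giroux graph is (Legendrian isotopic to) $\Gamma_{\widehat M}$, and an elementary move $M\mapsto M'$, produce a convex surface isotopic through convex surfaces to the original one whose Giroux graph is $\Gamma_{\widehat M'}$. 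For an extension move this is realizing an edge blow-up, i.e.\ creating a new pair of singularities of the characteristic foliation by a $C^0$-small convex perturbation localized near a point of the surface, which is a standard application of the genericity of convexity and the technique of~\cite{gi1} (cf.\ the uses of that technique in Lemmas~\ref{wrinkle-creation-isotopy} and~\ref{flype-equivalence-lem}). Slide moves correspond to moving an edge of the Giroux graph across a singularity, again realizable by a convex isotopy. An elimination move is the inverse of an extension. Since all of $\partial F$ is Legendrian throughout and the characteristic foliation can be kept very nice except at isolated moments, the endpoint surface $F'$ satisfies (1)--(3), and is isotopic to $F$ through convex surfaces. One can then, if one wishes, further isotope within the convex class so that $\mathscr G(F')$ equals $\Gamma_\rho$ on the nose rather than merely Legendrian-isotopic to it.

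The main obstacle I expect is the bifurcation analysis in part~(i): one has to be precise about exactly which local changes of a very nice characteristic foliation can occur in a generic one-parameter family of convex surfaces, and to check that each of them induces on the Giroux graph only an ambient Legendrian isotopy or an edge contraction/blow-up (and not, say, some more violent change of the combinatorial type that would not be an elementary move). This is where the detailed structure theory of Giroux's convex surfaces and of very nice foliations from~\cite{Gi,dp17} has to be used carefully, and it is the reason the authors describe this section as "mostly ideological" and keep the contact-topological input to a minimum elsewhere. A secondary technical point, common to both parts, is the appeal to the approximation technique of~\cite{gi1,mas1} guaranteeing that a $C^0$-isotopy of a surface realizing a prescribed change of the dividing set / Giroux graph can be $C^0$-approximated by one staying in the convex class; this is used here exactly as in Lemmas~\ref{wrinkle-creation-isotopy}, \ref{half-wrinkle-creation-isotopy}, and~\ref{flype-equivalence-lem}, so I would simply cite those arguments rather than redo them.
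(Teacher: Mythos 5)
The paper does not actually give a proof of this theorem: immediately after the statement the authors write ``The proof will be published elsewhere. We don't use this result in the sequel.'' So there is no proof of record here for your argument to be compared against, and I can only assess your sketch on its own merits.

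Taken on those terms, your overall plan is the natural one, and indeed the only reasonable route given the surrounding machinery: identify $\mathscr G(F)$ with the underlying graph of a divided Legendrian ribbon graph, translate isotopy through convex surfaces into a chain of Legendrian-graph isotopies plus edge contraction/blow-up moves (= the ``Legendrian equivalence'' of Definition~\ref{leg-rib-graph-equiv-def}), and conversely realize each elementary move geometrically by a localized convex perturbation. The real gap, which you correctly flag but do not close, is the bifurcation analysis in part~(i). You assert that a generic one-parameter family of convex surfaces can be arranged so that the characteristic foliation is very nice for all but finitely many $t$, and that at each exceptional $t$ the Giroux graph changes only by an ambient Legendrian isotopy or by a single edge contraction/blow-up. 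Neither claim is established in the paper (the genericity results of Giroux are about the dividing set, not directly about the Giroux graph of a very nice foliation), and the passage from ``generic bifurcation of the foliation'' to ``elementary move of the Legendrian ribbon graph'' has several cases (birth--death of singularities, retrogradient saddle connections, a $0$-arc degenerating into a $-1$-arc, and events at $\partial F$) that each need to be worked out; a priori some of them could produce a simultaneous contraction and blow-up, or a change at the boundary that is not obviously of the allowed type. Until this case analysis is supplied, part~(i) is not proved.

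For part~(ii), the reduction via Theorem~\ref{equivalence-of-legendrian-graphs-thm} to realizing type~I extension/elimination/slide moves is sound, and the realization of extensions and slides as $C^0$-small convex perturbations is plausible by the technique of~\cite{gi1,mas1}; but here too one must check that the perturbation can be kept within the class of \emph{very nice} characteristic foliations except at isolated moments, and that no spurious components are created in $\mathscr G(F_t)$. These are exactly the same local-model checks missing from part~(i), so the two halves stand or fall together. In short: the approach is correct in outline and is presumably what the authors intend for their forthcoming proof, but the central technical lemma (classification of generic bifurcations of very nice characteristic foliations in a convex family, and their effect on the Giroux graph) is stated, not proved.
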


The proof will be published elsewhere. We don't use this result in the sequel.

\subsection{Divided Legendrian graphs}\label{divided-legendrian-subsec}
Here we discuss what general mirror diagrams are if viewed up to type~I elementary moves.
To start, let us forget for a while about contact structures.

\begin{defi}\label{div-spa-rib-gra-def}
By \emph{a divided spatial ribbon graph} we mean a spatial ribbon graph~$\rho$ in which
each surface~$F\in S_\rho$ is endowed with an isotopy class~$\Delta(F)$ of abstract dividing sets
so that the following holds:
\begin{enumerate}
\item
for any~$F\in S_\rho$ there exists a representative~$\delta\in\Delta(F)$ consisting
of arcs each of which intersects~$\Gamma_\rho$ exactly once and transversely (in the topological sense), at a point which is not a true vertex of~$\Gamma_\rho$;
\item
for any~$F,F'\in S_\rho$ any isotopy in the class~$S_\rho$ that brings~$F$ to~$F'$ (respecting the correspondence
between connected components of~$\partial F$ and~$\partial F'$ with the elements of~$\partial\rho$)
also brings~$\Delta(F)$ to~$\Delta(F')$.
\end{enumerate}

Two divided spatial ribbon graphs~$\rho$ and~$\rho'$ are said to be \emph{equivalent}
if a surface~$F\in S_\rho$ can be brought to a surface~$F'\in S_{\rho'}$ by an isotopy
that also brings~$\Delta(F)$ to~$\Delta(F')$.
\end{defi}

\begin{defi}
Let~$F$ be a surface contained in another surface~$F'$, and let~$\delta$ and~$\delta'$ be
abstract dividing sets on~$F$ and~$F'$, respectively. We say that~$\delta'$
\emph{extends}~$\delta$ if~$\delta=\delta'\cap F$ and any connected component of~$\delta'$ has
a non-empty intersection with~$F$.

If surfaces~$F$ and~$F'$ are endowed with isotopy classes
of abstract dividing sets~$\Delta(F)$ and~$\Delta(F')$, respectively,
we write~$F\subset_{\mathrm d}F'$ to state that~$F\subset F'$
and any~$\delta\in\Delta(F)$ can be extended to some~$\delta'\in\Delta(F')$.
\end{defi}

\begin{defi}\label{negl-hole-def}
Let~$\rho$ be a divided ribbon graph.
An inessential boundary circuit~$\gamma$ of~$\rho$ is called
\emph{negligible} if, for some (and then any) surface~$F\in S_\rho$
and some (and then any) representative~$\delta\in\Delta(F)$, the connected component~$\widetilde\gamma$ of~$\partial F$ corresponding to~$\gamma$
meets~$\delta$ exactly twice. In this case, the component~$\widetilde\gamma$ of~$\partial F$
is also called negligible.
\end{defi}

The reason why we single out this type of hole is the following one. If a surface~$F'$ is
obtained from a surface~$F\in S_\rho$ by patching a negligible hole, where~$\rho$ is a divided ribbon graph,
then there is a unique way to endow~$F'$ with an isotopy class of
abstract dividing sets so as to have~$F\subset_{\mathrm d}F'$.

For a divided spatial ribbon graph~$\rho$, we denote by~$\overline S_\rho$
the class of all surfaces~$F$, each endowed with an isotopy class~$\Delta(F)$ of abstract dividing sets,
such that
$F$ is obtained from some~$F_0\in S_\rho$ by patching all negligible holes, and
we have~$F_0\subset_{\mathrm d}F$.

\begin{defi}
Two divided spatial ribbon graphs~$\rho$ and~$\rho'$ are said to be \emph{stably equivalent}
if a surface~$F\in\overline S_\rho$ can be brought to~$F\in\overline S_{\rho'}$ by an isotopy
that also takes~$\Delta(F)$ to~$\Delta(F')$.
\end{defi}

\begin{defi}
Let~$\rho$ be a divided ribbon graph,
and let~$F_0\in S_\rho$, $\delta_0\in\Delta(F)$.
Two connected
components~$\beta_1$ and~$\beta_2$, say, of~$\delta_0$ are said to be
\emph{coherent} if there is a surface~$F\in\overline S_\rho$
and a representative~$\delta\in\Delta(F)$ such that~$\delta_0=\delta\cap F_0$
and~$\beta_1$, $\beta_2$ are contained in the same connected component of~$\delta$.
If no two connected components of~$\delta_0$ are coherent, we say that~$\rho$ is \emph{reduced}.
\end{defi}

\begin{defi}\label{propagation-def}
Let~$\rho$ and~$\rho'$ be two divided ribbon graphs
such that~$\rho'$ viewed as an ordinary spatial ribbon
graph is
obtained from~$\rho$ by a handle addition. Let also~$F$, $F'$, and~$d$ be as in Definition~\ref{handle-addition-def}.
We say that the transition~$\rho\mapsto\rho'$ is \emph{a propagation},
and the inverse transition is \emph{a shrinking} if
the hole patched by~$d$ is negligible, and we have~$F'\subset_{\mathrm d}F$.
\end{defi}

Let us explain this operation in different terms.  One can see that if~$F$ is a compact surface,
and~$\delta$ is an abstract dividing set on~$F$ consisting of pairwise disjoint arcs, then there exists a unique equivalence
class of divided spatial ribbon graphs~$\rho$ such that~$F\in S_\rho$ and~$\delta\in\Delta(F)$.
Thus, isotopy classes of such pairs~$(F,\delta)$, which we call \emph{divided surfaces},
are in a natural one-to-one correspondence with
equivalence classes of divided spatial ribbon graphs.

For~$(F,\delta)$ a divided surface, a connected component~$\gamma$ of~$\partial F$
is \emph{negligible} if~$\gamma$ is inessential and there are
exactly two endpoints of~$\delta$ on~$\gamma$.

If a connected component~$\beta$ of~$\delta$ has one endpoint on a negligible
component~$\gamma$ of~$\partial F$, and the other endpoint on a component~$\gamma'\ne\gamma$,
the removal of a tubular neighborhood~$U$ of~$\beta$ such that~$U\cap\delta=\beta$ from~$F$ represents a shrinking.
This is illustrated in Figure~\ref{propagation-fig}, where the dividing sets of the divided surfaces are shown in green.
\begin{figure}[ht]
\newlength\tmp
\settowidth\tmp{$\scriptstyle\text{propagate}$}
\includegraphics[scale=0.7]{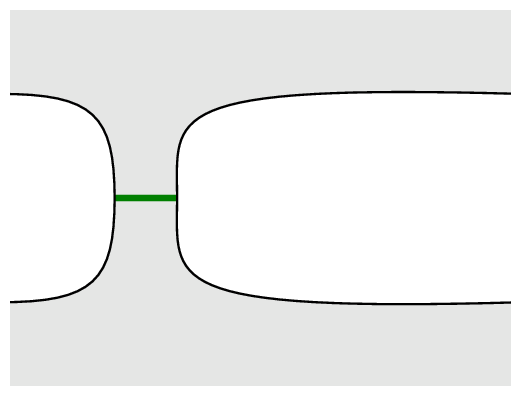}\put(-100,40){$\gamma\#\gamma'$}
\raisebox{60pt}{$\begin{matrix}\xrightarrow{\text{propagate}}\\
\xleftarrow{\hbox to\tmp{\hss$\scriptstyle\text{shrink}$\hss}}\end{matrix}$}
\includegraphics[scale=0.7]{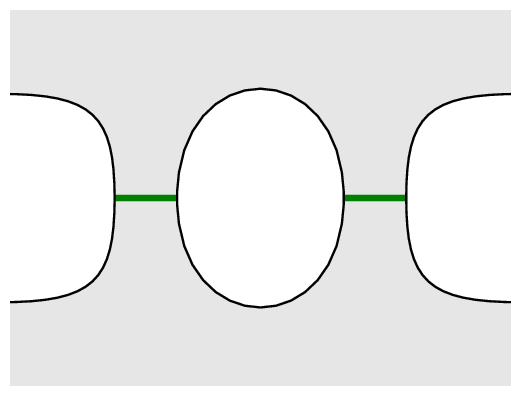}\put(-120,10){$F$}\put(-52,72){$\beta$}%
\put(-76,40){$\gamma$}\put(-30,40){$\gamma'$}
\caption{Propagation and shrinking in terms of divided surfaces}\label{propagation-fig}
\end{figure}
It is understood that the new boundary component, which is the `connected sum'~$\gamma\#\gamma'$
of~$\gamma$ and~$\gamma'$
is essential if and only if so is~$\gamma'$. This implies, in particular, that~$\gamma\#\gamma'$
is negligible if and only if so is~$\gamma'$.

It is easy to see that if~$\beta_1$, $\beta_2$ are two connected components of~$\delta$ distinct from~$\beta$,
then they are coherent if and only if they remain coherent after the shrinking.
So, connected components of~$\delta$ fall into several coherence classes,
and a propagation adds an element to one of these classes, whereas a shrinking removes
an element from a class. The number of coherence classes remains fixed.

With this explanation at hand the following statement is obvious.

\begin{prop}\label{equivalence-of-divided-graphs-prop}
\emph{(i)} Two divided ribbon graphs~$\rho$ and~$\rho'$
are stably equivalent if and only if there is an ribbon graph~$\rho''$
equivalent to~$\rho'$ that is obtained from~$\rho$ by a finite sequence of propagations and shrinkings.

\emph{(ii)} For any divided spatial ribbon graphs~$\rho$, there is a sequence of shrinkings that produces
a reduced divided ribbon graph from~$\rho$.

\emph{(iii)} Two reduced divided spatial ribbon graphs are equivalent if and only if they
are stably equivalent.
\end{prop}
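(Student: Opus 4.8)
Throughout, I would translate the statement into the language of divided surfaces: as recalled just before the proposition, equivalence classes of enhanced divided spatial ribbon graphs correspond to isotopy classes of pairs $(F,\delta)$, a propagation or shrinking being the addition or removal of a regular neighbourhood of an arc of $\delta$ one of whose endpoints lies on a negligible boundary component. Two bookkeeping facts, immediate from the definitions and from Figure~\ref{propagation-fig}, drive everything: (a) a propagation increases, and a shrinking decreases, the number of components of $\delta$ by one, while the number of \emph{coherence classes} is unchanged; (b) the divided surface $\bar F$ obtained by patching \emph{all} negligible holes of $(F,\delta)$, recorded together with its dividing set and its essential boundary, is unchanged by propagations, shrinkings and equivalences, hence is an invariant of the stable equivalence class.

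For (ii) I would induct on the number of components of $\delta_0$. If $(F_0,\delta_0)$ is not reduced, then two distinct components of $\delta_0$ become one when all negligible holes are patched; tracing the merging, there is a negligible hole $\gamma$ carrying the endpoints of two \emph{distinct} arcs $\beta,\beta'$. Then $\beta$ meets $\gamma$ in exactly one endpoint (its other endpoint cannot lie on $\gamma$, whose two slots are already used by $\beta$ and $\beta'$), so $\beta$ admits a shrinking. After shrinking, by (a) the component count drops by one and the coherence-class count is unchanged, and two surviving components are coherent in the new graph precisely when they were coherent before; so finitely many shrinkings produce a graph in which no class has two elements, i.e.\ a reduced one.

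Part (i) is then essentially formal once (iii) is known. The direction ``only if'' is trivial by (b). For ``if'': given stably equivalent $\rho,\rho'$, use (ii) to reach reduced graphs $\rho_{\mathrm r},\rho_{\mathrm r}'$ by shrinkings; by (b) these are still stably equivalent, hence by (iii) equivalent; running the second chain of shrinkings backwards exhibits $\rho'$, up to equivalence, as the result of applying to $\rho$ some shrinkings followed by some propagations. So the order of business is: prove (ii), then (iii), then deduce (i).

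The real work is (iii), whose non-formal direction asserts that two \emph{reduced} divided graphs that are stably equivalent are equivalent. Having fixed reduced $\rho,\rho'$ with, by (b), isotopic associated $\bar F$'s, I would first produce a finite sequence of propagations and shrinkings joining $\rho$ to a graph equivalent to $\rho'$ --- applying Proposition~\ref{stable-equivalence-of-ribbon-graphs-prop} and Lemma~\ref{add-points-to-V-lem} to the underlying spatial ribbon graphs and then refining the resulting handle moves, using the freedom to isotope $\delta$ and to subdivide edges, so that each patches a negligible hole and respects $\subset_{\mathrm d}$, i.e.\ is a propagation or a shrinking. With such a sequence in hand I would argue by induction on the number of propagations in it that its net effect is an equivalence. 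Since by (a) every term has the same number $n$ of coherence classes and a reduced graph has exactly $n$ components of its dividing set, the component count along the sequence is a lattice path beginning and ending at its minimum $n$; the base case (no propagations) forces the sequence to be empty, a shrinking being impossible at height $n$. For the step, take the last propagation $\sigma_i\mapsto\sigma_{i+1}$: it creates an arc $\beta$ on a new negligible hole which also carries the endpoint of another arc, so if $\beta$ survived to the final (reduced) graph that graph would fail to be reduced; hence some later shrinking destroys $\beta$. One then checks that the shrinkings occurring before $\beta$ is destroyed take place away from $\beta$ and commute with the propagation creating $\beta$; after commuting that propagation next to the shrinking that destroys $\beta$, the pair cancels up to equivalence, lowering the propagation count. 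The main obstacle is precisely this last bundle of verifications --- that a handle move can be normalised into a propagation or shrinking, and, more delicate, the commutation of a shrinking with a disjoint propagation, including tracking at every intermediate stage which boundary circuits are negligible and which are essential.
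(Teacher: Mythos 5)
Your outline for parts (ii) and (i) is correct and matches what the paper has in mind: a non-reduced graph always admits a shrinking (your argument that the two endpoints on a negligible hole $\gamma$ coming from distinct arcs $\beta,\beta'$ forces the other endpoint of $\beta$ to lie off $\gamma$ is exactly the point), so finitely many shrinkings reach a reduced graph; and once (ii) and (iii) are in hand, (i) is the formal consequence you describe.

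For (iii), however, you have taken a long and incomplete detour where the paper intends a short direct argument, and the two things you yourself flag as the ``main obstacle'' are genuine gaps, not verifications to be checked. First, Proposition~\ref{stable-equivalence-of-ribbon-graphs-prop} produces handle additions and removals of the \emph{underlying} ribbon graphs, and there is no reason an individual handle move in that chain should patch a negligible hole or respect $\subset_{\mathrm d}$; ``refining'' such a move into propagations and shrinkings would itself need an argument that is not supplied (indeed a handle addition across a hole carrying $0$ or $\geqslant 4$ endpoints of the dividing set does not even make sense in the divided category). Second, the commutation of a shrinking past a disjoint propagation while tracking negligibility and essentialness at every intermediate stage is a nontrivial piece of bookkeeping that you do not carry out. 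Worse, this route misses the idea that makes the proposition ``obvious'' once translated into divided surfaces.

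The intended proof of (iii) is a direct characterization. Pass to the divided surface $(\bar F,\bar\delta,\partial_{\mathrm e}\bar F)$ obtained by patching all negligible holes; this datum is the invariant of the stable equivalence class. Now observe that in a reduced $\rho$ with $F_0\in S_\rho$, the closure of $\bar F\setminus F_0$ is a union of patching discs, and each such disc meets $\bar\delta$ in exactly one arc (the hole is negligible, so carries exactly two endpoints of $\delta$). If that arc lay on a component of $\bar\delta$ with boundary, cutting would split it into two arcs with endpoints on the new hole, giving two coherent components of $\delta$ and contradicting reducedness; so each disc is transverse to a \emph{closed} component of $\bar\delta$. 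Two discs on the same closed component again produce two coherent arcs, and a closed component with no disc would leave a circle component of $\delta$, which is disallowed. Hence the patching discs are in bijection with the closed components of $\bar\delta$, each a small disc transverse to its component, and such a disc is unique up to isotopy of $\bar F$ preserving $\bar\delta$ and $\partial_{\mathrm e}\bar F$. Therefore $F_0$, together with $\delta_0=\bar\delta\cap F_0$, is determined by $(\bar F,\bar\delta,\partial_{\mathrm e}\bar F)$ up to isotopy, and two stably equivalent reduced graphs are equivalent. This replaces your entire lattice-path induction.
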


Now we plug back in the contact structure~$\xi_+$.

\begin{defi}\label{divided-leg-rib-graph-def}
\emph{A divided Legendrian ribbon graph}
is a divided spatial ribbon graph~$\rho$ such that~$\Gamma_\rho$
is a Legendrian graph,
and there are representatives~$F\in S_\rho$ and~$\delta\in\Delta(F)$ with the following properties:
\begin{enumerate}
\item
$\delta$ is disjoint from~$V_\rho$;
\item
each connected component of~$\delta$ is an arc intersecting~$\Gamma_\rho$ exactly once;
\item
each edge of~$\rho$  has at most one intersection with~$\delta$;
\item
$F$ is tangent to~$\xi_+$ along all edges of~$\rho$ that are disjoint from~$\delta$;
\item
if an edge~$e$ of~$\rho$ intersects~$\delta$ (such edges are called~\emph{divided}), then~$e$ is a $-1$-arc of the characteristic foliation of~$F$,
i.e.\ a Legendrian arc such that the contact plane~$\xi_+(p)$ makes a negative half-turn relative to the tangent plane~$T_pF$
when~$p$ traverses the edge~$e$, and~$\xi_+(p)\ne T_pF$ for~$p\in\interior(e)$;
\item
the orientation of each connected component~$\beta\subset\delta$  agrees with the coorientation of~$\xi_+$
at the point~$\beta\cap\Gamma_\rho$.
\end{enumerate}
\end{defi}

It is not hard to see that, if~$\Gamma$ is a Legendrian graph with the set of vertices~$V$ such that
any edge is a smooth simple arc, and~$X$ is a subset of edges of~$\Gamma$,
then there is a unique divided Legendrian ribbon graph~$\rho$ with~$\Gamma_\rho=\Gamma$,
$V_\rho=V$ such that the set of divided edges of~$\Gamma$ is~$X$.

The main source of divided Legendrian ribbon graphs is the class of extended Giroux graphs of convex surfaces, see~\cite[Definition~27]{dp17}.
Let~$F$ be a Giroux's convex surface with Legendrian boundary and very nice characteristic foliation, and let~$\delta$
be a dividing set of~$F$. Let also~$\mathscr G$ be the Giroux graph, and~$\widetilde{\mathscr G}$ an extended Giroux graph of~$F$.
By declaring the $-1$-arcs in~$\widetilde{\mathscr G}$, which are not contained in~$\mathscr G$, to be divided edges
we get a divided Legendrian ribbon graph, which we denote by~$\rho(F,\widetilde{\mathscr G})$.
To get representatives~$F'\in S_{\rho(F,\widetilde{\mathscr G})}$ and~$\delta'\in\Delta(F')$
it suffices to take a small open neighborhood~$U$ of~$\widetilde{\mathscr G}$ in~$F$ and put~$F'=\overline U$, $\delta'=\delta\cap F'$.

The point is that the divided Legendrian ribbon graph~$\rho(F,\widetilde{\mathscr G})$
constructed in this way carries essentially all interesting information
about~$F$ if the latter is viewed up to isotopy in the class of Giroux's convex
surfaces. This means that we can study convex surfaces by operating essentially one-dimensional
objects, namely, Legendrian graphs with some additional structure.

The link to extended Giroux graphs also motivates introducing the following
equivalence relation, up to which it is natural to consider divided Legendrian ribbon
graphs.

\begin{defi}
Two divided Legendrian ribbon graphs~$\rho_0$ and~$\rho_1$ are said to be
\emph{Legendrian equivalent} if there is a $1$-parametric family of divided Legendrian ribbon graphs~$\rho_t$, $t\in[0,1]$,
connecting~$\rho_0$ and~$\rho_1$, and an isotopy~$F_t$, $t\in[0,1]$, from a surface~$F_0\in S_{\rho_0}$
to a surface~$F_1\in S_{\rho_1}$ such that~$F_t\in S_{\rho_t}$
and this isotopy brings~$\Delta(F_0)$ to~$\Delta(F_t)$ for all~$t\in[0,1]$.
\end{defi}

In terms of Legendrian graphs this definition means the following. We consider Legendrian
graphs some edges of which are declared divided. Two such Legendrian graphs are regarded equivalent
if one can be obtained from the other by an isotopy in the class of Legendrian graphs and operations
of edge contraction/blow-up which are allowed only on non-divided edges.
The divided edges are forbidden to collapse and to be born as a result of an edge blow up.

There is an arbitrariness in the definition of an extended Giroux graph~$\widetilde{\mathscr G}$ of a convex surface~$F$
that allows to add more divided edges to it at our will. Such an addition will change the Legendrian
equivalence class of~$\rho(F,\widetilde{\mathscr G})$. Propagations and shrinkings take care of this.

\begin{defi}\label{propag-shri-def}
In the case of divided Legendrian ribbon graphs, \emph{a propagation} and \emph{a shrinking}
have the same meaning as in the case of general divided spatial ribbon graphs (see Definition~\ref{propagation-def}).
Two divided ribbon graphs~$\rho$ and~$\rho'$ are said to be \emph{stably Legendrian equivalent}
if there is a divided Legendrian ribbon graph~$\rho''$ such that~$\rho''$
is Legendrian equivalent to~$\rho'$ and is obtained from~$\rho$ by finitely many propagations
and shrinkings.
\end{defi}

\begin{theo}\label{convex-surf-through-div-leg-graph-th}
Let~$F$ and~$F'$ be two connected Giroux's convex surfaces with Legendrian boundary and very nice
characteristic foliation, and let~$\mathscr G$, $\mathscr G'$ be their respective extended Giroux graphs.
Suppose that all connected components of~$\partial F$ and~$\partial F'$ are essential. Then divided Legendrian
ribbon graphs~$\rho(F,\mathscr G)$ and~$\rho(F',\mathscr G')$ are stably
Legendrian equivalent if and only if the surfaces~$F$ and~$F'$ are isotopic through the class
of Giroux's convex surfaces with Legendrian boundary.
\end{theo}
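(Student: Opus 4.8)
The plan is to treat the two implications separately, reducing each to material already developed above.

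\emph{The ``if'' direction.} Here I would show that every elementary step of a stable Legendrian equivalence lifts to an isotopy of the ambient surface through Giroux's convex surfaces with Legendrian boundary. By Proposition~\ref{equivalence-of-divided-graphs-prop}(i) and its Legendrian refinement it suffices to treat, on the one hand, a Legendrian equivalence of divided Legendrian ribbon graphs, and on the other hand a propagation or a shrinking. A Legendrian equivalence is, by definition, a $1$-parameter family $\rho_t$ of divided Legendrian ribbon graphs together with a compatible isotopy $F_t$, $t\in[0,1]$, of carried surfaces; taking at each moment a sufficiently thin tubular neighborhood of $\Gamma_{\rho_t}$ in $F_t$ produces an isotopy through convex surfaces (a thin neighborhood of a Legendrian graph carrying a compatible dividing set is convex, with the graph recovering the Giroux graph), which one then makes terminate with Legendrian boundary by a further $C^0$-small convex isotopy realizing the boundary Legendrian, exactly as in the proofs of Lemmas~\ref{wrinkle-creation-isotopy} and~\ref{flype-equivalence-lem}, using the technique of~\cite{gi1,mas1}. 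Propagations and shrinkings are even easier: by the very definition of $\overline S_\rho$, patching the negligible hole created (respectively destroyed) by the move returns the same surface with the same isotopy class of dividing set, so these moves induce the identity on carried surfaces up to the equivalences already built into stable Legendrian equivalence. Concatenating these realizations yields the desired isotopy from $F$ to $F'$.

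\emph{The ``only if'' direction.} This is the substantial direction. First I would normalize: using Proposition~5 and Lemma~2 of~\cite{dp17}, invoked as in the proof of Lemma~\ref{rectangular-representative-lem}, an isotopy through convex surfaces with Legendrian boundary brings $F$ to a surface of the form $\widehat\Pi$ for some rectangular diagram of a surface $\Pi$, carrying an extended Giroux graph contained in the $1$-skeleton of the tiling of $\widehat\Pi$; declaring the $\diagdown$-edges divided identifies $\rho(F,\mathscr G)$, up to Legendrian equivalence together with propagations and shrinkings, with the enhanced divided Legendrian ribbon graph underlying the mirror diagram $M(\Pi)$ (Definition~\ref{ass-mir-diagr-def}, Subsection~\ref{rectangular=mirror-subsec}), and similarly for $F'$. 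After this reduction the statement follows from two ingredients: (a) the ``convex'' part of Theorem~\ref{equivalence-thm} --- $\widehat\Pi$ and $\widehat\Pi'$ are equivalent as $\xi_+$-convex surfaces (after the standard passage between the corner and the Legendrian-boundary models) if and only if $\Pi'$ is obtained from $\Pi$ by a finite sequence of type~I and neutral basic moves; and (b) a move-by-move dictionary translating each type~I basic move and each neutral basic move of rectangular diagrams of surfaces into a composition of Legendrian equivalences, propagations and shrinkings of the associated enhanced divided Legendrian ribbon graph. Ingredient (b) is assembled from Lemmas~\ref{wrinkle-creation-isotopy}, \ref{half-wrinkle-creation-isotopy}, \ref{stab-lem}, \ref{exchange-move-equivalence-lem}, and~\ref{flype-equivalence-lem}, which already describe how each basic move modifies a canonic dividing configuration, read through the correspondence of Subsection~\ref{rectangular=mirror-subsec}.

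\emph{Main obstacle.} The genuine difficulty is ingredient (a), whose proof --- the sufficiency of basic moves for transitions between convex-isotopic surfaces --- is itself deferred in this paper. A self-contained argument would bypass Theorem~\ref{equivalence-thm} and instead track the extended Giroux graph directly along a \emph{generic} isotopy $F_t$ through convex surfaces with Legendrian boundary. Such a path crosses only finitely many elementary bifurcations of the characteristic foliation --- births and deaths of pairs of singularities, retrograde saddle connections, and the analogous boundary degenerations that occur as the Legendrian boundary undergoes Legendrian isotopy --- and at each bifurcation one must check that $\rho(F_t,\widetilde{\mathscr G}_t)$ changes only by a Legendrian equivalence together with propagations and shrinkings. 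The key leverage is Giroux's theorem~\cite{Gi} that the isotopy class of the dividing set is preserved throughout, which rules out any ``essential'' topological change of $\delta$ and thus confines the possible changes of the divided graph to the allowed list. Carrying out this bifurcation analysis cleanly --- in particular controlling the behaviour near the boundary, where corners may have to be introduced and removed so that the notion of a $-1$-arc stays meaningful --- is the heaviest part of the argument; everything else is either routine or already available from the preceding sections.
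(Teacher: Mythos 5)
The paper does not prove this theorem: immediately after the statement it reads, ``We omit the proof, which will be published elsewhere. We don't use this result in the sequel.'' So there is no proof in the text to compare your proposal against, and any evaluation has to be on the merits of your outline alone.

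Two remarks on the outline. First, a minor labeling point: in ``$A$ if and only if $B$'' with $A$ = ``the ribbon graphs are stably Legendrian equivalent'' and $B$ = ``the surfaces are convex-isotopic,'' the paragraph you call the ``if'' direction proves $A\Rightarrow B$ (which is the ``only if'' implication), and vice versa. More substantively, in the $A\Rightarrow B$ direction there is a gap you do not flag. Your construction produces an isotopy through convex surfaces of the \emph{carried} surfaces --- the thin tubular neighborhoods of the graphs $\Gamma_{\rho_t}$ --- and you correctly observe that propagations and shrinkings do not change the patched surface. But the theorem is about $F$ and $F'$, which are obtained by patching the inessential holes of those carried surfaces, and you never argue that the patching can be done as a continuous family while staying in the class of Giroux's convex surfaces. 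Connectedness of the graph gives topological uniqueness of the patching discs (the complement is irreducible), but threading a $1$-parameter family of patching discs through convex surfaces requires an argument --- compare the handling of bypass attachments in the proof of Lemma~\ref{flype-equivalence-lem}, which uses \cite{gi1,dp17}. As it stands this direction does not close.

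For the $B\Rightarrow A$ direction, you are candid that the heavy lifting is exactly the ``hard half'' of Theorem~\ref{equivalence-thm}, which this paper also defers, and that the self-contained alternative is a full bifurcation analysis of the extended Giroux graph along a generic convex isotopy (births and deaths of singularities, retrograde saddle connections, boundary degenerations), with Giroux's invariance of the dividing set \cite{Gi} constraining the allowed changes. That is a reasonable plan and is probably close to what an eventual proof would do, but nothing in your write-up (or in this paper) carries it out, so the implication remains conditional. In short: the proposal is a sensible road map, but it is not yet a proof --- one direction leans on material the paper itself omits, and the other has an unaddressed patching step.
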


We omit the proof, which will be published elsewhere.

With every mirror diagram~$M$ we associate
a divided Legendrian ribbon graph, which we denote by~$\rho^+(M)$,
by letting the underlying spatial ribbon graph be~$\widehat M$,
and letting~$\delta_+$ be a representative of~$\Delta(\wideparen M)$, where~$\delta_+$
is the first entry of a canonic dividing configuration of~$\wideparen M$ (see Definition~\ref{canonic-dividing-for-mirror-diagrams-def}).

One can, of course, define~$\rho^-(M)$ similarly using the symmetry between~$\xi_+$ and~$\xi_-$.

\begin{theo}\label{type-i-moves-meaning-th}
\emph{(i)}
Any divided Legendrian ribbon graph is stably Legendrian equivalent to~$\rho^+(M)$
for some mirror diagram~$M$.

\emph{(ii)}
Let~$M$ and~$M'$ be mirror diagrams. The divided Legendrian ribbon graphs~$\rho^+(M)$
and~$\rho^+(M')$ are stably Legendrian equivalent if and only if~$M$ and~$M'$ are related by
a finite sequence of type~I elementary moves.
\end{theo}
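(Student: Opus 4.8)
\textbf{Proof proposal for Theorem~\ref{type-i-moves-meaning-th}.}

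The plan is to reduce the statement to the combinatorial machinery already built up in the excerpt, using the dictionary between mirror diagrams and divided Legendrian ribbon graphs and the stable-equivalence theorem for spatial ribbon graphs. For part~(i), I would start from an arbitrary enhanced divided Legendrian ribbon graph~$\rho$. Its underlying graph~$\Gamma_\rho$ is a Legendrian graph with a prescribed set of divided edges. Using Theorem~\ref{equivalence-of-legendrian-graphs-thm} (and its proof, which relies on~\cite{p14}) one presents the Legendrian part up to Legendrian equivalence by~$\widehat G$ for a rectangular diagram of a graph~$G$; the subtlety is that the divided edges must be tracked through this reduction. Here I would invoke the characterization preceding Definition~\ref{divided-leg-rib-graph-def}: a divided Legendrian ribbon graph is determined by a Legendrian graph together with a choice of subset of edges to be the $-1$-arcs. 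By first performing propagations (Definition~\ref{propagation-def}) to move each divided edge onto a separate free meridian--longitude pair, then applying the rectangular-diagram reduction to the resulting Legendrian graph while keeping the divided edges isolated, one arrives at a picture where the divided edges occupy distinguished positions. Assigning the type~`$\diagdown$' to exactly the divided edges and the type~`$\diagup$' to all the others produces an enhanced mirror diagram~$M$ with~$\rho^+(M)$ stably Legendrian equivalent to~$\rho$, because the canonic dividing configuration's $\delta_+$-component intersects precisely the $\diagdown$-mirrors (cf.\ the Proposition at the end of Section~\ref{rectangular=mirror-subsec} and Definition~\ref{canonic-dividing-for-mirror-diagrams-def}).

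For part~(ii), the `if' direction is the routine half: one checks that each type~I elementary move~$M\mapsto M'$ gives rise to a stable Legendrian equivalence~$\rho^+(M)\simeq\rho^+(M')$. Extension/elimination moves correspond to edge blow-up/contraction on a $\diagup$-edge (a non-divided edge), which is exactly what Legendrian equivalence of divided Legendrian ribbon graphs permits; the type~I elementary bypass addition/removal and slide moves correspond to isotopies of the Legendrian graph in the contact-convex setting, possibly combined with a propagation or shrinking coming from the inessential (hence patchable, and in fact negligible after accounting for $\delta_+$) boundary circuits involved. Here I would lean on the fact, recorded just before Definition~\ref{neat-def}, that all type~I elementary moves are $+$-safe, so $\tb_+$ of the essential boundary circuits behaves monotonically, which matches the constraint that divided ($-1$-)edges cannot be born or die.

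The `only if' direction is the heart of the matter, and the main obstacle. Suppose~$\rho^+(M)$ and~$\rho^+(M')$ are stably Legendrian equivalent. The strategy is to upgrade the stable-equivalence statement for plain spatial ribbon graphs, Proposition~\ref{stable-equivalence-of-ribbon-graphs-prop} and Theorem~\ref{relative-stable-equivalence-th}, to the divided/Legendrian setting. One would run the four-step argument of Lemma~\ref{handle-reduction-seq-lem} and the decomposition of Lemma~\ref{handle-reduction-decomp-lem}, but now insisting throughout that (a) the ambient surface~$F=\widehat\Pi$ be tangent to~$\xi_+$ along all non-divided edges and be a $-1$-arc along each divided edge, and (b) every intermediate rectangular diagram of a surface carry only $\diagup$-mirrors except at the divided edges, so that every intermediate mirror diagram is of the shape handled in Section~\ref{rectangular=mirror-subsec}. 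The handle additions/removals occurring in the guiding sequence that touch only non-divided parts decompose into type~I extension/elimination and elementary bypass moves by Lemma~\ref{handle-reduction-decomp-lem}; the ones interacting with a divided edge must be analysed by hand and shown to be realizable by type~I slide and bypass moves together with propagations/shrinkings, using that a divided edge is always a $-1$-arc and hence, near it, the convex surface and its dividing set are standard (the normal-arc/cutting picture of Subsection~\ref{hMM-subsec} is the right bookkeeping device). The delicate point is that propagations and shrinkings of divided Legendrian ribbon graphs must be matched with \emph{neutral} mirror-diagram moves or with the inessential-circuit data, so that no $\diagdown$-mirror is spuriously created; concretely, one must verify that the negligible holes of~$\rho^+(M)$ correspond exactly to the $H_M$-circuits under the canonic dividing configuration, which is where Lemma~\ref{realization-invariance-lemm}-type invariance of the dividing set under the moves enters. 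Once that correspondence is in place, the transition between any two mirror diagrams realizing stably Legendrian equivalent divided Legendrian ribbon graphs is assembled from type~I elementary, slide, and neutral moves, and Lemma~\ref{neutral-move-decomposition-lem} rewrites the neutral moves as type~I elementary ones, completing the proof.
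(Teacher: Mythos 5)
The paper's proof of part~(i) applies the Approximation Principle directly: perturb~$\Gamma_\rho$ generically, approximate its torus projection by a rectangular diagram of a graph with every edge represented by a long enough staircase, and mark one interior vertex of each staircase coming from a divided edge as a $\diagdown$-mirror. Your route via Theorem~\ref{equivalence-of-legendrian-graphs-thm} together with preliminary propagations is more roundabout but plausible. Likewise, your treatment of the ``if'' direction of part~(ii) matches the paper's short verification (type~I extensions and slides preserve the equivalence class, type~I elementary bypass additions realize propagations).

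The genuine gap is in the ``only if'' direction of part~(ii). You propose to re-run the guiding-sequence machinery of Lemma~\ref{handle-reduction-seq-lem} and Lemma~\ref{handle-reduction-decomp-lem}, insisting that all intermediate surfaces remain tangent to~$\xi_+$ along the graph and that all intermediate mirror diagrams carry only~$\diagup$-mirrors away from divided edges. This cannot work as stated. First, Lemma~\ref{handle-reduction-decomp-lem} decomposes a handle addition into elementary moves of \emph{both} types, and you give no mechanism that would force only type~I moves to appear once you constrain to the Legendrian category; indeed that is precisely what one would need to prove. Second, and more fundamentally, the paper itself explains right after Theorem~\ref{equivalence-of-legendrian-graphs-thm} that handle addition is \emph{impossible} within the class of Legendrian ribbon graphs (no overtwisted discs, Bennequin), so a guiding sequence consisting of handle additions and removals cannot in general be pushed into the Legendrian world; there is no intermediate Legendrian object to associate with a handle addition that does not touch a divided edge. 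The escape you sketch (``analyse by hand'' the handles that interact with a divided edge) is vague and, more importantly, does nothing for handles that do \emph{not} interact with divided edges. The paper instead routes Legendrian equivalence through the combinatorial equivalence theorem of~\cite{p14} exactly as in Theorem~\ref{equivalence-of-legendrian-graphs-thm}, tracking divided edges through those moves, and then disposes of the ``stable'' part (propagations/shrinkings) separately via Lemma~\ref{remove-coherent-mirror-lem}, which shows that adjoining or deleting a coherent $\diagdown$-mirror is a composition of type~I moves. You cite Lemma~\ref{realization-invariance-lemm} in this role, but that lemma is about invariance of canonic dividing configurations under type~I moves (useful for the ``if'' direction), not about realizing propagations by type~I moves; the lemma you need is Lemma~\ref{remove-coherent-mirror-lem}.
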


We need a little preparation before the proof.

\begin{defi}\label{neglibigle-circuit-def}
A boundary circuit~$c$ of a(n enhanced) mirror diagram~$M$ is called $+$-negligible (respectively, $-$-negligible)
if~$c$ is inessential and~$\tb_+(c)=-1$ (respectively, $\tb_-(c)=-1$).
\end{defi}

Recall that the equality~$\tb_+(c)=-1$ is equivalent to saying that~$c$ hits $\diagdown$-mirrors of~$M$ exactly twice.
Thus~$c$ is $+$-negligible if and only if the respective boundary circuit~$\widehat c$ of~$\rho^+(M)$ is negligible.

\begin{defi}\label{coherent-mirror-def}
Two distinct $\diagdown$-mirrors~$\mu$ and~$\mu'$ of a(n enhanced) mirror
diagram~$M$ are said to be \emph{immediately coherent}
if there is a $+$-negligible boundary circuit that hits both~$\mu$ and~$\mu'$.

Two mirrors~$\mu$ and~$\mu'$ are said to be \emph{coherent} if there are mirrors~$\mu_0=\mu,\mu_1,\mu_2,\ldots,\mu_k=\mu'$
such that~$\mu_{i-1}$ and~$\mu_i$ are immediately coherent for all~$i=1,\ldots,k$.

Immediate coherence and coherence of~$\diagup$-mirrors is defined similarly, using $-$-negligible boundary circuits.
\end{defi}

Let~$M$ be a mirror diagram, and let~$(\delta_+,\delta_-)$ be a canonic dividing configuration of~$\wideparen M$.
It is easy to see that two $\diagdown$-mirrors (respectively, $\diagup$-mirrors) of~$M$ are coherent if and only if
so are the respective components of~$\delta_+$ (respectively, $\delta_-$).

Recall that Definition~\ref{ass-mir-diagr-def} introduces the mirror diagram~$M(\Pi)$ associated with a rectangular
diagram~$\Pi$. This definition makes perfect sense even if~$\Pi$ is any finite collection of pairwise compatible rectangles,
that is, a collection of rectangles that forms a subset of a rectangular diagram of a surface.
Recall also that, in the proof of Lemma~\ref{handle-reduction-seq-lem},
we introduced the union of two mirror diagrams if they agree on the common mirrors.

\begin{defi}\label{patching-mirror-diagr-def}
Let~$M$ be an enhanced mirror diagram, and let~$c$ be an inessential circuit of~$M$.
Let also~$\Pi$ be a collection of rectangles in~$\mathbb T^2$ 
such that~$\widehat\Pi=\cup_{r\in\Pi}\widehat r$
is a patching disc for~$\widehat c$ in~$\widehat M$. Denote by~$M'$ the enhanced mirror diagram
whose underlying mirror diagram is~$M\cup M(\Pi)$, and the set of essential
boundary circuits is the same as that of~$M$.

Then the transition from~$M$ to~$M'$ is called \emph{a patching of~$c$}. With such an operation we
associate a morphism~$\eta$ defined by~$(\wideparen M,\wideparen M',\mathrm{id}|_{\wideparen M})\in\eta$.
\end{defi}

The mirrors explicitly mentioned in Definitions~\ref{ext-move-def}, \ref{mirr-bypass-def}, and~\ref{mirr-slide-def}
are said to \emph{participate} in the respective moves.

\begin{lemm}\label{patching-lem}
Let~$M$ be an enhanced mirror diagram, and let~$c$ be a $+$-negligible boundary circuit of~$M$.
Then any patching of~$c$ admits a neat decomposition
into type~I extensions and elementary bypass additions. Moreover,
if~$c$ hits two distinct $\diagdown$-mirrors of~$M$, then
the decomposition of any patching of~$c$ into type~I elementary moves
can be chosen so that one of the $\diagdown$-mirrors, at our choice,
does not participate in the moves.
\end{lemm}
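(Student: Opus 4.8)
The plan is to induct on the number of rectangles in the patching disc, i.e.\ on $|\Pi|$ where $\widehat\Pi = \cup_{r\in\Pi}\widehat r$ is the patching disc for $\widehat c$ in $\widehat M$. The induction base is $|\Pi| = 1$: here $\widehat c = \partial r$ for a single rectangle $r$, and the circuit $c$ bounds a rectangle whose interior is free of mirrors; by hypothesis $c$ hits $\diagdown$-mirrors exactly twice, so at most two of the four vertices of $r$ carry a mirror of $M$ (and those that do are $\diagdown$-mirrors; the others must be $\diagup$-mirrors or absent). Adding the missing occupied levels by type~I extension moves, then adding the missing $\diagdown$-mirror by a type~I elementary bypass addition (as in Case~1 of the proof of Lemma~\ref{neutral-move-decomposition-lem} for bridge moves), realizes the patching. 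One checks directly that this decomposition is neat: each extension is $-$-safe-to-bring-forward and each elementary bypass addition is $-$-safe-to-postpone, so Lemma~\ref{safe-lem} applies, and Conditions~(2),~(3) of Definition~\ref{neat-def} hold automatically because the modified region is a disc with fixed boundary.

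For the induction step, $|\Pi| > 1$, I would mimic the inductive mechanism already used in Lemma~\ref{handle-reduction-decomp-lem}: choose a simple arc $\alpha$ in the $1$-skeleton $\cup_{r\in\Pi}\partial\widehat r$ that cuts $\widehat\Pi$ into two nontrivial subdiscs, with at least one endpoint of $\alpha$ on $\Gamma_{\widehat M}$. Patching $c$ then factors as: a patching of $c$ by the subdisc containing that endpoint (producing an intermediate enhanced mirror diagram $M_1$, whose associated surface still carries the remaining subdisc as a patching disc for the correspondingly modified circuit), followed by a patching of the new circuit by the other subdisc. Each of these two patchings involves strictly fewer rectangles, so the induction hypothesis applies to each. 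The only subtlety is the same bookkeeping as in Lemma~\ref{handle-reduction-decomp-lem}: depending on whether $\alpha$ has one or two endpoints on $\Gamma_{\widehat M}$, the first factor may itself require a few type~I extension moves before the sub-patching proper, but these are again $-$-safe and do not disturb neatness. Concatenating neat decompositions along Lemma~\ref{neat-properties-lem}(3) yields a neat decomposition of the whole patching.

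For the ``moreover'' clause, suppose $c$ hits two distinct $\diagdown$-mirrors $\mu_1,\mu_2$ of $M$ and we wish one of them, say $\mu_1$, not to participate in any move. The point is that one can choose the cutting arcs $\alpha$ at every stage of the recursion so that the subdisc on which the genuine bypass-addition step is eventually performed never has $\mu_1$ among the mirrors at the corners of its rectangles that get a new mirror or a new occupied level adjacent to them. Concretely, in the base case: since $c$ already hits $\mu_1$, the rectangle $r$ being patched has $\mu_1$ at one of its four vertices, hence the $\diagdown$-mirror that is \emph{missing} (the one added by the elementary bypass addition) is at the opposite $\diagdown$-corner, which is necessarily a corner other than the one carrying $\mu_1$; thus $\mu_1$ only serves as a passive reflecting mirror for $c$ and is not listed among the mirrors explicitly modified. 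Propagating this choice up through the recursion — always routing $\alpha$ so that $\mu_1$ stays on the ``already mirrored'' side — gives the claim. The main obstacle I anticipate is precisely verifying that such a coherent global choice of the arcs $\alpha$ is always possible, i.e.\ that one never gets forced into a configuration where $\mu_1$ must participate; this is a combinatorial case analysis on the position of $\mu_1$ relative to the cell structure of $\widehat\Pi$, but it is of the same flavor as, and no harder than, the case analysis already carried out in Lemma~\ref{handle-reduction-decomp-lem}, so it should go through without essential difficulty.
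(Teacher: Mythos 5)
Your base case is wrong. When $|\Pi|=1$, the circuit $c=\partial r$ is a boundary circuit of $M$, and since a boundary circuit must reflect off a mirror at every corner it traverses, all four corners of $r$ are mirrors of $M$ — the two $\diagdown$-corners being $\diagdown$-mirrors and the two $\diagup$-corners being $\diagup$-mirrors. The inference ``$c$ hits $\diagdown$-mirrors exactly twice, so at most two of the four vertices of $r$ carry a mirror of $M$'' is a non-sequitur: $+$-negligibility ($\tb_+(c)=-1$) constrains only the number of $\diagdown$-mirrors hit by $c$ and says nothing about the $\diagup$-corners. Thus in the base case $M(\Pi)\subset M$, so $M=M\cup M(\Pi)$ and no moves are needed at all; your purported extension and bypass-addition steps have no missing mirrors to add.

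Your induction step also has genuine gaps. First, the intermediate operation — adding $M(\Pi_1)$ for the subdisc $D_1$ — is \emph{not} a patching of $c$ in the sense of Definition~\ref{patching-mirror-diagr-def}, because $D_1$ is only part of the patching disc; so the induction hypothesis (which is a statement about patchings) does not apply to this first factor as formulated. Second, for the second factor to be covered by the induction hypothesis, the intermediate circuit $c'$ bounding $D_2$ would need to be $+$-negligible, and this is not automatic: a generic cutting arc $\alpha$ may cross several $\diagdown$-edges of the tiling of $\widehat\Pi$, making $\tb_+(c')<-1$. Third, and most importantly, you never invoke the argument that actually forces all elementary moves to be of type~I. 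The paper's proof peels off rectangles one at a time, always choosing a rectangle $r\in\Pi$ whose boundary hits a $\diagdown$-mirror of $M$, and then uses the non-existence of overtwisted discs (Bennequin) to deduce that only one $\diagdown$-vertex of $r$ can already be in $E_M$ and that $\partial\widehat r\cap\Gamma_{\widehat M}$ is connected — these are exactly the facts guaranteeing that adding the mirrors of $r$ decomposes into at most two type~I extensions followed by one type~I elementary bypass addition, and also that $\Pi\setminus\{r\}$ is again a patching disc for a $+$-negligible circuit. The same overtwisted-disc reasoning underlies the observation (needed for the ``moreover'') that no rectangle of $\Pi$ hits both $\diagdown$-mirrors of $c$, so one can always choose $r$ disjoint from a prescribed $\diagdown$-mirror. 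Your proposal does not engage with this argument at any point, and the claim that everything can be done with type~I moves is therefore unsupported; the ``routing $\alpha$'' discussion in your ``moreover'' paragraph, which you yourself flag as the unverified obstacle, is a symptom of the same missing ingredient.
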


\begin{proof}
Let~$\Pi$ be
a collection of rectangles in~$\mathbb T^2$ 
such that~$\widehat\Pi=\cup_{r\in\Pi}\widehat r$ is a patching disc for~$\widehat c$ in~$\widehat M$.
We proceed by induction in the number~$k$ of rectangles in~$\Pi$. If~$k=1$ there is nothing to prove, since~$M=M\cup M(\Pi)$.

Suppose that~$k>1$.
Let~$r$ be a rectangle in~$\Pi$ such that~$\partial r$ hits a~$\diagdown$-mirror of~$M$. Denote the $\diagdown$-vertices of~$r$ by~$\mu_1$ and~$\mu_2$,
and the $\diagup$-vertices of~$r$ by~$\mu_3$ and~$\mu_4$.
Denote also by~$M'$ the diagram~$M\cup M(\{r\})$ having the same essential boundary circuits as~$M$ has.

Only one of~$\mu_1$ and~$\mu_2$ can be a mirror of~$M$.
Indeed, otherwise the diagram~$M'$ would have an inessential boundary circuit hitting only $\diagup$-mirrors,
that is, with zero Thurston--Bennequin number~$\tb_+$, which would mean the existence of an overtwisted disc.
For the same reason the intersection of~$\partial\widehat r$ with~$\Gamma_{\widehat M}$ must be connected.

If~$c$ hits two distinct $\diagdown$-mirrors and we want one of them~$\mu_0$, say,
not to be involved in the moves from the sought-for decomposition, the rectangle~$r$
should be chosen disjoint from~$\mu_0$. This is possible as no rectangle in~$\Pi$
hits both $\diagdown$-mirrors on~$c$.

We may assume without loss of generality that~$\mu_1\in E_M$ and~$\mu_2\notin E_M$.
If~$\mu_3,\mu_4\in E_M$, then~$M\mapsto M'$ is a type~I elementary bypass addition.
Suppose that~$\mu_3\notin E_M$. Let~$x$ be the meridian or longitude passing through~$\mu_2$ and~$\mu_3$.
We must have~$x\notin L_M$, since otherwise the intersection~$\partial\widehat r\cap\Gamma_{\widehat M}$
would not be connected. The addition of~$\mu_3$ and~$x$ to~$M$ is, therefore, a type~I extension move.
Similarly, if~$\mu_4\notin E_M$, then the addition of~$\mu_4$ and the meridian or longitude passing through~$\mu_2$ and~$\mu_4$
is a type~I extension move.

Thus, we can proceed from~$M$ to~$M'$ by zero, one, or two type~I extension moves followed by a type~I
elementary bypass addition.

Now let~$\Pi'=\Pi\setminus\{r\}$. One can see that~$\widehat\Pi'$ is a patching disc for~$\widehat M'$,
and~$M'\cup M(\Pi')=M\cup M(\Pi)$. The induction step follows.

All the morphisms associated with the discussed moves have a representative identical on~$\wideparen M$,
which implies that the obtained decomposition is neat.
\end{proof}

\begin{lemm}\label{remove-coherent-mirror-lem}
Let~$M$ be an enhanced mirror diagram, and let~$c$ be a $+$-negligible boundary circuit that
hits two distinct $\diagdown$-mirrors~$\mu_1$ and~$\mu_2$.
Let also~$c'$ be the boundary circuit that hits the other side of~$\mu_2$.

Denote by~$M'$ the enhanced mirror diagram obtained from~$M$ by removing
the mirror~$\mu_2$ and declaring the new boundary circuit, which is naturally denoted~$c\# c'$,
essential or inessential depending on whether~$c'$ is essential or not. Denote also
by~$\eta$ the morphism from~$M$ to~$M'$ defined by~$(F,F,\mathrm{id}|_F)\in\eta$, where~$F$ is an arbitrary surface carried by~$\widehat M$.
Then the transformation~$M\xmapsto\eta M'$ admits a neat decomposition into type~I elementary moves.
\end{lemm}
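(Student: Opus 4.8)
The key observation is that removing a $\diagdown$-mirror $\mu_2$ that lies on a $+$-negligible circuit $c$ is, from the point of view of divided Legendrian ribbon graphs, exactly a shrinking (see Definition~\ref{propagation-def}): the $-1$-arc $\widehat\mu_2$ carries a connected component $\beta$ of $\delta_+$ one of whose endpoints lies on the negligible hole $\widehat c$, and deleting $\mu_2$ merges $c$ with the adjacent circuit $c'$, which is precisely the operation illustrated in Figure~\ref{propagation-fig}. So the statement says that a shrinking can be realized by type~I elementary moves. The strategy is to realize the shrinking in two stages: first \emph{patch} the hole $c$, thereby fixing the surface so that $\mu_2$ becomes an interior $-1$-arc of a genuine rectangular diagram of a surface, then \emph{remove} $\mu_2$ as an ordinary $\diagdown$-vertex of that rectangular diagram, recreating a (smaller) hole in the process.

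\textbf{First step: patch $c$ away from $\mu_2$.} By Lemma~\ref{patching-lem}, applied with the choice that $\mu_2$ be the distinguished $\diagdown$-mirror $\mu_0$ not participating in the moves, there is a neat decomposition of a patching $M\mapsto M_1$ of $c$ into type~I extensions and elementary bypass additions in which $\mu_2$ is untouched. After this step $M_1$ has one fewer inessential boundary circuit but $\mu_2$ is still a $\diagdown$-mirror of $M_1$ and is now surrounded (on the side that used to be $c$) by mirrors that fill in a patching disc $\widehat\Pi$.

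\textbf{Second step: remove $\mu_2$ as a rectangular-diagram vertex.} Consider the rectangle $r$ of $\Pi$ incident to $\mu_2$ as one of its two $\diagdown$-vertices; as in the proof of Lemma~\ref{patching-lem} the other $\diagdown$-vertex of $r$ is not a mirror of $M$, so $r$ meets $\Gamma_{\widehat M}$ in a connected arc and $\partial r$ is an inessential boundary circuit of $M_1$. Deleting $\mu_2$ from $M_1$ then falls under a type~I elementary bypass \emph{removal} (Definition~\ref{mirr-bypass-def} with Convention~\ref{symmetries}) provided the other two corners of $r$ are mirrors of $M_1$; if one or both of them must also be removed together with their occupied levels, these are type~I elimination moves, exactly as in the induction step of Lemma~\ref{patching-lem} read backwards. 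This produces a diagram $M'$ in which the merged circuit $c\#c'$ is (re)created with the correct essential/inessential status, and the new hole corresponding to the remainder of the old patching disc is $+$-negligible or patchable as needed. One then checks, using Lemma~\ref{safe-lem} and the fact that all the morphisms involved have a representative identical on $\wideparen M$ (so Condition~(3) of Definition~\ref{neat-def} is automatic and the Thurston--Bennequin monotonicity Condition~(2) holds because type~I extensions and elementary bypass removals are $-$-safe-to-bring-forward while type~I eliminations and elementary bypass additions are $-$-safe-to-postpone), that the whole composite decomposition is neat, and that $\eta$ is indeed the composite morphism.

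\textbf{Main obstacle.} The delicate point is the bookkeeping of \emph{which} inessential circuit becomes which after each move, so that at the end exactly $c\#c'$ appears with the status dictated by $c'$ and no spurious essential circuit is created or destroyed — in other words, verifying Condition~(1) of Definition~\ref{neat-def} and the hypotheses of Definition~\ref{elem-move-enhanced-def} at every stage. This is where one must be careful that the patching disc $\widehat\Pi$ chosen in the first step does not accidentally force an essential circuit to pass through the region being modified; the argument of Lemma~\ref{patching-lem} (ruling out overtwisted discs, i.e.\ inessential circuits with vanishing $\tb_+$) is exactly what guarantees this, and the same no-overtwisted-disc argument must be invoked once more when $\mu_2$ is finally deleted to see that the resulting circuit indeed has the right $\tb_+$. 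The geometric content is straightforward; the work is in making these combinatorial compatibilities explicit.
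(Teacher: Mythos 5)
Your conceptual reading of the lemma is right: deleting $\mu_2$ is a shrinking, realized by first patching $c$ away from $\mu_2$ (using the extra clause of Lemma~\ref{patching-lem}) and then undoing. This is exactly the paper's strategy. However, your description of the second half of the decomposition has a real gap.

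After fully patching $c$, the diagram $M_1 = M''$ contains \emph{all} the mirrors and occupied levels of the patching disc $\Pi$, not just those near $\mu_2$. Removing $\mu_2$ from $M''$ is indeed a single type~I elementary bypass removal, and you correctly note that a couple of eliminations may follow — but that does \emph{not} produce $M'$, which has none of the patching mirrors at all (it is literally $M$ with $\mu_2$ deleted). You must then undo the \emph{entire} patching sequence, and the whole point is that this reversal is still built of type~I moves. The paper makes this explicit: setting $M_i' = M_i \setminus \{\mu_2\}$ (well-defined because $\mu_2$ does not participate in the patching sequence $M_0 \to M_1 \to \cdots \to M_k$), it observes that $M_k \to M_k'$ is an elementary bypass removal, and that $M_i' \to M_{i-1}'$ is a type~I elimination or elementary bypass removal whenever $M_{i-1} \to M_i$ was a type~I extension or elementary bypass addition, respectively. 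The full neat decomposition is then $M_0 \to \cdots \to M_k \to M_k' \to \cdots \to M_0' = M'$. Your phrase ``exactly as in the induction step of Lemma~\ref{patching-lem} read backwards'' gestures at this, but as written it only refers to eliminating the mirrors at the corners of the single rectangle $r$, and your assertion that ``this produces a diagram $M'$'' after those few moves is false in general (it holds only when $\Pi$ is a single rectangle). The sentence about ``the new hole corresponding to the remainder of the old patching disc'' suggests you are aware that something is left over, but you do not carry the unpatching to completion or verify that each reversed move is of type~I. That verification — and the parallel-sequence bookkeeping that makes it work — is the actual content of the proof.
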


\begin{proof}
Let~$M''$ be an enhanced mirror diagram obtained from~$M$
by a patching of~$c$. By Lemma~$\ref{patching-lem}$ there exists a sequence of type~I extensions and elementary
bypass additions
$$M=M_0\mapsto M_1\mapsto\ldots\mapsto M_k=M''$$
not involving the mirror~$\mu_2$.

In each diagram~$M_i$, $i=0,\ldots,k$, the
mirror~$\mu_2$ is hit by~$c'$ on one side and by an inessential boundary circuit~$c_i$ on the
other side. Let~$M_i'$ be obtained from~$M_i$ by removing~$\mu_2$ and declaring the new boundary circuit~$c_i\#c'$
essential or inessential depending on whether~$c'$ is essential or inessential. Then~$M_k\mapsto M_k'$
is a type~I elementary bypass removal, and~$M_i'\mapsto M_{i-1}'$ is either a type~I elimination move
or a type~I elementary bypass removal depending on whether~$M_{i-1}\mapsto M_i$ is an extension move or an
elementary bypass addition.

Thus, the sought-for decomposition is
$$M=M_0\mapsto M_1\mapsto\ldots\mapsto M_k\mapsto M_k'\mapsto M_{k-1}'\mapsto\ldots\mapsto M_1'\mapsto M_0'=M'.$$
The details are left to the reader.
\end{proof}

\begin{proof}[Proof of Theorem~\ref{type-i-moves-meaning-th}]
The part~(i) follows from the Approximation Principle of~\cite{dp17},
and holds without `stably'. Namely, for a given divided Legendrian ribbon graph~$\rho$ we can obtain
a mirror diagram~$M$ with~$\rho^+(M)$ equivalent to~$\rho$, as follows.
Make a generic Legendrian perturbation of~$\Gamma_\rho$ and take the torus projection of the obtained graph.
Then approximate this torus projection by a rectangular diagram of a graph~$G$ as described
in the proof of~\cite[Proposition~8]{dp17}.

We may assume that the torus projection of each edge~$e$ of~$\Gamma_\rho$ (after a generic perturbation)
is approximated in~$G$ by a staircase arc consisting of more than one edge.
If~$e$ is a divided edge, choose a vertex of~$G$ in the respective staircase arc
distinct from the endpoints (the latter may be shared with other staircase arcs) and assign
type~`$\diagdown$' to it. Do it for all divided edges. All the other vertices of~$G$ are assigned
type~`$\diagup$'. We get a mirror diagram~$M$ that represents a divided Legendrian ribbon graph
equivalent to~$\rho$.

Now proceed with proving part~(ii) of the theorem. The fact that type~I elementary moves
of mirror diagrams preserve the stable Legendrian equivalence class
of~$\rho^+(M)$ follows from a direct check: type~I extensions and slides
preserve the equivalence class, whereas a type~I elementary bypass addition translates
into a propagation.

The fact that these moves suffice to
transform~$M$ to~$M'$, provided that~$\rho^+(M)$ and~$\rho^+(M')$ are stably Legendrian
equivalent, is established similarly to Theorem~\ref{equivalence-of-legendrian-graphs-thm}.
Lemma~\ref{remove-coherent-mirror-lem} implies that propagations and shrinkings
of divided Legendrian ribbon graphs of the form~$\rho^+(M)$ also can be realized as compositions type~I
moves.\end{proof}

\subsection{Type~II moves from the point of view of~$\xi_+$-divided Legendrian ribbon graphs}\label{type-ii-moves-for-divided-graph-subsec}
Here we consider what happens to the divided Legendrian ribbon graph~$\rho^+(M)$
when a type~II move is applied to~$M$. More honestly, we look at what happens to
the stable equivalence class of the underlying divided ribbon graph.
The type~II moves that we discuss include extension, elimination, elementary bypass addition,
elementary bypass removal, split, and merge moves.

Let~$M\mapsto M'$ be one of the listed moves, and let~$F\in\overline S_{\rho^+(M)}$ and $F'\in\overline S_{\rho^+(M')}$
be obtained from~$\wideparen M$ and~$\wideparen M'$,
respectively, by patching all negligible holes.
Choose some representatives~$\delta\in\Delta(F)$ and~$\delta'\in\Delta(F')$. We are going
to discuss how the divided surfaces~$(F,\delta)$ and~$(F',\delta')$ are related in each case
depending on the kind of the move~$M\mapsto M'$, which we consider now one by one.
The moves come in three pairs of mutually inverses, so we need to consider
only one move in each pair.

We assume that the surfaces~$F$ and~$F'$ are chosen
to have as much in common as possible. In particular, the common negligible boundary circuits of~$\rho^+(M)$ and~$\rho^+(M')$
are patched in ~$F$ and~$F'$ in the same way.

\smallskip\noindent\emph{Case 1}:~$M\mapsto M'$ is a type~II extension move.
Denote by~$c$ the boundary circuit of~$M$ on which a new mirror is added, and by~$c'$ the
respective boundary circuit of~$M'$.

The hole~$\wideparen c'\subset\partial\wideparen M'$ is never negligible (since either~$c$ and~$c'$ are essential, or~$\tb_+(c)<0$
and hence~$\tb_+(c')<-1$),
but it may happen that~$\wideparen c$ is negligible. In this case,
$\wideparen c$ is patched in~$F$, but~$\wideparen c'$ is not patched in~$F'$. One can see that~$(F',\delta')$
is obtained from~$(F,\delta)$ by removing an open disc that intersects~$\delta$ in an open arc
and then attaching to~$\wideparen c$ a half-disc with a new component of~$\delta'$ in it whose endpoints
lie on~$\wideparen c'$; see Figure~\ref{extension-for-divided-surface}~(a).
\begin{figure}[ht]
\raisebox{90pt}{\hbox to 0pt{\hss(a)\hskip.5cm}}
\includegraphics[scale=0.7]{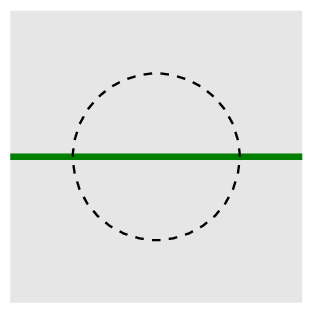}\put(-95,10){$F$}\put(-74,77){$\wideparen c$}\put(-15,55){$\delta$}
\hskip1cm\raisebox{50pt}{$\longrightarrow$}\hskip1cm
\includegraphics[scale=0.7]{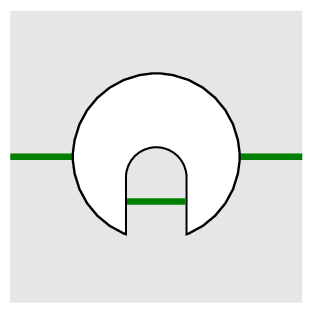}\put(-95,10){$F'$}\put(-65,70){$\wideparen c'$}\put(-15,55){$\delta'$}\put(-55,40){$\delta'$}%
\put(-95,55){$\delta'$}

\vskip.5cm
\raisebox{52pt}{\hbox to 0pt{\hss(b)\hskip.5cm}}
\includegraphics[scale=0.7]{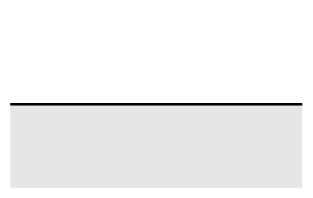}\put(-95,10){$F$}\put(-85,34){$\wideparen c$}
\hskip1cm\raisebox{30pt}{$\longrightarrow$}\hskip1cm
\includegraphics[scale=0.7]{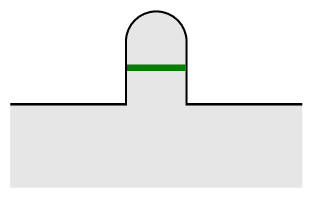}\put(-95,10){$F'$}\put(-85,34){$\wideparen c'$}\put(-55,47){$\delta'$}
\caption{A type~II extension move from the divided surface associated with~$\rho^+(M)$ point of view}\label{extension-for-divided-surface}
\end{figure}
The overall effect, viewed up to isotopy,
of these two operations can also be viewed as a removal of an open disc intersecting~$\delta$ in two open arcs.

If~$\wideparen c$ is not negligible, then~$F'$ is obtained from~$F$ just by attaching a half-disc with a new component
of~$\delta'$ inside; see Figure~\ref{extension-for-divided-surface}~(b).

\smallskip\noindent\emph{Case 2}:~$M\mapsto M'$ is a type~II elementary bypass removal.

By the definition of the move, there are two boundary circuits~$c_1$, $c_2$ of~$M$
that are replaced by a single one in~$M'$,
which we denote by~$c_1\#c_2$.
One of~$\wideparen c_1$ and~$\wideparen c_2$ must be negligible, and we assume that so is~$\wideparen c_1$.
If~$\wideparen c_2$ is negligible, too, then~$(F',\delta')$ is obtained from~$(F,\delta)$ by removing
two open discs, each intersecting~$\delta$ in an open arc, and then removing a $1$-handle `in between'.
The overall effect can be viewed as a removal of
an open disc intersecting~$\delta$ in two open arcs; see Figure~\ref{bypass-removal-for-divided-surface}~(a).
\begin{figure}[ht]
\raisebox{60pt}{\hbox to 0pt{\hss(a)\hskip.5cm}}
\includegraphics[scale=0.7]{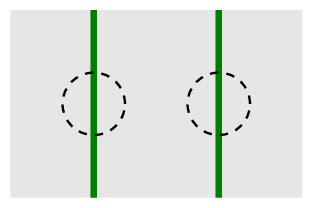}\put(-95,10){$F$}\put(-90,45){$\wideparen c_1$}\put(-48,45){$\wideparen c_2$}%
\put(-72,13){$\delta$}\put(-30,13){$\delta$}
\hskip1cm\raisebox{30pt}{$\longrightarrow$}\hskip1cm
\includegraphics[scale=0.7]{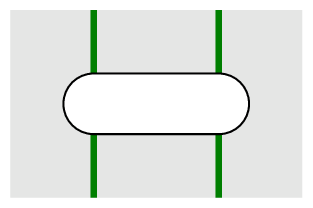}\put(-95,10){$F'$}\put(-65,29){$\wideparen{c_1\#c_2}$}%
\put(-72,13){$\delta'$}\put(-30,13){$\delta'$}\put(-72,50){$\delta'$}\put(-30,50){$\delta'$}

\vskip.8cm
\raisebox{60pt}{\hbox to 0pt{\hss(b)\hskip.5cm}}
\includegraphics[scale=0.7]{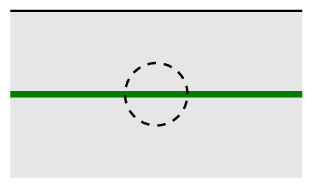}\put(-95,10){$F$}\put(-85,64){$\wideparen c_2$}\put(-15,34){$\delta$}%
\put(-69,42){$\wideparen c_1$}
\hskip1cm\raisebox{30pt}{$\longrightarrow$}\hskip1cm
\includegraphics[scale=0.7]{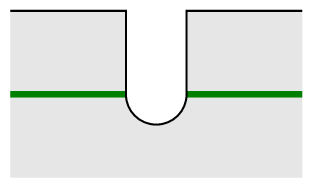}\put(-95,10){$F'$}\put(-95,64){$\wideparen{c_1\#c_2}$}\put(-15,34){$\delta'$}%
\put(-90,34){$\delta'$}
\caption{A type~II elementary bypass removal
from the divided surface associated with~$\rho^+(M)$ point of view}\label{bypass-removal-for-divided-surface}
\end{figure}

Otherwise, the effect of the move~$M\mapsto M'$ on~$(F,\delta)$ is a removal of a half-disc intersecting~$\delta$ in
a single open arc; see Figure~\ref{bypass-removal-for-divided-surface}~(b).

\smallskip\noindent\emph{Case 3}:~$M\mapsto M'$ is a type~II split move.

Let~$c$ be the boundary circuit of~$M$ containing the snip point of the move~$M\mapsto M'$.
As we saw in Subsection~\ref{hMM-subsec} the surface~$\wideparen M'$
is obtained from~$\wideparen M$ by cutting along an arc~$\beta$ that has exactly one endpoint on~$\partial\wideparen M$, and intersects~$\delta$ transversely in a single point, followed by a small deformation.
If~$c$ is not negligible, then the corresponding component of~$\partial\wideparen M$
is not patched in~$F$, and the effect of the move~$M\mapsto M'$ on the surface~$F$
consists in a removal of a half-disc intersecting~$\delta$ in a single open arc.

If~$c$ is negligible, then the corresponding connected component of~$\partial\wideparen M$
is patched, and the transition from~$F$ to~$F'$ consists in removing the respective
patching disc and then removing a half-disc as in the previous case.
The overall effect is a removal of an open disc intersecting~$\delta$ in two open arcs.

We see that the effect of a type~II split move on the divided surface~$(F,\delta)$ is analogous
to that of a type~II elementary bypass removal. In the case of a type~II extension
move the effect is also a particular case of this, provided that the modified boundary circuit has negative
Thurston--Bennequin number~$\tb_+$ before the move.

Indeed, in the case of a negligible boundary circuit the negativity of~$\tb_+$ is automatic, and
the topological effect of an extension is a removal of an open disc intersecting~$\delta$
in two open arcs as noticed above.
If a type~II extension move modifies a non-negligible boundary circuit with~$\tb_+<0$,
then the addition of a half-disc containing a new connected component of~$\delta'$ can
be viewed, up to isotopy, as a removal of a half-disc intersecting~$\delta$ in an open arc.
This is illustrated in Figure~\ref{tb<0extension-fig}.
\begin{figure}[ht]
\includegraphics[scale=0.7]{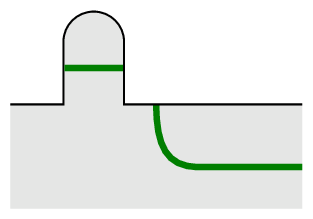}
\hskip1cm\raisebox{30pt}{$\sim$}\hskip1cm
\includegraphics[scale=0.7]{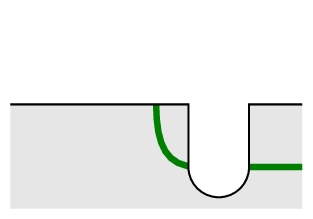}
\caption{Attaching a half-disc with a new component of~$\delta'$
is equivalent to removing a closed half-disc intersecting~$\delta$ in an open arc}\label{tb<0extension-fig}
\end{figure}

These observations give a hint for the idea behind generalized type~II moves introduced in Section~\ref{commutation-sec}.
The generalized moves are designed to have the same effect on the isotopy
class of the respective divided surfaces as split/merge and elementary bypass removal/addition
moves have: either removing/gluing up a disc intersecting the dividing set in two open arcs, or removing/gluing up a half-disc intersecting
the dividing set in a single open arc.

We also see that type~II extension moves modifying a boundary circuit with~$\tb_+=0$
fall out of this scheme. A natural generalization of such extension moves is not `good enough' like other generalized moves,
which is, roughly, the reason for including the flexibility assumptions in Theorems~\ref{warm-up-thm} and~\ref{commutation-2-thm}.

\subsection{Subdiagrams of mirror diagrams}

\begin{defi}
An enhanced mirror diagram~$M_1$ is said to be \emph{a subdiagram} of an enhanced mirror diagram~$M_2$
if the following holds:
$$L_{M_1}\subset L_{M_2},\quad
E_{M_1}\subset E_{M_2},\quad
T_{M_2}\bigr|_{E_{M_1}}=T_{M_1},\quad
\partial M_1\cap\partial M_2\cap H_{M_1}= \partial M_1\cap\partial M_2\cap H_{M_2}.$$
In this case we write~$M_1\subset M_2$.
\end{defi}

In other words, this definition means that~$M_2$ is obtained from~$M_1$ by adding
some (may be none) new occupied levels and mirrors without modifying the enhancement on the
preserved boundary circuits.

\begin{prop}\label{subdiagram-move-prop}
Let~$M_1$ be a subdiagram of an enhanced mirror diagram~$M_2$, and let $M_1\mapsto M_1'$ be
a move from the following list:
\begin{itemize}
\item
a type~I extension, elimination, split, or merge move;
\item
a jump or twist move.
\end{itemize}

Then there exist a mirror diagram~$M_2'$ and
a sequence~$s$ of type~I elementary moves moves producing~$M_2'$ from~$M_2$ such that
$M_1'$ is a subdiagram of~$M_2'$, and the composition of the moves in~$s$ transforms
any essential boundary circuit~$c\in\partial M_1\cap\partial M_2$ in exactly the same way
as the move~$M_1\mapsto M_1'$ does. This means, in particular, that
the boundary circuit of~$M_2'$ corresponding to such~$c$ belongs to~$\partial M_1'$.
\end{prop}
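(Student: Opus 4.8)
The plan is to treat each kind of move $M_1\mapsto M_1'$ separately and, for each one, exhibit a sequence $s$ of type~I elementary moves on $M_2$ that ``mimics'' $M_1\mapsto M_1'$ on the subdiagram while only adding/deleting occupied levels and mirrors that did not belong to $M_1$. The guiding principle is that all the moves in the list are \emph{local}: they are prescribed inside a small region $r$ (a rectangle, or a strip of the form $[\theta_1;\theta_2]\times\mathbb S^1$ or $\mathbb S^1\times[\varphi_1;\varphi_2]$) together with a single occupied level that is being created, destroyed, or split. Since $M_2\supset M_1$, that same region may contain extra mirrors of $M_2$ that are not in $M_1$; the task is to carry those extra mirrors along without disturbing the essential boundary circuits shared with $M_1$.

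First I would handle the \textbf{extension/elimination} case. If $M_1\mapsto M_1'$ is a type~I extension move adding an occupied level $x$ and a $\diagup$-mirror $\mu$ at $x\cap y$ for some $y\in L_{M_1}\subset L_{M_2}$, then simply performing the \emph{same} extension move on $M_2$ (adding $x$ and $\mu$) works: $x\notin L_{M_2}$ automatically because $x\notin L_{M_1}$ and $M_2$ contains no occupied levels outside a prescribed position relative to $M_1$ in the region where the move happens --- here one must choose $x$ close enough to $y$, using the freedom in the homeomorphism $\psi$ built into the definitions, so that $x$ avoids $L_{M_2}\setminus L_{M_1}$ as well. The effect on any shared boundary circuit is exactly that of the extension move on $M_1$. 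Elimination is the inverse; it requires that the mirror and level being removed belong to $M_1$ and are not hit by mirrors of $M_2\setminus M_1$ on that level, which again can be arranged by choosing the relevant levels generically or, if necessary, first applying a jump move to the extra mirrors.

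Next, \textbf{jump} and \textbf{twist} moves. A jump move of $M_1$ moves an occupied level $\ell_{\varphi_1}$ of $M_1$ (carrying some of its mirrors) to a free parallel position $\ell_{\varphi_2}$. In $M_2$ this level $\ell_{\varphi_1}$ may carry extra mirrors, and there may be extra occupied levels of $M_2$ between $\varphi_1$ and $\varphi_2$. If there are no such intervening levels, the same jump move applied to $M_2$ does the job (moving all mirrors on $\ell_{\varphi_1}$, both those of $M_1$ and the extra ones). If there are intervening levels, one decomposes the single jump of $M_1$ into a sequence of ``small'' jumps in $M_2$ past one intervening level at a time; each small jump is still a legitimate jump move of $M_2$, and by Lemma~\ref{neutral-move-decomposition-lem} each decomposes neatly into type~I elementary moves. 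The mirrors of $M_2\setminus M_1$ that happen to lie on intervening levels are not touched. Twist moves are analogous (they only rearrange two mirrors on a single occupied level and leave levels fixed), so the ``same'' twist on $M_2$ works verbatim, then is decomposed into type~I elementary moves via Lemma~\ref{neutral-move-decomposition-lem}.

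The \textbf{split/merge} case is the main obstacle, and I would spend the bulk of the argument there. A type~I split move of $M_1$ replaces $\ell_{\varphi_0}$ by two parallel levels $\ell_{\varphi_1},\ell_{\varphi_2}$, doubles the splitting mirror $\mu$, and distributes the remaining mirrors on $\ell_{\varphi_0}$ between $\ell_{\varphi_1}$ and $\ell_{\varphi_2}$ according to which side of $m_{\theta_2}$ they lie on, with a snip point at $(\theta_1,\varphi_0)$. In $M_2$ the level $\ell_{\varphi_0}$ may carry additional mirrors, and there may be additional occupied levels of $M_2$ near $\varphi_0$. The key observation is that the snip point $(\theta_1,\varphi_0)$ must be \emph{free of mirrors of $M_1$}, but it need \emph{not} be free of mirrors of $M_2$; so in $M_2$ one may have to first move any mirror of $M_2\setminus M_1$ sitting at or near $(\theta_1,\varphi_0)$ out of the way by a jump move (already handled), and first create a free position for $\ell_{\varphi_1}$, $\ell_{\varphi_2}$ by jumping away intervening $M_2$-levels. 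After this preparation the split move of $M_1$ upgrades to a genuine type~I split move of $M_2$ (which splits $\ell_{\varphi_0}$ with all its mirrors), and by Lemma~\ref{split-move-decomposition-lem} this decomposes neatly into type~I elementary moves. One must check that the extra mirrors of $M_2$ are distributed by the $M_2$-split move consistently with being a subdiagram containing $M_1'$ --- this is exactly where ``which side of $m_{\theta_2}$'' matters, and it is automatic because the distribution rule depends only on the $\theta$-coordinate. Merge is the inverse and needs the symmetric preparation (jumping extra levels/mirrors aside so the two levels of $M_1$ being merged are adjacent in $M_2$ as well). Throughout, the effect on each shared essential boundary circuit $c\in\partial M_1\cap\partial M_2$ is, by construction, precisely the effect of $M_1\mapsto M_1'$ on $c$, since $c$ only interacts with mirrors and levels of $M_1$, all of which are transformed identically; in particular $c$ is carried to the corresponding boundary circuit of $M_1'$, which lies in $\partial M_1'\cap\partial M_2'$. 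Finally I would remark that the claim about how $c$ is transformed follows from the local nature of all the moves (none of them changes the combinatorics of $c$ outside the involved levels), so the verification of the ``transforms $c$ in exactly the same way'' clause reduces to inspecting the pictures defining each move.
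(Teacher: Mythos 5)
Your case-by-case plan is the same as the paper's, but the merge case, which is where the real work lies, is seriously under-argued. The obstacle there is mirrors of $M_2$ inside $[\theta_1;\theta_2]\times[\varphi_1;\varphi_2]$ and in particular inside the splitting gap $\{\theta_2\}\times(\varphi_1;\varphi_2)$. These cannot be cleared by ``jumping extra levels/mirrors aside'' because they sit on the vertical level $m_{\theta_2}$, which also carries the two mirrors $\mu'$, $\mu''$ of $M_1$ that are about to be merged, so $m_{\theta_2}$ cannot be jumped. The paper first splits $m_{\theta_2}$ at one of these mirrors to segregate the unwanted mirrors, then invokes Lemma~\ref{rem-obst-lem} twice to sweep all $M_2$-mirrors out of the half-open rectangles $[\theta_1;\theta_2)\times[\varphi_1;\varphi_2)$ and $(\theta_2;\theta_1]\times(\varphi_1;\varphi_2]$ by type~I elementary moves, then jumps remaining horizontal $M_2$-levels aside, and only then performs the merge. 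Without Lemma~\ref{rem-obst-lem} or a similar obstacle-clearing device your merge case does not close.

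The other cases also have directional slips, though they are easier to repair. For extension, the new occupied level of $M_1'$ is fixed once $M_1'$ is fixed (and $M_1'$ must end up a subdiagram of $M_2'$), so you cannot ``choose $x$ close enough to $y$'' to dodge $L_{M_2}\setminus L_{M_1}$; the paper instead jumps the conflicting level of $M_2$ to a parallel free position and then performs the extension. For jump moves, decomposing into ``small jumps past one intervening level at a time'' fails whenever an intervening $M_2$-level carries a mirror inside the interval $(\theta_1;\theta_2)$, because then the small jump is not a jump move in the sense of Definition~\ref{mir-jump-def}; the paper jumps the intervening $M_2$-levels (which are not in $L_{M_1}$) out of the region instead. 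And for split, a mirror of $M_2$ at the prescribed snip point cannot be removed by a jump move, since jumps move whole occupied levels rather than single mirrors; the paper simply perturbs the snip point slightly, which leaves $M_1\mapsto M_1'$ unchanged.
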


We need the following preparatory lemma.

\begin{lemm}\label{rem-obst-lem}
Let~$M$ be an enhanced mirror diagram, and let~$\theta_1,\theta_2,\varphi_1,\varphi_2\in\mathbb S^1$
be such that~$\theta_1\ne\theta_2$, $\varphi_1\ne\varphi_2$. Let also~$\Omega\subset\mathbb T^2$
be either~$(\theta_1;\theta_2]\times(\varphi_1;\varphi_2]$
or~$[\theta_1;\theta_2)\times[\varphi_1;\varphi_2)$. Then there exists a sequence
of type~I elementary moves transforming~$M$ to another enhanced mirror diagram~$M'$
so that~$M'$ does not have any mirrors in~$\Omega$, and all boundary circuits of~$M$ that don't hit any mirror in~$\Omega$
are preserved by all these moves. Moreover, for any~$\varepsilon>0$ chosen in advance such that~$\theta_2+\varepsilon\in(\theta_2;\theta_1)$
and~$\varphi_2+\varepsilon\in(\varphi_2;\varphi_1)$, we can ensure that
\begin{enumerate}
\item
all mirrors in~$E_{M'}\setminus E_M$ are contained in~$(\theta_1;\theta_2+\varepsilon)\times(\varphi_1;\varphi_2+\varepsilon)\setminus\Omega$
if~$\Omega=(\theta_1;\theta_2]\times(\varphi_1;\varphi_2]$, and in~$(\theta_1-\varepsilon;\theta_2)\times(\varphi_1-\varepsilon;\varphi_2)\setminus\Omega$ if~$\Omega=[\theta_1;\theta_2)\times[\varphi_1;\varphi_2)$;
\item
if~$\varphi\in(\varphi_1;\varphi_2)$ and~$M$ has no mirrors on~$\ell_\varphi\setminus\Omega$, then~$M'$ has no mirrors on~$\ell_\varphi$
\end{enumerate}
\end{lemm}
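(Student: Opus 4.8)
The plan is to clear the mirrors from $\Omega$ one at a time, always picking a ``maximal'' mirror with respect to a suitable partial order so that removing it only requires type~I moves, and so that the auxiliary occupied levels and mirrors that get created stay inside the prescribed $\varepsilon$-collar of $\Omega$. I treat the case $\Omega=(\theta_1;\theta_2]\times(\varphi_1;\varphi_2]$; the other case follows by applying the symmetry $r_\diagdown$ or $r_-$ together with $r_|$ (Convention~\ref{symmetries}), which exchanges the two shapes of $\Omega$ while preserving the class of type~I moves. Order the meridians $m_\theta$ with $\theta\in(\theta_1;\theta_2]$ by increasing $\theta$ and, for each such meridian, order its mirrors lying in $\Omega$ by decreasing $\varphi$ (i.e.\ starting from $\varphi_2$ and going down toward $\varphi_1$). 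Process the mirrors in this lexicographic order: first all mirrors on $m_{\theta}$ for the smallest admissible $\theta$, top to bottom, then the next meridian, and so on.

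First I would handle the removal of a single mirror $\mu=(\theta,\varphi)\in\Omega$ under the inductive hypothesis that $\mu$ is currently the only mirror of the diagram in the rectangle $(\theta_1;\theta]\times[\varphi;\varphi_2]$. If $\mu$ is a $\diagdown$-mirror I would invoke Lemma~\ref{remove-coherent-mirror-lem}: the boundary circuit $c$ hitting the ``upper-left'' side of $\mu$ is inessential (because $\mu$ is alone in that rectangle and no other mirror of the current diagram sits there, so $c$ traces out the boundary of a mirror-free rectangle touching $m_{\theta_1}$ and $\ell_{\varphi_2}$, possibly after we first ensure $\tb_+(c)=-1$ by a bridge addition if $c$ were mirror-free); applying the lemma removes $\mu$ by a neat sequence of type~I elementary moves, at the cost of some type~I extensions and elementary bypass additions whose new occupied levels can be squeezed into the strips $(\theta;\theta+\varepsilon)$ and $(\varphi;\varphi+\varepsilon)$ because those moves only act near the patching disc of $c$, which we are free to choose thin. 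If $\mu$ is a $\diagup$-mirror I would instead apply the dual of Lemma~\ref{remove-coherent-mirror-lem} (the $\xi_-\!\leftrightarrow\!\xi_+$ version is \emph{not} available since we must stay within type~I moves), so here the right move is: first add, by type~I extension and elementary bypass addition moves, a ``half-wrinkle-like'' configuration turning the unwanted $\diagup$-mirror into a mirror that can be slid off and eliminated; concretely, one performs the inverse of a type~I split followed by a type~I elimination, exactly as in the elementary pieces appearing in Figure~\ref{split-decomp} and Lemma~\ref{split-into-bypas-decomp-lem}, read in the direction that destroys the mirror. In both cases the key point to check is that all newly introduced mirrors land in $(\theta_1;\theta+\varepsilon)\times(\varphi_1;\varphi+\varepsilon)\setminus\Omega$, which follows from choosing the auxiliary meridian at $\theta+\varepsilon'$ and longitude at $\varphi+\varepsilon'$ for a sufficiently small $\varepsilon'<\varepsilon$; since we process mirrors in the stated order, ``$\theta+\varepsilon'$'' never collides with a meridian that still carries an unprocessed mirror.

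Next I would run the induction over all mirrors of $\Omega$ in the chosen order. After processing a mirror $\mu$ on $m_\theta$, the auxiliary occupied level $m_{\theta+\varepsilon'}$ it created lies strictly to the right of $m_\theta$ but to the left of $m_{\theta_2+\varepsilon}$ (shrink $\varepsilon'$ as needed so that finitely many such auxiliary levels accumulate only near, but not at, the right wall), so the inductive hypothesis ``$\mu'$ is the unique mirror in $(\theta_1;\theta']\times[\varphi';\varphi_2]$'' is restored for the next mirror $\mu'$ in line. Claim~(2) of the statement is then automatic: a longitude $\ell_\varphi$ with $\varphi\in(\varphi_1;\varphi_2)$ that starts mirror-free outside $\Omega$ never receives a new mirror, because every auxiliary mirror we create sits either on an auxiliary longitude $\ell_{\varphi+\varepsilon'}$ (which by construction has $\varphi+\varepsilon'>\varphi_2$ when $\varphi$ is the top mirror processed, and otherwise lies on an occupied level we are about to vacate) or on $m_{\theta+\varepsilon'}$ inside the collar strip $\varphi<\varphi_2+\varepsilon$; none of these coincide with $\ell_\varphi$. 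Finally, every boundary circuit of the original $M$ that hits no mirror in $\Omega$ shares, with every intermediate diagram, a nontrivial arc of an occupied level untouched by any of the moves (all our moves are localized near $\Omega$ and its $\varepsilon$-collar), so it is preserved throughout.

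The main obstacle I expect is the $\diagup$-mirror case: unlike $\diagdown$-mirrors, a $\diagup$-mirror on a $+$-negligible circuit is not directly removable by Lemma~\ref{remove-coherent-mirror-lem}, and one must instead build the removal out of a type~I extension, a wrinkle creation, and the pieces of Figures~\ref{split-decomp}–\ref{bypass-decomposition-fig} run in reverse, verifying at each step that no $\diagdown$-mirror is spuriously created and that the essential boundary circuits outside $\Omega$ are left alone. A secondary bookkeeping difficulty is the $\varepsilon$-collar control: one must fix, once and for all at the outset, a strictly decreasing sequence $\varepsilon>\varepsilon_1>\varepsilon_2>\cdots$ of ``insertion offsets'' indexed by the (finitely many) mirrors of $\Omega$, so that all auxiliary levels created during the whole process are pairwise distinct and confined to $(\theta_1;\theta_2+\varepsilon)\times(\varphi_1;\varphi_2+\varepsilon)\setminus\Omega$; this is routine but needs to be set up before the induction rather than discovered during it.
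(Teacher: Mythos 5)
Your plan would delete the mirrors in $\Omega$ one by one; the lemma only asks to \emph{move} them out of $\Omega$, and the paper does exactly that. This is not a cosmetic difference: deletion via Lemma~\ref{remove-coherent-mirror-lem} requires the circuit $c$ hitting the mirror to be $+$-negligible, i.e.\ \emph{inessential} with $\tb_+(c)=-1$, and neither is under your control. If $c$ is essential, the lemma simply does not apply; if $c$ hits more than two $\diagdown$-mirrors it has $\tb_+(c)<-1$, and there is no way to ``ensure $\tb_+(c)=-1$ by a bridge addition'' (bridges lower $\tb_+$, and in any case one cannot change an essential circuit into an inessential one). Worse, even when $c$ is $+$-negligible, the patching disc underlying Lemma~\ref{remove-coherent-mirror-lem} lives in $\widehat\Pi$ and its occupied levels can land anywhere in $\mathbb T^2$; this is incompatible with your $\varepsilon$-collar bookkeeping and with condition~(2). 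The $\diagup$-mirror case is worse still: type~I elimination can remove a $\diagup$-mirror only when it is the sole mirror on one of its levels, and the construction you gesture at via Figure~\ref{split-decomp} necessarily passes through a type~II elementary bypass removal, which you are not allowed to use here.

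The paper's proof takes the opposite tack: it leaves the mirror count alone and relocates whole rows of mirrors. One runs an induction on the number of mirrors in $\Omega$ and, at each step, locates a ``topmost'' occupied longitude $\ell_{\varphi_3}$ with $\varphi_3\in(\varphi_1;\varphi_2]$ carrying mirrors in $\Omega$ and with nothing of $\Omega$ above it, together with a landing longitude $\ell_{\varphi_4}$, $\varphi_4\in(\varphi_2;\varphi_2+\varepsilon)$, not yet occupied. If \emph{all} mirrors on $\ell_{\varphi_3}$ lie in $\Omega$, a single jump move shifts $\ell_{\varphi_3}$ to $\ell_{\varphi_4}$; any circuit touching $\ell_{\varphi_3}$ must hit one of its mirrors (all in $\Omega$), so circuits disjoint from the $\Omega$-mirrors are untouched. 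If $\ell_{\varphi_3}$ also has mirrors outside $\Omega$, one first performs a type~I extension adding a $\diagup$-mirror $\mu$ at $(\theta_3,\varphi_3)$ with $\theta_3\in(\theta_2;\theta_2+\varepsilon)$, then a type~I split of $\ell_{\varphi_3}$ at $\mu$ that leaves the $\Omega$-mirrors on one successor level and the others on $\ell_{\varphi_3}$ itself, and then jumps only the first successor to $\ell_{\varphi_4}$. All new mirrors created in the process live on $m_{\theta_3}$ or $\ell_{\varphi_4}$, both inside the prescribed collar and outside $\Omega$, which gives condition~(1); condition~(2) falls out because the only longitudes the moves touch are $\ell_{\varphi_3}$ (which carried mirrors in $\Omega$) and $\ell_{\varphi_4}$ (which lies above $\varphi_2$). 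The jump and split moves are then replaced by their neat decompositions into type~I elementary moves via Lemmas~\ref{neutral-move-decomposition-lem} and~\ref{split-move-decomposition-lem}. You should rebuild your argument along these lines; the crucial realization is that $\Omega$ is a half-open rectangle precisely so that one can push its mirrors across the closed sides into the adjacent collar rather than annihilate them.
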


\begin{proof}
Due to symmetry it suffices to consider the case~$\Omega=(\theta_1;\theta_2]\times(\varphi_1;\varphi_2]$.
The proof is by induction in the number of mirrors of~$M$ contained in~$\Omega$. If there are no such
mirrors then there is nothing to prove. Suppose that there are some.

Then there exists a longitude~$\ell_{\varphi_3}$ such that the following holds:
\begin{enumerate}
\item
$\varphi_3\in(\varphi_1;\varphi_2]$;
\item
there are some mirrors of~$M$ in~$(\theta_1;\theta_2]\times\{\varphi_3\}$;
\item
either~$\varphi_3=\varphi_2$ or there is no mirror of~$M$ in~$(\theta_1;\theta_2]\times(\varphi_3;\varphi_2]$.
\end{enumerate}
Pick a~$\varphi_4\in(\varphi_2;\varphi_2+\varepsilon)$ such that~$M$ has no mirrors in~$\mathbb S^1\times(\varphi_2;\varphi_4]$.
\begin{figure}[ht]\begin{tabular}{lc}
\raisebox{65pt}{(a)\hskip.5cm}&
\includegraphics{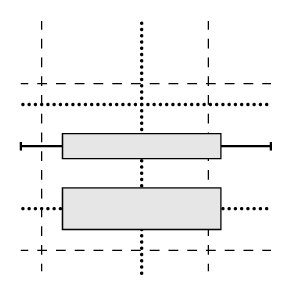}\put(-123,0){$\theta_1$}\put(-43,0){$\theta_2$}%
\put(-143,18){$\varphi_1$}\put(-143,98){$\varphi_2$}%
\put(-76,37){$Y$}\put(-76,67){$X$}\put(-143,67){$\varphi_3$}\put(-82,-5){$M$}
\hskip1cm\raisebox{65pt}{$\longrightarrow$}\hskip1cm
\includegraphics{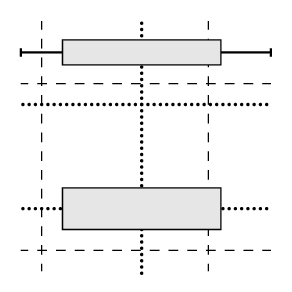}\put(-123,0){$\theta_1$}\put(-43,0){$\theta_2$}%
\put(-142,16.5){$\varphi_1$}\put(-142,96.5){$\varphi_2$}%
\put(-76,37){$Y$}\put(-76,112){$X$}\put(-143,112){$\varphi_4$}\put(-82,-5){$M''$}%
\\[.5cm]
\raisebox{65pt}{(b)}&
\includegraphics{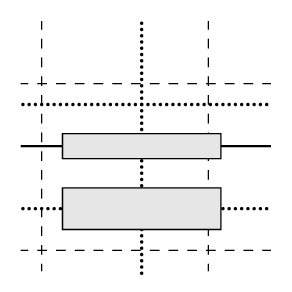}\put(-123,0){$\theta_1$}\put(-43,0){$\theta_2$}%
\put(-143,18){$\varphi_1$}\put(-143,98){$\varphi_2$}%
\put(-76,37){$Y$}\put(-76,67){$X$}\put(-143,67){$\varphi_3$}\put(-82,-5){$M$}
\hskip1cm\raisebox{65pt}{$\longrightarrow$}\hskip1cm
\includegraphics{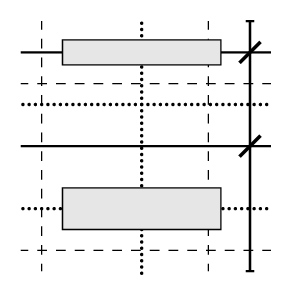}\put(-123,0){$\theta_1$}\put(-43,0){$\theta_2$}%
\put(-142,16.5){$\varphi_1$}\put(-142,96.5){$\varphi_2$}\put(-143,67){$\varphi_3$}%
\put(-76,37){$Y$}\put(-76,112){$X$}\put(-143,112){$\varphi_4$}%
\put(-23,0){$\theta_3$}\put(-82,-5){$M''$}
\end{tabular}
\caption{Removing mirrors from~$\Omega=(\theta_1;\theta_2]\times(\varphi_1;\varphi_2]$. The meridians~$m_{\theta_1}$,
$m_{\theta_2}$ and the longitudes~$\ell_{\varphi_1}$, $\ell_{\varphi_2}$ may or may not be
occupied levels of both diagrams}\label{rem-obst-fig}
\end{figure}

If all mirrors of~$M$ lying on~$\ell_{\varphi_3}$ are in~$\Omega$, we apply a jump move to~$M$
that shifts all these mirrors to~$\ell_{\varphi_4}$; see Figure~\ref{rem-obst-fig}~(a).  Denote by~$M''$ the obtained
enhanced mirror diagram.

If not all mirrors of~$M$ lying on~$\ell_{\varphi_3}$ are in~$\Omega$, we pick a~$\theta_3\in(\theta_2;\theta_2+\varepsilon)$
so that the domain~$(\theta_2;\theta_3]\times\mathbb S^1$ contains no mirrors of~$M$, and then apply to~$M$
a sequence of the following three moves:
\begin{enumerate}
\item
a type~I extension move that adds a new mirror~$\mu$ at~$(\theta_3,\varphi_3)$;
\item
a type~I split move that splits~$\ell_{\varphi_3}$ at~$\mu$ so that all mirrors in~$(\theta_3;\theta_1)\times\{\varphi_3\}$
stay fixed, and mirrors in~$(\theta_1;\theta_2)\times\{\varphi_3\}$ shift upwards slightly;
\item
a jump move that shifts the successors of the mirrors in~$(\theta_1;\theta_2)\times\{\varphi_3\}$ to~$\ell_{\varphi_4}$.
\end{enumerate}
The result is again denoted by~$M''$; see Figure~\ref{rem-obst-fig}~(b).

In both cases, the operations used to produce~$M''$ from~$M$ do not
modify any boundary circuit that is disjoint from~$\Omega$.
The transformation~$M\mapsto M''$ admits a neat decomposition
into type~I elementary moves, due to Lemmas~\ref{neutral-move-decomposition-lem} and~\ref{split-move-decomposition-lem},
and~$M''$ has fewer mirrors in~$\Omega$ than~$M$ has. The induction step follows.
\end{proof}

\begin{proof}[Proof of Proposition~\ref{subdiagram-move-prop}]
We consider different kinds of moves~$M_1\mapsto M_1'$ case by case. In each case, except
the one of elimination move, the sequence~$s$ consists
of two parts. The first part, which we refer to as \emph{the removing of obstacles},
includes only moves that do not alter any boundary circuit in~$\partial M_1\cap
\partial M_2$, and produces a diagram for which the preconditions of the move~$M_1\mapsto M_1'$ hold.
This part is trivial if the preconditions hold already for~$M_2$.

The removing of obstacles part is described in terms of moves that are not elementary, but admit a neat
decomposition into type~I elementary moves by Lemmas~\ref{neutral-move-decomposition-lem} and~\ref{split-move-decomposition-lem}.

The second part of the sequence~$s$
is a neat decomposition into type~I elementary moves of a move that `does the same thing
as~$M_1\mapsto M_1'$ does'.

\medskip\noindent\emph{Case 1}: $M_1\mapsto M_1'$ is a type~I extension move.\\
Denote by~$\mu$ the mirror added
by this move and by~$x$ and~$y$ the two levels passing through~$\mu$ so that
$x$ is an occupied level of both~$M_1$ and~$M_1'$, and $y$ is the occupied level of~$M_1'$ added by the move.

If~$y$ is an occupied level of~$M_2$
we start~$s$ by a jump move
that replaces~$y$ by a parallel occupied level~$y'$ close to~$y$ and shifts the mirrors on~$y$ accordingly, decomposed neatly into a sequence of type~I elementary moves.
Such a decomposition exists by Lemma~\ref{neutral-move-decomposition-lem}.
Since~$y$ is not an occupied level of~$M_1$ no boundary circuit in~$\partial M_1\cap\partial M_2$ can have an edge at~$y$, hence
all the boundary circuits in~$\partial M_1\cap\partial M_2$ remain untouched. This is the removing of obstacles part.

If~$y$ is not an occupied level of~$M_2$ there is no obstacle to remove.

Now append~$s$ by an extension move that adds the occupied level~$y$ and the mirror~$\mu$ to the diagram.
The obtained diagram is taken for~$M_2'$.

\medskip\noindent\emph{Case 2}: $M_1\mapsto M_1'$ is a type~I elimination move.\\
Denote by~$\mu$ and~$y$ the eliminated mirror and occupied level, respectively,
and by~$c$ the boundary circuit of~$M_1$ modified by the move.

If there are some other mirrors of~$M_2$ on~$y$, this means that~$c\notin\partial M_2$.
In this case, we take~$\varnothing$ for~$s$, and~$M_2$ for~$M_2'$.
If~$\mu$ is the only mirror of~$M_2$ at the level~$y$,
then we take for~$s$ a sequence consisting of a single
elimination move~$M_2\mapsto M_2'$ that removes~$\mu$ and~$y$ from
the diagram.

\medskip\noindent\emph{Case 3}: $M_1\mapsto M_1'$ is a type~I split move.\\
We use the notation from Definition~\ref{split-def} and substitute~$M_1$ and~$M_1'$ for~$M$ and~$M'$,
respectively. There are two preconditions that may not hold for~$M_2$:
\begin{enumerate}
\item
the diagram~$M_2$
may have a mirror at the snip point, which is~$(\theta_1,\varphi_0)$,
\item
the diagram~$M_2$ may have occupied levels in~$\mathbb S^1\times[\varphi_1;\varphi_2]$
other than~$\ell_{\varphi_0}$.
\end{enumerate}
An obstacle of the first kind is removed by a small alteration of the snip point. An obstacle of the second kind
is removed by applying a few jump moves that shift all occupied levels contained in~$\mathbb S^1\times[\varphi_1;\varphi_2]$
upward or downward. The positions of occupied levels of~$M_1$ need not be altered for that.

When obstacles are removed we can apply a split move~$M_2\mapsto M_2'$ so that all mirrors of
the subdiagram~$M_1$ have the same successors in~$M_2'$ as in~$M_1'$.

\medskip\noindent\emph{Case 4}: $M_1\mapsto M_1'$ is a type~I merge move.\\
We again use the notation from Definition~\ref{split-def}, but now substitute~$M_1'$ and~$M_1$ for~$M$ and~$M'$,
respectively.
If there are some mirrors of~$M_2$ in the splitting gap of the move~$M_1'\mapsto M_1$,
we apply a split move that splits the occupied level~$m_{\theta_2}$ at~$\mu'$
and has the snip point at~$(\theta_2,\varphi_3)$, where~$\varphi_3\in(\varphi_1;\varphi_2)$
is chosen so that~$M_2$ has no mirrors in~$\{\theta_2\}\times[\varphi_3;\varphi_2)$. This
split move is chosen to keep the mirrors of~$M_1$ in~$\{\theta_2\}\times[\varphi_2;\varphi_1)$ fixed.

Then we apply Lemma~\ref{rem-obst-lem} twice to remove all mirrors of~$M_2$, as well as their successors, from~$[\theta_1;\theta_2)\times[\varphi_1;\varphi_2)$, and then from~$(\theta_2;\theta_1]\times(\varphi_1;\varphi_2]$. The second application does not introduce
any new mirrors in~$[\theta_1;\theta_2)\times[\varphi_1;\varphi_2)$, but some empty horizontal occupied levels
of the obtained diagram may remain in~$\mathbb S^1\times(\varphi_1;\varphi_2)$.
These can be shifted outside this domain by jump moves. This creates
preconditions for a merge move as described in Definition~\ref{split-def}, where we now
substitute the currently obtained diagram for~$M'$ and the sought-for diagram~$M_2'$ for~$M$.

\medskip\noindent\emph{Case 5}: $M_1\mapsto M_1'$ is a jump move.\\
Clearly, the obstacles, if any, can be removed by jump moves that alter the positions of
occupied level of~$M_2$ that do not belong to~$L_{M_1}$.

\medskip\noindent\emph{Case 6}: $M_1\mapsto M_1'$ is a twist move.\\
We use the notation from Definition~\ref{twist-def}. The obstacles, if any,
can be removed by one or two double split moves with the splitting mirrors~$\mu_1$
and~$\mu_2$ (see Definition~\ref{double-split-def}).
These moves should be chosen to keep the occupied level~$\ell_{\varphi_0}$ in the diagram, and to clear the intervals~$(\theta_1;\theta_2)\times\{\varphi_0\}$ and~$(\theta_2;\theta_1)\times\{\varphi_0\}$
from the mirrors of~$M_2$.
\end{proof}

\subsection{Invariance of~$\mathscr I$}

\begin{lemm}\label{realization-invariance-lemm}
Let~$M\xmapsto\eta M'$ be a type~I elementary move of enhanced mirror diagrams,
and let~$\Pi$, $\Pi'$ be rectangular diagrams of a surface properly carried by~$M$ and~$M'$, respectively (see
Definitions~\ref{properly-carried-by-graph-def} and~\ref{properly-carried-by-mir-diag-def}).
Let also~$(\delta_+,\delta_-)$ and~$(\delta_+',\delta_-')$ be canonic dividing configurations of~$\widehat\Pi$
and~$\widehat\Pi'$, respectively, and~$\phi$ be a homeomorphism~$\widehat\Pi\rightarrow\widehat\Pi'$
such that~$(\widehat\Pi,\widehat\Pi',\phi)\in\eta$.

Suppose
that all inessential boundary circuits of~$M$ are $+$-negligible.
Then all inessential boundary circuits of~$M'$ are also $+$-negligible, and the abstract dividing sets~$\phi(\delta_+)$
and~$\delta_+'$ are equivalent.
\end{lemm}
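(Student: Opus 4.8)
The plan is to prove the statement by going through the four kinds of type~I elementary moves (extension, elimination, elementary bypass addition, elementary bypass removal) one at a time, in each case tracking explicitly what happens to the boundary circuits and to the arc $\delta_+$ near the site of the move. First I would record two elementary observations that cover the ``$+$-negligible'' bookkeeping. Recall that a boundary circuit $c$ is $+$-negligible iff it is inessential and $\tb_+(c)=-1$, i.e.\ it hits exactly two $\diagdown$-mirrors; and that, for the canonic dividing configuration, each $\diagdown$-mirror $\mu$ of $M$ contributes exactly one arc of $\delta_+$, crossing $\widehat\mu$ at its midpoint. Since $\Pi$ is properly carried by $M$, the inessential circuits of $M$ are exactly the $\partial r$, $r\in\Pi$ (Proposition~\ref{obvious-prop-1} and Definition~\ref{ass-mir-diagr-def}); but the hypothesis of the lemma does not require $M=M(\Pi)$, so I would phrase everything in terms of ``$+$-negligible'' rather than ``boundary of a rectangle''. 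The key point to verify first is that a type~I elementary move, by its very definition (Definition~\ref{elem-move-enhanced-def}) together with Convention~\ref{symmetries}, sends inessential circuits to inessential circuits and changes $\tb_+$ of each affected circuit by a controlled amount; from this I would deduce that if all inessential circuits of $M$ are $+$-negligible then so are those of $M'$.

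Next, for each move type I would exhibit the homeomorphism $\phi$ concretely from the partial homeomorphism $h_M^{M'}$ constructed in Subsection~\ref{hMM-subsec} (Cases 1 and 2 there handle extensions and elementary bypass additions; the other two moves are the inverses). For an \emph{extension move} $M\mapsto M'$ adding a $\diagup$-mirror $\mu$ and a new level: $\mu$ is of type $\diagup$, so it contributes to $\delta_-$, not $\delta_+$; the partial homeomorphism is the identity on $\wideparen M$, the only change on the surface side is the addition of a small strip $\wideparen\mu$ carrying a piece of $\delta_-'$, and $\phi$ can be taken to fix $\delta_+$ pointwise, so $\phi(\delta_+)=\delta_+\subset\delta_+'$ and the two are equal, hence equivalent. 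For a \emph{type~I elementary bypass addition} $M\mapsto M'$, which inserts a $\diagdown$-mirror $\mu$ at the fourth corner of a rectangle $r$ so that $\partial r$ becomes a $+$-negligible inessential circuit (this uses the hypothesis: before the move the circuit through that corner must already be inessential, otherwise the enhancement is not defined), the new arc of $\delta_+'$ crossing $\widehat\mu$ lies inside the disc bounded by $\partial r$, which is patched when we pass from $\widehat M'$ to a carried surface; on the carried surface this new arc of $\delta_+'$ has both endpoints on a patched hole and so is isotopic, rel the rest of $\delta_+'$, to an arc disjoint from everything — in other words the isotopy class of $\delta_+'$ on $\widehat\Pi'$ is the same whether or not this arc is present. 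Since $\phi=h_M^{M'}$ is the identity on $\wideparen M$ and carries $\delta_+$ to the rest of $\delta_+'$, this gives $\phi(\delta_+)\sim\delta_+'$. The elimination and bypass removal cases are obtained by applying the inverse move and invoking what has just been proved, using that $\phi^{-1}$ realizes $\eta^{-1}$.

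A subtlety I would need to address carefully is that $\Pi$ and $\Pi'$ are given \emph{a priori} as arbitrary rectangular diagrams properly carried by $M$ and $M'$; the surfaces $\widehat\Pi$, $\widehat\Pi'$ need not literally be $\wideparen M$, $\wideparen{M'}$, and the canonic dividing configurations $(\delta_\pm,\delta_\pm')$ are only well-defined up to equivalence. Here I would use that any surface properly carried by $\widehat M$ is isotopic rel $\Gamma_{\widehat M}$ to $\wideparen M$ through surfaces of the form carried by $\widehat M$ (cf.\ the discussion of $\widetilde S_\rho$ and Lemma~\ref{rectangular-representative-lem}), that such an isotopy carries a canonic dividing configuration of one to a canonic dividing configuration of the other (by the uniqueness up to equivalence noted after Definition~\ref{canonic-dividing-for-mirror-diagrams-def} and Proposition~\ref{finite-prop}/the Proposition after Definition~\ref{canonic-def}), and that the morphism $\eta$ is, by definition, an isotopy class of identifications — so the particular representative homeomorphism $\phi$ with $(\widehat\Pi,\widehat\Pi',\phi)\in\eta$ is determined up to isotopy, which is all we need since the conclusion is about equivalence classes of abstract dividing sets. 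Thus the general case reduces to the model case $\widehat\Pi=\wideparen M$, $\widehat\Pi'=\wideparen{M'}$, $\phi=h_M^{M'}$ treated above.

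The main obstacle I expect is the elementary bypass addition/removal case, specifically making rigorous the claim that the extra arc of $\delta_+'$ produced at the new $\diagdown$-mirror ``does not matter'' up to equivalence. This requires knowing that the disc $\partial r$ bounds (a patching disc for that inessential circuit) really is there and really is patched when passing to a carried surface, and that the arc of $\delta_+'$ in question has \emph{both} endpoints on that patched boundary circuit — which is where one uses that the circuit is $+$-negligible, i.e.\ hits exactly two $\diagdown$-mirrors, so after patching there is a short arc of $\delta_+'$ cutting a bigon-like region off the patch, removable by an isotopy of the abstract dividing set (a bigon/half-bigon reduction in the sense of Definition~\ref{bigon-def}, which does not change the equivalence class). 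If the circuit failed to be $+$-negligible this arc could be essential for the topology of $\delta_+'$, which is exactly why the hypothesis is needed and why the lemma is stated the way it is. Checking that $+$-negligibility propagates through the move, and that no \emph{other} inessential circuit of $M'$ can fail it, is a short computation with $\tb_+$ using the explicit local pictures in Figure~\ref{mir-elem-moves}, and I would present it as part of the first step.
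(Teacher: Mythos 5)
The paper disposes of this lemma in one sentence by appealing to the divided Legendrian ribbon graph framework of Subsection~\ref{divided-legendrian-subsec}: the pair $(\widehat\Pi,[\delta_+])$ is (a patching of) the divided surface underlying $\rho^+(M)$, a type~I elementary move realizes a stable Legendrian equivalence of $\rho^+(M)$ and $\rho^+(M')$ (Theorem~\ref{type-i-moves-meaning-th}), and the hypothesis that every inessential circuit is $+$-negligible is exactly what makes the divided surface structure propagate uniquely to the fully patched surface. Your case-by-case unpacking is a legitimate alternative to invoking that machinery, and your identification of where the $+$-negligibility hypothesis is used (the patched disc carries a \emph{single} arc of $\delta_+$, so nothing extra can be trapped) is the right insight. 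However, there are two concrete gaps.

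First, you list only four of the five type~I elementary moves and omit type~I \emph{slide} moves (Definition~\ref{mirr-slide-def}). This is not a cosmetic omission: the remark after Definition~\ref{mirr-slide-def} and Figure~\ref{slide} explains that a slide move cannot in general be decomposed into extension/elimination/bypass moves of the \emph{same} type, so you cannot reduce to the four cases you did treat. The slide case is in fact easy --- a type~I slide relocates only $\diagup$-mirrors, the $\diagdown$-strips carrying $\delta_+$ are untouched, and the re-routing of the two affected boundary circuits through $\diagup$-mirrors leaves each circuit's count of $\diagdown$-mirror visits unchanged --- but it needs to be said.

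Second, your argument in the bypass addition case contains a factual error: the new arc of $\delta_+'$ at $\widehat\mu$ does \emph{not} have both endpoints on a patched hole. Its two endpoints lie on the two boundary components of $\wideparen{M'}$ adjacent to the new $\diagdown$-mirror $\mu$, which are $\wideparen{\partial r}$ (the new $+$-negligible circuit) and $\wideparen{c_2}$ (the circuit on the other side of $\mu$, which need not even be inessential). What is true is this: since $\tb_+(\partial r)=-1$, the patching disc for $\partial r$ inside $\widehat\Pi'$ contains exactly one arc of $\delta_+'$, and that arc joins the arc at $\widehat\mu$ to the arc at the other $\diagdown$-mirror $\nu=(\theta_1,\varphi_1)$ of $\partial r$. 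The resulting concatenated arc of $\delta_+'$ differs from the corresponding arc of $\phi(\delta_+)$ only by an isotopy pushing it across the patching disc --- this is precisely a shrinking in the sense of Definition~\ref{propagation-def}, not a bigon reduction that ``removes'' an arc. Relatedly, your closing reduction to ``the model case $\widehat\Pi=\wideparen M$'' is not available as stated, because $\wideparen M$ is not of the form $\widehat\Pi$ and its canonic dividing configuration (Definition~\ref{canonic-dividing-for-mirror-diagrams-def}) lacks the arcs that live inside patching discs; the correct reduction is to observe that, given the $+$-negligibility hypothesis, $\delta_+$ on $\widehat\Pi$ is determined up to equivalence by $M$ alone (one arc per $\diagdown$-mirror, one arc per patched hole, glued uniquely), and then track what happens to this model under the move.
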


\begin{proof}
This statement follows easily from the discussion in Subsection~\ref{divided-legendrian-subsec}.
\end{proof}

\begin{proof}[Proof of Theorem~\ref{invariant-thm}]
Since~$\widehat R$ and~$\widehat R'$ are Legendrian equivalent,
there exists a sequence
of exchange moves, type~I stabilizations, and type~I destabilizations producing~$R'$ from~$R$
(see \cite[Proposition~4.4]{ost2008}):
$$R=R_0\mapsto R_1\mapsto\ldots\mapsto R_k=R'.$$
Such a sequence can be chosen so that all stabilizations in it occur before all destabilizations. This ensures
that all the diagrams~$R_i$, $i=0,\ldots,k$, are $-$-compatible with~$F$.
Indeed, each~$R_i$ is obtained by exchange moves and type~I stabilizations either
from~$R$ or from~$R'$. Exchange moves preserve Thurston--Bennequin numbers of the
connected components of the link, whereas stabilizations may only decrease them.

Thus, to prove the theorem it suffices to assume that~$R\mapsto R'$ is either a single exchange
move, or a single type~I stabilization, or a single type~I
destabilization, and to show, in all three cases, that~$\mathscr I_{F,L,+}(R)\subset\mathscr I_{F,L,+}(R')$.

Let~$\delta\in\mathscr I_{F,L+}(R)$, and let~$(\Pi,\phi)$ be a proper $+$-realization of~$\delta$.
Let also~$M=M(\Pi)$ be the enhanced mirror diagram associated with~$\Pi$ (see Definition~\ref{ass-mir-diagr-def}).
Denote by~$C$ the simple collection of essential boundary circuits of~$M$ representing~$R$,
that is, such that~$R(C)=R$ (see Subsection~\ref{simple-circuits-subsec}).
Denote by~$M_0$ the subdiagram of~$M$ defined by the conditions: $E_{M_0}=R$,
$L_{M_0}=\{x\in L_M:x\text{ contains an edge of }R\}$.
One can see that~$C$ is a simple collection of essential boundary circuits of~$M_0$, too.

We claim that there is a sequence of moves
\begin{equation}\label{annuli-transform-seq-eq}
M_0\mapsto M_1\mapsto\ldots\mapsto M_k
\end{equation}
including only the moves listed in the assumption of Proposition~\ref{subdiagram-move-prop},
such that, for the induced transformations~$C_0=C\mapsto C_1\mapsto\ldots\mapsto C_k$,
we have~$R(C_k)=R'$.

We take this claim for granted for the moment and deduce the assertion~$\delta\in\mathscr I_{F,L,+}(R')$.

Let~$C'=C_k$.
Since~$M_0$ is a subdiagram of~$M$, and~$C\subset\partial M\cap\partial M_0$,
by Proposition~\ref{subdiagram-move-prop} there exists a sequence
$M\xmapsto{\eta_1}M_1'\xmapsto{\eta_2}M_2'\xmapsto{\eta_3}\ldots\xmapsto{\eta_{k'}}M_{k'}'$
of type~I elementary moves that induces a sequence of transformations
$C\mapsto C_1'\mapsto C_2'\mapsto\ldots\mapsto C_{k'}'$ such that~$C_{k'}'=C'$.
Denote~$M_{k'}'$ by~$M'$.

By Theorem~\ref{relative-stable-equivalence-th}
the enhanced spatial ribbon graphs~$\widehat M$ and~$\widehat M'$
are stably equivalent, and, moreover, this equivalence can be realized
by an isotopy inducing the morphism~$\eta=\eta_{k'}\circ\ldots\circ\eta_2\circ\eta_1$
and bringing~$\widehat\Pi$ to a surface~$F'$ carried by~$\widehat M'$.
Since this isotopy induces the morphism~$\eta$,
it also brings~$\widehat R$ to~$\widehat R'$.

Thus, we have a homeomorphism~$\psi:\mathbb S^3\rightarrow\mathbb S^3$
such that~$\psi(\widehat R)=\widehat R'$ and~$\psi(\widehat\Pi)=F'$.
By Lemma~\ref{rectangular-representative-lem} the surface~$F'$ can be chosen
in the form~$\widehat{\Pi'}$, where~$\Pi'$ is a rectangular diagram of a surface
properly carried by~$M'$. It now follows from Lemma~\ref{realization-invariance-lemm}
that~$(\Pi',\psi\circ\phi)$ is a proper $+$-realization of~$\delta$ at~$R'$.

It remains to prove the claim about  the existence of the
sequence~\eqref{annuli-transform-seq-eq}. There are several cases and subcases to consider.

\medskip\noindent\emph{Case 1:} $R\mapsto R'$ is an exchange move. The sought-for sequence
can obviously be made of jump moves (at most three are needed).

\medskip\noindent\emph{Case 2:}  $R\mapsto R'$ is a type~I stabilization move. Due to symmetry, we may assume that a vertex~$(\theta_0,\varphi_0)$ is replaced by the vertices~$(\theta_0+\varepsilon,\varphi_0)$,
$(\theta_0,\varphi_0+\varepsilon)$, and~$(\theta_0+\varepsilon,\varphi_0+\varepsilon)$, with a small~$\varepsilon$.
First, we perform an extension move that creates a mirror at~$(\theta_0+\varepsilon,\varphi_0)$,
and then a split move that splits the occupied level~$\ell_{\varphi_0}$ at~$(\theta_0+\varepsilon,\varphi_0)$
and moves the mirror at~$(\theta_0,\varphi_0)$ to~$(\theta_0,\varphi_0+\varepsilon)$; see
Figure~\ref{stabilization-via-mirdiagr-fig}.
\begin{figure}[ht]
\includegraphics{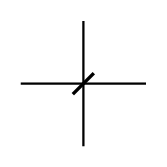}
\quad\raisebox{38pt}{$\longrightarrow$}\quad
\includegraphics{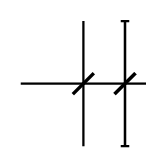}
\quad\raisebox{38pt}{$\longrightarrow$}\quad
\includegraphics{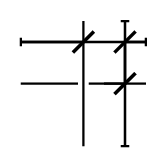}

\includegraphics{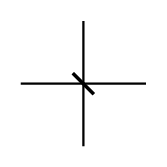}
\quad\raisebox{38pt}{$\longrightarrow$}\quad
\includegraphics{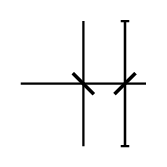}
\quad\raisebox{38pt}{$\longrightarrow$}\quad
\includegraphics{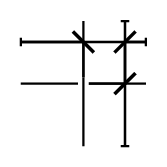}

\caption{Doing a type~I stabilization of a boundary circuit by means of
type~I moves of a mirror diagram}\label{stabilization-via-mirdiagr-fig}
\end{figure}
There are two boundary circuits modified by these moves, and both undergo a stabilization as the result
of these operations.

\medskip\noindent\emph{Case 3:}  $R\mapsto R'$ is a type~I destabilization move. Due to symmetry, we may assume that, for some~$\theta_0,\varphi_0$, and a small~$\varepsilon$,
the vertices~$(\theta_0+\varepsilon,\varphi_0)$,
$(\theta_0,\varphi_0+\varepsilon)$, and~$(\theta_0+\varepsilon,\varphi_0+\varepsilon)$
are replaced by a vertex~$(\theta_0,\varphi_0)$.
Let~$c\in C$ be the boundary circuit of~$M$ on which the destabilization occurs.
Since the destabilization makes the Thurston--Bennequin number of the knot~$\widehat c$
larger, and after the destabilization the diagram~$R'$ is still $-$-compatible with~$F$,
the relative Thurston--Bennequin number~$\tb_-(\widehat c,\widehat\Pi)$,
which is the same thing as~$\tb_-(c)$, is strictly negative. Therefore~$c$ hits $\diagup$-mirrors at least twice.

By applying twist moves to~$M_0$ we can permute $\diagup$- and $\diagdown$-mirrors arbitrarily
along a connected component of~$R$. In particular, we can achieve that the mirrors at~$(\theta_0+\varepsilon,\varphi_0)$
and~$(\theta_0+\varepsilon,\varphi_0+\varepsilon)$ become $\diagup$-mirrors. After that we can
apply the moves from the decompositions in Case~2, in the reverse order.
\end{proof}

\section{Commutation theorems}\label{commutation-sec}
\subsection{Formulation and the strategy of the proof}\label{strategy-subsec}
In this section we formulate and prove the key technical result of this paper.
We start from a warm-up version of it. Recall (see Definition~\ref{ass-spatial-rib-graph-def}) that the connectedness
of a mirror diagram is defined in terms of the corresponding spatial ribbon graph.

\begin{theo}\label{warm-up-thm}
Let~$M$ and~$M'$ be mirror diagrams
such that each connected component of~$M$ contains at least one $\diagdown$-mirror,
and each connected component of~$M'$ contains at least one $\diagup$-mirror.
Assume that~$M$ and~$M'$ are related by
a sequence of elementary moves
\begin{equation}\label{both-types-move-seq-eq}
M=M_0\xmapsto{\eta_1}M_1\xmapsto{\eta_2}\ldots
\xmapsto{\eta_n}M_n=M'.
\end{equation}
Then there exists an enhanced mirror diagram~$M''$ related to~$M$ by a sequence
\begin{equation}\label{type-i-move-seq-eq}
M=M_0'\xmapsto{\eta_1'}M_1'\xmapsto{\eta_2'}\ldots
\xmapsto{\eta_k'}M_k'=M''
\end{equation}
of type~I elementary moves,
and to~$M'$ by a sequence
\begin{equation}\label{type-ii-move-seq-eq}
M''=M_0''\xmapsto{\eta_1''} M_1''\xmapsto{\eta_2''}\ldots
\xmapsto{\eta_l''} M_l''=M'
\end{equation}
of type~II elementary moves such that
$$\eta_n\circ\ldots\circ\eta_2\circ\eta_1=\eta_l''\circ\ldots\circ\eta_2''\circ\eta_1''\circ
\eta_k'\circ\ldots\circ\eta_2'\circ\eta_1'.$$
\end{theo}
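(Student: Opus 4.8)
The goal is to push all type~I moves to the left and all type~II moves to the right in the sequence~\eqref{both-types-move-seq-eq}, i.e.\ to establish a commutation of the form ``type~II then type~I equals type~I then type~II''. The natural strategy is an induction on the number of ``inversions'' in the sequence, where an inversion is a pair of indices $i<j$ such that $M_{i-1}\xmapsto{\eta_i}M_i$ is of type~II and $M_{j-1}\xmapsto{\eta_j}M_j$ is of type~I. (Recall that slide moves and the neutral moves of Section~\ref{neat-decomp-sec} are of both types, so they cause no obstruction and may be assigned either type as convenient.) If there are no inversions the sequence already has the desired shape, possibly after grouping: the initial segment is of type~I and the terminal segment of type~II. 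To reduce the number of inversions it suffices to treat a single adjacent pair $M_{i-1}\xmapsto{\eta_i}M_i\xmapsto{\eta_{i+1}}M_{i+1}$ in which $\eta_i$ is type~II and $\eta_{i+1}$ is type~I, and to replace it by a sequence in which all type~I moves precede all type~II moves and the composite morphism is unchanged. Concatenating such local replacements drives the inversion count down to zero.

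\textbf{The local commutation step.} So the heart of the matter is: given $M_{i-1}\xmapsto{\eta_i}M_i\xmapsto{\eta_{i+1}}M_{i+1}$ with $\eta_i$ a type~II elementary move and $\eta_{i+1}$ a type~I elementary move, produce $M_{i-1}\xmapsto{}\widetilde M\xmapsto{}M_{i+1}$ where the first half consists of type~I elementary moves and the second of type~II elementary moves, with the same composite. The key tool is Proposition~\ref{subdiagram-move-prop}: roughly, a type~I move applied to $M_i$ can be ``carried through'' a type~II move, because the mirrors and occupied levels touched by the two moves are largely independent. More precisely, I would first use the neat decompositions of Section~\ref{neat-decomp-sec} (Lemmas~\ref{neutral-move-decomposition-lem}, \ref{split-move-decomposition-lem}, \ref{slide-bypass-neat-decomp-lem}, \ref{split-into-bypas-decomp-lem}, \ref{extension-neat-decomp-lem}) to reduce to a short list of ``canonical'' type~I and type~II moves (extension/elimination, elementary bypass addition/removal, and split/merge of the appropriate type), discarding slides and neutral moves by absorbing them into either side. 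Then, for each of the finitely many combinatorial configurations of how the support of the type~II move meets the support of the type~I move, I would exhibit an explicit rearrangement. When the supports are essentially disjoint the two moves literally commute and there is nothing to do. When they interfere, one uses Proposition~\ref{subdiagram-move-prop} to first perform the type~I move on the relevant subdiagram of $M_{i-1}$ (after ``removing obstacles'' by jump/twist/split moves, which are neutral and hence harmless), obtaining $\widetilde M$, and then checks that the type~II move becomes applicable to $\widetilde M$ with the correct effect. The bookkeeping of the associated morphisms is handled via the partial homeomorphisms $h_M^{M'}$ of Subsection~\ref{hMM-subsec} together with the notion of neat decomposition, which is precisely designed so that morphism identities hold automatically once one has verified the combinatorial effect on boundary circuits.

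\textbf{Where the connectedness hypotheses enter.} The assumptions that every connected component of $M$ has a $\diagdown$-mirror and every component of $M'$ has a $\diagup$-mirror are what make the terminal configuration legitimate: they guarantee, via $\mathrm{tb}$-considerations (a component with no $\diagdown$-mirror would force a boundary circuit with $\mathrm{tb}_+=0$, i.e.\ an overtwisted disc, cf.\ the argument in the proof of Lemma~\ref{patching-lem}), that one can always find a $\diagdown$-mirror to ``anchor'' the obstacle-removal steps when commuting a type~II move past a type~I move, and dually. Without them the rearrangement can get stuck, as the discussion at the end of Subsection~\ref{type-ii-moves-for-divided-graph-subsec} foreshadows (the problematic case being a type~II extension move on a boundary circuit with $\mathrm{tb}_+=0$); under the stated hypotheses this case is avoided or can be routed around.

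\textbf{The main obstacle.} The genuinely hard part is the case analysis in the local commutation step when the supports of the two moves overlap nontrivially, especially when the type~II move is a split/merge and the type~I move is an elementary bypass addition touching one of the mirrors or occupied levels being split. Here one cannot simply commute; one must reorganize, possibly lengthening the sequence considerably, and verify both that the combinatorial outcome is right and that the composite morphism is preserved. I expect this to require a careful enumeration (mediated by the symmetries of Convention~\ref{symmetries} to cut down the number of genuinely distinct configurations) together with repeated appeals to Proposition~\ref{subdiagram-move-prop} and Lemma~\ref{rem-obst-lem}. The termination of the outer induction is not an issue once the local step is in place, since each application strictly decreases the (finite, nonnegative) inversion count; the real work is entirely in the local step.
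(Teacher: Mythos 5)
Your outer induction does not terminate as stated, and this is exactly the pitfall the paper flags in Subsection~\ref{strategy-subsec}. When you replace a single adjacent pair (type~II then type~I) by a block of the form ``type~I moves followed by type~II moves'', that block may contain many moves of each type. If there are other type~II moves earlier in the sequence and other type~I moves later, the total inversion count can strictly \emph{increase}: each of the $a$ new type~I moves creates a fresh inversion with every earlier type~II move, and each of the $b$ new type~II moves creates one with every later type~I move. There is no uniform bound on $a$ and $b$ in terms of the two moves being commuted, so ``strictly decreases the inversion count'' is simply false. You cannot salvage this by choosing a cleverer monovariant on the raw sequence of elementary moves; some structural control on the local replacement step is needed.

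The paper's solution is to enlarge the move set. It introduces \emph{generalized type~II moves} (generalized split/merge moves built from splitting routes, and generalized type~II bypass additions/removals) and proves two facts: a \emph{first commutation property} (Propositions~\ref{1st-comm-prop-of-merge-prop}, \ref{1st-comm-prop-for-bypass-1-prop}, \ref{1st-comm-prop-for-bypass-2-prop}) saying that a type~I elementary move immediately following a generalized type~II move can be pushed to the front while producing \emph{exactly one} generalized type~II move afterwards — so the count of generalized type~II moves is conserved; and a \emph{second commutation property} (Propositions~\ref{2ndcommutation-for-merge-non-special-prop}, \ref{2ndcommutation-for-merge-special-prop}, \ref{2ndcommutation-for-bypass-prop}) saying each generalized type~II move decomposes into elementary moves with all type~II moves before all type~I moves. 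The outer induction is then on the number of generalized type~II moves that still precede some type~I elementary move, and this does decrease because the first commutation property never creates new generalized type~II moves. Your proposal is missing this intermediate layer of moves, and without it — or some substitute providing the same ``no proliferation'' guarantee — the reduction to a local lemma does not close.

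A secondary remark: the flexibility hypotheses do enter roughly where you suggest (the troublesome case being type~II extension on a boundary circuit with $\tb_+=0$), but in the paper they are used specifically in the \emph{second} commutation property for special generalized merges and for generalized bypass removals (via Lemma~\ref{flexibility-lem} and Lemma~\ref{extension-via-flexibility-lem}), not as a blanket device for the obstacle-removal steps in Proposition~\ref{subdiagram-move-prop}.
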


The whole of this section is devoted to the proof of this theorem and Theorem~\ref{commutation-2-thm}, which
is more elaborate. Before formulating the latter and proceeding with the actual proof we
make a few remarks about the ideas, and provide the vague outline.

If the sequence~\eqref{both-types-move-seq-eq} consists of just two moves the first of which is of type~II and the
second one is of type~I, then the respective sequences~\eqref{type-i-move-seq-eq} and~\eqref{type-ii-move-seq-eq}
are found fairly easily even without additional assumptions on the presence of $\diagdown$-mirrors in~$M$
and $\diagup$-mirrors in~$M'$.
It is then tempting to try the following strategy in the general case: whenever
a type~II move in the sequence~\eqref{both-types-move-seq-eq} is followed by a type~I move
replace the subsequence of these two moves by a sequence in which all type~I moves occur before all type~II
moves, and keep repeating this procedure until all type~I moves occur before all type~II moves.

The problem with this approach is to show that the process ever stops, as the number of moves of both
types in the sequence may grow. To overcome this difficulty we introduce a larger family of moves,
which we call generalized type~II moves, and prove two \emph{commutation properties} for them.
\emph{The first commutation property} is a statement similar to Theorem~\ref{warm-up-thm}
in which type~II elementary moves are replaced by generalized type~II moves, and the assertion claims additionally
that the number of generalized type~II moves in the sequence~\eqref{type-ii-move-seq-eq} is
the same as that in the sequence~\eqref{both-types-move-seq-eq}. To establish the first commutation
property in the general case it suffices to do so when a single generalized type~II move is
followed by a single elementary type~I move.

\emph{The second commutation property} of generalized type~II moves is a statement that these moves admit
a decomposition into a sequence of elementary moves such that all type~I moves in it occur before all type~II moves.

So, the strategy to prove Theorem~\ref{warm-up-thm} is as follows. First, we modify the initial sequence of moves
so that it consists of elementary type~I moves and generalized type~II moves for which we establish the
two commutation properties mentioned above. Then we proceed by induction in the number of
generalized type~II moves preceding some elementary type~I moves in the sequence.
To make the induction step we take the last generalized type~II move that is immediately followed by a type~I
elementary move, use the first commutation property
to `shift' it to the end of the sequence, and then decompose into elementary moves so that all type~I moves
occur before all type~II moves in this decomposition.

This method works perfectly for Theorem~\ref{warm-up-thm}, but in order to have applications to Legendrian
knots we need a relative version of Theorem~\ref{warm-up-thm}
in which a collection~$C$ of essential boundary circuits is taken special care. It would be
ideal for the relative version of Theorem~\ref{warm-up-thm}
to assume that~$C$ is preserved by all the moves in~\eqref{both-types-move-seq-eq}
and then to assert the same about the sequences~\eqref{type-i-move-seq-eq} and~\eqref{type-ii-move-seq-eq}.
However, the required relative versions of the commutation properties of generalized type~II moves
do not always hold, so, in order to make the general scheme outlined above work, we weaken
the relative formulation by letting the sequences of moves modify~$C$ in a `delicate' way
(namely, so that~$C$ undergoes only exchange moves).
To overcome the difficulties we are also forced to include jump moves into the list of moves occurring in the discussed sequences. According to
Lemma~\ref{neutral-move-decomposition-lem} jump moves can be decomposed into elementary moves of either type, but
such a decomposition may no longer be delicate with~$C$, so, in the relative case, some jump moves have to be left as is,
not decomposed into elementary moves.

In order to formulate the relative version of the commutation theorem we need a couple more definitions.

\begin{defi}\label{c-delicate-def}
Let~$M$ and~$M'$ be enhanced mirror diagrams, and let $C\subset\partial_{\mathrm e}M$ be a
collection of essential
boundary circuits of~$M$. Let also
\begin{equation}\label{elem-move-seq-eq}
M=M_0\xmapsto{\eta_1}M_1\xmapsto{\eta_2}\ldots
\xmapsto{\eta_n}M_n=M'
\end{equation}
be a sequence of moves including elementary moves and jump moves,
and
$$C=C_0\mapsto C_1\mapsto\ldots\mapsto C_n$$
be the induced by~\eqref{elem-move-seq-eq} sequence of transformations of the boundary circuits that starts from~$C$.
The sequence of moves~\eqref{elem-move-seq-eq} is called \emph{$C$-delicate} if~$C_{i-1}=C_i$
whenever~$M_{i-1}\mapsto M_i$ is an elementary move.
A $C$-delicate sequence of moves is said to be \emph{of type~I} (respectively, \emph{of type~II})
if all the elementary moves in it are of type~I (respectively, of type~II).
\end{defi}

In other words, a $C$-delicate sequence consists of elementary moves preserving the chosen
family of essential boundary
circuits, and jump moves that are allowed to modify this family.

\begin{defi}\label{flexibility-def}
Let~$M$ be an enhanced mirror diagram, and let~$C$ be a collection of essential boundary circuits of~$M$.
The diagram~$M$ is said to be \emph{$+$-flexible relative to~$C$} (respectively, \emph{$-$-flexible relative to~$C$})
if, for any boundary circuit~$c\in\partial M$ that does not belong to~$C$, there exists a sequence
$$c_1,c_2,\ldots,c_m=c$$
of boundary circuits of~$M$ not belonging to~$C$
such that~$\tb_+(c_1)<0$ (respectively, $\tb_-(c_1)<0$), and for all~$i=1,\ldots,m$,
the boundary circuits~$c_{i-1}$ and~$c_i$ are adjacent to one another in the
sense that some mirror of~$M$ is hit by both of them.

$M$ is said to be \emph{flexible relative to~$C$} if it is both $+$-flexible and $-$-flexible relative to~$C$.
$M$ is called \emph{flexible} (respectively, \emph{$+$-flexible} or \emph{$-$-flexible}) if it is flexible relative to~$\varnothing$
(respectively, $+$-flexible or $-$-flexible relative to~$\varnothing$).
\end{defi}

The reader may be puzzled for the moment why `flexibility' refers to this strange property. The name will
be justified in Subsection~\ref{flexibility-subsec}.

The following statement is trivial.

\begin{prop}
An enhanced mirror diagram~$M$ is $+$-flexible (respectively, $-$-flexible) if and only if
every connected component of~$M$ contains a $\diagdown$-mirror (respectively, a $\diagup$-mirror).
\end{prop}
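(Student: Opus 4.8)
The plan is to unwind the definition of $+$-flexibility when $C=\varnothing$ and match it against the connectivity condition on $\diagdown$-mirrors, the $-$-flexibility case being symmetric (apply the $r_\diagdown$ symmetry of Convention~\ref{symmetries}). Recall from Definition~\ref{flexibility-def} that $M$ is $+$-flexible iff for every boundary circuit $c\in\partial M$ there is a chain $c_1,\ldots,c_m=c$ of boundary circuits with $\tb_+(c_1)<0$ and consecutive circuits adjacent, where adjacency means sharing a mirror. The key combinatorial object is the \emph{adjacency graph} $\mathscr A(M)$ whose vertices are the boundary circuits of $M$ and whose edges join two circuits that hit a common mirror. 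I would first observe that the connected components of $\mathscr A(M)$ are in bijection with the connected components of $M$: indeed, two occupied levels of $M$ are joined by a mirror precisely when the corresponding discs $\wideparen x$ in $\wideparen M$ are joined by a strip $\wideparen\mu$, and the boundary circuits sharing that mirror lie in the same component of $\partial\wideparen M$ restricted to that component of $\wideparen M$; conversely any path in $\widehat G_M$ can be pushed to a path of boundary circuits through the mirrors it crosses. So adjacency-connectivity of boundary circuits is exactly connectivity of $M$.

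With this in hand the statement reduces to: within a fixed connected component $M_0$ of $M$, \emph{every} boundary circuit admits an adjacency-chain to one with $\tb_+<0$ if and only if $M_0$ contains some boundary circuit with $\tb_+<0$, and the latter holds iff $M_0$ contains a $\diagdown$-mirror. The `only if' half of the first equivalence is immediate (a chain can only exist if its target component contains such a circuit; and since $\mathscr A(M_0)$ is connected, the existence of one such circuit gives a chain from every circuit to it). For the `if' half I would use the previous paragraph's bijection. Then the final link is: $\tb_+(c)<0$ for some boundary circuit $c$ of $M_0$ iff $c$ hits a $\diagdown$-mirror iff $M_0$ has a $\diagdown$-mirror. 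The forward direction of ``$M_0$ has a $\diagdown$-mirror $\Rightarrow$ some circuit has $\tb_+<0$'' is the only point needing a word: every mirror $\mu$ of $M$ is hit (on at least one side) by some boundary circuit, since the ray of light along the occupied levels through $\mu$ closes up into a circuit by Definition~\ref{mirror-circuit-def}; that circuit hits the $\diagdown$-mirror $\mu$, hence has $\tb_+\leq -1/2<0$ (and in fact $\tb_+$ is a negative integer, as noted after the definition of the Thurston--Bennequin numbers of a circuit).

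Assembling: given a boundary circuit $c$ in a connected component $M_0$ of $M$ that contains a $\diagdown$-mirror $\mu$, pick a boundary circuit $c_1$ hitting $\mu$, so $\tb_+(c_1)<0$; by connectivity of $\mathscr A(M_0)$ there is an adjacency-chain from $c_1$ to $c$, which is the chain required by Definition~\ref{flexibility-def}. Conversely, if $M_0$ has no $\diagdown$-mirror, then every boundary circuit of $M_0$ hits only $\diagup$-mirrors, so $\tb_+=0$ for all of them, and no valid chain exists for any $c$ in $M_0$; hence $M$ is not $+$-flexible. This proves the $+$-part; the $-$-part follows by the symmetry $r_\diagdown$, which swaps mirror types and the roles of $\tb_+$ and $\tb_-$.

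I do not expect a genuine obstacle here; the statement is called trivial in the text and the content is entirely the bookkeeping of the adjacency-graph/connected-component correspondence. The one place to be slightly careful is the precise translation between ``connected component of $M$'' (defined via $\widehat M$ in Definition~\ref{ass-spatial-rib-graph-def}) and connectivity of the adjacency graph of boundary circuits; I would state that correspondence as a short lemma-free remark rather than belabor it, since it is visible directly from the construction of $\wideparen M$ in Subsection~\ref{mirror-diagram-definition-subsec}: the $0$-handles $\wideparen x$ and $1$-handles $\wideparen\mu$ assemble $\wideparen M$ with the same incidence pattern as occupied-levels-and-mirrors, and two boundary components of $\partial\wideparen M$ lie in the same component of $\wideparen M$ exactly when a sequence of such handles connects them, which is exactly an adjacency-chain of circuits.
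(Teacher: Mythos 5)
Your proof is correct, and the paper itself offers no proof (it simply declares the statement trivial), so there is no alternative route to compare against: the argument you give is the natural one. The only point that genuinely requires an argument is the connectivity of your adjacency graph $\mathscr A(M_0)$ for a connected component $M_0$, and you sketch the right reason via the handle decomposition of $\wideparen M_0$. To make that step watertight you could note that for a fixed occupied level $x$ the boundary circuits through $x$ are cyclically arranged around $\partial\wideparen x$, separated by the lateral boundaries of the strips $\wideparen\mu$, so any two of them are joined by an adjacency chain through consecutive mirrors on $x$; and if $x$ and $y$ share a mirror $\mu$, any circuit hitting $\mu$ lies in both of these families; connectivity of $M_0$ then chains everything together. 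One small edge case worth naming explicitly is an occupied level with no mirror: it is by itself a connected component of $M$, its single boundary circuit hits no mirror at all, so both Thurston--Bennequin numbers vanish and both the flexibility condition and the mirror condition fail, consistently. The passage to the $-$ case by the $r_\diagdown$ symmetry is exactly as you say.
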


Thus, the assumption of Theorem~\ref{warm-up-thm} on the presence of $\diagdown$- and $\diagup$-mirrors
can be rephrased by requesting that~$M$ and~$M'$ be $+$-flexible and $-$-flexible,
respectively.

The following statement demonstrates that flexibility is not
a very restrictive condition, and it holds in the most important case,
which motivated all this machinery.

\begin{prop}\label{simple-essential-boundary-implies-flexibility-prop}
If the essential boundary of an enhanced mirror diagram~$M$
is simple, and no connected component of~$\Gamma_{\widehat M}$ is a tree,
then~$M$ is flexible relative to~$C$ for any~$C\subset\partial_{\mathrm e}M$.
\end{prop}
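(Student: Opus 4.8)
The plan is to verify the defining property of \emph{flexibility relative to~$\partial_{\mathrm e}M$} directly from the hypotheses, which amounts to showing that every boundary circuit $c\notin\partial_{\mathrm e}M$ (that is, every inessential circuit) can be joined, through a chain of pairwise adjacent inessential circuits, to some inessential circuit $c_1$ with $\tb_+(c_1)<0$, and, symmetrically, to one with $\tb_-(c_1)<0$. Since the two cases are interchanged by the symmetry $\diagdown\leftrightarrow\diagup$ (Convention~\ref{symmetries}), it suffices to treat $+$-flexibility. So fix an inessential boundary circuit $c$ and let $\gamma=\widehat c\subset\partial\wideparen M$ be the corresponding boundary component of the surface $\wideparen M$; recall that $c$ being inessential means $\gamma$ bounds a patching disc in $\mathbb S^3$.

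First I would set up the combinatorial bookkeeping: the inessential boundary circuits of $M$ are in bijection with a subcollection of connected components of $\partial\wideparen M$, and two boundary circuits $c,c'$ are \emph{adjacent} exactly when some mirror $\mu$ is hit by both, i.e.\ when the strips $\wideparen c$ and $\wideparen c'$ are glued to a common $1$-handle $\wideparen\mu$; equivalently, in the torus picture $c$ and $c'$ meet the two sides of a single mirror. Consider the union $B$ of all patching discs of all inessential boundary circuits of $M$ together with $\wideparen M$ itself; patching the inessential holes yields a surface $F$ carried by $\widehat M$ whose connected components correspond to the connected components of $\Gamma_{\widehat M}$. The key point is that a connected component of $\Gamma_{\widehat M}$ which is \emph{not} a tree has $b_1\geqslant 1$, hence the corresponding component of $\wideparen M$ has negative Euler characteristic, so it cannot consist entirely of $0$-arcs of the characteristic foliation with respect to $\xi_+$ — some edge must be a $\diagdown$-mirror. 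More precisely, I would use the description of $\tb_+$ from Subsection~\ref{simple-circuits-subsec}: $\tb_+(c')=-\frac{1}{2}(\#\{\diagdown\text{-mirrors hit by }c'\})$, so $\tb_+(c')<0$ iff $c'$ hits at least one $\diagdown$-mirror. Thus I must show that starting from any inessential $c$ one can walk, across shared mirrors and staying among inessential circuits, to a circuit that hits a $\diagdown$-mirror.

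The walk is carried out as follows. Let $\mathcal C$ be the connected component, in the adjacency graph on boundary circuits, containing $c$. If every member of $\mathcal C$ hits a $\diagdown$-mirror we are done (in particular if $c$ itself does). Otherwise, suppose for contradiction that no circuit in $\mathcal C$ hits a $\diagdown$-mirror; I would then argue that the circuits in $\mathcal C$, being all inessential, bound patching discs whose interiors are disjoint from $\Gamma_{\widehat M}$, and the union of these discs with the portion of $\wideparen M$ `enclosed' by $\mathcal C$ produces a disc $D\subset\mathbb S^3$ whose boundary is a single boundary circuit of $M$ hitting only $\diagup$-mirrors — in contactotopological language, a disc tangent to $\xi_-$ along its whole boundary. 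But since the essential boundary $\partial_{\mathrm e}M$ is simple and no component of $\Gamma_{\widehat M}$ is a tree, such a component carries a nontrivial loop, and the existence of this disc would force that loop to be unknotted-and-inessential in a way that contradicts Bennequin's inequality (non-existence of overtwisted discs, as invoked for Legendrian ribbon graphs after Theorem~\ref{equivalence-of-legendrian-graphs-thm}) — equivalently, it would contradict the fact that the only patchable boundary circuits of a ribbon graph lie in tree components. Unwinding this contradiction gives that $\mathcal C$ must contain a circuit hitting a $\diagdown$-mirror, which is the desired $c_1$; the chain from $c$ to $c_1$ inside $\mathcal C$ is the required sequence $c_1,c_2,\ldots,c_m=c$.

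The main obstacle is making the "disc enclosed by $\mathcal C$" argument precise: one has to show that the union of the inessential patching discs and the appropriate sub-collection of $0$-handles and ($\diagup$-)$1$-handles of $\wideparen M$ assembles into an embedded (or at least immersed-with-controlled-singularities) surface with a single Legendrian-for-$\xi_-$ boundary, and to extract from the tree/non-tree dichotomy a genuine contradiction with Bennequin's theorem rather than a mere topological statement. An alternative, possibly cleaner, route that I would fall back on is purely combinatorial: proceed by induction on the number of inessential circuits in $\mathcal C$ not yet known to be connected (in the adjacency graph, staying inessential) to a $\diagdown$-hitting circuit, using that removing a $\diagup$-mirror shared by two inessential circuits of $\mathcal C$ merges them and performs a handle-removal-type simplification on $\wideparen M$ that strictly decreases $b_1$ of no component — and eventually, since some component has $b_1\geqslant 1$, a $\diagdown$-mirror must be forced. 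Either way the bookkeeping, not the idea, is where the work lies; the conclusion "$M$ is flexible relative to $\partial_{\mathrm e}M$" then follows by applying the same argument with $\diagdown$ and $\diagup$ exchanged to obtain $-$-flexibility.
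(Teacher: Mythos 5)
Your proposal has the right ingredients—Bennequin's theorem, patching discs, the non-tree hypothesis—but it is built on the wrong architecture, and the gap you yourself flag (making the ``disc enclosed by $\mathcal C$'' precise) is real and not repairable without essentially abandoning the whole route. The missed observation is that Bennequin's theorem applies directly to the patching disc of the given circuit~$c$ itself: since~$c$ is inessential there is a patching disc~$(\mathbb D^2,\phi)$ for~$\widehat c$, and since the component of~$\Gamma_{\widehat M}$ containing~$\widehat c$ is not a tree, $\phi(\mathbb D^2)$ is not a sphere, so $\phi$ can be perturbed to an embedding whose image is a disc with Legendrian boundary of relative Thurston--Bennequin number~$\tb_+(c)$; Bennequin then forces~$\tb_+(c)<0$, and by symmetry also~$\tb_-(c)<0$. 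Once you know this for \emph{every} inessential circuit, the required chain in Definition~\ref{flexibility-def} has length one ($c_1=c$), and there is nothing to walk to. Your entire ``walk inside the adjacency component~$\mathcal C$'' framework, with its contradiction argument, is therefore solving a harder problem than the definition asks.

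There are also concrete errors in the intermediate steps. The Euler-characteristic claim (``negative $\chi$ forces a $\diagdown$-mirror in the component'') is false: a connected mirror diagram with only $\diagup$-mirrors can represent any Legendrian graph, in particular one with~$b_1\geqslant1$; what is ruled out by Bennequin is that such a non-tree component has \emph{patchable} boundary circuits, which is a different statement and is precisely the one the paper uses. Relatedly, the appeal to ``the only patchable boundary circuits of a [Legendrian] ribbon graph lie in tree components'' does not apply to your~$\mathcal C$, because the absence of $\diagdown$-mirrors on circuits in~$\mathcal C$ does not make the surrounding component a Legendrian ribbon graph. Finally, the ``portion of~$\wideparen M$ enclosed by~$\mathcal C$'' has no intrinsic meaning—adjacency of boundary circuits does not give a planar picture—and there is no reason the union you describe should be a disc with a single boundary circuit. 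The fix is to drop the chain construction entirely and apply Bennequin to each inessential circuit's own patching disc, which is the paper's one-paragraph argument.
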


\begin{proof}
Let~$c$ be  an inessential boundary circuit of~$M$, and let~$(\mathbb D^2,\phi)$
be a patching disc for~$\widehat c$. Since the connected component of~$\Gamma_{\widehat M}$
containing~$\widehat c$ is not a tree, the image~$\phi(\mathbb D^2)$ is not a sphere.
Therefore, $\phi$ can be disturbed slightly to become and embedding so
that the image~$\phi(\mathbb D^2)$ becomes a disc with Legendrian boundary having
relative Thurston--Bennequin number equal to~$\tb_+(c)$.

It follows from Bennequin's theorem on the non-existence of overtwisted discs
that~$\tb_+(c)<0$. 
Similarly, we have~$\tb_-(c)<0$ for any inessential boundary circuit of~$M$.

If the essential boundary is simple,
then any boundary circuit is either inessential or adjacent to an inessential one. The claim follows.
\end{proof}

Here is the relative version of the commutation theorem.

\begin{theo}\label{commutation-2-thm}
Let~$M$ and~$M'$ be enhanced mirror diagrams related by
a $C$-delicate sequence of moves
$$M=M_0\xmapsto{\eta_1}M_1\xmapsto{\eta_2}\ldots
\xmapsto{\eta_n}M_n=M',$$
where~$C$ is a collection of essential boundary circuits of~$M$.
Let~$C'$ be the respective collection of essential boundary circuits of~$M'$.
Assume that~$M$ is $+$-flexible relative to~$C$,
and~$M'$ is $-$-flexible relative to~$C'$.

Then there exists an enhanced mirror diagram~$M''$ related to~$M$ by a type~I $C$-delicate sequence of moves
\begin{equation}\label{del-type-i-move-seq-eq}
M=M_0'\xmapsto{\eta_1'}M_1'\xmapsto{\eta_2'}\ldots
\xmapsto{\eta_k'}M_k'=M''
\end{equation}
and to~$M'$ by a type~II $C''$-delicate sequence of moves
$$M''=M_0''\xmapsto{\eta_1''} M_1''\xmapsto{\eta_2''}\ldots
\xmapsto{\eta_l''} M_l''=M',$$
where~$C''$ is the collection of essential boundary circuits of~$M''$
which~$C$ is transformed to by~\eqref{del-type-i-move-seq-eq},
such that
$$\eta_n\circ\ldots\circ\eta_2\circ\eta_1=\eta_l''\circ\ldots\circ\eta_2''\circ\eta_1''\circ
\eta_k'\circ\ldots\circ\eta_2'\circ\eta_1'.$$
\end{theo}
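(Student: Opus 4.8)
\textbf{Proof proposal for Theorem~\ref{commutation-2-thm}.}
The plan is to follow the strategy sketched in Subsection~\ref{strategy-subsec}, carrying out the whole argument in the enlarged move package that includes the \emph{generalized type~II moves} announced there (to be introduced formally below in this section). First I would record the two \emph{commutation properties} of generalized type~II moves and prove them in their relative form: the first commutation property asserts that a $C$-delicate sequence consisting of a single generalized type~II move followed by a single type~I elementary move can be rewritten, preserving the composition of the associated morphisms, as a $C'$-delicate sequence of type~I elementary moves followed by the \emph{same number} (namely one) of generalized type~II moves, and it is here that the flexibility hypotheses will be used---$+$-flexibility relative to~$C$ for the diagram on which we start applying type~I moves, so that new $\diagdown$-mirrors can be created and transported along inessential boundary circuits (cf.\ Lemmas~\ref{patching-lem}, \ref{remove-coherent-mirror-lem}, and the analysis in Subsections~\ref{divided-legendrian-subsec} and~\ref{type-ii-moves-for-divided-graph-subsec}); the second commutation property asserts that any generalized type~II move admits a $C$-delicate decomposition into elementary moves in which all type~I moves precede all type~II moves, which is essentially a packaging of Lemmas~\ref{slide-bypass-neat-decomp-lem}, \ref{extension-neat-decomp-lem}, \ref{split-into-bypas-decomp-lem}, \ref{split-move-decomposition-lem}, and~\ref{neutral-move-decomposition-lem} together with the subdiagram-propagation result Proposition~\ref{subdiagram-move-prop}.

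Granting these, the proof of Theorem~\ref{commutation-2-thm} proceeds as follows. First I would replace the given $C$-delicate sequence
$M=M_0\xmapsto{\eta_1}\cdots\xmapsto{\eta_n}M_n=M'$
by a $C$-delicate sequence in which each move is either an elementary type~I move, a generalized type~II move, or a jump move, with the same total morphism; this is done by decomposing each type~II elementary move in the original sequence into a generalized type~II move (indeed every type~II elementary move is a degenerate instance of a generalized one) and leaving type~I and jump moves untouched. Next I would define a complexity of such a sequence, for instance the number of pairs $(i,j)$ with $i<j$ such that the $i$-th move is a generalized type~II move, the $j$-th move is a type~I elementary move, and no type~II move lies strictly between them---or, more simply, the total number of type~I elementary moves that are preceded by at least one generalized type~II move. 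The induction is on this complexity. If it is zero, then all type~I elementary moves precede all generalized type~II moves and, after expanding the jump moves appropriately using Lemma~\ref{neutral-move-decomposition-lem} (keeping those jump moves that must modify~$C$ undecomposed, as in the statement), we are done. Otherwise pick the last generalized type~II move $M_{i-1}\xmapsto{\eta_i}M_i$ that is immediately followed by a type~I elementary move $M_i\xmapsto{\eta_{i+1}}M_{i+1}$; apply the first commutation property to the two-move block $M_{i-1}\to M_i\to M_{i+1}$ to rewrite it with the type~I move first and a single generalized type~II move second. Throughout this I would appeal to Lemma~\ref{neat-properties-lem} and Lemma~\ref{safe-lem} (via the neatness bookkeeping of Definition~\ref{neat-def}) to ensure that Condition~(2) of that definition---the monotonicity of negative Thurston--Bennequin numbers of essential boundary circuits---is maintained, which is exactly the hypothesis under which the delicacy of the intermediate sequences is not spoiled. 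One checks that this local rewriting strictly decreases the complexity without increasing the number of generalized type~II moves, so the induction terminates.

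When the complexity has reached zero, the sequence has the shape (type~I elementary and jump moves) followed by (generalized type~II and jump moves). I would then apply the second commutation property to each generalized type~II move in the tail to replace it by a $C$-delicate elementary decomposition with its type~I part first; the type~I parts so produced can be pushed to the front past the already-present type~II elementary moves exactly because they are $+$-safe-to-bring-forward in the sense of Definition~\ref{safe-def} (type~I extension and elementary bypass removal are $-$-safe-to-bring-forward, type~I elimination and elementary bypass addition are $-$-safe-to-postpone, and slides are safe), so Lemma~\ref{safe-lem} guarantees that the rearrangement stays $C$-neat, hence $C$-delicate. Collecting the type~I portion gives the sequence~\eqref{del-type-i-move-seq-eq} ending at the desired~$M''$, with $C$ transformed to $C''$; the remaining type~II portion is the required type~II $C''$-delicate sequence; and the equality of total morphisms is automatic from Condition~(3) of neatness ($\eta=\eta_n\circ\cdots\circ\eta_1$ is preserved at every rewriting step). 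The main obstacle I anticipate is the relative (i.e.\ $C$-delicate) form of the \emph{first} commutation property: in the absolute setting one may freely patch an inessential boundary circuit whenever convenient, but in the relative setting one must only patch/unpatch inessential circuits and must route auxiliary $\diagdown$-mirrors so as never to touch a circuit of~$C$, and this is precisely why $+$-flexibility relative to~$C$ (for the side where type~I moves act) and $-$-flexibility relative to~$C'$ (for the other side) are exactly the hypotheses needed---verifying that these hypotheses suffice in every configuration of the two-move block, with all cases of generalized type~II move crossed against all cases of elementary type~I move, is the bulk of the work.
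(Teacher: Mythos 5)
Your high-level strategy---introduce generalized type~II moves, prove a first commutation property (exchange a generalized type~II move with a following type~I elementary move) and a second commutation property (decompose a generalized type~II move with type~I moves first), then induct---matches the paper. However, the parenthetical claim that ``every type~II elementary move is a degenerate instance of a generalized one'' is false: type~II extension, elimination, and slide moves are \emph{not} instances of generalized type~II merge moves nor of generalized type~II bypass removals. Converting the input sequence to the working form in which every type~II occurrence is a generalized type~II merge or a generalized type~II bypass removal is precisely where the $+$-flexibility of~$M$ relative to~$C$ is first spent, via Lemma~\ref{extension-via-flexibility-lem} (extensions), Lemma~\ref{slide-bypass-neat-decomp-lem} (slides and bypass additions), and Lemma~\ref{split-into-bypas-decomp-lem} (splits into bypass removals). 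More seriously, the second commutation properties---Propositions~\ref{2ndcommutation-for-merge-special-prop} and~\ref{2ndcommutation-for-bypass-prop}---require $-$-flexibility of the diagram \emph{where they are applied}, that is, of an intermediate diagram, while your hypothesis is only at the endpoints~$M$ and~$M'$. The paper handles this with the ``saw'' construction (extending each intermediate $M_j$ by type~I and type~II extensions so that after truncation all relevant diagrams are \emph{very} flexible relative to~$C_j$) together with Lemma~\ref{almost-c-neat=>flex-lem}, which propagates flexibility through any almost $C$-neat decomposition with very flexible endpoints. This step is essential and absent from your outline.

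Your final phase is also flawed. After reaching ``type~I and jump moves, then generalized type~II moves,'' you decompose each generalized type~II move in the tail and assert that the type~I pieces so produced ``can be pushed to the front \dots\ exactly because they are $+$-safe-to-bring-forward.'' But safety in the sense of Definition~\ref{safe-def} governs the monotonicity of Thurston--Bennequin numbers (i.e.\ neatness); it is not a commutation statement and does not let a type~I move jump past an arbitrary type~II elementary move---if it did, the whole theorem would be trivial. The paper avoids this circularity by applying the second commutation property \emph{only to the last} remaining generalized type~II move, which reduces $N_1$ (the number of generalized type~II moves) by one while $N_2$ (the number of type~I/jump moves after the last of them) may jump up, and then returns to the $N_2>0$ phase. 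Relatedly, your proposed single-number complexity does not obviously decrease: the first commutation property may insert many new type~I moves in front of the move being swapped, and these remain preceded by earlier generalized type~II moves. The lexicographic pair $(N_1,N_2)$ is what makes the induction terminate. Finally, you omit the separate treatment required when~$M$ has spherical components.
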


Before proceeding with the proof of Theorems~\ref{warm-up-thm} and~\ref{commutation-2-thm}
we demonstrate that the flexibility assumption is essential in each of them.

\begin{exam}\label{counter-example1-ex}
Let~$M$ be an enhanced mirror diagram consisting of a single occupied level without any mirrors, and let~$M'$ be
the mirror diagram shown in Figure~\ref{counter-example1-fig}.
\begin{figure}[ht]
\includegraphics{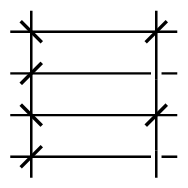}
\caption{The diagram~$M'$ in Example~\ref{counter-example1-ex}}\label{counter-example1-fig}
\end{figure}
All boundary circuits of both diagrams are inessential.
These two diagrams are related by elementary moves. Indeed, there are two coherent $\diagdown$-mirrors in~$M'$,
one of which can be eliminated by means of a sequence of
three type~I elementary moves
(see Lemma~\ref{remove-coherent-mirror-lem}). The respective spatial graph will become a tree. So,
all the mirrors now can be reduced by elimination moves.

We claim that it is impossible to get~$M'$ from~$M$ by a sequence of elementary moves
such that all type~I moves in it precede all type~II moves. This can be seen as follows.
Let $M''$ be obtained from~$M$ by type~I elementary moves.
Since $\Gamma_{\widehat M}$ is a single point, the spatial graph~$\Gamma_{\widehat M''}$ is a tree.
The diagram~$M''$ cannot have any $\diagdown$-mirrors, hence no two $\diagup$-mirrors
of~$M''$ are coherent.

The graph~$\Gamma_{\widehat M'}$ is not simply connected, and no two $\diagup$-mirrors
of~$M'$ are coherent. It follows from Proposition~\ref{equivalence-of-divided-graphs-prop}
that~$\rho^-(M')$ and~$\rho^-(M'')$ viewed as divided spatial ribbon graphs
are not stably equivalent. By Theorem~\ref{type-i-moves-meaning-th}~(ii)
the diagrams~$M''$ and~$M'$ are not related by type~II elementary moves.
\end{exam}

\begin{exam}\label{counter-example2-ex}
This example is similar in nature to the previous one, but slightly more complicated.
Consider the mirror diagrams~$M$ and~$M'$ shown in~Figure~\ref{counter-example2-fig}.
Again, $M$ can be transformed to~$M'$ by means of
a sequence of elementary moves, but any such sequence have a type~II move preceding a type~I move.
\begin{figure}[ht]
\includegraphics{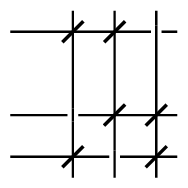}\put(-50,-10){$M$}
\hskip2cm
\includegraphics{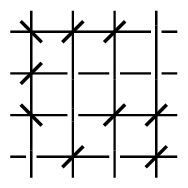}\put(-50,-10){$M'$}
\caption{The diagrams in Example~\ref{counter-example2-ex}}\label{counter-example2-fig}
\end{figure}
The details are left to the reader.
\end{exam}

We now proceed with the constructions needed to prove Theorems~\ref{warm-up-thm} and~\ref{commutation-2-thm}.

\subsection{Splitting routes and splitting paths}

For~$x$ an occupied level of a mirror diagram~$M$ and~$p$ a point at~$x$
distinct from all mirrors of~$M$, we denote by~$\wideparen p$ the point at~$\partial\wideparen x$
defined by~$\varphi(\wideparen p)=\varphi(p)$ if~$x$ is a meridian and~$\theta(\wideparen p)=\theta(p)$
if~$x$ is a longitude of~$\mathbb T^2$. By using this notation we assume that the surface~$\wideparen M$
is chosen slim
enough, so that~$\wideparen p$ is outside of all the strips~$\wideparen\mu$, $\mu\in E_M$.
This means, in particular, that~$\wideparen p$ lies on~$\partial\wideparen M$.

A sequence~$(\mu_1,\ldots,\mu_m)$ of mirrors of a mirror diagram will be called \emph{cancellable}
if there is a balanced parenthesis sequence of length~$m$ such that whenever the $i$th and the~$j$th
parentheses in it are matched we have~$\mu_i=\mu_j$. (In particular, this means that~$m$ is even.)
For instance, $(\mu,\mu',\mu',\mu'',\mu''',\mu''',\mu'',\mu)$
is cancellable for any mirrors~$\mu,\mu',\mu'',\mu'''$.

\begin{defi}\label{simple-route-def}
By \emph{a single-headed type~II splitting path} in the surface~$\wideparen M$ we mean
an oriented simple normal arc~$\wideparen\omega\subset\wideparen M$ such that the following holds:
\begin{enumerate}
\item
$\wideparen\omega$ starts at an interior point of~$\wideparen M$
and arrives at a point on~$\partial\wideparen M$, that is, a point of the form~$\wideparen p$, where~$p\in y\in L_M$, $p\notin E_M$;
\item
$\wideparen\omega$ can be cut into subarcs~$\wideparen\omega^1,\ldots,\wideparen\omega^k$
(following in~$\wideparen\omega$ in this order)
so that
\begin{enumerate}
\item
for all~$2\leqslant i\leqslant k-1$
each endpoint of~$\wideparen\omega^i$ lies in the interior of a disc of the form~$\wideparen x$, $x\in L_M$;
\item
each~$\wideparen\omega^i$, $i=1,\ldots,k$, intersects a single strip~$\wideparen\mu_i$, $\mu_i\in E_M$;
\item
$\mu_1$ is a $\diagdown$-mirror of~$M$, all the other~$\mu_i$'s are $\diagup$-mirrors;
\item
whenever~$i,i',j,j'\in\{2,\ldots,k\}$ are such that $i'>i$, $j'>j$, $\mu_i=\mu_{i'}=\mu_j=\mu_{j'}$
and the sequences~$(\mu_i,\mu_{i+1},\ldots,\mu_{i'})$ and~$(\mu_j,\mu_{j+1},\ldots,\mu_{j'})$
are cancellable, we have~$i\equiv j\pmod2$.
\end{enumerate}
\end{enumerate}
The sequence~$\omega=(\mu_1,\ldots,\mu_k,p)$ arising in this way is referred to as \emph{a single-headed type~II splitting route} in~$M$,
and the path~$\wideparen\omega$ is said to be \emph{associated with~$\omega$}.

If~$k=1$, then~$\omega$ is called \emph{an elementary type~II splitting route}, and~$\wideparen\omega$
\emph{an elementary type~II splitting path}.

\emph{A single-headed (or elementary) type~I splitting path} and \emph{a single-headed (or elementary) type~I splitting route} are defined
similarly, with the roles of $\diagup$-mirrors and $\diagdown$-mirrors exchanged.
\end{defi}
\begin{figure}[ht]
\includegraphics{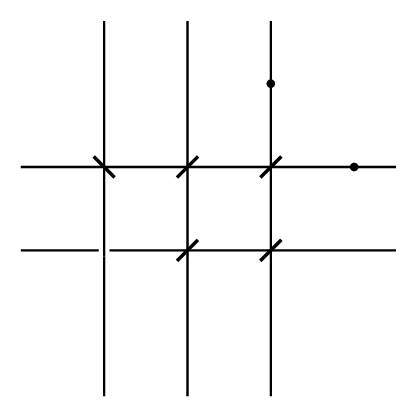}\put(-148,15){$m_1$}\put(-108,15){$m_2$}\put(-68,15){$m_3$}%
\put(-20,85){$\ell_1$}\put(-20,125){$\ell_2$}\put(-148,125){$\mu_1$}\put(-89,125){$\mu_{2,6}$}%
\put(-83,85){$\mu_3$}\put(-123,85){$\mu_4$}\put(-123,125){$\mu_5$}\put(-66,160){$p$}\put(-32,112){$q$}
\hskip.5cm
\raisebox{20pt}{\includegraphics{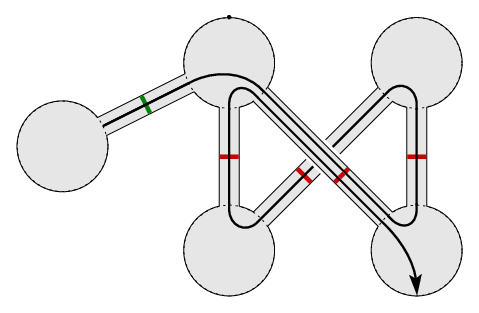}\put(-205,76){$\wideparen m_1$}%
\put(-123,122){$\wideparen\ell_2$}\put(-32,122){$\wideparen\ell_1$}\put(-126,24){$\wideparen m_2$}%
\put(-25,30){$\wideparen m_3$}\put(-33,0){$\wideparen p$}\put(-170,110){$\wideparen\mu_1$}%
\put(-138,72){$\wideparen\mu_5$}\put(-90,100){$\wideparen\mu_{2,6}$}\put(-90,45){$\wideparen\mu_4$}%
\put(-23,72){$\wideparen\mu_3$}\put(-123,147){$\wideparen q$}}
\caption{A single-headed splitting path associated with the splitting route~$(\mu_1,\mu_2,\mu_3,\mu_4,\mu_5,$ $\mu_6=\mu_2,p)$}\label{splitting-route-exam-fig}
\end{figure}

\begin{exam}\label{splitting-route-exam}
Shown in Figure~\ref{splitting-route-exam-fig} is a mirror diagram~$M$ and a single-headed splitting path in~$\wideparen M$
associated with the single-headed type~II splitting route~$(\mu_1,\mu_2,\mu_3,\mu_4,\mu_5,\mu_6=\mu_2,p)$.
Observe that the sequence~$(\mu_1,\mu_2,\mu_3,\mu_4,\mu_5,q)$ is not a single-headed type~II splitting route as an arc crossing
the~$1$-handles in the indicated order and ending at~$\wideparen q$ must have a self-intersection in~$\wideparen\ell_2$.
\end{exam}

Note that a~$\diagup$-mirror may appear in a single-headed type~II splitting route arbitrarily many times but at most two times in a row.
This constraint is a part of Condition~(2d) in Definition~\ref{simple-route-def}.

Condition~(2d), which looks somewhat artificial, is added
in order for any single-headed type~II splitting route to define the associated single-headed splitting path uniquely
up to isotopy in the class of normal arcs. If this condition is omitted, then the
path~$\wideparen\omega$ may be defined by~$\omega$ too loosely, which, in turn, would complicate
the exposition of some arguments below.

Condition~(2d) prohibits, in particular, having~$\mu_i=\mu_{i+1}=\mu_{i+2}$ or~$\mu_i=\mu_{i+1}=\mu_{i+2m-1}=\mu_{i+2m}$ with~$m>1$.
The issues that are avoided by
these prohibitions
are illustrated in Figures~\ref{no-split-fig1} and~\ref{no-split-fig2}, where
pairs of fragments of normal arcs are shown such that, for each pair, replacing one fragment of a normal arc by the other
does not change the combinatorial presentation of the path by a sequence of mirrors.

\begin{figure}[ht]
\includegraphics[scale=0.7]{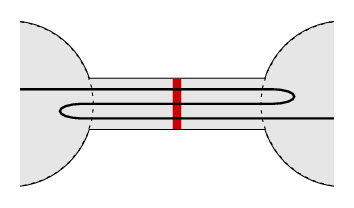}\hskip2cm
\includegraphics[scale=0.7]{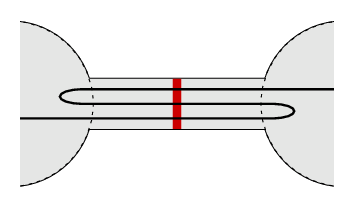}
\caption{Non-uniqueness of~$\wideparen\omega$ if we allow~$\mu_i=\mu_{i+1}=\mu_{i+2}$}\label{no-split-fig1}
\end{figure}

\begin{figure}[ht]
\includegraphics[scale=0.7]{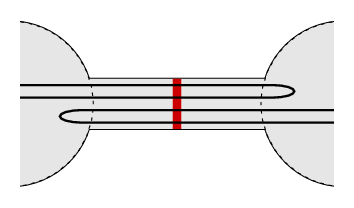}\hskip2cm
\includegraphics[scale=0.7]{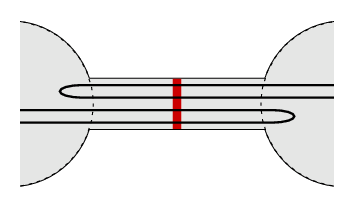}
\caption{Non-uniqueness of~$\wideparen\omega$ if we allow~$\mu_i=\mu_{i+1}=\mu_{i+2m-1}=\mu_{i+2m}$, $m>1$}\label{no-split-fig2}
\end{figure}

The general form of Condition~(2d)
allows to avoid similar issues with longer pieces of~$\wideparen\omega$, for instance,
with sequences of the form
$$(\ldots,\mu,\nu,\nu,\mu,\mu,\nu\ldots),\quad(\ldots,\mu,\nu,\nu,\mu,\ldots,\nu,\mu,\mu,\nu,\ldots),$$
and also guarantees that such pieces do not appear if a cancellable portion of a splitting route
is removed.

Note that the equalities~$\mu_i=\mu_{i+1}=\mu_j=\mu_{j+1}$ on their own cannot cause the problem
shown in Figure~\ref{no-split-fig2} if~$j-i$ is even and is larger than~$2$ in absolute value. Indeed, for $l=1,\ldots,k-1$, denote by~$x_l$ the
occupied level of~$M$ containing~$\mu_l$ and~$\mu_{l+1}$. If~$i$ and~$j$ are
of the same parity, then the occupied levels~$x_i$ and~$x_j$ are parallel. Therefore, the equality~$\mu_i=\mu_j$
implies~$x_i=x_j$, and hence,
the `tongues'~$\wideparen\omega^i\cup\wideparen\omega^{i+1}$ and~$\wideparen\omega^j\cup\wideparen\omega^{j+1}$
are directed coherently unlike the ones in Figure~\ref{no-split-fig2}.
\begin{figure}[ht]
\includegraphics[scale=.65]{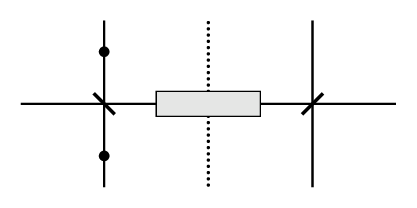}\put(-100,62){$x$}\put(-131,31){$y$}%
\put(-95,47){$p_2$}\put(-95,15){$p_1$}\put(-112,37){$\mu_1$}\put(-28,37){$\mu_2$}\\
\includegraphics[scale=.7]{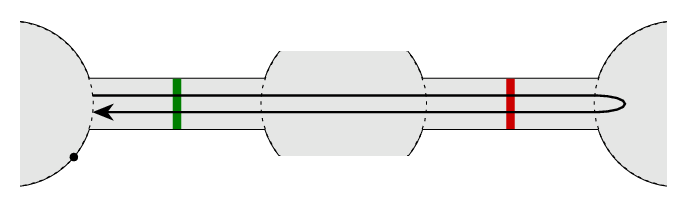}\put(-118,22){$\wideparen y$}\put(-220,33){$\wideparen x$}\put(-205,8){$\wideparen p_1$}%
\put(-175,16){$\wideparen\mu_1$}\put(-63,16){$\wideparen\mu_2$}\put(-120,0){$p=p_1$}
\\\vskip.5cm
\includegraphics[scale=.7]{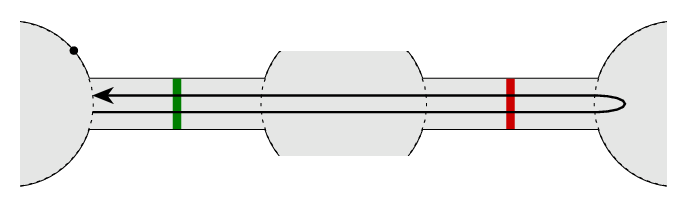}\put(-118,22){$\wideparen y$}\put(-220,33){$\wideparen x$}\put(-203,54){$\wideparen p_2$}%
\put(-175,16){$\wideparen\mu_1$}\put(-63,16){$\wideparen\mu_2$}\put(-120,0){$p=p_2$}
\caption{Dependence of~$\wideparen\omega$ on~$p$ in the case of a double-headed
splitting route~$\omega=(\mu_1,\mu_2,\mu_2,\mu_1,p)$}\label{path-depend-on-p3-fig}
\end{figure}

\begin{defi}\label{double-headed-splitting-route-def}
By \emph{a double-headed type~II splitting path} in the surface~$\wideparen M$ we mean
an oriented normal simple arc~$\wideparen\omega\subset\wideparen M$ such that the following holds:
\begin{enumerate}
\item
both endpoints of $\wideparen\omega$ are interior points of~$\wideparen M$
\item
$\wideparen\omega$ can be cut into subarcs~$\wideparen\omega^1,\ldots,\wideparen\omega^k$
(following in~$\wideparen\omega$ in this order), $k\geqslant2$,
so that
\begin{enumerate}
\item
for all~$2\leqslant i\leqslant k-1$,
each endpoint of~$\wideparen\omega^i$ lies in the interior of a disc of the form~$\wideparen x$, $x\in L_M$;
\item
each~$\wideparen\omega^i$, $i=1,\ldots,k$, intersects a single strip~$\wideparen\mu_i$, $\mu_i\in E_M$;
\item
$\mu_1$ and~$\mu_k$ are $\diagdown$-mirrors of~$M$, all the other~$\mu_i$'s are $\diagup$-mirrors;
\item
whenever~$i,i',j,j'\in\{2,\ldots,k-1\}$ are such that $i'>i$, $j'>j$, $\mu_i=\mu_{i'}=\mu_j=\mu_{j'}$
and the sequences~$(\mu_i,\mu_{i+1},\ldots,\mu_{i'})$ and~$(\mu_j,\mu_{j+1},\ldots,\mu_{j'})$
are cancellable, we have~$i\equiv j\pmod2$.
\end{enumerate}
\end{enumerate}
To encode~$\wideparen\omega$ we use a sequence~$\omega=(\mu_1,\ldots,\mu_k,p)$
in which~$p\in\mathbb T^2$ satisfies the following conditions:
\begin{enumerate}
\item[(i)]
$p$ lies on the occupied level~$x_0$ of~$M$ characterized by~$z\in\partial\wideparen x_0$,
where~$z$ is the terminal point of~$\wideparen\omega$;
\item[(ii)]
$p\notin E_M$;
\item[(iii)]
the shorter of the two open arcs into
which~$z$ and~$\wideparen p$ cut the circle~$\partial\wideparen x_0$
is disjoint from~$\wideparen\omega$ and from the strips of the form~$\wideparen\mu$, $\mu\in E_M\setminus\{\mu_k\}$.
\end{enumerate}
The sequence~$\omega$ is then referred to as \emph{a double-headed type~II splitting route in~$M$},
and the path~$\wideparen\omega$ is said to be \emph{associated with~$\omega$}.

A double-headed type~II splitting route is called \emph{special} if it has
the form~$(\mu_1,\mu_2,p)$.

By \emph{a type~II splitting route} or \emph{path} we mean either a single-headed or a double-headed type~II splitting route or path,
respectively.
\end{defi}

\begin{figure}[ht]
\includegraphics[scale=.65]{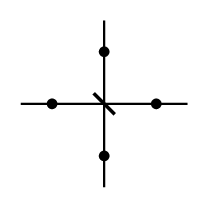}\put(-35,62){$x$}\put(-66,31){$y$}\put(-19,25){$p_1$}\put(-51,25){$p_3$}%
\put(-30,47){$p_2$}\put(-30,15){$p_4$}\put(-42,37){$\mu$}\\
\includegraphics[scale=.7]{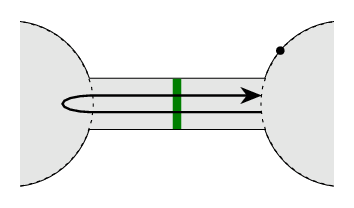}\put(-63,16){$\wideparen\mu$}\put(-17,33){$\wideparen y$}\put(-108,33){$\wideparen x$}
\put(-36,54){$\wideparen p_1$}\put(-88,-5){$\omega=(\mu,\mu,p_1)$}\hskip2cm
\includegraphics[scale=.7]{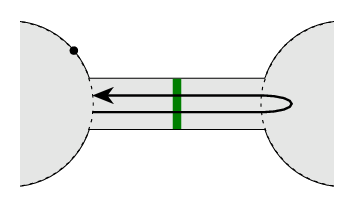}\put(-63,16){$\wideparen\mu$}\put(-17,33){$\wideparen y$}\put(-108,33){$\wideparen x$}
\put(-91,54){$\wideparen p_2$}\put(-88,-5){$\omega=(\mu,\mu,p_2)$}
\\\vskip.5cm
\includegraphics[scale=.7]{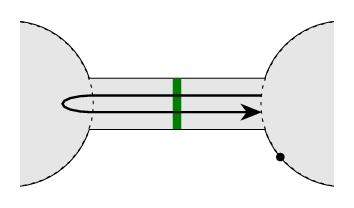}\put(-63,16){$\wideparen\mu$}\put(-17,33){$\wideparen y$}\put(-108,33){$\wideparen x$}
\put(-32,8){$\wideparen p_3$}\put(-88,-5){$\omega=(\mu,\mu,p_3)$}\hskip2cm
\includegraphics[scale=.7]{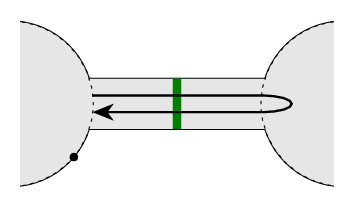}\put(-63,16){$\wideparen\mu$}\put(-17,33){$\wideparen y$}\put(-108,33){$\wideparen x$}
\put(-93,8){$\wideparen p_4$}\put(-88,-5){$\omega=(\mu,\mu,p_4)$}
\caption{Dependence of~$\wideparen\omega$ on~$p$ in the case of a special splitting route~$\omega$
with~$\mu_1=\mu_2$}\label{path-depend-on-p1-fig}
\end{figure}

\begin{figure}
\includegraphics[scale=.65]{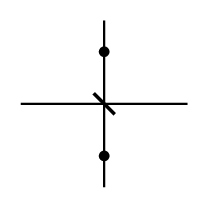}\put(-35,62){$x$}\put(-66,31){$y$}%
\put(-30,47){$p_2$}\put(-30,15){$p_1$}\put(-42,37){$\mu$}\\
\includegraphics[scale=.7]{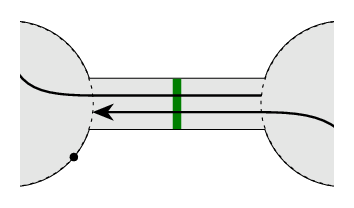}\put(-93,8){$\wideparen p_1$}%
\put(-17,38){$\wideparen y$}\put(-108,28){$\wideparen x$}\put(-63,16){$\wideparen\mu$}\put(-72,-5){$p=p_1$}
\hskip2cm
\includegraphics[scale=.7]{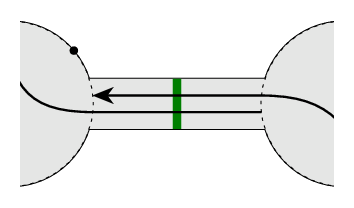}\put(-91,54){$\wideparen p_2$}%
\put(-17,45){$\wideparen y$}\put(-108,21){$\wideparen x$}\put(-63,16){$\wideparen\mu$}\put(-72,-5){$p=p_2$}
\caption{Dependence of~$\wideparen\omega$ on~$p$ in the case of a double-headed splitting route~$\omega=(\mu,\mu_2,\mu_3,
\ldots,\mu_{k-1},\mu,p)$}\label{path-depend-on-p2-fig}
\end{figure}
Let~$\omega=(\mu_1,\ldots,\mu_k,p)$ be a double-headed type~II splitting route. If~$\mu_1\ne\mu_k$,
then the last entry in it, the point~$p$, carries no additional information. Neither does it if~$k\equiv0\,(\mathrm{mod}\,2)$
and we have~$\mu_i\ne\mu_{k+1-i}$ for some~$i\in\{2,\ldots,k/2\}$.
If $k\equiv0\,(\mathrm{mod}\,2)$, $k>2$, and~$\mu_i=\mu_{k+1-i}$ for all~$i=1,\ldots,k/2$,
then~$p$ is important only for specifying the orientation of~$\wideparen\omega$;
see Figure~\ref{path-depend-on-p3-fig}.
If~$\mu_1=\mu_k$ and~$k$ is either odd (it should then be greater than four) or equal to two, then even the isotopy
class of the unoriented arc underlying~$\wideparen\omega$
depends on~$p$; see Figures~\ref{path-depend-on-p1-fig} and~\ref{path-depend-on-p2-fig}.

Another reason to keep~$p$ in the notation for a double-headed splitting routes
is the similarity with the notation for single-headed splitting routes, which sometimes
allows for shorter formulations.

\begin{prop}
If~$\omega$ is a type~II splitting route in a mirror diagram~$M$, then the associated type~II splitting
path~$\wideparen\omega$ in~$\wideparen M$ is defined uniquely up to isotopy in the class of normal
arcs.
\end{prop}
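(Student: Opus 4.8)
The plan is to argue entirely inside the handle decomposition of $\wideparen M$ recalled in Subsection~\ref{hMM-subsec}: the $0$-handles are the discs $\wideparen x$, $x\in L_M$, and the $1$-handles are the strips $\wideparen\mu$, $\mu\in E_M$, each $\wideparen\mu$ joining the two $0$-handles $\wideparen m$ and $\wideparen\ell$, where $m$ and $\ell$ are the meridian and longitude through $\mu$. First I would establish the standard fact that a normal arc $\beta\subset\wideparen M$ is determined, up to isotopy through normal arcs, by the finite sequence of $1$-handles it traverses (read in order along $\beta$), by the $0$-handle into which $\beta$ passes between consecutive traversals, and by the position of each endpoint up to sliding within its slot on the circle $\partial\wideparen x$ cut out by the feet of the $1$-handles. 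This is proved by induction on the number of $1$-handle traversals, using: (a)~an embedded arc in a disc connecting the interiors of two prescribed complementary boundary intervals is unique up to isotopy fixing the feet of the attached $1$-handles setwise, since a disc has no interior topology and such an arc separates the remaining feet in the only way compatible with the endpoints; and (b)~an embedded spanning arc of a strip $\wideparen\mu$ connecting its two feet is unique up to isotopy. Hence it suffices to show that a type~II splitting route $\omega=(\mu_1,\dots,\mu_k,p)$ determines this combinatorial data of any associated path.

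The sequence of $1$-handles is $\wideparen\mu_1,\dots,\wideparen\mu_k$ by definition, so I would recover the sequence of $0$-handles $\wideparen{x_0},\dots,\wideparen{x_k}$ inductively. The handle $\wideparen{x_0}$ from which $\wideparen\omega$ emanates is canonically attached to $\mu_1$ (the occupied level through $\mu_1$ singled out by the conventions for type~II routes), the starting point being the foot of $\wideparen\mu_1$ on $\partial\wideparen{x_0}$, which is an interior point of $\wideparen M$ as required. For the inductive step, assume $x_0,\dots,x_{i-1}$ determined. If $\mu_i\neq\mu_{i+1}$, then $x_i$ is the unique occupied level of $M$ containing both $\mu_i$ and $\mu_{i+1}$ --- unique because two distinct mirrors lie on at most one common level --- and $x_{i-1}$ is forced to be the remaining level through $\mu_i$, consistently with its earlier determination. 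If $\mu_i=\mu_{i+1}$, then, because $\wideparen\omega^i$ and $\wideparen\omega^{i+1}$ are disjoint spanning arcs of the single strip $\wideparen\mu_i$, which has only two feet, one deduces that $x_i$ is the level through $\mu_i$ distinct from $x_{i-1}$ and that $x_{i+1}=x_{i-1}$; since no mirror occurs three times in a row (a consequence of Condition~(2d) of Definition~\ref{simple-route-def}), the next mirror $\mu_{i+2}$, if any, differs from $\mu_{i+1}$, and we are back in the first case. Finally the endpoint slots are pinned down: at the initial end by the canonical foot of $\wideparen\mu_1$, at the terminal end by $\wideparen p$ itself in the single-headed case, and in the double-headed case by $p$ together with conditions~(i)--(iii) of Definition~\ref{double-headed-splitting-route-def}, which specify exactly which of the two arcs of $\partial\wideparen y$ cut off by $\wideparen p$ and the terminal point is free, hence which side of $\mu_k$ the path returns to --- precisely the freedom exhibited in Figures~\ref{path-depend-on-p1-fig} and~\ref{path-depend-on-p2-fig}, removed once $p$ is recorded.

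I expect the delicate point --- the part that needs genuine care rather than bookkeeping --- to be verifying that Condition~(2d) of Definitions~\ref{simple-route-def} and~\ref{double-headed-splitting-route-def} is strong enough to force the $0$-handle sequence and, more importantly, the relative position of the ``tongues'' $\wideparen\omega^i\cup\wideparen\omega^{i+1}$ when a mirror repeats or nearly repeats along $\omega$. Concretely, one must rule out the ambiguities of Figures~\ref{no-split-fig1} and~\ref{no-split-fig2} and their longer analogues such as $(\dots,\mu,\nu,\nu,\mu,\mu,\nu,\dots)$ and $(\dots,\mu,\nu,\nu,\mu,\dots,\nu,\mu,\mu,\nu,\dots)$. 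For this I would compare two paths $\wideparen\omega,\wideparen\omega'$ associated with the same $\omega$ handle by handle, observing that a maximal cancellable subword $(\mu_i,\dots,\mu_{i'})$ of $\omega$ corresponds to a sub-tongue which, being embedded and normal, retracts onto a single point of $\partial\wideparen{x_{i-1}}$ and contributes nothing to the slot data, so that the ``turning'' at positions $i$ and $i'+1$ is governed purely by the parities of $i$ and $i'$; Condition~(2d) is exactly the statement that these parities agree whenever the occupied levels involved would otherwise have to coincide, which is what makes the two turnings forced. Once the tongue structure is controlled, the remaining argument is the routine induction of the first paragraph. The type~I case then follows from the type~II case by applying the symmetries $r_\diagdown$ and $r_\diagup$ of Convention~\ref{symmetries}.
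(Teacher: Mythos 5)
Your opening ``standard fact'' is false as stated, and this creates a logical inconsistency that runs through the whole proposal. You claim that a normal arc $\beta\subset\wideparen M$ is determined, up to normal isotopy, by (i) the sequence of $1$-handles it traverses, (ii) the sequence of $0$-handles it passes through between traversals, and (iii) the endpoint slots. But the paper's own Figures~\ref{no-split-fig1} and~\ref{no-split-fig2} exhibit pairs of normal arcs sharing exactly this data that are \emph{not} isotopic: when the arc makes several parallel spanning crossings of the same strip $\wideparen\mu$, their relative order within $\wideparen\mu$ is additional data not captured by your (i)--(iii), and two choices of order can give genuinely different isotopy classes. The inductive proof you sketch, built from (a) and (b), would break down at exactly this point, because (a) controls a single arc in a disc in isolation, not the relative configuration of several arc segments in the same disc (or the induced relative order of their extensions across the adjacent strip). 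You do acknowledge this ambiguity later, but you frame it as a second, separate step (``it suffices to show that $\omega$ determines this combinatorial data''), which is vacuous: a splitting route trivially determines your data (i)--(iii), so if the ``standard fact'' were true there would be no delicate point at all. What is actually nontrivial is precisely what the ``standard fact'' omits.

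The correct structure of the argument is: (1) $\omega$ determines the $1$-handle sequence trivially and the $0$-handle sequence by the forcing argument you give (which is correct --- the non-repeating case because two distinct mirrors lie on at most one common level, the repeating case because a normal arc must exit a strip on the opposite side it entered); (2) the isotopy class of $\wideparen\omega$ is determined by this data \emph{together with}, for each strip $\wideparen\mu$, the cyclic order in which the spanning segments of $\wideparen\omega\cap\wideparen\mu$ appear across the strip; (3) Condition~(2d), via the parity argument, forces this order. For step (3) your parity remark is pointed in the right direction (it matches the paper's own discussion that $i\equiv j\pmod 2$ and $\mu_i=\mu_j$ force $x_i=x_j$, making the tongues ``directed coherently''), but the phrases ``contributes nothing to the slot data'' and ``the turning at positions $i$ and $i'+1$ is governed purely by the parities'' are too vague to count as a proof of the forcing, and the claim that a maximal cancellable subword ``retracts onto a single point'' is an isotopy statement that presupposes the very uniqueness you are trying to prove. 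To close the gap you would need to make step (2) explicit (i.e.\ correct the ``standard fact'') and then actually derive the order of the parallel crossings in each strip from the cyclic order of mirrors on the two occupied levels through $\mu$ together with Condition~(2d), by an induction on the number of times $\mu$ occurs in $\omega$.
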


We skip the easy proof.

\begin{defi}
Two type~II (or type~I) splitting routes~$\omega$ and~$\omega'$ are said to be \emph{equivalent} if the
respective splitting paths~$\wideparen\omega$ and~$\wideparen\omega'$ are isotopic
in the class of normal arcs.
\end{defi}

Clearly, it two splitting routes~$\omega=(\mu_1,\ldots,\mu_k,p)$ and~$\omega'=(\mu_1',\ldots,\mu_{k'}',p')$
are equivalent, then~$k'=k$ and $\mu_i=\mu_i'$ for~$1\leqslant i\leqslant k$, so the only difference
that can occur in equivalent splitting routes is~$p\ne p'$.

\subsection{Generalized split and generalized merge moves}
For proving Theorems~\ref{warm-up-thm} and~\ref{commutation-2-thm} we need
to generalize merge moves. However, it is more intuitive to speak
in terms of the inverse operations, which are generalized split moves and which we now define.

\begin{defi}\label{gen-split-def}
Let~$M$ be an enhanced mirror diagram, and let~$\omega=(\mu_1,\mu_2,\ldots,\mu_k,p)$ be a type~II splitting route in~$M$.
\emph{A generalized type~II split move~$M\xmapsto\eta M'$
associated with~$\omega$} is defined inductively as follows.

\smallskip\noindent\emph{Case 1}: $k=1$.\\$M\xmapsto\eta M'$ is a type~II split move for which~$\mu_1$
is the splitting mirror and~$p$ is the snip point.

\smallskip\noindent\emph{Case 2}: $k>1$, $\omega$ is a single-headed splitting route.\\
Let~$M\xmapsto{\eta_1}M_1$ be a type~I split move for which~$\mu_k$
is the splitting mirror and~$p$ the snip point, and
let~$M_1\xmapsto{\eta_2}M'$ be a generalized type~II split move
associated with a type~II splitting route~$\omega'=(\mu_1',\mu_2',\ldots,\mu_{k-1}',p')$
in which~$\mu_i'$ is a successor of~$\mu_i$, $i=1,\ldots,k-1$, and~$p'$
lies in the splitting gap of
the move~$M\xmapsto{\eta_1}M_1$.
Then~$M\xmapsto\eta M'$ with~$\eta=\eta_2\circ\eta_1$
is a generalized type~II split move
associated with~$\omega$.

\smallskip\noindent\emph{Case 3}: $k>2$, $\omega$ is a double-headed splitting route.\\
Let~$M\xmapsto{\eta_1}M_1$ be a double split move for which~$\mu_k$ and~$\mu_{k-1}$
are the splitting mirrors (see Definition~\ref{double-split-def}).
Denote by~$\mu_k'$ and~$\mu_k''$ the successors of~$\mu_k$
ordered so that the $\diagdown$-splitting gap of the move is~$(\mu_k';\mu_k'')$
if $(\mu_k;p)$ is shorter than~$(p;\mu_k)$, and~$(\mu_k'';\mu_k')$ otherwise.

Let~$M_1\xmapsto{\eta_2}M'$ be a generalized type~II split move
associated with a single-headed type~II splitting route~$\omega'=(\mu_1',\mu_2',\ldots,\mu_{k-2}',p')$
in which~$\mu_i'$ is a successor of~$\mu_i$, $i=1,\ldots,k-2$, and~$p'$
lies in the $\diagup$-splitting gap of the move~$M\xmapsto{\eta_1}M_1$.
If~$\mu_1=\mu_k$ we demand additionally that~$\mu_1'=\mu_k'$.

Then~$M\xmapsto\eta M'$ with~$\eta=\eta_2\circ\eta_1$
is a generalized type~II split move associated with~$\omega$.

\smallskip\noindent\emph{Case 4}: $\omega=(\mu_1,\mu_2,p)$ is a special double-headed splitting route.
In this case, the generalized type~II split move~$M\xmapsto\eta M'$ defined below
is also called \emph{special}.\\
Denote by~$y$ the occupied level of~$M$ containing~$\mu_1$ and~$\mu_2$, and not containing~$p$.
Let~$M\xmapsto{\eta_1}M_1$ be a type~I extension move that adds a new mirror~$\mu_0$
on~$y$, and let~$M_1\xmapsto{\eta_2}M'$ be a generalized type~II
split move associated with the splitting route~$(\mu_1,\mu_0,\mu_0,\mu_2,p)$.
Then~$M\xmapsto\eta M'$ with~$\eta=\eta_2\circ\eta_1$
is a generalized type~II split move associated with~$\omega$.
The mirror~$\mu_0$ is referred to as \emph{the auxiliary mirror} of the move~$M\xmapsto\eta M'$.
The new occupied level added by the extension move~$M\xmapsto{\eta_1}M_1$
is called \emph{the auxiliary level} of the move~$M\xmapsto\eta M'$.

\smallskip
The inverse of a generalized type~II split move
is called \emph{a generalized type~II merge move}.
\end{defi}

One can see that generalized split moves are safe-to-bring-forward, whereas generalized merge moves are
safe-to-postpone (see Definition~\ref{safe-def}).

\begin{prop}
For any type~II splitting route~$\omega$ in an enhanced mirror diagram~$M$, there exists an associated generalized type~II
split move~$M\mapsto M'$. Unless~$\omega$ is special,
the combinatorial type of the resulting diagram~$M'$ is prescribed
by the combinatorial class~$M$ and the equivalence class of~$\omega$.
If~$\omega$ is special, then the combinatorial type of the resulting diagram~$M'$
is prescribed by~$M$, $\omega$, and the position of the auxiliary mirror.
\end{prop}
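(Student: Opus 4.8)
The plan is to prove the proposition by a straightforward induction on the length $k$ of the splitting route $\omega=(\mu_1,\ldots,\mu_k,p)$, following exactly the recursive structure of Definition~\ref{gen-split-def}. For the existence part, I would check that at every stage of the recursion the move invoked (a type~II split move in Case~1, a type~I split move in Case~2, a double split move in Case~3, a type~I extension move in Case~4) is actually applicable to the current diagram, and that the splitting route $\omega'$ handed to the recursive call is again a legitimate type~II splitting route (shorter than $\omega$, so the induction hypothesis applies). The key observations here are: (a) the conditions in Definitions~\ref{simple-route-def} and~\ref{double-headed-splitting-route-def} on the mirrors $\mu_1$ (and $\mu_k$ in the double-headed case) being $\diagdown$-mirrors while the intermediate $\mu_i$ are $\diagup$-mirrors are precisely what guarantees that the required split/double-split move at the ``tail'' of $\omega$ can be performed; (b) the snip point $p$ (resp.\ $p'$ chosen in the splitting gap) lies where the definitions demand, so the preconditions of Definition~\ref{split-def}, Definition~\ref{double-split-def}, or Definition~\ref{ext-move-def} are met; and (c) Condition~(2d) of the splitting-route definitions, together with the parity remark in the excerpt about cancellable segments, ensures that after removing $\mu_k$ (resp.\ $\mu_{k-1},\mu_k$) and passing to successors, the truncated sequence still satisfies~(2d) and hence is a genuine splitting route. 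The base cases $k=1$ (Case~1) and the special case~$k=2$ with $\mu_1=\mu_2$ (Case~4) are immediate.

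For the uniqueness part, the induction is again on $k$. In Case~1 the resulting diagram is determined by $M$, the splitting mirror $\mu_1$, and the snip point $p$ up to combinatorial type, and since combinatorial type is insensitive to the exact positions of new occupied levels, the dependence on $p$ only matters through which ``side'' of $\mu_1$ it lies on, which is encoded in the equivalence class of $\omega$ (by the remark that equivalent splitting routes differ only in the last entry $p$, and for a single-headed route this last entry carries genuine information only up to the isotopy class of $\wideparen\omega$). In the recursive cases I would argue that the intermediate diagram $M_1$ is determined up to combinatorial type by $M$ and the tail move, that the choice of $p'$ in the splitting gap is irrelevant combinatorially (any two choices give combinatorially equivalent diagrams, since they differ only by positions of new levels inside a region free of other mirrors), and that the equivalence class of $\omega'$ is determined by that of $\omega$; then the induction hypothesis gives combinatorial uniqueness of $M'$. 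In Case~3 with $\mu_1=\mu_k$ one must additionally invoke the matching condition $\mu_1'=\mu_k'$ that is built into the definition, which pins down which of the two successors of $\mu_k$ the recursive route uses; this is exactly why the discussion in Figures~\ref{path-depend-on-p1-fig}--\ref{path-depend-on-p2-fig} shows $p$ is needed in the double-headed case to select the isotopy class, and hence it is $\omega$ (not just $(\mu_1,\ldots,\mu_k)$) that determines $M'$.

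The one genuine exception is Case~4, the special double-headed route $\omega=(\mu_1,\mu_2,p)$: here the very first step is a type~I extension move that introduces an auxiliary mirror $\mu_0$ on the level $y$, and the combinatorial type of the final diagram depends on where $\mu_0$ is placed relative to the other mirrors on the two levels through $\mu_1$ and $\mu_2$ — different placements lead to combinatorially distinct $M'$. This is why the statement of the proposition must, and does, add the qualifier ``the position of the auxiliary mirror'' in the special case. So in the uniqueness induction I would carve out Case~4 and state the conclusion with this extra datum; once $\mu_0$ is fixed, the recursive call is to a non-special route of length four, and the general argument applies.

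I expect the main obstacle to be the bookkeeping in the uniqueness proof for double-headed routes with $\mu_1=\mu_k$: one must carefully track, through the recursion, which successor of the repeated $\diagdown$-mirror is used, and verify that the orientation/side information carried by $p$ propagates correctly through the double split move of Case~3 (the ordering of $\mu_k',\mu_k''$ by whether $p\in(\mu_k;\mu_k+\pi)$ or $(\mu_k+\pi;\mu_k)$) and then through the additional constraint $\mu_1'=\mu_k'$. This is not deep but is the place where the somewhat technical Conditions~(2d) and~(iii) of the splitting-route definitions are really used, and where a naive argument would wrongly conclude that $p$ is irrelevant. Everything else — applicability of the moves, shortening of the route, preservation of the splitting-route conditions under truncation — should follow by routine inspection of the definitions, aided by Figures~\ref{splitting-route-exam-fig}--\ref{path-depend-on-p2-fig}.
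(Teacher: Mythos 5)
Your proposal follows the same inductive scaffolding as the paper's proof and correctly isolates the delicate points: the role of the snip point $p$, the constraint $\mu_1'=\mu_k'$ for double-headed routes with $\mu_1=\mu_k$, and the necessity of adding the auxiliary-mirror data in the special case. However, the central claim you lean on in the existence argument — your item~(c), that ``after removing $\mu_k$ (resp.\ $\mu_{k-1},\mu_k$) and passing to successors, the truncated sequence still satisfies~(2d)'' — elides the actual difficulty. After the tail split move, a mirror $\mu_i$ of $M$ may have \emph{two} successors in $M_1$, and Definition~\ref{gen-split-def} deliberately says only ``$\mu_i'$ is a successor of~$\mu_i$'' without specifying which. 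The content of the proposition is precisely that there is a unique eligible choice (up to equivalence of splitting routes), and your plan of ``routine inspection'' of Condition~(2d) does not supply a combinatorial criterion for making that choice or proving its uniqueness.

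The paper closes this gap with one device you do not invoke: the partial homeomorphisms $h_M^{M_1}$ of Subsection~\ref{hMM-subsec}. The requirement on $\omega'$ is reformulated topologically — $\wideparen\omega'$ must be the image under $h_M^{M_1}$ of the truncated splitting path $\wideparen\omega^1\cup\ldots\cup\wideparen\omega^{k-1}$ (or $\wideparen\omega^1\cup\ldots\cup\wideparen\omega^{k-2}$ in the double-headed case). Since $h_M^{M_1}$ is a genuine (partial) homeomorphism respecting the handle decomposition, the image is a normal arc, so it automatically satisfies the technical conditions including~(2d), and it is unique up to isotopy of normal arcs, i.e.\ up to equivalence of routes. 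This disposes of both existence and uniqueness in a single step and in particular settles which successor of each $\mu_i$ to take. Your combinatorial route could perhaps be made to work, but it would have to rediscover this parametrization; as written, the step you flag as ``routine'' is exactly where the proof lives.
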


\begin{proof}
We use the notation from Definition~\ref{gen-split-def}.
We only need to prove the existence of the type~II splitting route~$\omega'$
used for the inductive step in the Cases~2 and~3 of Definition~\ref{gen-split-def}, and show that it is unique up to replacing~$p'$ by another point in the ($\diagup$-)splitting gap of the move~$M\xmapsto{\eta_1}M_1$.

One can see that the requirements on~$\omega'$ can be reformulated in topological terms
by means of the partial homeomorphisms~$h_M^{M'}$ introduced in Subsection~\ref{hMM-subsec} as follows:
the image under~$h^M_{M_1}$ of a type~II splitting path~$\wideparen\omega'$ associated with~$\omega'$
is a subarc of a type~II splitting path~$\wideparen\omega$ associated with~$\omega$.

\begin{figure}[ht]
\begin{tabular}{ccc}
\includegraphics[scale=0.65]{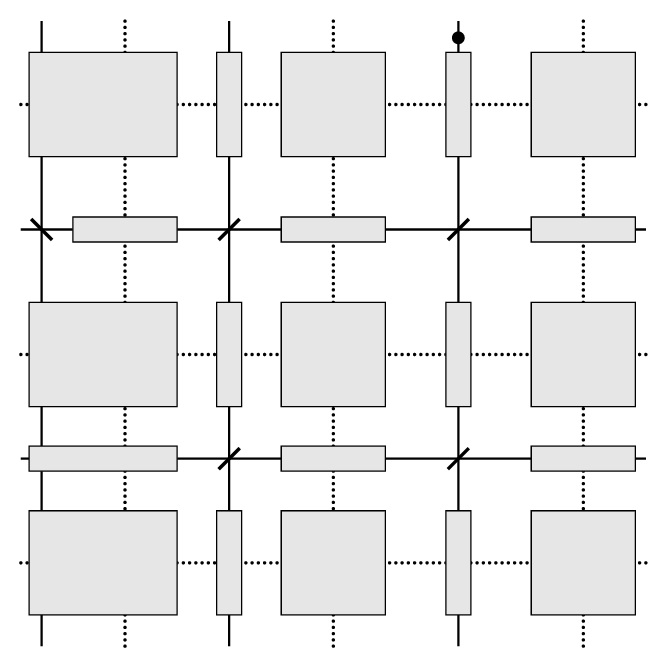}\put(-194,139){$\scriptstyle1$}\put(-63,131){$\scriptstyle2,6$}%
\put(-135,131){$\scriptstyle5$}\put(-63,59){$\scriptstyle3$}\put(-135,59){$\scriptstyle4$}
&
\raisebox{100pt}{$\longrightarrow$}
&
\includegraphics[scale=0.65]{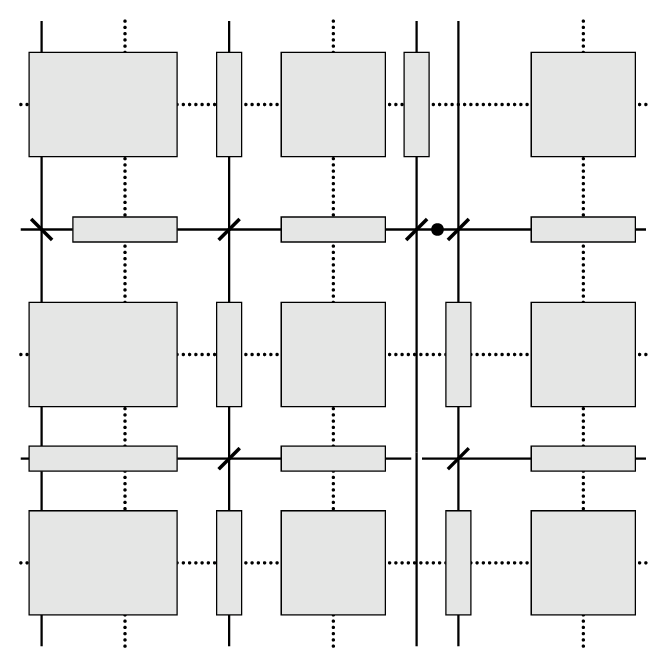}\put(-194,139){$\scriptstyle1$}\put(-63,131){$\scriptstyle2$}%
\put(-135,131){$\scriptstyle5$}\put(-63,59){$\scriptstyle3$}\put(-135,59){$\scriptstyle4$}\\
&&$\big\downarrow$\\
\includegraphics[scale=0.65]{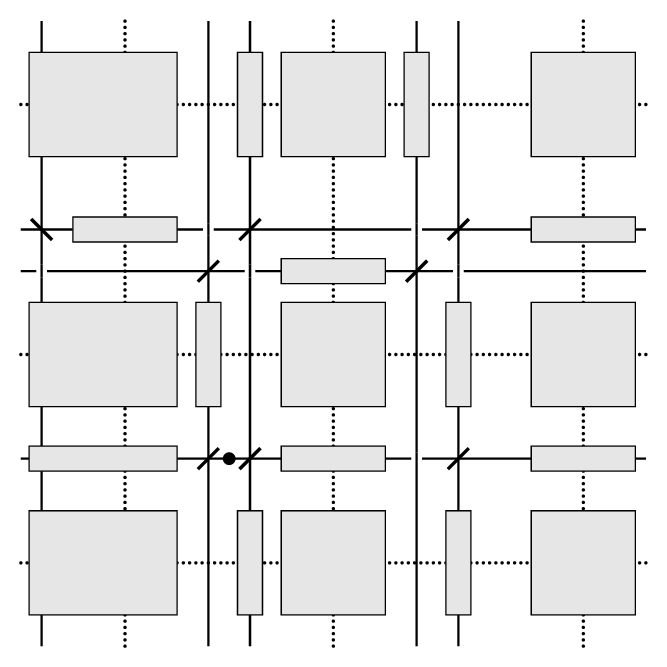}\put(-194,139){$\scriptstyle1$}\put(-63,131){$\scriptstyle2$}%
\put(-63,59){$\scriptstyle3$}
&
\raisebox{100pt}{$\longleftarrow$}
&
\includegraphics[scale=0.65]{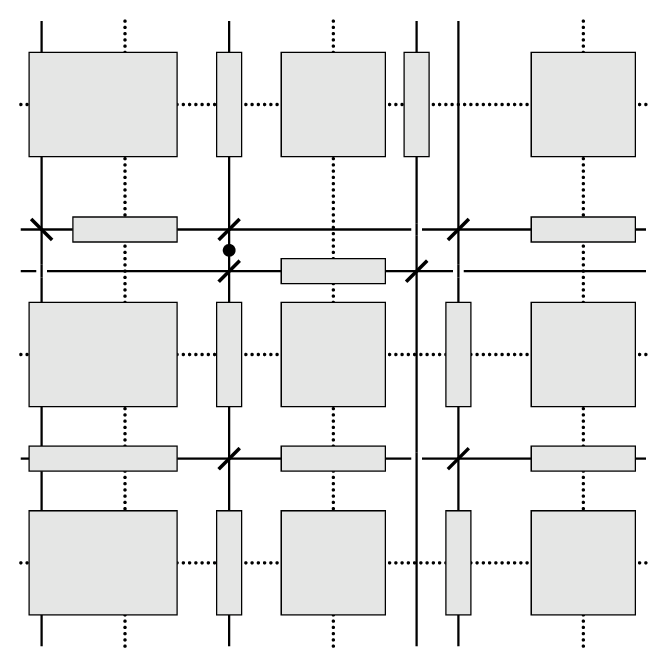}\put(-194,139){$\scriptstyle1$}\put(-63,131){$\scriptstyle2$}%
\put(-63,59){$\scriptstyle3$}\put(-135,59){$\scriptstyle4$}\\
\end{tabular}
\caption{An example of a generalized type~II splitting, part~1}\label{gen-split-pic-1}
\end{figure}
\begin{figure}[ht]
\begin{tabular}{ccc}
\includegraphics[scale=0.65]{gen-split-ex4.eps}\put(-194,139){$\scriptstyle1$}\put(-63,131){$\scriptstyle2$}%
\put(-63,59){$\scriptstyle3$}
&
\raisebox{100pt}{$\longrightarrow$}
&
\includegraphics[scale=0.65]{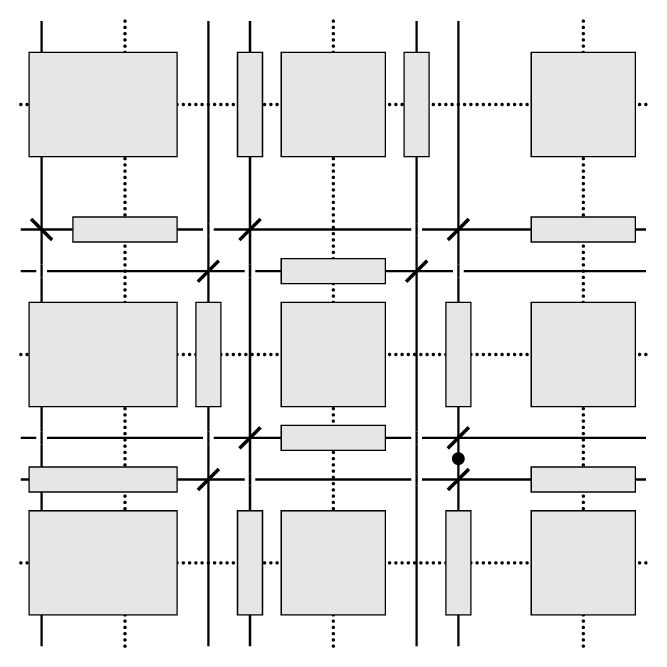}\put(-194,139){$\scriptstyle1$}\put(-63,131){$\scriptstyle2$}\\
&&$\big\downarrow$\\
\includegraphics[scale=0.65]{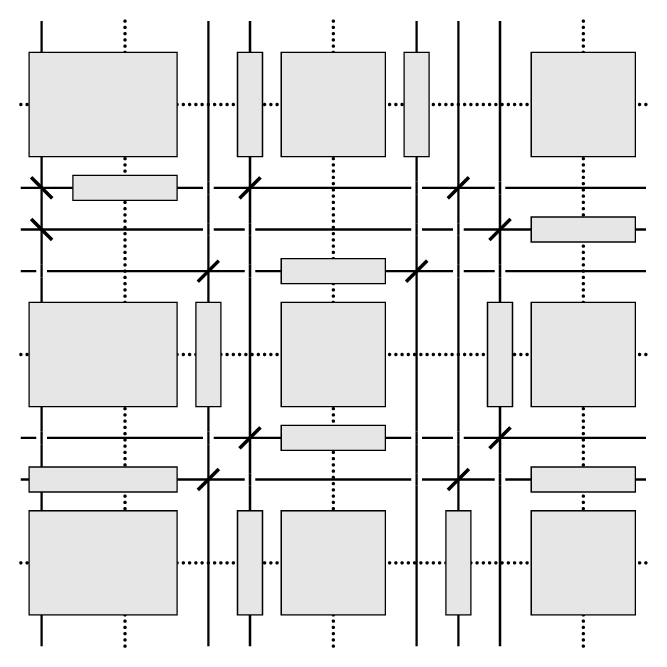}
&
\raisebox{100pt}{$\longleftarrow$}
&
\includegraphics[scale=0.65]{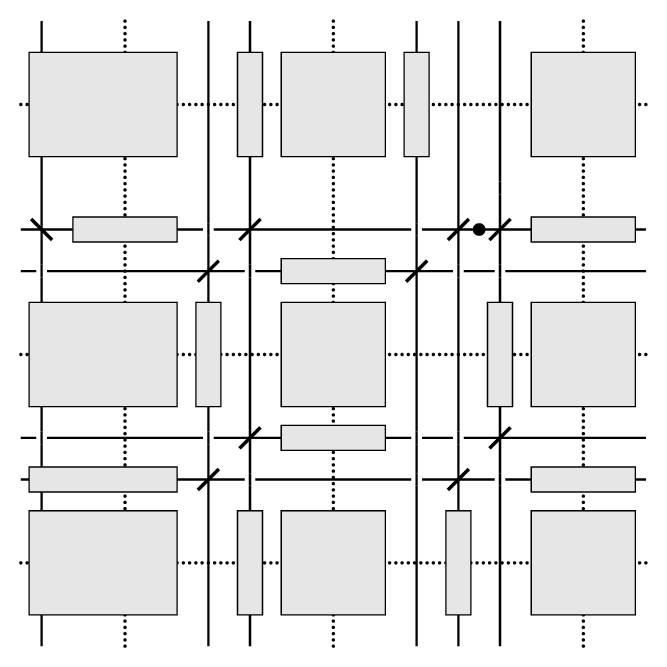}\put(-194,139){$\scriptstyle1$}\\
\end{tabular}
\caption{An example of a generalized type~II splitting, part~2}\label{gen-split-pic-2}
\end{figure}

This means that the sought-for~$\omega'$ can be obtained as follows. Take any type~II
splitting path~$\wideparen\omega$ associated with~$\omega$ and decompose it
into subarcs~$\wideparen\omega^i$, $i=1,\ldots,k$, as in Definition~\ref{simple-route-def}.
The partial homeomorphism~$h_M^{M_1}$ can be chosen so that the intersection of the domain of~$h_M^{M_1}$
with~$\wideparen\omega$ is precisely~$\wideparen\omega^1\cup\ldots\wideparen\omega^{k-1}$
if~$\omega$ is a single-headed splitting route and~$\wideparen\omega^1\cup\ldots\wideparen\omega^{k-2}$
if it is a double-headed one.
We can take for~$\wideparen\omega'$ the image~$h_M^{M_1}(\wideparen\omega)$.
One can see that any eligible choice of~$\wideparen\omega'$ fits into this construction.
This means that the choice of~$\mu_i'$, $i=1,\ldots,k-1$, in Definition~\ref{gen-split-def}
is unique.
\end{proof}

\begin{exam}\label{gen-split-move-exam}
Consider
a general mirror diagram obtained from that in Example~\ref{splitting-route-exam}
by adding more occupied levels and mirrors, and take the same
splitting route~$\omega$ as in Example~\ref{splitting-route-exam}.

Figures~\ref{gen-split-pic-1} and~\ref{gen-split-pic-2} show a sequence of ordinary splittings
arising from the definition of a generalized type~II split move associated with~$\omega$.
In each picture we mark the snip point of the forthcoming split move, and number
the mirrors of the splitting route with which the remaining part of the process (which
is by definition also a generalized type~II split move) is associated. Each mirror~$\mu_i$
and its successors in the respective splitting routes are marked by the respective number~$i$.
\end{exam}

\begin{conv}
Though a generalized split move associated with a splitting route~$\omega$ depends only on the equivalence class of~$\omega$,
by saying that~$M\mapsto M'$ is a (generalized) split move we assume that a concrete splitting route for this move has been
chosen and fixed.
\end{conv}

Recall that equivalent splitting routes may be different only in the last entry, which is the snip point in the case of single-headed splitting routes.

For every non-special generalized type~II split move~$M\mapsto M'$ and its inverse, which is the generalized type~II
merge move~$M'\mapsto M$,
we define the partial homeomorphisms~$h_M^{M'}$, $h_{M'}^M$ as follows.
Let~$M=M_0\xmapsto{\eta_1}M_1\xmapsto{\eta_2}\ldots\xmapsto{\eta_k}M_k=M'$
be the sequence of split moves that arises from the inductive definition of
a non-special generalized type~II split move. We take for~$h_{M'}^M$ the composition~$h_{M_1}^{M_0}\circ h_{M_2}^{M_1}\circ\ldots\circ
h_{M_k}^{M_{k-1}}$. Its domain is the whole of the surface~$\wideparen M'$, and the image is obtained from~$\wideparen M$
by cutting along an arc of the form~$\wideparen\omega$, where~$\omega$ is the type~II splitting route
with which the split move~$M\mapsto M'$ is associated.
Clearly the triple~$(\wideparen M',\wideparen M,h_{M'}^M)$ represents the morphism from~$\widehat M'$
to~$\widehat M$ associated with the move~$M'\xmapsto{\eta^{-1}} M$,
where~$\eta=\eta_k\circ\ldots\circ\eta_2\circ\eta_1$.

Accordingly, $h_M^{M'}$ is defined as the inverse of~$h_{M'}^M$. Its domain is obtained by cutting~$\wideparen M$
along~$\wideparen\omega$, and the image is the whole of~$\wideparen M'$. Clearly, there is a surface~$F\in\widetilde S_{\widehat M'}$
and an extension~$\widetilde h$ of the partial homeomorphism~$h_M^{M'}$ to a homeomorphism~$\wideparen M\rightarrow F$
such that the triple~$(\wideparen M,F,\widetilde h)$ represents the morphism~$\eta$, which is associated with the move~$M\mapsto M'$.

Successors and predecessors of mirrors and occupied levels are defined for the moves~$M\mapsto M'$ and~$M'\mapsto M$
by following the general rule (see Definition~\ref{successor-def}). One can see that every mirror and every occupied
level of~$M'$ has a predecessor in~$M$ for the move~$M\mapsto M'$. The number of successors of a mirror~$\mu$
in~$M$ is one greater than the number of entries in~$\omega$ equal to~$\mu$.

\begin{defi}
Let~$C$ be a collection of boundary circuits of a mirror diagram~$M$, and let~$\omega$
be a splitting route in~$M$. Denote by~$\wideparen C$ the union of the corresponding
boundary components of~$\wideparen M$.
We say that~$\omega$ \emph{separates}~$C$
if there exists an occupied level~$x$ of~$M$ such that at least two
connected components of~$\wideparen x\setminus\wideparen\omega$ have
a non-empty intersection with~$\wideparen C$.
\end{defi}

\begin{prop}\label{non-separating-splitting-prop}
Let~$C$ be a collection of boundary circuits of a mirror diagram~$M$, and let~$\omega$
be a non-special splitting route in~$M$. The following four conditions are equivalent:
\begin{enumerate}
\item
$\omega$ does not separate~$C$;
\item
for any two successive mirrors~$\mu$ and~$\mu'$ in~$\omega$, and also, if~$\omega$ is single-headed,
for~$\mu$ the last mirror and~$\mu'$ the snip point  of~$\omega$, either~$(\mu,\mu')$ or~$(\mu',\mu)$
is disjoint from~$\bigcup_{c\in C}c$;
\item
every split move in the decomposition of~$M\mapsto M'$ arising from Definition~\ref{gen-split-def}
can be chosen so as to preserve all the boundary circuits in~$C$;
\item
a generalized split move~$M\mapsto M'$ associated
with~$\omega$ can be chosen so as to preserve all the boundary circuits in~$C$.
\end{enumerate}
\end{prop}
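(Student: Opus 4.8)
The plan is to prove the cyclic chain of implications $(1)\Rightarrow(2)\Rightarrow(3)\Rightarrow(4)\Rightarrow(1)$. The content is mostly a careful unwinding of the inductive Definition~\ref{gen-split-def} in parallel with the geometric picture provided by the splitting path $\wideparen\omega$ and the partial homeomorphisms $h_M^{M'}$ of Subsection~\ref{hMM-subsec}, so I would keep everything anchored to the surface $\wideparen M$ rather than juggling combinatorics of boundary circuits directly.

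First I would establish $(1)\Leftrightarrow(2)$, which is essentially a reformulation and does not use the inductive structure. The key observation is that $\wideparen\omega$ is a normal arc, so for each occupied level $x$ of $M$ the intersection $\wideparen\omega\cap\wideparen x$ is a union of arcs with endpoints on $\partial\wideparen x$, and these arcs cut the disc $\wideparen x$ into regions. A splitting route separates $C$ precisely when two of these regions both meet $\wideparen C$; since the arcs of $\wideparen\omega\cap\wideparen x$ come in the local patterns forced by Definition~\ref{simple-route-def} (each arc passing between two distinct handles attached at $\wideparen x$, and the ``tongue'' patterns produced by repeated mirrors being coherently directed by the parity Condition~(2d)), non-separation is equivalent to saying that along $\partial\wideparen x$, between any two consecutive points where $\wideparen\omega$ hits $\partial\wideparen x$, one of the two complementary arcs of $\partial\wideparen x$ misses $\wideparen C$. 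Translating this back through the correspondence between components of $x\setminus E_M$ and components of $\wideparen x\cap\partial\wideparen M$ (Subsection~\ref{mirror-diagram-definition-subsec}), and noting that consecutive hits of $\wideparen\omega$ on $\partial\wideparen M$ correspond exactly to successive mirrors in $\omega$ (and, in the single-headed case, to the last mirror and the snip point), yields Condition~(2). The non-speciality hypothesis is used here to guarantee the snip point/auxiliary-level subtleties do not interfere; for a special $\omega$ the statement would fail, which is why it is excluded.

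Next, $(2)\Rightarrow(3)$ is the inductive heart. I would induct on the length $k$ of $\omega$, following the four cases of Definition~\ref{gen-split-def}. In the base case ($k=1$, Case~1), $M\mapsto M'$ is an ordinary type~II split move with splitting mirror $\mu_1$ and snip point $p$; Condition~(2) for $k=1$ says exactly that one of $(\mu_1,p)$, $(p,\mu_1)$ misses $\bigcup_{c\in C}c$, and choosing the snip point within that gap is precisely what makes the split move preserve all circuits in $C$ (this is the analogue, for split moves, of the admissibility discussion in Lemma~\ref{bigon-reduction-lemm}). For the inductive step in Cases~2 and~3 I would: (a) check that the first split/double-split move $M\xmapsto{\eta_1}M_1$ can be chosen $C$-preserving, using that $\mu_k$ (resp. $\mu_{k-1},\mu_k$) and the adjacent entry of $\omega$ satisfy the gap condition from~(2); (b) verify that after this move the shorter splitting route $\omega'$ obtained by restricting $\wideparen\omega$ via $h_M^{M_1}$ still satisfies Condition~(2) with respect to the image $C_1$ of $C$ — this is where I must track that the new mirrors introduced by the first move lie in the splitting gap, which is free of $C$, so no new violations of~(2) appear; and (c) apply the induction hypothesis to $M_1\mapsto M'$ and $\omega'$. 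Case~4 is excluded by non-speciality. The implication $(3)\Rightarrow(4)$ is immediate, since composing $C$-preserving moves gives a $C$-preserving move and the composite of the $\eta_i$ is the associated morphism $\eta$ by definition. Finally $(4)\Rightarrow(1)$ is the contrapositive direction: if $\omega$ separates $C$, there is an occupied level $x$ and two components of $\wideparen x\setminus\wideparen\omega$ each meeting $\wideparen C$; tracing through $h_M^{M'}$, the corresponding disc $\wideparen x$ of $\wideparen M$ must be subdivided by the generalized split (the arc $\wideparen\omega$ along which we cut genuinely separates two pieces of $\wideparen C$), so at least one boundary circuit of $C$ is genuinely modified no matter how the intermediate moves are chosen — one shows this by observing that any decomposition into split moves must include a split whose splitting gap lies on the relevant occupied level between the two $C$-meeting regions.

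The main obstacle I anticipate is step~(b) of the inductive step: controlling how Condition~(2) behaves under the passage $\omega\rightsquigarrow\omega'$. Concretely, one must be sure that the successors of a mirror $\mu_i$ of $\omega$ still have the ``one adjacent gap free of $C$'' property after the first split/double-split move, given that this move relabels occupied levels and inserts new mirrors. This requires a precise bookkeeping lemma about how boundary circuits in $C$ are carried to their successors by a single ($C$-preserving) split move — essentially, that the gaps on $\omega'$ are either gaps inherited verbatim from $\omega$ or gaps lying inside the newly created splitting gap, which is disjoint from $\bigcup_{c\in C}c$ by construction. I would isolate this as a short auxiliary claim and prove it by direct inspection of Definition~\ref{split-def} together with the successor/predecessor conventions of Subsection~\ref{hMM-subsec}; once it is in place, the induction runs smoothly and the remaining implications are routine.
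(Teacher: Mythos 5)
The paper deliberately omits the proof, stating that it is an easy consequence of the correspondence between boundary circuits of~$M$ and boundary components of~$\wideparen M$ (Subsection~\ref{mirror-diagram-definition-subsec}) and the definition of a split move; your chain $(1)\Leftrightarrow(2)\Rightarrow(3)\Rightarrow(4)\Rightarrow(1)$ via the surface picture and induction on the length of~$\omega$ is exactly the sort of argument that remark invites, and the overall structure is sound.

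One point deserves correction. In your base case you write that ``choosing the snip point within that gap is precisely what makes the split move preserve all circuits in~$C$.'' But the snip point~$p$ is part of the data of~$\omega$ and is fixed up to its edge of~$x$; moving~$p$ within that edge does not change which boundary circuits are touched. The choice that actually matters in Definition~\ref{split-def} is which of the two successor levels occupies the original position~$\varphi_0$ (equivalently, whether~$\varphi_1=\varphi_0$ or~$\varphi_2=\varphi_0$), and correspondingly which of the two new mirrors~$\mu',\mu''$ sits at the original position of the splitting mirror. Condition~(2) tells you that one of the two arcs~$(\mu_1;p)$,~$(p;\mu_1)$ is disjoint from~$\bigcup_{c\in C}c$, and you must keep the \emph{other} arc's piece of~$\wideparen x\setminus\wideparen\omega$ in place; this is also what guarantees that the new mirror~$\mu''$ lands in the splitting gap away from the arcs of~$C$ on the perpendicular level, which is where the subtlety really lives. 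Your ``gap'' phrasing conflates these.

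Apart from that, you correctly identify the technical crux---verifying that Condition~(2) is inherited by the shortened route~$\omega'$ after the first split, using the fact that the newly created mirror lies in the splitting gap and hence on a circuit already known not to be in~$C$---and your proposed auxiliary bookkeeping lemma is the right thing to isolate. Alternatively one can bypass~(2) and~(3) entirely and prove $(1)\Leftrightarrow(4)$ directly: a circuit is preserved by the generalized split iff all its boundary arcs on each cut disc~$\wideparen x$ lie in the unique piece of~$\wideparen x\setminus\wideparen\omega$ that can be kept at the original~$\varphi$-coordinate, which is possible for all of~$C$ simultaneously iff at most one piece per disc meets~$\wideparen C$, i.e.\ iff~$\omega$ does not separate~$C$. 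This collapses the induction into a one-shot geometric observation, with~(2) and~(3) read off as reformulations.
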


This proposition is an easy consequence of the correspondence between boundary circuits of~$M$
and boundary components of~$\wideparen M$ described in Subsection~\ref{mirror-diagram-definition-subsec},
and the definition of a split move. We omit the proof.

\begin{defi}
Let~$M\mapsto M'$ be a generalized type~II split move, and let
\begin{equation}\label{canonical-decomposition-eq}
M=M_0\mapsto M_1\mapsto\ldots\mapsto M_k=M'
\end{equation}
be a sequence of moves including a type~II split move, some number of type~I split moves, and possibly
a double split move and a type~I extension move, that arises from the inductive definition of
a generalized type~II split move. We say that the decomposition~\eqref{canonical-decomposition-eq}
of the move~$M\mapsto M'$ is \emph{canonical} if it is neat.
\end{defi}

It follows from Proposition~\ref{non-separating-splitting-prop} that any generalized type~II split move always has a canonical decomposition.

\subsection{Commutation of moves}

Formally speaking, two moves of mirror diagrams~$M_1\mapsto M_2$ and~$M_3\mapsto M_4$
are composable only if~$M_2=M_3$. However, in certain situation, a natural meaning can be given
to a composition of non-trivial moves~$M\mapsto M_1$ and~$M\mapsto M_2$ starting from the same diagram.
If they can be `composed' in either way it makes sense to ask whether the results of the two compositions are (combinatorially)
the same. If they are, then it is natural to say that the moves \emph{commute}. In the following definition we list
specific cases in which we use such terminology.
All mirror diagrams in it are assumed to be enhanced.

\begin{defi}\label{friendly-resemble-def}
Let~$M\xmapsto\eta M_1$ be a move of one of the following kinds:
generalized split, double split, wrinkle creation, jump, elementary bypass removal, or elimination move,
and let~$M\xmapsto\zeta M_2$ be one of the moves for which we have defined the partial homeomorphism~$h_M^{M_2}$.
Let also~$C$ be the set of all boundary circuits of~$M$ preserved by both moves.

We say that the move~$M\xmapsto\zeta M_2$ is \emph{friendly} to the move~$M\xmapsto\eta M_1$ if
one of the cases~1--7 listed below occurs (one extra case will be introduced
in Definition~\ref{friendly-to-gen-wrinkle-def}). In each case we define (non-uniquely) a move~$M_2\xmapsto{\eta^\zeta}M_{21}$
\emph{resembling} the move~$M\xmapsto\eta M_1$. To be called such
it must preserve all boundary circuits in~$C$.
If~$M\xmapsto\zeta M_2$ is a jump move, then, additionally,~$M_2\xmapsto{\eta^\zeta}M_{21}$
is demanded to preserve any boundary circuit of~$M_2$ corresponding
to a boundary circuit of~$M$ preserved by the move~$M\xmapsto\eta M_1$.

\smallskip\noindent\emph{Case~1}:
$M\xmapsto\eta M_1$ is a generalized type~II (respectively, type~I) split move associated with
a type~II (respectively, type~I) splitting route~$\omega$, and there is a unique, up to equivalence,
type~II (respectively, type~I) splitting route~$\omega'$
in~$M_2$ such that the partial homeomorphism~$h_M^{M_2}$
can be chosen to take~$\wideparen\omega$ to~$\wideparen\omega'$.
\\
The move~$M_2\xmapsto{\eta^\zeta}M_{21}$ is defined as a generalized type~II (respectively, type~I)
split move associated with~$\omega'$.

\smallskip\noindent\emph{Case~2}:
$M\xmapsto\eta M_1$ is a jump move,
$M\xmapsto\zeta M_2$ is a move different from a jump move such
that every occupied level and mirror of~$M_2$ has exactly one predecessor in~$M$.
\\
Suppose that the jump move~$M\xmapsto\eta M_1$ shifts a meridian~$m_{\theta_1}\in L_M$.
Let~$m_{\theta_1},m_{\theta_2},\ldots,m_{\theta_n}$ be all occupied meridians of~$M$ listed so that
$\theta_1<\theta_2<\ldots<\theta_n<\theta_1+2\pi$, and let~$m_{\theta_{11}},\ldots,m_{\theta_{1l}}$
be all the successors of~$m_{\theta_1}$ in~$M_2$. By construction of our moves of mirror diagrams, there are $2n$ real numbers
$\theta_1'<\theta_1''<\theta_2'<\theta_2''<\ldots<\theta_n'<\theta_n''<\theta_1'+2\pi$ such that
any successor of the meridian~$m_{\theta_i}$ is contained in~$(\theta_i';\theta_i'')\times\mathbb S^1$.
We may also assume that~$\theta_1'<\theta_{11}<\theta_{12}<\ldots<\theta_{1l}<\theta_1''$.

If the move~$M\xmapsto\eta M_1$ shifts $m_{\theta_1}$ to a position between~$m_{\theta_j}$ and~$m_{\theta_{j+1}}$,
then the move~$M_2\xmapsto{\eta^\zeta}M_{21}$ is a composition of jump moves that replaces
the meridians~$m_{\theta_{11}},\ldots,m_{\theta_{1l}}$ by~$m_{\theta_{11}'},\ldots,m_{\theta_{1l}'}$
such that~$\theta_j''<\theta_{11}'<\theta_{12}'<\ldots<\theta_{1l}'<\theta_{j+1}'$.

If~$M\xmapsto\eta M_1$ shifts not a meridian but a longitude, the move~$M_2\xmapsto{\eta^\zeta}M_{21}$ is defined
similarly with~$\varphi$'s instead of~$\theta$'s.

\smallskip\noindent\emph{Case~3}:
$M\xmapsto\eta M_1$ is a jump move that replaces an occupied level~$x$
by another occupied level~$y$, and~$M\xmapsto\zeta M_2$ is an extension move
such that replacing of~$x$ by~$y$ in~$M_2$ can still be realized by a jump move.
\\
Then this jump move is the one that is taken for~$M_2\xmapsto{\eta^\zeta} M_{21}$.

\smallskip\noindent\emph{Case~4}:
$M\xmapsto\eta M_1$ is an elimination move removing a mirror~$\mu$ and an occupied level~$x$
such that~$\mu$ and~$x$ have unique successors~$\mu'$ and~$x'$, respectively, in~$M_2$, and~$x'$
contains no mirrors of~$M_2$ other than~$\mu'$. The move~$M_2\xmapsto{\eta^\zeta}M_{21}$
is then the elimination of~$\mu'$ and~$x'$.

\smallskip\noindent\emph{Case~5}:
$M\xmapsto\eta M_1$ is an elementary bypass removal, $c$ is the corresponding
inessential boundary circuit having the form of the boundary of a rectangle,
and the move~$M\xmapsto\zeta M_2$ transforms~$c$ to a boundary circuit~$c'$ that
also has the form of the boundary of a rectangle.
\\
Let~$\mu$ be the mirror removed by~$M\xmapsto\eta M_1$. The move~$M_2\xmapsto{\eta^\zeta}M_{21}$
removes a successor of~$\mu$ lying on~$c'$, which is unique.

\smallskip\noindent\emph{Case~6}:
$M\xmapsto\eta M_1$ is a double split move with splitting mirrors~$\mu_1$, $\mu_2$ lying
on an occupied level~$x$,~$M\xmapsto\zeta M_2$ is any move for which the partial
homeomorphism~$h_M^{M_2}$ is defined, and there is a unique triple~$(\mu_1',\mu_2',x')\in E_{M_2}\times E_{M_2}\times L_{M_2}$
such that~$\mu_1'$, $\mu_2'$, and~$x'$ are successors of~$\mu_1$, $\mu_2$ and~$x$, respectively,
and~$\mu_1',\mu_2'\in x'$.
\\
The move~$M_2\xmapsto{\eta^\zeta}M_{21}$ is then defined as a double split move with
splitting mirrors~$\mu_1'$ and~$\mu_2'$.

\smallskip\noindent\emph{Case~7}:
$M\xmapsto\eta M_1$ is a wrinkle creation move with ramification mirrors~$\mu_1$, $\mu_2$,
and the conditions from the previous case hold.
\\
The move~$M_2\xmapsto{\eta^\zeta}M_{21}$ is a wrinkle creation move with
ramification mirrors~$\mu_1'$ and~$\mu_2'$.
\end{defi}

One can see that in each case listed in Definition~\ref{friendly-resemble-def} the move~$M_2\xmapsto{\eta^\zeta}M_{21}$ does exist.

\begin{defi}\label{moves-commute-def}
Let~$M\xmapsto\eta M_1$ and~$M\xmapsto\zeta M_2$ be two moves that are friendly to each other and
neither of which is a special generalized split move, and let~$C$ be the set of all boundary circuits of~$M$ preserved by both moves.
Let also~$M_1\xmapsto{\zeta^\eta}M_{12}$ and~$M_2\xmapsto{\eta^\zeta}M_{21}$
be moves that resemble~$M\xmapsto\zeta M_2$ and~$M\xmapsto\eta M_1$, respectively.
We say that the moves~$M\xmapsto\eta M_1$ and~$M\xmapsto\zeta M_2$ \emph{commute} with
each other if the move
\begin{equation}\label{4composition-eq}
M_{21}\xmapsto{\zeta^\eta\circ\eta\circ\zeta^{-1}\circ(\eta^\zeta)^{-1}}M_{12}
\end{equation}
admits a $C$-neat decomposition into jump moves that preserve the combinatorial type of the diagram.

If the transformation~\eqref{4composition-eq} admits a $C$-neat decomposition into jump moves
each of which either preserves the combinatorial type of the diagram or
modifies it only by exchanging occupied levels that have a common predecessor in~$M$, then we say that
the moves~$M\xmapsto\eta M_1$ and~$M\xmapsto\zeta M_2$ \emph{almost commute}.
\end{defi}

Below we prove a number of statements about almost commutation. In the proofs, we
don't make a special check that the involved jump moves exchange only the successors
of the same occupied level of the original diagram, because this is kind of automatic.
The long sequences of moves that we consider are designed so that the families of successors
of different parallel occupied occupied levels never mix with each other,
and the non-trivial part of establishing the almost commutation of the discussed
moves is to show that the order of successors in each family can be changed
by jump moves in the expected way.

\begin{lemm}\label{splits-commute-with-each-other-lem}
Let~$M\xmapsto\eta M_1$ and~$M\xmapsto\zeta M_2$ be two split moves (not generalized, but of any type)
associated with splitting routes~$\omega_1=(\mu,p)$ and~$\omega_2=(\nu,q)$, respectively,
and let~$x$ and~$y$ be the respective occupied levels of~$M$ that are split by these moves.
Assume that~$p\ne q$.

Then the moves~$M\xmapsto\eta M_1$ and~$M\xmapsto\zeta M_2$ are friendly to each other unless
one of the following situations occurs:
\begin{enumerate}
\item
we have~$\mu=\nu$ and~$x\ne y$;
\item
we have~$x=y$, $\mu\ne\nu$, and the pairs~$\{\mu,p\}$ and~$\{\nu,q\}$ \emph{interleave},
which means that the interval~$(\mu;p)$ contains exactly one of the points~$\nu$ and~$q$.
This is equivalent to saying that~$\wideparen\omega_1$ and~$\wideparen\omega_2$
have an unavoidable intersection.
\end{enumerate}
If the moves~$M\xmapsto\eta M_1$ and~$M\xmapsto\zeta M_2$ are friendly to each other,
then they almost commute.
\end{lemm}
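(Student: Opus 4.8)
The plan is to prove the two assertions of the lemma separately, using the topological picture provided by the surface $\wideparen M$ and the partial homeomorphisms $h_M^{M_2}$, $h_M^{M_1}$ introduced in Subsection~\ref{hMM-subsec}. For the friendliness part, recall that a single split move of any type is, up to the choice of the splitting route $(\mu,p)$, completely determined by its splitting mirror and snip point; equivalently, by an elementary splitting path in $\wideparen M$. So the question of whether $M\xmapsto\zeta M_2$ is friendly to $M\xmapsto\eta M_1$ (Case~1 of Definition~\ref{friendly-resemble-def}) reduces to whether $h_M^{M_2}$ carries the elementary splitting path $\wideparen\omega_1$ to a unique, up to equivalence, elementary type~$T$ splitting path in $\wideparen M_2$, where $T$ is the type of $\eta$. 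First I would describe $\wideparen\omega_1$ concretely: it crosses the single strip $\wideparen\mu$, with the two endpoints lying on $\partial\wideparen x$ close to $\wideparen\mu$ and near $\wideparen p$. The partial homeomorphism $h_M^{M_2}$ for a split move cuts $\wideparen M$ along $\wideparen\omega_2$ (a path crossing $\wideparen\nu$), so $\wideparen\omega_1$ survives into $\wideparen M_2$ essentially intact provided it does not have an unavoidable intersection with $\wideparen\omega_2$, and provided the strip $\wideparen\mu$ (rather, its images) is still a single $1$-handle in $\wideparen M_2$. The first fails precisely when $x=y$, $\mu\ne\nu$ and $\{\mu,p\}$, $\{\nu,q\}$ interleave — this gives situation~(2); the second fails precisely when $\mu=\nu$ and $x\ne y$, because then $\mu$ is the splitting mirror of $\zeta$, so it has two successors in $M_2$ and the image of $\wideparen\omega_1$ has to choose between two non-equivalent elementary splitting paths — this gives situation~(1). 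In all other cases — $\mu=\nu$, $x=y$ (so $\eta=\zeta$ up to the snip point, $p\ne q$); or $\mu\ne\nu$ with $x\ne y$; or $\mu\ne\nu$, $x=y$, non-interleaving — I would check by inspection of the few local pictures that $\wideparen\omega_1$ has a unique image, hence $\zeta$ is friendly to $\eta$ (and symmetrically $\eta$ is friendly to $\zeta$).

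For the almost-commutation part, I would split into these same cases. In each case one has resembling moves $M_1\xmapsto{\zeta^\eta}M_{12}$ and $M_2\xmapsto{\eta^\zeta}M_{21}$, both again single split moves, and one must analyze the transformation
$$M_{21}\xmapsto{\zeta^\eta\circ\eta\circ\zeta^{-1}\circ(\eta^\zeta)^{-1}}M_{12}.$$
Topologically, both $\wideparen M_{12}$ and $\wideparen M_{21}$ are obtained from $\wideparen M$ by cutting along the \emph{same} pair of disjoint (up to isotopy) normal arcs $\wideparen\omega_1$, $\wideparen\omega_2$, followed by small deformations; so $M_{12}$ and $M_{21}$ have the same combinatorial type as mirror diagrams, \emph{except} possibly for the cyclic order in which some successors of a common occupied level of $M$ sit on a new level. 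Concretely, when $x\ne y$ the two splits happen at unrelated occupied levels and commute exactly, so the composite is the identity up to combinatorial equivalence and there is nothing to do (zero jump moves). The substantive case is $x=y$, $\mu\ne\nu$, non-interleaving: here splitting $x$ first at $\mu$ then (the successor of) $\nu$ versus first at $\nu$ then at $\mu$ produces the two middle levels $\ell_{\varphi_1}$, $\ell_{\varphi_2}$ in two possibly different vertical orders, and the discrepancy is exactly an exchange of two levels that both descend from $x$; this is fixed by a single jump move (or two), which modifies the combinatorial type only by exchanging such a pair. The remaining case $\mu=\nu$, $x=y$ (both split the same level at the same mirror, different snip points) is handled similarly: the two snip points split the unique new arc of the boundary circuit into two, and the composite redistributes the block of mirrors by jump moves that preserve the combinatorial type. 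I would then check $C$-neatness of the resulting short sequence of jump moves: since $C$ is the set of boundary circuits preserved by both $\eta$ and $\zeta$, and jump moves are neutral (hence safe) by Lemma~\ref{neutral-move-decomposition-lem} and the remarks after Definition~\ref{neat-def}, neatness follows once we verify that every circuit in $C$ is preserved at each intermediate step, which is immediate from the fact that the jump moves only reshuffle levels descended from $x=y$ and, by Proposition~\ref{non-separating-splitting-prop} applied to both $\omega_1$ and $\omega_2$, the circuits of $C$ meet each such level in a connected piece.

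The main obstacle I anticipate is bookkeeping in the case $x=y$, $\mu\ne\nu$, non-interleaving: one must be careful that the four mirrors and the various split levels are matched up correctly through the chain $(\eta^\zeta)^{-1}$, then $\zeta^{-1}$, then $\eta$, then $\zeta^\eta$, so that the net effect really is a permutation of successors of $x$ realizable by combinatorics-preserving-or-level-exchanging jump moves, and not something worse. This is essentially a finite check of a handful of local configurations (according to whether $\mu$ lies in $(\nu;q)$ or in $(q;\nu)$, and whether $p$ lies before or after $\nu$ along $x$), and I would organize it by drawing the two splitting paths $\wideparen\omega_1$, $\wideparen\omega_2$ in $\wideparen x$ and tracking the four connected components of $\wideparen x\setminus(\wideparen\omega_1\cup\wideparen\omega_2)$ and how they group into the discs $\wideparen{\ell_{\varphi_i}}$ on each side. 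A secondary, purely formal point is to confirm Condition~(3) of Definition~\ref{neat-def} (compatibility of the associated morphisms) for the jump-move sequence; but since all the modified surfaces are obtained from $\wideparen M$ by cutting along the same arcs and the region affected by the correcting jump moves is a union of discs with part of the boundary fixed, this holds automatically, as noted in the discussion after Definition~\ref{neat-def}.
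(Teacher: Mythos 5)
Your proposal follows essentially the same route as the paper: you characterize friendliness via the topological criterion that $\wideparen\omega_1$ and $\wideparen\omega_2$ must be realizable as disjoint normal arcs with unambiguous mutual position (unavoidable intersection giving situation~(2), and the two-successor ambiguity when $\mu=\nu$, $x\ne y$ giving situation~(1)), and you establish almost-commutation by observing that $M_{12}$ and $M_{21}$ differ at most by the cyclic order of successors of $x=y$, which is corrected by jump moves. The paper carries out the $x=y$ part as an explicit five-case inspection (up to the $r_\diagdown$, $r_\diagup$, $r_-$, $r_|$ symmetries) with figures, distinguishing the subcases where the diagrams coincide on the nose from those where one jump move is needed, whereas you argue the same thing more compactly by tracking the four components of $\wideparen x\setminus(\wideparen\omega_1\cup\wideparen\omega_2)$; this is a presentation difference, not a different argument.
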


\begin{proof}
The claim of the lemma is quite obvious if~$\mu\ne\nu$ and~$x\ne y$. In this case, the
moves~$M\xmapsto\eta M_1$ and~$M\xmapsto\zeta M_2$, when viewed combinatorially,
`do not interfere' which each other and clearly commute.

The moves~$M\xmapsto\eta M_1$ and~$M\xmapsto\zeta M_2$ are friendly to each other
if and only if the following two conditions hold:
\begin{enumerate}
\item
the splitting paths~$\wideparen\omega_1$, $\wideparen\omega_2$ associated with~$\omega_1$
and~$\omega_2$ can be chosen to be disjoint;
\item
if~$\wideparen\omega_1'$ and~$\wideparen\omega_2'$ are another disjoint splitting paths
associated with~$\omega_1$ and~$\omega_2$, respectively, then there is a self-homeomorphism
of~$\wideparen M$ isotopic to the identity, and preserving the handle decomposition
structure, that takes the pair~$(\wideparen\omega_1,\wideparen\omega_2)$ to~$(\wideparen\omega_1',\wideparen\omega_2')$
\end{enumerate}
Indeed, the first condition means that we can define the image of~$\omega_1$ in~$M_2$
and the image of~$\omega_2$ in~$M_1$ in a consistent way, and the second condition means
that the choice of these images is essentially unique.

The only possible reason for~$\wideparen\omega_1$ and~$\wideparen\omega_2$ to have an
unavoidable intersection is the coincidence of~$x$ and~$y$ and
a cyclic order of~$\mu,p,\nu,q$ such that the pairs~$\{\mu,p\}$ and~$\{\nu,q\}$ interleave;
see Figure~\ref{unfriendly-fig}~(a).
\begin{figure}[ht]
\begin{tabular}{ccc}
\includegraphics[scale=0.65]{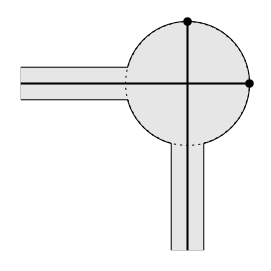}\put(-34,49){$\wideparen x=\wideparen y$}\put(-68,42){$\wideparen\mu$}%
\put(-18,19){$\wideparen\nu$}\put(-3,55){$\wideparen p$}\put(-29,83){$\wideparen q$}&\hbox to 1cm{\hss}&
\raisebox{20pt}{\includegraphics[scale=0.65]{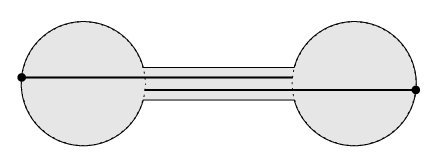}\put(-113,13){$\wideparen x$}\put(-28,13){$\wideparen y$}%
\put(-84,12){$\wideparen\mu=\wideparen\nu$}\put(-140,27){$\wideparen p$}\put(-3,23){$\wideparen q$}}\\
(a)&&(b)
\end{tabular}
\caption{Mutually unfriendly splitting paths~$\wideparen{(\mu,p)}$ and~$\wideparen{(\nu,q)}$}\label{unfriendly-fig}
\end{figure}

Assume that~$\wideparen\omega_1$ and~$\wideparen\omega_2$ can be chosen to be disjoint.
Their mutual position is ambiguous only
if~$\mu$ has two successors for the move~$M\xmapsto\zeta M_2$, or~$\nu$ has two successors for the move~$M\xmapsto\eta M_1$.
Both conditions are equivalent to~$\mu=\nu$. However, if~$x=y$, then the ambiguity does not occur
since the mutual position of~$\wideparen p$ and~$\wideparen q$ on~$\partial\wideparen x$ prescribes
the mutual position of~$\wideparen\omega_1\cap\wideparen\mu$
and~$\wideparen\omega_2\cap\wideparen\mu$
in~$\wideparen\mu$. The ambiguity does occur if~$\mu=\nu$ and~$x\ne y$; see Figure~\ref{unfriendly-fig}~(b).

It remains to show that the moves~$M\xmapsto\eta M_1$ and~$M\xmapsto\zeta M_2$ almost commute with one another
in the case when they are mutually friendly, and we have~$x=y$. Up to various
symmetries there are the following five cases to consider. In each case, we denote by~$\nu_1$ the splitting mirror
of the move~$M_1\xmapsto{\zeta^\eta}M_{12}$,
by~$x_1$ the successor of~$x$ in~$M_1$ containing~$\nu_1$,
by~$\mu_2$ the splitting mirror
of the move~$M_2\xmapsto{\eta^\zeta}M_{21}$, by~$x_2$ the successor of~$x$
in~$M_2$ containing~$\mu_2$, and by~$q_1\in x_1$, $p_2\in x_2$ points such that~$h_M^{M_1}(\wideparen q)=
\wideparen q_1$, $h_M^{M_2}(\wideparen p)=\wideparen p_2$.

\medskip
\noindent\emph{Case 1}:
the moves~$M\xmapsto\eta M_1$ and~$M\xmapsto\zeta M_2$ are of type~II and type~I, respectively,
and we have~$q\in(\mu;\nu)$ and~$p\in(\nu;\mu)$.

One can see from Figure~\ref{split-comm-lem-case-x=y-1}
that the results of the transformations~$M\xmapsto{\zeta^\eta\circ\eta}M_{12}$
and~$M\xmapsto{\eta^\zeta\circ\zeta}M_{21}$ are combinatorially the same.
It follows easily that the moves commute.
\begin{figure}[ht]
\includegraphics[scale=0.65]{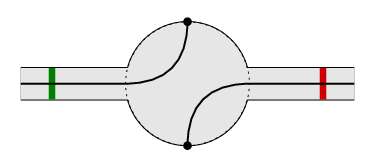}\put(-61,23){$\wideparen x$}\put(-100,10){$\wideparen\mu$}%
\put(-24,10){$\wideparen\nu$}\put(-61,-3){$\wideparen q$}\put(-61,50){$\wideparen p$}

\begin{tabular}{ccc}
\includegraphics[scale=0.65]{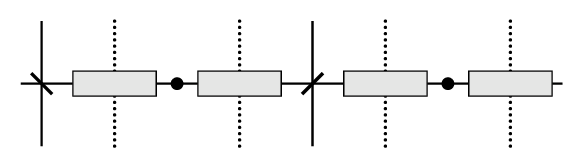}\put(-166,17){$\mu$}\put(-129,17){$q$}\put(-95,17){$\nu$}%
\put(-45,17){$p$}\put(-160,-5){$M$}\put(-184,24){$x$}
&\raisebox{23pt}{$\stackrel\eta\longrightarrow$}&
\includegraphics[scale=0.65]{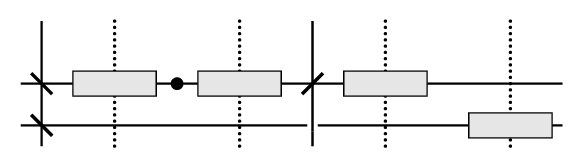}\put(-129,17){$q_1$}\put(-95,17){$\nu_1$}\put(-160,-5){$M_1$}\put(-187,24){$x_1$}\\
$\big\downarrow$\hbox to 0pt{$\scriptstyle\zeta$\hss}&&$\big\downarrow$\hbox to 0pt{$\scriptstyle\zeta^\eta$\hss}\\
\includegraphics[scale=0.65]{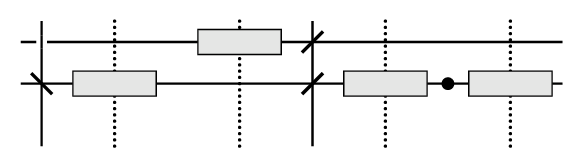}\put(-169,17){$\mu_2$}\put(-45,17){$p_2$}\put(-160,-5){$M_2$}\put(-187,24){$x_2$}
&\raisebox{23pt}{$\stackrel{\eta^\zeta}\longrightarrow$}&
\includegraphics[scale=0.65]{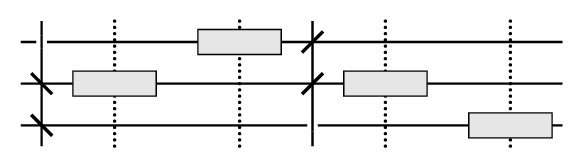}\put(-160,-5){$M_{12}=M_{21}$}
\end{tabular}
\caption{Proof of Lemma~\ref{splits-commute-with-each-other-lem} in the case $x=y$, $\mu$ is of type~`$\diagdown$',
$\nu$ is of type~`$\diagup$',
$q\in(\mu;\nu)$, $p\in(\nu;\mu)$}\label{split-comm-lem-case-x=y-1}
\end{figure}

\medskip
\noindent\emph{Case 2}:
The moves~$M\xmapsto\eta M_1$ and~$M\xmapsto\zeta M_2$ are of type~II and type~I, respectively,
and we have~$q\in(\nu;\mu)$ and~$p\in(q;\mu)$.

One can see from Figure~\ref{split-comm-lem-case-x=y-2}
that the combinatorial types of the diagrams~$M_{12}$ and~$M_{21}$ are obtained from each
other by exchanging two successors of~$x$, and this exchange can
be realized by jump moves. This implies that
the moves almost commute.
\begin{figure}[ht]
\includegraphics[scale=0.65]{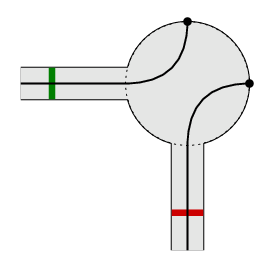}\put(-29,55){$\wideparen x$}\put(-68,42){$\wideparen\mu$}%
\put(-18,19){$\wideparen\nu$}\put(-3,55){$\wideparen q$}\put(-29,83){$\wideparen p$}

\begin{tabular}{ccc}
\includegraphics[scale=0.65]{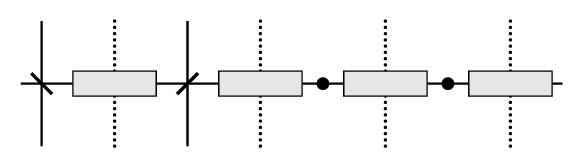}\put(-166,17){$\mu$}\put(-85,17){$q$}\put(-133,17){$\nu$}%
\put(-45,17){$p$}\put(-160,-5){$M$}\put(-184,24){$x$}
&\raisebox{23pt}{$\stackrel\eta\longrightarrow$}&
\includegraphics[scale=0.65]{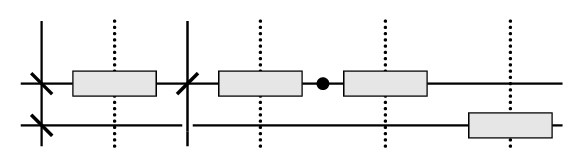}\put(-85,17){$q_1$}\put(-134,17){$\nu_1$}\put(-160,-5){$M_1$}\put(-187,24){$x_1$}\\
$\big\downarrow$\hbox to 0pt{$\scriptstyle\zeta$\hss}\\
\includegraphics[scale=0.65]{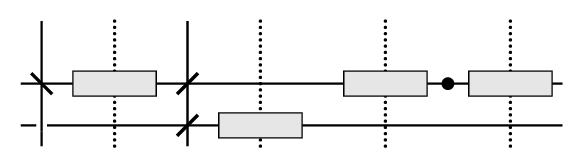}\put(-169,17){$\mu_2$}\put(-45,17){$p_2$}\put(-160,-5){$M_2$}\put(-187,24){$x_2$}
&&\raisebox{20pt}{$\big\downarrow$\hbox to 0pt{$\scriptstyle\zeta^\eta$\hss}}\\
$\big\downarrow$\hbox to 0pt{$\scriptstyle\eta^\zeta$\hss}\\
\includegraphics[scale=0.65]{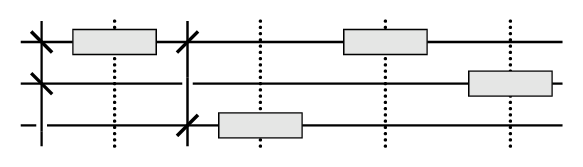}\put(-160,-5){$M_{21}$}
&&
\includegraphics[scale=0.65]{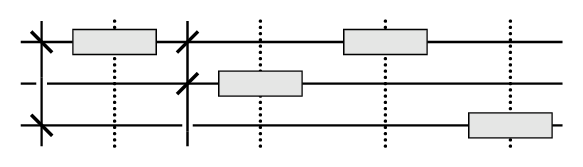}\put(-160,-5){$M_{12}$}\\
\end{tabular}
\caption{Proof of Lemma~\ref{splits-commute-with-each-other-lem} in the case $x=y$, $\mu$ is of type~`$\diagdown$',
$\nu$ is of type~`$\diagup$',
$q\in(\nu;\mu)$, $p\in(q;\mu)$}\label{split-comm-lem-case-x=y-2}
\end{figure}

\medskip
\noindent\emph{Case 3}:
The moves~$M\xmapsto\eta M_1$ and~$M\xmapsto\zeta M_2$ are both of type~I,
and we have~$\mu=\nu$. As one can see from Figure~\ref{split-comm-lem-case-mu=nu-x=y-fig}
the diagrams~$M_{12}$ and~$M_{21}$ are combinatorially equivalent, which
means that the moves commute.
\begin{figure}[ht]
\includegraphics[scale=0.65]{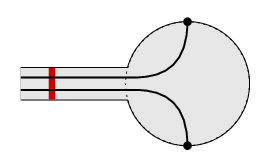}\put(-26,23){$\wideparen x$}\put(-75,10){$\wideparen\mu=\wideparen\nu$}%
\put(-29,-3){$\wideparen p$}\put(-29,50){$\wideparen q$}

\begin{tabular}{ccc}
\includegraphics[scale=0.65]{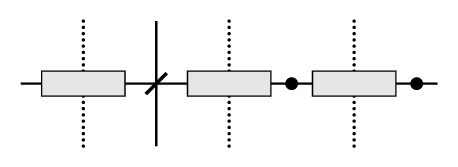}\put(-112,17){$\mu=\nu$}\put(-16,17){$q$}%
\put(-55,17){$p$}\put(-130,-5){$M$}\put(-145,24){$x$}
&\raisebox{23pt}{$\stackrel\eta\longrightarrow$}&
\includegraphics[scale=0.65]{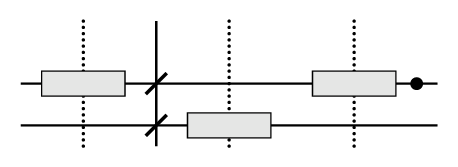}\put(-105,17){}\put(-16,17){$q_1$}\put(-90,30){$\nu_1$}%
\put(-130,-5){$M_1$}\put(-148,24){$x_1$}\\
$\big\downarrow$\hbox to 0pt{$\scriptstyle\zeta$\hss}&&$\big\downarrow$\hbox to 0pt{$\scriptstyle\zeta^\eta$\hss}\\
\includegraphics[scale=0.65]{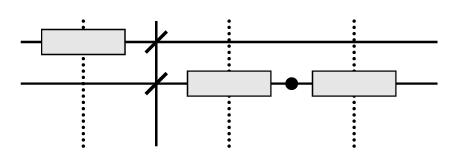}\put(-107,17){$\mu_2$}%
\put(-55,17){$p_2$}\put(-130,-5){$M_2$}\put(-148,24){$x_2$}
&\raisebox{23pt}{$\stackrel{\eta^\zeta}\longrightarrow$}&
\includegraphics[scale=0.65]{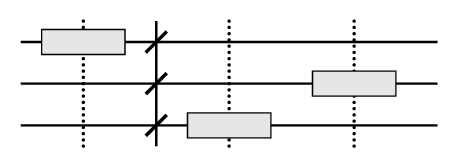}\put(-130,-5){$M_{12}=M_{21}$}
\end{tabular}
\caption{Proof of Lemma~\ref{splits-commute-with-each-other-lem} in the case $x=y$, $\mu=\nu$ are of type~`$\diagup$'}\label{split-comm-lem-case-mu=nu-x=y-fig}
\end{figure}

\medskip
\noindent\emph{Case 4}:
The moves~$M\xmapsto\eta M_1$ and~$M\xmapsto\zeta M_2$ are both of type~I,
and we have~$q\in(\mu;\nu)$, $p\in(\nu;\mu)$. This case is similar to Case~2.
The almost commutation of the moves is illustrated in Figure~\ref{split-comm-lem-case-type-I-x=y-1-fig}.
\begin{figure}[ht]
\raisebox{20pt}{\includegraphics[scale=0.65]{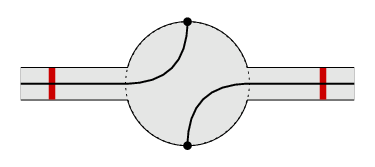}\put(-61,23){$\wideparen x$}\put(-100,10){$\wideparen\mu$}%
\put(-24,10){$\wideparen\nu$}\put(-61,-3){$\wideparen q$}\put(-61,50){$\wideparen p$}}

\begin{tabular}{ccc}
\includegraphics[scale=0.65]{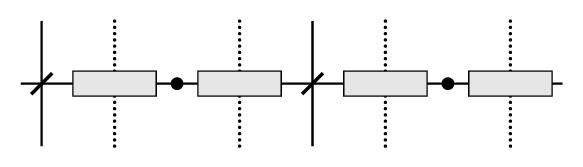}\put(-178,17){$\mu$}\put(-129,17){$q$}\put(-93,17){$\nu$}%
\put(-45,17){$p$}\put(-160,-5){$M$}\put(-184,24){$x$}
&\raisebox{23pt}{$\stackrel\eta\longrightarrow$}&
\includegraphics[scale=0.65]{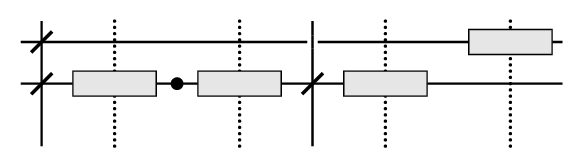}\put(-129,17){$q_1$}\put(-95,17){$\nu_1$}\put(-160,-5){$M_1$}\put(-187,24){$x_1$}\\
$\big\downarrow$\hbox to 0pt{$\scriptstyle\zeta$\hss}\\
\includegraphics[scale=0.65]{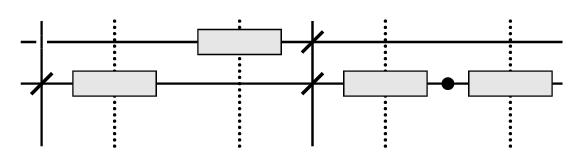}\put(-180,17){$\mu_2$}\put(-45,17){$p_2$}\put(-160,-5){$M_2$}\put(-187,24){$x_2$}
&&\raisebox{20pt}{$\big\downarrow$\hbox to 0pt{$\scriptstyle\zeta^\eta$\hss}}\\
$\big\downarrow$\hbox to 0pt{$\scriptstyle\eta^\zeta$\hss}\\
\includegraphics[scale=0.65]{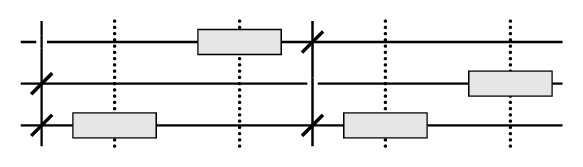}\put(-160,-5){$M_{21}$}
&&
\includegraphics[scale=0.65]{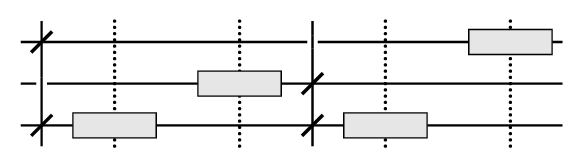}\put(-160,-5){$M_{12}$}\\
\end{tabular}
\caption{Proof of Lemma~\ref{splits-commute-with-each-other-lem} in the case $x=y$, $\mu\ne\nu$ are of type~`$\diagup$', $q\in(\mu;\nu)$,
$p\in(\nu;\mu)$}\label{split-comm-lem-case-type-I-x=y-1-fig}
\end{figure}

\medskip
\noindent\emph{Case 5}:
The moves~$M\xmapsto\eta M_1$ and~$M\xmapsto\zeta M_2$ are both of type~I,
and we have~$q\in(\nu;\mu)$, $p\in(q;\mu)$. This case is similar to Case~1.
The commutation of the moves is illustrated in Figure~\ref{split-comm-lem-case-type-I-x=y-2-fig}.
\begin{figure}[ht]
\includegraphics[scale=0.65]{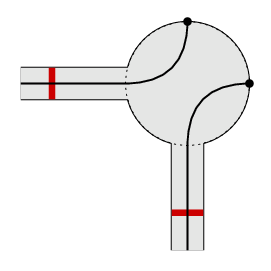}\put(-29,55){$\wideparen x$}\put(-68,42){$\wideparen\mu$}%
\put(-18,19){$\wideparen\nu$}\put(-3,55){$\wideparen q$}\put(-29,83){$\wideparen p$}

\begin{tabular}{ccc}
\includegraphics[scale=0.65]{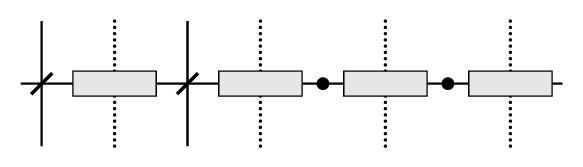}\put(-178,17){$\mu$}\put(-85,17){$q$}\put(-133,17){$\nu$}%
\put(-45,17){$p$}\put(-160,-5){$M$}\put(-184,24){$x$}
&\raisebox{23pt}{$\stackrel\eta\longrightarrow$}&
\includegraphics[scale=0.65]{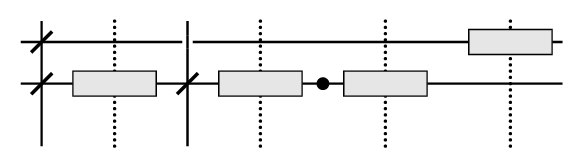}\put(-85,17){$q_1$}\put(-134,17){$\nu_1$}\put(-160,-5){$M_1$}\put(-187,24){$x_1$}\\
$\big\downarrow$\hbox to 0pt{$\scriptstyle\zeta$\hss}&&$\big\downarrow$\hbox to 0pt{$\scriptstyle\zeta^\eta$\hss}\\
\includegraphics[scale=0.65]{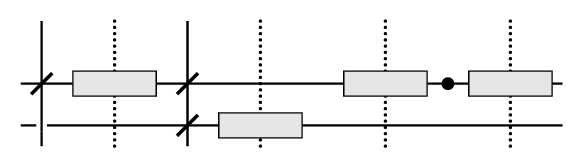}\put(-180,17){$\mu_2$}\put(-45,17){$p_2$}\put(-160,-5){$M_2$}\put(-187,24){$x_2$}
&\raisebox{23pt}{$\stackrel{\eta^\zeta}\longrightarrow$}&
\includegraphics[scale=0.65]{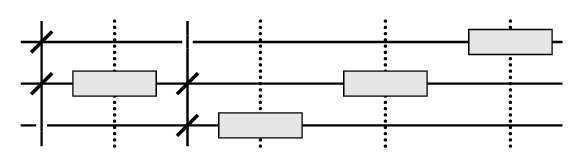}\put(-160,-5){$M_{12}=M_{21}$}
\end{tabular}
\caption{Proof of Lemma~\ref{splits-commute-with-each-other-lem} in the case $x=y$, $\mu\ne\nu$ are of type~`$\diagup$', $q\in(\nu;\mu)$,
$p\in(q;\mu)$}\label{split-comm-lem-case-type-I-x=y-2-fig}
\end{figure}

\medskip
Thus, in all five cases the moves~$M\xmapsto\eta M_1$ and~$M\xmapsto\zeta M_2$ almost commute.
\end{proof}

\begin{lemm}\label{splits-commute-with-each-other-lem-2}
Let~$M\xmapsto\eta M_1$ be a double split move with splitting
$\diagdown$-mirror~$\mu_1$ and splitting $\diagup$-mirror~$\mu_2$, and let~$M\xmapsto\zeta M_2$ be a type~I split move
associated with a splitting route~$\omega_2=(\nu,q)$.
Let also~$x$ and~$y$ be the respective occupied levels of~$M$ that are split by these moves.

Then the moves~$M\xmapsto\eta M_1$ and~$M\xmapsto\zeta M_2$ are friendly to each other unless
one of the following situations occurs:
\begin{enumerate}
\item
we have~$\mu_2=\nu$ and~$x\ne y$;
\item
we have~$x=y$, $\mu_2\ne\nu$, and the pairs~$\{\mu_1,\mu_2\}$ and~$\{\nu,q\}$ interleave.
This is equivalent to saying that~$\wideparen\omega_2$
has an unavoidable intersection with a splitting path associated
with some (and then any) type~II splitting route starting from~$(\mu_1,\mu_2)$.
\end{enumerate}
If the moves~$M\xmapsto\eta M_1$ and~$M\xmapsto\zeta M_2$ are friendly to each other,
then they almost commute.
\end{lemm}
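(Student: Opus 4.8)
The plan is to follow closely the proof of Lemma~\ref{splits-commute-with-each-other-lem}, adapting it to the case where one of the two moves is a double split rather than an ordinary split. Recall from Subsection~\ref{hMM-subsec} (Case~5) that the double split move $M\xmapsto\eta M_1$ carries a partial homeomorphism $h_M^{M_1}$ whose image is obtained from $\wideparen M$ by cutting along a normal arc $\beta$ that crosses precisely the two strips $\wideparen\mu_1$, $\wideparen\mu_2$ and passes through (cutting in two) the disc $\wideparen x$, where $x$ is the occupied level of $M$ carrying $\mu_1$ and $\mu_2$. One should think of $\beta$ as the initial segment of a type~II splitting path starting from $(\mu_1,\mu_2)$; this is exactly what makes the phrasing of condition~(2) in the lemma meaningful. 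Since a double split move has a $\diagdown$ splitting mirror $\mu_1$ and a $\diagup$ splitting mirror $\mu_2$, whereas a type~I split move has a $\diagup$ splitting mirror $\nu$, we always have $\mu_1\ne\nu$, so the only mirror coincidence that needs attention is $\mu_2=\nu$.

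First I would dispose of the case $\mu_2\ne\nu$, $x\ne y$: the two moves affect disjoint portions of $\wideparen M$, the paths $\beta$ and $\wideparen\omega_2$ can be chosen disjoint, their mutual position is unambiguous, and the diagrams $M_{12}$ and $M_{21}$ come out combinatorially identical, so the moves commute. Next, exactly as in the previous lemma, I would note that $M\xmapsto\eta M_1$ and $M\xmapsto\zeta M_2$ are friendly to each other precisely when (a) $\beta$ and $\wideparen\omega_2$ admit disjoint representatives and (b) any two such disjoint pairs are interchangeable by a handle-decomposition-preserving self-homeomorphism of $\wideparen M$ isotopic to the identity. Condition~(a) fails exactly when $x=y$ and the pairs $\{\mu_1,\mu_2\}$, $\{\nu,q\}$ interleave on the circle $\partial\wideparen x$, forcing $\beta$ and $\wideparen\omega_2$ to intersect; this is case~(2). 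Condition~(b) fails when the crossing of $\wideparen\mu_2$ by $\wideparen\omega_2$ may sit on either side of the crossing of $\wideparen\mu_2$ by $\beta$; this happens exactly when $\mu_2=\nu$ and $\mu_2$ has two successors for the opposite move, i.e.\ when $x\ne y$, which is case~(1). When $\mu_2=\nu$ but $x=y$, the mutual position of $\wideparen q$ and the strip ends on $\partial\wideparen x$ pins everything down, so no ambiguity arises and the moves are friendly.

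To finish, assuming friendliness I would establish almost commutation by a case analysis on $x=y$, mirroring Cases~1--5 in the proof of Lemma~\ref{splits-commute-with-each-other-lem}: the subcases are organized by the cyclic order on $\partial\wideparen x$ of the strip ends $\wideparen\mu_1$, $\wideparen\mu_2$ of the double split and the strip end $\wideparen\nu$, snip point $\wideparen q$ of the type~I split, plus the separate subcase $\mu_2=\nu$. In each subcase I would expand $M\xmapsto{\zeta^\eta\circ\eta}M_{12}$ and $M\xmapsto{\eta^\zeta\circ\zeta}M_{21}$, using that the double split is a wrinkle creation followed by a bridge removal (both neatly decomposable into type~$T$ elementary moves by Lemma~\ref{neutral-move-decomposition-lem}) composed with a type~I split, draw the two resulting diagrams, and observe that they agree up to the order of two successors of $x$ — an order that is corrected by jump moves exchanging levels with a common predecessor in $M$, which is precisely what Definition~\ref{moves-commute-def} demands for almost commutation. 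The equality of composed morphisms required in that definition is automatic, as in Lemma~\ref{splits-commute-with-each-other-lem}, because the modified region is a disc a part of whose boundary stays fixed.

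The step I expect to be the main obstacle is the bookkeeping in the case $x=y$: since the double split is not a single elementary move, one must verify that the auxiliary level and the two extra $\diagup$-mirrors introduced inside the wrinkle creation do not interfere with the successors of $\nu$ nor with the realizing path $\wideparen\omega_2$, and that the neat decompositions of the intermediate wrinkle and bridge moves can be threaded through without disturbing the boundary circuits that must be preserved. Once the relative positions on $\partial\wideparen x$ are enumerated this reduces to a finite, if tedious, diagram check, directly analogous to the figures accompanying Lemma~\ref{splits-commute-with-each-other-lem}.
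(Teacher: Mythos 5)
Your proposal is correct and takes essentially the approach the paper itself indicates: the paper omits the proof with the remark that it is completely analogous to Lemma~\ref{splits-commute-with-each-other-lem}, and your plan spells out precisely that analogy — replacing the type~II splitting path by the arc~$\beta$ that the double split move cuts along, observing that $\mu_1\ne\nu$ is automatic by type mismatch, and reproducing the dichotomy between unavoidable intersection (case~2) and positional ambiguity when a shared mirror is the splitting mirror of both moves with $x\ne y$ (case~1). One small remark: in your last paragraph you worry about the auxiliary level and the two extra $\diagup$-mirrors "introduced inside the wrinkle creation" interfering with successors of~$\nu$. This concern is largely vacuous, because the almost-commutation condition of Definition~\ref{moves-commute-def} compares only the end diagrams $M_{12}$ and $M_{21}$; the ephemeral level $\ell_{\varphi_2}$ and its mirrors $\mu_1''$, $\mu_2''$ are already gone from~$M_1$ and hence never interact with~$M_2$. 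The comparison is a direct combinatorial check on $M_{12}$ versus $M_{21}$, exactly as in the figures for Lemma~\ref{splits-commute-with-each-other-lem}, and the only discrepancy that can arise is a transposition of two successors of~$x$, repaired by jump moves. So your plan is sound; that one caveat is an overstatement of the bookkeeping burden.
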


We omit the proof, which is completely analogous to the proof of Lemma~\ref{splits-commute-with-each-other-lem}.

\begin{lemm}\label{split-commute-with-jump-lem}
Any non-special generalized type~II split move commutes with any jump move.
\end{lemm}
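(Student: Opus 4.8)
The plan is to reduce the statement to the case of ordinary split moves handled in Lemma~\ref{splits-commute-with-each-other-lem} by unwinding the inductive definition of a generalized type~II split move (Definition~\ref{gen-split-def}). Let $M\xmapsto\eta M'$ be a non-special generalized type~II split move associated with a splitting route $\omega=(\mu_1,\ldots,\mu_k,p)$, and let $M\xmapsto\zeta M_1$ be a jump move that shifts some occupied level $x$ to a parallel level $x'$. First I would observe that the jump move is friendly to each of the elementary moves occurring in the canonical decomposition $M=N_0\xmapsto{\zeta_1}N_1\xmapsto{\zeta_2}\ldots\xmapsto{\zeta_m}N_m=M'$ of $M\xmapsto\eta M'$: each $N_{i-1}\mapsto N_i$ is a type~I split, a type~II split, a double split, or a type~I extension, and for every such move the partial homeomorphism $h_{N_{i-1}}^{N_i}$ is defined, so Cases~1, 2, 3, and~6 of Definition~\ref{friendly-resemble-def} apply (and symmetrically the split/double split moves are friendly to the jump move). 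Thus along the whole decomposition we obtain a ``ladder'': diagrams $N_i'$ obtained from $N_i$ by a jump move $N_i\xmapsto{\zeta^{(i)}}N_i'$ resembling $M\xmapsto\zeta M_1$, together with moves $N_{i-1}'\mapsto N_i'$ resembling $N_{i-1}\mapsto N_i$.

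Next I would run the commutation of elementary moves rung by rung. For each $i$, the two moves $N_{i-1}\mapsto N_i$ (a split or double split) and $N_{i-1}\mapsto N_{i-1}'$ (a jump move) either commute or almost commute by Lemma~\ref{splits-commute-with-each-other-lem}, Lemma~\ref{splits-commute-with-each-other-lem-2}, or by the elementary analysis of how a jump move interacts with a type~I extension (Case~3 of Definition~\ref{friendly-resemble-def}, where the resembling move is again a jump move and the square closes up to combinatorial equivalence). Concatenating these rungs, the composite square $M_{21}\xmapsto{\ (\ )\ }M_{12}$ for the pair $(\eta,\zeta)$ decomposes into the individual squares, each of which is $C$-neatly filled by jump moves that either preserve the combinatorial type or exchange two successors of a common occupied level of the preceding diagram. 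Since, across the whole ladder, the successors being exchanged are all successors of a single occupied level of $M$ (the families of successors of distinct parallel occupied levels never mix, as the moves are built so as to keep them separated), these exchanges can in fact be reorganized into jump moves realizing an arbitrary reshuffle of the successor family, so the total square is $C$-neatly fillable by combinatorial-type-preserving jump moves. Hence $M\xmapsto\eta M'$ and $M\xmapsto\zeta M_1$ commute in the sense of Definition~\ref{moves-commute-def}.

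There are two points where care is needed, and I expect the second to be the main obstacle. The first, minor, point is bookkeeping about which intermediate jump moves may temporarily exchange levels and showing these exchanges cancel or combine correctly; this is handled by the general principle stated right after Definition~\ref{moves-commute-def} that we need not separately verify that jump moves exchange only successors of the same original level. The second, genuine, difficulty is the position of the snip/auxiliary data: when $\mu_j$ occurs more than once in $\omega$ (so $\mu_j$ has several successors under $h_M^{M_1}$), one must check that the jump move $M\xmapsto\zeta M_1$ carries the splitting route $\omega$ to a \emph{unique} equivalence class of splitting route $\omega'$ in $M_1$, so that the resembling move of Case~1 is well defined — this is exactly why the hypothesis ``non-special'' is imposed, and it amounts to verifying that a jump move does not change which of two successors of a repeated mirror lies on which side, which follows from the uniqueness of the splitting path associated with a splitting route together with the fact that a jump move preserves the handle-decomposition structure. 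Once this well-definedness is in place, the rest is the routine ladder argument sketched above, and I would present it by induction on $k$, the number of mirrors in $\omega$, with the base case $k=1$ being precisely Lemma~\ref{splits-commute-with-each-other-lem} (in its type~I/type~II forms) and the inductive step peeling off the first split/double split/extension move in the canonical decomposition and applying the inductive hypothesis to the remaining generalized split move, which is associated with a shorter route.
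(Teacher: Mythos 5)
The paper gives no proof here (``We omit the easy proof''), so I can only assess your argument on its own terms. Your overall strategy — unwind the canonical decomposition into ordinary split, double split, and extension moves, and commute each rung with the jump move — is a sound plan, and is presumably close to what the authors have in mind. But there is a real gap at the key step: you cite Lemmas~\ref{splits-commute-with-each-other-lem} and~\ref{splits-commute-with-each-other-lem-2} to commute each rung, yet those lemmas are about commuting two \emph{split} (or split and double split) moves with each other; they say nothing about commuting a split move with a \emph{jump} move. What you actually need at each rung is a statement that a type~I split, a type~II split, a double split, or a type~I extension move commutes with a jump move. This fact is true and elementary — a jump move just shifts one occupied level parallel to itself without creating any new ambiguity in the ordering of successors, so each such rung commutes \emph{exactly}, not just almost — but you never argue it directly, and the lemmas you invoke do not supply it. Once each rung commutes exactly, the ladder assembles cleanly, and the cancellation bookkeeping you devote a paragraph to is unnecessary.

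Your explanation of why the ``non-special'' hypothesis is imposed is also off the mark. The image of a splitting route under a jump move is always unique — a jump move preserves the number of mirrors and does not duplicate them, so there is no ambiguity in forming $\omega'$ even when a mirror is repeated in $\omega$. The actual reason is more basic: Definition~\ref{moves-commute-def} explicitly declares commute/almost commute undefined when one of the moves is a special generalized split move, because a special move depends on a choice of auxiliary mirror and auxiliary level that are not determined by the splitting route, and a jump move can disturb the position of that auxiliary datum. This case is handled separately, with a weaker conclusion, in Lemma~\ref{special-split-commute-with-jumpe-lem}. So ``non-special'' is not there to make your well-definedness argument go through; it is there so that the statement of the lemma makes sense at all.
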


We omit the easy proof.

The following is an analogue of Lemma~\ref{split-commute-with-jump-lem} for the case of a special generalized split move.

\begin{lemm}\label{special-split-commute-with-jumpe-lem}
Let~$M\xmapsto\eta M_1$ be a special generalized type~II split move, and let~$M\xmapsto\zeta M_2$ be a jump move.
Let also~$M\xmapsto{\eta_1}M_1'$ be the first move of a canonical decomposition of~$M\xmapsto\eta M_1$, which is a type~I extension move, and let~$M_1'\xmapsto{\eta_2}M_1$ be the `remaining part' of the decomposition,
which is a generalized type~II split move such that~$\eta=\eta_2\circ\eta_1$. Denote by~$C$ the set of
all boundary circuits preserved by both moves~$M\xmapsto\eta M_1$ and~$M\xmapsto\zeta M_2$.

Suppose that the move~$M\xmapsto{\eta_1}M_1'$ is friendly to~$M\xmapsto\zeta M_2$. Then the move~$M_2\xmapsto{\eta^\zeta}M_{21}$
can be chosen so that it preserves all boundary circuits in~$C$,
and the transformation~~$M_1\xmapsto{\eta^\zeta\circ\zeta\circ\eta^{-1}}M_{21}$
admits a $C$-neat decomposition into jump moves.
\end{lemm}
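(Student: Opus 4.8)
The plan is to reduce the statement for the special generalized type~II split move $M\xmapsto\eta M_1$ to the already-established commutation results for \emph{non-special} generalized split moves (Lemma~\ref{split-commute-with-jump-lem}) and for ordinary moves (Lemmas~\ref{splits-commute-with-each-other-lem}, \ref{splits-commute-with-each-other-lem-2}), using the canonical decomposition $M\xmapsto{\eta_1}M_1'\xmapsto{\eta_2}M_1$ with $\eta=\eta_2\circ\eta_1$, $\eta_1$ a type~I extension move adding the auxiliary mirror $\mu_0$ on the auxiliary level, and $\eta_2$ the generalized type~II split move associated with $(\mu_1,\mu_0,\mu_0,\mu_2,p)$, which is no longer special. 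First I would fix a jump move $M\xmapsto\zeta M_2$ and the hypothesis that $M\xmapsto{\eta_1}M_1'$ is friendly to $M\xmapsto\zeta M_2$. Since $\zeta$ is a jump move and $\eta_1$ is a (non-jump) move in which every occupied level and mirror of $M_1'$ has a unique predecessor in $M$, Case~2 (or Case~3, if the shifted level happens to carry the new mirror) of Definition~\ref{friendly-resemble-def} applies, so the resembling move $M_2\xmapsto{\eta_1^\zeta}M_{21}'$ exists and is a composition of jump moves, hence automatically preserves all boundary circuits, in particular those in $C$; moreover, by Definition~\ref{moves-commute-def} (jump moves commute with extensions by Case~3, or by the explicit Case~2 description) the moves $\eta_1$ and $\zeta$ commute, i.e.\ the square closes up to a $C$-neat sequence of combinatorics-preserving jump moves. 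This is the base of the reduction.

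Next I would observe that $M_1'\xmapsto{\eta_2}M_1$ is a non-special generalized type~II split move, so Lemma~\ref{split-commute-with-jump-lem} applies to $\eta_2$ and the jump move $M_1'\xmapsto{\zeta^{\eta_1}}M_{21}'$ (the ``transported'' jump move obtained from the commutation of $\eta_1$ and $\zeta$): it tells us that $\eta_2$ commutes with this jump move, so there is a move $M_{21}'\xmapsto{\eta_2^{\zeta^{\eta_1}}}M_{21}$ resembling $\eta_2$ and the square
$$M_{21}'\xrightarrow{\ \eta_2^{\zeta^{\eta_1}}\ }M_{21},\qquad
M_1'\xrightarrow{\ \eta_2\ }M_1\xrightarrow{\ (\text{transported }\zeta)\ }M_{21}$$
closes up to a $C$-neat sequence of combinatorics-preserving jump moves. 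The composite $M_2\xmapsto{\eta^\zeta}M_{21}$ is then defined as $M_2\xmapsto{\eta_1^\zeta}M_{21}'\xmapsto{\eta_2^{\zeta^{\eta_1}}}M_{21}$; since the first factor is a composition of jump moves and the second resembles a non-special generalized split move preserving the boundary circuits in $C$ (this uses that $\zeta^{\eta_1}$ is a jump move and the ``jump moves must also preserve the corresponding boundary circuits'' clause in Definition~\ref{friendly-resemble-def}), the composite $\eta^\zeta$ preserves all boundary circuits in $C$. Finally, pasting the two $C$-neat squares along their common edge (the transported jump move $M_1'\to M_{21}'$) — legitimate by Lemma~\ref{neat-properties-lem}, which lets us assemble and re-bracket $C$-neat decompositions — yields that $M_1\xmapsto{\eta^\zeta\circ\zeta\circ\eta^{-1}}M_{21}$ admits a $C$-neat decomposition into jump moves. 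Each such jump move is either combinatorics-preserving or exchanges two levels with a common predecessor in $M$; but in fact, tracking carefully, all of them are combinatorics-preserving here because the only ``true'' splitting happens inside $\eta_2$, whose commutation with a jump move (Lemma~\ref{split-commute-with-jump-lem}) produces only trivial jump moves — so we even get honest commutation, which is stronger than what is asked.

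The main obstacle I expect is bookkeeping rather than conceptual: making sure that the auxiliary mirror $\mu_0$ and the auxiliary level can be chosen generically enough that the friendliness hypothesis on $M\xmapsto{\eta_1}M_1'$ is compatible with the friendliness/commutation hypotheses needed to apply Lemma~\ref{split-commute-with-jump-lem} to $\eta_2$ — concretely, one must verify that the jump move $\zeta$, after being transported past the extension $\eta_1$, still does not interfere unavoidably with (a splitting path associated with) the splitting route $(\mu_1,\mu_0,\mu_0,\mu_2,p)$, and that $p$ can be kept in the appropriate splitting gap throughout. Since $\mu_0$ and its level are new and can be placed in an arbitrarily small neighbourhood of $\mu_1$ or $\mu_2$ along the occupied level $y$, and since a jump move only repositions \emph{parallel} occupied levels (so it never separates a short arc from a mirror on $y$), this interference can always be avoided; I would isolate this as a short geometric sub-lemma about the partial homeomorphisms $h_M^{M_1'}$ and $h_{M_1'}^{M_{21}'}$ and then the rest is the purely formal pasting of neat squares described above. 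The verification of Conditions~(2) and~(3) of Definition~\ref{neat-def} at each stage is routine by the ``remove/glue a (half-)disc meeting the dividing set in one or two arcs'' description of the moves and the discussion following Definition~\ref{neat-def}, so I would not dwell on it.
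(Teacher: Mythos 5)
Your high-level strategy is the same as the paper's: decompose the special generalized split move $\eta=\eta_2\circ\eta_1$ into the extension $\eta_1$ and the non-special generalized split $\eta_2$, commute the jump move $\zeta$ past $\eta_1$, and then handle $\eta_2$ via Lemma~\ref{split-commute-with-jump-lem}. The pasting-of-squares framework you describe is what the paper does (see Figure~\ref{special-split-commute-fig}).

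However there is a genuine slip that, taken literally, breaks the construction. You write that the resembling move $M_2\xmapsto{\eta_1^\zeta}M_{21}'$ ``is a composition of jump moves.'' This conflates the two faces of the commutation square. The move that is a composition of jump moves is $M_1'\xmapsto{\zeta^{\eta_1}}M_2'$, the one resembling the jump move $\zeta$ and starting from $M_1'$ (this exists precisely by the friendliness hypothesis). The move from $M_2$ that closes the square is $\xi=\zeta^{\eta_1}\circ\eta_1\circ\zeta^{-1}$, and it is a single \emph{type~I extension move}, not a jump move, because it resembles $\eta_1$. This matters for two reasons. First, you use ``jump moves automatically preserve all boundary circuits'' to conclude $\eta^\zeta$ preserves $C$ — but jump moves are allowed to modify boundary circuits (that is precisely why $C$-delicate sequences admit them), so this premise is false even for jump moves; the correct argument is that $\xi$ preserves $C$ because both $\eta_1$ and $\zeta$ do, and $\eta_2^{\zeta^{\eta_1}}$ preserves $C$ by Lemma~\ref{split-commute-with-jump-lem}. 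Second, and more seriously, your claimed decomposition of $\eta^\zeta$ as (jump moves)$\circ$(non-special generalized split) does \emph{not} produce a special generalized type~II split move, so it cannot be taken for the move $M_2\xmapsto{\eta^\zeta}M_{21}$ resembling $\eta$; the paper's decomposition $\eta^\zeta=\eta_2^{\zeta^{\eta_1}}\circ\xi$ is an extension followed by a non-special generalized split, which is exactly the definition of a special generalized type~II split move. Also, as a smaller point, an extension move cannot fall under Case~2 of Definition~\ref{friendly-resemble-def} (the new level and mirror have no predecessor), so only Case~3 applies there. If you correct the identification of $\xi$ as the extension move and run the same pasting argument, the rest of your reasoning — including the observation that the jump moves arising from Lemma~\ref{split-commute-with-jump-lem} are combinatorics-preserving — goes through and matches the paper.
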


\begin{proof}
\begin{figure}[ht]
\includegraphics{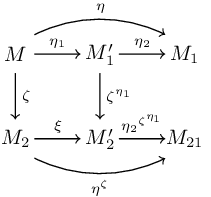}
\caption{Scheme of the moves in the proof of Lemma~\ref{special-split-commute-with-jumpe-lem}}\label{special-split-commute-fig}
\end{figure}
By the assumption of the lemma there is a well defined jump move~$M_1'\xmapsto{\zeta^{\eta_1}}M_2'$. One can see
that~$M_2\xmapsto\xi M_2'$, where~$\xi=\zeta^{\eta_1}\circ\eta_1\circ\zeta^{-1}$, is a type~I extension move
(consult Figure~\ref{special-split-commute-fig} for the scheme).
The moves~$M_1'\xmapsto{\eta_2}M_1$ and~$M_1'\xmapsto{\zeta^{\eta_1}}M_2'$ commute by Lemma~\ref{split-commute-with-jump-lem},
so we can define the non-special generalized type~II split move~$M_2'\xmapsto{\eta_2^{\zeta^{\eta_1}}}M_{21}$ so
that the transformation~$M_1\xmapsto{\eta_2^{\zeta^{\eta_1}}\circ\zeta^{\eta_1}\circ{\eta_2^{-1}}}M_{21}$ admits a $C$-neat
decomposition into jump moves. It remains to note that~$M_2\xmapsto{\eta_2^{\zeta^{\eta_1}}\circ\xi}M_{21}$ is
a special generalized type~II split move that can be taken for~$M_2\xmapsto{\eta^\zeta}M_{21}$.
\end{proof}

\begin{lemm}\label{elimination-commutes-with-split-lem}
Let~$M\xmapsto\eta M_1$ be a generalized type~II split move associated with a
non-special type~II splitting route~$\omega$
in~$M$, and let~$M\xmapsto\zeta M_2$ be a type~I elimination move
such that the mirror being removed does not appear in~$\omega$.
Then these moves are friendly to one another and, moreover, they commute.
\end{lemm}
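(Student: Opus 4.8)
The plan is to deduce both assertions from a single structural fact: the hypothesis that $\mu$ does not appear in $\omega$ forces the two handles removed by the elimination move to be disjoint from every splitting path associated with $\omega$, and hence from every constituent move of the generalized split. Throughout, write $\omega=(\mu_1,\ldots,\mu_k,p)$, let $\mu$ be the mirror and $x$ the occupied level removed by $M\xmapsto\zeta M_2$ (so $\mu$ is the unique mirror on $x$), and recall that for an elimination move the partial homeomorphism $h_M^{M_2}$ is the identity from $\wideparen M_2=\wideparen M\setminus(\wideparen\mu\cup\wideparen x)$ onto itself.

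First I would record the structural observation. Cut an associated splitting path $\wideparen\omega$ into subarcs $\wideparen\omega^1,\ldots,\wideparen\omega^k$ as in Definitions~\ref{simple-route-def} and~\ref{double-headed-splitting-route-def}. The strips crossed by $\wideparen\omega$ are precisely $\wideparen\mu_1,\ldots,\wideparen\mu_k$, and the discs it passes through are the $\wideparen x_l$, where $x_l$ is the occupied level carrying $\mu_l$ and $\mu_{l+1}$, together with the disc $\wideparen y$ of the level $y$ carrying $\mu_k$ and $p$. Since $x$ carries no mirror other than $\mu$ and $\mu\ne\mu_i$ for every $i$, none of $x_1,\ldots,x_{k-1},y$ equals $x$ and none of $\mu_1,\ldots,\mu_k$ equals $\mu$; hence $\wideparen\omega$ is disjoint from $\wideparen\mu\cup\wideparen x$. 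Consequently $\wideparen\omega$ survives in $\wideparen M_2$ and $h_M^{M_2}$ carries it to itself, so $\omega$, read in $M_2$, is again a type~II splitting route $\omega'$ (the mirror sequence and the cancellability condition~(2d) are unchanged, and $p\notin E_{M_2}$), and it is the unique one, up to equivalence, with $h_M^{M_2}(\wideparen\omega)\simeq\wideparen\omega'$. This is exactly Case~1 of Definition~\ref{friendly-resemble-def}: $M\xmapsto\zeta M_2$ is friendly to $M\xmapsto\eta M_1$, with resembling move $M_2\xmapsto{\eta^\zeta}M_{21}$ the generalized type~II split of $M_2$ associated with $\omega'$.

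For the reverse direction I would check Case~4 of Definition~\ref{friendly-resemble-def}, i.e.\ that $M\xmapsto\eta M_1$ is friendly to $M\xmapsto\zeta M_2$. Since $\mu$ occurs zero times in $\omega$, it has exactly one successor $\mu'$ for the move $M\xmapsto\eta M_1$; the level $x$ is never one of the levels split in the canonical decomposition of $M\xmapsto\eta M_1$ (those are among $x_1,\ldots,x_{k-1},y$, as one sees by unwinding Definition~\ref{gen-split-def}), so it too has a unique successor $x'$; and since the new mirrors produced by split and double split moves lie on the newly created levels, $x'$ still carries only $\mu'$ (when a level through $\mu$ is split, $\mu$ is merely transported passively and remains the sole mirror on the successor of $x$). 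Thus $M\xmapsto\eta M_1$ is friendly to $M\xmapsto\zeta M_2$, the resembling move $M_1\xmapsto{\zeta^\eta}M_{12}$ being the elimination of $\mu'$ and $x'$. Since $\omega$ is non-special, the two moves now fall under Definition~\ref{moves-commute-def}, and it remains to see that the composite move $M_{21}\xmapsto{\zeta^\eta\circ\eta\circ\zeta^{-1}\circ(\eta^\zeta)^{-1}}M_{12}$ admits a $C$-neat decomposition into jump moves preserving the combinatorial type of the diagram — here $C$ being the set of boundary circuits preserved by both moves. This I would establish by showing the square closes up on the nose: each split (and double split) move comprising $M\xmapsto\eta M_1$ affects only the level it splits, hence commutes with the ``off to the side'' deletion of the lonely pair $(\mu,x)$; propagating this through the canonical decomposition yields, combinatorially, $M_{21}=\mathrm{split}(M_2,\omega')=\mathrm{split}(M,\omega)\setminus\{\mu',x'\}=M_{12}$, and tracking the (identity-type) partial homeomorphisms shows the composite morphism is the identity. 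The empty sequence of jump moves is then a $C$-neat decomposition, so the moves commute.

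The main obstacle is the bookkeeping in the last two paragraphs: one must verify carefully, from the recursive Definition~\ref{gen-split-def} and the precise effect of split, double split and (in the double-headed case) the accompanying bridge/wrinkle moves, that $x$ is never the level being split, that $\mu$ is never a splitting mirror and, although it may be passively transported, never ceases to be the unique mirror on the successor of $x$, and that no new level or mirror created along the way interferes with $\wideparen\omega$. Once these disjointness facts are pinned down, both friendliness clauses (Cases~1 and~4) and the on‑the‑nose commutation of the square follow routinely.
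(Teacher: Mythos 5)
Your proof is correct, and since the paper explicitly states ``We omit the easy proof,'' there is no authorial argument to compare against; what you have written is evidently the intended one. The crux — that $\mu\notin\omega$ forces $\wideparen\omega$ (and hence every split and double split in the canonical decomposition) to avoid $\wideparen\mu\cup\wideparen x$, since $\mu$ is the unique mirror on $x$ and so $x$ can never be one of the levels carrying two consecutive entries of $\omega$ nor the terminal level — is exactly right, and from it both friendliness clauses (Cases~1 and~4 of Definition~\ref{friendly-resemble-def}) and on-the-nose closure of the commutation square do follow. Two small remarks: the canonical decomposition of a non-special generalized type~II split consists only of (type~I and one type~II) split moves and possibly a double split, with no accompanying bridge or wrinkle moves, so that parenthetical in your last paragraph is a slight overstatement; and strictly speaking the definition of ``commute'' allows $M_{21}$ and $M_{12}$ to differ by combinatorial-type-preserving jump moves, so your claim that the empty sequence suffices presumes the (available, but worth flagging) choice that the new occupied levels in the resembling split are placed at the same positions as in the original.
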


We omit the easy proof.

\begin{lemm}\label{type-i-split-commutes-with-generalized-type-ii-split-lem}
Let~$M\xmapsto\eta M_1$ be a generalized type~II split move associated with a
non-special type~II splitting route~$\omega$
in~$M$, and let~$M\xmapsto\zeta M_2$ be a type~I split move
that is friendly to~$M\xmapsto\eta M_1$. Then the move~$M\xmapsto\eta M_1$
is also friendly to~$M\xmapsto\zeta M_2$, and the moves almost commute with one another.
\end{lemm}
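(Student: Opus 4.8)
The plan is to reduce the statement about a \emph{generalized} type~II split move $M\xmapsto\eta M_1$ to the already-established Lemma~\ref{splits-commute-with-each-other-lem} and Lemma~\ref{splits-commute-with-each-other-lem-2} by working through the canonical decomposition of $M\xmapsto\eta M_1$ move by move. Write $M=M^0\xmapsto{\eta_1}M^1\xmapsto{\eta_2}\ldots\xmapsto{\eta_k}M^k=M_1$ for a canonical (hence neat) decomposition of the generalized type~II split, where each $M^{i-1}\xmapsto{\eta_i}M^i$ is an ordinary type~I split, possibly one double split, and possibly one type~I extension. The type~I split move $M\xmapsto\zeta M_2$ has an associated splitting path $\wideparen\omega_\zeta$, and $M\xmapsto\eta M_1$ has an associated splitting path $\wideparen\omega$ (a type~II splitting path, single- or double-headed). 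The friendliness hypothesis on $M\xmapsto\zeta M_2$ toward $M\xmapsto\eta M_1$ means that $h_M^{M_2}$ can be chosen to take $\wideparen\omega$ to a unique type~II splitting route $\omega'$ in $M_2$; the first thing to check is that, symmetrically, $\wideparen\omega_\zeta$ and $\wideparen\omega$ can be realized disjointly and that the mutual position is essentially unique, so that $M\xmapsto\eta M_1$ is friendly to $M\xmapsto\zeta M_2$ and the move $M_1\xmapsto{\zeta^\eta}M_{12}$ is well defined. This is the same kind of "no unavoidable intersection, no ambiguity" analysis that appears in the proof of Lemma~\ref{splits-commute-with-each-other-lem}, but now $\wideparen\omega$ is a union of several subarcs $\wideparen\omega^i$; one argues it handle-by-handle since $\wideparen\omega_\zeta$ meets only one strip.

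The core of the argument is an induction on $k$, the length of the canonical decomposition of $M\xmapsto\eta M_1$. For each $i$ we want to push the type~I split $M^{i-1}\xmapsto\zeta\,$-translate past $\eta_i$ using Lemma~\ref{splits-commute-with-each-other-lem} (when $\eta_i$ is an ordinary type~I split), Lemma~\ref{splits-commute-with-each-other-lem-2} (when $\eta_i$ is the double split), or the trivial commutation of a split with an extension/elimination that does not touch it (when $\eta_i$ is the type~I extension in the special-subroute case — but here $\omega$ is non-special, so this does not occur). At step $i$ we therefore obtain a move $M^i_{\zeta\text{-translate}}$ resembling $M\xmapsto\zeta M_2$, together with a $C$-neat sequence of jump moves realizing the discrepancy between the two ways of getting from $M^{i-1}$ to the resolved diagram; the jump moves exchange only successors of a common occupied level of $M^{i-1}$, so composing these discrepancies over $i=1,\ldots,k$ still yields a $C$-neat sequence of such jump moves (Lemma~\ref{neat-properties-lem} guarantees that concatenating neat decompositions is neat). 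Collecting the $\eta_i$-translates of $\zeta$ over all $i$ reassembles, by the inductive definition in Definition~\ref{gen-split-def}, into the required generalized type~II split move $M_2\xmapsto{\eta^\zeta}M_{21}$, and the accumulated jump moves give the neat decomposition of $M_{21}\xmapsto{\zeta^\eta\circ\eta\circ\zeta^{-1}\circ(\eta^\zeta)^{-1}}M_{12}$ demanded by Definition~\ref{moves-commute-def}. Throughout one uses that generalized split moves are safe-to-bring-forward and type~I elimination/extension moves are $-$-safe in the relevant direction, so Lemma~\ref{safe-lem} supplies neatness of every intermediate regrouping.

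The main obstacle I anticipate is bookkeeping the splitting routes: as $\wideparen\omega_\zeta$ is carried forward through the decomposition $M^0\to M^1\to\ldots\to M^k$ by the partial homeomorphisms $h_{M^{i-1}}^{M^i}$, I must verify that at each stage it remains a legitimate type~I splitting path (Conditions (2a)--(2d) of Definition~\ref{simple-route-def}, in particular the parity condition (2d)), that it stays disjoint from the part of $\wideparen\omega$ still "active" at that stage, and that the snip point stays in the correct splitting gap so that the next application of Lemma~\ref{splits-commute-with-each-other-lem} is in one of the five enumerated cases rather than an unfriendly configuration. The friendliness hypothesis in the statement is exactly what rules out the unavoidable-intersection cases (1) and (2) of those lemmas at the \emph{first} step; the task is to propagate this condition along the decomposition, which should follow because $h_{M^{i-1}}^{M^i}$ is a handle-decomposition-preserving partial homeomorphism and the new mirrors created by a type~I split are disjoint from $\wideparen\omega_\zeta$. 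A secondary care point is the case when a mirror $\mu_j$ of $\omega$ coincides with $\nu$ (the splitting mirror of $\zeta$): here $\mu_j$ may acquire two successors for $\zeta$, but as in Lemma~\ref{splits-commute-with-each-other-lem} the mutual position of the snip points on $\partial\wideparen x$ disambiguates the choice, so friendliness is not lost. Once these routine-but-delicate verifications are in place, the commutation statement follows by assembling the per-step commutations exactly as described.
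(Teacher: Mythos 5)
Your overall strategy — induct on the length of a canonical decomposition, push $\zeta$ through one elementary split at a time using Lemma~\ref{splits-commute-with-each-other-lem} (and \ref{splits-commute-with-each-other-lem-2} for the double split), and accumulate the jump-move discrepancies — is the same plan the paper follows, and most of your bookkeeping concerns are exactly the right things to worry about. But there is a genuine gap in the "secondary care point" you dismiss too quickly.

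You write that when a mirror $\mu_j$ of $\omega$ coincides with $\nu$ (the splitting mirror of $\zeta$), the ambiguity is resolved "as in Lemma~\ref{splits-commute-with-each-other-lem}" by the mutual position of the snip points on $\partial\wideparen x$. That disambiguation only applies in the case $x=y$, where both elementary splits split the same occupied level. When $\mu=\nu$ but $x\ne y$ — which is precisely case (1) of Lemma~\ref{splits-commute-with-each-other-lem} — that lemma declares the two ordinary split moves \emph{unfriendly}: the snip points live on different circles $\partial\wideparen x$ and $\partial\wideparen y$, so there is no such disambiguation at the level of the single pair of elementary moves. This situation does occur during the decomposition: the first step $M\xmapsto{\eta_1}M'$ splits the occupied level perpendicular to the one $\zeta$ splits, and if $\mu_k=\nu$ you land exactly in case (1). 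Your per-step induction therefore stalls here; you cannot invoke Lemma~\ref{splits-commute-with-each-other-lem} to produce the resembling move $\zeta^{\eta_1}$.

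The resolution requires a global argument that you do not supply: although the local pair $\bigl(\wideparen{(\mu_k,p)},\wideparen\omega_\zeta\bigr)$ is ambiguous, the \emph{full} path $\wideparen\omega$ is not (by the hypothesis of the lemma), and what saves the day is that some earlier visit of $\wideparen\omega$ to the disc $\wideparen y$ (i.e., some $j\notin\{k,k-1\}$ with $\mu_j=\mu_k$ and $\mu_{j+\epsilon}\in y$, $\mu_{j+\epsilon}\ne\mu_k$) separates $\wideparen q$ from the tail of $\wideparen\omega$ and fixes the correct successor of $\nu$. One must then define the type~I split on $M'$ directly from the image $h_M^{M'}(\wideparen\omega_\zeta)$ — not as "$\zeta^{\eta_1}$", which is undefined — and check by hand that the resulting diagrams agree combinatorially. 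Without this step the proof is incomplete, because the claim "friendliness is not lost at each step" is false exactly where the paper has to work hardest.
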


\begin{proof}
Let~$C$ be the set of all boundary circuits of~$M$ preserved by both moves~$M\xmapsto\eta M_1$ and~$M\xmapsto\zeta M_2$,
and let~$\omega_1=(\nu,q)$ be the type~I splitting route with which the move~$M\xmapsto\zeta M_2$ is associated.
Since this move is friendly to the move~$M\xmapsto\eta M_1$ the
path~$\wideparen\omega_1$ has no unavoidable intersections with~$\wideparen\omega$, and,
moreover, the mutual position of~$\wideparen\omega$ and~$\wideparen\omega_1$ is
essentially unique. This implies that the move~$M\xmapsto\eta M_1$ is also friendly to~$M\xmapsto\zeta M_2$.
We will now show, by induction in the length of~$\omega$, that the moves~$M\xmapsto\eta M_1$ and~$M\xmapsto\zeta M_2$ almost commute.

Let~$\omega=(\mu_1,\mu_2,\ldots,\mu_k,p)$. If~$k=1$, then~$M\xmapsto\eta M_1$ is an ordinary
type~II split move, whose almost commutation with~$M\xmapsto\zeta M_2$ follows from
Lemma~\ref{splits-commute-with-each-other-lem}.

Now assume that~$k>1$ and the lemma is proved for generalized type~II split moves
associated with shorter non-special splitting routes than~$\omega$. Let~$M\xmapsto{\eta_1}M'$ be the first move of
a canonical decomposition of the move~$M\xmapsto\eta M_1$, and let~$M'\xmapsto{\eta_2}M_1$
be the `remaining part' of this decomposition, that is, a generalized type~II split move
such that~$M\xmapsto\eta M_1$ is the composition of~$M\xmapsto{\eta_1}M'$ and~$M'\xmapsto{\eta_2}M_1$.
Let also~$M_2\xmapsto\xi M_2'$
be the first move of a canonical decomposition of the move~$M_2\xmapsto{\eta^\zeta}M_{21}$,
and~$M'\xmapsto\chi M_2'$ be the composition of the moves~$M'\xmapsto{\eta_1^{-1}}M$,
$M\xmapsto\zeta M_2$, and~$M_2\xmapsto\xi M_2'$.

By construction, the transformation~$M'\xmapsto\chi M_2'$ is friendly to the generalized type~II split move~$M'\xmapsto{\eta_2}M_1$,
and the move~$M_2'\xmapsto{{\eta_2}^\chi}M_{21}$ is the `remaining part' of a canonical decomposition of the
move~$M_2\xmapsto{\eta^\zeta}M_{21}$.
Thus, we have the commutative diagram of moves shown in Figure~\ref{comm-diagr1-dig} on the left, in which all the transformations preserve
the boundary circuits in~$C$.
\begin{figure}[ht]
\centerline{\includegraphics{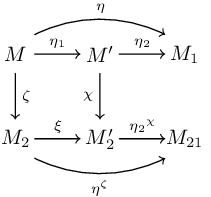}\hskip1cm\includegraphics{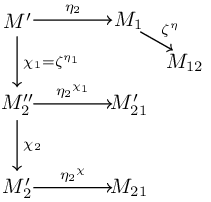}}
\caption{The scheme of the induction step in the proof of Lemma~\ref{type-i-split-commutes-with-generalized-type-ii-split-lem}}\label{comm-diagr1-dig}
\end{figure}

Suppose that the moves~$M\xmapsto{\eta_1}M'$ and~$M\xmapsto\zeta M_2$ almost commute. Then~$\xi={\eta_1}^\zeta$,
and the move~$M'\xmapsto\chi M_2'$ admits a $C$-neat decomposition into a type~I split move of the form~$M'\xmapsto{\chi_1}M_2''$
with~$\chi_1=\zeta^{\eta_1}$, followed by a transformation~$M_2''\xmapsto{\chi_2}M_2'$ that can be further $C$-neatly decomposed into jump moves.
The move~$M'\xmapsto{\zeta^{\eta_1}}M_2''$ is friendly to~$M'\xmapsto{\eta_2}M_1$, and the latter
is a generalized type~II split move associated with a non-special type~II splitting route which is shorter than~$\omega$.

By the induction hypothesis there is a generalized type~II split move~$M_2''\xmapsto{{\eta_2}^{\chi_1}}M_{21}'$
and a type~I split move~$M_1\xmapsto{\chi_1^{\eta_2}=\zeta^\eta}M_{12}$, both preserving the boundary circuits in~$C$,
such that~$M_{21}'$ and~$M_{12}$ are related by a sequence of jump moves also preserving the boundary circuits in~$C$
and realizing the morphism~$\zeta^\eta\circ\eta_2\circ\chi_1^{-1}\circ({\eta_2}^{\chi_1})^{-1}$.

It follows from Lemma~\ref{split-commute-with-jump-lem} that the moves~$M_2''\xmapsto{{\eta_2}^{\chi_1}}M_{21}'$
and~$M_2''\xmapsto{\chi_2}M_2'$ commute, and by construction we have~${\eta_2}^\chi=({\eta_2}^{\chi_1})^{\chi_2}$.
Therefore, the transformation~$M_{21}'\xmapsto{{\eta_2}^\chi\circ\chi_2\circ({\eta_2}^{\chi_1})^{-1}}M_{21}$
also admits a $C$-neat decomposition into jump moves. We see from here that the moves~$M\xmapsto\eta M_1$
and~$M\xmapsto\zeta M_2$ almost commute.

Now we turn to the case when the moves~$M\xmapsto{\eta_1}M'$ and~$M\xmapsto\zeta M_2$ do not almost commute.
Suppose that~$\omega$ is a single-headed splitting route, so~$M\xmapsto{\eta_1}M'$ is a type~I split move.

The splitting paths~$\wideparen{(\mu_k,p)}$ and~$\wideparen\omega_1$ don't have an unavoidable intersection,
since neither do~$\wideparen\omega$ and~$\wideparen\omega_1$. By Lemma~\ref{splits-commute-with-each-other-lem}
the only reason for~$M\xmapsto{\eta_1}M'$ and~$M\xmapsto\zeta M_2$ to be unfriendly to one another is
an ambiguity in the mutual position of~$\wideparen{(\mu_k,p)}$ and~$\wideparen\omega_1$.
This is the issue illustrated in Figure~\ref{unfriendly-fig}~(b).

However, by the assumption of the lemma the mutual position of~$\wideparen\omega$ and~$\wideparen\omega_1$
is not ambiguous if they are chosen disjoint. Let us fix such a choice. Then there is a unique
splitting route~$\omega_1'=(\nu',q')$ in~$M'$ such that~$h_M^{M'}(\wideparen\omega_1)=\wideparen\omega_1'$.
We define the move~$M'\xmapsto{\chi_1} M_2''$ as a type~I split move associated with~$\omega_1'$
and preserving all boundary circuits in~$C$.

As one can learn from Figure~\ref{special-case-gen-split-commute-with-split-fig}
the diagrams~$M_2''$ and~$M_2'$ are combinatorially equivalent, so one of them can be obtained from the other
by jump moves adjusting the positions of the occupied levels without changing the combinatorial type.
\begin{figure}[ht]
\includegraphics[scale=0.65]{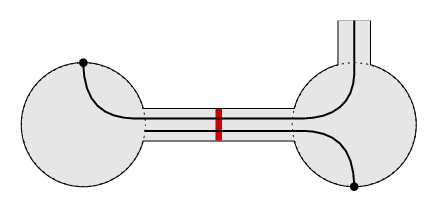}\put(-115,23){$\wideparen x$}\put(-24,23){$\wideparen y$}%
\put(-84,12){$\wideparen\mu_k=\wideparen\nu$}\put(-113,50){$\wideparen p$}\put(-29,-4){$\wideparen q$}%
\put(-18,52){$\wideparen\mu_{k-1}$}

\begin{tabular}{ccc}
\includegraphics[scale=0.65]{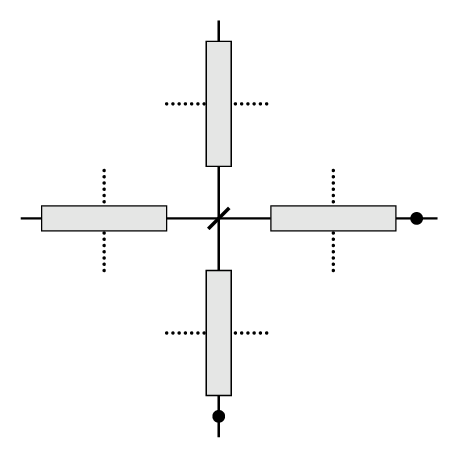}\put(-70,10){$p$}\put(-15,66){$q$}\put(-72,80){$\nu$}%
\put(-125,88){\begin{rotate}{-90}{$\in$}\end{rotate}}\put(-134,92){$\mu_{k-1}$}\put(-145,73){$y$}\put(-77,140){$x$}\put(-130,20){$M$}
&\raisebox{72pt}{$\stackrel{\eta_1}\longrightarrow$}&
\includegraphics[scale=0.65]{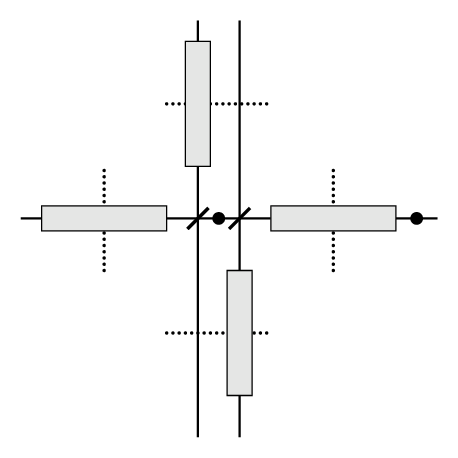}\put(-15,66){$q'$}\put(-65,80){$\nu'$}\put(-80,66){$p'$}\put(-130,20){$M'$}\\
$\big\downarrow$\hbox to 0pt{$\scriptstyle\zeta$\hss}&&$\big\downarrow$\hbox to 0pt{$\scriptstyle\chi_1$}\\
\includegraphics[scale=0.65]{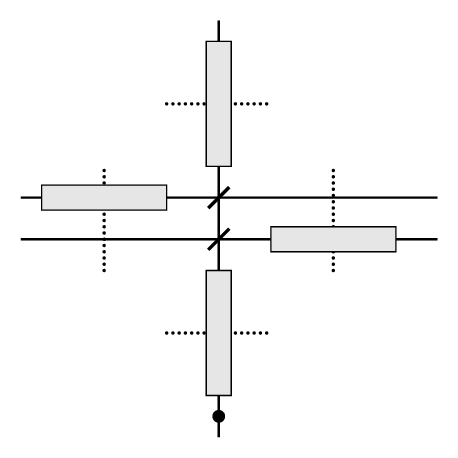}\put(-130,20){$M_2$}\put(-70,10){$p_2$}
&\raisebox{72pt}{$\stackrel\xi\longrightarrow$}&
\includegraphics[scale=0.65]{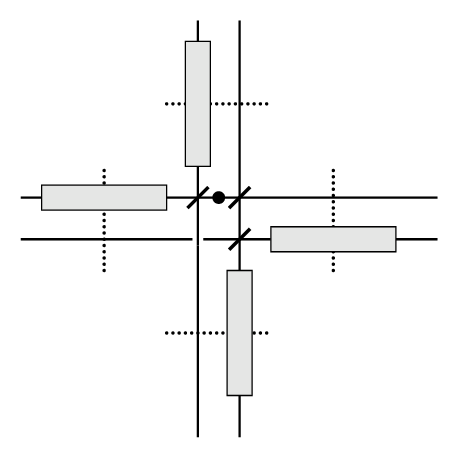}\put(-130,20){$M_2'=M_2''$}\put(-80,72){$p_2'$}
\end{tabular}
\caption{Induction step in the proof of Lemma~\ref{type-i-split-commutes-with-generalized-type-ii-split-lem} in the case when
the moves~$M\xmapsto{\eta_1}M'$ and~$M\xmapsto\zeta M_2$ do not almost commute}\label{special-case-gen-split-commute-with-split-fig}
\end{figure}
In this figure we denote by~$p'$, $p_2$, and~$p_2'$ the snip points of the generalized type~II split moves~$M'\xmapsto{\eta_2}M_1$,
$M_2\xmapsto{\eta^\zeta}M_{21}$, and~$M_2'\xmapsto{{\eta_2}^\chi}M_{21}$, respectively. We also denote
by~$x$ and~$y$ the occupied levels of~$M$ that are split by the moves~$M\xmapsto{\eta_1}M'$ and~$M\xmapsto\zeta M_2$, respectively.

Figure~\ref{special-case-gen-split-commute-with-split-fig} shows, up to symmetry, what happens in the particular case when~$\mu_{k-1}\ne\mu_k$.
It is, however, possible that~$\mu_{k-1}=\mu_k$. Since there is no ambiguity in the mutual position of~$\wideparen\omega$
and~$\wideparen\omega_1$, there must be another arc in~$\wideparen y\cap\wideparen\omega$
separating~$\wideparen q$ from the `tail' of~$\wideparen\omega$ in~$\wideparen y$.
This means that, for some~$j\notin\{k,k-1\}$, we
have~$\mu_j=\mu_k$ and, for some~$\epsilon\in\{-1,1\}$, we have~$\mu_{j+\epsilon}\in y$,
$\mu_{j+\epsilon}\ne\mu_k$. One should then substitute~$\mu_{j+\epsilon}$ for~$\mu_{k-1}$ in Figure~\ref{special-case-gen-split-commute-with-split-fig}.

If~$\omega$ is a double-headed splitting route, then the reasoning is almost the same with the roles of~$\mu_k$ and~$\mu_{k-1}$ played
by~$\mu_{k-1}$ and~$\mu_{k-2}$, respectively. In place of~$p$ we have the mirror~$\mu_k$, which has
two successors in each of~$M'$, $M_2'$, and~$M_2''$,
and Lemma~\ref{splits-commute-with-each-other-lem-2} should be used instead of Lemma~\ref{splits-commute-with-each-other-lem}.

Thus, the scheme described above and sketched in Figure~\ref{comm-diagr1-dig} works perfectly in the case when
the moves~$M\xmapsto{\eta_1}M'$ and~$M\xmapsto\zeta M_2$ do not almost commute, too,
though~$\chi$ and~$\xi$ are not interpreted as~$\zeta^{\eta_1}$ and~${\eta_1}^\zeta$, respectively.
So, the almost commutation of the moves~$M\xmapsto\eta M_1$ and~$M\xmapsto\zeta M_2$ also holds in this case.
\end{proof}

\begin{lemm}\label{bypass-commutes-with-split-lem}
Let~$M\xmapsto\eta M_1$ be a generalized type~II split move associated with a non-special
type~II splitting route~$\omega$ in~$M$, and let~$M\xmapsto\zeta M_2$ be a type~I elementary bypass removal.
Denote by~$\mu_0$ the $\diagdown$-mirror removed by~$M\xmapsto\zeta M_2$
and by~$c$ the inessential boundary circuit of~$M$ hitting~$\mu_0$ and having the form~$\partial r$,
where~$r$ is a rectangle with interior disjoint from~$E_M$.

Suppose that the following conditions hold:
\begin{enumerate}
\item
the mirror~$\mu_0$ does not appear in~$\omega$;
\item
either~$\omega$ is double-headed, or~$\omega$ is single-headed and the snip point of the move~$M\xmapsto\eta M_1$
does not lie on~$c$.
\end{enumerate}
Then the moves~$M\xmapsto\eta M_1$ and~$M\xmapsto\zeta M_2$ are friendly to one another and,
moreover, they commute.
\end{lemm}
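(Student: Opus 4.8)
\emph{Proof proposal.} The plan is to first verify that the two moves are friendly to each other in the sense of Definition~\ref{friendly-resemble-def}, and then to check that the fourfold composition~\eqref{4composition-eq} is combinatorially trivial, so that the moves commute in the sense of Definition~\ref{moves-commute-def}. For a type~I elementary bypass removal the partial homeomorphism~$h_M^{M_2}$ is the identity of~$\wideparen M$ (see Subsection~\ref{hMM-subsec}), and~$M_2$ differs from~$M$ only by the deletion of the strip~$\wideparen\mu_0$, which, by Condition~(1), is disjoint from any splitting path~$\wideparen\omega$ associated with~$\omega$. Hence~$\omega$ is still a type~II splitting route in~$M_2$ (all the conditions of Definition~\ref{simple-route-def} survive since the mirrors~$\mu_i$ and their types are unchanged, and the terminal point~$p$ is distinct from~$\mu_0$ because~$\mu_0\in c$ while, in the single-headed case, $p\notin c$ by Condition~(2)), and~$h_M^{M_2}=\mathrm{id}$ carries~$\wideparen\omega$ to it without ambiguity. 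So Case~1 of Definition~\ref{friendly-resemble-def} applies and the bypass removal is friendly to the generalized split, the resembling move~$M_2\xmapsto{\eta^\zeta}M_{21}$ being the generalized type~II split associated with~$\omega$ in~$M_2$.

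For the reverse direction I would invoke Case~5 of Definition~\ref{friendly-resemble-def}, which requires that the generalized split~$M\xmapsto\eta M_1$ carry the rectangular inessential circuit~$c$ to a circuit~$c'$ that again bounds a rectangle whose interior is free of mirrors. Here is where Conditions~(1) and~(2) enter: together they say precisely that~$\omega$ does \emph{not} separate~$\{c\}$ (in the single-headed case the endpoint~$\wideparen p$ would otherwise lie on~$\wideparen c$, producing two components of~$\wideparen x\setminus\wideparen\omega$ meeting~$\wideparen c$; in the double-headed case both endpoints are interior, so~$\mu_0\notin\omega$ suffices). By Proposition~\ref{non-separating-splitting-prop} the canonical decomposition of~$M\xmapsto\eta M_1$ into type~I splits, a possible double split, a possible type~I extension, and one type~II split can be chosen so that every intermediate move preserves~$c$; since none of these moves introduces a mirror inside the rectangle bounded by~$c$, the circuit~$c'$ is again of the form~$\partial r'$. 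Thus Case~5 applies, and~$M_1\xmapsto{\zeta^\eta}M_{12}$ is the removal of the unique successor of~$\mu_0$, which lies on~$c'$. One should also note, en passant, that the mirrors forming the type~I bypass pattern of Definition~\ref{mirr-bypass-def} around~$\mu_0$ have uniquely determined successors in~$M_1$ forming the same pattern, so this resembling bypass removal is legitimate.

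It remains to see that the moves commute, i.e.\ that~$M_{21}$ and~$M_{12}$ are combinatorially equal. Both are obtained from~$M$ by performing the generalized split along~$\wideparen\omega$ and deleting~$\mu_0$ together with closing the corresponding bypass; these two operations act on disjoint portions of the diagram (the split modifies the diagram only along~$\wideparen\omega$ and in the splitting gaps, which avoid~$\mu_0$, its level and the rectangle bounded by~$c$, while the bypass removal is localized at~$\mu_0$, whose mirror does not occur in~$\omega$), so their effects are independent of the order in which they are applied. Hence the transformation~\eqref{4composition-eq} is the identity up to, at worst, trivial jump moves adjusting the positions of the newly created occupied levels, which gives a $C$-neat decomposition into combinatorial-type-preserving jump moves. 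Condition~(3) of Definition~\ref{neat-def} (the equality of morphisms) is automatic here, since all the partial homeomorphisms involved are identities on the unmodified part of the surface and the modified regions are discs with a fixed piece of boundary, exactly as in the situation discussed after Definition~\ref{neat-def}. This argument parallels, and is slightly more delicate than, that of Lemma~\ref{elimination-commutes-with-split-lem}.

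The main obstacle I anticipate is the bookkeeping in the second paragraph: one must follow the rectangular circuit~$c$ through every stage of the canonical decomposition of~$M\xmapsto\eta M_1$ and confirm that the intermediate splitting mirrors, snip points and the double-split splitting gaps can always be selected away from~$c$, and that no new mirror ever falls inside the rectangle bounded by~$c$. Everything else reduces to the ``the two moves do not interfere'' principle already used repeatedly in this section, together with the explicit description of~$h_M^{M'}$ for a bypass removal.
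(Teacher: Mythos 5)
Your proposal follows essentially the same strategy the paper uses: friendliness in one direction is immediate because the bypass removal deletes a single mirror absent from $\omega$; friendliness in the other direction hinges on showing that the canonical decomposition of the generalized split carries $c$ to another rectangular inessential circuit; and the commutation itself is attributed to the two moves operating on combinatorially independent parts of the diagram. There is one place where your route diverges slightly and one place where your phrasing is looser than the argument can afford.

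The divergence: to track $c$ through the canonical decomposition you invoke Proposition~\ref{non-separating-splitting-prop}, together with the remark that no mirror appears inside~$r$. The paper instead argues directly: an ordinary split move can destroy $\partial r$ only if its snip point lies on $\partial r$; the first snip point is off $c$ by hypothesis (single-headed case) or a double split is applied (double-headed case), and every later snip point lies in the splitting gap of the preceding move, which cannot meet $c$. The two arguments reach the same conclusion, but the paper's is self-contained and avoids the detour through the separation criterion, which by itself only guarantees that $c$ survives as a boundary circuit, not that it stays rectangular — you patch this with the ``no new mirror inside~$r$'' observation, but be aware that this second ingredient is doing real work and is not a consequence of~\ref{non-separating-splitting-prop}.

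The looser phrasing: your claim that the split ``modifies the diagram only along $\wideparen\omega$ and in the splitting gaps, which avoid $\mu_0$, its level and the rectangle bounded by~$c$'' is not literally true. The intermediate split moves may well split an occupied level that passes through a side of~$r$, so the portion of the diagram touched by the generalized split is \emph{not} disjoint from~$r$; what is true is that the rectangular form of~$c$ is preserved and the mirror $\mu_0$ with its bypass pattern survives with unique successors. The paper's proof sidesteps this by reducing to the canonical decomposition and asserting that each \emph{ordinary} split move commutes with the bypass removal of~$\mu_0$; that reduction is the rigorous version of your ``independence of order'' heuristic, and you should make it explicit rather than rely on disjointness of supports, which does not hold.
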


\begin{proof}
The move~$M\xmapsto\zeta M_2$ is clearly friendly to~$M\xmapsto\eta M_1$, since it only removes a single mirror that
does not appear in~$\omega$.

If~$r$ is a rectangle such that~$\partial r\in\partial M$ and~$\interior(r)\cap E_M=\varnothing$, then any double split
move~$M\mapsto M'$ takes the boundary circuit~$\partial r$ to a boundary circuit of the same
kind, that is~$\partial r'\in\partial M'$, $\interior(r')\cap E_{M'}=\varnothing$, where~$r'$ is a rectangle.
This follows easily from the definition of a double split move.

An ordinary split move can `destroy'~$\partial r$ only if the snip point of the move lies on~$\partial r$. Thus,
the first move in a canonical decomposition does not destroy~$c$. The subsequent moves
don't do so either because they are split moves whose snip point lies in the splitting gap of the previous move,
and this splitting gap cannot be a part of~$c$ or of the boundary circuit into which~$c$ is transformed.
One can see that the removal of~$\mu_0$ or its successor
is a type~I elementary bypass removal for all diagrams arising in the canonical decomposition
of the move~$M\xmapsto\eta M_1$, and this bypass removal commutes with the
ordinary split moves from the canonic decomposition. The claim follows.
\end{proof}

\subsection{Similarity of splitting routes}

\begin{defi}\label{pull-tight-loosen-def}
Let~$\omega=(\mu_1,\mu_2,\ldots,\mu_k,p)$ and~$\omega'$ be type~II splitting routes in a mirror diagram~$M$. We say that~$\omega'$
is obtained from~$\omega$ by \emph{a pulling tight operation},
and that~$\omega$ is obtained from~$\omega'$ by \emph{a loosening operation},
if, for some~$i\in\{2,\ldots,k-1\}$, we have~$\mu_i=\mu_{i+1}$, and~$\omega'$ is the result of removing the~$i$th and
the~$(i+1)$st entries in~$\omega$.
\end{defi}

\begin{defi}\label{shrink-stretch-def}
Let~$\omega=(\mu_1,\mu_2,\ldots,\mu_k,p)$ and~$\omega'$ be single-headed
type~II splitting routes in a mirror diagram~$M$. We say that~$\omega'$
is obtained from~$\omega$ by \emph{a tail shrinking operation},
and that~$\omega$ is obtained from~$\omega'$ by \emph{a tail stretching operation},
if  $\omega'$ has the form~$(\mu_1,\ldots,\mu_{k-1},q)$, and one of the unions of intervals~$(\mu_k;p)\cup(q;\mu_k)$
and~$(p;\mu_k)\cup(\mu_k;q)$ does not contain mirrors of~$M$.
\end{defi}

Observe that each union of intervals mentioned in Definition~\ref{shrink-stretch-def} is composed
of two mutually orthogonal straight line
portions of the same boundary circuit of~$M$.
\begin{figure}[ht]
\includegraphics[scale=.7]{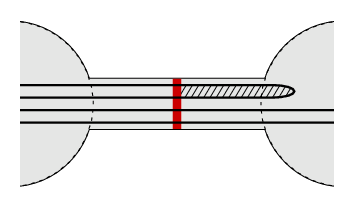}\put(-80,15){$\wideparen\mu_i=\wideparen\mu_{i+1}$}
\hskip1cm\raisebox{32pt}{$\longrightarrow$}\hskip1cm
\includegraphics[scale=.7]{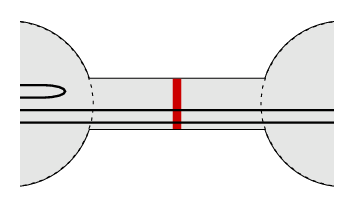}

\vskip.5cm

\includegraphics[scale=.7]{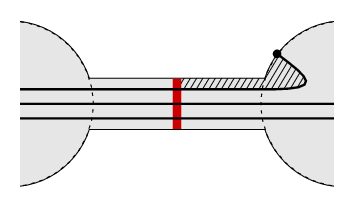}\put(-65,15){$\wideparen\mu_k$}\put(-33,53){$\wideparen p$}
\hskip1cm\raisebox{32pt}{$\longrightarrow$}\hskip1cm
\includegraphics[scale=.7]{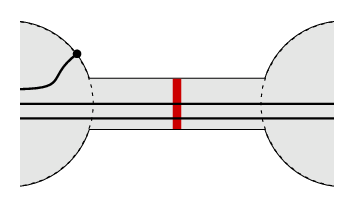}\put(-92,55){$\wideparen q$}
\caption{Pulling tight and tail shrinking operations as (half-)bigon reductions}\label{pull-tight-fig}
\end{figure}

Definitions~\ref{pull-tight-loosen-def} and~\ref{shrink-stretch-def} have a simple topological meaning, which we now explain.

Let~$M$ be a mirror diagram, and let~$(\delta_+,\delta_-)$ be a canonic dividing configuration of~$\wideparen M$
(see Definition~\ref{canonic-dividing-for-mirror-diagrams-def}) such
that~$\delta_+\cup\delta_-$ is contained in the union of $1$-handles of~$\wideparen M$.
If~$\omega=(\mu_1,\ldots,\mu_k,p)$ is a type~II splitting route, and the associated splitting path~$\wideparen\omega$
is chosen to have minimal possible number of intersections with~$\delta_+\cup\delta_-$,
then each arc~$\wideparen\omega^i$, $i=1,\ldots,k$, from Definitions~\ref{simple-route-def}
and~\ref{double-headed-splitting-route-def} intersects~$\delta_+\cup\delta_-$ exactly once.
We say in this case
that~$\wideparen\omega$ is chosen \emph{in the optimal way}.

Let~$\omega\mapsto\omega'$ be a pulling tight operation, and let~$\wideparen\omega$ and~$\wideparen\omega'$
be chosen in the optimal way. One can see that, topologically, the transition from~$\wideparen\omega$
to~$\wideparen\omega'$ is a reduction of a bigon of~$\wideparen\omega$ and~$\delta_-$.
Similarly, if~$\omega\mapsto\omega'$ is a tail shrinking operation then the
transition from~$\wideparen\omega$
to~$\wideparen\omega'$ is a reduction of a half-bigon of~$\wideparen\omega$ and~$\delta_-$.
This is illustrated in Figure~\ref{pull-tight-fig}.

\begin{defi}
A type~II splitting route~$\omega=(\mu_1,\ldots,\mu_k,p)$ and the associated
type~II splitting path are said to be \emph{reduced} if~$\mu_i\ne\mu_{i+1}$ for
all~$i=2,\ldots,k-1$, and one of the following two conditions holds:
\begin{enumerate}
\item
$\omega$ is single-headed and both intervals~$(\mu_k;p)$ and~$(p;\mu_k)$ contain at least one mirror of~$M$;
\item
$\omega$ is double-headed.
\end{enumerate}
\end{defi}

\begin{prop}\label{reducing-splitting-route-prop}
For any type~II splitting route~$\omega$ in a mirror diagram~$M$, there exists
a reduced type~II splitting route in~$M$ obtained from~$\omega$
by a finite sequence of pulling tight and tail shrinking operations.
\end{prop}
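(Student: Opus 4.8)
The plan is to prove Proposition~\ref{reducing-splitting-route-prop} by an explicit termination argument: I want to exhibit a non-negative integer quantity attached to a type~II splitting route that strictly decreases under every pulling tight and every tail shrinking operation, and show that a route on which neither operation applies is reduced. The natural candidate is the \emph{length} $k$ of the splitting route $\omega=(\mu_1,\ldots,\mu_k,p)$, i.e.\ the number of mirror entries, since a pulling tight operation removes two entries and a tail shrinking operation removes one entry. Thus the quantity $k$ strictly decreases, so after finitely many operations we reach a route $\omega'$ to which neither operation applies. It remains to argue that such a terminal $\omega'$ is reduced in the sense of the definition immediately preceding the proposition, and that every operation used keeps us in the class of legitimate type~II splitting routes (so that the process makes sense).

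First I would carefully check that pulling tight and tail shrinking, applied to a genuine type~II splitting route, again produce a genuine type~II splitting route. For pulling tight at index $i$ with $\mu_i=\mu_{i+1}$: the head mirrors $\mu_1$ (and $\mu_k$ if double-headed) are untouched since $i\in\{2,\dots,k-1\}$; Conditions~(2a), (2b), (2c) of Definitions~\ref{simple-route-def} and~\ref{double-headed-splitting-route-def} are inherited; the only delicate point is Condition~(2d), the parity condition on cancellable subsequences. Here I would use exactly the remark made in the excerpt just after Definition~\ref{double-headed-splitting-route-def}: removing a cancellable portion (and $(\mu_i,\mu_{i+1})$ with $\mu_i=\mu_{i+1}$ is cancellable of length two) does not create forbidden patterns, and the parities of all surviving indices either all stay fixed or all shift by $2$, so $i\equiv j\pmod 2$ is preserved on the surviving indices. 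For tail shrinking the argument is even simpler: we drop the last mirror entry $\mu_k$ and replace $p$ by $q$, the hypothesis that one of $(\mu_k;p)\cup(q;\mu_k)$, $(p;\mu_k)\cup(\mu_k;q)$ is free of mirrors guarantees the new terminal point still sits on the correct boundary circuit and that the associated path is obtained from $\wideparen\omega$ by a half-bigon reduction as indicated in Figure~\ref{pull-tight-fig}, so all conditions survive. (One should also note that a tail shrinking operation is only defined for single-headed routes, and it keeps the route single-headed, which is consistent.)

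Next I would verify that a terminal route $\omega'=(\mu_1,\dots,\mu_k,p)$ — one admitting no pulling tight and no tail shrinking — is reduced. No pulling tight possible means there is no $i\in\{2,\dots,k-1\}$ with $\mu_i=\mu_{i+1}$, which is precisely the first clause in the definition of a reduced route. If $\omega'$ is double-headed, the second clause is automatically satisfied, so $\omega'$ is reduced. If $\omega'$ is single-headed, then the impossibility of a tail shrinking operation means that neither $(\mu_k;p)\cup(q;\mu_k)$ nor $(p;\mu_k)\cup(\mu_k;q)$ is mirror-free for \emph{any} valid choice of $q$; unwinding this (a tail shrinking would be available whenever one of $(\mu_k;p)$ or $(p;\mu_k)$ contains no mirror of $M$, by taking $q$ on the appropriate side close to $\mu_k$), this forces both open intervals $(\mu_k;p)$ and $(p;\mu_k)$ to contain at least one mirror of $M$, which is exactly clause~(1) in the definition of a reduced single-headed route. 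Hence $\omega'$ is reduced in all cases, completing the proof.

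The main obstacle I anticipate is the bookkeeping around Condition~(2d) under pulling tight — making fully rigorous the claim that excising a cancellable pair $(\mu_i,\mu_{i+1})$ cannot violate the parity constraint or create a newly forbidden cancellable subsequence among the surviving mirrors. The excerpt already asserts this behaviour in its discussion after Definition~\ref{double-headed-splitting-route-def} (''such pieces do not appear if a cancellable portion of a splitting route is removed'' and the parity remark about $x_i=x_j$ for indices of the same parity), so the honest work is just to spell out the index reindexing and invoke that discussion; I would keep this to a short paragraph rather than a case analysis. A secondary minor point is to confirm that the hypotheses allowing a tail shrinking can genuinely be met whenever one of $(\mu_k;p)$, $(p;\mu_k)$ is mirror-free, i.e.\ that a suitable $q$ exists on the correct boundary circuit — this is immediate from the structure of boundary circuits of $M$ described in Subsection~\ref{mirror-diagram-definition-subsec}, since near $\mu_k$ the circuit continues in a straight segment in the orthogonal direction.
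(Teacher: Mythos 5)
Your proof is essentially the paper's, made explicit. Both rest on the observation that each of the two operations strictly shortens the route --- you track the number $k$ of mirror entries, the paper merely says the operations ``shorten'' --- so the process terminates, and both then reduce to the claim that a terminal route (one admitting neither operation) is reduced. The paper asserts this is ``obvious without Condition~(2d)'' and that~(2d) is robust under both operations; you spell out the Condition~(2d) check via the parity/reindexing remark the paper alludes to after Definition~\ref{double-headed-splitting-route-def}, and you spell out why a terminal single-headed route must have mirrors on both sides of~$p$. This is the same argument at a finer level of detail, not a different one.

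There is one small but genuine soft spot, inherited from the paper: your assertion that a tail shrinking ``would be available whenever one of $(\mu_k;p)$ or $(p;\mu_k)$ contains no mirror of $M$'' breaks down when $k=1$. By Definition~\ref{shrink-stretch-def}, a tail shrinking must produce a shorter \emph{splitting route} $(\mu_1,\dots,\mu_{k-1},q)$, which for $k=1$ has no mirror entries and hence is not a splitting route; so the operation is simply undefined. Yet $\omega=(\mu_1,p)$ with $\mu_1$ the only mirror on the occupied level containing $p$ is a legitimate single-headed type~II splitting route for which both $(\mu_1;p)$ and $(p;\mu_1)$ are mirror-free, so it fails clause~(1) of the definition of a reduced route while admitting neither a pulling tight nor a tail shrinking. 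Either ``reduced'' should be read as treating $k=1$ as automatically reduced (which seems to be the intent, since there is nothing left to shorten), or one should verify that this configuration does not arise where the proposition is invoked; as written, your terminal-route-is-reduced step has a small hole. This is equally a hole in the paper's own one-paragraph proof, so it reflects fidelity to the source rather than a misunderstanding on your part, but it is the one place where your ``unwinding'' of the paper's asserted obviousness does not actually close.
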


\begin{proof}
Pulling tight and tail shrinking operations shorten the corresponding splitting route,
hence it suffices to show that, whenever a type~II splitting route is not reduced, one of these operations can be applied.
The latter would be obvious without Condition~(2d) in Definitions~\ref{simple-route-def}
and~\ref{double-headed-splitting-route-def}.
This condition is designed to be robust under pulling tight and shrinking
operations, so it does not prohibit any of them. The claim follows.
\end{proof}

\begin{lemm}\label{relaxation-lem}
Let~$M\mapsto M'$ be a generalized type~II split move associated
with a type~II splitting route~$\omega$, and let~$\omega'$ be a type~II splitting route obtained
from~$\omega$ either by a pulling tight or tail shrinking
operation. Let also~$C$ be the set of boundary circuits of~$M$ preserved by the move~$M\mapsto M'$.
Then there is a choice of the generalized type~II split move~$M\mapsto M''$
associated with~$\omega'$ such that all the boundary circuits in~$C$ are also preserved by
this move.
\end{lemm}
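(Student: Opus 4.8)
The plan is to reduce the statement to the non-separation criterion of Proposition~\ref{non-separating-splitting-prop}. By the very definition of $C$, the move $M\mapsto M'$ is a generalized type~II split move associated with $\omega$ that preserves every circuit in $C$. Moreover, whenever $\omega'$ is obtained from $\omega$ by pulling tight or tail shrinking, $\omega$ is not special: pulling tight needs an index $i\in\{2,\ldots,k-1\}$, hence $k\geqslant3$, and tail shrinking needs $\omega$ single-headed. So Proposition~\ref{non-separating-splitting-prop} applies to $\omega$, and its implication $(4)\Rightarrow(1)$ shows that $\omega$ does not separate $C$. The whole problem then reduces to showing that $\omega'$ does not separate $C$ either: if $\omega'$ is not special this gives the desired move $M\mapsto M''$ by the implication $(1)\Rightarrow(4)$ of Proposition~\ref{non-separating-splitting-prop}, and the case of special $\omega'$ is dealt with at the end.

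To prove that $\omega'$ does not separate $C$, I would compare the splitting paths $\wideparen\omega$ and $\wideparen{\omega'}$, chosen in the optimal way. As explained after Definition~\ref{shrink-stretch-def}, the transition $\wideparen\omega\mapsto\wideparen{\omega'}$ is a bigon reduction (for a pulling tight) or a half-bigon reduction (for a tail shrinking); concretely $\wideparen{\omega'}$ can be chosen to agree with $\wideparen\omega$ away from the finger being removed, and for each occupied level $x$ the arc system $\wideparen{\omega'}\cap\wideparen x$ is obtained from $\wideparen\omega\cap\wideparen x$ in one of three ways: no change, deletion of one arc, or replacement of two disjoint arcs $a_1,a_2$ by a single arc isotopic rel endpoints to $a_1\ast\gamma\ast a_2$, where $\gamma\subset\partial\wideparen x$ lies in the strip-attaching region of the repeated mirror of the finger and is therefore disjoint from $\partial\wideparen M$, hence from $\wideparen C$. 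In each case the number of connected components of $\wideparen x\setminus\wideparen{\omega'}$ meeting $\wideparen C$ does not exceed the corresponding number for $\wideparen\omega$: deleting an arc only merges complementary regions, and in the last case the only region that gets subdivided has part of its boundary on $\gamma$ and so does not meet $\wideparen C$, while the rest of the complement only merges. Since $\omega$ does not separate $C$, each $\wideparen x\setminus\wideparen\omega$ has at most one $\wideparen C$-meeting component, so the same is true for $\wideparen x\setminus\wideparen{\omega'}$. (Alternatively one may argue combinatorially through condition~(2) of Proposition~\ref{non-separating-splitting-prop}: the list of successive pairs of $\omega'$ differs from that of $\omega$ only by replacing the pairs around the repeated mirror by one new pair on a common occupied level; as the two old pairs share a vertex, a short case analysis on the cyclic position of the three relevant points shows that the new pair still has a $C$-free side, condition~(2d) of Definitions~\ref{simple-route-def} and~\ref{double-headed-splitting-route-def} ensuring that these points are distinct except in the degenerate situation where the condition is vacuous.)

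Finally, suppose $\omega'$ is special. This happens only when $\omega$ is a double-headed splitting route of length four that is pulled tight, so $\omega'=(\mu_1,\mu_2,p)$ with $\mu_1,\mu_2$ being $\diagdown$-mirrors on a common occupied level $y$, and $\wideparen{\omega'}$ runs once through the disc $\wideparen y$, cutting it into two components, each of whose traces on $\partial\wideparen y$ contains a free boundary arc (between the strip attachments of $\wideparen\mu_1$ and $\wideparen\mu_2$, on both sides). Since $\omega'$ does not separate $C$, at most one of these two components meets $\wideparen C$, so some boundary arc of $y$ lies on a circuit not in $C$; placing the auxiliary mirror $\mu_0$ in such an arc makes the type~I extension move of Definition~\ref{gen-split-def}(Case~4) preserve $C$, and then the non-special route $(\mu_1,\mu_0,\mu_0,\mu_2,p)$ does not separate $C$ in the enlarged diagram (which again follows from the non-separation of $\omega'$ and the choice of $\mu_0$), so by Proposition~\ref{non-separating-splitting-prop} the subsequent generalized type~II split move can be chosen to preserve $C$. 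Composing the two gives the desired $M\mapsto M''$.

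The step I expect to be the main obstacle is the verification of the finger-removal picture (equivalently, the case analysis for the new successive pair) in the presence of repeated mirrors: one has to invoke condition~(2d) of Definitions~\ref{simple-route-def} and~\ref{double-headed-splitting-route-def} to control the local combinatorics of the finger and its possible interaction with other occurrences of the same mirror, and one must check that the sub-arc $\gamma$ along which the finger is pushed off genuinely avoids $\wideparen C$, which relies on its lying inside a strip-attaching region rather than on $\partial\wideparen M$.
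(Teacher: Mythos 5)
Your reduction to the non-separation criterion of Proposition~\ref{non-separating-splitting-prop}, and the observation that $\omega$ itself cannot be special, are exactly right and mirror the paper's strategy. Your detailed argument that a pulling-tight or tail-shrinking operation cannot take a non-separating route to a separating one is a correct elaboration of what the paper labels an ``obvious fact.'' The divergence, and the gap, is in the special case.

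Your treatment of the special case relies on the claim that $\wideparen{\omega'}$ cuts $\wideparen y$ into two components \emph{each of which} has a free boundary arc of $\partial\wideparen y$ on $\partial\wideparen M$. This is true when the two $\diagdown$-entries of $\omega'$ are distinct mirrors, but it fails when they coincide, which is allowed (compare Figures~\ref{path-depend-on-p1-fig} and~\ref{path-depend-on-p2-fig}). In that case $\wideparen{\omega'}$ enters and exits $\wideparen y$ through the \emph{same} strip $\wideparen{\mu_1}$, so one of the two complementary regions is a small disc whose boundary is a subarc of $\wideparen{\omega'}$ together with a subarc of the strip-attachment arc of $\wideparen{\mu_1}$; it contains no free arcs and meets $\partial\wideparen M$ nowhere. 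The conclusion ``at most one component meets $\wideparen C$'' then holds vacuously (that component meets nothing), and you cannot deduce from it that some free boundary arc of $y$ lies on a non-$C$ circuit, nor place $\mu_0$ in that component.

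The paper's argument avoids this by looking at $\omega$ rather than only $\omega'$: writing $\omega=(\mu_1,\mu_2,\mu_2,\mu_3,p)$, the repeated $\diagup$-mirror $\mu_2$ lies on $y$, and the canonical decomposition of the move $M\mapsto M'$ begins with a double split at $\mu_2$, so at least one boundary circuit $c$ hitting $\mu_2$ is modified and hence $c\notin C$. Placing the auxiliary mirror on $c$ near $\mu_2$ works uniformly, including when $\mu_1=\mu_3$. This is the extra information you need: it is the presence of the finger through $\mu_2$ in $\omega$, not any property of $\omega'$ alone, that guarantees a usable non-$C$ arc on $y$. You flagged the repeated-mirror interaction as a likely obstacle in your last paragraph; this is indeed exactly where the argument breaks.
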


\begin{proof}
If~$\omega'$ is not special, the statement follows from Proposition~\ref{non-separating-splitting-prop} and
the obvious fact that pulling tight and tail shrinking operations cannot take a splitting route that does not separate~$C$, to a splitting route that does.

Now suppose that~$\omega'$ is special. This means that~$\omega$ and~$\omega'$ have the form~$(\mu_1,\mu_2,\mu_2,\mu_3,p)$
and~$(\mu_1,\mu_3,p)$, respectively. There is a boundary circuit~$c$ that hits~$\mu_2$ and does not belong to~$C$,
since at least one of the boundary circuits hitting~$\mu_2$ is modified by the move~$M\mapsto M'$.
Choose the auxiliary mirror~$\mu_0$ for~$M\mapsto M''$ on~$c$ in a vicinity of~$\mu_2$. Then the extension that introduces~$\mu_0$ will not modify any boundary circuit in~$C$,
and the splitting path~$(\mu_1,\mu_0,\mu_0,\mu_3,p)$ will not separate~$C$. The claim follows.
\end{proof}

\begin{defi}\label{wandering-def}
Let~$M$ be a mirror diagram, and let~$\omega$ and~$\omega'$ be type~II splitting routes in~$M$
having the form~$\omega=(\mu_1,\mu_2,\ldots,\mu_k,p)$ and
$\omega'=(\mu_1',\mu_1'',\mu_2,\ldots,\mu_k,p)$. Assume that~$M$ has an inessential boundary circuit~$c$
such that the following conditions hold:
\begin{enumerate}
\item
$c$ has the form of the boundary of a rectangle~$r$, and~$\interior(r)\cap E_M=\varnothing$;
\item
$\mu_1,\mu_1',\mu_1''\in c$;
\item
$p\notin c$.
\end{enumerate}
Then we say that~$\omega$ and~$\omega'$ are obtained from one another by \emph{a head wandering operation}.
\end{defi}
\begin{figure}[ht]
\includegraphics[scale=0.65]{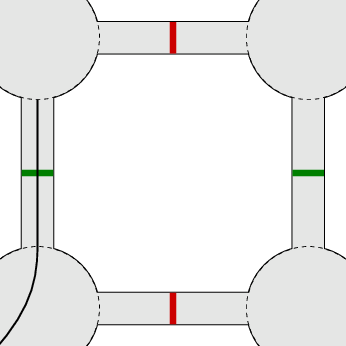}\put(-114,50){$\wideparen\mu_1$}\put(-30,50){$\wideparen\mu_1'$}%
\put(-58,-5){$\wideparen\mu_1''$}
\hskip1cm\raisebox{52pt}{$\longleftrightarrow$}\hskip1cm
\includegraphics[scale=0.65]{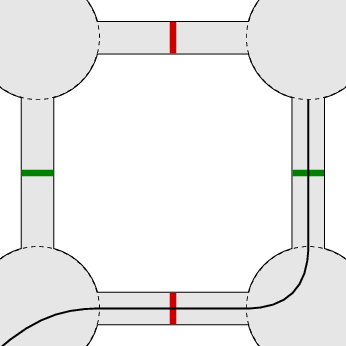}\put(-114,50){$\wideparen\mu_1$}\put(-30,50){$\wideparen\mu_1'$}%
\put(-58,-5){$\wideparen\mu_1''$}
\caption{Head wandering}\label{head-wandering-fig}
\end{figure}
The idea of this move is illustrated in Figure~\ref{head-wandering-fig}.

\begin{defi}\label{conversion-def}
Let~$M$ be a mirror diagram, and let~$\omega=(\mu_1,\mu_2,\ldots,\mu_k,p)$
be a single-headed type~II splitting route in~$M$. Let also~$\omega'$ be a double-headed type~II splitting route in~$M$
of the form~$\omega'=(\mu_1,\mu_2,\ldots,\mu_k,\mu_{k+1},p')$. Assume that~$M$ has an inessential boundary circuit~$c$
such that the following conditions hold:
\begin{enumerate}
\item
$c$ has the form of the boundary of a rectangle~$r$, and~$\interior(r)\cap E_M=\varnothing$;
\item
$\mu_{k+1},p,p'\in c$.
\end{enumerate}
Then we say~$\omega'$ is obtained from~$\omega$ by \emph{a tail-to-head conversion},
and~$\omega$ is obtained from~$\omega'$ by \emph{a head-to-tail conversion}.
\end{defi}
\begin{figure}[ht]
\includegraphics[scale=0.65]{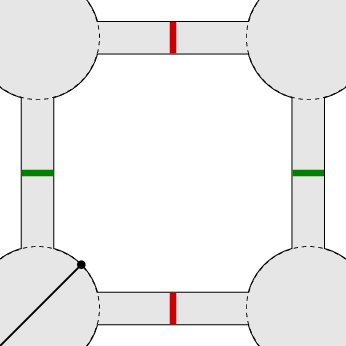}\put(-79,28){$\wideparen p$}
\hskip1cm\raisebox{52pt}{$\longleftrightarrow$}\hskip1cm
\includegraphics[scale=0.65]{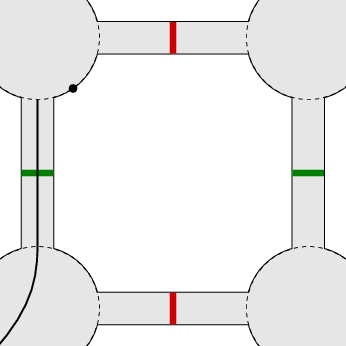}\put(-124,50){$\wideparen\mu_{k+1}$}\put(-85,71){$\wideparen p'$}
\caption{Tail-to-head and head-to-tail conversions}\label{conversion-fig}
\end{figure}
The tail-to-head and head-to-tail conversion moves are illustrated in Figure~\ref{conversion-fig}.

\begin{defi}\label{similarity-def}
Two splitting routes~$\omega$ and~$\omega'$ are said to be \emph{similar} if
there is a sequence of splitting routes
$$\omega=\omega_0,\omega_1,\ldots,\omega_N=\omega'$$
in which~$\omega_i$ and~$\omega_{i+1}$ are either equivalent
or obtained from each other by one of the transformations introduced in
Definitions~\ref{pull-tight-loosen-def}, \ref{shrink-stretch-def}, \ref{wandering-def},
and \ref{conversion-def}, for all~$i=0,\ldots,N-1$.
\end{defi}

\begin{prop}\label{similarity-prop}
Let~$\omega$ and~$\omega'$ be similar type~II splitting routes in a mirror diagram~$M$,
and let~$M\xmapsto\eta M_1$ and~$M\xmapsto\zeta M_2$
be generalized type~II split moves associated with~$\omega$ and~$\omega'$, respectively.
Let also~$C$ be the set of essential boundary circuits of~$M$
preserved by both moves.

Then the transformation~$M_1\xmapsto{\zeta\circ\eta^{-1}}M_2$ admits a $C$-neat decomposition into
type~I elementary moves.
\end{prop}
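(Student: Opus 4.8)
## Proof proposal for Proposition~\ref{similarity-prop}

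The plan is to reduce to the case where $\omega$ and $\omega'$ differ by a single one of the five elementary operations in Definition~\ref{similarity-def} (equivalence, pulling tight/loosening, tail shrinking/stretching, head wandering, tail-to-head/head-to-tail conversion), and then to exhibit a $C$-neat decomposition of $M_1\xmapsto{\zeta\circ\eta^{-1}}M_2$ into type~I elementary moves in each case. Indeed, if $\omega=\omega_0,\omega_1,\dots,\omega_N=\omega'$ is a similarity chain, then by Lemma~\ref{relaxation-lem} (applied repeatedly, together with the analogous observation for head wandering and conversions, which cannot take a splitting route separating $C$ to one separating $C$) we may choose generalized type~II split moves $M\xmapsto{\eta_i}N_i$ associated with $\omega_i$ such that each $\eta_i$ preserves all boundary circuits in $C$. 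The composition $N_i\xmapsto{\eta_{i+1}\circ\eta_i^{-1}}N_{i+1}$ is then the single-step case, and concatenating the $C$-neat type~I decompositions (using Lemma~\ref{neat-properties-lem} to splice them, and noting that generalized type~II split moves are safe-to-bring-forward while their inverses are safe-to-postpone) yields the desired decomposition of $M_1\xmapsto{\zeta\circ\eta^{-1}}M_2$.

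For the single-step cases I would argue via the canonical decompositions of the two generalized type~II split moves. Write $M\xmapsto\eta M_1$ as a sequence $M=A_0\mapsto A_1\mapsto\cdots\mapsto A_k=M_1$ of ordinary split moves (plus possibly a double split and a type~I extension) coming from Definition~\ref{gen-split-def}, and similarly $M\xmapsto\zeta M_2$ as $M=B_0\mapsto\cdots\mapsto B_l=M_2$. When $\omega$ and $\omega'$ are equivalent the two canonical decompositions can be chosen to differ only in the position of snip points, so $M_1$ and $M_2$ are combinatorially equivalent via jump moves preserving $C$; when $\omega'$ is obtained from $\omega$ by pulling tight (removing a repeated $\mu_i=\mu_{i+1}$), the canonical decomposition of $M\xmapsto\zeta M_2$ is obtained from that of $M\xmapsto\eta M_1$ by deleting the two split moves corresponding to the $i$th and $(i+1)$st entries, and the diagram $M_1$ is obtained from $M_2$ by a type~I extension followed by a wrinkle creation (which by Lemma~\ref{neutral-move-decomposition-lem} decomposes neatly into type~I elementary moves), together with jump moves adjusting level positions. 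The tail shrinking case is handled the same way using a half-bigon/half-wrinkle in place of a bigon/wrinkle, cf.\ Figure~\ref{pull-tight-fig}. For head wandering the difference between the two canonical decompositions is localized near the inessential boundary circuit $c=\partial r$ of Definition~\ref{wandering-def}: $M_1$ and $M_2$ differ by moving a $\diagdown$-mirror from one side of $r$ to another, which is realized by a type~I elementary bypass addition, a jump move, and a type~I elementary bypass removal. The tail-to-head/head-to-tail conversion case is similar, again localized near $c$.

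The main obstacle I expect is bookkeeping of the type~I-ness and $C$-delicacy throughout: the intermediate moves in the single-step decompositions include wrinkle, bridge, and extension/bypass moves, and one must verify at each splice that Condition~(2) of Definition~\ref{neat-def} holds, i.e.\ that no boundary circuit loses and then regains negative $\tb_+$ or $\tb_-$. Here the systematic use of Lemma~\ref{safe-lem} is essential: generalized split moves and their building blocks are safe-to-bring-forward (and inverses safe-to-postpone), so the recursive neatness guarantee described after Definition~\ref{neat-def} applies. A secondary subtlety is the special case $\omega$ or $\omega'$ special — here, as in the proof of Lemma~\ref{relaxation-lem}, one must choose the auxiliary mirror of the special generalized split move on an inessential boundary circuit near $\mu_2$ not belonging to $C$, so that the extra type~I extension introducing it does not touch $C$; this is always possible because at least one boundary circuit hitting the relevant mirror is modified by the move and hence not in $C$. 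Condition~(3) of Definition~\ref{neat-def} is automatic in all single-step cases since the modified part of the surface is a disc (or half-disc) with a fixed portion of its boundary, as noted in the discussion following Definition~\ref{neat-def}.
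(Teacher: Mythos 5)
Your overall reduction matches the paper's: reduce to the single-step case via the observation that generalized type~II merge moves are safe-to-postpone (which gives neatness of the spliced sequence), then handle each of the generating relations of similarity separately. But your treatment of the single-step cases contains a concrete error and is elsewhere too vague to constitute a proof.

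The pulling tight case is the main gap. You claim that after a pulling tight operation, ``$M_1$ is obtained from $M_2$ by a type~I extension followed by a wrinkle creation.'' This cannot be right: a type~I extension adds one mirror and one occupied level, a wrinkle creation (Definition~\ref{wrinkle-def}) adds four mirrors and two levels, for a net of $+5$ mirrors and $+3$ levels. But removing a repeated $\mu_i=\mu_{i+1}$ from the splitting route reduces the number of times $\wideparen\omega$ passes through $\wideparen\mu_i$ by two and through the discs by two, so $M_1$ has exactly two more mirrors and two more occupied levels than $M_2$. (A cleaner way to see the difference is that it amounts to two further type~I split moves applied to $M_2$, not an extension plus a wrinkle.) Beyond the handle count, ``deleting the two split moves corresponding to the $i$th and $(i+1)$st entries'' from a canonical decomposition is not a well-defined operation, since every subsequent split is predicated on the result of the previous one. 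The paper instead proves this case by an induction that peels off one ordinary split move at a time and compares the resulting diagrams using the commutation lemmas (Lemmas~\ref{elimination-commutes-with-split-lem}, \ref{split-commute-with-jump-lem}, \ref{type-i-split-commutes-with-generalized-type-ii-split-lem}) together with explicit diagram analysis (Figures~\ref{pull-split-fig1}--\ref{pull-split-fig3}); this machinery is what actually produces and certifies the $C$-neat type~I decomposition, and there is no shortcut that simply names a pair of moves relating $M_1$ to $M_2$.

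The remaining cases are described at a level of vagueness that does not carry the burden of the proof. For tail shrinking, the relevant observation (used in the paper) is that the last split in the canonical decomposition of $M\xmapsto\eta M_1$ acts combinatorially as a type~I extension because one side of the splitting gap is mirror-free, which lets one transfer the rest of the decomposition; invoking ``a half-bigon/half-wrinkle'' (a rectangular-diagram notion) does not establish this. For head wandering and tail-to-head conversion your sketches are closer to the paper's localized arguments but omit the reduction to the bottom of the canonical decomposition and the verification that the resulting transformation commutes with the remaining splits. Finally, the paper identifies a genuinely distinct subcase you only mention in passing: when $\omega=\omega'$ is a \emph{special} route, two generalized split moves associated with the same route can still have different auxiliary mirrors, and one must show these choices are related by type~I moves (the paper's Case~3, which introduces an intermediate double-headed route through both auxiliary mirrors). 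Stating that ``at least one boundary circuit hitting the relevant mirror is modified by the move'' is the right starting observation but does not by itself produce the decomposition.
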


\begin{proof}
\begin{figure}[ht]
\includegraphics[scale=0.65]{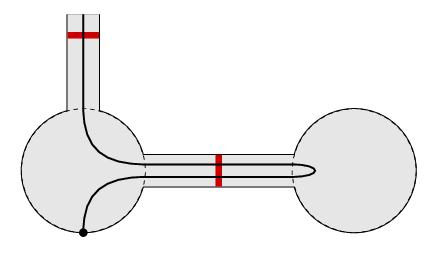}\put(-115,22){$\wideparen x$}\put(-28,22){$\wideparen y$}%
\put(-90,10){$\wideparen\mu_k=\wideparen\mu_{k-1}$}\put(-103,60){$\wideparen\mu_{k-2}$}\put(-114,-4){$\wideparen p$}

\vskip.5cm

\begin{tabular}{ccc}
\includegraphics[scale=0.65]{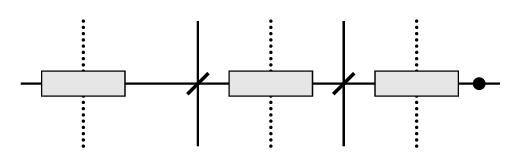}\put(-155,0){$M$}\put(-115,16){$\mu_k$}\put(-77,16){$\mu_{k-2}$}%
\put(-15,18){$p$}\put(-164,24){$x$}\put(-103,50){$y$}
&\raisebox{24pt}{$\stackrel{\eta_1}\longrightarrow$}&
\includegraphics[scale=0.65]{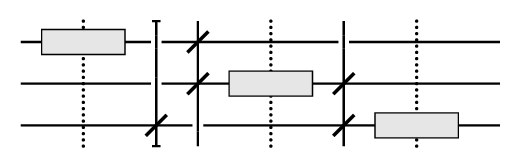}\put(-155,0){$M_1'$}\put(-116,50){$y'$}
\\
$\big\downarrow$\hbox to 0pt{$\scriptstyle\zeta_1$\hss}&&
$\big\downarrow$\hbox to 0pt{$\scriptstyle\chi$\hss}
\\
\includegraphics[scale=0.65]{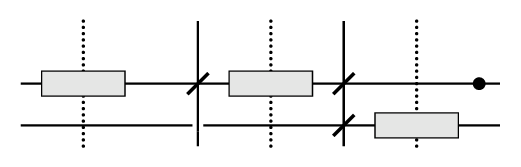}\put(-155,0){$M_2'$}\put(-115,17){$\mu_k'$}\put(-166,24){$x'$}\put(-15,17){$p'$}
&\raisebox{24pt}{$\stackrel\xi\longrightarrow$}&
\includegraphics[scale=0.65]{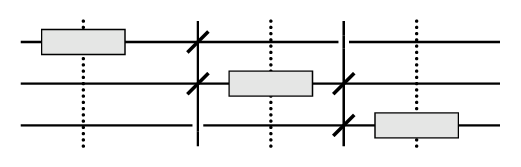}\put(-155,0){$M_3'=M_4'$}
\end{tabular}
\caption{Proof of Proposition~\ref{similarity-prop} in the pulling tight case, $\omega$ and~$\omega'$ are single-headed,
$k>3$}\label{pull-split-fig1}
\end{figure}
It suffices to prove the proposition in the case when~$\omega\mapsto\omega'$
is one of the transformations introduced in Definitions~\ref{pull-tight-loosen-def}, \ref{shrink-stretch-def}, \ref{wandering-def},
and \ref{conversion-def}, and in the case when~$\omega=\omega'$ is a special route.
Indeed, let~$\omega_0,\ldots,\omega_N$ be as in Definition~\ref{similarity-def}, and let~$M\xmapsto{\eta_i}M_i'$
be a generalized type~II split move associated with~$\omega_i$, $i=0,1,\ldots,N$, and preserving all the boundary circuits in~$C$.

Generalized type~II merge moves are safe-to-postpone, which implies, by induction in~$N$,
that
$$M_0'\xmapsto{\eta_1\circ\eta_0^{-1}}M_1'\xmapsto{\eta_2\circ\eta_1^{-1}}\ldots
\xmapsto{\eta_N\circ\eta_{N-1}^{-1}}M_N'\xmapsto{\eta_N^{-1}}M$$%
is a $C$-neat decomposition of~$M_0'\xmapsto{\eta_0^{-1}}M$. Therefore, the first~$N$ moves
in this sequence form a $C$-neat decomposition of the move~$M_1=M_0'\xmapsto{\zeta\circ\eta^{-1}}M_N'=M_2$.

Thus, in what follows we assume~$N=1$.

\smallskip\noindent\emph{Case 1}: $\omega\mapsto\omega'$ is a pulling tight operation.\\
We use the notation from Definition~\ref{pull-tight-loosen-def}, that is, we assume that~$\omega'$ is obtained from~$\omega$
by removing the~$i$th and the~$(i+1)$st entries, which are equal. Suppose that~$\omega$ and~$\omega'$
are single-headed.

The first~$k-1-i$ steps of a canonical decomposition of the move~$M\xmapsto\eta M_1$ coincide---combinatorially---with
those of a canonical decomposition of the move~$M\xmapsto\zeta M_2$. This reduces
the general case to the case when~$k=i+1$,
that is, when the two entries in~$\omega$ being cancelled are the last two mirrors.

Let~$M\xmapsto{\eta_1}M_1'$ be the composition of the first three split moves
in a canonical decomposition of the generalized type~II split move~$M\xmapsto\eta M_1$,
and let~$M_1'\xmapsto{\eta_2}M_1$ be the remaining part of the decomposition.
Let also~$M\xmapsto{\zeta_1}M_2'$ be the first split move in a canonical
decomposition of the move~$M\xmapsto\zeta M_2$, and~$M_2'\xmapsto{\zeta_2}M_2$
the remaining part.
\begin{figure}[ht]
\includegraphics[scale=0.65]{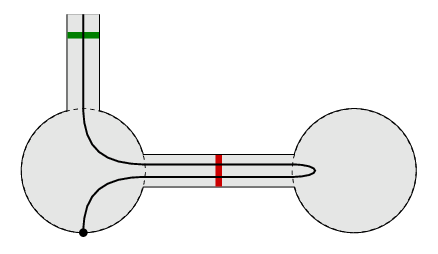}\put(-115,22){$\wideparen x$}\put(-28,22){$\wideparen y$}%
\put(-84,10){$\wideparen\mu_3=\wideparen\mu_2$}\put(-103,60){$\wideparen\mu_1$}

\vskip.5cm

\begin{tabular}{ccc}
\includegraphics[scale=0.65]{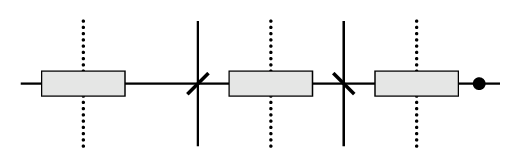}\put(-155,0){$M$}\put(-113,16){$\mu_3$}\put(-53,16){$\mu_1$}%
\put(-15,18){$p$}\put(-164,24){$x$}\put(-103,50){$y$}
&\raisebox{24pt}{$\stackrel{\eta_1}\longrightarrow$}&
\includegraphics[scale=0.65]{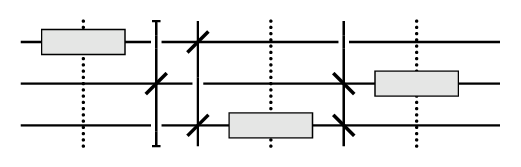}\put(-155,0){$M_1'$}\put(-116,50){$y'$}
\\
$\big\downarrow$\hbox to 0pt{$\scriptstyle\zeta_1$\hss}
\\
\includegraphics[scale=0.65]{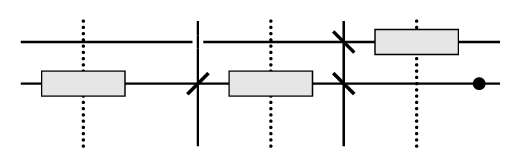}\put(-155,0){$M_2'$}\put(-115,17){$\mu_3'$}\put(-166,24){$x'$}\put(-15,17){$p'$}
&&
\raisebox{24pt}{$\big\downarrow$\hbox to 0pt{$\scriptstyle\chi$\hss}}
\\
$\big\downarrow$\hbox to 0pt{$\scriptstyle\xi$\hss}
\\
\includegraphics[scale=0.65]{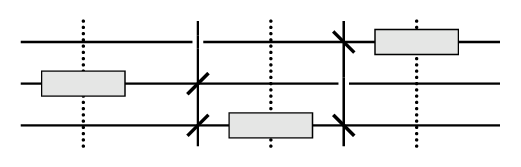}\put(-155,0){$M_4'$}
&&
\includegraphics[scale=0.65]{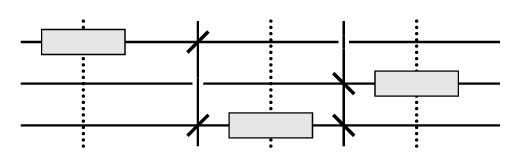}\put(-155,0){$M_3'$}
\end{tabular}
\caption{Proof of Proposition~\ref{similarity-prop} in the pulling tight case, $\omega$ and~$\omega'$
are single-headed, $k=3$}\label{pull-split-fig2}
\end{figure}

Denote by~$x$ the occupied level of~$M$ containing~$\mu_k=\mu_{k-1}$ and~$\mu_{k-2}$,
and by~$y$ the other occupied level passing through~$\mu_k$.
The occupied level~$y$ has two successors in~$M_1'$ one of which $y'$, say, carries
only one mirror of~$M_1'$. Let~$M_1'\xmapsto\chi M_3'$ be the elimination move
that removes this mirror and the level~$y'$.

The occupied level~$x$ has two successors in~$M_2'$ only one of which contains
a successor of~$\mu_k$. Denote these successors of~$x$ and~$\mu_k$ by~$x'$ and~$\mu_k'$, respectively.
Denote also by~$p'$ a point on~$x'$ such that~$\wideparen p'$
is the image of an interior point of~$\wideparen M$ under~$h_M^{M_2'}$.
Let~$M_2'\xmapsto\xi M_4'$ be a type~I split move associated with the splitting
route~$(\mu_k',p')$. We can choose it to preserve all boundary circuits in~$C$.

We claim that the transformation~$M_3'\xmapsto\psi M_4'$, where~$\psi=\xi\circ\zeta_1\circ\eta_1^{-1}\circ\chi^{-1}$,
admits a $C$-neat decomposition into jump moves. Indeed, if~$k>3$, then~$\mu_{k-2}$ is of type~`$\diagup$',
and one can directly check that the combinatorial types of~$M_3'$ and~$M_4'$ just coincide.
This is illustrated in Figure~\ref{pull-split-fig1}. If~$k=3$, then~$\mu_{k-2}=\mu_1$ is of type~`$\diagdown$',
and the combinatorial type of~$M_4'$ differs from that of~$M_3'$ only in the mutual position
of two successors of~$x$; see Figure~\ref{pull-split-fig2}.

The moves~$M_1'\xmapsto\chi M_3'$ and~$M_2'\xmapsto\xi M_4'$ are friendly to the moves~$M_1'\xmapsto{\eta_2}M_1$
and~$M_2'\xmapsto{\zeta_2}M_2$, respectively, so we can define
the moves~$M_3'\xmapsto{{\eta_2}^\chi}M_3$ and~$M_4'\xmapsto{{\zeta_2}^\xi}M_4$,
again preserving all the boundary circuits in~$C$. The scheme of all the involved moves is shown in Figure~\ref{comm-diagr-pull-split-fig}.
\begin{figure}[ht]
\includegraphics{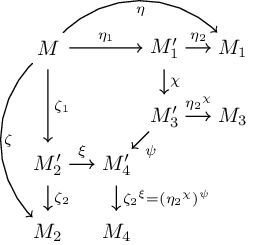}
\caption{The scheme of the moves in the proof of Proposition~\ref{similarity-prop}, the pulling tight case, single-headed
splitting routes}\label{comm-diagr-pull-split-fig}
\end{figure}

Now note that the splitting routes of the moves~$M_3'\xmapsto{{\eta_2}^\chi}M_3$
and~$M_4'\xmapsto{{\zeta_2}^\xi}M_4$ originate from the coincident starting portions of the splitting routes~$\omega$ and~$\omega'$.
One can see from this that~$M_4'\xmapsto{{\zeta_2}^\xi}M_4$ is actually the same thing as~$M_4'\xmapsto{({\eta_2}^\chi)^\psi}M_4$.
It follows from Lemmas~\ref{elimination-commutes-with-split-lem}, \ref{split-commute-with-jump-lem},
\ref{type-i-split-commutes-with-generalized-type-ii-split-lem}, and~\ref{neutral-move-decomposition-lem}
that all three transformations
$$M_1\xmapsto{{\eta_2}^\chi\circ\chi\circ\eta_2^{-1}}M_3,\quad
M_3\xmapsto{{\zeta_2}^\xi\circ\psi\circ({\eta_2}^\chi)^{-1}}M_4,\quad
M_4\xmapsto{\zeta_2\circ\xi^{-1}\circ({\zeta_2}^\xi)^{-1}}M_2$$
admit $C$-neat decompositions into type~I elementary moves.

Now suppose that~$\omega$ and~$\omega'$ are double-headed. Then the first~$k-2-i$ steps
of canonical decompositions of the moves~$M\xmapsto\eta M_1$ and~$M\xmapsto\zeta M_2$
are the same from the combinatorial point of view, and if~$k>i+2$, then the rest of the proof is
literally the same as above, since, starting from the second step of a canonical decomposition, we deal with single-headed splitting routes.

Suppose~$k=i+2>4$. The above scheme still works with the only change occurring in the numeration of
\begin{figure}[ht]
\includegraphics[scale=0.65]{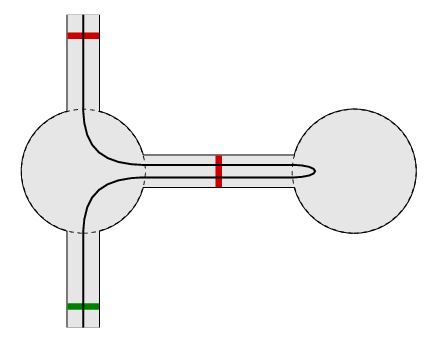}\put(-115,49){$\wideparen x$}\put(-28,49){$\wideparen y$}%
\put(-95,35){$\wideparen\mu_{k-1}=\wideparen\mu_{k-2}$}\put(-103,87){$\wideparen\mu_{k-3}$}\put(-103,12){$\wideparen\mu_k$}

\vskip.5cm

\begin{tabular}{ccc}
\includegraphics[scale=0.65]{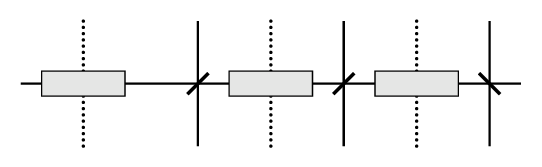}\put(-162,0){$M$}\put(-130,16){$\mu_{k-1}$}\put(-84,16){$\mu_{k-3}$}%
\put(-11,18){$\mu_k$}\put(-171,24){$x$}\put(-110,50){$y$}
&\raisebox{24pt}{$\stackrel{\eta_1}\longrightarrow$}&
\includegraphics[scale=0.65]{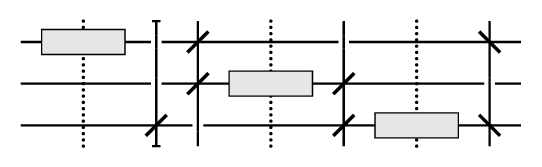}\put(-162,0){$M_1'$}\put(-123,50){$y'$}
\\
$\big\downarrow$\hbox to 0pt{$\scriptstyle\zeta_1$\hss}&&
$\big\downarrow$\hbox to 0pt{$\scriptstyle\chi$\hss}
\\
\includegraphics[scale=0.65]{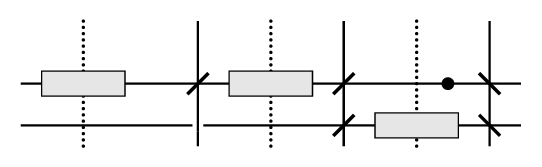}\put(-162,0){$M_2'$}\put(-130,17){$\mu_{k-1}'$}\put(-173,24){$x'$}\put(-32,32){$p'$}
&\raisebox{24pt}{$\stackrel\xi\longrightarrow$}&
\includegraphics[scale=0.65]{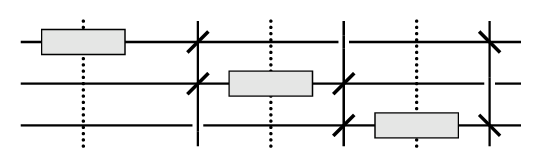}\put(-162,0){$M_3'=M_4'$}
\end{tabular}
\caption{Proof of Proposition~\ref{similarity-prop} in the pulling tight case, $\omega$ and~$\omega'$ are double-headed,
$k>4$}\label{pull-split-fig3}
\end{figure}
the mirrors, which should be shifted by~$1$. The discussed parts of the diagrams are shown in Figure~\ref{pull-split-fig3}.

Finally, suppose that~$k=4$, $\omega=(\mu_1,\mu_2,\mu_2,\mu_3,p)$, and $\omega'=(\mu_1,\mu_3,p)$.
Let~$M\xmapsto{\zeta_1}M_2'$ be the first move in a canonical decomposition of~$M\xmapsto\zeta M_2$,
and let~$M_2'\xmapsto{\zeta_2}M_2$ be the remaining part of the decomposition.
By definition, $M\xmapsto{\zeta_1}M_2'$ is an extension move, and~$M_2'\xmapsto{\zeta_2}M_2$
is a generalized type~II split move associated with the double-headed type~II splitting path~$\omega''=(\mu_1,\mu_0,\mu_0,\mu_3,p)$,
where~$\mu_0$ is the auxiliary mirror of the move~$M\xmapsto\zeta M_2$.

At least one of the sequences~$(\mu_1,\mu_0,\mu_0,\mu_2,\mu_2,\mu_3,p)$ and~$(\mu_1,\mu_2,\mu_2,\mu_0,\mu_0,\mu_3,p)$
is a double-headed type~II splitting route in~$M_2'$.
We denote it by~$\omega'''$. Which one of the two can be taken for~$\omega'''$ depends on the cyclic
order of~$\mu_0,\mu_1,\mu_2,\mu_3$ on the occupied level of~$M_2'$ that contains all of them,
and, in the case~$\mu_1=\mu_3$, also on the position of~$p$.
Both splitting routes~$\omega$ and~$\omega''$ are obtained from~$\omega'''$ by pulling tight operations.

Let~$M_2'\xmapsto\xi M_3$ be a generalized type~II split move associated with~$\omega'''$. Choose it
to preserve all the boundary circuits in~$C$.

The extension move~$M\xmapsto{\zeta_1}M_2'$ is friendly to~$M\xmapsto\eta M_1$, and the move~$M_2'\xmapsto{\eta^{\zeta_1}}M_1'$
resembling~$M\xmapsto\eta M_1$ (chosen to preserve the boundary circuits in~$C$)
is a generalized type~II split move associated with~$\omega$, which is also a type~II splitting route in~$M_2'$.
Consult Figure~\ref{comm-diagr-pull-split-fig2} for the general scheme.
\begin{figure}[ht]
\includegraphics{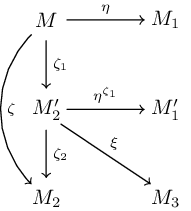}
\caption{The scheme of the moves in the proof of Proposition~\ref{similarity-prop}, the pulling tight case,
double-headed splitting routes, $k=4$}\label{comm-diagr-pull-split-fig2}
\end{figure}

The transformation~$M_1\xmapsto{\eta^{\zeta_1}\circ\zeta_1\circ\eta^{-1}}M_1'$ 
is a type~I extension move preserving the boundary circuits in~$C$
by Lemma~\ref{elimination-commutes-with-split-lem} (applied to the moves~$M_2'\xmapsto{\eta^{\zeta_1}}M_1'$
and~$M_2'\xmapsto{\zeta_1^{-1}}M$), and the transformations~$M_1'\xmapsto{\xi\circ(\eta^{\zeta_1})^{-1}}M_3$,
$M_3\xmapsto{\zeta_2\circ\xi^{-1}}M_2$ admit a $C$-neat decompositions
into type~I elementary moves by the established above cases of Proposition~\ref{similarity-prop}.
This completes the proof in the pulling tight case.

\smallskip\noindent\emph{Case 2}: $\omega\mapsto\omega'$ is a tail shrinking operation.\\
We use the notation from Definition~\ref{shrink-stretch-def}.
Let~$M\xmapsto{\eta_1}M_1'$ be the first split move of a canonical decomposition of~$M\xmapsto\eta M_1$,
and let~$M_1'\xmapsto{\eta_2}M_1$ be the remaining part. From the combinatorial point of view
the move~$M\xmapsto{\eta_1}M_1'$ is a type~I extension move,
whose inverse is friendly to~$M_1'\xmapsto{\eta_2}M_1$, so~$M\xmapsto\zeta M_2$
can be interpreted as~$M\xmapsto{{\eta_2}^{\eta_1^{-1}}}M_2$. We leave the details, which are
easy in this case, to the reader.

\smallskip\noindent\emph{Case 3}: $\omega=\omega'$ is a special type~II splitting route,
$\omega=(\mu_1,\mu_2,p)$.\\
Let~$\nu_1$, $x_1$ be the auxiliary mirror and the auxiliary level, respectively,
of the move~$M\xmapsto\eta M_1$,
and let~$\nu_2$, $x_2$ be the auxiliary mirror  and the auxiliary level, respectively, of the move~$M\xmapsto\zeta M_2$.
Let also~$M\xmapsto{\eta_1}M_1'$ and~$M\xmapsto{\zeta_1}M_2'$ be the extension moves
from which the canonic decompositions of the moves~$M\xmapsto\eta M_1$ and~$M\xmapsto\zeta M_2$, respectively,
start, and let~$M_1'\xmapsto{\eta_2}M_1$ and~$M_2'\xmapsto{\zeta_2}M_2$ be the remaining
parts of the canonical decompositions. If~$\nu_1=\nu_2$, there is nothing to prove, so we assume~$\nu_1\ne\nu_2$,
which implies~$x_1\ne x_2$.
\begin{figure}[ht]
\includegraphics{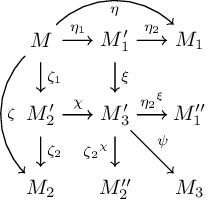}
\caption{The scheme of the moves in the proof of Proposition~\ref{similarity-prop} in the
case~$\omega=\omega'=(\mu_1,\mu_2,p)$}\label{coincident-special-similarity-fig}
\end{figure}

Denote by~$M_3'$ the mirror diagram obtained from~$M$ by adding both occupied levels~$x_1$ and~$x_2$
and both mirrors~$\nu_1$, $\nu_2$. We have two extension moves~$M_1'\xmapsto\xi M_3'$ and~$M_2'\xmapsto\chi M_3'$
such that~$\xi\circ\eta_1=\chi\circ\zeta_1$, and these moves are friendly to~$M_1'\xmapsto{\eta_2}M_1$
and~$M_2'\xmapsto{\zeta_2}M_2$, respectively; see Figure~\ref{coincident-special-similarity-fig}.
Thus, we have generalized type~II split moves~$M_3'\xmapsto{{\eta_2}^\xi}M_1''$ and~$M_3'\xmapsto{{\zeta_2}^\chi}M_2''$
resembling~$M_1'\xmapsto{\eta_2}M_1$ and~$M_2'\xmapsto{\zeta_2}M_2$, respectively.

At least one of the sequences~$(\mu_1,\nu_1,\nu_1,\nu_2,\nu_2,\mu_2,p)$ and~$(\mu_1,\nu_2,\nu_2,\nu_1,\nu_1,\mu_2,p)$ is a double-headed
type~II splitting route in~$M_3'$, which we denote by~$\omega''$. There exists a generalized type~II split move~$M_3'\xmapsto\psi M_3$ associated with~$\omega''$ and
preserving the boundary circuits in~$C$.

The splitting routes that the moves~$M_3'\xmapsto{{\eta_2}^\xi}M_1''$ and~$M_3'\xmapsto{{\zeta_2}^\chi}M_2''$
are associated with, which are~$(\mu_1,\nu_1,\nu_1,\mu_2,p)$ and~$(\mu_1,\nu_2,\nu_2,\mu_2,p)$, respectively,
are obtained from~$\omega''$ by pulling tight operations and are not special. Thus,
the existence of a $C$-neat decomposition of the transformations~$M_1''\xmapsto{\psi\circ({\eta_2}^\xi)^{-1}}M_3$
and~$M_3\xmapsto{{\zeta_2}^\chi\circ\psi^{-1}}M_2''$ into type~I elementary moves
follows from the previously considered cases of Proposition~\ref{similarity-prop}.
The existence of such a decomposition for the transformations~$M_1\xmapsto{{\eta_2}^\xi\circ\xi\circ\eta_2^{-1}}M_1''$
and~$M_2''\xmapsto{\zeta_2\circ\chi^{-1}\circ({\zeta_2}^\chi)^{-1}}M_2$ follows from Lemma~\ref{elimination-commutes-with-split-lem}.

\smallskip\noindent\emph{Case 4}: $\omega\mapsto\omega'$ is a head wandering operation.\\
We use the notation from Definition~\ref{wandering-def}. Suppose that~$\omega$ and~$\omega'$
are single-headed. The first~$k-1$ steps in canonical decompositions
of the moves~$M\xmapsto\eta M_1$ and~$M\xmapsto\zeta M_2$ are combinatorially the same, and the type~II splitting routes that
the remaining parts of the decompositions are associated with are still related by a head wandering operation.
So, it suffices to consider the case~$k=1$, $\omega=(\mu,p)$, $\omega'=(\mu',\mu'',p)$.

Denote by~$x$ the occupied level of~$M$ containing~$p$ and~$\mu$, and by~$y$
the occupied level of~$M$ containing~$\mu'$ and~$\mu''$. Denote also by~$c_1$
and~$c_2$ the inessential boundary circuits corresponding to~$c$ in~$M_1$ and~$M_2$, respectively.
Up to various symmetries the portion of~$M$ in question looks as shown at the top of Figure~\ref{head-wand-split-fig1}.
\begin{figure}[ht]
\includegraphics[scale=0.65]{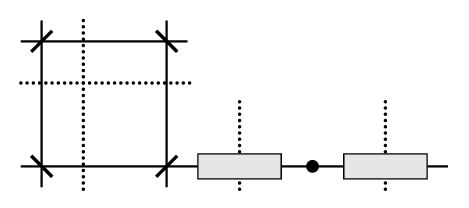}\put(-146,17){$\mu$}\put(-108,0){$\mu''$}\put(-106,58){$\mu'$}%
\put(-51,19){$p$}\put(-45,45){$M$}\put(-4,11){$x$}\put(-96,63){$y$}
\vskip.3cm
\begin{tabular}{ccc}
\hbox to 1cm{\hss}${}^\eta\kern-.4em\swarrow$
&\hbox to 1cm{\hss}&
$\searrow\kern-.4em{}^\zeta$\hbox to 1cm
\\
\includegraphics[scale=0.65]{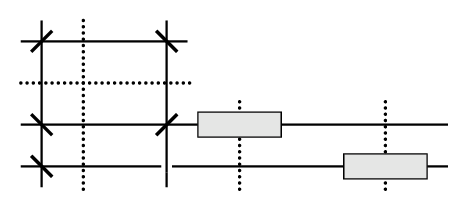}\put(-45,45){$M_1$}
&&
\includegraphics[scale=0.65]{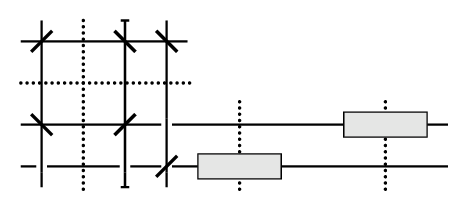}\put(-45,45){$M_2$}
\\$\big\downarrow\scriptstyle\xi$&&$\big\downarrow\scriptstyle\chi$\\
\includegraphics[scale=0.65]{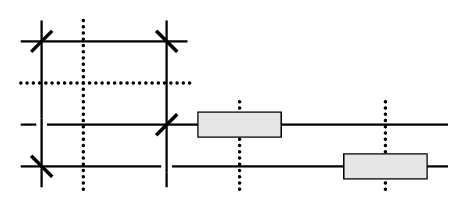}\put(-45,45){$M_1'$}
&&
\includegraphics[scale=0.65]{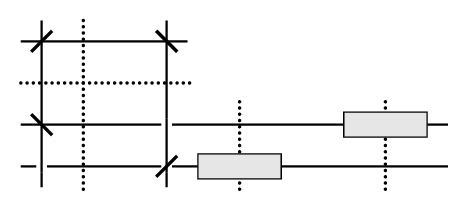}\put(-45,45){$M_2'$}
\end{tabular}
\caption{Proof of Proposition~\ref{similarity-prop} in the head wandering case, $\omega$ and~$\omega'$ are single-headed}\label{head-wand-split-fig1}
\end{figure}

The mirror~$\mu$ has a unique successor in~$M_1$ that is hit by~$c_1$.
Let~$M_1\xmapsto\xi M_1'$ be the elementary bypass removal that deletes this successor.
This move will preserve all boundary circuits except~$c_1$, which is inessential,
and another one that is already modified by~$M\xmapsto\eta M_1$. Thus
all the boundary circuits in~$C$ are preserved in~$M_1'$.

The occupied level~$y$ has a unique successor in~$M_2$ containing a portion of~$c_2$.
It carries exactly two mirrors of~$M_2$, which are successors of~$\mu'$ and~$\mu''$.
Let~$M_2\xmapsto\chi M_2'$ be the bridge removal that deletes~$y$ together with
these two mirrors. This move preserves all boundary circuits except~$c_2$
and another one that is already modified by~$M\xmapsto\eta M_1$. Thus
all the boundary circuits in~$C$ are also preserved in~$M_2'$.

One can see that~$M_1'$ and~$M_2'$ are obtained from one another by exchanging the two
occupied levels that are successors of~$x$. One can also see that
there is no obstruction to exchange them by means of jump moves preserving all the boundary
circuits that have not yet been modified by other moves discussed above.
To complete the proof in this case it remains to decompose neatly the jump moves
as well as the bridge addition~$M_2'\xmapsto{\chi^{-1}}M_2$ into
elementary type~I moves, which is possible by Lemma~\ref{neutral-move-decomposition-lem}.

Now suppose that~$\omega$ and~$\omega'$ are double-headed. If~$k>2$, then the first
moves in canonical decompositions of~$M\xmapsto\eta M_1$ and~$M\xmapsto\zeta M_2$
are combinatorially the same, and the type~II splitting routes responsible for the
remaining parts of the canonical decompositions are single-headed, so we can proceed as above.

Suppose that~$k=2$ and~$\mu_1\ne\mu_2$. Denote by~$x$ the occupied level of~$M$
passing through~$\mu_1$ and~$\mu_2$. Since~$\mu_1\ne\mu_2$, the mirror~$\mu_2$ lies outside~$c$,
which means that the union of boundary circuits in~$C$ does not cover~$x\setminus c$,
and hence there exists a generalized type~II split move~$M\xmapsto\xi M_3$
associated with~$\omega$ and preserving the boundary circuits in~$C$, such that the auxiliary mirror of this move is outside~$c$.
We have seen in Case~3 above that~$M_1\xmapsto{\xi\circ\eta^{-1}}M_3$ admits a $C$-neat
decomposition into type~I elementary moves. This reduces the currently
considered case to the subcase when the auxiliary mirror of
the move~$M\xmapsto\eta M_1$ is outside~$c$. We assume this in the sequel.

Denote by~$\mu_0$ the auxiliary mirror. Let~$M\xmapsto{\eta_1}M_1'$
be the first move of a canonical decomposition of~$M\xmapsto\eta M_1$,
and let~$M_1'\xmapsto{\eta_2}M_1$ be the remaining part.
The type~I extension move~$M\xmapsto{\eta_1}M_1'$ is friendly to~$M\xmapsto\zeta M_2$,
so we can define~$M_1'\xmapsto{\zeta^{\eta_1}}M_2'$ in such a way that the transformation~$M_2'\xmapsto{\zeta\circ\eta_1^{-1}\circ(\zeta^{\eta_1})^{-1}}M_2$
is a type~I elimination move (see Lemma~\ref{elimination-commutes-with-split-lem}). This move is associated with
the type~II splitting route~$\omega'=(\mu_1',\mu_1'',\mu_2,p)$ in~$M_1'$.

Let~$\omega''$ be the type~II splitting route~$(\mu_1',\mu_1'',\mu_0,\mu_0,\mu_2,p)$ in~$M_1'$,
and let~$M_1'\xmapsto\chi M_3$ be the respective generalized type~II split
move; see the scheme in Figure~\ref{head-wandering-special-fig}.
\begin{figure}[ht]
\includegraphics{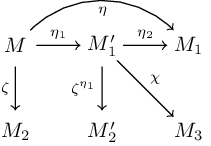}
\caption{The scheme of the moves in the proof of Proposition~\ref{similarity-prop} in the
head wandering case with special~$\omega$}\label{head-wandering-special-fig}
\end{figure}
By construction, the generalized type~II split move~$M_1'\xmapsto{\eta_2}M_1$
is associated with the splitting route~$\omega_1=(\mu_1,\mu_0,\mu_0,\mu_2,p)$ in~$M_1'$.

Thus, the splitting routes $\omega_1$ and~$\omega''$ are related by a head wandering operation,
and neither of them is special. As has been shown above, in this case,
the transformation~$M_1\xmapsto{\chi\circ{\eta_2^{-1}}}M_3$ admits a $C$-neat decomposition into
type~I elementary moves.

The splitting route~$\omega'$ in~$M_1'$ is obtained from~$\omega''$ by a pulling tight operation.
So, the required decomposition for~$M_3\xmapsto{\zeta^{\eta_1}\circ\chi^{-1}}M_2'$ exists by Case~1 considered above.

It remains to consider the subcase when~$\omega$ and~$\omega'$ have the form~$\omega=(\mu,\mu,p)$,
$\omega'=(\mu',\mu'',\mu,p)$. Denote by~$\mu_0$ and~$y$ the auxiliary mirror and the auxiliary occupied level
of the move~$M\xmapsto\eta M_1$,
by~$x$ the occupied level of~$M$ containing~$\mu$ and~$\mu_0$ (and~$\mu''$),
and by~$z$ the occupied level of~$M$ containing~$\mu'$ and~$\mu''$.
\begin{figure}[ht]
\begin{tabular}{ccccccc}
\includegraphics[scale=0.65]{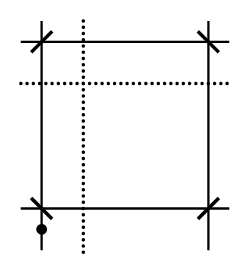}\put(-76,25){$\mu$}\put(-73,12){$p$}\put(-28,10){$\mu''$}%
\put(-5,17){$x$}\put(-9,64){$\mu'$}\put(-60,-5){$M$}\put(-15,81){$z$}
&\raisebox{40pt}{$\stackrel{\eta_1}\longrightarrow$}&
\includegraphics[scale=0.65]{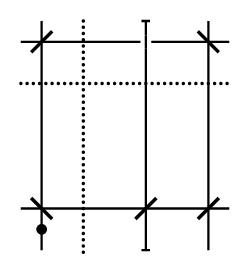}\put(-60,-5){$M_1'$}\put(-45,10){$\mu_0$}\put(-35,83){$y$}
&\raisebox{40pt}{$\stackrel{\eta_2}\longrightarrow$}&
\includegraphics[scale=0.65]{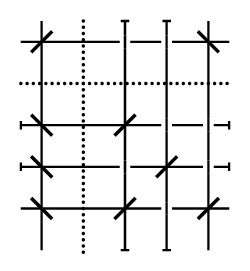}\put(-60,-5){$M_1$}
&\raisebox{40pt}{$\stackrel{\xi_1}\longrightarrow$}&
\includegraphics[scale=0.65]{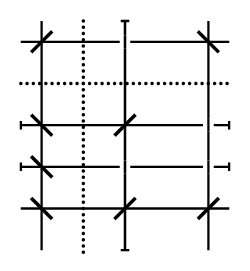}\put(-60,-5){$M_3$}
\\
$\big\downarrow$\hbox to 0pt{$\scriptstyle\zeta$\hss}
&&&&&&
$\big\downarrow$\hbox to 0pt{$\scriptstyle\xi_2$\hss}
\\
\includegraphics[scale=0.65]{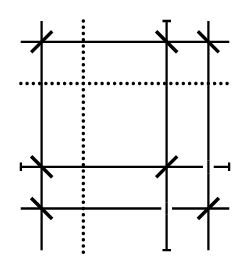}\put(-60,-5){$M_2$}
&\raisebox{40pt}{$\stackrel{\xi_5}\longleftarrow$}&
\includegraphics[scale=0.65]{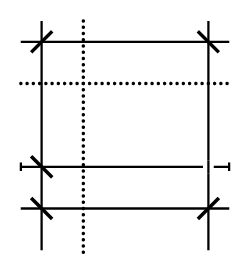}\put(-60,-5){$M_6$}
&\raisebox{40pt}{$\stackrel{\xi_4}\longleftarrow$}&
\includegraphics[scale=0.65]{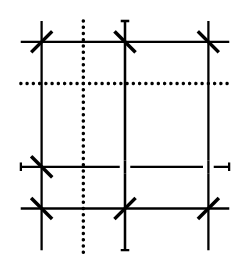}\put(-60,-5){$M_5$}
&\raisebox{40pt}{$\stackrel{\xi_3}\longleftarrow$}&
\includegraphics[scale=0.65]{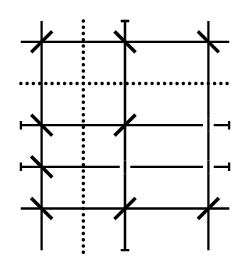}\put(-60,-5){$M_4$}
\end{tabular}
\caption{Proof of Proposition~\ref{similarity-prop} in the head
wandering case, $\omega=(\mu,\mu,p)$, $\omega'=(\mu',\mu'',\mu,p)$}\label{wandering-very-special-fig}
\end{figure}

If~$\mu_0$ is outside~$c$ we proceed exactly as before, when
we had~$\mu_1\ne\mu_2$. Suppose that~$\mu_0\in c$.
The sought-for decomposition is sketched in Figure~\ref{wandering-very-special-fig}
(the other cases are obtained by symmetries), in which:
\begin{itemize}
\item
the move~$M\xmapsto{\eta_1}M_1'$
is the first move of a canonical decomposition of~$M\xmapsto\eta M_1$,
and~$M_1'\xmapsto{\eta_2}M_1$ is the remaining part,
\item
$M_1\xmapsto{\xi_1}M_3$ is a type~I elimination move that deletes a successor of~$y$
and a successor of~$\mu_0$,
\item
$M_3\xmapsto{\xi_2} M_4$ is a type~I elementary bypass addition,
\item
$M_4\xmapsto{\xi_3} M_5$ is a bridge removal, which deletes a successor of~$x$
with two mirror on it,
\item
$M_5\xmapsto{\xi_4} M_6$ is a composition
of a jump move that shifts the remaining successor of~$y$ toward~$z$
and a double merge move that merges this successor with~$z$,
\item
and finally, $M_6\xmapsto{\xi_5} M_2$ is a bridge addition.
\end{itemize}

If the moves~$M\xmapsto\eta M_1$ and~$M\xmapsto\zeta M_2$ preserve
all boundary circuits except~$c$, then so do all the moves in Figure~\ref{wandering-very-special-fig}.
Otherwise, some adjustment may be needed by means of jump moves.
All jump moves and bridge moves should be decomposed neatly into type~I moves to
get the sought-for decomposition of~$M_1\xmapsto{\zeta\circ\eta^{-1}}M_2$.

\smallskip\noindent\emph{Case 5}: $\omega\mapsto\omega'$ is a tail-to-head conversion.\\
We use the notation from Definition~\ref{conversion-def}. Suppose that~$k>1$, that is,~$\omega'$
is not special. Let~$M\xmapsto{\eta_1}M_1'$ and~$M\xmapsto{\zeta_1}M_2'$ be the first
moves of canonical decompositions of~$M\xmapsto\eta M_1$ and~$M\xmapsto\zeta M_2$,
respectively, and let~$M_1'\xmapsto{\eta_2}M_1$ and~$M_2'\xmapsto{\zeta_2}M_2$
be the remaining parts of the decompositions.

\begin{figure}[ht]
\begin{tabular}{ccc}
\includegraphics[scale=0.65]{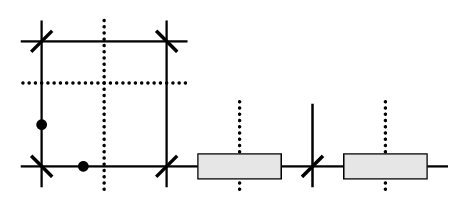}\put(-60,3){$\mu_k$}\put(-131,3){$\mu_{k+1}$}%
\put(-123,18){$p$}\put(-143,25){$p'$}\put(-50,44){$M$}
&\raisebox{32pt}{$\stackrel{\eta_1}\longrightarrow$}&
\includegraphics[scale=0.65]{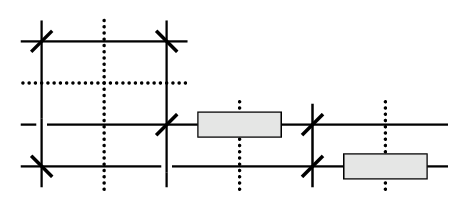}\put(-50,44){$M_1'$}\\
$\big\downarrow$\hbox to 0pt{$\scriptstyle\zeta_1$\hss}\\
\includegraphics[scale=0.65]{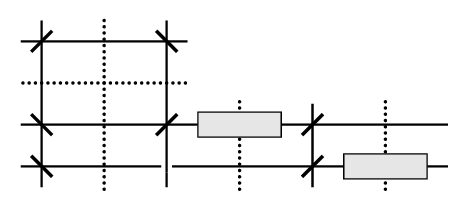}\put(-50,44){$M_2'$}
\end{tabular}
\caption{Proof of Proposition~\ref{similarity-prop} in the tail-to-head conversion case}\label{tail-to-head-split-fig}
\end{figure}
One can see from Figure~\ref{tail-to-head-split-fig} that $M_1'\xmapsto\xi M_2'$ with~$\xi=\zeta_1\circ\eta_1^{-1}$
is a type~I elementary bypass addition possibly composed with jump moves that do not
change the combinatorial type of the diagram. It is also clear that~$M_2'\xmapsto{\zeta_2}M_2$
can be interpreted as~$M_2'\xmapsto{{\eta_2}^\xi}M_2$; refer to Figure~\ref{tail-to-head-similarity-fig} for the general scheme.
\begin{figure}[ht]
\includegraphics{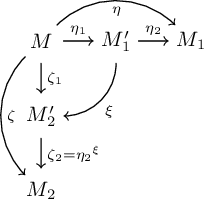}
\caption{The scheme of the moves in the proof of Proposition~\ref{similarity-prop},
the tail-to-head conversion case with non-special~$\omega'$}\label{tail-to-head-similarity-fig}
\end{figure}
We skip the details, which are similar to many previously considered cases.

Finally, suppose that~$k=1$. Let~$x$ be the occupied level of~$M$ containing~$\mu_1$, $\mu_2$, and~$p$.
There is a unique $\diagup$-mirror in~$c\cap x$, which we denote by~$\mu_0$. The sequences~$\omega''=(\mu_1,\mu_0,\mu_0,p)$
and~$\omega'''=(\mu_1,\mu_0,\mu_0,\mu_2,p')$ are non-special
type~II splitting routes related by a tail-to-head conversion operation,
and in each of the pairs $\omega'',\omega$ and~$\omega',\omega'''$ the splitting routes are related by a pulling tight
operation. The assertion of the proposition follows from the previously considered cases.
\end{proof}

\subsection{First commutation property of generalized type~II merge moves}

\begin{lemm}\label{pre-1st-commutation-lem}
Let~$M\xmapsto\eta M_1$ be a generalized type~II split move associated
with a splitting route~$\omega$, and let~$M\xmapsto\zeta M_2$
be a type~I move of one of the following kinds: elimination, extension, split, merge,
elementary bypass addition, or elementary bypass removal.
Denote by~$C$ the set of all boundary circuits of~$M$ preserved by both moves.

Then there exists a generalized type~II split move~$M\xmapsto{\eta'}M_1'$ associated with
a type~II splitting route~$\omega'$ such that
the following holds:
\begin{enumerate}
\item
the splitting route $\omega'$ is similar to~$\omega$;
\item
the move~$M\xmapsto{\eta'}M_1'$ preserves all boundary circuits in~$C$.
\item
the move~$M\xmapsto\zeta M_2$ is friendly to~$M\xmapsto{\eta'}M_1'$;
\item
if~$M_2\xmapsto{{\eta'}^\zeta}M_{21}$ is a move resembling~$M\xmapsto{\eta'}M_1'$,
then the transformation~$M_{21}\xmapsto{\eta'\circ\zeta^{-1}\circ({\eta'}^\zeta)^{-1}}M_1'$
admits a $C$-neat decomposition into type~I elementary moves.
\end{enumerate}
\end{lemm}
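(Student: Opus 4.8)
The plan is to reduce the lemma, after a preparatory adjustment of $\omega$, to the commutation facts already proved for ordinary split moves, generalized split moves, and elimination and bypass-removal moves (Lemmas~\ref{splits-commute-with-each-other-lem}, \ref{splits-commute-with-each-other-lem-2}, \ref{split-commute-with-jump-lem}, \ref{special-split-commute-with-jumpe-lem}, \ref{elimination-commutes-with-split-lem}, \ref{type-i-split-commutes-with-generalized-type-ii-split-lem}, and~\ref{bypass-commutes-with-split-lem}), combining them with Proposition~\ref{similarity-prop} (which lets us pass between $\eta$ and a generalized split $\eta'$ associated with a similar route at the cost of a $C$-neat sequence of type~I elementary moves) and with Lemmas~\ref{neutral-move-decomposition-lem} and~\ref{split-move-decomposition-lem} (which turn $C$-neat sequences of jump moves and of generalized type~I moves into $C$-neat sequences of type~I elementary moves). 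Concretely, once $\omega'$ is chosen so that the hypotheses of the appropriate commutation lemma hold, that lemma gives that $M\xmapsto{\eta'}M_1'$ and $M\xmapsto\zeta M_2$ almost commute, i.e.\ $M_{21}\to M_{12}$ decomposes $C$-neatly into jump moves, where $M_1'\xmapsto{\zeta^{\eta'}}M_{12}$ is the (single, type~I) move resembling $\zeta$; since $\eta'\circ\zeta^{-1}\circ({\eta'}^\zeta)^{-1}=(\zeta^{\eta'})^{-1}\circ\bigl(\zeta^{\eta'}\circ\eta'\circ\zeta^{-1}\circ({\eta'}^\zeta)^{-1}\bigr)$, the transformation $M_{21}\xmapsto{\eta'\circ\zeta^{-1}\circ({\eta'}^\zeta)^{-1}}M_1'$ is the composition of that $C$-neat sequence of jump moves with the single type~I move $(\zeta^{\eta'})^{-1}$, hence decomposes $C$-neatly into type~I elementary moves.

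So the heart of the matter is the \emph{normalization} of $\omega$. First, using Proposition~\ref{reducing-splitting-route-prop} and Lemma~\ref{relaxation-lem}, I would replace $\omega$ by a similar reduced route for which the chosen generalized split move still preserves $C$. If the resulting route is special, I would keep $\omega'=\omega$ and decompose $M\xmapsto{\eta'}M_1'$ into the type~I extension adding the auxiliary mirror followed by a non-special generalized split, handling the two pieces separately exactly as in the proof of Lemma~\ref{special-split-commute-with-jumpe-lem} with $\zeta$ (or $\zeta^{-1}$, see below) playing the role of the jump move there; thus I may assume $\omega'$ is non-special. Then, depending on the kind of the type~I move $\zeta$, I would further replace $\omega$ by a similar $\omega'$ (each step keeping the generalized split $C$-preserving via Lemma~\ref{relaxation-lem} and Proposition~\ref{non-separating-splitting-prop}) so as to arrange that:
\begin{itemize}
\item
$\zeta$ is a type~I extension: no change is needed, since the mirror added by $\zeta$ is new (hence absent from $\omega'$) and stays on an otherwise unoccupied level throughout $\eta'$, so that $M_{21}\to M_1'$ is the single type~I elimination undoing that addition;
\item
$\zeta$ is a type~I elimination or a type~I elementary bypass removal, deleting a mirror $\mu_0$: $\mu_0$ does not appear in $\omega'$, and, in the bypass-removal case with $\omega'$ single-headed, the snip point of $\eta'$ does not lie on the inessential rectangular circuit $c$ of $\zeta$ (or $\omega'$ is double-headed);
\item
$\zeta$ is a type~I split, a type~I merge, or a type~I elementary bypass addition: a splitting path $\wideparen{\omega'}$ can be chosen disjoint from the cut arc of $\zeta$ (respectively of $\zeta^{-1}$), i.e.\ $\zeta$ is friendly to $M\xmapsto{\eta'}M_1'$ (for split) or $\zeta^{-1}$ is friendly to the move ${\eta'}^\zeta$ out of $M_2$ (for merge and bypass addition), using the partial homeomorphisms of Subsection~\ref{hMM-subsec}.
\end{itemize}

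With $\omega'$ so normalized, the \emph{case analysis} finishes the proof. For $\zeta$ a type~I split this is Lemma~\ref{type-i-split-commutes-with-generalized-type-ii-split-lem}; for $\zeta$ a type~I elimination it is Lemma~\ref{elimination-commutes-with-split-lem}; for $\zeta$ a type~I elementary bypass removal it is Lemma~\ref{bypass-commutes-with-split-lem}; for $\zeta$ a type~I merge and a type~I elementary bypass addition one applies, respectively, Lemma~\ref{type-i-split-commutes-with-generalized-type-ii-split-lem} and Lemma~\ref{bypass-commutes-with-split-lem} inside the diagram $M_2$ to the pair of moves $\bigl({\eta'}^\zeta,\zeta^{-1}\bigr)$ out of $M_2$, observing that the corresponding resembling move $({\eta'}^\zeta)^{\zeta^{-1}}$ is exactly $\eta'$, so the almost-commutation square out of $M_2$ is precisely the square $M_{21}\to M_1'$ we need; for $\zeta$ a type~I extension it is the direct observation above. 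In every case the bookkeeping of Conditions~(2) and~(3) of Definition~\ref{neat-def} is the routine one described after that definition.

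The \textbf{main obstacle} is the normalization, specifically the assertion that a reduced splitting path can always be pushed off a prescribed mirror or inessential rectangle by similarity operations while remaining similar to $\omega$ and keeping the generalized split $C$-preserving. The crucial local observation is that the mirror $\mu_0$ being eliminated lies on an occupied level carrying no other mirror, which forces any subarc of $\wideparen\omega$ crossing $\wideparen{\mu_0}$ to be immediately followed or preceded by another such subarc; hence $\mu_0$ occurs in $\omega$ in a consecutive pair and can be removed by a pulling-tight or a tail-shrinking operation. In the bypass cases the relevant $\diagdown$-mirror, if present in $\omega$, can only be a head of the route, and a head-wandering operation along the rectangular circuit $c$ moves the head off it (applied at both heads when the route is double-headed with both heads at that mirror); a snip point lying on $c$ is handled by a tail-to-head conversion, which also makes $\omega'$ double-headed. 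Checking that none of these rerouting operations turns $\omega'$ into a route separating $C$, and that in the split and merge cases removing the unavoidable intersections with the cut arc of $\zeta$ does not create new ones, is the delicate part, but it is of the same nature as the analyses already carried out in the proofs of Lemmas~\ref{relaxation-lem} and~\ref{type-i-split-commutes-with-generalized-type-ii-split-lem}.
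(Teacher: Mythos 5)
Your proposal follows essentially the same architecture as the paper's proof: first reduce $\omega$ via Proposition~\ref{reducing-splitting-route-prop} and Lemma~\ref{relaxation-lem}, then run a case analysis on the kind of $\zeta$ and feed the resulting friendly pair into the appropriate almost-commutation lemma (Lemmas~\ref{type-i-split-commutes-with-generalized-type-ii-split-lem}, \ref{elimination-commutes-with-split-lem}, \ref{bypass-commutes-with-split-lem}), working inside $M_2$ with $\bigl({\eta'}^\zeta,\zeta^{-1}\bigr)$ for merge and bypass addition — this is exactly what the paper's Cases 1a--1f do. Your local observations (a $\diagup$-mirror on a singleton level enters a route only in consecutive pairs and is killed by reduction; a type-I extension adds a level that $\eta'$ never touches; the $\diagdown$-mirror removed by a type-I elementary bypass removal can sit only at a head and is moved off by head wandering, with the on-$c$ snip point absorbed by a tail-to-head conversion) are precisely the pivots of the paper's Cases 1a, 1b, 1e/1f.

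Two places deserve a caveat. First, your universal identity $\eta'\circ\zeta^{-1}\circ({\eta'}^\zeta)^{-1}=(\zeta^{\eta'})^{-1}\circ\bigl(\zeta^{\eta'}\circ\eta'\circ\zeta^{-1}\circ({\eta'}^\zeta)^{-1}\bigr)$ presumes $\zeta^{\eta'}$ is defined, but in the paper's Definition~\ref{friendly-resemble-def} the resembling move $\zeta^{\eta'}$ only exists when $\zeta$ is a (generalized) split, jump, elementary bypass removal, or elimination — so it is undefined for $\zeta$ an extension, a merge, or an elementary bypass addition. You in fact sidestep this in the detailed bullets by working out of $M_2$ with $\zeta^{-1}$, which \emph{is} in the admissible list, and recovering $\eta'$ as $({\eta'}^\zeta)^{\zeta^{-1}}$; but the top-level framing should not be stated as if $\zeta^{\eta'}$ always made sense. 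Second, for the special route the paper does not reduce to the jump-move argument of Lemma~\ref{special-split-commute-with-jumpe-lem}; instead, invoking Proposition~\ref{similarity-prop}, it picks a concrete auxiliary mirror adapted to $\zeta$ in each of its subcases 2a--2f and redoes the commutation directly, since the auxiliary type-I extension can interfere with $\zeta$ (a split $\zeta$ with snip point near the auxiliary mirror, a bypass move whose rectangle contains it, etc.). Your ``decompose into extension plus non-special split and handle the pieces separately'' captures the spirit, but the interactions of the added mirror with a non-jump $\zeta$ are exactly the content of Cases 2b--2f and are not automatic. Finally, you correctly flag that verifying $C$-non-separation after each rerouting, and verifying Condition~(2d) for the image of $\omega'$ in $M_2$ in the merge case, are the technical cruxes; the paper's Cases 1c--1f carry these out in full, and they are where most of the labor lies.
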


\begin{proof}
Let~$\omega''$
be a reduced type~II splitting route obtained from~$\omega$ by pulling tight and tail shrinking
operations, which exists by Proposition~\ref{reducing-splitting-route-prop}, and
let~$M\xmapsto{\eta''}M_1''$ be a generalized type~II split move associated with~$\omega''$.
By Lemma~\ref{relaxation-lem}
the move~$M\xmapsto{\eta''}M_1''$ can be chosen
to preserve all the boundary circuits in~$C$.
We assume that such a choice has been made.

If the move~$M\xmapsto\zeta M_2$ is friendly to~$M\xmapsto{\eta''}M_1''$ and the moves
almost commute with one another, then due to Lemma~\ref{neutral-move-decomposition-lem}
Condition~(4) above is satisfied for~$\omega'=\omega''$, and we are done.
Similarly, we are done if the move~$M_2\xmapsto{{\eta''}^{\zeta}}M_{21}$ almost commutes with~$M_2\xmapsto{\zeta^{-1}}M$.
If neither of these occurs, additional arguments are required and/or~$\omega''$ has to be modified.
There are again a number of cases to consider.

\medskip\noindent\emph{Case~1}:~$\omega''$ is not special.

\medskip\noindent\emph{Case~1a}:~$M\xmapsto\zeta M_2$ is a type~I elimination move.
\\
If~$x$ is an occupied level of~$M$ containing a single mirror~$\mu$, and this mirror is of type~`$\diagup$', then no reduced
type~II splitting route visits~$\mu$. Hence, such a $\mu$ cannot appear in~$\omega''$, and
any type~I elimination move is friendly to~$M\xmapsto{\eta''}M_1''$. The moves~$M\xmapsto{\eta''}M_1''$
and~$M\xmapsto\zeta M_2$ commute with one another by Lemma~\ref{elimination-commutes-with-split-lem},
so we can take~$\omega''$ for~$\omega'$.

\medskip\noindent\emph{Case~1b}:~$M\xmapsto\zeta M_2$ is a type~I extension move.
\\
An extension move can be unfriendly to the non-special generalized
split move~$M\xmapsto{\eta''}M_1''$ only if~$\omega''$ is single-headed,
and the place where a new mirror is
inserted coincides with the snip point of the move~$M\xmapsto{\eta''}M_1''$. However, a small shift
of the snip point has no combinatorial effect on the respective generalized type~II split move, hence
we may consider all extension moves being friendly to~$M\xmapsto{\eta''}M_1''$.
Then the move~$M_2\xmapsto{{\eta''}^{\zeta}}M_{21}$ is friendly
to the elimination move~$M_2\xmapsto{\zeta^{-1}}M$, and
these moves commute with one another, again, by Lemma~\ref{elimination-commutes-with-split-lem}.

\medskip\noindent\emph{Case~1c}:~$M\xmapsto\zeta M_2$ is a type~I merge move.
\\
Denote by~$\nu_1$, $\nu_2$ the two mirrors of~$M$ that have a common successor in~$M_2$,
which we denote by~$\nu$. We number them so that~$(\nu_1;\nu_2)$ is the splitting gap of the inverse
move~$M_2\xmapsto{\zeta^{-1}}M$. Denote also by~$x$ the occupied level of~$M$
passing through both~$\nu_1$ and~$\nu_2$, and by~$y_1$ and~$y_2$ the
occupied levels of~$M$ perpendicular to~$x$ and passing through~$\nu_1$ and~$\nu_2$, respectively.
Denote the common successor of~$y_1$ and~$y_2$ in~$M_2$ by~$y$.

Under an appropriate choice of~$h_M^{M_2}$
the image of~$\wideparen\omega''$ in~$\wideparen M_2$ is a normal arc
unless the splitting route~$\omega''$ is single-headed and its snip point lies in~$(\nu_1;\nu_2)$.
If the latter is the case, apply a tail stretching operation to~$\omega''$ to obtain~$\omega'$.
Otherwise, put~$\omega'=\omega''$.

Even if we have to apply a tail stretching to~$\omega''$, the obtained splitting route~$\omega'$
still does not separate~$C$, since~$\wideparen\omega'$ is obtained from~$\wideparen\omega''$
by pulling the snip point along 
a boundary circuit of~$M$ which is modified by
the move~$M\xmapsto\eta M_1$, and hence is not in~$C$. So,
we can always choose the move~$M\xmapsto{\eta'}M_1'$ associated with~$\omega'$
so that it preserves all boundary circuits in~$C$.

Let~$(\mu_1,\ldots,\mu_k,p)=\omega'$. In the case when~$\omega'$ is double-headed
we assume that~$p$ is chosen so close to~$\mu_k$ that
no occupied level passes between~$\mu_k$ and~$p$.
For~$i=1,\ldots,k$, denote by~$\mu_i'$ be the successor of~$\mu_i$
in~$M_2$. Denote also by~$p'$ the point in~$\mathbb T^2$ such that~$h_M^{M_2}(\wideparen p)=\wideparen p'$.
The sequence~$\omega'''=(\mu_1',\mu_2',\ldots,\mu_k',p')$ is a type~II splitting route in~$M_2$, provided
that Condition~(2d) of the corresponding
Definition~\ref{simple-route-def} or~\ref{double-headed-splitting-route-def} holds.

We claim that this is the case due to the fact that~$\omega'$ is either reduced or is
obtained from the reduced splitting route~$\omega''$ by a single tail stretching operation. Indeed,
we can have~$\mu_i'=\mu_{i+1}'$ only if~$\{\mu_i,\mu_{i+1}\}=\{\nu_1,\nu_2\}$.
In this case, if~$3\leqslant i\leqslant k-2$, then one of the mirrors~$\mu_{i-1}$ and~$\mu_{i+2}$ lies on~$y_1$ and the other on~$y_2$,
and we have~$\mu_{i-1},\mu_{i+2}\notin\{\nu_1,\nu_2\}$, $\mu_{i-1}\ne\mu_{i+2}$. Therefore, we cannot have~$\mu_{i-1}'=\mu_{i+2}'$.

One can see from this that the only cancellable portions of~$\omega'''$ have the form~$(\mu_i',\mu_{i+1}')=(\nu,\nu)$.
One can also see that if~$\{\mu_i,\mu_{i+1}\}=\{\mu_j,\mu_{j+1}\}=\{\nu_1,\nu_2\}$, then~$i-j=0\pmod2$,
so Condition~(2d) holds.

Thus, the image of~$\omega'$ in~$M_2$ is a type~II splitting route and we can define the
associated generalized type~II split move~$M_2\xmapsto{{\eta'}^\zeta}M_{21}$.
We now show that the move~$M_2\xmapsto{\zeta^{-1}}M$ is friendly to~$M_2\xmapsto{{\eta'}^\zeta}M_{21}$.

The move~$M_2\xmapsto{\zeta^{-1}}M$ is a type~I split move associated with a
splitting route of the form~$(\nu,q)$, where~$q\in y$. By construction, the associated
splitting path~$\wideparen{(\nu,q)}$ is disjoint from~$\wideparen\omega'''$.
We claim that their mutual position is unambiguous.

Indeed, suppose otherwise. Then there is a type~II splitting route in~$M$
not equivalent to~$\omega'$ whose image in~$M_2$ coincides with~$\omega'''$.
Such a route should be obtained from~$\omega'$ by replacing some entries equal to~$\nu_1$ or~$\nu_2$
with~$\nu_2$ or~$\nu_1$, respectively. By construction we have~$\mu_i\ne\mu_{i+1}$
for all~$i=1,\ldots,k-1$. Therefore, if~$\mu_i=\nu_1$, $2\leqslant i\leqslant k$,
then at least one of~$\mu_{i-1}$ and~$\mu_{i+1}$
is not in~$\{\nu_1,\nu_2\}$ and lies on~$y_1$ (if~$\omega'$ is single-headed we assume that~$\mu_{k+1}$
refers to~$p$). This means that replacing~$\mu_i$ by~$\nu_2$
will produce a sequence that is not a type~II splitting route even if some other~$\nu_1$'s are
replaced by~$\nu_2$'s or vice versa. Similarly, no $\nu_2$-entry in~$\omega'$ can be replaced by~$\nu_1$.
A contradiction.

Thus the move~$M_2\xmapsto{\zeta^{-1}}M$ is friendly to~$M_2\xmapsto{{\eta'}^\zeta}M_{21}$,
and by Lemma~\ref{type-i-split-commutes-with-generalized-type-ii-split-lem} these moves almost commute.

\medskip\noindent\emph{Case~1d}:~$M\xmapsto\zeta M_2$ is a type~I split move.
\\
Let~$(\nu,q)$ be the respective
type~I splitting route, and let~$(\mu_1,\mu_2,\ldots,\mu_k,p)=\omega''$. Denote by~$x$ the occupied
level of~$M$ containing~$\nu$ and~$q$.

The only possible reason for~$M\xmapsto\zeta M_2$ to be unfriendly
to~$M\xmapsto{\eta''}M_1''$ in this case
is an unavoidable intersection of~$\wideparen{(\nu,q)}$ and~$\wideparen\omega''$,
since the other possible reason, an ambiguity in the relative position of~$\wideparen{(\nu,q)}$ and~$\wideparen\omega''$
is ruled out by the fact that~$\omega''$ is reduced.
Clearly, all intersections of~$\wideparen{(\nu,q)}$ and~$\wideparen\omega''$
occur in~$\wideparen x$ and each of them is due to the fact that, for some~$i\in\{2,3,\ldots,k\}$,
the following condition holds:
\begin{equation}\label{where-to-insert-a-loop-eq}
\text{the mirrors~$\mu_i$, $\mu_{i+1}$ lie on~$x$, and the pairs~$\{\mu_i,\mu_{i+1}\}$ and~$\{\nu,q\}$ interleave,}
\end{equation}
where we put~$\mu_{k+1}=p$ if~$\omega'$ is single-headed; see the left picture in Figure~\ref{loosening-fig}.

\begin{figure}[ht]
\includegraphics[scale=0.7]{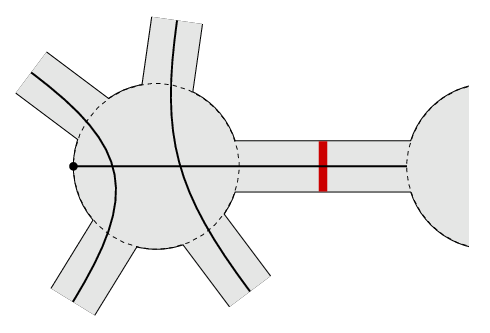}\put(-150,53){$\wideparen q$}\put(-60,36){$\wideparen\nu$}%
\put(-125,40){$\wideparen\omega''$}\put(-120,67){$\wideparen\omega''$}
\hskip2cm
\includegraphics[scale=0.7]{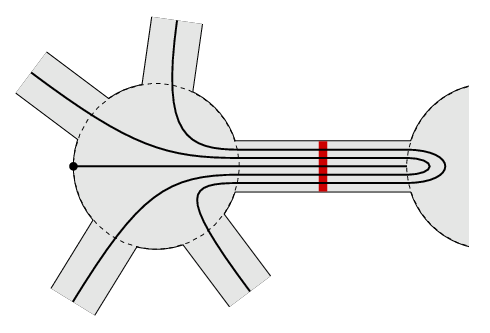}
\caption{Loosening~$\omega''\mapsto\omega'$ that resolves intersections with~$\wideparen{(\nu,q)}$}\label{loosening-fig}
\end{figure}

Let~$\omega'$ be obtained from~$\omega''$ by inserting the subsequence~$(\nu,\nu)$ between~$\mu_i$ and~$\mu_{i+1}$
whenever condition~\eqref{where-to-insert-a-loop-eq} holds. Topologically this means that we replace~$\wideparen\omega''$
by an isotopic path~$\wideparen\omega'$ avoiding intersections with~$\wideparen{(\nu,q)}$.
Clearly, the new path can be chosen to be a normal arc.
Since~$\omega''$ is reduced, one can see that~$\omega'$ satisfies Conditions~(2d)
of the corresponding
Definition~\ref{simple-route-def} or~\ref{double-headed-splitting-route-def},
so~$\omega'$ is a type~II splitting route.

By construction, the move~$M\xmapsto\zeta M_2$ is now friendly to~$M\xmapsto{\eta'}M_1'$ if the latter
is a generalized type~II split move
associated with~$\omega'$. It remains to show that~$\omega'$ does not separate~$C$,
and hence the move~$M\xmapsto{\eta'}M_1'$ can be chosen
to preserve all the boundary circuits in~$C$.

Let~$d_1,\ldots,d_l$ be the connected components of~$\wideparen x\setminus\bigl(\wideparen{(\nu,q)}\cup\wideparen\omega''\bigr)$.
For any~$i,j\in\{1,\ldots,l\}$, $i\ne j$, the components~$d_i$ and~$d_j$ are separated by at least one of~$\wideparen{(\nu,q)}$
and~$\wideparen\omega''$. This means that the respective portions of~$x$ appear at different occupied levels
either in~$M_1$, or in~$M_2$ (or in both). Therefore, for at most one~$i$ the component~$d_i$ may have
a non-empty intersection with the boundary components corresponding to boundary circuits from~$C$. This means
that at most one connected component of~$x\setminus(\{\nu,q,\}\cup\omega'')=x\setminus(\{\nu,q,\}\cup\omega')$
has a non-empty intersection with~$\bigcup_{c\in C}c$. Therefore,
$\omega'$ does not separate~$C$.

\medskip\noindent\emph{Case~1e}:~$M\xmapsto\zeta M_2$ is a type~I elementary bypass removal.
\\
Let~$c$ be the inessential boundary circuit of~$M$ whose presence allows this move.
This means that~$c$ has  the form of the boundary of a rectangle~$r$
such that~$\interior(r)\cap E_M=\varnothing$. Let~$\nu_1$, $\nu_2$, $\nu_3$, $\nu_4$ be the mirrors
at the corners of~$r$ numbered so that~$c=[\nu_1;\nu_2]\cup[\nu_1;\nu_3]\cup[\nu_2;\nu_4]\cup[\nu_3;\nu_4]$.
This means, in particular, that~$\nu_1$ and~$\nu_4$ are $\diagdown$-mirrors, and~$\nu_2$, $\nu_3$ are
$\diagup$-mirrors.

We may assume without loss of generality that the move~$M\xmapsto\zeta M_2$ removes~$\nu_1$, since
the case of removing~$\nu_4$ is symmetric to this one.
By Lemma~\ref{bypass-commutes-with-split-lem} the moves~$M\xmapsto\zeta M_2$ and~$M\xmapsto{\eta''}M_1''$
commute with one another unless one of the following holds:
\begin{itemize}
\item
$\nu_1$ is an entry of~$\omega''$;
\item
$\omega''$ is single-headed and the snip point of the move~$M\xmapsto{\eta''}M_1''$ lies on~$c$.
\end{itemize}
So, it suffices to find a non-special type~II splitting route~$\omega'$ similar to~$\omega''$ such that~$\nu_1$
is not an entry of~$\omega'$ and either~$\omega'$ is double-headed or~$\omega'$ is a single-headed
splitting route ending outside~$c$.

There are a number of cases
in each of which we either take~$\omega''$ for~$\omega'$ or apply a few operations transforming~$\omega''$ to~$\omega'$,
which include tail stretching, tail-to-head and head-to-tail conversions, and head wandering.
All the operations modify the corresponding splitting path near the boundary component~$\wideparen c\subset\partial\wideparen M$.
One can see from this and from the fact that~$c\notin C$ that, in each case, the obtained
type~II splitting route~$\omega'$ still does not separate~$C$.

Let~$(\mu_1,\mu_2,\ldots,\mu_k,p)=\omega''$.
Denote by~$x$ the occupied level of~$M$ passing through~$\nu_1$ and~$\nu_2$.
The roles of~$\nu_2$ and~$\nu_3$ are symmetric to each other in the present context, so we omit the cases obtained from
the considered ones by the exchange~$\nu_2\leftrightarrow\nu_3$.

Suppose first that~$\omega''$ is single-headed.

If $\mu_1\ne\nu_1$, $p\notin c$ take~$\omega''$ for~$\omega'$.

If $\mu_1\ne\nu_1$ and $p\in(\nu_1;\nu_2)$ take~$(\mu_1,\ldots,\mu_k,\nu_2,\nu_4,q)$ for~$\omega'$, where
$q\in(\nu_3;\nu_4)$ is close to~$\nu_4$. This splitting route is obtained from~$\omega''$ by a tail stretching
followed by a tail-to-head conversion:
$$(\mu_1,\mu_2,\ldots,\mu_k,p)\mapsto(\mu_1,\mu_2,\ldots,\mu_k,\nu_2,p')\mapsto(\mu_1,\ldots,\mu_k,\nu_2,\nu_4,q),$$
where~$p'\in(\nu_2;\nu_4)$.

If $\mu_1\ne\nu_1$ and $p\in(\nu_2;\nu_4)$ take~$(\mu_1,\ldots,\mu_k,\nu_4,q)$ for~$\omega'$, where
$q\in(\nu_3;\nu_4)$ is close to~$\nu_4$. This splitting route is obtained from~$\omega''$ by a tail-to-head conversion.

If~$\mu_1=\nu_1$, $p\notin c$, and~$\mu_2\in x$ (here and in the next case,
if~$k=1$, then~$\mu_2$ refers to~$p$), take~$(\nu_4,\nu_2,\mu_2,\ldots,\mu_k,p)$ for~$\omega'$. This
splitting route is obtained from~$\omega''$ by a head wandering operation.

If~$\mu_1=\nu_1$, $p\in(\nu_1;\nu_2)$, and~$\mu_2\in x$, take~$(\nu_4,\nu_2,\mu_2,\ldots,\mu_k,\nu_2,\nu_4,q)$ for~$\omega'$,
where~$q\in(\nu_4;\nu_3)$ is close to~$\nu_4$. This splitting route is obtained from~$\omega''$ by
a sequence of four operations including a tail stretching, a tail-to-head conversion, replacing
the splitting route by an equivalent one, and then a head wandering operation:
$$\omega''\mapsto(\mu_1,\ldots,\mu_k,\nu_2,p')\mapsto(\mu_1,\ldots,\mu_k,\nu_2,\nu_4,q')\mapsto(\mu_1,\ldots,\mu_k,\nu_2,\nu_4,q)\mapsto
(\nu_4,\nu_2,\mu_2,\ldots,\mu_k,\nu_2,\nu_4,q),$$
where~$q'\in(\nu_3;\nu_4)$ is close to~$\nu_4$, and $p'\in(\nu_2;\nu_4)$.

If~$\mu_1=\nu_1$, $p\in(\nu_1;\nu_3)$, and~$\mu_2\in x$, take~$(\nu_4,\nu_2,\mu_2,\ldots,\mu_k,\nu_3,\nu_4,q)$ for~$\omega'$,
where~$q\in(\nu_4;\nu_2)$ is close to~$\nu_4$. The sequence of transformations producing~$\omega'$ from~$\omega''$
is similar to the previous case:
$$\omega''\mapsto(\mu_1,\ldots,\mu_k,\nu_3,p')\mapsto(\mu_1,\ldots,\mu_k,\nu_3,\nu_4,q')\mapsto(\mu_1,\ldots,\mu_k,\nu_3,\nu_4,q)\mapsto
(\nu_4,\nu_2,\mu_2,\ldots,\mu_k,\nu_3,\nu_4,q),$$
where~$q'\in(\nu_2;\nu_4)$ is close to~$\nu_4$, and $p'\in(\nu_3;\nu_4)$.

If~$\mu_1=\nu_1$, $p\in(\nu_2;\nu_4)$, and~$\mu_2\in x$, take~$(\nu_4,\nu_2,\mu_2,\ldots,\mu_k,\nu_4,q)$ for~$\omega'$,
where~$q\in(\nu_4;\nu_3)$ is close to~$\nu_4$. This splitting route is obtained from~$\omega''$ by
a sequence of three operations including a tail-to-head conversion, replacing
the splitting route by an equivalent one, and then a head wandering operation:
$$\omega''\mapsto(\mu_1,\ldots,\mu_k,\nu_4,q')\mapsto(\mu_1,\ldots,\mu_k,\nu_4,q)\mapsto
(\nu_4,\nu_2,\mu_2,\ldots,\mu_k,\nu_4,q),$$
where~$q'\in(\nu_3;\nu_4)$ is close to~$\nu_4$.

If~$\mu_1=\nu_1$, $p\in(\nu_3;\nu_4)$, and~$\mu_2\in x$, take~$(\nu_4,\nu_2,\mu_2,\ldots,\mu_k,\nu_4,q)$ for~$\omega'$,
where~$q\in(\nu_4;\nu_2)$ is close to~$\nu_4$. The splitting route~$\omega'$ is produced from~$\omega''$
similarly to the previous case:
$$\omega''\mapsto(\mu_1,\ldots,\mu_k,\nu_4,q')\mapsto(\mu_1,\ldots,\mu_k,\nu_4,q)\mapsto
(\nu_4,\nu_2,\mu_2,\ldots,\mu_k,\nu_4,q),$$
where~$q'\in(\nu_2;\nu_4)$ is close to~$\nu_4$.

Now suppose that the splitting route~$\omega''$ is double-headed.

If $\mu_1\ne\nu_1$ and $\mu_k\ne\nu_1$ take~$\omega''$ for~$\omega'$.

If~$\mu_k=\nu_1$ and $p\in(\nu_1;\nu_3)$,
apply a head-to-tail conversion to~$\omega''$:
$$(\mu_1,\ldots,\mu_{k-1},\nu_1,p)\mapsto(\mu_1,\ldots,\mu_{k-1},q),$$
where~$q\in(\nu_1;\nu_2)$. We obtain a single-headed type~II splitting route similar to~$\omega''$
and proceed as in one of the above listed cases.

If~$\mu_k=\nu_1$, $\mu_1\ne\nu_1$, $p\in(\nu_3;\nu_1)$, pick a point~$p'\in(\nu_1;\nu_3)$ close to~$\nu_1$.
The type~II splitting route~$(\mu_1,\ldots,\mu_k,p')$ is then equivalent to~$\omega''$
and we can proceed as in the previous case.

If~$\mu_1=\nu_1$, $p\notin c$, and~$\mu_2\in x$, apply a head wandering
$$(\nu_1,\mu_2,\ldots,\mu_k,p)\mapsto(\nu_4,\nu_2,\mu_2,\ldots,\mu_k,p)$$
and then proceed as in one of the above listed cases (or symmetric to them).

If~$\mu_1=\nu_1$, $\mu_k\ne\nu_1$, $p\in c$, replace~$\omega''$ with an equivalent splitting route with~$p\notin c$, and
then proceed as in the previous case.

\medskip\noindent\emph{Case~1f}:~$M\xmapsto\zeta M_2$ is a type~I elementary bypass addition.
\\
If~$\omega''$ is double-headed, then the move~$M_2\xmapsto{{\eta''}^\zeta}M_{21}$ is defined and
commutes with~$M_2\xmapsto{\zeta^{-1}}M$ by Lemma~\ref{bypass-commutes-with-split-lem}, so we can take~$\omega''$ for~$\omega'$.

Suppose that~$\omega''=(\mu_1,\ldots,\mu_k,p)$ is single-headed.
Let~$c=[\nu_1;\nu_2]\cup[\nu_1;\nu_3]\cup[\nu_2;\nu_4]\cup[\nu_3;\nu_4]=\partial r$
be the inessential boundary circuit of~$M_2$ created by this move.
We may assume without loss of generality that the mirror added by the move is~$\nu_1$
(the case when the added mirror is~$\nu_4$ is symmetric to this one).
So,~$\nu_1$ is not present in~$M$.

If~$p\in[\nu_1;\nu_2)\cup[\nu_1;\nu_3)$ there is an equivalent type~II splitting route~$(\mu_1,\ldots,\mu_k,p')$ with~$p'\notin c$.
Take such a route for~$\omega'$.

If~$p\in(\nu_2;\nu_4)\cup(\nu_3;\nu_4)$, then a tail stretching operation on~$\omega''$ yields
a type~II splitting route of the form~$(\mu_1,\ldots,\mu_k,\nu_j,p')$
with $j\in\{2,3\}$, $p'\in[\nu_1;\nu_2)\cup[\nu_1;\nu_3)$, which can then be changed to an equivalent one~$(\mu_1,\ldots,\mu_k,\nu_j,p'')$
with~$p''\notin c$. We take the obtained splitting route for~$\omega'$.

The move~$M\xmapsto\zeta M_2$ is friendly to the type~II split move~$M\xmapsto{\eta'}M_1'$ associated with~$\omega'$,
and, moreover, it follows from Lemma~\ref{bypass-commutes-with-split-lem} that the moves~$M_2\xmapsto{{\eta'}^\zeta}M_{21}$,
$M_2\xmapsto{\zeta^{-1}}M$ commute.

\medskip\noindent\emph{Case~2}: $\omega''$ is special, $\omega''=(\mu_1,\mu_2,p)$.
\\
We may assume without loss of generality that the interval~$(\mu_2;p)$ is shorter than~$(p;\mu_2)$,
since the other case is symmetric to this one.

The special case requires special care, but
in the present context, it is actually easier than the non-special one. There are several subcases to consider
in each of which the proof is by a straightforward check.
We skip some boring details and provide only the most essential ones.

By Proposition~\ref{similarity-prop} it suffices to prove the assertion of the lemma
for a concrete eligible choice of the auxiliary mirror of the move~$M_2\xmapsto{{\eta'}^\zeta}M_{21}$. So,
in each case, we specify this choice.

\medskip\noindent\emph{Case~2a}:~$M\xmapsto\zeta M_2$ is a type~I elimination move.
We take~$\omega''$ for~$\omega'$ and take the auxiliary mirror of the move~$M\xmapsto\eta M_1$
for the auxiliary mirror of the move~$M_2\xmapsto{\eta'^\zeta}M_{21}$.

\medskip\noindent\emph{Case~2b}:~$M\xmapsto\zeta M_2$ is a type~I extension move.
\\
We take~$\omega''$ for~$\omega'$ unless the mirror~$\mu_*$ added
by the move~$M\xmapsto\zeta M_2$ appears in~$(\mu_2;p)$,
in which case we put~$\omega'=(\mu_1,\mu_2,p')$, where~$p'\in(\mu_2;\mu_*)$
is any point not lying at the intersection of two occupied levels of~$M$.
Then the move~$M_2\xmapsto{\eta'^\zeta}M_{21}$ is also associated with~$\omega'$.

We choose the same auxiliary level for
the move~$M_2\xmapsto{\eta'^\zeta}M_{21}$ as the one for~$M\xmapsto\eta M_1$, provided that it does not coincide with the occupied level created
by the move~$M\xmapsto\zeta M_2$. In the latter case, the position of
the auxiliary level should be disturbed slightly.

\medskip\noindent\emph{Case~2c}:~$M\xmapsto\zeta M_2$ is a type~I merge move.
\\
Let~$(\nu_1;\nu_2)$ be the splitting gap of the inverse move~$M_2\xmapsto{\zeta^{-1}}M$.
We take~$\omega''$ for~$\omega'$. Since~$\mu_1$, $\mu_2$
are $\diagdown$-mirrors, and~$\nu_1$, $\nu_2$ are $\diagup$-mirrors, the image~$(\mu_1',\mu_2',p')$ of~$\omega'$
in~$M_2$ is a well defined (up to equivalence) special type~II splitting route.

At least one of the two (not necessarily distinct) boundary circuits of~$M$ that hit~$\mu_1$ is modified by the move~$M\xmapsto\eta M_1$,
and hence, does not belong to~$C$. So, we can choose the auxiliary mirror~$\mu_0$ of the move~$M\xmapsto{\eta'}M_1'$
on such a circuit in a small vicinity of~$\mu_1$. Similarly, we choose the auxiliary mirror of the move~$M_2\xmapsto{{\eta'}^\zeta}M_{21}$
in a small vicinity of~$\mu_1'$, on the same side of~$\mu_1'$ on which~$\mu_0$ is located relative to~$\mu_1$.

\medskip\noindent\emph{Case~2d}:~$M\xmapsto\zeta M_2$ is a type~I split move.
\\
Let~$(\nu,q)$ be the splitting route with which the move~$M\xmapsto\zeta M_2$ is associated.
If the splitting paths~$\wideparen\omega''$ and~$\wideparen{(\nu,q)}$ have no unavoidable intersection,
we take~$\omega''$ for~$\omega'$. 
The positions of the auxiliary mirrors for the moves~$M\xmapsto{\eta'}M_1'$ and~$M_2\xmapsto{{\eta'}^\zeta}M_{21}$
should be chosen in a small vicinity of the mirror~$\mu_1$ and its successor in~$M_2$, respectively.

Otherwise we put~$\omega'=(\mu_1,\nu,\nu,\mu_2,p)$, similarly to
Case~1d.

\medskip\noindent\emph{Case~2e}:~$M\xmapsto\zeta M_2$ is a type~I elementary bypass removal.
\\
We use the same notation as in Case~1e, as well as a similar strategy.
Unless~$\nu_1$ or~$\nu_4$ appear in~$\omega''$ we take~$\omega''$ for~$\omega'$
and use the same positions of the auxiliary mirrors for the moves~$M\xmapsto{\eta'}M_1'$ and~$M_2\xmapsto{{\eta'}^\zeta}M_{21}$.
These positions should be chosen outside~$r$.

If~$\nu_1$ does not appear in~$\omega''$, but~$\nu_4$ does, we take for~$\omega'$ a splitting
route of the form~$(\mu_1,\nu_2,\nu_2,\mu_2,p')$ or~$(\mu_1,\nu_3,\nu_3,\mu_2,p')$,
whichever fits the definition. One of these must do so. The splitting route~$\omega'$
is now non-special, and the associated move~$M\xmapsto{\eta'}M_1'$ and the move~$M\xmapsto\zeta M_2$ are mutually friendly.

If~$\nu_1$ appears in~$\omega''$ we resolve this by means of head wandering operations, tail-to-head conversions,
and head-to-tail conversions, similarly to the procedure in Case~1e. This produces a double-headed splitting route
with no entry equal~$\nu_1$ and we proceed as before.

\medskip\noindent\emph{Case~2f}:~$M\xmapsto\zeta M_2$ is a type~I elementary bypass addition.
\\
We use the same notation as in Case~1f above.

If~$\nu_4$ does not appear in~$\omega''$ we take~$\omega''$ for~$\omega'$
and use the same position of the auxiliary mirrors for the moves~$M\xmapsto{\eta'}M_1'$ and~$M_2\xmapsto{{\eta'}^\zeta}M_{21}$
chosen outside~$r$.

If~$\nu_4=\mu_1$ or~$\mu_2$, we take for~$\omega'$ a splitting route of the form~$(\mu_1,\nu_2,\nu_2,\mu_2,p)$
or~$(\mu_1,\nu_3,\nu_3,\mu_2,p)$, whichever fits the definition. One of these must do so. Such splitting route~$\omega'$
is now non-special, and the associated move~$M\xmapsto{\eta'}M_1'$ and the move~$M\xmapsto\zeta M_2$ are mutually friendly.

\medskip
Thus, in each case, we have a recipe how to produce~$\omega'$, and to choose the associated generalized split move,
and it is straightforward to check that the assertion of the Lemma holds in each case.
\end{proof}

We are ready to establish the first commutation property of generalized type~II merge moves.

\begin{prop}\label{1st-comm-prop-of-merge-prop}
Let~$M_1$ be an enhanced mirror diagram, and let~$C$ be a collection of essential boundary circuits of~$M_1$.
Let also~$M_1\xmapsto\eta M_2$ be a generalized type~II merge move preserving all~$c\in C$, and~$M_2\xmapsto\zeta M_3$
a transformation that admits a $C$-neat decomposition into type~I elementary moves.

Then there exists a composition~$M_1\xmapsto{\zeta'}M_4$ of type~I elementary moves, and
a generalized type~II merge move~$M_4\xmapsto{\eta'}M_3$ 
that give rise to a $C$-neat decomposition of the transformation~$M_1\xmapsto{\zeta\circ\eta}M_3$.
\end{prop}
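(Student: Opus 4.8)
The plan is to pass to the inverse move. Writing $\sigma=\eta^{-1}:M_2\to M_1$, we have a generalized type~II \emph{split} move associated with some type~II splitting route $\omega$ (and, if $\omega$ is special, a fixed choice of auxiliary mirror), and the desired identity $\zeta\circ\eta=\eta'\circ\zeta'$ for $\sigma'=(\eta')^{-1}$ becomes the commutation statement $\sigma'\circ\zeta=\zeta'\circ\sigma$. So it suffices to ``slide'' $\sigma$ rightward past the given $C$-neat decomposition of $M_2\xmapsto\zeta M_3$ into type~I elementary moves, one move at a time, producing a generalized type~II split move $\sigma':M_3\to M_4$ and a composition $\zeta':M_1\to M_4$ of type~I elementary moves, together fitting into a $C$-neat decomposition of $M_1\xmapsto{\zeta\circ\eta}M_3$.

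First I would reduce $\zeta$'s decomposition to a $C$-neat decomposition into \emph{basic} type~I elementary moves, namely extension, elimination, elementary bypass addition, elementary bypass removal, type~I split, and type~I merge moves — precisely the moves to which Lemma~\ref{pre-1st-commutation-lem} applies. The only type~I elementary move not already on this list is a type~I slide, which by (the dual of) Lemma~\ref{slide-bypass-neat-decomp-lem} neatly decomposes into a type~I split move, a jump move, and a type~I merge move; since the slide preserves all circuits in $C$ so does the intervening jump, and the jump in turn decomposes into basic type~I elementary moves by Lemma~\ref{neutral-move-decomposition-lem}. By Lemma~\ref{neat-properties-lem} the resulting longer decomposition of $\zeta$ is still $C$-neat.

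Then I would induct on the length $N$ of this basic decomposition; the case $N=0$ is immediate. For the inductive step, write $\zeta=\zeta_{\geqslant2}\circ\zeta_1$ with $M_2\xmapsto{\zeta_1}M_2'$ a single basic type~I move. Applying Lemma~\ref{pre-1st-commutation-lem} to the split move $\sigma$ and to $\zeta_1$ yields a generalized type~II split move $\sigma'':M_2\to M_1'$ associated with a route $\omega'$ similar to $\omega$, a resembling generalized type~II split move $\rho:M_2'\to M_{21}'$ preserving all circuits of $C$, and a $C$-neat decomposition into type~I elementary moves of $M_{21}'\to M_1'$; Proposition~\ref{similarity-prop} supplies a $C$-neat decomposition into type~I elementary moves of $M_1\to M_1'$. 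Now the induction hypothesis, applied to the generalized type~II merge move $\rho^{-1}:M_{21}'\to M_2'$ and the (length $N-1$) $C$-neat decomposition $\zeta_{\geqslant2}:M_2'\to M_3$, produces a composition $M_{21}'\to M_4$ of type~I elementary moves and a generalized type~II merge $M_4\to M_3$ giving a $C$-neat decomposition of $M_{21}'\to M_3$. Concatenating the three type~I parts $M_1\to M_1'\to M_{21}'\to M_4$ with the final merge $M_4\to M_3$ yields the required decomposition of $M_1\to M_3$; a short bookkeeping with the morphisms (using $\rho\circ\zeta_1\circ(\sigma'')^{-1}$ for $M_1'\to M_{21}'$ and the relations supplied by Lemma~\ref{pre-1st-commutation-lem}(4) and Proposition~\ref{similarity-prop}) confirms that the composite equals $\zeta\circ\eta$, and appending the safe-to-postpone merge at the end keeps the whole decomposition $C$-neat (Lemma~\ref{safe-lem}).

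The residual difficulty is essentially all bookkeeping, since the combinatorial substance is already isolated in Lemma~\ref{pre-1st-commutation-lem} and Proposition~\ref{similarity-prop}. The points that need care are: checking that the (a priori non-maximal) collection $C$ is contained in the ``set of circuits preserved by both moves'' to which Lemma~\ref{pre-1st-commutation-lem} refers — which follows because every move of $\zeta$'s decomposition preserves $C$ and $\eta$ preserves $C$, so $\sigma=\eta^{-1}$ and each $\zeta_1$ do too; tracking the transformed collection $C$ as the diagram changes under the type~I moves and re-checking the hypotheses of Lemma~\ref{pre-1st-commutation-lem} at each stage; and verifying that the several $C$-neat decompositions concatenate to a $C$-neat one. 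I expect the only slightly delicate verification to be condition~(2) of Definition~\ref{neat-def} (the $\tb_\pm$-monotonicity) across the join of the inductively produced decomposition and those coming out of Lemma~\ref{pre-1st-commutation-lem}; this is handled by the standard ``safe move'' argument of Lemma~\ref{safe-lem}, using that all type~I elementary moves are $+$-safe and generalized type~II merge moves are safe-to-postpone.
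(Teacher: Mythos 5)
Your overall strategy is the same as the paper's: pass to the inverse generalized type~II split move $\sigma=\eta^{-1}$, slide it rightward past the pieces of a suitable decomposition of $\zeta$ one at a time, and use Lemma~\ref{pre-1st-commutation-lem} together with Proposition~\ref{similarity-prop} at each step, verifying $C$-neatness via the safeness bookkeeping of Lemma~\ref{safe-lem}. The inductive formulation and the morphism bookkeeping are fine, and your observation that $C$ sits inside the maximal preserved collection in Lemma~\ref{pre-1st-commutation-lem} and Proposition~\ref{similarity-prop} is correct and necessary.

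The one place where you diverge from the paper, and where there is a genuine (if modest) gap, is the handling of jump moves. You assert that a jump move ``decomposes into basic type~I elementary moves by Lemma~\ref{neutral-move-decomposition-lem},'' where ``basic'' for you means slide-free, so that Lemma~\ref{pre-1st-commutation-lem} applies to each piece. But Lemma~\ref{neutral-move-decomposition-lem} only guarantees a neat decomposition into \emph{type~I elementary moves}, and by the Section~6 definition these include type~I slide moves. If a slide appears, your preliminary reduction recurses: slide $\rightarrow$ split $+$ jump $+$ merge $\rightarrow$ \ldots{} and you would need a separate termination argument (e.g.\ a bound on the induction measure used in the jump case of Lemma~\ref{neutral-move-decomposition-lem}), which you do not supply. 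The paper sidesteps this entirely: after decomposing slides into split $+$ jump $+$ merge, it \emph{keeps} jump moves in the list of single-move base cases, and commutes the generalized type~II split move past a jump directly via Lemma~\ref{split-commute-with-jump-lem} in the non-special case (with an extra argument, choosing the auxiliary mirror near $\mu_1$ and using Lemma~\ref{special-split-commute-with-jumpe-lem} and Proposition~\ref{similarity-prop}, in the special case). Adopting that treatment of the jump case would close the gap and align your proof fully with the paper's.
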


\begin{proof}
A type~I slide move admits a neat decomposition into a sequence of three moves:
a type~I split move, a jump move,
and a type~I merge move (refer to Figure~\ref{slide-decomposition-fig}, where
a similar decomposition for a type~II slide move is shown).
Therefore, it suffices to prove the proposition, substituting for~$M_2\xmapsto\zeta M_3$
a single move from the following list: a type~I elimination, extension, split, or merge move, an
elementary bypass addition or removal, or a jump move.

If~$M_2\xmapsto\zeta M_3$ is a jump move, then the assertion of the proposition
follows from Lemmas~\ref{split-commute-with-jump-lem} and~\ref{neutral-move-decomposition-lem}, provided that the generalized split move~$M_2\xmapsto{\eta^{-1}}M_1$ is not special.

Suppose that~$M_2\xmapsto\zeta M_3$ is a jump move and
the move~$M_2\xmapsto{\eta^{-1}}M_1$ is special. Let~$(\mu_1,\mu_2,p)$ be the respective
special splitting route. By choosing the position of the auxiliary mirror
close enough to~$\mu_1$, and outside~$\bigcup_{c\in C}c$, we can ensure that the obtained generalized type~II split move~$M_2\xmapsto\xi M_1'$,
also associated with~$(\mu_1,\mu_2,p)$, has the following properties:
\begin{enumerate}
\item
the move~$M_2\xmapsto\xi M_1'$ preserves the boundary circuits in~$C$;
\item
the first move in the canonic decomposition of~$M_2\xmapsto\xi M_1'$ is friendly to~$M_2\xmapsto\zeta M_3$.
\end{enumerate}
The assertion of the proposition now follows from Proposition~\ref{similarity-prop}
and Lemmas~\ref{special-split-commute-with-jumpe-lem} and~\ref{neutral-move-decomposition-lem}.

If~$M_2\xmapsto\zeta M_3$ is a type~I elimination, extension, split, or merge move, or an
elementary bypass addition{} or removal,
the assertion follows from Lemma~\ref{pre-1st-commutation-lem}
(where we substitute~$M_2\xmapsto{\eta^{-1}}M_1$ and~$M_2\xmapsto\zeta M_3$
for~$M\xmapsto\eta M_1$ and~$M\xmapsto\zeta M_2$, respectively),
and Proposition~\ref{similarity-prop}.
\end{proof}

\subsection{Second commutation property of generalized type~II merge moves, non-special case}
Generalized type~II split moves come with a decomposition into split moves (and a single extension move
in the special case),
which are all of type~I except the last one, which is of type~II. By Lemma~\ref{split-move-decomposition-lem}
this implies that
any generalized type~II split move admits a neat decomposition into a sequence
of elementary moves such that all type~I moves in it precede all type~II moves.
So, for generalized type~II split moves the second commutation property holds trivially, but
this is not what we need.
In order to establish the second commutation property for the inverse moves
we need a decomposition of the opposite kind, in which type~II elementary moves precede
type~I elementary moves.

\begin{prop}\label{2ndcommutation-for-merge-non-special-prop}
Any non-special generalized type~II split move~$M\xmapsto\eta M'$ admits a neat decomposition into
a sequence of elementary moves
such that all type~II moves in it occur before all type~I moves.
\end{prop}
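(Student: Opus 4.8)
The plan is to induct on the length $k$ of the type~II splitting route $\omega = (\mu_1,\ldots,\mu_k,p)$ with which the move $M\xmapsto\eta M'$ is associated, following the recursive structure of Definition~\ref{gen-split-def} but assembling the pieces in the opposite order. The base case $k=1$ is an ordinary type~II split move, which is neatly decomposable into type~II elementary moves by Lemma~\ref{split-move-decomposition-lem}, so there is nothing to prove. For $k>1$, the idea is that the canonical decomposition of $M\xmapsto\eta M'$ consists of several type~I split moves (and possibly a double split move, but since $\omega$ is non-special no initial extension move appears) followed by a single ordinary type~II split move; I want to \emph{commute} the type~II move to the front past the type~I moves.

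First I would write the canonical decomposition $M = M_0 \xmapsto{\eta_1} M_1 \xmapsto{\eta_2} \ldots \xmapsto{\eta_k} M_k = M'$ where $M_0 \xmapsto{\eta_1} M_1$ is the first type~I split move (or double split move, if $\omega$ is double-headed, in which case $M_1 \xmapsto{\eta_2}\cdots$ is associated with a single-headed route and one falls into the single-headed induction after the first step), and $M_1 \xmapsto{\eta_2}\cdots\xmapsto{\eta_k} M'$ is a non-special generalized type~II split move $M_1 \xmapsto{\eta'} M'$ associated with a shorter route. By the induction hypothesis $M_1 \xmapsto{\eta'} M'$ admits a neat decomposition into elementary moves with all type~II moves first; write it as $M_1 \xmapsto{\alpha} N \xmapsto{\beta} M'$ with $M_1\xmapsto\alpha N$ a type~II $C$-delicate sequence (in the sense appropriate to the relevant $C$, or just: a sequence of type~II elementary moves) and $N \xmapsto\beta M'$ a sequence of type~I elementary moves. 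Now I have $M \xmapsto{\eta_1} M_1 \xmapsto{\alpha} N \xmapsto{\beta} M'$, where the leading move is of type~I and the next block is type~II; I need to push the type~I split move $M\xmapsto{\eta_1}M_1$ to the right past the type~II block $M_1 \xmapsto\alpha N$. This is exactly a commutation problem of the kind treated in Lemmas~\ref{splits-commute-with-each-other-lem}, \ref{type-i-split-commutes-with-generalized-type-ii-split-lem}, and more systematically Lemma~\ref{pre-1st-commutation-lem} and Proposition~\ref{1st-comm-prop-of-merge-prop}: a type~I elementary move followed by (what amounts to) a generalized type~II move can be rewritten with the type~II part first.

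Concretely, I would apply Proposition~\ref{1st-comm-prop-of-merge-prop} (or its underlying Lemma~\ref{pre-1st-commutation-lem}) in the form: given the generalized type~II merge move $N\xmapsto{\beta^{-1}}M_1$ — no, better to phrase it with the inverse orientation — given a type~I elementary move $M\xmapsto{\eta_1}M_1$ followed by a generalized type~II merge move $M_1 \xmapsto{\alpha^{-1}}$ (wait: $\alpha$ goes $M_1\to N$, type~II; I want to commute $\eta_1$ past it), I invoke that $\eta_1$ is friendly to the moves in $\alpha$ and that they almost commute, possibly after replacing $\omega$ by a similar splitting route via Proposition~\ref{similarity-prop} and the similarity operations (pulling tight, tail shrinking, head wandering, conversions). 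The output is a rewriting $M \xmapsto{\alpha'} N' \xmapsto{\eta_1'} \cdots$ where $\alpha'$ is a type~II block and $\eta_1'$ a type~I block; absorbing $\eta_1'$ into $\beta$ and decomposing any residual jump moves into type~I elementary moves via Lemma~\ref{neutral-move-decomposition-lem} gives the desired decomposition of $M\xmapsto\eta M'$ with all type~II moves preceding all type~I moves. Neatness of the resulting decomposition follows from Lemmas~\ref{neat-properties-lem} and~\ref{safe-lem} together with the observation that generalized type~II split moves are safe-to-bring-forward and the corresponding merge moves safe-to-postpone, so that Condition~(2) of Definition~\ref{neat-def} is automatic, while Condition~(3) is inherited from the commutation lemmas.

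The main obstacle I expect is the bookkeeping in the commutation step: $\eta_1$ is a type~I split move whose splitting path may have unavoidable intersections with the splitting path of the type~II block, and resolving these forces one to replace $\omega$ by a similar but longer route (inserting a loop $(\nu,\nu)$, as in Case~1d of the proof of Lemma~\ref{pre-1st-commutation-lem}), which threatens to break the induction on $k$. The way around this is to run the induction not on the raw length of $\omega$ but to first pass to a \emph{reduced} route via Proposition~\ref{reducing-splitting-route-prop} and Lemma~\ref{relaxation-lem} — matching exactly the strategy already used in Lemma~\ref{pre-1st-commutation-lem} — and to note that the loop-insertions needed to make $\eta_1$ friendly occur only in the step commuting $\eta_1$ past $\alpha$, not in the recursive call, so the induction parameter that actually decreases is the length of the route defining $M_1\xmapsto{\eta'}M'$, which is $k-1$ (or $k-2$ in the double-headed case) by construction. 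I would also need to handle carefully the case where the first move $M\xmapsto{\eta_1}M_1$ is a double split move: a double split is itself neutral (Lemma~\ref{neutral-move-decomposition-lem}), hence neatly decomposable into type~II elementary moves, so it can simply be absorbed into the front type~II block, and then one is left with the single-headed situation.
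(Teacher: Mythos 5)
Your approach is genuinely different from the paper's. The paper does \emph{not} reorder the canonical decomposition by commutation; instead it constructs an entirely different decomposition — the ``anticanonical decomposition'' — from scratch, by induction in which the head of the route is stripped off (not the tail). Its first move is a \emph{wrinkle creation} with $\mu_1,\mu_2$ as ramification mirrors (type~II, by Lemma~\ref{neutral-move-decomposition-lem}), followed recursively by a generalized type~II split associated with $(\mu_2',\ldots,\mu_k',p')$, then a ``clean up'' (type~I eliminations and bypass removals of the wrinkle debris), then jump moves. The nontrivial content is Claim~3, a combinatorial argument showing that this anticanonical procedure lands on a diagram differing from $M'$ only by jump moves. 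No commutation lemma is invoked.

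Your plan hinges on the step ``push $\eta_1$ to the right past the type~II block $\alpha$,'' and this is where there is a genuine gap. The block $\alpha$ produced by the inductive hypothesis is an arbitrary sequence of type~II elementary moves; it is \emph{not} a single generalized type~II split or merge move, and in fact its composite $M_1\to N$ is the composition of several wrinkle creations plus a split, which is not a generalized type~II split move. Consequently neither Proposition~\ref{1st-comm-prop-of-merge-prop} nor Lemma~\ref{pre-1st-commutation-lem} applies to it directly, because they take as input a single generalized type~II merge move. The fallback — commuting $\eta_1$ past each type~II elementary move of $\alpha$ one at a time — runs straight into the termination problem that Subsection~\ref{strategy-subsec} explicitly warns about: each pairwise commutation can create several new type~I and type~II elementary moves, so the number of moves may grow without bound. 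Your proposed remedy (induct on $k$, noting the recursive call is on a route of length $k-1$) does not repair this, because the problematic unbounded growth happens entirely \emph{inside} the ``push $\eta_1$ past $\alpha$'' step, not in the recursive call. Moreover, even for a single type~II elementary move $\tau$ in $\alpha$, the paper provides commutation statements only for pairs involving a generalized type~II split/merge and a type~I elementary move (Lemmas~\ref{splits-commute-with-each-other-lem}, \ref{type-i-split-commutes-with-generalized-type-ii-split-lem}, \ref{pre-1st-commutation-lem}); the cases of a type~I split against a type~II extension or type~II elementary bypass addition are not established, and it is precisely to avoid developing that much larger commutation calculus that the paper goes through the anticanonical decomposition and its Claims~1--4. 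So the plan would need a new set of pairwise commutation lemmas plus a termination argument, which amounts to substantially more than what the proposal currently contains.
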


\begin{proof}
It suffices to prove the proposition in the case when the move~$M\xmapsto\eta M'$ is associated
with a single-headed type~II splitting route. Indeed, suppose otherwise,
namely that~$M\xmapsto\eta M'$ is associated with a non-special
double-headed type~II splitting route. The canonic decomposition of the move~$M\xmapsto\eta M'$
starts with a double split move, which can be decomposed neatly into
type~II elementary moves according to Lemma~\ref{neutral-move-decomposition-lem},
and the rest of the decomposition compiles into a generalized type~II split move associated
with a single-headed splitting route. If the latter admits a desired decomposition,
then so does the move~$M\xmapsto\eta M'$.

Thus, in the sequel we assume that the move~$M\xmapsto\eta M'$
is associated with a single-headed splitting route.
Let~$\omega=(\mu_1,\mu_2,\ldots,\mu_k,p)$ be that route.
We proceed by induction in~$k$ and construct a neat decomposition of the move~$M\mapsto M'$
into a sequence of moves in which the first~$k-1$ moves are wrinkle creation
moves, the~$k$th move is a type~II split move, and
all the other moves are
type~I elementary moves (including elementary bypass removals and eliminations) and
jump moves. Such a decomposition of the move~$M\mapsto M'$
will be referred to as \emph{anticanonical}.

Each wrinkle creation move and the type~II split move in the anticanonical decomposition should
be then neatly decomposed into type~II elementary moves, and the jump moves should be neatly decomposed
into type~I elementary moves.
This is possible according to Lemmas~\ref{neutral-move-decomposition-lem} and~\ref{split-move-decomposition-lem}.

If~$k=1$ the move
$M\xmapsto\eta M'$ is an ordinary type~II move whose anticanonical
decomposition consists of the move itself.

Suppose that~$k\geqslant2$.
The first move~$M\xmapsto\xi M_1$ of the anticanonical decomposition is a wrinkle
creation move for which~$\mu_1$ and~$\mu_2$ are the ramification mirrors.
Denote by~$\mu_1'$ and~$\mu_2'$ the~$\diagup$-mirror and the~$\diagdown$-mirror in~$M_1$
that have no predecessors in~$M$ (they are located near~$\mu_1$ and~$\mu_2$,
respectively).

The proposition will follow from the Claims~1--4 proven below.

\begin{description}
\item[Claim~1] there is a unique, up to equivalence, type~II splitting route~$\omega'=(\mu_2',\mu_3',\ldots,\mu_k',p')$ in~$M_1$
such that~$h^{M_1}_M(\wideparen\omega\cap F)$ is a subarc of~$\wideparen\omega'$, where~$F$ is the domain of~$h^{M_1}_M$ (the
partial homeomorphism~$h_M^{M_1}$ is associated with the move~$M\xmapsto\xi M_1$ as explained in Subsection~\ref{hMM-subsec}).

Let~$M_1\xmapsto{\eta'} M_2$ be a generalized type~II split move associated with~$\omega'$. Let~$\mu_1''$ be the successor of~$\mu_1'$,
and~$\mu_2''$, $\mu_2'''$ be the successors of~$\mu_2'$ for this move. These three mirrors lie on the same occupied level of~$M_2$,
which contains no other mirrors. Let~$M_3$ be the diagram obtained from~$M_2$ by removing
the mirrors~$\mu_1''$, $\mu_2''$, and~$\mu_2'''$ together with the occupied level of~$M_2$ passing through them.
This eliminates the two inessential boundary circuits that have been created by the wrinkle move~$M\xmapsto\xi M_1$
and modifies one more boundary circuit, whose essentialness or inessentialness is assumed to be preserved
by the transformation~$M_2\mapsto M_3$ (consult Figures~\ref{anticanonical-fig} and~\ref{cut-from-other-end} below).
It means, in particular, that any surface carried by~$\widehat M_2$
is also carried by~$\widehat M_3$.
Define a morphism~$\chi:\widehat M_2\rightarrow\widehat M_3$
by~$\bigl(F,F,\mathrm{id}|_F\bigr)\in\chi$, where~$F$ is any such surface.
\item[Claim~2] the transformation~$M_2\xmapsto\chi M_3$ admits a decomposition into type~I elementary moves.
\item[Claim~3] the transformation~$M_3\xmapsto\zeta M'$, where~$\zeta=\eta\circ\xi^{-1}\circ{\eta'}^{-1}\circ\chi^{-1}$,
admits a decomposition into jump moves shifting only the successors
of the occupied levels of~$M$ that are being split by the move~$M\mapsto M'$.
\item[Claim~4] the moves~$M\xmapsto\xi M_1$,
$M_1\xmapsto{\eta'} M_2$ and the decompositions of~$M_2\xmapsto\chi M_3$ and~$M_3\xmapsto\zeta M'$
into type~I elementary moves and jump moves, respectively, can be chosen to give
rise to a neat decomposition of the move~$M\xmapsto\eta M'$.
\end{description}

Thus, an anticanonical decomposition of the move~$M\xmapsto\eta M'$ is defined as the concatenation of the following
four sequences of moves:
\begin{enumerate}
\item
the wrinkle creation move~$M\xmapsto\xi M_1$,
\item
an anticanonical decomposition of the generalized type~II split move~$M_1\xmapsto{\eta'} M_2$;
\item
a decomposition into type~I elementary moves of the transformation~$M_2\xmapsto\chi M_3$
(this will be referred to as \emph{a clean-up});
\item
a sequence of jump moves that compiles into the transformation~$M_3\xmapsto\zeta M'$.
\end{enumerate}

\begin{exam}\label{anticanonical-exam}
Consider again the generalized type~II split move from Example~\ref{gen-split-move-exam}.
Figure~\ref{anticanonical-fig} shows the transitions~$M\xmapsto\xi M_1$, $M_1\xmapsto{\eta'} M_2$,
and~$M_2\xmapsto\chi M_3$. By~$1,2,\ldots$ we mark the mirrors~$\mu_1,\mu_2,\ldots$,
and by~$1',2',\ldots$ the mirrors~$\mu_1',\mu_2',\ldots$, respectively.
One can see that the final diagram in Figure~\ref{gen-split-pic-2}
can be obtained from~$M_3$ by two jump moves.

\begin{figure}[ht]
\begin{tabular}{ccc}
\includegraphics[scale=0.65]{gen-split-ex1.eps}\put(-194,139){$\scriptstyle1$}\put(-63,131){$\scriptstyle2,6$}%
\put(-135,131){$\scriptstyle5$}\put(-63,59){$\scriptstyle3$}\put(-135,59){$\scriptstyle4$}%
\put(-160,-4){$M$}
&
\raisebox{100pt}{$\longrightarrow$}
&
\includegraphics[scale=0.65]{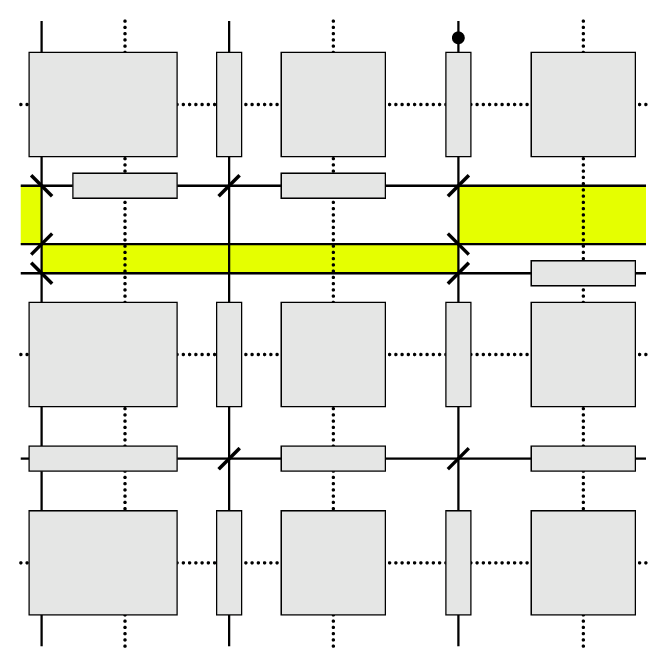}\put(-63,134.5){$\scriptstyle2'$}\put(-63,144){$\scriptstyle6'$}%
\put(-135,144){$\scriptstyle5'$}\put(-63,58){$\scriptstyle3'$}\put(-135,58){$\scriptstyle4'$}\put(-203,134.5){$\scriptstyle1'$}%
\put(-160,-4){$M_1$}
\\
&&$\big\downarrow$\\
\includegraphics[scale=0.65]{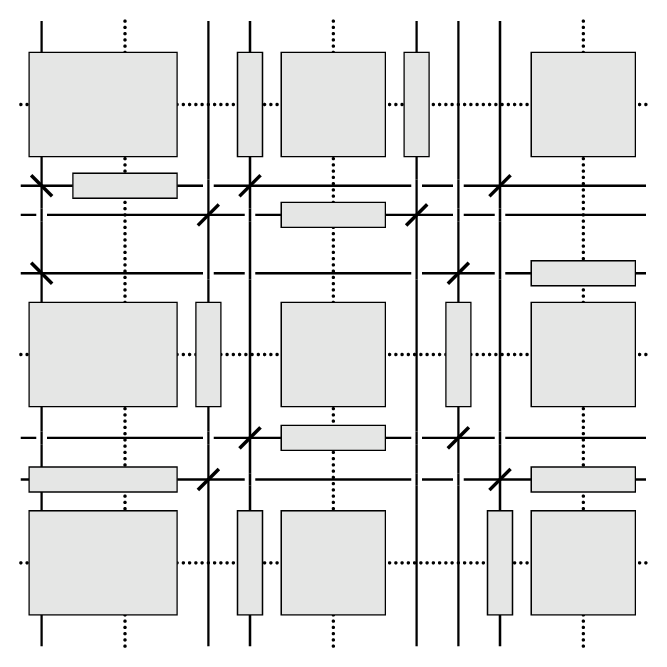}
\put(-160,-4){$M_3$}
&
\raisebox{100pt}{$\longleftarrow$}
&
\includegraphics[scale=0.65]{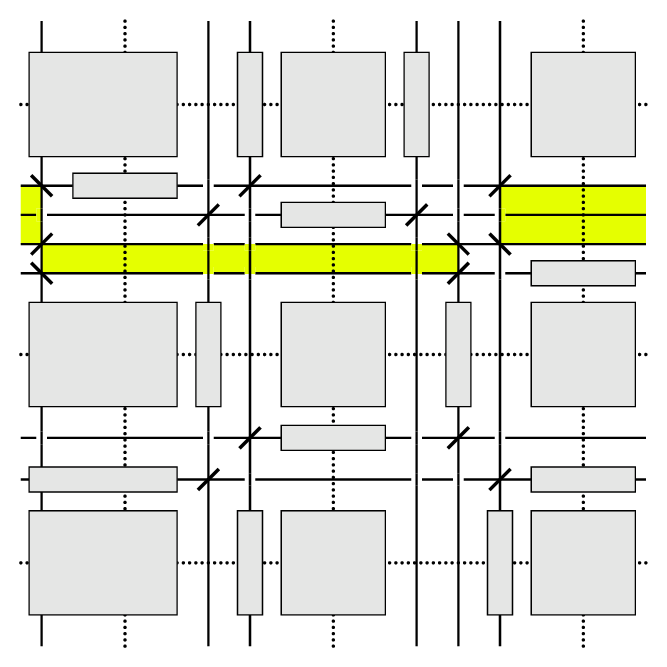}\put(-160,-4){$M_2$}\put(-192,134.5){$\scriptstyle1''$}\put(-64,134.5){$\scriptstyle2''$}\put(-51,134.5){$\scriptstyle2'''$}
\\
\end{tabular}
\caption{First three steps of the anticanonical decomposition of a generalized type~II split move
in Example~\ref{anticanonical-exam}}\label{anticanonical-fig}
\end{figure}
\end{exam}

Now we proceed with the proof of Claims~1--4. Let~$\wideparen\omega=\wideparen\omega^1\cup\ldots\cup\wideparen\omega^k$
be the decomposition of the arc~$\wideparen\omega$ into subarcs as in Definition~\ref{simple-route-def}.
This decomposition can be chosen so that the domain~$F$ of~$h_M^{M_1}$ is obtained from~$\wideparen M$ by cutting
along~$(\wideparen\mu_1\cup\wideparen x\cup\wideparen\mu_2)\cap(\wideparen\omega^1\cup\wideparen\omega^2)$,
where~$x$ is the occupied level of~$M$ containing~$\mu_1$ and~$\mu_2$,
and the intersection of~$F$ with~$\wideparen\omega$
is~$\wideparen\omega^3\cup\ldots\cup\wideparen\omega^k$.
We can also ensure that the starting point of~$h_M^{M_1}(\wideparen\omega\cap F)$
appears on the boundary of the strip~$\wideparen\mu_2'$ but not on~$\partial\wideparen M_1$.
The arc~$h_M^{M_1}(\wideparen\omega\cap F)$ is then continued uniquely (up to isotopy of~$\wideparen M_1$ preserving the handle
decomposition structure) to a type~II splitting path; see Figure~\ref{cut-from-other-end}. Claim~1 follows.
\begin{figure}[ht]
\includegraphics[scale=0.6]{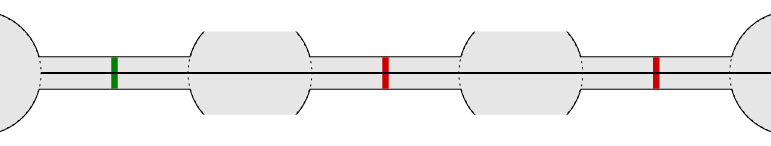}\put(-192,5){$\wideparen\mu_1$}\put(-114,5){$\wideparen\mu_2$}%
\put(-36,5){$\wideparen\mu_3$}\put(10,17){$\wideparen M$}\put(-153,10){$\wideparen x$}
$$\Big\downarrow\hbox to 0pt{wrinkle creation\hss}$$

\vskip3mm
\includegraphics[scale=0.6]{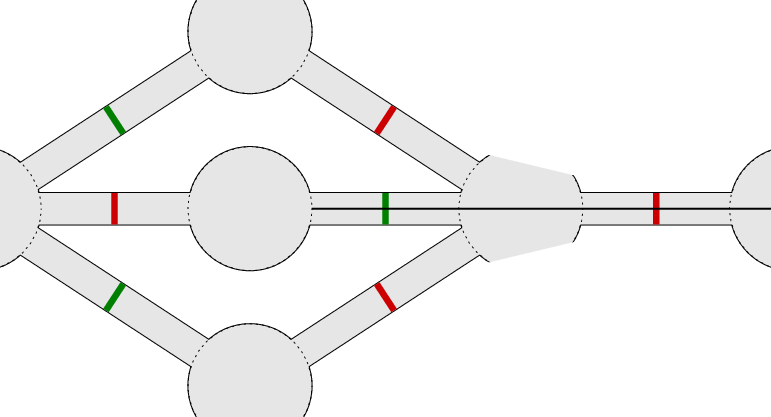}\put(-180,44){$\wideparen\mu_1'$}\put(-126,44){$\wideparen\mu_2'$}%
\put(-36,44){$\wideparen\mu_3'$}\put(-153,56){$\wideparen x'$}\put(10,56){$\wideparen M_1$}
$$\Big\downarrow\hbox to 0pt{generalized type~II splitting\hss}$$
\vskip2mm
\includegraphics[scale=0.6]{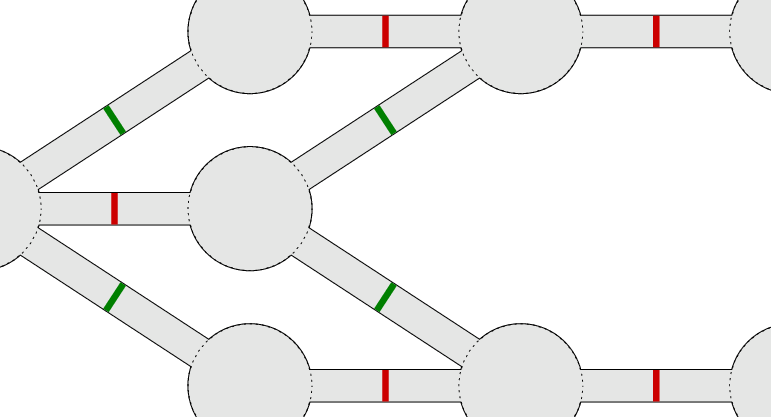}\put(-180,44){$\wideparen\mu_1''$}\put(-104,75){$\wideparen\mu_2''$}%
\put(-104,42){$\wideparen\mu_2'''$}\put(-153,56){$\wideparen x'$}\put(10,56){$\wideparen M_2$}
$$\Big\downarrow\hbox to 0pt{clean-up\hss}$$
\vskip2mm
\includegraphics[scale=0.6]{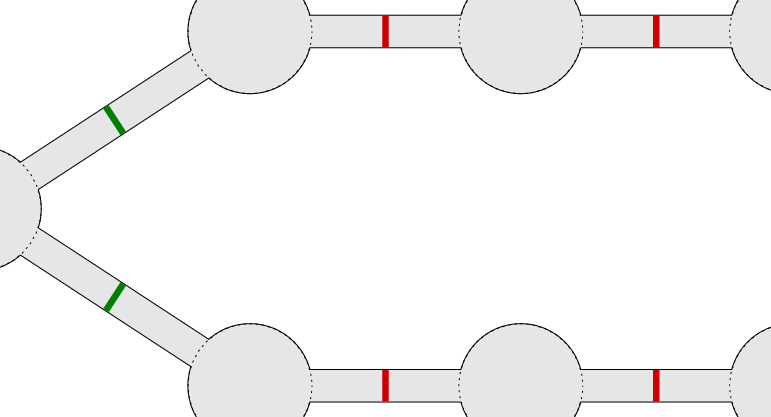}\put(10,56){$\wideparen M_3$}
\caption{Anticanonical decomposition of a generalized type~II split move from the `internal' point of view
of the surface~$\wideparen M$}\label{cut-from-other-end}
\end{figure}

The wrinkle move~$M\xmapsto\xi M_1$ creates two new boundary circuits, the ones that hit the mirror~$\mu_1'$.
These circuits have the form~$\partial r_1$, $\partial r_2$, where~$r_1$ and~$r_2$ are some rectangles
with no mirrors in the interior.
(In Figure~\ref{anticanonical-fig} these rectangles are highlighted.) One can see that the move~$M_1\mapsto M_2$
transforms them to circuits that still have the form of the boundary of a rectangle with no mirrors inside.
No splitting of the level~$x'$ containing~$\mu_1'$ and~$\mu_2'$ occurs in the canonical decomposition of~$M_1\xmapsto{\eta'} M_2$,
since the mirror~$\mu_1'$ cannot appear in~$\omega'$.
Hence $\mu_1''$, $\mu_2''$, and~$\mu_2'''$ remain on the level~$x'$.

One can now see that the deletions of~$\mu_2''$ and~$\mu_2'''$ are type~I elementary bypass removals,
and the subsequent deletion of~$\mu_1''$ together with the
occupied level~$x'$ is a type~I elimination move. Thus, Claim~2 has been settled.

We define the partial homeomorphisms~$h_{M_2}^{M_3}$ and~$h_{M_3}^{M_2}$ for the transformation~$M_2\xmapsto\chi M_3$ in the obvious way:~$h_{M_3}^{M_2}$ is the embedding~$\wideparen M_3\rightarrow\wideparen M_2$, and~$h_{M_2}^{M_3}$ is the inverse
map. Clearly, these partial homeomorphisms decompose into the partial homeomorphisms corresponding
to the elementary type~I moves involved in the decomposition of the transformation~$M_2\xmapsto\chi M_3$
discussed above.

From the `internal' point of view of the surface~$\wideparen M$ the composition of the transformations~$M\xmapsto\xi
M_1\xmapsto{\eta'} M_2\xmapsto\chi M_3$
and the generalized type~II split move~$M\xmapsto\eta M'$ do the same thing: they cut the surface~$\wideparen
M$ along the path~$\wideparen\omega$
and deform the obtained surface so that the parts of each $1$- or $0$-handle are brought to positions
near the original position of the handle.
The latter formally means that whenever~$x_1,x_2,\ldots,x_l$ are parallel occupied levels of~$M$ and~$x_1',x_2',\ldots,x_l'$ are
their respective successors for either of the transformations~$M\xmapsto\eta M'$ and~$M\xmapsto{\chi\circ\eta'\circ\xi} M_3$,
the cyclic order of~$x_1',x_2',\ldots,x_l'$ is the same as that of~$x_1,x_2,\ldots,x_l$. So, the combinatorial difference
between~$M'$ and~$M_3$ is in that the successors of each individual occupied level of~$M$ in the diagrams~$M'$ and~$M_3$
may appear in different orders, and to prove Claim~3 it is only needed to show that the required change of the order of
successors of each occupied level of~$M$ can be achieved by means of jump moves.

The proof of Claim~3 is by induction in~$k$. The induction base, $k=2$, is verified by a direct check
illustrated in Figure~\ref{claim3-induction-base-fig}, where the top row shows a canonical decomposition
of the move~$M\xmapsto\eta M'$.
\begin{figure}[ht]
\begin{tabular}{ccccc}
\includegraphics[scale=0.65]{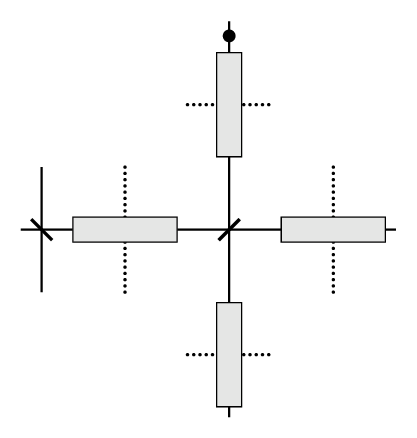}\put(-120,10){$M$}\put(-115,55){$\mu_1$}\put(-70,55){$\mu_2$}\put(-55,124){$p$}
&\raisebox{60pt}{$\longrightarrow$}&
\includegraphics[scale=0.65]{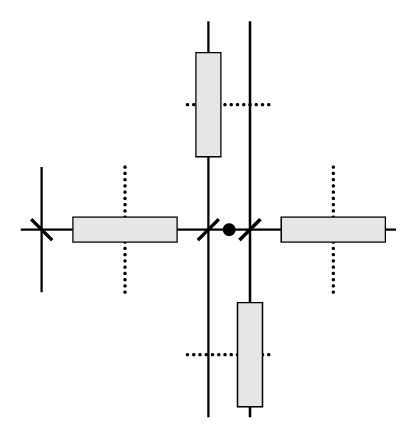}
&\raisebox{60pt}{$\longrightarrow$}&
\includegraphics[scale=0.65]{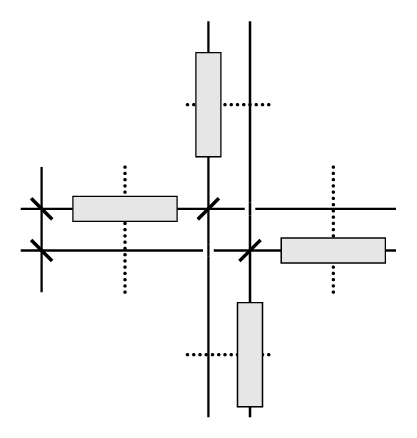}\put(-120,10){$M'$}\\
$\big\downarrow$&&&&$\big\uparrow$\\
\includegraphics[scale=0.65]{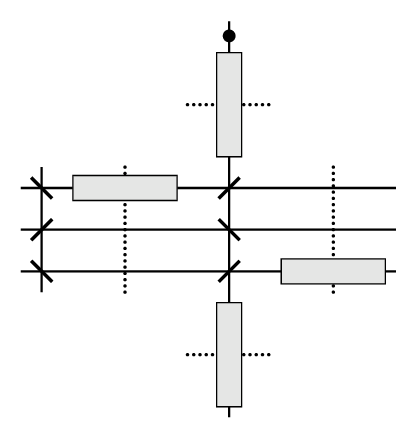}\put(-120,10){$M_1$}
&\raisebox{60pt}{$\longrightarrow$}&
\includegraphics[scale=0.65]{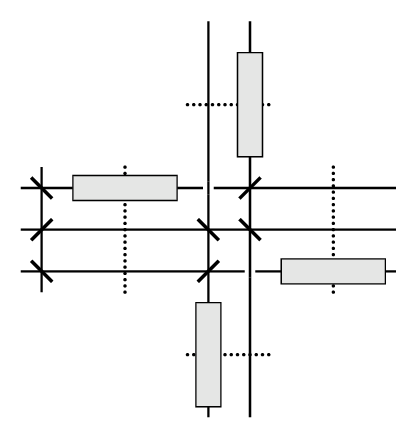}\put(-120,10){$M_2$}
&\raisebox{60pt}{$\longrightarrow$}&
\includegraphics[scale=0.65]{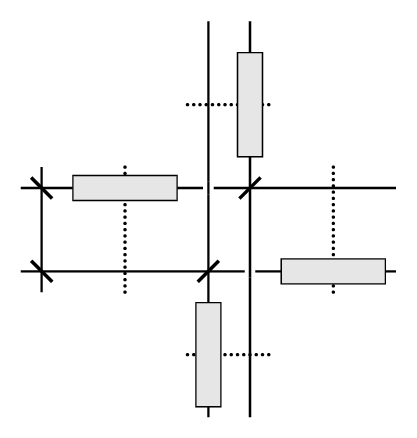}\put(-120,10){$M_3$}
\end{tabular}
\caption{Proof of Claim~3 in the case~$k=2$}\label{claim3-induction-base-fig}
\end{figure}
One can see that the combinatorial types of~$M_3$ and~$M'$ are different only in the order in which appear the two successors
of the occupied level of~$M$ containing~$\mu_2$ and~$p$, and this difference can be eliminated by a single jump move.
\begin{figure}[ht]
\centerline{\includegraphics{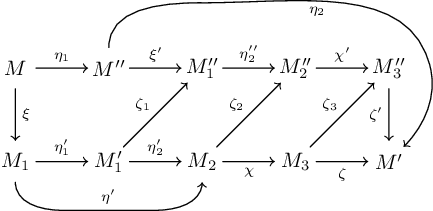}}
\caption{Proof of Claim~3, the scheme of the induction step}\label{scheme-fig}
\end{figure}

To make the induction step, let~$M\xmapsto{\eta_1}M''$ be the first type~I split move
in a canonical decomposition of the generalized type~II split move~$M\xmapsto\eta M'$,
and let~$M_1\xmapsto{\eta_1'}M_1'$ be the first type~I split move
in a canonical decomposition of the generalized type~II split move~$M_1\xmapsto{\eta'}M_2$.
Let also~$M''\xmapsto{\eta_2}M'$ and~$M_1'\xmapsto{\eta_2'}M_2$
be the remaining parts of the decompositions.

The transformation~$M''\xmapsto{\eta_2}M'$ is a generalized type~II split move
associated with a splitting route having~$k-1$ mirror entries, so we can use the induction hypothesis
and decompose it into the following transformations as described above (consult Figure~\ref{scheme-fig}):
\begin{enumerate}
\item
a wrinkle creation move~$M''\xmapsto{\xi'}M_1''$;
\item
a generalized type~II split move~$M_1''\xmapsto{\eta_2''}M_2''$ associated with a splitting route of length~$k-2$;
\item
a clean-up transformation~$M_2''\xmapsto{\chi'}M_3''$;
\item
a composition of jump moves~$M_3''\xmapsto{\zeta'}M'$.
\end{enumerate}

Let~$\omega''$ and~$\omega'''$ be the type~II splitting routes with which associated are the moves~$M_1''\xmapsto{\eta_2''}M_2''$
and~$M_1'\xmapsto{\eta_2'}M_2$, respectively. We claim that the transformation~$M_1'\xmapsto{\zeta_1}M_1''$,
where~$\zeta_1=\xi'\circ\eta_1\circ\xi^{-1}\circ{\eta_1'}^{-1}$, decomposes neatly into jump moves, and
takes~$\omega'''$ to~$\omega''$.

Again, if we look at the transformations~$M\xmapsto{\eta_1'\circ\xi}M_1'$ and~$M\xmapsto{\xi'\circ\eta_1}M_1''$ from the `internal' point of view
of the surface~$\wideparen M$, there is no distinction: both transformations result in cutting~$\wideparen M$ along the initial
(corresponding to~$\mu_1,\mu_2$)
and terminal (corresponding to~$\mu_k$) parts of~$\wideparen\omega$ and adding a `bridge' across
the new hole, which is the result of the cutting along the initial part of~$\wideparen\omega$.
This means, in particular, that there is a homeomorphism~$\psi:\wideparen M_1'\rightarrow\wideparen
M_1''$ preserving the handle decomposition structure and representing the morphism~$\zeta_1$, such that~$h^M_{M_1}\circ h_{M_1'}^{M_1}=h_{M''}^M\circ h_{M_1''}^{M''}\circ\psi$,
and for an appropriate choice of the partial homeomorphisms~$h_*^*$, we have~$\psi(\wideparen\omega''')=\wideparen\omega''$.

Let~$y_1$ and~$y_2$ be the occupied levels of~$M$ perpendicular to~$x$ and containing~$\mu_1$ and~$\mu_2$, respectively.
Without loss of generality we may assume that the occupied level~$x$ is horizontal, that is, has the form~$\ell_{\varphi_0}$
for some~$\varphi_0$. The occupied levels~$y_1$, $y_2$ have then the form~$m_{\theta_1}$ and~$m_{\theta_2}$, respectively,
for some~$\theta_1$,~$\theta_2$. To show that the transformation~$M_1'\xmapsto{\zeta_1}M_1''$ decomposes
neatly into jump moves we need to consider the following five cases.

\begin{figure}[ht]
\includegraphics[scale=0.65]{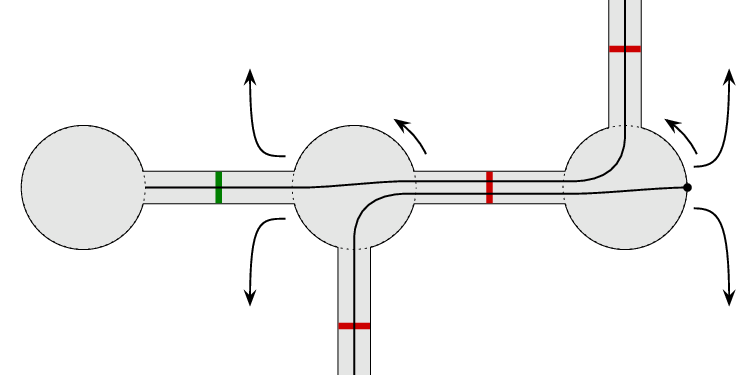}\put(-211,54){$\wideparen y_1$}\put(-126,65){$\wideparen x$}%
\put(-173,42){$\wideparen\mu_1$}\put(-96,42){$\wideparen\mu_2=\wideparen\mu_k$}%
\put(-160,12){$\wideparen x'''$}\put(-160,98){$\wideparen x''$}\put(-8,12){$\wideparen y_2'$}\put(-8,100){$\wideparen y_2''$}%
\put(-115,12){$\wideparen\mu_{k-1}$}\put(-56,100){$\wideparen\mu_3$}%
\put(-104,77){$\theta$}\put(-19,77){$\varphi$}\put(-40,45){$\wideparen y_2$}\put(-16,56){$\wideparen p$}

\begin{tabular}{ccc}
\includegraphics[scale=0.65]{split-comm01.eps}\put(-120,10){$M$}\put(-132,63){$x$}\put(-122,90){$y_1$}\put(-63,135){$y_2$}%
\put(-55,124){$p$}\put(-85,30){$\mu_3\in{}$}\put(-85,78){\begin{rotate}{-90}{$\in$}\end{rotate}}\put(-87,82){$\mu_{k-1}$}%
\put(-115,55){$\mu_1$}\put(-70,55){$\mu_2$}
&\raisebox{60pt}{$\longrightarrow$}&
\includegraphics[scale=0.65]{split-comm02.eps}\put(-120,10){$M''$}\put(-70,135){$y_2'$}\put(-56,135){$y_2''$}\\
$\big\downarrow$&&$\big\downarrow$\\
\includegraphics[scale=0.65]{split-comm04.eps}\put(-120,10){$M_1$}\put(-137,64){$x'$}\put(-137,51){$x''$}\put(-137,77){$x'''$}
&\raisebox{60pt}{$\longrightarrow$}&
\includegraphics[scale=0.65]{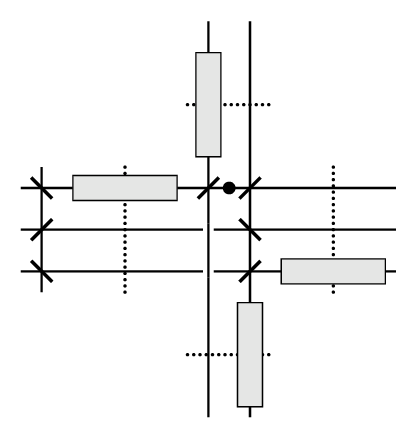}\put(-120,10){$M_1'=M_1''$}
\end{tabular}
\caption{Induction step in the proof of Claim~3, the case~$\mu_k=\mu_2$, $k\equiv0\pmod2$, $\mu_{k-1}\in(\mu_1;\mu_2)$, $\mu_3\in(p;\mu_2)$}\label{claim3case2a}
\end{figure}
\medskip\noindent\emph{Case~1}: $\mu_k$ does not lie on the occupied level~$x$.\\
In this case it is quite obvious that
the moves~$M\xmapsto{\eta_1}M''$ and~$M\xmapsto\xi M_1$
commute.

\medskip\noindent\emph{Case~2}:  $\mu_k$ coincides with~$\mu_2$, and~$k$ is even.

The occupied level being split by the move~$M\mapsto M''$ is~$y_2$.
Let~$y_2'=m_{\theta_2'}$ and $y_2''=m_{\theta_2''}$ be the two successors of~$y_2$ for
the split move~$M\mapsto M''$, and let~$x''=\ell_{\varphi_0''}$, $x'''=\ell_{\varphi_0'''}$ be the successors of~$x$ for the wrinkle
creation move~$M\mapsto M_1$. Let also~$x'$ be the occupied level of~$M_1$ with no predecessor. We order these levels
so that~$\theta_2'\in(\theta_1;\theta_2'')$ and~$\varphi_0'\in(\varphi_0'';\varphi_0''')$,
where~$x'=\ell_{\varphi_0'}$.

Due to the fact that~$\omega=(\mu_1,\mu_2,\ldots,\mu_k,p)$ is a type~II splitting route, we have either~$\mu_{k-1}\in(\mu_1;\mu_2)$
and~$\mu_3\in(p;\mu_2)$, or~$\mu_{k-1}\in(\mu_2;\mu_1)$
and~$\mu_3\in(\mu_2;p)$.
These two subcases are obtained from one another by the symmetry~$(\theta,\varphi)\mapsto(-\theta,-\varphi)$,
so it suffices to consider the first one, when~$\mu_{k-1}\in(\mu_1;\mu_2)$ and~$\mu_3\in(p;\mu_2)$.
In this case, the
$\diagup$-ramification mirror of the move~$M''\xmapsto{\xi'}M_1''$
is located at~$x\cap y_2''$, whereas the
splitting mirror of the move~$M_1\xmapsto{\eta_1'}M_1'$ is located at~$x'''\cap y_2$. 
One can see from
Figure~\ref{claim3case2a} that the diagrams~$M_1'$ and~$M_1''$ are combinatorially equivalent.

\medskip\noindent\emph{Case~3}: $\mu_k$ coincides with~$\mu_2$, and~$k$ is odd.

We may assume that~$p\in(\mu_2;\mu_1)$ as the other case is obtained by applying the transformation~$(\theta,\varphi)\mapsto(-\theta,-\varphi)$.
One can see from Figure~\ref{claim3case3} that the moves~$M\xmapsto{\eta_1}M''$ and~$M\xmapsto\xi M_1$
commute with one another.
\begin{figure}[ht]
\includegraphics[scale=0.65]{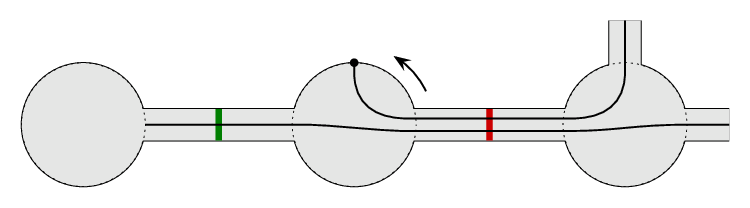}\put(-211,22){$\wideparen y_1$}\put(-126,13){$\wideparen x$}%
\put(-173,10){$\wideparen\mu_1$}\put(-96,10){$\wideparen\mu_2=\wideparen\mu_k$}%
\put(-104,45){$\theta$}\put(-42,13){$\wideparen y_2$}\put(-126,51){$\wideparen p$}

\vskip5mm
\begin{tabular}{ccc}
\includegraphics[scale=0.65]{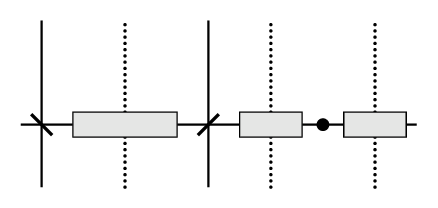}\put(-120,-5){$M$}\put(-138,25){$x$}\put(-127,63){$y_1$}\put(-75,63){$y_2$}%
\put(-120,16){$\mu_1$}\put(-83,16){$\mu_2$}\put(-38,16){$p$}
&\raisebox{30pt}{$\longrightarrow$}&
\includegraphics[scale=0.65]{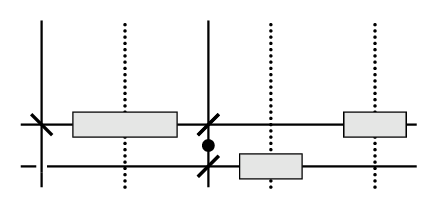}\put(-120,-5){$M''$}\\
$\big\downarrow$&&$\big\downarrow$\\
\includegraphics[scale=0.65]{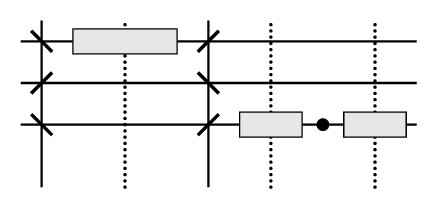}\put(-120,-5){$M_1$}
&\raisebox{30pt}{$\longrightarrow$}&
\includegraphics[scale=0.65]{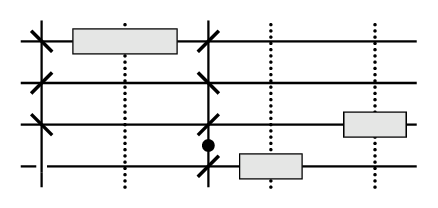}\put(-120,-5){$M_1'=M_1''$}
\end{tabular}
\caption{Induction step in the proof of Claim~3, the case~$\mu_k=\mu_2$, $k\equiv1\pmod2$}\label{claim3case3}
\end{figure}

\begin{figure}[ht]
\begin{tabular}{ccc}
\includegraphics[scale=0.65]{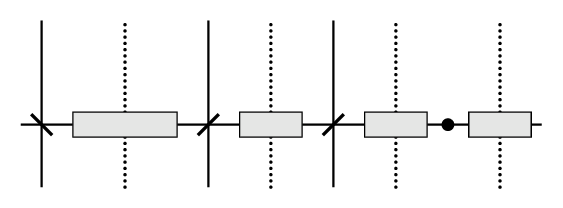}\put(-178,25){$x$}\put(-167,63){$y_1$}\put(-115,63){$y_2$}%
\put(-160,16){$\mu_1$}\put(-123,16){$\mu_2$}\put(-38,16){$p$}\put(-83,16){$\mu_k$}\put(-160,-5){$M$}
&\raisebox{30pt}{$\longrightarrow$}&
\includegraphics[scale=0.65]{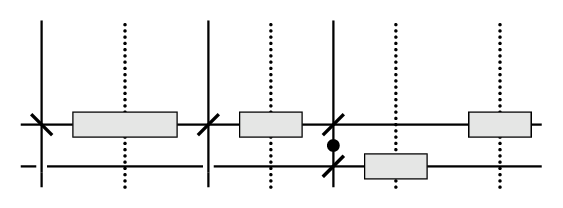}\put(-160,-5){$M''$}\\
$\big\downarrow$&&$\big\downarrow$\\
\includegraphics[scale=0.65]{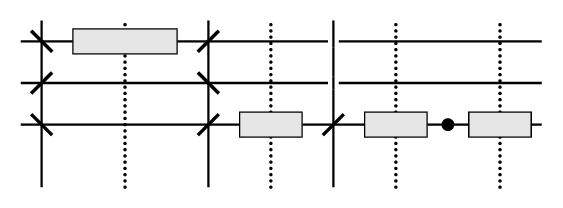}\put(-160,-5){$M_1$}
&\raisebox{30pt}{$\longrightarrow$}&
\includegraphics[scale=0.65]{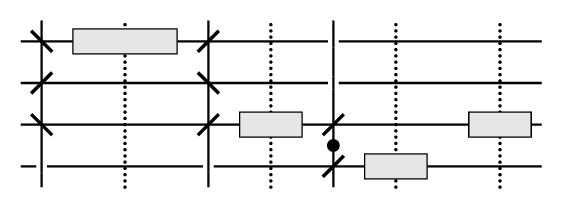}\put(-160,-5){$M_1'=M_1''$}
\end{tabular}
\caption{Induction step in the proof of Claim~3, the case~$\mu_k\in x$, $\mu_k\ne\mu_2$, $k\equiv1\pmod2$, $p\in(\mu_k;\mu_1)$}\label{claim3case5a}
\end{figure}
\medskip\noindent\emph{Case~4}: $\mu_k$ lies on~$x$, $\mu_k\ne\mu_2$, $k$ is even.

It is quite easy to see in this case that  the moves~$M\xmapsto{\eta_1}M''$ and~$M\xmapsto\xi M_1$
commute with one another. We leave this to the reader.

\medskip\noindent\emph{Case~5}: $\mu_k$ lies on~$x$, $\mu_k\ne\mu_2$, $k$ is odd.

The point~$p$ lies on the occupied level~$x$ in this case.
We may assume that~$\mu_k\in(\mu_2;\mu_1)$ as the other case is obtained by applying the transformation~$(\theta,\varphi)\mapsto(-\theta,-\varphi)$.
We then also have~$p\in(\mu_2;\mu_1)$ as otherwise~$\omega=(\mu_1,\mu_2,\ldots,\mu_k,p)$ would not be a type~II splitting route.

There are, however, two different subcases, $p\in(\mu_k;\mu_1)$ and~$p\in(\mu_2;\mu_k)$.
In the former subcase the moves producing~$M_1'$ and~$M_1''$ are shown in Figure~\ref{claim3case5a}.
One can see that the moves~$M\xmapsto{\eta_1}M''$ and~$M\xmapsto\xi M_1$ again
commute with one another.

The latter subcase is the only situation when the diagrams~$M_1'$ and~$M_1''$ are not combinatorially equivalent.
The moves in question are shown in Figure~\ref{claim3case5b}. One can see that
the moves~$M\xmapsto{\eta_1}M''$ and~$M\xmapsto\xi M_1$
almost commute with one another, so~$M_1''$, viewed up to combinatorial equivalence,
is obtained from~$M_1'$ by a single jump move.
\begin{figure}[ht]
\begin{tabular}{ccc}
\includegraphics[scale=0.65]{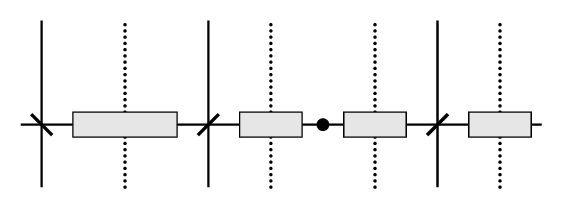}\put(-178,25){$x$}\put(-167,63){$y_1$}\put(-115,63){$y_2$}%
\put(-160,16){$\mu_1$}\put(-123,16){$\mu_2$}\put(-78,16){$p$}\put(-51,16){$\mu_k$}\put(-160,-5){$M$}
&\raisebox{30pt}{$\longrightarrow$}&
\includegraphics[scale=0.65]{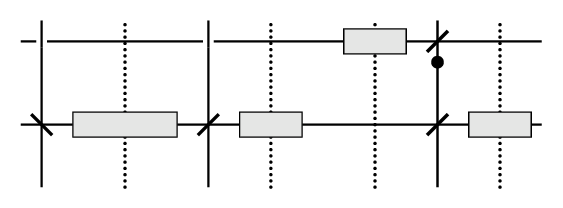}\put(-160,-5){$M''$}\\
$\big\downarrow$\\
\includegraphics[scale=0.65]{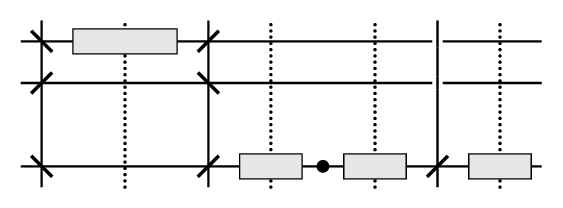}\put(-160,-5){$M_1$}
&&\raisebox{30pt}{$\big\downarrow$}\\
$\big\downarrow$\\
\includegraphics[scale=0.65]{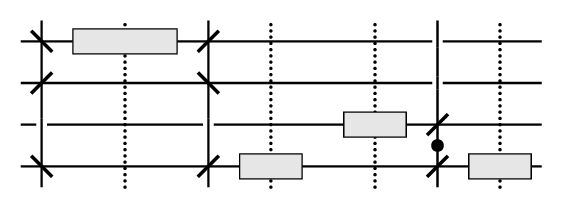}\put(-160,-5){$M_1'$}
&&
\includegraphics[scale=0.65]{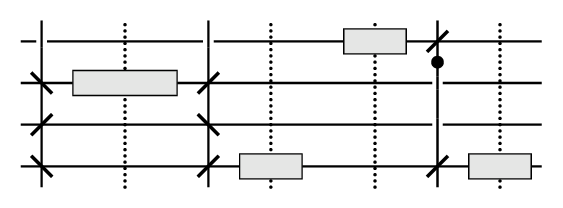}\put(-160,-5){$M_1''$}
\end{tabular}
\caption{Induction step in the proof of Claim~3, the case~$\mu_k\in x$, $\mu_k\ne\mu_2$, $k\equiv1\pmod2$, $p\in(\mu_2;\mu_k)$}\label{claim3case5b}
\end{figure}
This jump move exchanges one of the successors of the occupied level~$x$ with two
successive occupied levels of~$M_1'$, one of which is also a successor of~$x$, and the other
is~$x'$. Recall that the occupied level~$x'$ is untouched by the move~$M_1\xmapsto{\eta'} M_2$ and eliminated by
the subsequent clean-up~$M_2\xmapsto\chi M_3$.

Thus, in all cases the diagrams~$M_1'$ and~$M_1''$ are related by a sequence of jump moves that takes~$\omega'''$
to~$\omega''$ and represents the morphism~$\zeta_1=\xi'\circ\eta_1\circ\xi^{-1}\circ{\eta_1'}^{-1}$.
It follows from Lemma~\ref{split-commute-with-jump-lem} that the transformation~$M_2\xmapsto{\zeta_2}M_2''$,
where~$\zeta_2=\eta_2''\circ\zeta_1\circ{\eta_2'}^{-1}$, also admits a decomposition
into jump moves. Moreover, one can see that this transformation establishes a bijection between the
elements of~$M_2$ (occupied levels and mirrors) and those of~$M_2''$ so that an element of~$M_2$
is removed by the clean-up~$M_2\xmapsto\chi M_3$ if and only if the respective element of~$M_2''$ is removed
by the clean-up~$M_2''\xmapsto{\chi'}M_3''$. Therefore, the transformation~$M_3\xmapsto{\zeta_3}M_3''$,
where~$\zeta_3=\chi'\circ\zeta_2\circ\chi^{-1}$, also decomposes into jump moves. The induction step follows, which concludes the proof of Claim~3.

It remains to prove Claim~4, that is, to show that an anticanonical decomposition of
the generalized type~II split move~$M\mapsto M'$ can be chosen to be neat.

Let~$C$ be a collection of boundary circuits of~$M$ untouched by the move~$M\mapsto M'$,
and let~$x$ be an occupied level of~$M$.
 Among the successors of~$x$, only one can coincide with~$x$.
This means that at most one connected component of~$\wideparen x\setminus\wideparen\omega$
has a non-empty intersection with the union of the boundary components of~$\wideparen M$
corresponding to the circuits in~$C$. Let~$d$ be such a component.

As noted above, the principal difference of an anticanonical decomposition
from a canonical one, is that we start cutting the surface~$\wideparen M$ along
the path~$\wideparen\omega$ from the other end. However, the cutting path is still the same,
so~$d$ is not going to be cut as a result of a wrinkle creation move
included in an anticanonical decomposition. This implies that
we can keep the position of the $1$-handles that remain attached to~$d$ fixed
when choosing a concrete wrinkle creation move that splits the
occupied level~$x$ or one of its successors.

The clean-up operations modify only the boundary circuits that has
already been modified (or created) by the preceding moves of an anticanonical decompositions.
Finally, the jump moves involved in an anticanonical decomposition are needed
to exchange some successors of~$x$ with each other and are not needed to exchange
them with successors of another occupied level of~$\wideparen M$. So,
there is no problem to keep one selected successor of~$x$ fixed while reordering the
successors by jump moves.

Thus, we see that an anticanonical decomposition of the move~$M\mapsto M'$
can be chosen to satisfy Condition~(1) of Definition~\ref{neat-def}.
One can see that the other two conditions are satisfied automatically.

This concludes the proof of Proposition~\ref{2ndcommutation-for-merge-non-special-prop}.
\end{proof}

\subsection{Generalized wrinkle moves and flexibility}\label{flexibility-subsec}
In order to prove the second commutation property for generalized merge moves in
the special case we need some preparations.

\begin{lemm}\label{flexibility-lem}
Let~$M$ be an enhanced mirror diagram, and let~$C$ be a collection of essential
boundary circuits of~$M$. The following two conditions are equivalent.
\begin{enumerate}
\item
$M$ is $+$-flexible (respectively, $-$-flexible) relative to~$C$ (see Definition~\ref{flexibility-def}).
\item
For any $\diagup$-mirror (respectively, $\diagdown$-mirror)~$\mu$ hit by some boundary circuit~$c\notin C$,
there exists a type~II (respectively, type~I) single-headed splitting route~$\omega=(\mu_1,\ldots,\mu_k,p)$
with~$\mu_k=\mu$ such that~$\omega$ does not separate~$C$.
\end{enumerate}
\end{lemm}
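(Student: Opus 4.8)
The plan is to prove the equivalence of the two conditions by unwinding the definition of flexibility and that of a (single-headed) type~II splitting route, treating the two implications separately. By the symmetry~$r_\diagdown$ (Convention~\ref{symmetries}) it suffices to treat the $+$-flexible case, where we look at $\diagup$-mirrors and type~II splitting routes. Throughout I will freely pass between a boundary circuit~$c$ of~$M$ and the corresponding boundary component~$\wideparen c$ of~$\wideparen M$, and between the mirror~$\mu$ and the $1$-handle~$\wideparen\mu$, and I will use the correspondence between connected components of~$x\setminus E_M$ on an occupied level~$x$ and the arcs of~$\wideparen x\cap\partial\wideparen M$ described in Subsection~\ref{mirror-diagram-definition-subsec}.

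First I would prove $(2)\Rightarrow(1)$, which is the more straightforward direction. Given a $\diagup$-mirror~$\mu$ hit by some~$c\notin C$, take a non-$C$-separating single-headed type~II splitting route~$\omega=(\mu_1,\dots,\mu_k,p)$ with~$\mu_k=\mu$, and recall that~$\mu_1$ is a $\diagdown$-mirror while all of~$\mu_2,\dots,\mu_k$ are $\diagup$-mirrors. For each~$i=1,\dots,k$ let~$c_i$ be the boundary circuit of~$M$ that contains the subarc~$\wideparen\omega^i$ of~$\wideparen\omega$, viewed back in~$M$; more precisely, $\wideparen\omega^i$ runs from a point on $\partial\wideparen x_{i-1}$ across the $1$-handle~$\wideparen\mu_i$ to a point on $\partial\wideparen x_i$, and I take~$c_i$ to be a boundary circuit hitting~$\mu_i$. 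Then~$c_1$ hits the $\diagdown$-mirror~$\mu_1$, hence $\tb_+(c_1)<0$. Consecutive circuits~$c_i$ and~$c_{i+1}$ both hit the mirror~$\mu_{i+1}$ (the arc~$\wideparen\omega^{i+1}$ crosses~$\wideparen\mu_{i+1}$, so its two ends lie on the two boundary arcs of~$\wideparen x_i$ adjacent to~$\wideparen\mu_{i+1}$), so they are adjacent; and~$c_k$ hits~$\mu_k=\mu$. The only thing to check is that none of the~$c_i$ lies in~$C$: if some~$c_i$ were in~$C$, then on the occupied level~$x_{i-1}$ or~$x_i$ through which~$\wideparen\omega^i$ passes, the arc~$\wideparen\omega$ would separate a component of~$\wideparen x\setminus\wideparen\omega$ meeting~$\wideparen C$ from another such component (the endpoint of~$\wideparen\omega^i$ lies on a boundary arc belonging to~$c_i\in C$), contradicting that~$\omega$ does not separate~$C$ — here I would spell out the observation that $\wideparen\omega$ entering a boundary arc of a level belonging to a circuit of~$C$ forces separation unless~$\omega$ is allowed to end there, which it is not for internal segments. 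Hence the chain~$c_1,\dots,c_k=(\text{a circuit hitting }\mu)$ witnesses $+$-flexibility relative to~$C$ at~$\mu$; since every boundary circuit not in~$C$ hits some $\diagup$-mirror or some $\diagdown$-mirror (a circuit hitting only $\diagup$-mirrors, or no mirrors, would bound an overtwisted-type disc once patched — but more elementarily one reduces to the mirror-hitting case by appending one more adjacency step), Definition~\ref{flexibility-def} is satisfied.

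Next I would prove $(1)\Rightarrow(2)$. Assume~$M$ is $+$-flexible relative to~$C$ and let~$\mu$ be a $\diagup$-mirror hit by some~$c\notin C$. By $+$-flexibility there is a chain of boundary circuits~$c_1,\dots,c_m$, none in~$C$, with~$\tb_+(c_1)<0$, with~$c_m$ adjacent to (or equal to) a circuit hitting~$\mu$, and with consecutive circuits adjacent. I build a type~II splitting path~$\wideparen\omega$ ending at~$\wideparen\mu$ by following this chain backwards: since~$\tb_+(c_1)<0$, the circuit~$c_1$ hits a $\diagdown$-mirror~$\mu_1$; starting inside the $1$-handle~$\wideparen\mu_1$ I run a normal arc across~$\wideparen\mu_1$, then along the $0$-handle~$\wideparen x_1$ (where~$x_1$ is an occupied level shared by~$c_1$ and~$c_2$) to a $1$-handle of a mirror hit by both~$c_1$ and~$c_2$, cross it, and continue; at each adjacency I cross one $1$-handle of a common mirror of two consecutive circuits, and finally I reach~$\wideparen\mu$ and cross it to arrive at an interior boundary point—or, if~$c_m$ merely is adjacent to a circuit hitting~$\mu$, I make one extra hop. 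This produces a normal arc satisfying conditions~(1), (2a), (2b), (2c) of Definition~\ref{simple-route-def} by construction. The main obstacle, and the step I expect to require real care, is condition~(2d) — the parenthesis-cancellation constraint that makes the route well-defined — together with ensuring the arc is \emph{simple} and does not separate~$C$. To handle this I would apply the pulling-tight and tail-shrinking operations of Definitions~\ref{pull-tight-loosen-def} and~\ref{shrink-stretch-def}: by Proposition~\ref{reducing-splitting-route-prop} any candidate route reduces to a \emph{reduced} one, and reduced routes automatically satisfy~(2d); simplicity of~$\wideparen\omega$ can be arranged by a homotopy-and-general-position argument, removing bigons of the arc with itself, which only shortens the combinatorial data and cannot create separation (since each boundary arc met internally belongs to a circuit not in~$C$, the same argument as in the forward direction shows no~$C$-separation is forced, and any accidental separation introduced by a bad initial choice of the path can be pushed off by the same tightening moves). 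I would close by noting that the resulting reduced, simple, non-$C$-separating normal arc is precisely the splitting path of a single-headed type~II splitting route~$\omega=(\mu_1,\dots,\mu_k,p)$ with~$\mu_k=\mu$ of the required form, which establishes~(2).
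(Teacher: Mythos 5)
Your overall plan — extract a chain of adjacent circuits from a splitting path for $(2)\Rightarrow(1)$, and build a splitting path from a chain for $(1)\Rightarrow(2)$ — is the right strategy, and you correctly identify condition~(2d) and simplicity as the crux of the hard direction. However your $(1)\Rightarrow(2)$ has a genuine gap, and it comes from a different mechanism than the one the paper actually uses.

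The paper does not pass through pulling tight or tail shrinking at all. It first converts the chain $c_1,\dots,c_m=c$ into a sequence of mirrors satisfying a \emph{weaker} combinatorial condition (iii$'$): consecutive $\mu_i,\mu_{i+1}$ lie on a common occupied level and one of the two arcs between them lies in $\bigcup_{c\notin C}c$. It then takes a \emph{shortest} such sequence, and minimality alone forces simplicity of the associated arc, condition~(2d), and the non-$C$-separation property: any self-crossing in $\wideparen x$, or any forbidden cancellable pattern, or any separation of $\wideparen C$, would yield a shortcut satisfying (iii$'$) and hence contradict minimality. Your route instead produces a candidate normal arc and then wants to invoke Proposition~\ref{reducing-splitting-route-prop} (pulling tight / tail shrinking). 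But that proposition takes as input something that \emph{already is} a type~II splitting route — a simple normal arc whose combinatorial trace satisfies~(2d) — and shrinks it. You cannot apply it to a merely immersed normal arc or to a mirror sequence violating~(2d), which is exactly what your raw candidate may be. The phrase ``remove bigons of the arc with itself by a homotopy-and-general-position argument'' is doing all the work here and is not a theorem available in the paper; moreover, it is not automatic that bigon removal for an arc in an arbitrary compact surface preserves normality, preserves the non-$C$-separation property, or terminates in something that satisfies the parity condition~(2d). The clean fix is to replace this step by the minimal-sequence argument.

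Two smaller points. In $(2)\Rightarrow(1)$ you assert that ``consecutive circuits $c_i$ and $c_{i+1}$ both hit $\mu_{i+1}$, so they are adjacent.'' This need not be true as stated: the circuit bordering $\wideparen\omega^i$ at $\wideparen\mu_i$ and the circuit bordering it at $\wideparen\mu_{i+1}$ can be different and non-adjacent if there are other mirrors of $M$ on $x_i$ between $\mu_i$ and $\mu_{i+1}$. You need to list \emph{all} circuits whose boundary arcs on $x_i$ lie on the chosen side of $\wideparen\omega$ between $\wideparen\mu_i$ and $\wideparen\mu_{i+1}$ (Definition~\ref{flexibility-def} allows arbitrarily long chains, so this is harmless), and the non-separation hypothesis is then what guarantees none of these intermediate circuits lies in $C$. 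Finally, your parenthetical about circuits hitting no mirror ``bounding an overtwisted-type disc'' is not quite the right justification and in any case such a circuit is an isolated component issue which Definition~\ref{flexibility-def} simply rules out via $\tb_+(c_1)<0$; state the reduction to the mirror-hitting case without the overtwisted-disc appeal.
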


\begin{proof}
(1)$\Rightarrow$(2). We consider only the case of $+$-flexibility as the other is symmetric to it.

Let~$\mu$ be a $\diagup$-mirror on~$c\in\partial M\setminus C$, and
let~$c_1,c_2,\ldots,c_m=c$ be a
sequence of boundary circuits of~$M$ satisfying the
conditions from Definition~\ref{flexibility-def}. It follows from these conditions
that there exists a sequence of mirrors
$\mu_1,\mu_2,\ldots,\mu_{i_1},\mu_{i_1+1},\ldots,\mu_{i_2},\mu_{i_2+1},\ldots,\mu_{i_m}$
of~$M$ such that:
\begin{itemize}
\item[(\romannumeral1)]
$\mu_1$ is a $\diagdown$-mirror and all the others are $\diagup$-mirrors;
\item[(\romannumeral2)]
the last mirror~$\mu_{i_m}$ is~$\mu$;
\item[(\romannumeral3)]
for all~$j=1,\ldots,m$ and $i=i_{j-1},i_{j-1}+1,\ldots,i_j-1$ the mirrors~$\mu_i$ and~$\mu_{i+1}$
lie on the same occupied level of~$M$ (we put~$i_0=1$), and either~$[\mu_i;\mu_{i+1}]$
or~$[\mu_{i+1};\mu_i]$ is a subset of~$c_j$.
\end{itemize}
The following condition on a sequence~$\mu_1,\ldots,\mu_k$ is clearly weaker than~(iii):
\begin{itemize}
\item[(\romannumeral3$'$)]
for all~$i=1,\ldots,k-1$, the mirrors~$\mu_i$ and~$\mu_{i+1}$
lie on the same occupied level of~$M$, and either~$[\mu_i;\mu_{i+1}]$ or~$[\mu_{i+1};\mu_i]$
is a subset of~$\bigcup_{c\in\partial M\setminus C}c$.
\end{itemize}
So, there exists a sequence satisfying~(i), (ii), and~(iii$'$). Take a shortest such sequence
and choose a point~$p\ne\mu$ on a boundary circuit~$c\notin C$
in a small neighborhood of~$\mu$ so that~$\mu$, $\mu_{k-1}$, and~$p$
don't lie on the same occupied level of~$M$. Then~$(\mu_1,\ldots,\mu_k,p)$
is a type~II splitting route not separating~$C$.

The implication~(2)$\Rightarrow$(1) is easy and left to the reader. We don't use it in the sequel.
\end{proof}

\begin{figure}[ht]
\includegraphics[scale=0.6]{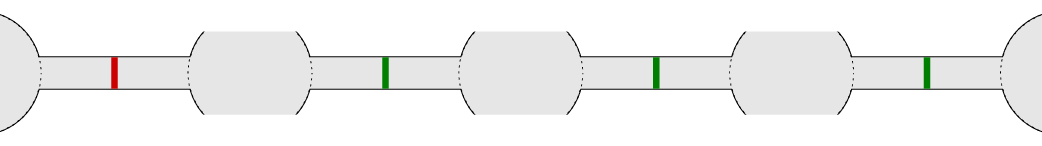}\put(-10,18){$\wideparen x_4$}%
\put(-76,18){$\wideparen x_3$}\put(-154,18){$\wideparen x_2$}\put(-232,18){$\wideparen x_1$}\put(-300,18){$\wideparen x_0$}%
\put(-37,6){$\wideparen\mu_4$}\put(-115,6){$\wideparen\mu_3$}\put(-193,6){$\wideparen\mu_2$}\put(-271,6){$\wideparen\mu_1$}

\centerline{$\big\downarrow$}

\vskip0.4cm

\includegraphics[scale=0.6]{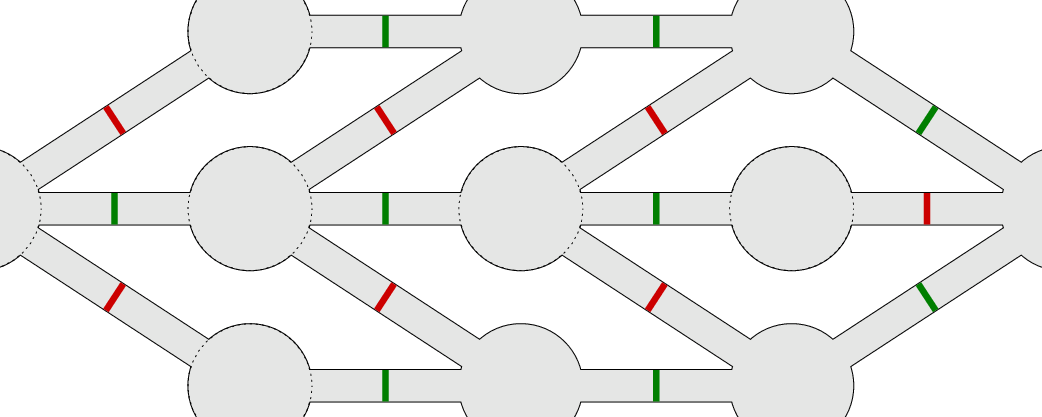}

\vskip.5cm

\includegraphics{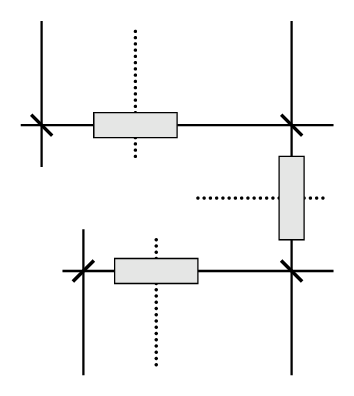}\put(-134,3){$x_0$}\put(-151,59){$x_1$}\put(-143,49){$\mu_1$}%
\put(-33,3){$x_2$}\put(-26,49){$\mu_2$}\put(-26,119){$\mu_3$}\put(-172,129){$x_3$}\put(-146,119){$\mu_4$}%
\put(-154,183){$x_4$}
\hskip.5cm\raisebox{95pt}{$\longrightarrow$}\hskip.5cm
\includegraphics{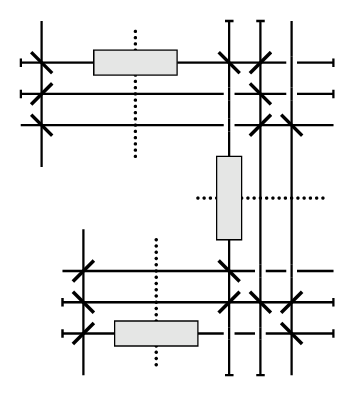}

\caption{Generalized wrinkle creation move associated with a type~I splitting route}\label{gen-wrinkle-fig}
\end{figure}

In the previous subsection we defined an anticanonical decomposition of a generalized type~II
split move. An anticanonical decomposition of a generalized type~I split move
is defined by symmetry, exchanging the roles of $\diagup$- and $\diagdown$-mirrors.

\begin{defi}
Let~$\omega=(\mu_1,\ldots,\mu_k,p)$, $k\geqslant2$, be a single-headed splitting route (of either type)
in an enhanced mirror diagram~$M$.
By \emph{a generalized wrinkle creation move associated with~$\omega$} we mean
the composition of the first~$k-1$ moves (which are wrinkle creation moves) of an anticanonical decomposition of
a generalized split move~$M\mapsto M'$ associated with~$\omega$.
Thus defined generalized wrinkle creation move will also be said to \emph{comply} with the move~$M\mapsto M'$.

The inverse operation is referred to as \emph{a generalized wrinkle reduction move}.
\end{defi}

Figure~\ref{gen-wrinkle-fig} illustrates the idea of a generalized wrinkle creation move.
Note that if~$k=2$, then a generalized wrinkle creation move
associated with~$(\mu_1,\mu_2,p)$ is an ordinary wrinkle creation move having~$\mu_1$ and~$\mu_2$
as the ramification mirrors.

The existence of an anticanonical decomposition of a non-special generalized split move implies
that, for every single-headed splitting route~$\omega=(\mu_1,\mu_2,\ldots,\mu_k,p)$ with~$k\geqslant2$
such that~$\omega$ does not separate a chosen collection~$C$ of essential boundary circuits,
there exists a generalized wrinkle creation move associated with~$\omega$ that preserves all boundary
circuits in~$C$.

So, the meaning of Lemma~\ref{flexibility-lem}
is, roughly, that we can create wrinkles anywhere in the diagram without disturbing
selected boundary circuits, provided that the diagram is
flexible relative to these boundary circuits.

Another thing the flexibility allows us to do is to avoid the use of type~II extension moves.

\begin{lemm}\label{extension-via-flexibility-lem}
Let~$M$ be an enhanced mirror diagram, and let~$C$ be a collection of essential
boundary circuits of~$M$ such that~$M$ is $+$-flexible relative to~$C$.
Then any type~II extension move that preserves the boundary circuits in~$C$
admits a $C$-neat decomposition into
type~I elementary moves combined with type~II split/merge moves.
\end{lemm}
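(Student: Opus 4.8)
Write $M\xmapsto\eta M'$ for the given type~II extension move; it adds a $\diagdown$-mirror~$\mu$ at a point $(\theta_0,\varphi_0)$ lying on an occupied level~$x$ of~$M$, together with a new occupied level through $(\theta_0,\varphi_0)$. By the description of extension moves following Definition~\ref{elem-move-enhanced-def}, exactly one boundary circuit~$c$ of~$M$ is modified by~$\eta$, being replaced by a circuit~$c'$ of~$M'$ with $\tb_+(c')=\tb_+(c)-1$; since the move preserves the circuits in~$C$ we have $c\notin C$. The plan is to treat separately the cases $\tb_+(c)<0$ and $\tb_+(c)=0$. If $\tb_+(c)<0$, then~$c$ hits at least one $\diagdown$-mirror, and Lemma~\ref{extension-neat-decomp-lem} already supplies a neat---hence $C$-neat---decomposition of~$\eta$ into type~II split moves and neutral moves; refining each neutral move into a sequence of type~I elementary moves by Lemma~\ref{neutral-move-decomposition-lem} and concatenating (this preserves $C$-neatness by the recursive construction described after Definition~\ref{neat-def}, using Lemmas~\ref{neat-properties-lem} and~\ref{safe-lem}), we obtain a $C$-neat decomposition of~$\eta$ into type~I elementary moves and type~II split moves, with no type~II extension move. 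This settles the first case.

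Now suppose $\tb_+(c)=0$, the case in which the flexibility hypothesis is needed. First, $c$ must hit at least one $\diagup$-mirror: a mirrorless $c\notin C$ could not be the endpoint of a chain $c_1,\ldots,c_m=c$ of circuits outside~$C$ with $\tb_+(c_1)<0$ and consecutive members sharing a mirror, unless $m=1$, which would force $\tb_+(c)<0$; so the $+$-flexibility of~$M$ relative to~$C$ rules this out. Fix a $\diagup$-mirror~$\nu$ on~$c$. By Lemma~\ref{flexibility-lem} the $+$-flexibility of~$M$ relative to~$C$ yields a type~II single-headed splitting route $\omega=(\mu_1,\ldots,\mu_k,p)$ with $\mu_k=\nu$ that does not separate~$C$; since $\mu_1$ is a $\diagdown$-mirror and $\mu_k$ is a $\diagup$-mirror, $k\ge2$. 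Using the freedom in the choice of the terminal point of a splitting route (cf.\ the proof of Lemma~\ref{flexibility-lem}) we may take $p$ on~$c$ near~$\nu$; and, moving $(\theta_0,\varphi_0)$ within its straight portion of~$c$ and choosing the new level of~$\eta$ generically (which does not alter the combinatorial type of~$\eta$), we may arrange that the part of the diagram affected by~$\eta$ is disjoint from a splitting path~$\wideparen\omega$ associated with~$\omega$.

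Perform the generalized wrinkle creation move $M\xmapsto\xi M_1$ associated with~$\omega$. As~$\omega$ does not separate~$C$, this move preserves every boundary circuit in~$C$; being a composition of ordinary wrinkle creation moves it admits, by Lemma~\ref{neutral-move-decomposition-lem}, a $C$-neat decomposition into type~I elementary moves, and, being a composite of safe moves, it may be prepended to a $C$-neat sequence without spoiling $C$-neatness. The effect of~$\xi$ on~$c$ is to reroute its successor~$c_1$ in~$M_1$ through the new $\diagdown$-mirrors created along the ``wrinkle trail'' following~$\omega$, so that $\tb_+(c_1)<0$. Since~$\xi$ operates disjointly from the part of the diagram touched by~$\eta$, the extension move~$\eta$ and the generalized wrinkle creation~$\xi$ commute in the sense of the friendly-move calculus of Section~\ref{commutation-sec} (cf.\ Definitions~\ref{friendly-resemble-def} and~\ref{moves-commute-def}): there are a type~II extension move $M_1\xmapsto{\eta^\xi}M_1'$ resembling~$\eta$ and a generalized wrinkle creation $M'\xmapsto{\xi'}M_1'$ associated with~$\omega$ (which survives into~$M'$), with $\xi'\circ\eta=\eta^\xi\circ\xi$, whence $\eta=(\xi')^{-1}\circ\eta^\xi\circ\xi$. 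Because $\tb_+(c_1)<0$, the already-treated first case applies to~$\eta^\xi$ and gives a $C$-neat decomposition into type~I elementary moves and type~II split moves; the generalized wrinkle reduction $M_1'\xmapsto{(\xi')^{-1}}M'$ decomposes $C$-neatly into type~I elementary moves and, being safe, may be appended. Concatenating the three pieces produces the desired $C$-neat decomposition of~$\eta$.

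The main obstacle lies entirely in the second case. One must verify that the generalized wrinkle creation move---dictated by the abstract splitting route~$\omega$ rather than by~$c$ itself---really endows the successor of~$c$ with negative Thurston--Bennequin number~$\tb_+$, which requires a careful trace of how the successive wrinkle creations reroute~$c$; and one must make precise the commutation of this generalized wrinkle move with the type~II extension move, a routine but bookkeeping-heavy instance of the friendly/resembling-move machinery of Section~\ref{commutation-sec} adapted to generalized wrinkle moves. The remaining points---that the refinements and prependings/appendings of safe moves keep the whole sequence $C$-delicate and $C$-neat---are handled exactly as in the recursive scheme set up after Definition~\ref{neat-def}.
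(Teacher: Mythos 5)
Your split into the cases $\tb_+(c)<0$ and $\tb_+(c)=0$ is reasonable, and the first case is handled correctly via Lemma~\ref{extension-neat-decomp-lem} and Lemma~\ref{neutral-move-decomposition-lem}, matching the paper's observation that any circuit with $\tb_+<0$ is already ``flexible'' in the sense required. The second case, however, contains a genuine gap at the central step.

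The claim that the generalized wrinkle creation move $M\xmapsto\xi M_1$ reroutes the successor $c_1$ of $c$ ``through the new $\diagdown$-mirrors created along the wrinkle trail, so that $\tb_+(c_1)<0$'' is false. A wrinkle creation move is a \emph{neutral} move, and neutral moves are \emph{safe} in the sense of Definition~\ref{safe-def}: they preserve the sign of $\tb_\pm$ of every essential boundary circuit. Concretely, if $c$ hits the $\diagup$-mirror $\mu_k=\nu$ on one side, then after the wrinkle creation with ramification mirrors (the $\diagdown$-mirror and) $\nu$, the successor $c_1$ hits exactly one of the two \emph{$\diagup$}-mirrors among $\nu$'s replacements, never the middle $\diagdown$-mirror (that one is hit only by the two new inessential rectangle circuits $\partial r_1$, $\partial r_2$). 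Iterating through the $k-1$ wrinkle creations of the anticanonical decomposition changes nothing: each step keeps $\tb_+$ of $c$'s successor equal to $\tb_+(c)=0$. So the reduction to the already-treated case never happens, and the appeal to the commutation with $\eta$ is moot.

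The paper's proof avoids this precisely by \emph{not} using only safe moves. It defines a circuit $c\notin C$ to be ``flexible'' if every type~II extension on it has the desired decomposition, observes (again via Lemmas~\ref{extension-neat-decomp-lem} and \ref{neutral-move-decomposition-lem}) that $\tb_+(c)<0$ implies flexibility, and then shows flexibility propagates between \emph{adjacent} circuits sharing a $\diagup$-mirror. The propagation step (the seven moves of Figure~\ref{extension-flexible-fig}) crucially inserts type~II split and merge moves, which are $-$-safe but \emph{not} $+$-safe: they redistribute the hits of $\diagdown$-mirrors between the two circuits on either side of the splitting mirror, temporarily making $\tb_+<0$ for the circuit that is about to receive the extension. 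That is exactly the mechanism your wrinkle-based argument cannot supply. The chain structure given by $+$-flexibility relative to $C$ then finishes the induction.
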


\begin{proof}
We call a boundary circuit~$c\in\partial M\setminus C$ \emph{flexible} if
any type~II extension move~$M\mapsto M'$ that modifies~$c$ admits a $C$-neat decomposition
into type~I elementary moves and type~II split/merge moves.
It follows from Lemmas~\ref{extension-neat-decomp-lem} and~\ref{neutral-move-decomposition-lem}
that any boundary circuit~$c\in\partial M\setminus C$ with~$\tb_+(c)<0$ is flexible. It also follows that~$c$
is flexible, once \emph{some} type~II extension move~$M\mapsto M'$ that modifies~$c$ admits a $C$-neat decomposition
into type~I elementary moves and type~II split/merge moves.

It therefore suffices to show that whenever~$c,c'\in\partial M\setminus C$ are adjacent
boundary circuits one of which is flexible, the other is flexible, too.
This is demonstrated in Figure~\ref{extension-flexible-fig}, where
we assume that~$c'$ is flexible and shares a $\diagup$-mirror with~$c$.
\begin{figure}[ht]
\begin{tabular}{ccccccc}
\includegraphics[scale=.8]{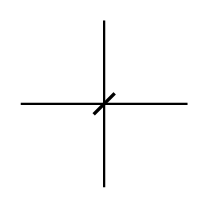}\put(-70,43){$c$}\put(-38,65){$c$}\put(-38,12){$c'$}\put(-14,43){$c'$}
&\raisebox{38pt}{$\longrightarrow$}&
\includegraphics[scale=.8]{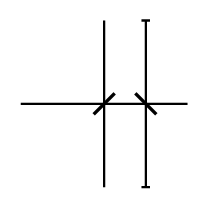}&\raisebox{38pt}{$\longrightarrow$}&
\includegraphics[scale=.8]{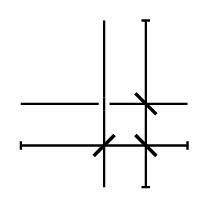}&\raisebox{38pt}{$\longrightarrow$}&
\includegraphics[scale=.8]{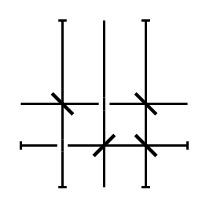}\\
&&&&&&$\big\downarrow$\\
\includegraphics[scale=.8]{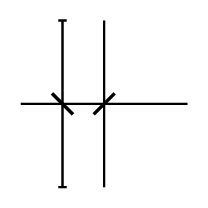}&\raisebox{38pt}{$\longleftarrow$}&
\includegraphics[scale=.8]{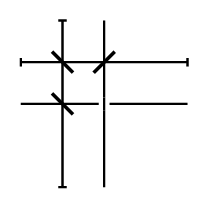}&\raisebox{38pt}{$\longleftarrow$}&
\includegraphics[scale=.8]{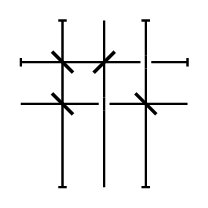}&\raisebox{38pt}{$\longleftarrow$}&
\includegraphics[scale=.8]{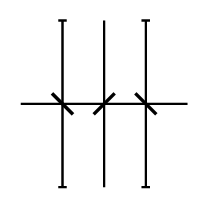}
\end{tabular}
\caption{If~$c'$ is flexible, then so is~$c$}\label{extension-flexible-fig}
\end{figure}

We apply the following moves:
\begin{enumerate}
\item
a type~II extension move that modifies~$c'$;
\item
a type~II split move;
\item
a type~II extension move that modifies the boundary circuit obtained from~$c$ by the previous
transformations, which now has~$\tb_+<0$;
\item
a type~II merge move;
\item
a type~II split move;
\item
a type~II elimination move that modifies the boundary circuit obtained from~$c'$
by the previous transformations, which now has~$\tb_+<-1$;
\item
a type~II merge move.
\end{enumerate}
None of these moves modifies any boundary circuit of the original diagram except for~$c$ and~$c'$.
\end{proof}

\subsection{Second commutation property of generalized type~II merge moves, special case}
Here we complete the proof of the second commutation property of generalized type~II merge moves.
Special generalized type~II merge moves don't have this property in general, and
a $-$-flexibility assumption on the obtained diagram is needed.

To construct the required decomposition we will need generalized wrinkle creation moves.
In order to make them friendlier to certain split moves we introduce the following

\begin{conv}
A generalized wrinkle creation move~$M\mapsto M_1$ associated with a single-headed splitting route~$\omega=(\mu_1,\ldots,\mu_k,p)$
does not depend on~$p$. However, in the context where generalized wrinkle moves are used
we will assume that a concrete choice for~$p$ has been made.

If~$M\xmapsto{\eta_1}M_1$ is a generalized wrinkle creation move that complies
with a generalized split move~$M\xmapsto\eta M'$, and~$M_1\xmapsto{\eta_2}M_2\xmapsto{\eta_3}\ldots\xmapsto{\eta_m}M_m=M'$
are the remaining moves from an anticanonical decomposition of~$M\xmapsto\eta M'$, then
we define~$h_M^{M_1}$ as $h_{M_2}^{M_1}\circ h_{M_3}^{M_2}\circ\ldots h_{M_m}^{M_{m-1}}\circ h_M^{M'}$.
(Thus, the domain of~$h_M^{M_1}$ is the same as that of~$h_M^{M'}$, and it does depend essentially on~$p$ though
the move~$M\mapsto M_1$ does not.)
\end{conv}

\begin{defi}\label{friendly-to-gen-wrinkle-def}
Let~$M\xmapsto{\eta_1}M_1$ be a generalized wrinkle creation move complying with a generalized split move~$M\xmapsto\eta M'$,
and let~$M\xmapsto\zeta M_2$ be one of the moves for which the partial homeomorphism~$h_M^{M_2}$ has
been defined.

We say that the move~$M\xmapsto\zeta M_2$ is \emph{friendly} to the move~$M\xmapsto{\eta_1}M_1$ if
it is friendly to~$M\xmapsto\eta M'$. In this case, the move~$M_2\xmapsto{{\eta_1}^\zeta}M_{21}$
\emph{resembling}~$M\xmapsto{\eta_1}M_1$ is defined as a generalized wrinkle
creation move complying with a move~$M_2\xmapsto{\eta^\zeta}M_2'$ resembling~$M\xmapsto\eta M'$.
\end{defi}

Now Definition~\ref{moves-commute-def} extends to generalized wrinkle moves accordingly.

\begin{lemm}\label{split-commutes-with-wrinkle-lem}
Let~$M$ be an enhanced mirror diagram in which a $\diagup$-mirror~$\mu_0$ can be removed
by a type~I elimination move, and let~$\omega$ be a type~II splitting route in~$M$ of the
form~$\omega=(\mu_1,\mu_0,\mu_0,\mu_2,p)$. Let also~$\sigma=(\nu_1,\nu_2,\ldots,\nu_k,q)$ be a single-headed type~I splitting route
such that, $\nu_k\in\{\mu_1,\mu_2\}$, $\nu_1,\ldots,\nu_{k-1}\notin\{\mu_0,\mu_1,\mu_2\}$,
and the associated splitting paths~$\wideparen\sigma$
and~$\wideparen\omega$ have no unavoidable intersection.
Finally, let~$M\xmapsto\eta M_1$ and~$M\xmapsto\zeta M_2$
be a generalized type~II split move associated with~$\omega$, and a generalized
wrinkle creation move associated with~$\sigma$, respectively.

Then the moves~$M\xmapsto\eta M_1$ and~$M\xmapsto\zeta M_2$
almost commute.
\end{lemm}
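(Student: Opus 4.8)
The plan is to reduce Lemma~\ref{split-commutes-with-wrinkle-lem} to the already-established commutation facts about ordinary split moves. First I would unfold the definitions: the generalized type~II split move $M\xmapsto\eta M_1$ associated with the special-looking route $\omega=(\mu_1,\mu_0,\mu_0,\mu_2,p)$ has a canonical decomposition $M\xmapsto{\eta_1}M'_1\xmapsto{\eta_2}M_1$ in which $M\xmapsto{\eta_1}M'_1$ is a double split move with splitting mirrors~$\mu_0$ (appearing twice in $\omega$) and $\mu_2$ --- actually a double split at the pair $(\mu_0,\mu_2)$ --- and $M'_1\xmapsto{\eta_2}M_1$ is a generalized type~II split move associated with the shortened single-headed route $(\mu_1,\mu'_0,p')$, where $\mu'_0$ is a successor of~$\mu_0$. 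Symmetrically, the generalized wrinkle creation move $M\xmapsto\zeta M_2$ associated with the single-headed type~I route $\sigma=(\nu_1,\dots,\nu_k,q)$ decomposes (anticanonically) into $k-1$ ordinary wrinkle creation moves, the first of which has ramification mirrors $\nu_{k-1}$ (or whatever mirror pair the first step of the anticanonical decomposition prescribes) near the point $\nu_k\in\{\mu_1,\mu_2\}$.

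The key observation is that the splitting path $\wideparen\sigma$ has no unavoidable intersection with $\wideparen\omega$ by hypothesis, and its last mirror $\nu_k$ lies in $\{\mu_1,\mu_2\}$ while none of $\nu_1,\dots,\nu_{k-1}$ coincides with $\mu_0,\mu_1,\mu_2$. So the region of the surface $\wideparen M$ where the wrinkle creation takes place is essentially disjoint from the $1$-handles $\wideparen\mu_0,\wideparen\mu_1,\wideparen\mu_2$ except for the single handle $\wideparen\nu_k$ whose disc-end $\wideparen y$ (the $0$-handle at the occupied level through $\nu_k$) is shared. The proof strategy is then a double induction: outer induction on the length of $\sigma$ (peeling off one wrinkle creation at a time from $M\xmapsto\zeta M_2$), and, for the base case $k=1$ --- when $M\xmapsto\zeta M_2$ is a single ordinary wrinkle creation move with ramification mirrors adjacent to $\nu_1=\nu_k\in\{\mu_1,\mu_2\}$ --- an inner argument that reuses the commutation results already proven. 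Concretely, in the base case I would first apply Lemma~\ref{split-commute-with-jump-lem} or Lemma~\ref{type-i-split-commutes-with-generalized-type-ii-split-lem} to push the ordinary type~I split moves hiding inside the wrinkle creation past the type~I split moves hiding inside the generalized type~II split, and handle the interaction of the remaining (genuinely type~II) split with the wrinkle by a direct diagrammatic check of the mutual positions of $\wideparen\sigma$ and $\wideparen\omega$ near $\wideparen y$, as was done case-by-case in the proof of Lemma~\ref{splits-commute-with-each-other-lem}. The extension move and elimination move that bookend a special generalized split's canonical decomposition commute with everything here by Lemma~\ref{elimination-commutes-with-split-lem}, so the auxiliary-mirror bookkeeping is harmless.

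For the inductive step I would split $M\xmapsto\zeta M_2$ as $M\xmapsto{\zeta_1}N\xmapsto{\zeta_2}M_2$, where $M\xmapsto{\zeta_1}N$ is the first ordinary wrinkle creation in the anticanonical decomposition (ramification mirrors $\nu_{k-1},\nu_{k-2}$ or, for the very first step, determined by the tail of $\sigma$) and $M_2$ is reached from $N$ by a generalized wrinkle creation associated with a shorter type~I route $\sigma'$ in $N$. Because $\nu_{k-1},\dots,\nu_1$ avoid $\mu_0,\mu_1,\mu_2$, the move $M\xmapsto{\zeta_1}N$ is friendly to $M\xmapsto\eta M_1$ and, by the induction base applied to the pair $(\eta,\zeta_1)$, they almost commute; this yields a generalized type~II split move $N\xmapsto{\eta^{\zeta_1}}N_1$ associated with the image $\omega'$ of $\omega$ in $N$ (which is again of the form $(\mu_1,\mu_0,\mu_0,\mu_2,p)$ or differs from it by a pulling-tight/equivalence operation, so Proposition~\ref{similarity-prop} keeps everything neat). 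Then $\sigma'$ in $N$ has smaller length and $\wideparen{\sigma'}$ still has no unavoidable intersection with $\wideparen{\omega'}$, so the induction hypothesis applies to $(N\xmapsto{\eta^{\zeta_1}}N_1,\ N\xmapsto{\zeta_2}M_2)$; stitching the two almost-commutation squares together along $N$ (using Lemma~\ref{neat-properties-lem} to recombine the pieces and Lemma~\ref{split-commute-with-jump-lem} to move the residual jump moves through the intervening split moves) gives almost commutation of $M\xmapsto\eta M_1$ with $M\xmapsto\zeta M_2$.

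The main obstacle I anticipate is the base case $k=1$ combined with the possibility $\mu_1=\mu_2$ or $\nu_1=\mu_1$ coinciding with a mirror that appears in $\omega$: then $\wideparen\sigma$ and $\wideparen\omega$ cross the same $0$-handle $\wideparen y$ in a constrained way, and one must verify that after the wrinkle creation (which splits $y$) the two successors of $y$ receive the relevant mirrors of the image of $\omega$ in the right cyclic order, so that the two ways of composing the moves differ only by jump moves exchanging successors of $y$ --- never successors of distinct occupied levels. This is exactly the kind of picture-by-picture verification carried out in Cases~1--5 of the proof of Proposition~\ref{2ndcommutation-for-merge-non-special-prop} (Figures~\ref{claim3case2a}--\ref{claim3case5b}), and I expect the generic subcase to reduce to literal combinatorial coincidence of the two resulting diagrams, with exactly one genuinely non-trivial subcase (analogous to the $p\in(\mu_2;\mu_k)$ case there) producing a single jump move that exchanges a successor of $y$ with the pair formed by another successor of $y$ and the auxiliary level, which is then eliminated and therefore harmless.
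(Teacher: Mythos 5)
Your proposal has the same skeleton as the paper's argument: induction on the length of the decomposition of the generalized wrinkle creation into ordinary wrinkle creations, a direct case-by-case check of mutual positions of the splitting paths for the base case, and peeling off the first ordinary wrinkle for the inductive step. That matches.

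Two points of friction worth naming. First, the anticanonical decomposition of the generalized wrinkle move associated with $\sigma=(\nu_1,\dots,\nu_k,q)$ begins by creating a wrinkle at the \emph{head} mirrors $\nu_1,\nu_2$, not at $\nu_{k-1},\nu_{k-2}$ (see the proof of Proposition~\ref{2ndcommutation-for-merge-non-special-prop}), so your indexing in the inductive step is off; the base case is $k=2$, not $k=1$. Second, and more substantively, in the inductive step you justify the almost-commutation of the peeled-off ordinary wrinkle creation~$\zeta_1$ with~$\eta$ ``by the induction base applied to the pair $(\eta,\zeta_1)$.'' This does not go through literally: the base case is precisely the situation where one of the ramification mirrors coincides with $\nu_k\in\{\mu_1,\mu_2\}$, whereas the peeled-off wrinkle $\zeta_1$ has both ramification mirrors $\nu_1,\nu_2$ \emph{disjoint} from $\{\mu_0,\mu_1,\mu_2\}$. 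This is a genuinely different configuration of the two paths in $\wideparen M$ and requires its own case analysis of mutual positions (the paper handles it via the five additional pictures in Figure~\ref{wrinkle-commute-fig2}, distinct from Figure~\ref{wrinkle-commute-fig1}). Your plan of pushing the split moves hiding inside the wrinkle past those inside $\eta$ by Lemma~\ref{type-i-split-commutes-with-generalized-type-ii-split-lem} may work in this disjoint-mirror case, but you should recognize it as a separate argument, not an appeal to the base case. Note also that the hypothesis already rules out $\nu_1=\mu_1$ (since $\nu_1,\dots,\nu_{k-1}\notin\{\mu_0,\mu_1,\mu_2\}$), and that $\omega$ as given ($k=4$, double-headed) is not a special splitting route, so there is no extension/elimination bookending its canonical decomposition --- the remark about ``a special generalized split's canonical decomposition'' is confusing the route $\omega$ with the special route $(\mu_1,\mu_2,p)$ that $\omega$ arises from in the application of this lemma (Proposition~\ref{2ndcommutation-for-merge-special-prop}).
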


\begin{proof}
The proof is by induction in the length of the decomposition of~$M\xmapsto\zeta M_2$
into a sequence of ordinary wrinkle creation moves, and is similar in nature
to that of Lemma~\ref{type-i-split-commutes-with-generalized-type-ii-split-lem}.
We omit most of the detail leaving it to the reader.

\begin{figure}[ht]
\includegraphics[scale=0.65]{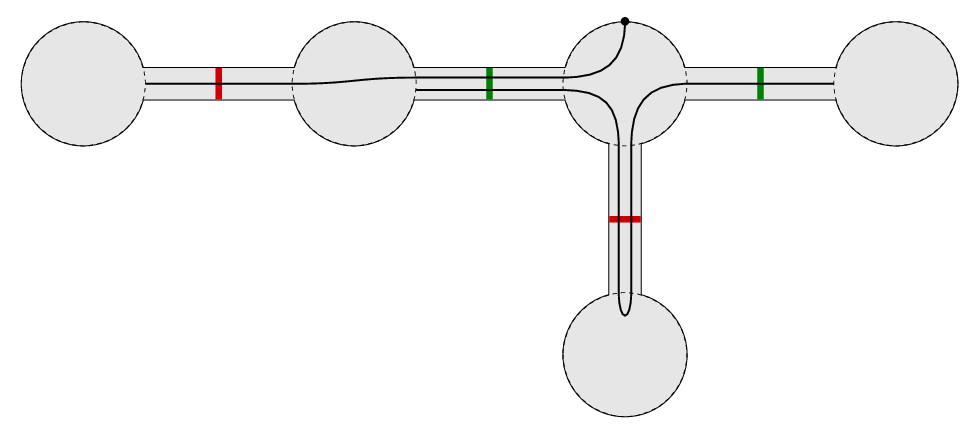}\put(-242,94.5){$\wideparen\nu_1$}%
\put(-169,94.5){$\wideparen\nu_2=\wideparen\mu_j$}\put(-73,94.5){$\wideparen\mu_{3-j}$}%
\put(-102,67){$\wideparen\mu_0$}

\vskip-2.5cm
\includegraphics[scale=0.65]{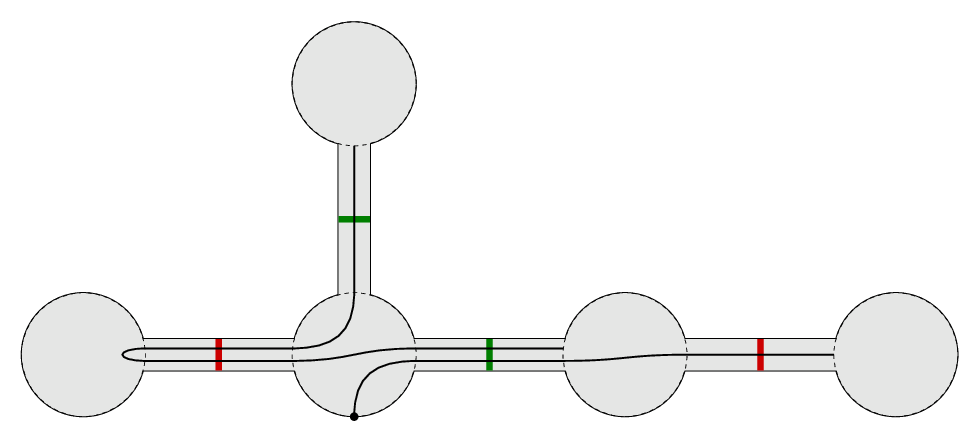}\put(-72,10){$\wideparen\nu_1$}%
\put(-169,10){$\wideparen\nu_2=\wideparen\mu_j$}\put(-242,10){$\wideparen\mu_0$}%
\put(-187,67){$\wideparen\mu_{3-j}$}

\includegraphics[scale=0.65]{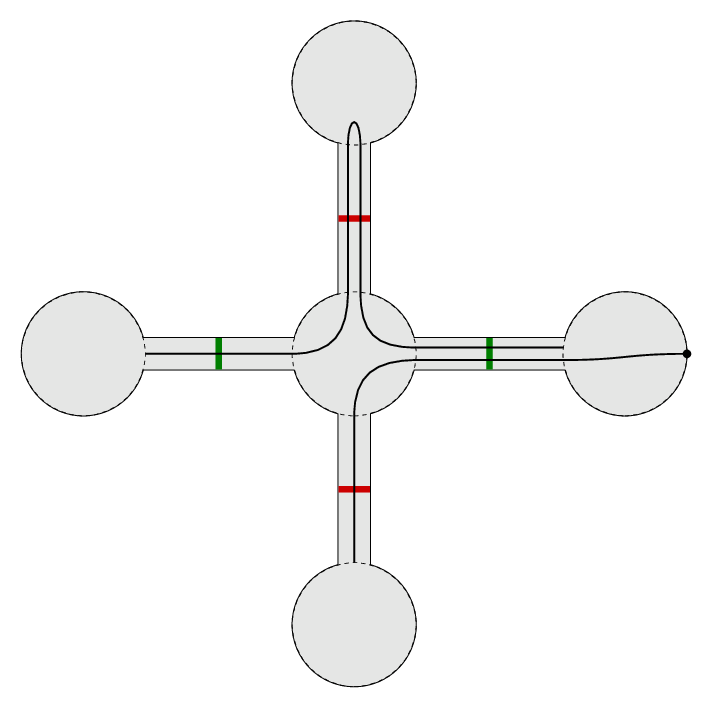}\put(-102,67){$\wideparen\nu_1$}%
\put(-84.5,94.5){$\wideparen\nu_2=\wideparen\mu_j$}\put(-102,151.5){$\wideparen\mu_0$}%
\put(-157.5,94.5){$\wideparen\mu_{3-j}$}
\includegraphics[scale=0.65]{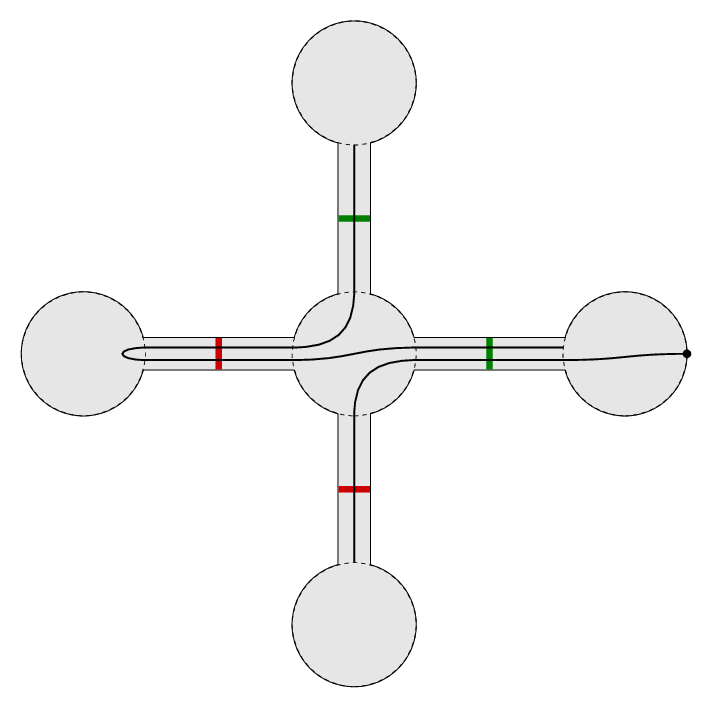}\put(-102,67){$\wideparen\nu_1$}%
\put(-84.5,94.5){$\wideparen\nu_2=\wideparen\mu_j$}\put(-102,151.5){$\wideparen\mu_{3-j}$}%
\put(-157.5,94.5){$\wideparen\mu_0$}

\vskip-2.5cm
\includegraphics[scale=0.65]{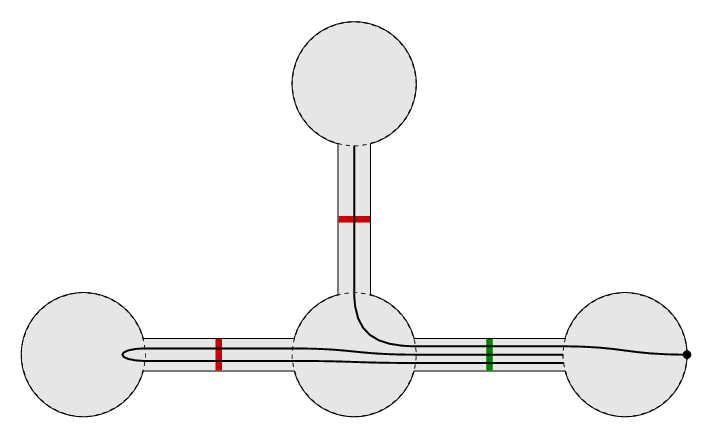}\put(-95.5,10){$\wideparen\nu_2=\wideparen\mu_1=\wideparen\mu_2$}%
\put(-169,10){$\wideparen\mu_0$}\put(-102,67){$\wideparen\nu_1$}

\vskip5mm
\includegraphics[scale=0.65]{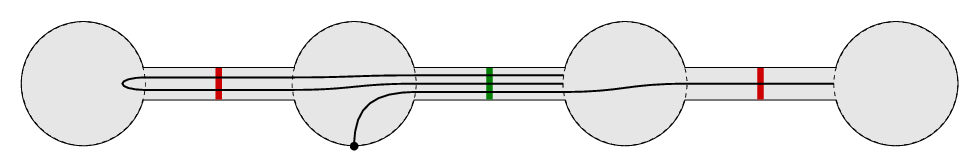}\put(-73,10){$\wideparen\nu_1$}%
\put(-180,10){$\wideparen\nu_2=\wideparen\mu_1=\wideparen\mu_2$}%
\put(-242,10){$\wideparen\mu_0$}
\caption{Possible mutual positions of~$\wideparen\omega$ and~$\wideparen\sigma$
in the case~$k=2$ in Lemma~\ref{split-commutes-with-wrinkle-lem} ($j=1,2$)}\label{wrinkle-commute-fig1}
\end{figure}

\begin{figure}[ht]
\includegraphics[scale=0.65]{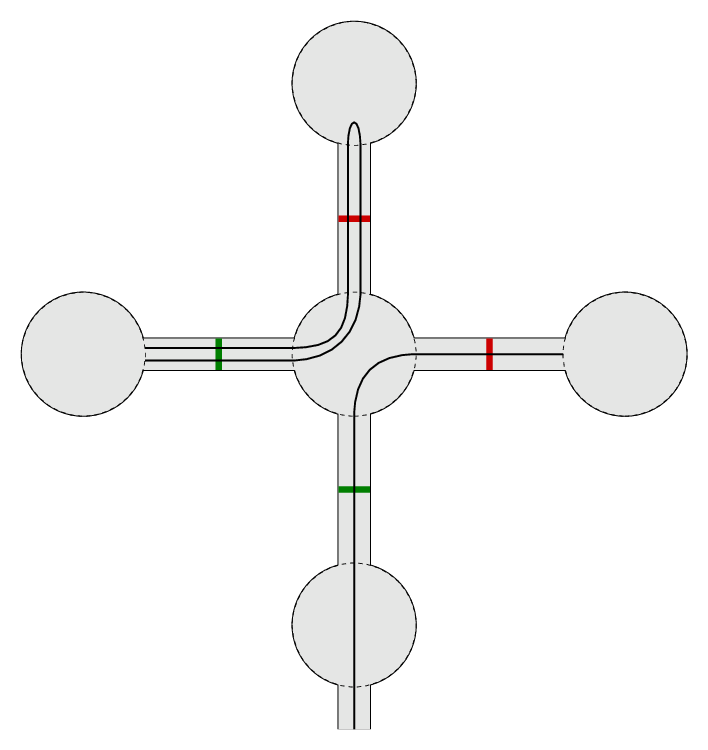}\put(-73,107.5){$\wideparen\nu_1$}%
\put(-169,107.5){$\wideparen\mu_1=\wideparen\mu_2$}\put(-102,80){$\wideparen\nu_2$}%
\put(-102,164.5){$\wideparen\mu_0$}
\includegraphics[scale=0.65]{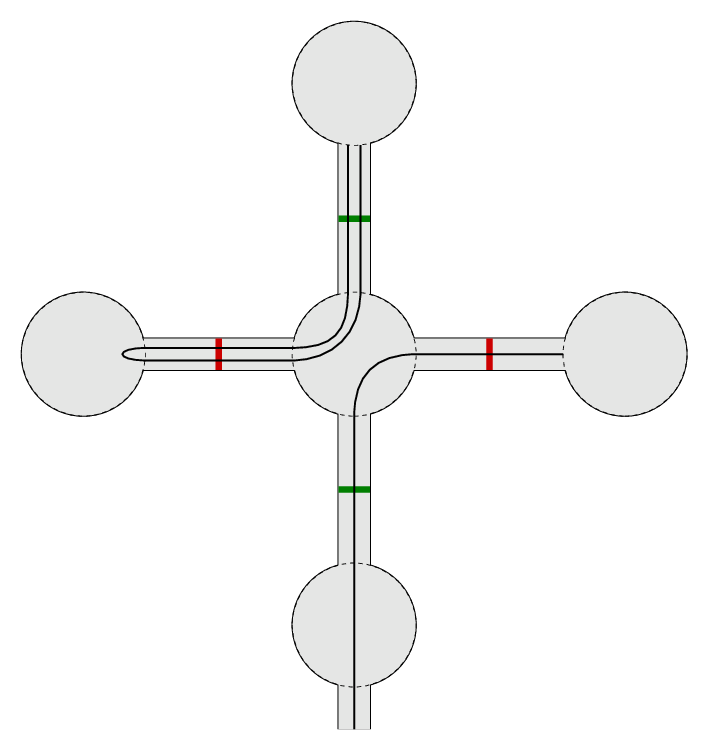}\put(-73,107.5){$\wideparen\nu_1$}%
\put(-102,164.5){$\wideparen\mu_1=\wideparen\mu_2$}\put(-102,80){$\wideparen\nu_2$}%
\put(-157.5,107.5){$\wideparen\mu_0$}

\vskip-2.5cm
\includegraphics[scale=0.65]{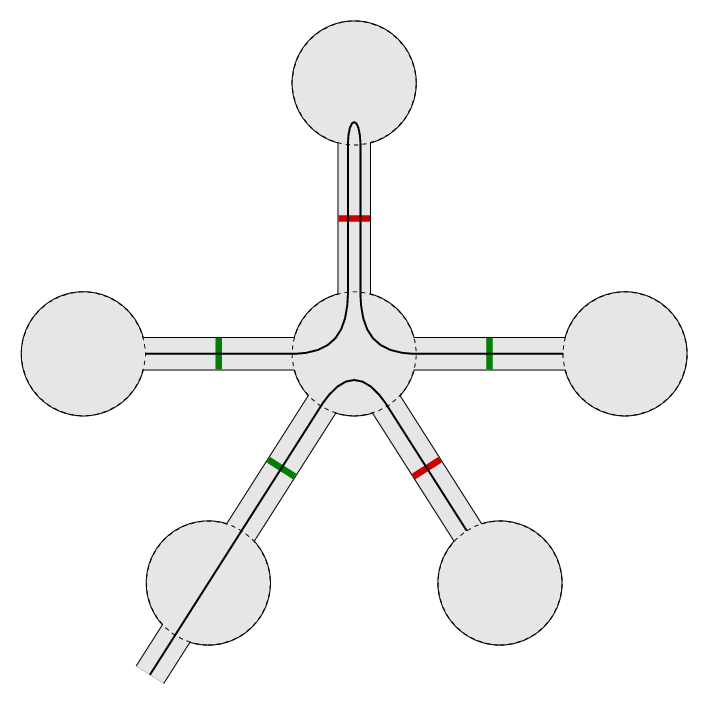}\put(-101,64){$\wideparen\nu_1$}%
\put(-127,64){$\wideparen\nu_2$}\put(-73,95){$\wideparen\mu_j$}\put(-157.5,95){$\wideparen\mu_{3-j}$}%
\put(-102,152){$\wideparen\mu_0$}

\vskip-2.5cm
\includegraphics[scale=0.65]{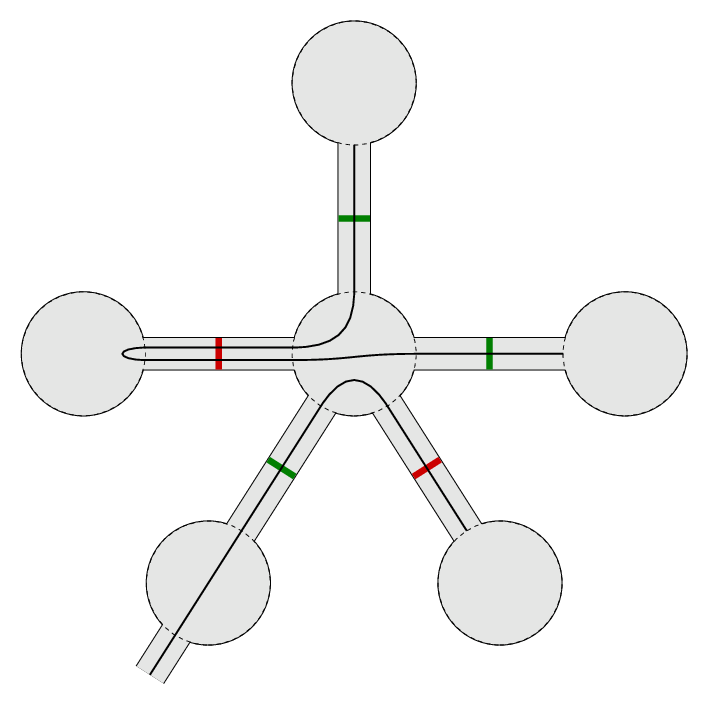}\put(-101,64){$\wideparen\nu_1$}%
\put(-127,64){$\wideparen\nu_2$}\put(-73,95){$\wideparen\mu_j$}\put(-157.5,95){$\wideparen\mu_0$}%
\put(-102,152){$\wideparen\mu_{3-j}$}
\includegraphics[scale=0.65]{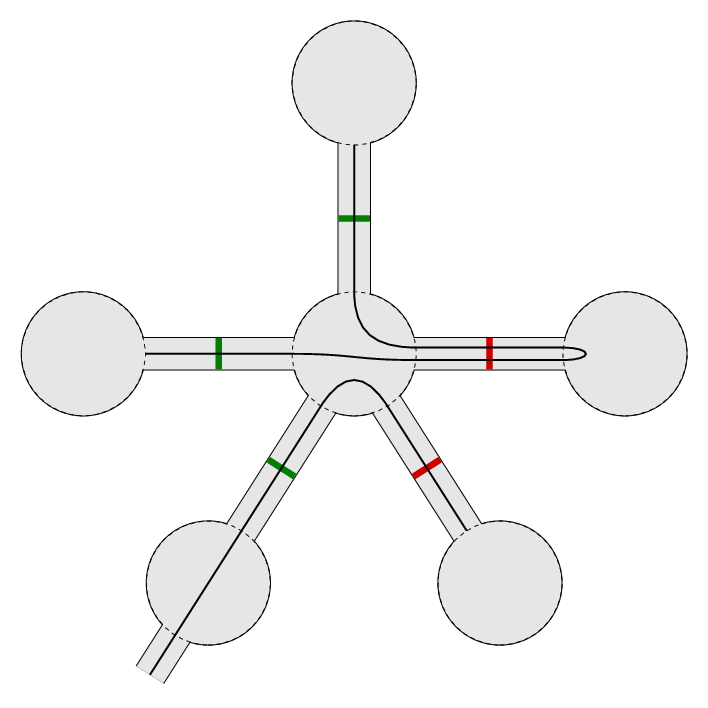}\put(-101,64){$\wideparen\nu_1$}%
\put(-127,64){$\wideparen\nu_2$}\put(-73,95){$\wideparen\mu_0$}\put(-157.5,95){$\wideparen\mu_{3-j}$}%
\put(-102,152){$\wideparen\mu_j$}
\caption{Cases to consider for the induction step in the proof of Lemma~\ref{split-commutes-with-wrinkle-lem}
($j=1,2$)}\label{wrinkle-commute-fig2}
\end{figure}

The induction base,~$k=2$, amounts to considering the six, up to symmetries, possible
mutual positions of the splitting paths~$\wideparen\omega$ and~$\wideparen\sigma$
shown in Figure~\ref{wrinkle-commute-fig1}.

For the induction step, one uses the fact that generalized wrinkle moves commute with jump moves,
which is obvious, and shows that an ordinary wrinkle creation move with ramification mirrors~$\nu_1,\nu_2$
disjoint from~$\mu_0,\mu_1,\mu_2$ almost commutes with the move~$M\xmapsto\eta M_1$.
To do so one considers six cases, in one of which the moves do not interfere, so their commutation
is obvious, and the five others are shown in Figure~\ref{wrinkle-commute-fig2}.
\end{proof}

\begin{prop}\label{2ndcommutation-for-merge-special-prop}
Let~$M$ be an enhanced mirror diagram, and let~$C$ be a collection of essential
boundary circuits of~$M$ such that~$M$ is $-$-flexible relative to~$C$.
Let also~$M\xmapsto\eta M_1$ be
a special generalized type~II split move preserving all
boundary circuits in~$C$.

Then the move~$M\xmapsto\eta M_1$ admits a $C$-neat decomposition into
a sequence of elementary moves
such that all type~II moves in it occur before all type~I moves.
\end{prop}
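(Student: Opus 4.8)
The plan is to reduce Proposition~\ref{2ndcommutation-for-merge-special-prop} to the non‑special case, namely Proposition~\ref{2ndcommutation-for-merge-non-special-prop}, by replacing the single auxiliary mirror of the special split move with a whole generalized wrinkle that is created \emph{before} the type~II part of the decomposition begins. Recall that a special generalized type~II split move~$M\xmapsto\eta M_1$ is associated with a special double‑headed splitting route~$\omega=(\mu_1,\mu_2,p)$ and, by Definition~\ref{gen-split-def}~(Case~4), decomposes as a type~I extension move~$M\xmapsto{\eta_1}M'$ (adding the auxiliary mirror~$\mu_0$ on the occupied level~$y$ through~$\mu_1$ and~$\mu_2$) followed by the non‑special generalized type~II split move~$M'\xmapsto{\eta_2}M_1$ associated with~$(\mu_1,\mu_0,\mu_0,\mu_2,p)$. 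The trouble is precisely that this decomposition has a type~I move (the extension) \emph{before} a type~II move, which is what we are trying to forbid. So the idea is: since~$\mu_2$ is a $\diagup$‑mirror hit by a boundary circuit not in~$C$ (at least one such circuit is modified by the move), Lemma~\ref{flexibility-lem} combined with the $-$‑flexibility of~$M$ relative to~$C$ gives a single‑headed type~I splitting route~$\sigma=(\nu_1,\ldots,\nu_k,q)$ with~$\nu_k\in\{\mu_1,\mu_2\}$, $\nu_1,\ldots,\nu_{k-1}\notin\{\mu_1,\mu_2\}$, such that~$\sigma$ does not separate~$C$; and we may arrange that~$\wideparen\sigma$ and~$\wideparen\omega$ have no unavoidable intersection. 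Let~$M\xmapsto\zeta M_2$ be the generalized wrinkle creation move associated with~$\sigma$, chosen so as to preserve all boundary circuits in~$C$ (possible because~$\sigma$ does not separate~$C$, via the existence of an anticanonical decomposition of an associated split move).

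First I would decompose the generalized wrinkle creation move~$M\xmapsto\zeta M_2$ \emph{neatly into type~II elementary moves} — this is possible since a generalized wrinkle is a concatenation of ordinary wrinkle creation moves, and Lemma~\ref{neutral-move-decomposition-lem} provides a neat decomposition of each ordinary wrinkle into type~II elementary moves. Next, by Lemma~\ref{split-commutes-with-wrinkle-lem}, the moves~$M\xmapsto\eta M_1$ and~$M\xmapsto\zeta M_2$ almost commute: there is a generalized type~II split move~$M_2\xmapsto{\eta^\zeta}M_{21}$ associated with a splitting route~$\omega'=(\mu_1',\mu_2',p')$ in~$M_2$ resembling~$\omega$ (with~$\mu_i'$ a successor of~$\mu_i$), preserving the circuits in~$C$, such that the transformation~$M_1\xmapsto{\eta^\zeta\circ\zeta\circ\eta^{-1}}M_{21}$ admits a $C$‑neat decomposition into jump moves each of which either preserves the combinatorial type or exchanges two levels with a common predecessor — and such jump moves in turn decompose neatly into type~I elementary moves by Lemma~\ref{neutral-move-decomposition-lem}. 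The point is that in~$M_2$ the mirror~$\mu_2'$ (or~$\mu_1'$) now sits on an occupied level which carries, besides it, the successors of the ramification mirrors of the wrinkle, so that~$\omega'$ — although still \emph{special} in the sense of having only two mirror entries — can be related by similarity operations (a head wandering, using the new inessential boundary circuits created by the wrinkle, which have the form of rectangle boundaries with no mirrors inside) to a \emph{non‑special} splitting route~$\omega''$. By Proposition~\ref{similarity-prop}, the generalized type~II split moves associated with similar splitting routes differ by a transformation admitting a $C$‑neat decomposition into type~I elementary moves, so~$M_2\xmapsto{\eta^\zeta}M_{21}$ may be replaced by a non‑special generalized type~II split move~$M_2\xmapsto{\widetilde\eta}\widetilde M$ up to a $C$‑neat tail of type~I elementary moves.

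Then I would invoke Proposition~\ref{2ndcommutation-for-merge-non-special-prop} on the non‑special generalized type~II split move~$M_2\xmapsto{\widetilde\eta}\widetilde M$: it admits a $C$‑neat decomposition into a sequence of elementary moves in which all type~II moves precede all type~I moves. Assembling everything in order, the full $C$‑neat decomposition of~$M\xmapsto\eta M_1$ is: (i) the generalized wrinkle creation~$M\xmapsto\zeta M_2$, decomposed neatly into type~II elementary moves; (ii) the non‑special generalized type~II split~$M_2\xmapsto{\widetilde\eta}\widetilde M$, decomposed — by Proposition~\ref{2ndcommutation-for-merge-non-special-prop} — into type~II moves followed by type~I moves; (iii) the similarity‑and‑commutation correction transformations relating~$\widetilde M$, $M_{21}$ and~$M_1$, all of which decompose $C$‑neatly into type~I elementary moves (by Proposition~\ref{similarity-prop}, Lemma~\ref{split-commutes-with-wrinkle-lem}, and Lemma~\ref{neutral-move-decomposition-lem}). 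Concatenating the type~II blocks of (i) and (ii), and then the type~I blocks of (ii) and (iii), yields the required ordering. Neatness of the concatenation follows from Lemma~\ref{neat-properties-lem} together with the observations after Definition~\ref{neat-def}: each constituent is neat, wrinkle creations and generalized splits are safe-to-bring-forward, and the type~I tails are assembled from safe moves; Condition~(3) of Definition~\ref{neat-def} holds because the composite morphism is~$\eta$ by construction (the wrinkle, the commutation correction, and the non‑special split compose to~$\eta$ by the relations in Lemmas~\ref{split-commutes-with-wrinkle-lem}, \ref{split-commute-with-jump-lem}, and Proposition~\ref{similarity-prop}).

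The main obstacle I expect is step (ii)'s input, i.e.\ verifying that after inserting the wrinkle the route~$\omega'$ really is similar (in the precise sense of Definition~\ref{similarity-def}) to a non‑special splitting route while keeping track of which boundary circuits are touched — one must check that the head wandering operation is \emph{legal}, meaning the relevant inessential boundary circuit created by the wrinkle genuinely has the form of a rectangle boundary with no interior mirrors and does not lie in~$C$, and that the required auxiliary mirror of the resulting generalized split can be placed outside~$\bigcup_{c\in C}c$ — exactly the kind of bookkeeping carried out in Case~2 of the proof of Lemma~\ref{pre-1st-commutation-lem} and in the head‑wandering cases of Proposition~\ref{similarity-prop}. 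A secondary subtlety is the edge case~$\mu_1=\mu_2$ (the special route with coinciding mirrors): there the wrinkle's ramification mirrors must be chosen so that~$\wideparen\sigma$ separates~$\wideparen\mu_1$ from the tail of~$\wideparen\omega$ appropriately, and one falls back on the analysis already done in the `head wandering case, $\omega=(\mu,\mu,p)$' portion of the proof of Proposition~\ref{similarity-prop} (Figure~\ref{wandering-very-special-fig}). I would handle these by a direct check, organized exactly as in those earlier proofs, and otherwise leave routine diagram‑chasing to the reader.
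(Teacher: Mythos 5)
Your strategy is essentially the same as the paper's (use $-$-flexibility and Lemma~\ref{flexibility-lem} to manufacture a type~I splitting route ending at~$\mu_1$ or~$\mu_2$, apply a generalized wrinkle to create an inessential rectangle circuit adjacent to it, exploit a head-wandering/similarity argument to replace the special route by a non-special one, and then invoke Proposition~\ref{2ndcommutation-for-merge-non-special-prop}). However there is a concrete gap in the middle: you apply Lemma~\ref{split-commutes-with-wrinkle-lem} to the pair~$M\xmapsto\eta M_1$ (the \emph{special} generalized split move) and~$M\xmapsto\zeta M_2$ (the generalized wrinkle on~$M$), but that lemma's hypotheses require~$\omega$ to be a non-special route of the form~$(\mu_1,\mu_0,\mu_0,\mu_2,p)$ with~$\mu_0$ removable by a type~I elimination. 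Moreover, Definition~\ref{moves-commute-def} explicitly excludes special generalized split moves from the notion of ``almost commute,'' so the object~$M_2\xmapsto{\eta^\zeta}M_{21}$ you build from this commutation is not defined by the paper's apparatus. You notice that~$\omega'$ in~$M_2$ is ``still special'' and propose to repair this with similarity afterwards, but the damage has already been done: the commutation you use to get into~$M_2$ is unsupported.

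The paper's fix is to reorder: first take the canonical decomposition of the special move,~$M\xmapsto{\eta_1}M'$ (the type~I extension adding~$\mu_0$) followed by the \emph{non-special} generalized type~II split~$M'\xmapsto{\eta_2}M_1$; apply the generalized wrinkle~$\zeta$ to~$M'$ (not to~$M$), where Lemma~\ref{split-commutes-with-wrinkle-lem} now applies legitimately since the route is~$(\mu_1,\mu_0,\mu_0,\mu_2,p)$ with~$\mu_0$ eliminable; then commute the extension~$\eta_1$ past~$\zeta$ (they are both friendly since the extension is a type~I move commuting with the wrinkle), use the head-wandering similarity~$\omega''\mapsto\omega'''=(\mu_1',\mu_*,\mu_*,\mu_2',p')$ where~$\mu_*$ is a ramification mirror of the wrinkle, and finally observe that the elimination~$\xi$ that removes the auxiliary mirror commutes with the non-special split associated with~$\omega'''$, so its inverse (a type~I extension) can be placed \emph{after} all type~II moves in the assembled sequence. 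In short: your outline needs the auxiliary mirror introduced \emph{before} the commutation machinery is invoked, and then pushed to the type~I block of the decomposition afterwards, rather than invoking an (undefined) commutation for the special move directly. This is a real repair, not just notation, because the intermediate commutation lemma genuinely does not cover the special case.
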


\begin{proof}
Let~$\omega=(\mu_1,\mu_2,p)$ be the special type~II splitting route
with which the move~$M\xmapsto\eta M_1$ is associated. Let also~$M\xmapsto{\eta'}M_1'$ be
another such move associated with~$\omega$, and preserving all~$c\in C$. It follows from Proposition~\ref{similarity-prop}
that the sought-for decomposition
of the move~$M\xmapsto\eta M_1$ exists if and only if it exists for~$M\xmapsto{\eta'}M_1'$.
This means that, without loss of generality, we can prescribe the position of the auxiliary
mirror, which we denote by~$\mu_0$, of the move~$M\xmapsto\eta M_1$ at our wish.

By Lemma~\ref{flexibility-lem} there exists a type~I single-headed splitting route~$\sigma=(\nu_1,\ldots,\nu_k,q)$
with~$\nu_k\in\{\mu_1,\mu_2\}$ such that~$\sigma$ does not separate~$C$. Take a shortest
such~$\sigma$. This will ensure that~$\nu_i\notin\{\mu_1,\mu_2\}$ for all~$i=1,\ldots,k-1$.
Moreover, the associated splitting path~$\wideparen\sigma$ will have no unavoidable intersection with~$\wideparen\omega$.

Now choose the position of~$\mu_0$ using the following rules. If~$\mu_1=\nu_k$
put~$\mu_0$ close to~$\mu_2$ and outside~$\bigcup_{c\in C}c\cup\{q\}$. If~$\mu_2=\nu_k$ put~$\mu_0$
close to~$\mu_1$ and outside~$\bigcup_{c\in C}c\cup\{q\}$. Let~$M\xmapsto{\eta_1}M'$ be the first
move in a canonical decomposition of the move~$M\xmapsto\eta M_1$, that is,
the type~I extension move that adds the mirror~$\mu_0$ to the diagram.
Then the remaining part of the canonical decomposition is a generalized
type~II split move~$M'\xmapsto{\eta_2}M_1$ associated with~$\omega'=(\mu_1,\mu_0,\mu_0,\mu_2,p)$.

Define~$M'\xmapsto\zeta M_2'$ to be a generalized wrinkle creation move associated with~$\sigma$.
We choose it to preserve all the boundary circuits in~$C$. This move clearly commutes
with the elimination move~$M'\xmapsto{\eta_1^{-1}}M$, so we have
a generalized wrinkle creation move~$M\xmapsto{\zeta^{\eta_1^{-1}}}M_2$
and an elimination move~$M_2'\xmapsto\xi M_2$, where~$\xi=(\eta_1^{-1})^\zeta$.

By construction, the splitting paths~$\wideparen\omega'$ and~$\wideparen\sigma$ in~$\wideparen M'$
have no unavoidable intersection, so we can apply Lemma~\ref{split-commutes-with-wrinkle-lem} to
conclude that the moves~$M'\xmapsto{\eta_2}M_1$ and~$M'\xmapsto\zeta M_2'$ almost commute.
Let~$M_1\xmapsto{\zeta^{\eta_2}}M_{12}$ and~$M_2'\xmapsto{\eta_2^\zeta}M_{21}$
be the moves resembling~$M'\xmapsto\zeta M_2'$ and~$M'\xmapsto{\eta_2}M_1$, respectively
(see Figure~\ref{special-split-ii-i-decomposition-fig} for the general scheme of moves).

Denote by~$\omega''=(\mu_1',\mu_0',\mu_0',\mu_1',p')$ the type~II splitting route in~$M_2'$
with which the move~$M_2'\xmapsto{\eta_2^\zeta}M_{21}$ is associated.
This route is the image of~$\omega'$ under the generalized wrinkle creation move~$M'\xmapsto\zeta M_2'$.
Denote also by~$x$ the occupied level of~$M_2'$ containing the mirrors~$\mu_0'$, $\mu_1'$, and~$\mu_2'$
(since~$\mu_0'\ne\mu_1'$ this level is unique).

The move~$M'\xmapsto\zeta M_2'$ creates a number of inessential boundary circuits
having form of the boundary of a rectangle. Since~$\nu_k\in\{\mu_1,\mu_2\}$, one of these
circuits~$c$, say, hits either~$\mu_1'$ or~$\mu_2'$. There is a unique $\diagup$-mirror~$\mu_*$ of~$M_2'$ on~$x$
hit by~$c$. One can see that~$\omega'''=(\mu_1',\mu_*,\mu_*,\mu_2',p')$ is a type~II splitting route
similar to~$\omega''$. One can also see that~$\omega'''$ does not separate~$C$.

Let~$M_2'\xmapsto\chi M_3$ be a generalized type~II splitting route associated with~$\omega'''$
and preserving all the boundary circuits in~$C$. Since~$\mu_0'$ does not appear in~$\omega'''$,
this move commutes with the elimination move~$M_2'\xmapsto\xi M_2$, so we have
a type~I elimination move~$M_3\xmapsto{\xi^\chi}M_4$ and a generalized type~II
split move~$M_2\xmapsto{\chi^\xi}M_4$ that preserve all the boundary circuits in~$C$.

We are ready to produce the sought-for decomposition of the move~$M\xmapsto\eta M_1$.
It consists of the following parts:
\begin{figure}[ht]
\includegraphics{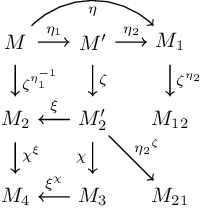}
\caption{The scheme of the moves in the proof of
Proposition~\ref{2ndcommutation-for-merge-special-prop}}\label{special-split-ii-i-decomposition-fig}
\end{figure}
\begin{enumerate}
\item
a decomposition of the generalized wrinkle creation move~$M\xmapsto{\zeta^{\eta_1^{-1}}}M_2$ into type~II elementary moves,
(the decomposition exists by Lemma~\ref{neutral-move-decomposition-lem});
\item
a decomposition of the non-special type~II split move~$M_2\xmapsto{\chi^\xi}M_4$ into a sequence of elementary
moves such that all type~II moves in it occur before type~I moves
(the decomposition exists by Proposition~\ref{2ndcommutation-for-merge-non-special-prop})
\item
type~I extension move~$M_4\xmapsto{(\xi^\chi)^{-1}}M_3$;
\item
a decomposition into type~I elementary moves of the transformation~$M_3\xmapsto{\eta_2^\zeta\circ\chi^{-1}}M_{21}$
(the decomposition exists by Proposition~\ref{similarity-prop});
\item
a decomposition into type~I elementary moves of the transformation~$M_{21}\xmapsto{\zeta^{\eta_2}\circ\eta_2\circ\zeta^{-1}
\circ(\eta_2^\zeta)^{-1}}M_{12}$ (the decomposition exists by Lemmas~\ref{split-commutes-with-wrinkle-lem}
and~\ref{neutral-move-decomposition-lem});
\item
and finally, a decomposition into type~I elementary moves of the generalized wrinkle
reduction~$M_{12}\xmapsto{(\zeta^{\eta_2})^{-1}}M_1$
(the decomposition exists by Lemma~\ref{neutral-move-decomposition-lem});
\end{enumerate}
All the listed decompositions are assumed to be $C$-neat.
\end{proof}

\subsection{Generalized type~II bypass removals. First commutation property}
Generalized type~II split moves don't have the first commutation property, and should
be replaced by something that has it.
Generalized type~II bypass removals, which are defined below, provide for a proper replacement.
If the mirror diagrams are viewed up to type~I moves, then
generalized type~II bypass removals do exactly the same job as generalized
type~II splittings do (see Subsection~\ref{type-ii-moves-for-divided-graph-subsec}), but unlike the latter they do have the first commutation property
as we will see in a moment.
Another important property of generalized type~II bypass removals is that
these moves, unlike generalized type~II split moves, are always $-$-safe-to-postpone (see Definition~\ref{safe-def}).

\begin{defi}\label{generalized-bypass-def}
Let~$M$ and~$M'$ be enhanced mirror diagram such that the following holds:
\begin{enumerate}
\item
the transition~$\widehat M\mapsto\widehat M'$
is a handle removal;
\item
all mirrors in~$E_M\setminus E_{M'}$ are $\diagup$-mirrors;
\item
there is a $+$-negligible boundary circuit~$c\in\partial M\setminus\partial M'$ (see Definition~\ref{neglibigle-circuit-def}).
\end{enumerate}
Then we say that~$M'$ is obtained from~$M$ by \emph{a generalized
type~II bypass removal}. With this operation we associate a morphism~$\eta:\widehat M\rightarrow\widehat M'$
by requesting that~$\bigl(F,F,\mathrm{id}|_F\bigr)\in\eta$, where~$F$ is a surface
obtained from~$\wideparen M$ by a patching of~$\wideparen c$.
The inverse operation~$M'\xmapsto{\eta^{-1}}M$ is called \emph{a generalized type~II bypass addition}.
\end{defi}

According to this definition, if~$M\mapsto M'$ is a generalized type~II bypass removal, then the closure of~$\widehat M\setminus\widehat M'$
is an arc  with endpoints on~$\widehat M'$. (The endpoints may coincide, thus making this arc a loop.)
This means that there is a sequence~$\beta=(y_0,\nu_1,y_1,\nu_2,\ldots,\nu_k,y_k)$
in which~$y_i\in L_M$, $\nu_i=y_{i-1}\cap y_i\in E_M\setminus E_{M'}$, $i=1,\ldots,k$, and $y_0,y_k\in L_{M'}$.
This sequence, which is defined uniquely up to reversing the order,
will be referred to as \emph{the type~II bypass removed by the move~$M\mapsto M'$}.
Note that the elements that are actually removed do not include the first and the last ones, which are
the occupied levels~$y_0$ and~$y_k$. Note also that the occupied levels~$y_i$, $i=0,1,\ldots,k$, must be pairwise distinct, with the only exception that~$y_0$ may coincide with~$y_k$.
Each level~$y_i$ with~$1\leqslant i\leqslant k-1$ must contain exactly two mirrors of~$M$,
which are~$\nu_i$ and~$\nu_{i+1}$.

Definition~\ref{generalized-bypass-def} requires the existence of a $+$-negligible boundary circuit~$c$
of~$M$ hitting each of~$\nu_i$, $i=1,\ldots,k$, exactly once. Since $c$ is inessential, there must
exist a patching disc for~$\widehat c$, and, as follows from Lemma~\ref{rectangular-representative-lem}, the patching disc can be chosen in the form~$\bigcup_{r\in\Pi}\widehat r$,
where~$\Pi$ is a collection of rectangles. This can be visualized by drawing the rectangles from~$\Pi$
together with the mirror diagram~$M$, and following the conventions made for both kinds of pictures,
presenting mirror diagrams and presenting rectangular diagrams of a surface.

There is another boundary circuit of~$M$ hitting each of~$\nu_i$, $i=1,\ldots,k$, exactly once, which we denote by~$c'$.
The boundary circuits~$c$ and~$c'$ are said to be \emph{adjacent} to the bypass~$\beta$. They are
replaced in~$M'$ by a single boundary circuit, which is referred to as~$c\#c'$.

\begin{exam}
\begin{figure}[ht]
\includegraphics[scale=0.5]{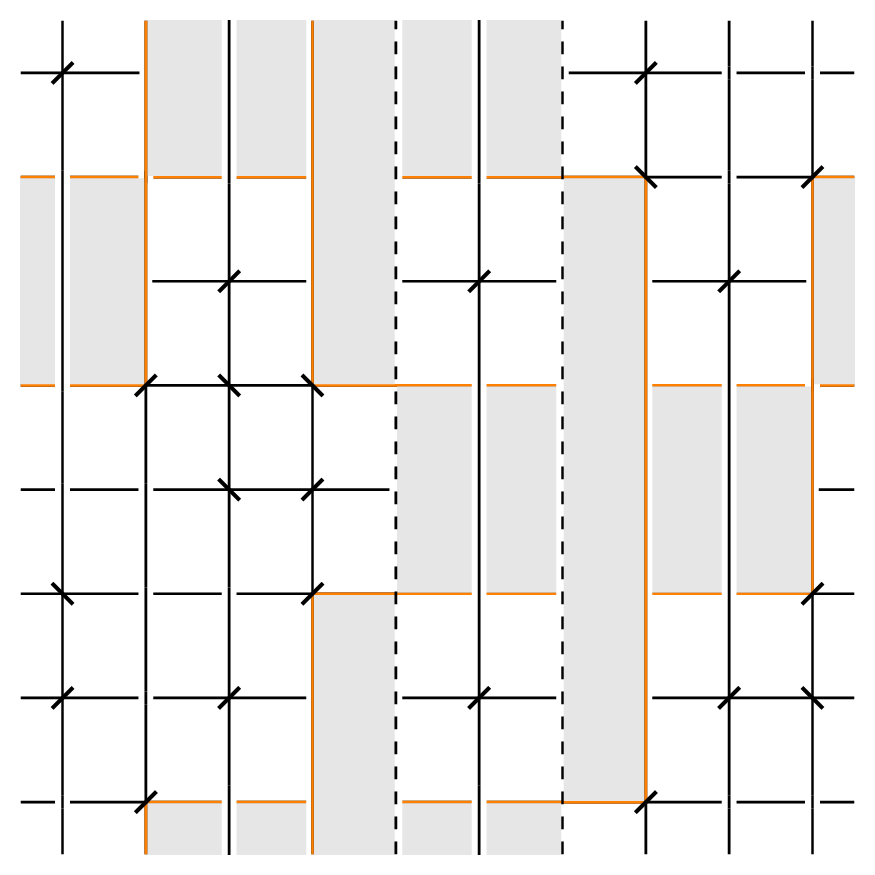}\put(-186,110){$\nu_1$}\put(-186,10){$\nu_2$}\put(-66,10){$\nu_3$}%
\put(-130,125){$r_1$}\put(-110,75){$r_2$}\put(-190,125){$r_3$}\put(-170,175){$r_4$}\put(-70,25){$r_5$}%
\put(-133,110){$\mu'$}\put(-53,160){$\mu''$}%
\put(-217,116){$y_0$}\put(-217,16){$y_2$}\put(-179,-2){$y_1$}\put(-59,-2){$y_3$}
\hskip.3cm\raisebox{100pt}{$\longrightarrow$}\hskip.3cm
\includegraphics[scale=0.5]{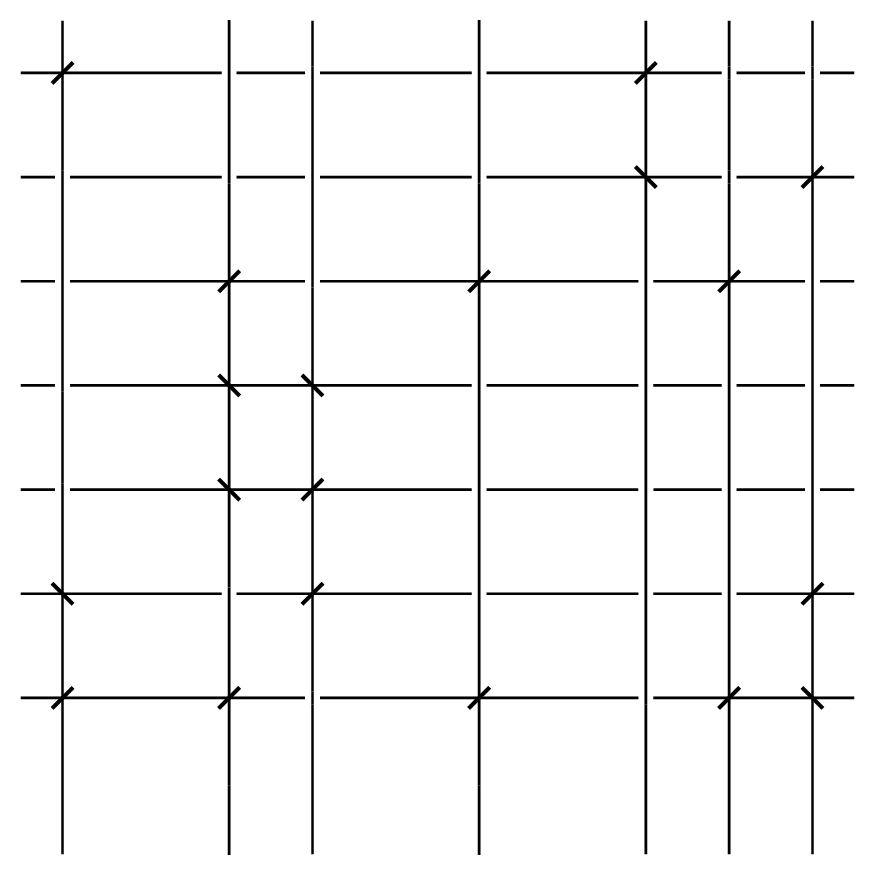}\\\vskip.5cm
\includegraphics[scale=0.5]{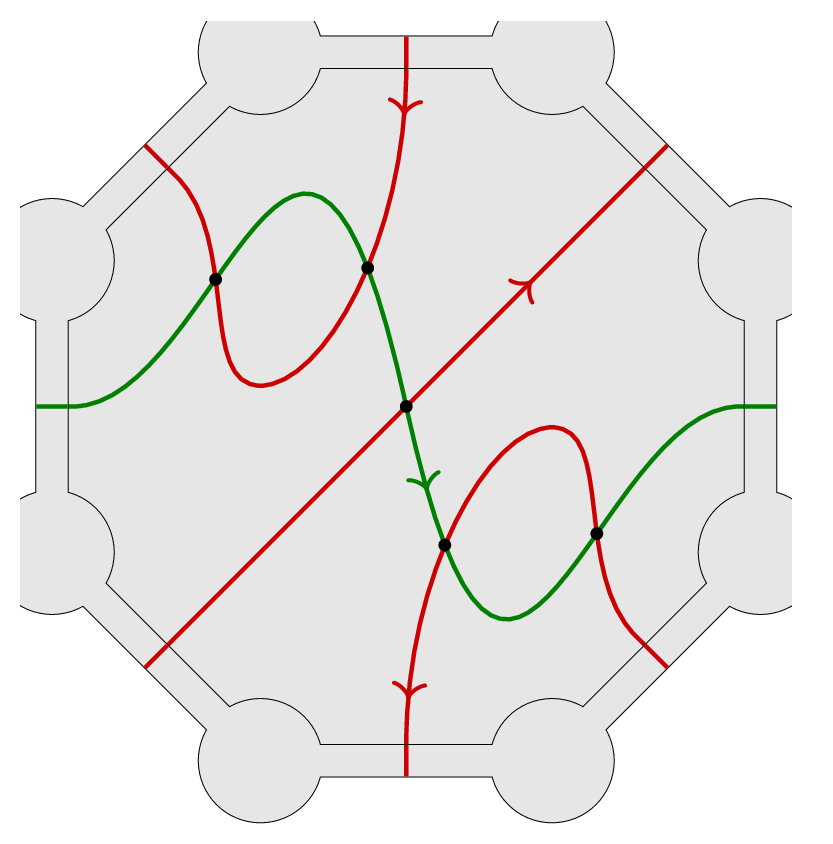}\put(-174,33){$\wideparen\nu_1$}\put(-105,5){$\wideparen\nu_2$}\put(-39,33){$\wideparen\nu_3$}%
\put(-205,103){$\wideparen\mu'$}\put(-10,103){$\wideparen\mu''$}%
\put(-160,136){$\mathring r_1$}\put(-124,137){$\mathring r_2$}\put(-115,106){$\mathring r_3$}%
\put(-105,72){$\mathring r_4$}\put(-68,76){$\mathring r_5$}\put(-50,120){$\widehat\Pi$}%
\put(-191,67){$\wideparen y_0$}\put(-141,17){$\wideparen y_1$}\put(-71,17){$\wideparen y_2$}\put(-21,67){$\wideparen y_3$}
\caption{Generalized type~II bypass removal and the structure of the corresponding
patching disc~$\widehat\Pi=\widehat r_1\cup\widehat r_2\cup\widehat r_3\cup\widehat r_4\cup\widehat r_5$}\label{gen-bypass-rem-fig}
\end{figure}
Shown in Figure~\ref{gen-bypass-rem-fig} is an example of a generalized type~II bypass removal~$M\mapsto M'$.
The top left picture shows the diagram~$M$ together with a collection of rectangles representing
a patching disc for a $+$-negligible circuit~$c$ which is destroyed by the move.
The circuit~$c$ is shown in orange. The dashed lines show two meridians that are not occupied
levels of~$M$. They correspond to two intersection points of the interior of the patching
disc with~$\mathbb S^1_{\tau=1}$. Denoted by~$\mu'$ and~$\mu''$
are the only two $\diagdown$-mirrors on~$c$ due to which~$c$ is $+$-negligible.

The bottom picture shows the structure of a canonic dividing configuration~$(\delta_+,\delta_-)$ of the patching disc.
The crucial point about this dividing configuration is that~$\delta_+$ is a single arc. This follows from
the condition~$\tb_+(c)=-1$.
\end{exam}

\begin{prop}\label{1st-comm-prop-for-bypass-1-prop}
Let~$M\xmapsto\eta M'$ be a generalized type~II bypass removal,
and let~$M'\xmapsto\zeta M''$ be a type~I elementary move. Let also~$C$
be the set of all boundary circuits of~$M$ preserved by both moves.
Then there exist transformations~$M\xmapsto{\zeta'}M'''$
and~$M'''\xmapsto{\eta'}M''$ preserving all the boundary circuits in~$C$ such that the following holds:
\begin{enumerate}
\item
we have~$\eta'\circ\zeta'=\zeta\circ\eta$;
\item
the transformation~$M'''\xmapsto{\eta'}M''$ is a generalized type~II bypass removal;
\item
the transformation~$M\xmapsto{\zeta'}M'''$ admits a neat decomposition into a sequence of type~I elementary moves
which yields a $C$-neat decomposition of~$M\xmapsto{\zeta\circ\eta}M''$
after appending the move~$M'''\xmapsto{\eta'}M''$ at the end of the sequence.
\end{enumerate}
\end{prop}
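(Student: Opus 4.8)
The plan is to reduce the statement to the already-established commutation lemmas by converting the generalized type~II bypass removal into something that moves commute with. The key observation is that a generalized type~II bypass removal $M\xmapsto\eta M'$ destroys a $+$-negligible boundary circuit $c$; by Lemma~\ref{patching-lem} the patching of $c$ decomposes neatly into type~I extensions and elementary bypass additions, and Lemma~\ref{remove-coherent-mirror-lem} lets us remove coherent $\diagdown$-mirrors by type~I moves. So I would first express the generalized bypass removal as the composition of a sequence of type~I moves (patching $c$) followed by a generalized type~II split move and then a further sequence of type~I moves that dispose of the extra structure. More precisely, letting $\beta=(y_0,\nu_1,y_1,\ldots,\nu_k,y_k)$ be the removed type~II bypass, the point is that $M\xmapsto\eta M'$ becomes, up to type~I moves applied before and after, a generalized type~II split move associated with a suitable type~II splitting route $\omega$ visiting the $\diagup$-mirrors $\nu_1,\ldots,\nu_k$ in order. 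This is the natural dual of the discussion in Subsection~\ref{type-ii-moves-for-divided-graph-subsec}, where it is noted that a generalized type~II split move and a generalized type~II bypass removal have the same effect on the associated divided surface.

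Once the bypass removal is presented as (type~I moves) $\circ$ (generalized type~II split) $\circ$ (type~I moves), I would apply Proposition~\ref{1st-comm-prop-of-merge-prop} to commute the generalized type~II merge move (the inverse of the split) past the type~I elementary move $M'\xmapsto\zeta M''$. Concretely: write $M\xmapsto\eta M'$ as $M\xmapsto{\alpha}N\xmapsto{\mu}N'\xmapsto{\gamma}M'$, where $\alpha$ and $\gamma$ are compositions of type~I elementary moves and $N'\xmapsto{\mu^{-1}}N$ is a generalized type~II split move. Then $N'\xmapsto{\zeta\circ\gamma}$ is a composition of type~I elementary moves (hence admits a $C'$-neat decomposition into such for the relevant collection $C'$ of circuits), so Proposition~\ref{1st-comm-prop-of-merge-prop}, applied with $\eta$ there equal to $\mu$ and $\zeta$ there equal to $\zeta\circ\gamma$, produces a composition $N'\xmapsto{\zeta''}N''$ of type~I elementary moves and a generalized type~II merge move $N''\xmapsto{\mu'}M''$ giving a neat decomposition. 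Finally $\alpha$ is already type~I, so $\zeta'=\zeta''\circ\gamma^{-1}$... no — rather, $\zeta'$ is obtained by composing $\alpha$ with $\zeta''\circ(\text{the type~I moves interpolating})$, and $M'''$ is $N''$, with $\eta'$ the generalized type~II bypass removal got by re-absorbing the type~I tail of $\mu'$; the resulting $M'''\xmapsto{\eta'}M''$ is again a generalized type~II bypass removal because its inverse is a generalized type~II bypass addition, which by definition is a generalized type~II split followed by type~I moves.

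The main obstacle I expect is bookkeeping the boundary circuits and showing that the collection $C$ is preserved throughout, in particular that the $+$-negligibility of the destroyed circuit is maintained under the type~I moves produced by Proposition~\ref{1st-comm-prop-of-merge-prop}, and that the patching disc used to realize $M\xmapsto\eta M'$ as a generalized split can be chosen compatibly with $C$. Here I would lean on Lemma~\ref{patching-lem} (which allows one of two $\diagdown$-mirrors not to participate in the moves) and on the flexibility/negligibility machinery of Subsection~\ref{divided-legendrian-subsec}, together with Lemma~\ref{realization-invariance-lemm}, to track which boundary circuits stay essential and which stay $+$-negligible. The verification that the final sequence is $C$-neat — conditions~(2) and~(3) of Definition~\ref{neat-def} — should be routine given the discussion after Definition~\ref{neat-def}, since all the moves involved are either type~I elementary moves (which are $+$-safe) or generalized type~II bypass removals (which are $-$-safe-to-postpone), so one only needs to check that the Thurston--Bennequin signs behave monotonically along the sequence, and this follows from the $-$-safeness of bypass removals noted just before Definition~\ref{generalized-bypass-def}.
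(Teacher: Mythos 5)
The central step of your plan --- factoring the generalized type~II bypass removal $M\xmapsto\eta M'$ as $(\text{type~I moves})\circ(\text{generalized type~II merge})\circ(\text{type~I moves})$ so as to apply Proposition~\ref{1st-comm-prop-of-merge-prop} --- is not available without extra hypotheses, and the statement you are proving has none. Producing such a factorization $C$-neatly is essentially what Proposition~\ref{2ndcommutation-for-bypass-prop} does, and that proposition carries a $-$-flexibility hypothesis on $M'$ for exactly this purpose: its proof reduces, after simplifications of the patching disc, to a terminal configuration ($k=2$, $\lambda_1=\lambda_2$) where the decomposition of Figure~\ref{special-bypass-fig} cannot in general respect the circuits in $C$ unless a type~I splitting route not separating $C$ can be found, which is precisely what $-$-flexibility provides via Lemma~\ref{flexibility-lem}. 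Without that hypothesis, the factorization you want need not exist. The discussion of Subsection~\ref{type-ii-moves-for-divided-graph-subsec} only shows that a generalized split and a generalized bypass removal induce the same change on the associated divided surfaces (hence agree ``up to type~I moves'' at the isotopy level); it does not produce a $C$-neat chain of elementary moves realizing that equality. There is also a second, independent problem: even granting the factorization, Proposition~\ref{1st-comm-prop-of-merge-prop} outputs a generalized type~II \emph{merge} move as the tail, whereas Condition~(2) of the statement requires $\eta'$ to be a generalized type~II \emph{bypass removal}. Your closing claim that ``a generalized type~II bypass addition is by definition a generalized type~II split followed by type~I moves'' is not what Definition~\ref{generalized-bypass-def} says, and the re-absorption step it is meant to license is left unjustified.

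The paper's own proof does not go through Proposition~\ref{1st-comm-prop-of-merge-prop} at all. It argues directly and combinatorially, case by case on the kind of move $M'\xmapsto\zeta M''$ (extension, elimination, elementary bypass addition, elementary bypass removal, slide), building $\zeta'$ explicitly and, where the site of $\zeta$ collides with the bypass $\beta=(y_0,\nu_1,\ldots,\nu_k,y_k)$, deforming the bypass out of the way by jump moves, type~I split moves, and Lemma~\ref{rem-obst-lem}; in the one delicate case (a type~I elementary bypass removal whose rectangle might have boundary $c\# c'$) it observes $\tb_+(c\# c')\leqslant -2$ to rule out the collision and then appeals to Lemma~\ref{remove-coherent-mirror-lem}. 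That direct construction is what makes the proposition hold without any flexibility hypothesis, which the statement is careful not to assume.
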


\begin{proof}
Generalized type~II bypass removals are $-$-safe-to-postpone. Therefore,
we need not worry about the $C$-neatness of the decomposition of~$M\xmapsto{\zeta\circ\eta}M''$, once the constructed decomposition of~$M\xmapsto{\zeta'}M'''$ is neat.

The statement is established similarly to~\cite[Proposition~2]{DyPr}
and to Proposition~\ref{subdiagram-move-prop} above. So, we will be a little sketchy, providing
only the details that are specific in the present case.

As usually, we have to consider different kinds of moves~$M'\xmapsto\zeta M''$ one by one.
The bypass removed by~$M\xmapsto\eta M'$ is denoted~$(y_0,\nu_1,y_1,\ldots,\nu_k,y_k)$.

\medskip\noindent\emph{Case 1}: $M'\xmapsto\zeta M''$ is a type~I extension move.\\
Let~$\mu$ and~$x$ be the mirror and the occupied level that are added by the move.
The move~$M\xmapsto{\zeta'}M'''$ should also add~$x$ and~$\mu$.
If~$x=y_i$ for some~$i\in\{1,\ldots,k-1\}$, then prior to that the diagram should be modified by
a jump move that disturbs the position of~$y_i$ slightly.

\medskip\noindent\emph{Case 2}: $M'\xmapsto\zeta M''$ is a type~I elimination move.\\
Let~$\mu$ and~$x$ be the mirror and the occupied level that are eliminated by the move.
Let also~$z$ be the other occupied level of~$M$ passing through~$\mu$.
The move~$M\xmapsto{\zeta'}M'''$ simply removes~$\mu$ and~$x$, unless~$x\in\{y_0,y_k\}$.

If~$x=y_0\ne y_k$ we put~$M'''=M$ and~$\zeta'=\mathrm{id}|_{\widehat M}$.
In this case, the transformation~$M\xmapsto{\zeta\circ\eta}M''$ is already
a generalized type~II bypass removal, and the removed bypass is~$(z,\mu,y_0,\nu_1,y_1,\ldots,\nu_k,y_k)$.

Similarly, if~$x=y_k\ne y_0$, we can take~$M$ for~$M'''$ as~$M\xmapsto{\zeta\circ\eta}M''$
removes the bypass~$(y_0,\nu_1,y_1,\ldots,\nu_k,y_k,\mu,z)$.

Suppose that~$x=y_0=y_k$. Pick a point~$p$ on~$x$ distinct from~$\mu$, $\nu_1$, and~$\nu_2$ so that the pair~$\{\mu,p\}$ interleaves
with~$\{\nu_1,\nu_k\}$, and take for~$M\xmapsto{\zeta'}M'''$ a type~I split move
with splitting mirror~$\mu$ and snip point~$p$. The occupied level~$x$ will have two successors in~$M'''$,
which we denote by~$x_1$, $x_2$, numbering so that~$x_1$ contains the successor of~$\nu_1$ (which is unique),
and~$x_2$ contains the successor of~$\nu_k$ (which is also unique).
We also denote the successors of~$\nu_1$ and~$\nu_k$ by~$\nu_1'$ and $\nu_k'$,
respectively, and the two successors of~$\mu$ contained in~$x_1$ and~$x_2$ by~$\mu_1$ and~$\mu_2$, respectively.

One can see that~$M'''\xmapsto{\eta'}M''$, where~$\eta'=\zeta\circ\eta\circ{\zeta'}^{-1}$, removes the following
bypass:
$$(z,\mu_1,x_1,\nu_1',{}y_1,\nu_2,\ldots,\nu_{k-1},y_{k-1},\nu_k',x_2,\mu_2,z).$$

\medskip\noindent\emph{Case 3}: $M'\xmapsto\zeta M''$ is a type~I elementary bypass addition.\\
We use the notation from Definition~\ref{mirr-bypass-def}, in which we substitute~$M''$
for~$M$. Denote also by~$\mu$ the mirror being added by the move,
and by~$\mu_1$, $\mu_2$, $\mu_3$ the mirrors at~$(\theta_1,\varphi_1)$, $(\theta_2,\varphi_1)$,
and~$(\theta_1,\varphi_2)$, respectively.

If no mirrors of the bypass appear in~$r=[\theta_1;\theta_2]\times[\varphi_1;\varphi_2]$,
then~$M\xmapsto{\zeta'}M'''$ simply adds a $\diagdown$-mirror at~$(\theta_2,\varphi_2)$,
and this is also a type~I elementary bypass addition.
Otherwise we have to modify the bypass in order to remove the obstacles.

There are the following two possible kinds of obstacles:
\begin{enumerate}
\item
one of, or both, of the mirrors~$\nu_1$, $\nu_k$ appear in~$[\mu_1;\mu_2]\cup[\mu_1;\mu_3]$;
\item
some of the mirrors~$\nu_i$ appear in~$\Omega=(\theta_1;\theta_2]\times(\varphi_1;\varphi_2]$.
\end{enumerate}

Obstacles of the first kind are removed by type~I split moves with splitting mirror~$\mu_2$
or~$\mu_3$. For instance, suppose that~$\nu_1\in(\mu_1;\mu_2)$
and~$\nu_k\notin(\nu_1;\mu_2)$. Let~$\theta_0=\theta(\nu_1)$.

For a small enough~$\varepsilon>0$, the longitude~$\ell_{\varphi_1+\varepsilon}$ is not an occupied level of~$M$,
and there are no mirrors of~$M$ in~$[\theta_0;\theta_2]\times(\varphi_1,\varphi_1+\varepsilon]$.
We apply a type~I split move that adds a $\diagup$-mirrors~$\nu_1'$ and~$\mu_2'$ at~$(\theta_0,\varphi_1+\varepsilon)$
and~$(\theta_2,\varphi_1+\varepsilon)$, respectively, to~$M$,
and removes~$\nu_1$.

The new bypass is
$$(m_{\theta_2},\mu_2',\ell_{\varphi_1+\varepsilon},\nu_1',y_1,\nu_2,\ldots,\nu_k,y_k).$$

Obstacles of the second kind are removed by applying Lemma~\ref{rem-obst-lem}. Note, however,
that we have
to use here not only the formulation of the Lemma, but also the proof. One can
see that the procedure
used to remove mirrors from~$\Omega$ transforms a bypass to another bypass.

The removing of obstacles procedure gives us the sought-for transformation~$M\xmapsto{\zeta'}M'''$
coming with a decomposition into type~I split moves
and jump moves, which can be decomposed neatly into type~I elementary moves.
These moves do not alter any part of the diagram~$M$ except for the bypass.

\medskip\noindent\emph{Case 4}: $M'\xmapsto\zeta M''$ is a type~I elementary bypass removal.\\
We again use the notation from Definition~\ref{mirr-bypass-def}, in which we now substitute~$M'$ and~$M''$
for~$M$ and~$M'$, respectively, and denote by~$\mu$ and~$\mu'$ the mirrors at~$(\theta_2,\varphi_2)$
and~$(\theta_1,\varphi_1)$, respectively. Denote also by~$c$ and~$c'$
the boundary circuits of~$M$ adjacent to the bypass being removed by~$M\xmapsto\eta M'$.

If the circuit~$c\#c'$ of~$M'$ is patchable, then so are both~$c$ and~$c'$, and hence~$\tb_+(c\#c')=
\tb_+(c)+\tb_+(c')\leqslant-2$. This means that we cannot have~$\partial r=c\#c'$,
since~$\tb_+(\partial r)=-1$. Therefore, $\partial r$ is a boundary circuit of~$M'$
preserved by the generalized type~II bypass addition~$M'\xmapsto{\eta^{-1}}M$.

If a patchable hole is preserved by a generalized bypass addition, this hole
remains patchable. Thus, $\partial r$ is a patchable hole of~$M$,
which means that the mirrors~$\mu$ and~$\mu'$ are coherent (see Definition~\ref{coherent-mirror-def}),
and by Lemma~\ref{remove-coherent-mirror-lem} the removal of~$\mu$
can be decomposed neatly into type~I elementary moves. So,
in this case, the transformation~$M\xmapsto{\zeta'}M'''$ is just the removal of~$\mu$
endowed with the morphism~$\zeta'$ defined by the identical map on
a surface obtained from~$\wideparen M$ by a patching of~$\wideparen c$.

\medskip\noindent\emph{Case 5}: $M'\xmapsto\zeta M''$ is a type~I slide move.\\
This case is treated similarly to Case~3. If the bypass being removed by the move~$M\xmapsto\eta M'$
creates an obstacle for doing the desired slide move on~$M$, the bypass is modified
by the same means.
\end{proof}

\begin{defi}\label{almost-neat-def}
Let~$M_0\xmapsto{\eta_1}M_1\xmapsto{\eta_2}\ldots\xmapsto{\eta_k}M_k$ be a sequence
of transformations of enhanced mirror diagrams, and let~$C=C_0$ be a family of boundary
circuits of~$M_0$. For each~$i=1,\ldots,k$, let~$C_i$ be the image of~$C_0$
in~$M_i$ under the transformation~$M_0\xmapsto{\eta_i\circ\eta_{i-1}\circ\ldots\circ\eta_1}M_i$.

The sequence~$M_0\xmapsto{\eta_1}M_1\xmapsto{\eta_2}\ldots\xmapsto{\eta_k}M_k$
is called \emph{an almost $C$-neat decomposition}
of the transformation~$M_0\xmapsto{\eta_k\circ\eta_{k-1}\circ\ldots,\eta_1}M_k$
if the following two condition hold:
\begin{enumerate}
\item
for any~$i=1,\ldots,k$, either~$M_{i-1}\xmapsto{\eta_i}M_i$ is a jump move or it preserves
all the boundary circuits in~$C_{i-1}$;
\item
for any~$0\leqslant i_1<i_2\leqslant k$,
whenever an essential boundary circuit~$c\in\partial M_{i_1}\setminus C_{i_1}$ and the respective
boundary circuit~$c'\in\partial M_{i_2}\setminus C_{i_2}$ have negative Thurston--Bennequin number~$\tb_+$
(respectively,~$\tb_-$), so also  have the corresponding boundary circuits in each $\partial M_j\setminus C_j$ with~$j\in[i_1,i_2]$.
\end{enumerate}

An almost $C$-neat decomposition is said to be of \emph{type~I} (respectively, of type~II) if
it includes only type~I (respectively, type~II) elementary moves and jump moves.
\end{defi}

Note that an almost $C$-neat decomposition is also $C$-delicate, provided
that~$C$ includes only essential boundary circuits, and that the moves include
only jump moves and elementary ones.

\begin{prop}\label{1st-comm-prop-for-bypass-2-prop}
Let~$M\xmapsto\eta M'$ be a generalized type~II bypass removal,
and let~$M'\xmapsto\zeta M''$ be a jump move. Let also~$C$
be the set of all boundary circuits of~$M$ preserved by the move~$M\xmapsto\eta M'$
(but not necessarily by~$M'\xmapsto\zeta M''$).
Then there exist transformations~$M\xmapsto{\zeta'}M'''$
and~$M'''\xmapsto{\eta'}M''$ such that the following holds:
\begin{enumerate}
\item
we have~$\eta'\circ\zeta'=\zeta\circ\eta$;
\item
the transformation~$M\xmapsto{\zeta'}M'''$ admits a type~I almost $C$-neat decomposition;
\item
the transformation~$M'''\xmapsto{\eta'}M''$ is a generalized type~II bypass removal;
\item
the transformation~$M\xmapsto{\zeta'}M'''$ takes~$C$ to a collection~$C'\subset\partial_{\mathrm e} M'''$
of boundary circuits such that the move~$M'''\xmapsto{\eta'}M''$ preserves them.
\end{enumerate}
\end{prop}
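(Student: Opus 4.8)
The plan is to imitate the proof of Proposition~\ref{1st-comm-prop-for-bypass-1-prop}: realize the jump move $M'\xmapsto\zeta M''$ directly on~$M$, after relocating the intermediate levels of the removed bypass out of the way, and then let $M'''\xmapsto{\eta'}M''$ be the generalized type~II bypass removal of the resulting, possibly shifted, bypass. Since $\zeta$ is a jump move — much simpler than the elementary moves treated in Proposition~\ref{1st-comm-prop-for-bypass-1-prop} — the transformation $M\xmapsto{\zeta'}M'''$ will consist \emph{entirely} of jump moves, which makes the verification of the required properties short. Write $\beta=(y_0,\nu_1,y_1,\nu_2,\ldots,\nu_k,y_k)$ for the type~II bypass removed by $M\xmapsto\eta M'$; thus $\nu_1,\ldots,\nu_k$ are $\diagup$-mirrors, the intermediate levels $y_1,\ldots,y_{k-1}$ are not occupied levels of~$M'$ (so $L_{M'}\subset L_M$), and $M$ has a $+$-negligible boundary circuit adjacent to~$\beta$. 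Let~$x$ be the occupied level of~$M'$ shifted by~$\zeta$; then $x\in L_{M'}\subset L_M$, so either $x\in\{y_0,y_k\}$ or $x$ avoids all of $y_1,\ldots,y_{k-1}$ and is not an endpoint level of~$\beta$. These are the two cases.

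First I would treat the case $x\notin\{y_0,y_k\}$. Then $\zeta$ moves only mirrors distinct from $\nu_1,\ldots,\nu_k$, and its destination level~$y$ satisfies $y\notin L_{M'}$, hence $y\ne y_0,y_k$. Applying a few preliminary jump moves to~$M$ that slightly relocate each intermediate level $y_i$ (carrying its two mirrors $\nu_i,\nu_{i+1}$) out of the sweep region of~$\zeta$ and away from~$y$, we may assume that $y$ is free in~$M$ and that the sweep region of~$\zeta$ in~$M$ is clear; then the same jump move~$\zeta$ applies verbatim to~$M$, producing~$M'''$, and $M'''\xmapsto{\eta'}M''$ is the bypass removal of the transported bypass. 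These preliminary jump moves replace~$\beta$ by a combinatorially isotopic bypass and preserve $\tb_+$ of every boundary circuit, so the hypotheses of Definition~\ref{generalized-bypass-def} continue to hold — in particular the adjacent $+$-negligible circuit stays $+$-negligible.

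The case $x=y_0$ (the case $x=y_k$ being symmetric by Convention~\ref{symmetries}, and the subcase $y_0=y_k$ requiring only that shifting the single level $x=y_0=y_k$ together with all of its mirrors transports the loop~$\beta$ to another loop) is where some care is needed. Here $\nu_1=y_0\cap y_1$ lies on~$x$, so when $\zeta$ is performed on~$M$ it must transport $\nu_1$ along with~$y_0$, and~$\beta$ becomes $(y,\nu_1',y_1,\nu_2,\ldots,\nu_k,y_k)$ with $\nu_1'=y\cap y_1$. To make this a legitimate jump move of~$M$ one chooses an interval on~$y_0$ containing all of its $M$-mirrors (including~$\nu_1$, which may require enlarging the interval used on~$M'$) and, as before, uses preliminary jump moves to empty the corresponding sweep region of~$M$ of the intermediate levels $y_1,\ldots,y_{k-1}$ and to separate~$y$ from them. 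Since a jump move preserves $\tb_+$ of every boundary circuit, the adjacent $+$-negligible circuit remains $+$-negligible, a patching disc for it is transported accordingly, and the transported bypass again satisfies Definition~\ref{generalized-bypass-def}.

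In every case $\zeta'$ is a concatenation of jump moves, so it is automatically a type~I almost $C$-neat decomposition of $M\xmapsto{\zeta'}M'''$ (jump moves are permitted in almost-neat decompositions of either type, and in the absence of elementary moves conditions~(1) and~(2) of Definition~\ref{almost-neat-def} are vacuous, respectively follow from the invariance of $\tb_\pm$ of all boundary circuits under jump moves); this yields conclusion~(2), and conclusion~(3) is the construction of $M'''\xmapsto{\eta'}M''$ above. The identity $\eta'\circ\zeta'=\zeta\circ\eta$ of conclusion~(1) is checked by tracking the induced maps on surfaces: all morphisms involved are represented by identity maps on the relevant subsurfaces of~$\wideparen M$, $\wideparen M'$, etc., exactly as prescribed in Definition~\ref{generalized-bypass-def} and in the jump-move clause of Definition~\ref{friendly-resemble-def}. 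For conclusion~(4), the circuits in~$C$ are not adjacent to~$\beta$ (since $\eta$ preserves them), and a jump move sends a circuit not adjacent to the bypass to a circuit not adjacent to the transported bypass; hence their common image $C'\subset\partial_{\mathrm e}M'''$ consists of circuits not adjacent to the final bypass and is preserved by $M'''\xmapsto{\eta'}M''$. The main obstacle throughout is the bookkeeping in the endpoint case: one must guarantee that the enlarged jump move on~$M$ transporting both the endpoint level and the adjacent bypass mirror is genuinely a jump move after clearing obstacles, and — the sole genuinely non-combinatorial point — that the $+$-negligibility of the adjacent boundary circuit, the only analytic hypothesis of Definition~\ref{generalized-bypass-def}, survives every step; the latter is precisely where the invariance of $\tb_+$ under jump moves does the work, while the obstacle-clearing is a routine adaptation of the technique already used in Proposition~\ref{1st-comm-prop-for-bypass-1-prop} and Lemma~\ref{rem-obst-lem}.
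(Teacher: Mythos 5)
Your high-level strategy — realize $\zeta$ directly on~$M$ after clearing obstacles, then remove the (possibly transported) bypass — is the paper's strategy. But your execution has a genuine gap: you assert that $\zeta'$ can be realized as a concatenation of jump moves only, and that is false in both obstacle configurations the paper identifies.

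The failure is clearest in the endpoint case~$x=y_0$ (resp.~$y_k$). When the attachment mirror~$\nu_1$ of the bypass lies on the level~$\ell_{\varphi_1}$ being shifted but \emph{outside} the interval~$(\theta_1;\theta_2)$ certifying the jump move on~$M'$, your plan is to enlarge the interval so that it also contains~$\nu_1$ and then perform the enlarged jump on~$M$. But enlarging the interval enlarges the sweep region~$(\theta_1;\theta_2)\times(\varphi_1;\varphi_2)$ past~$\theta_2$ (or~$\theta_1$), and that extra strip can contain mirrors of~$M'$ (hence of~$M$) that are not bypass mirrors at all; you have no licence to move those by jump moves, since they may be hit by circuits in~$C$ or blocked by other occupied levels. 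The paper's resolution is structurally different: it does not transport~$\nu_1$. Instead it \emph{replaces} the attachment point by a short detour using two type~I extension moves and a type~I slide move, creating new mirrors~$\nu_1',\nu_1'',\nu_1'''$ and a new short occupied level, so that the new bypass attaches to~$\ell_{\varphi_1}$ at a point~$\nu_1'$ that already lies inside~$(\theta_1;\theta_2)$; only after that is the original jump available verbatim. A similar problem arises for the second obstacle type, where some intermediate bypass mirror~$\nu_i$ sits in the sweep region: "slight relocation" of an intermediate level by a small jump need not get it out of~$(\theta_1;\theta_2)$ (other occupied levels of~$M$ block the path), and the paper instead invokes Lemma~\ref{rem-obst-lem}, whose output is a sequence of type~I split, extension, and jump moves, not of jump moves alone.

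Note that the statement you are proving already signals this: it promises a \emph{type~I almost $C$-neat} decomposition of~$\zeta'$, which by Definition~\ref{almost-neat-def} may include type~I elementary moves, not just jump moves. If jump moves sufficed, there would have been no reason to allow type~I elementary moves in the statement and, more to the point, no reason to formulate Proposition~\ref{1st-comm-prop-for-bypass-2-prop} separately from the case of a type~I elementary move covered by Proposition~\ref{1st-comm-prop-for-bypass-1-prop}; the substance of the present proposition is precisely that the extra freedom granted by the jump move~$\zeta$ (it is permitted to change~$C$) has to be paid for by admitting type~I elementary moves in~$\zeta'$. Your bookkeeping of the morphisms and the preservation of the transported~$C$ in conclusion~(4) is fine; the missing ingredient is the correct obstacle-removal mechanism, which modifies the bypass rather than the jump.
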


\begin{proof}
We proceed similarly to Case~3 of the proof of Proposition~\ref{1st-comm-prop-for-bypass-1-prop}.
The bypass removed by~$M\xmapsto\eta M'$ is denoted~$(y_0,\nu_1,y_1,\ldots,\nu_k,y_k)$.

We use the notation from Definition~\ref{mir-jump-def} substituting~$M'$ and~$M''$ for~$M$ and~$M'$,
respectively. So, the jump move~$M'\xmapsto\zeta M''$ takes the occupied level~$\ell_{\varphi_1}$
with all mirrors on it to~$\ell_{\varphi_2}$. The almost $C$-neat
decomposition of the move~$M\xmapsto{\zeta'}M'''$
should end up with a jump move that does the same thing, namely, shifts~$\ell_{\varphi_1}$ to~$\ell_{\varphi_2}$.
The preconditions for this move may not hold for~$M$, and the possible obstacles may be
only of the following two kinds:
\begin{enumerate}
\item
one of, or both of, the mirrors~$\nu_1$ and~$\nu_k$ appear in~$[\theta_2;\theta_1]\times\{\varphi_1\}$;
\item
some mirrors~$\nu_i$ appear in~$(\theta_1;\theta_2)\times(\varphi_1;\varphi_2]$.
\end{enumerate}

Obstacles of the first kind are removed by sliding `the attachment points' of the bypass along~$\ell_{\varphi_1}$
into the interval~$(\theta_1;\theta_2)\times\{\varphi_1\}$. For instance, suppose that~$\nu_1\in[\theta_2;\theta_1]\times\{\varphi_1\}$
and~$\nu_k\notin[\theta_2;\theta_0)\times\{\varphi_1\}$,
where~$\theta_0=\theta(\nu_1)$. Then for a small enough~$\varepsilon$ there are no mirrors of~$M$
in~$[\theta_2-\varepsilon;\theta_0)\times[\varphi_1-\varepsilon,\varphi_1)$, and~$m_{\theta_2-\varepsilon}$, $\ell_{\varphi_1-\varepsilon}$
are not occupied levels of~$M$.

We apply to~$M$ two type~I extension moves that add a $\diagup$-mirror~$\nu_1'$ at~$(\theta_2-\varepsilon,\varphi_1)$, and a $\diagup$-mirror~$\nu_1'''$ at~$(\theta_0,\varphi_1-\varepsilon)$, and then apply a type~I slide move
that replaces~$\nu_1$ by a $\diagup$-mirror~$\nu_1''$ at~$(\theta_2-\varepsilon,\varphi_1-\varepsilon)$. The new bypass is
$$(\ell_{\varphi_1}=y_0,\nu_1',m_{\theta_2-\varepsilon},\nu_1'',\ell_{\varphi_1-\varepsilon},\nu_1''',m_{\theta_0}=y_1,\nu_2,\ldots,\nu_k,y_k).$$

The obstacles of the second kind are again removed by means of Lemma~\ref{rem-obst-lem}. We leave the details to the reader.
\end{proof}

\subsection{Second commutation property of generalized type~II bypass removals}
The whole of this subsection is devoted to the proof of the following statement.

\begin{prop}\label{2ndcommutation-for-bypass-prop}
Let~$M$ be an enhanced mirror diagram, and let~$C$ be a collection of essential
boundary circuits of~$M$.
Let also~$M\xmapsto\eta M'$ be a generalized type~II bypass removal that preserves all the
boundary circuits in~$C$. Assume that~$M'$ is $-$-flexible relative to~$C$.

Then the move~$M\xmapsto\eta M'$
admits 
an almost $C$-neat decomposition into a sequence of elementary moves and jump moves
such that all type~I elementary moves in it occur before all type~II elementary moves.
\end{prop}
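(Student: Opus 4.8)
The plan is to reduce Proposition~\ref{2ndcommutation-for-bypass-prop} to the already-established second commutation properties of generalized type~II \emph{split} moves, namely Propositions~\ref{2ndcommutation-for-merge-non-special-prop} and~\ref{2ndcommutation-for-merge-special-prop}. The point is that a generalized type~II bypass removal does essentially the same thing as a generalized type~II split move \emph{after patching a hole}; more precisely, if $M\xmapsto\eta M'$ removes the bypass $\beta=(y_0,\nu_1,y_1,\ldots,\nu_k,y_k)$ and destroys the $+$-negligible boundary circuit $c$, then by Lemma~\ref{rectangular-representative-lem} a patching disc for $\widehat c$ can be taken in the form $\widehat\Pi=\bigcup_{r\in\Pi}\widehat r$ with $\Pi$ a collection of rectangles, and since $\tb_+(c)=-1$ the curve $\delta_+$ in a canonic dividing configuration of $\widehat\Pi$ is a single arc. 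That arc, followed from one end of $\delta_+$ to the other while recording the mirrors $\nu_1,\ldots,\nu_k$ encountered, is (up to pulling tight and the conventions of Definition~\ref{simple-route-def}) a single-headed type~II splitting route $\omega$ in $M$ with last entry among the mirrors of $\widehat\Pi$ that lie on $c$; equivalently $\omega$ terminates at a point of $c$ adjacent to where the patching disc is attached.

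First I would make this correspondence precise: given $M\xmapsto\eta M'$, produce a generalized type~II split move $M\xmapsto{\eta_s}M_s$ associated with a splitting route $\omega$ as above, together with a sequence of type~I moves realizing $M_s\mapsto M'$ and a morphism bookkeeping. Concretely, the mirrors and levels of the patching disc $\Pi$ that are \emph{not} already in $M$ get added back by type~I extension and elementary bypass addition moves (this is exactly the content of Lemma~\ref{patching-lem}, which gives a neat decomposition of a patching of a $+$-negligible circuit into type~I extensions and elementary bypass additions, and moreover lets us keep one chosen $\diagdown$-mirror uninvolved); after this patching the remaining transformation is combinatorially a generalized type~II split move undone, i.e.\ a generalized type~II merge move, which is the inverse of the split move whose second commutation property we already know. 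Here I need to choose $\omega$ so that it does not separate $C$; this is available because all the boundary circuits adjacent to $\beta$ are modified by $\eta$ and hence not in $C$, so by Proposition~\ref{non-separating-splitting-prop} (and, in the special case, the argument at the end of the proof of Lemma~\ref{relaxation-lem}) the generalized split move can be chosen to preserve $C$.

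Next I would assemble the decomposition. Write $M\xmapsto\eta M'$ as the composition of a patching $M\xmapsto{\pi} M_1$ of $c$ (neatly decomposed into type~I extensions and elementary bypass additions by Lemma~\ref{patching-lem}), followed by a generalized type~II merge move $M_1\xmapsto{\mu} M'$ — namely the inverse of a generalized type~II split move $M'\xmapsto{\mu^{-1}} M_1$ associated with a splitting route $\omega'$ in $M'$ that does not separate the image $C'$ of $C$ and such that $M_1$ is obtained by patching the appropriate hole. By Proposition~\ref{2ndcommutation-for-merge-non-special-prop} (if $\omega'$ is non-special) or Proposition~\ref{2ndcommutation-for-merge-special-prop} (if it is special; here we need $M'$ $-$-flexible relative to $C'$, which follows from the hypothesis that $M'$ is $-$-flexible relative to $C$), the split move $M'\xmapsto{\mu^{-1}}M_1$ admits a $C'$-neat decomposition into elementary moves in which all type~II moves occur before all type~I moves. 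Reversing that decomposition and using Lemma~\ref{neat-properties-lem}, the merge move $M_1\xmapsto{\mu}M'$ admits a $C'$-neat decomposition in which all type~I moves occur before all type~II moves. Then the full sequence is: (type~I moves from the patching $\pi$) $\cdot$ (type~I moves from the merge) $\cdot$ (type~II moves from the merge). The first two blocks are type~I, the last is type~II, so all type~I moves precede all type~II moves. The jump moves that inevitably show up when type~I slides or the various reorderings are decomposed (via Lemma~\ref{neutral-move-decomposition-lem} and Lemma~\ref{split-move-decomposition-lem}) are the reason the conclusion is phrased in terms of an \emph{almost} $C$-neat decomposition; I would track these through Definition~\ref{almost-neat-def}, using that generalized type~II bypass removals and, more to the point, all the intermediate type~II split/merge moves are $-$-safe, so condition~(2) of that definition is automatic.

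The main obstacle I expect is the careful matching of morphisms and of the hole data: I must check that the splitting route $\omega'$ extracted from the patching disc of $c$ really does reproduce, on the nose, the original bypass $\beta$ (so that the composite of the patching and the merge is exactly $\eta$, not merely $\eta$ up to isotopy), and that the patching $\pi$ together with the merge $\mu$ can be chosen with compatible enhancements so that $\mu\circ\pi=\eta$ as morphisms, not just as combinatorial transformations. A secondary subtlety is the special case: if $\omega'$ turns out to be special, the use of Proposition~\ref{2ndcommutation-for-merge-special-prop} requires genuine $-$-flexibility, and I must verify that the hypothesis "$M'$ is $-$-flexible relative to $C$" transfers to "$M'$ is $-$-flexible relative to $C'$" — but since the patching $\pi$ only adds inessential circuits and the essential boundary adjacent to $\omega'$ is not in $C'$, this should follow directly from Definition~\ref{flexibility-def}. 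Once these bookkeeping points are nailed down, the proposition follows by quoting Propositions~\ref{2ndcommutation-for-merge-non-special-prop} and~\ref{2ndcommutation-for-merge-special-prop}, Lemma~\ref{patching-lem}, and the neatness manipulations of Lemmas~\ref{neat-properties-lem} and~\ref{safe-lem}.
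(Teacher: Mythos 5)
Your core intuition — that a generalized type~II bypass removal ought to be reducible to generalized type~II split/merge moves modulo type~I moves — is reasonable and is indeed consistent with the discussion in Subsection~\ref{type-ii-moves-for-divided-graph-subsec}, where both kinds of moves are shown to act identically on the relevant divided surfaces. But the specific factoring you build on is not correct, and the step you flagged as a ``main obstacle'' is in fact a genuine gap: if $M\xmapsto{\pi}M_1$ is the full patching of the $+$-negligible circuit $c$ by a patching disc $\Pi$ (so $M_1=M\cup M(\Pi)$), the remaining transformation $M_1\to M'$ is \emph{not} a single generalized type~II merge move. A generalized type~II split associated with a single-headed route of length $l$ adds exactly $l$ occupied levels and $l$ mirrors; a double-headed route of length $l$ adds $l-1$ levels and $l$ mirrors; and the special case adds $4$ levels and $5$ mirrors (extension $+$ double split $+$ two more splits). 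By contrast $M_1\to M'$ removes the $\nu_j$'s, the interior $\diagdown$-mirrors $\mu_1,\dots,\mu_{k-1}$ of $M(\Pi)$, whichever $\lambda_j$ were added by the patching, and the levels $y_1,\dots,y_{k-1}$ together with whatever new levels the patching introduced. Even in the paper's crucial base case $k=2$, $\lambda_1=\lambda_2\notin E_{M'}$, the patching adds $\mu_1,\lambda_1$ and one new level, so $M_1\to M'$ removes $4$ mirrors and $2$ levels, a pattern matching no generalized type~II merge; and already for $k=1$ the remainder is a type~II elementary bypass removal, which is not a merge move at all. So the reduction to Propositions~\ref{2ndcommutation-for-merge-non-special-prop} and~\ref{2ndcommutation-for-merge-special-prop} does not get off the ground.

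The paper's proof proceeds differently: it runs an induction directly on the complexity of the patching disc $\Pi$ (the number of rectangles, plus the number of mirrors of $M'$ hit by $c$, plus the number of occupied levels of $M(\Pi)$ not in $L_M$), with simplification lemmas that reduce to two base cases, namely $k=1$ (a type~II elementary bypass removal, trivially of the required form) and $k=2$ with $\lambda_1=\lambda_2\notin E_{M'}$. It is only this last case that actually uses the $-$-flexibility of $M'$: the decomposition into a double merge, a type~I elimination and a type~II split (Figure~\ref{special-bypass-fig}) cannot always be made to preserve $C$, and $-$-flexibility together with Lemma~\ref{flexibility-lem} lets one interpose a generalized wrinkle creation move to cure this. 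You anticipate both the role of $-$-flexibility and the delicacy of the morphism bookkeeping, but these are secondary to the false factoring; without a correct way to split off the type~I part first, the rest of your argument does not apply.
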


\begin{proof}
Let~$\beta=(y_0,\nu_1,y_1,\ldots,\nu_k,y_k)$ be the bypass removed by the move~$M\xmapsto\eta M'$.
Denote by~$c$ the $+$-negligible circuit of~$M$ adjacent to~$\beta$
(there can be two such boundary circuits, in which case take any of them),
and by~$c'$ the other boundary circuit of~$M$ adjacent to~$\beta$.

As noted above there exists a patching disc for~$\widehat c$ having
the form~$\bigcup_{r\in\Pi}\widehat r$, where~$\Pi$ is a collection of rectangles.
For brevity we will refer to~$\Pi$ as a patching disc.
Though~$\Pi$ is not necessarily a rectangular diagram of a surface (topologically~$\widehat\Pi$
can be a surface with finitely many identifications between boundary points),
the associated mirror diagram~$M(\Pi)$ (see Definition~\ref{ass-mir-diagr-def})
still has perfect sense for~$\Pi$.

\emph{The complexity} of the patching disc~$\Pi$ is defined as the sum~$n_1+n_2+n_3$, where~$n_1$
is the number of rectangles in~$\Pi$, $n_2$ is the number of mirrors in~$E_{M'}$ hit by~$c$,
and~$n_3$ is the number of occupied levels of~$M(\Pi)$ that do not belong to~$L_M$.

We proceed by induction in the complexity of the patching disc.

We say that the patching disc~$\Pi$ can be \emph{simplified} if there are transformations~$M\xmapsto\zeta M_1$
and~$M'\xmapsto{\zeta'}M_1'$ with the following properties:
\begin{enumerate}
\item
$M\xmapsto\zeta M_1$ (respectively, $M'\xmapsto{\zeta'}M_1'$)
admits
a type~I (respectively, type~II) almost $C$-neat decomposition;
\item
the transformation~$M_1\xmapsto{\eta_1}M_1'$, where~$\eta_1=\zeta'\circ\eta\circ\zeta^{-1}$
is a generalized type~II bypass removal;
\item
if~$a\in C$, and~$M\xmapsto\zeta M_1$ transforms~$a$ to~$a_1$, then
the move~$M_1\xmapsto{\eta_1}M_1'$ preserves~$a_1$;
\item
there is a patching disc~$\Pi'$ associated with~$M_1\xmapsto{\eta_1}M_1'$
that has lower complexity than~$\Pi$ has.
\end{enumerate}

Clearly, if such a simplification exists and the assertion of the proposition holds for the move~$M_1\xmapsto{\eta_1}M_1'$,
then it also holds for the original move~$M\xmapsto\eta M'$.

For~$j=1,2,\ldots,k-1$ we denote by~$I_j$ the one of the two subintervals~$(\nu_j;\nu_{j+1}),(\nu_{j+1};\nu_j)$ of~$y_j$ that is
contained in~$c$. By~$I_0$ (respectively,~$I_k$) we denote the minimal subinterval of~$y_0$
(respectively, $y_k$) contained in~$c$ that either has the form~$(\mu;\nu_1)$ or~$(\nu_1;\mu)$
(respectively, $(\mu;\nu_k)$ or~$(\nu_k;\mu)$) with~$\mu\in E_M$.
Denote also by~$\mu'$ (respectively, $\mu''$) the mirror of~$M$ such that $I_0$ is either~$(\mu';\nu_1)$
or~$(\nu_1;\mu')$ (respectively, $I_k$ is either~$(\mu'';\nu_k)$ or~$(\nu_k;\mu'')$).

\begin{lemm}
If~$\bigcup_{j=0}^kI_j\cup\{\mu',\mu''\}$ contains a $\diagup$-mirror of~$M(\Pi)$,
then~$\Pi$ can be simplified.
\end{lemm}

\begin{proof}
Suppose there is a $\diagup$-mirror $\lambda$ of~$M(\Pi)$ in~$I_j$, $j\in\{1,\ldots,k-1\}$.
Denote by~$x$ the occupied level of~$M(\Pi)$ perpendicular to~$y_j$ and
passing through~$\lambda$. We claim that~$x\notin L_M$.
Indeed, if~$x\in L_M$, then
the addition of~$\lambda$ to~$M$ would split the boundary circuit~$c$
into two patchable boundary circuits~$c_1$, $c_2$, say, such that~$\tb_+(c_1)+\tb_+(c_2)=-1$,
which is impossible.

Pick a point~$p$ in $y_j\setminus\overline I_j$. Let~$M\xmapsto\zeta M_1$
be the composition of the type~I extension move that adds the $\diagup$-mirror~$\lambda$ together
with the occupied level~$x$,
and a type~I split move associated with~$(\lambda,p)$. The mirrors~$\nu_j$, $\nu_{j+1}$
have unique successors in~$M_1$, which we denote by~$\nu_j'$ and~$\nu_{j+1}'$, respectively.
The occupied level~$y_j$ will have two successors, which we denote by~$y_j'$ and~$y_j''$
so as to have~$\nu_j'\in y_j'$, $\nu_{j+1}'\in y_j''$. The mirror~$\lambda$ will also have
two successors, $\lambda'\in y_j'$ and~$\lambda''\in y_j''$.

Now let~$\beta'$ be the sequence obtained from~$\beta$ by replacing
the subsequence~$(\nu_j,y_j,\nu_{j+1})$ with
\begin{equation}\label{bypass-replacement-eq}
(\nu_j',y_j',\lambda',x,\lambda'',y_j'',\nu_{j+1}').
\end{equation}
We put~$M_1'=M'$ and~$\zeta'=\mathrm{id}$. One can see that~$M_1\xmapsto{\eta\circ\zeta^{-1}}M_1'$
is a generalized bypass removal, and the respective bypass is~$\beta'$.

One can also see that a patching disc~$\Pi'$ associated with~$M_1\xmapsto{\eta\circ\zeta^{-1}}M_1'$
can be obtained from~$\Pi$ by a small perturbation of rectangles having one of the sides on~$y_j$
(see the second move in Figure~\ref{two-more-moves-fig} for an example of what happens with~$\Pi$).

In the passage from~$\Pi$ to~$\Pi'$, the
numbers~$n_1$ and~$n_2$ in the definition of the complexity of~$\Pi$ are unchanged, whereas~$n_3$ drops by~$1$, since~$x$ is now
an occupied level of~$M_1$. Hence, a simplification of the patching disc occurs.

In the case when~$\lambda\in I_0$ or~$\lambda\in I_k$ we proceed as above
substituting~$\mu'$ for~$\nu_0$ if~$j=0$, and~$\mu''$ for~$\nu_{k+1}$ if~$j=k$.
If~$j=0$ (respectively,~$j=k$) we also choose~$p$ close to~$\nu_1$ (respectively, to~$\nu_k$), and
define the move~$M\xmapsto\zeta M_1$ so as to have~$y_0'=y_0$ (respectively, $y_k''=y_k$). We also drop the first two (respectively, the last two) entries in~\eqref{bypass-replacement-eq}.

If~$\mu'$ is a $\diagup$-mirror, we let~$M\xmapsto\zeta M_1$ be a type~I
split move associated with the splitting route~$(\mu',p)$, where~$p\in y_0\setminus\overline I_0$
is close to~$\nu_1$, such that the successor of~$y_0$ that does not
contain the successor of~$\nu_1$ coincides with~$y_0$.

We again put~$M_1'=M'$ and~$\zeta'=\mathrm{id}$, and
have that a patching disc associated with~$M_1\xmapsto{\eta\circ\zeta^{-1}}M_1'$
is obtained from~$\Pi$ by a small perturbation of rectangles
having a side on~$y_0$.

In this case the numbers~$n_1$, $n_3$ are preserved, and~$n_2$ drops by~$1$, since~$c$ transforms
to a boundary circuit that does not hit~$\mu'$.

The case when~$\mu''$ is a $\diagup$-mirror is similar.
\end{proof}

So, we assume in the sequel that the mirrors~$\mu'$ and~$\mu''$ of~$M$ are of type~`$\diagdown$'.
Since~$\tb_+(c)=-1$, these are the only $\diagdown$-mirrors hit by~$c$.
We also assume that there are no $\diagup$-mirrors of~$M(\Pi)$ in~$\bigcup_{j=0}^kI_j$.
This implies that~$I_0$ and~$I_k$ contain no mirror of~$M(\Pi)$,
and each~$I_j$ with~$j\in\{1,\ldots,k-1\}$ contains exactly one $\diagdown$-mirror,
which we denote by~$\mu_j$. We also put~$\mu_0=\mu'$ and~$\mu_k=\mu''$.

For every~$j\in\{1,\ldots,k\}$ we have a unique rectangle in~$\Pi$ with a vertex at~$\nu_j$.
Denote this rectangle by~$r_j$. It follows from the assumptions we have just made that~$\mu_{j-1}$
and~$\mu_j$ are also vertices of~$r_j$. Denote the remaining, the fourth vertex of~$r_j$ by~$\lambda_j$.
It is opposite to~$\nu_j$ in~$r_j$, and it is a $\diagup$-mirror of~$M(\Pi)$. It may or may not be a mirror of~$M$,
and if it is, then its type in~$M$ is also~`$\diagup$'.

Let~$(\delta_+,\delta_-)$ be a canonic dividing configuration of~$\widehat\Pi$, and
let~$d=\widehat\Pi\setminus\widehat c$. By construction, $d$ is an open disc.
Since~$\tb_+(c)=-1$, the intersection~$d\cap\delta_+$ consists of a single open arc
that approaches the midpoints of~$\widehat\mu'$ and~$\widehat\mu''$ at the ends.
This implies that there are no rectangles in~$\Pi$ except~$r_1,\ldots,r_k$,
as otherwise we would have another connected component of~$\delta_+$.

\begin{lemm}\label{bypass-simplification-lem-2}
If~$k>2$ and, for some~$j\in\{1,\ldots,k-1\}$, we have~$\lambda_j=\lambda_{j+1}{}\not\in E_{M'}$, then~$\Pi$
can be simplified.
\end{lemm}

\begin{proof}
Suppose that~$\lambda_j=\lambda_{j+1}{}\not\in E_{M'}$, $1\leqslant j\leqslant k-1$, $k>2$. Without loss
of generality we may assume that the rectangles~$r_j$ and~$r_{j+1}$ have the form~$[\theta_1;\theta_2]\times[\varphi_1;\varphi_2]$
and~$[\theta_2;\theta_3]\times[\varphi_2;\varphi_1]$ for some~$\theta_1,\theta_2,\theta_3,\varphi_1,\varphi_2\in\mathbb S^1$.
Indeed, the other cases are obtained from this one by applying the symmetries~$r_\diagup$, $r_\diagdown$,
and~$r_\diagup\circ r_\diagdown$ (see Subsection~\ref{conventions-subsec} for notation).
\begin{figure}[ht]
\includegraphics{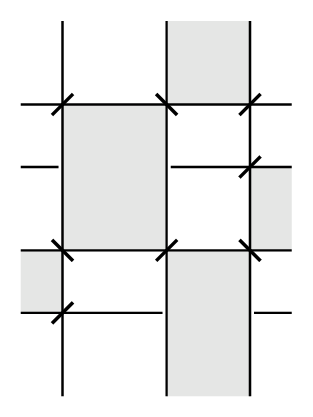}\put(-157,196){$y_{j-1}=m_{\theta_1}$}\put(-75,196){$m_{\theta_2}$}\put(-34,196){$m_{\theta_3}=y_{j+1}$}%
\put(-162,48){$y_{j-2}$}\put(-156,78){$\ell_{\varphi_1}$}\put(-178,148){$y_j=\ell_{\varphi_2}$}\put(-162,118){$y_{j+2}$}%
\put(-140,40){$\nu_{j-1}$}\put(-115,71){$\mu_{j-1}$}\put(-66,89){$\lambda_j$}\put(-66,140){$\mu_j$}\put(-115,158){$\nu_j$}%
\put(-25,158){$\nu_{j+1}$}\put(-25,128){$\nu_{j+2}$}\put(-25,71){$\mu_{j+1}$}\put(-98,113){$r_j$}\put(-57,40){$r_{j+1}$}%
\put(-140,62){$r_{j-1}$}\put(-28,100){$r_{j+2}$}
\put(-90,-3){$M(\Pi)$}
\hskip3cm
\includegraphics{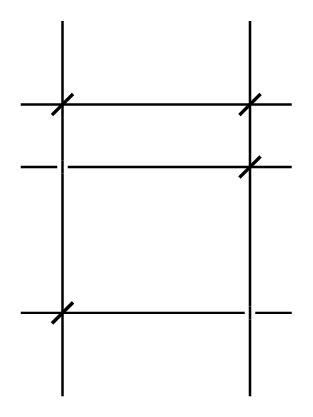}\put(-157,196){$y_{j-1}=m_{\theta_1}$}\put(-34,196){$m_{\theta_3}=y_{j+1}$}%
\put(-162,48){$y_{j-2}$}\put(-178,148){$y_j=\ell_{\varphi_2}$}\put(-162,118){$y_{j+2}$}%
\put(-140,40){$\nu_{j-1}$}\put(-115,158){$\nu_j$}%
\put(-25,158){$\nu_{j+1}$}\put(-25,128){$\nu_{j+2}$}
\put(-80,-3){$M$}
\caption{The mutual position of~$r_j$ and~$r_{j+1}$ in the case~$\lambda_j=\lambda_{j+1}{}\notin E_{M'}$}\label{bypass-wrinkle-fig}
\end{figure}

Our constructions are also invariant under the substitution~$\nu_j\mapsto\nu_{k+1-j}$, $\lambda_j\mapsto\lambda_{k+1-j}$,
$\mu_j\mapsto\mu_{k-j}$, $y_j\mapsto y_{k-j}$. So, we may also safely assume that~$j\leqslant k-2$.
Shown in Figure~\ref{bypass-wrinkle-fig} is the case when we also have~$j>1$, though we don't assume this in the sequel.
If~$j=1$, one should remove~$y_{j-2}$, $\nu_{j-1}$, and~$r_{j-1}$ from the pictures in Figure~\ref{bypass-wrinkle-fig},
and add~$\mu_0$ and~$\ell_{\varphi_1}$ to the right picture.

By construction, there are no mirrors of~$M$ inside~$r_j$ and~$r_{j+1}$ and in~$[\nu_j;\nu_{j+1}]\cup[\mu_{j-1};\mu_{j+1}]$.
Pick an~$\varepsilon>0$ such that there are no occupied levels of~$M$ in
the domains~$\Omega_1=(\theta_1-\varepsilon;\theta_1)\times\mathbb S^1$ and~$\Omega_2=(\theta_3;\theta_3+\varepsilon)\times\mathbb S^1$.
By means of jump moves applied to the diagrams~$M$ and~$M'$
we can shift all their occupied meridians intersecting~$r_j$ into~$\Omega_2$,
and all occupied meridians intersecting~$r_{j+1}$ into~$\Omega_1$. Let~$M\xmapsto{\zeta_1}M_2$
and~$M'\xmapsto{\zeta'}M_1'$ be the obtained transformations.
The transformation~$M_2\xmapsto{\zeta'\circ\eta\circ\zeta_1^{-1}}M_1'$ is still a
generalized type~II bypass removal, and there is an associated patching disc
having the same complexity as~$\Pi$ has. This patching disc is produced from~$\Pi$
by two or less exchange moves.

Now observe that~$m_{\theta_2}$ is not an occupied level
of~$M$, and hence not an occupied
level of~$M_2$. Therefore, $M_2$ has no mirrors in~$(\theta_1;\theta_3)\times\mathbb S^1$,
and there is no obstruction to merging the meridians~$m_{\theta_1}=y_{j-1}$ and~$m_{\theta_3}=y_{j+1}$,
and simultaneously applying the respective wrinkle reduction move to the patching disc. Since we assumed~$j\leqslant k-2$,
the meridian~$y_{j+1}$ does not belong to~$L_{M'}$ (which is not the case for the meridian~$y_{j-1}$ when~$j=1$).
We define~$M_2\xmapsto{\zeta_2}M_1$ to be the composition of a type~I merge move that
merges~$y_{j-1}$ and~$y_{j+1}$ so that the successor of both meridians is~$y_{j-1}$,
and a type~I elimination move that removes the common successor of~$\nu_j$ and~$\nu_{j+1}$.
In other words, the transformation~$M_2\xmapsto{\zeta_2}M_1$ replaces~$\nu_j$, $\nu_{j+1}$, and~$\nu_{j+2}$
by a single $\diagup$-mirror at~$y_{j-1}\cap y_{j+2}$, and removes~$y_j$ and~$y_{j+1}$.

Now we put~$\zeta=\zeta_2\circ\zeta_1$ and note that the transformation~$M_1\xmapsto{\eta'}M_1'$,
where~$\eta'={}\zeta'\circ\eta\circ\zeta^{-1}$, is a generalized bypass removal with the corresponding bypass being
obtained from~$\beta$ by replacing the subsequence~$(\nu_j,y_j,\nu_{j+1},y_{j+1},\nu_{j+2})$
with a single entry~$\nu_j'=y_{j-1}\cap y_{j+2}$.

The new patching disc will have two rectangles less than~$\Pi$ has, whereas~$n_2$ and~$n_3$
in the definition of the complexity are unchanged. So, a simplification of the patching disc occurs.\end{proof}

\begin{lemm}\label{simplification-lem-3}
If either~$\lambda_j\ne\lambda_{j+1}$ or~$\lambda_j=\lambda_{j+1}\in E_{M'}$ holds
for all~$j\in\{1,\ldots,k-1\}$, then~$\lambda_j\in E_{M'}$ for all~$j\in\{1,\ldots,k\}$.
\end{lemm}

\begin{proof}
If~$\lambda_j\notin E_{M'}$, then~$\lambda_j$ coincides with some~$\lambda_i$, $i\ne j$. This means that
a connected component of~$\delta_-$ is an arc~$\alpha$ connecting the midpoints of~$\widehat\nu_i$ and~$\widehat\nu_j$.
The arcs~$\alpha$ and~$\interior(\delta_+)$ have two intersection points, hence there is a subdisc~$d'\subset d$
bounded by a subarc of~$\alpha$ and a subarc of~$\delta_+$. Such a disc must contain a bigon of~$\delta_+$ and~$\delta_-$,
which implies that for some~$l\leqslant k-1$ we have~$\lambda_l=\lambda_{l+1}{}\notin E_{M'}$. The claim follows.\end{proof}

\begin{lemm}
If~$\lambda_1\in E_{M'}$ and~$k>1$, then~$\Pi$ can be simplified.
\end{lemm}

\begin{proof}
If~$\lambda_1\in E_{M'}$ and~$k>1$, the addition of~$\mu_1$ to~$M$
is a type~I elementary bypass addition. Indeed, the three vertices~$\nu_1$, $\mu_0$, $\lambda_1$
of~$r_1$ belong to~$E_M$, whereas the forth, $\mu_1$ does not. We let~$M\xmapsto\zeta M_1$
be this addition, and let~$M'\xmapsto{\zeta'}M_1'$ be the addition of a bridge that
adds~$\mu_1$, $\nu_1$, and~$y_1$ to~$M'$. It can be neatly decomposed into type~II
elementary moves by Lemma~\ref{neutral-move-decomposition-lem}.

One can see that~$M_1\xmapsto{\zeta'\circ\eta\circ\zeta^{-1}}M_1'$
is a generalized bypass removal, and the corresponding bypass is~$(y_1,\nu_2,y_2,\ldots,\nu_k,y_k)$.
The collection~$\{r_j\}_{j=2}^k$ can be taken for a patching disc, which is simpler than~$\Pi$.
\end{proof}

Thus, the only cases in which we have not established a possibility to simplify~$\Pi$
are the following two:
\begin{enumerate}
\item
$k=1$, $\lambda_1\in E_{M'}$;
\item
$k=2$, $\lambda_1=\lambda_2\notin E_{M'}$.
\end{enumerate}

In the first case, the move~$M\xmapsto\eta M'$ is a type~II elementary bypass removal, hence
the assertion of the proposition holds trivially.

To treat the second case,
we revisit the proof of Lemma~\ref{bypass-simplification-lem-2}, where we put~$j=1$.
We use the same notation and make the same assumption about the positions of~$r_1$ and~$r_2$.

The elements~$\nu_{j-1}$, $\nu_{j+2}$, $y_{j-2}$, $y_{j+2}$, $r_{j-1}$, $r_{j+2}$ now do not exist,
whereas~$\mu_0=\mu'$, $\mu_2=\mu''$ are~$\diagdown$-mirrors of~$M$ and~$M'$,
and~$\ell_{\varphi_1}$ is an occupied level of both. By using the same family of jump
moves as in the proof of Lemma~\ref{bypass-simplification-lem-2} we can reduce the general
case to the one when no meridian in~$(\theta_1;\theta_2)\times\mathbb S^1$
is an occupied level of~$M$.

Thus, it remains to consider the situation shown in Figure~\ref{special-bypass-fig} on the left
(the gray boxes again stand for families of mirrors of the diagram, not rectangles
of~$\Pi$).
\begin{figure}[ht]
\includegraphics[scale=.8]{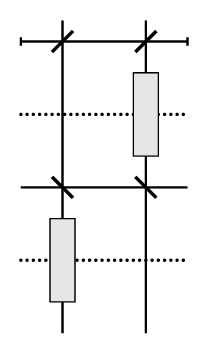}\put(-60,133){$y_0$}\put(-28,133){$y_2$}\put(-82,120){$y_1$}%
\put(-52,56){$\mu_0$}\put(-20,56){$\mu_2$}\put(-67,111.5){$\nu_1$}\put(-35,111.5){$\nu_2$}%
\put(-45,-3){$M$}
\hskip.3cm\raisebox{70pt}{$\longrightarrow$}\hskip.3cm
\includegraphics[scale=.8]{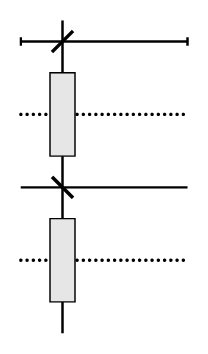}
\hskip.3cm\raisebox{70pt}{$\longrightarrow$}\hskip.3cm
\includegraphics[scale=.8]{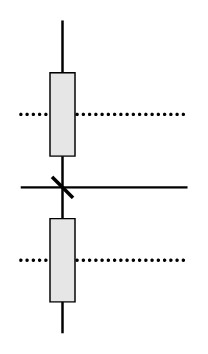}
\hskip.3cm\raisebox{70pt}{$\longrightarrow$}\hskip.3cm
\includegraphics[scale=.8]{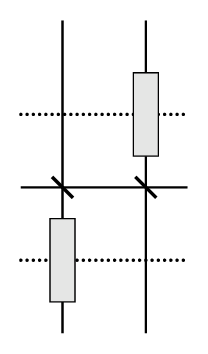}\put(-45,-3){$M'$}
\caption{Decomposition of a generalized bypass removal into type~I moves followed by type~II moves
in the case~$k=2$, $\lambda_1=\lambda_2{}\notin E_{M'}$}\label{special-bypass-fig}
\end{figure}

The diagram~$M'$ can be obtained from~$M$ in three steps illustrated in Figure~\ref{special-bypass-fig}:
\begin{enumerate}
\item
a double merge move that merges~$y_0$ with~$y_2$;
\item
a type~I elimination move that removes~$y_1$ with the remaining mirror on it;
\item
a type~II split move that recovers~$y_0$ and~$y_1$ with all mirrors on them except for~$\nu_1$ and~$\nu_2$.
\end{enumerate}
The double merge move should then be neatly decomposed into type~I elementary moves, and the type~II split move
should be neatly decomposed into type~II elementary moves, which is possible due to
Lemmas~\ref{neutral-move-decomposition-lem} and~\ref{split-move-decomposition-lem}.

These decompositions can be chosen to preserve either all boundary circuits visiting~$y_0$ and not hitting~$\nu_1$,
or all boundary circuits visiting~$y_2$ and not hitting~$\nu_2$. If no mirror on either~$y_0$ or~$y_2$
is hit by a boundary circuit in~$C$, we are done.

It is possible, however, that both~$y_0$ and~$y_2$ contain mirrors hit by boundary circuits in~$C$, in which
case we would get stuck without the $-$-flexibility of~$M'$. However, $-$-flexibility allows us to avoid the difficulty.

Suppose that there is a single-headed type~I splitting route~$\omega$ in~$M'$ such that~$\omega$ does not separate~$C$
and has the following property: no connected component of~$\wideparen y_2\setminus\wideparen\omega$
has a non-empty intersection with both~$\wideparen \nu_2$ and~$\bigcup_{a\in C}\wideparen a$.
By a small perturbation of the snip point we can ensure
that~$\omega$ is also suitable as a type~I splitting route in~$M$.

Let~$M\xmapsto\zeta M_1$ be a generalized wrinkle creation move associated with~$\omega$, and preserving all boundary circuits in~$C$. The mirrors~$\nu_1$, $\nu_2$, and the occupied level~$y_1$
have unique successors in~$M_1$, which we denote by~$\nu_1'$, $\nu_2'$, and~$y_1'$, respectively.
Let~$M_1'$ be obtained from~$M_1$ by removing these three elements (the occupied level~$y_1'$ contains
no other mirrors). One can see that~$M_1\mapsto M_1'$ is a generalized type~II bypass removal,
and~$M'\mapsto M_1'$ is a generalized wrinkle creation move associated with~$\omega$.
Moreover, if~$\eta_1:\widehat M_1\rightarrow\widehat M_1'$ and~$\zeta':\widehat M'\rightarrow\widehat M_1'$
are the associated morphisms, then~$\eta_1\circ\zeta=\zeta'\circ\eta$.

Due to the properties of~$\omega$ the successor of~$y_2$ in~$M_1$ that contains~$\nu_2'$
has an empty intersection with~$\bigcup_{a\in C}a$.
This means that the generalized bypass removal~$M_1\xmapsto{\eta_1}M_1'$
admits a decomposition illustrated in Figure~\ref{special-bypass-fig} without
disturbing any boundary circuits in~$C$. Thus, it remains to show how to find~$\omega$
with required properties.

Denote, as usually, by~$c\#c'$ the unique boundary circuit in~$\partial M'\setminus\partial M$.
If~$c\#c'$ hits a $\diagup$-mirror, the sought-for~$\omega$ can be found by starting from such a mirror and following~$c\#c'$.
More precisely, let~$\kappa_1,\kappa_2,\kappa_3,\ldots,\kappa_l$ be all mirrors hit by~$c\#c'$ numbered
in the order they follow on~$c\#c'$, so that~$\kappa_1=\mu_0$ and~$\kappa_l=\mu_2$. Let~$j$ be the maximal
index such that~$\kappa_j$ is a $\diagup$-mirror. We can take for~$\omega$ the sequence~$(\kappa_j,\kappa_{j+1},\ldots,\kappa_l,p)$,
where~$p$ is picked on the segment of~$c\#c'$ connecting~$\mu_2$ and~$\mu_0$.

If~$c\#c'$ does not hit any $\diagup$-mirror, we pick any single-headed splitting route in which the last mirror
appears on~$c\#c'$ (this exists by Lemma~\ref{flexibility-lem}). Then we prolong this splitting route along~$c\#c'$
until the respective splitting path cuts~$\wideparen y_2$ as requested.

This completes the proof of Proposition~\ref{2ndcommutation-for-bypass-prop}.\end{proof}

\subsection{Proof of the commutation theorems}
Theorem~\ref{warm-up-thm} is a particular case of Theorem~\ref{commutation-2-thm}, so it suffices to prove the latter.

A little more preparation is in order.

\begin{defi}
Let~$M$ be an enhanced mirror diagram, and let~$C$ be a collection of essential boundary
circuits of~$M$. The diagram~$M$ is said to be \emph{very $+$-flexible} (respectively,
\emph{very $-$-flexible}) \emph{relative to~$C$} if, for any~$c\in\partial M\setminus C$ we have~$\tb_+(c)<0$
(respectively, $\tb_-(c)<0$). If~$M$ is both very $+$-flexible and very $-$-flexible relative to~$C$,
we say that it is \emph{very flexible relative to~$C$}.
\end{defi}

Clearly `very ($\pm$)-flexible' implies `($\pm$)-flexible'.

We call a connected component~$M_0$ of an enhanced mirror diagram~$M$ \emph{spherical}
if the corresponding connected component of~$\wideparen M_0$ is a sphere with holes,
and all elements of~$\partial M_0$ are inessential boundary circuits of~$M$.
A connected component~$M_0$ of~$M$ is called \emph{a disc component} if
the corresponding connected component of~$\wideparen M_0$ is a sphere with holes,
and~$\partial M_0$ contains exactly one essential boundary circuit of~$M$.

\begin{lemm}\label{almost-c-neat=>flex-lem}
Let $M_0\xmapsto{\eta_1}M_1\xmapsto{\eta_2}\ldots\xmapsto{\eta_n}M_n$ be an almost $C_0$-neat
decomposition of a transformation~$M_0\xmapsto\eta M_n$ of enhanced mirror diagrams
into a sequence of moves of any kind introduced earlier in this paper,
and let~$C_0\mapsto C_1\mapsto\ldots\mapsto C_n$ be the induced
transformations of collections of boundary circuits.

Assume that~$M_0$ (equivalently: $M_n$) has no spherical components.
Assume also that~$M_0$ is $+$-flexible relative to~$C_0$, and~$M_n$ is very $+$-flexible relative to~$C_n$.
Then all diagrams~$M_1,M_2,\ldots,M_{n-1}$ are $+$-flexible relative to the respective~$C_i$'s, too.
\end{lemm}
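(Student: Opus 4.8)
The plan is to argue by a ``propagation of flexibility'' along the sequence, exploiting the fact that an almost $C$-neat decomposition preserves negativity of $\tb_+$ of essential boundary circuits across all intermediate stages (Condition~(2) of Definition~\ref{almost-neat-def}). Fix an index $i\in\{1,\ldots,n-1\}$ and a boundary circuit $c\in\partial M_i\setminus C_i$; I must produce a chain $c_1,c_2,\ldots,c_m=c$ of circuits in $\partial M_i\setminus C_i$ with $\tb_+(c_1)<0$ in which consecutive members share a mirror. The idea is to trace $c$ forward to the final diagram $M_n$ and backward to the initial diagram $M_0$, using $+$-flexibility at whichever end is convenient, and then transport the flexibility chain back to stage $i$. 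The one subtlety is that a single boundary circuit at stage $i$ may correspond to several circuits at an earlier or later stage (jump moves split nothing, but split/merge, bypass, wrinkle, double split moves do), and some circuits are created or destroyed by the moves; so ``corresponds to'' has to be handled via the relation ``$c$ is transformed to $c'$'' of the preliminary-remarks subsection and via adjacency.

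First I would record the elementary observation that each move in the sequence changes the adjacency graph of boundary circuits in a controlled way: a jump move preserves it combinatorially; the other moves either merge two adjacent circuits, split one into two adjacent circuits, or create/destroy a circuit that is adjacent (through a shared mirror) to a circuit present both before and after. Hence if $c'\in\partial M_j\setminus C_j$ is $+$-flexible relative to $C_j$ (i.e. connected through non-$C$ circuits to one with $\tb_+<0$), then after applying $M_j\xmapsto{\eta_{j+1}}M_{j+1}$ every circuit in $\partial M_{j+1}\setminus C_{j+1}$ that is adjacent to, or a successor of, $c'$ is again connected through non-$C$ circuits to one with $\tb_+<0$; here one uses Condition~(2) of Definition~\ref{almost-neat-def} to guarantee that the circuit with negative $\tb_+$ one lands on (or its successor) still has negative $\tb_+$ at stage $j+1$, and one uses the absence of spherical components to rule out the degenerate situation of a ``floating'' sphere with only inessential holes appearing, which would break the chain. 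This gives a one-step propagation lemma for $+$-flexibility both forward and backward along a single move.

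Next I would run this propagation. Starting from $M_0$, which is $+$-flexible relative to $C_0$ by hypothesis, the one-step lemma shows $M_1$ is $+$-flexible relative to $C_1$ at every circuit that is a successor of some circuit of $M_0$. The only circuits of $M_1\setminus C_1$ not covered this way are those \emph{newly created} by $M_0\xmapsto{\eta_1}M_1$ (the new $+$-negligible holes of bypass/wrinkle/double-split moves, the new inessential rectangle-boundary of an extension or elementary bypass addition, etc.); but each of these is, by inspection of the move definitions, adjacent to a circuit present in both $M_0$ and $M_1$, and, crucially, every such newly created essential circuit has $\tb_+<0$ automatically (a new hole created by these moves is $+$-negligible, so $\tb_+=-1<0$, while a new essential circuit from an extension move on a circuit with $\tb_+<0$ again has $\tb_+<0$). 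Iterating, $M_i$ is $+$-flexible relative to $C_i$ for all $i$. The very-$+$-flexibility of $M_n$ is not strictly needed for the forward pass but gives the cleanest base-of-induction bookkeeping in the backward direction and in the two-sided argument, which is why it is assumed; I would use it to close off the last diagram and then sweep backward, handling symmetrically the circuits of $M_i\setminus C_i$ that are killed (rather than created) as one moves from stage $i$ to stage $i+1$.

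The main obstacle I anticipate is the careful case analysis verifying the one-step propagation lemma for \emph{every} move type appearing in an ``almost $C$-neat decomposition'' — elementary moves of both types, jump moves, and (since the statement says ``moves of any kind introduced earlier'') potentially bridge, twist, wrinkle creation/reduction, double split/merge, split/merge, generalized split/merge, generalized wrinkle, and generalized bypass moves. For each one must check: (a) which boundary circuits are created, destroyed, merged, split; (b) that a created essential circuit has $\tb_+<0$; (c) that a destroyed/created circuit is adjacent to a surviving one; and (d) that Condition~(2) of Definition~\ref{almost-neat-def} lets one keep a witness circuit with $\tb_+<0$ alive between any two stages. Point (b) is where the hypothesis ``no spherical components'' and the structure of $+$-negligible holes really enter, and point (d) is where the almost-neatness (as opposed to mere $C$-delicacy) is essential; I expect these two points, uniformly across the move list, to be the bulk of the work, while the global induction itself is routine once the one-step lemma is in hand.
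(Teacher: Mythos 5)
Your high-level strategy --- propagate $+$-flexibility along the sequence via adjacency of boundary circuits together with Condition~(2) of Definition~\ref{almost-neat-def} --- is the right one, and it is essentially what the paper does. But your ``one-step propagation lemma'' is false as you state it, and you explicitly disclaim the hypothesis that makes it true. First, you should begin (as the paper does) by replacing each compound move with its neat decomposition into elementary and jump moves; this is legitimate because neat decompositions preserve Condition~(2), and it collapses your anticipated case-by-case sweep to a single observation: after the reduction, every move is $+$-safe-to-bring-forward except type~II eliminations and type~II elementary bypass additions. Bypass additions (of either type) create a $+$-negligible circuit $\partial r$ adjacent to the modified one, so they trivially preserve $+$-flexibility. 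The sole genuine obstruction is a type~II elimination, which sends the modified circuit $c_0$ to $c_1$ with $\tb_+(c_1)=\tb_+(c_0)+1$; if $c_0$ was the lone witness in its component and $\tb_+(c_0)=-1$, then $\tb_+(c_1)=0$ and $+$-flexibility is destroyed. Your focus on ``newly created circuits'' does not touch this case at all.

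The gap in your proposal is precisely the claim that very $+$-flexibility of $M_n$ ``is not strictly needed for the forward pass.'' It is needed exactly here. If $c_0$ is essential, then (since a non-jump move must preserve $C_0$) $c_0\notin C_0$ and the corresponding $c_n\notin C_n$; very $+$-flexibility of $M_n$ gives $\tb_+(c_n)<0$, while $\tb_+(c_0)=\tb_+(c_1)-1\le-1<0$ automatically, and Condition~(2) of Definition~\ref{almost-neat-def} then forces $\tb_+(c_1)<0$. Without the $M_n$ endpoint Condition~(2) simply cannot be invoked --- it is a two-sided interpolation statement, not a unidirectional one as your phrasing suggests. If instead $c_0$ is inessential, $c_1$ is inessential and hence patchable, and the no-spherical-components hypothesis feeds Bennequin's theorem: a patchable Legendrian circuit in a non-spherical component with $\tb_+\ge0$ would bound an overtwisted disc, so $\tb_+(c_1)<0$. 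Your characterization of the spherical hypothesis as ruling out a ``floating sphere that breaks the chain'' is not what is happening; it is a Bennequin estimate, not a connectivity observation. With these two corrections (recognize type~II elimination as the one obstruction; use very $+$-flexibility of $M_n$ plus Condition~(2) for the essential case and Bennequin for the inessential case) your propagation plan becomes a proof.
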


\begin{proof}
Without loss of generality we may assume that all moves~$M_{i-1}\xmapsto{\eta_i}M_i$
are elementary or jump moves. Indeed, all the moves introduced in this paper
admit a neat decomposition into elementary ones, hence, if we replace all
moves in the original sequence, except jump moves, by their respective neat decompositions
into elementary moves, we will still have an almost $C_0$-neat decomposition of~$M_0\xmapsto\eta M_n$.

We proceed by induction in~$n$. The induction base, $n=1$, is trivial.
To make the induction step it suffices to show that~$M_1$ is $+$-flexible relative to~$C_1$.

All $+$-safe-to-bring-forward moves (see Definition~\ref{safe-def})
preserve $+$-flexibility by design. So, if~$M_0\xmapsto{\eta_1}M_1$
is a type~I elementary move, jump move, type~II extension, slide, or elementary bypass removal
move, $M_1$ is flexible and we are done.

Elementary bypass additions (of either type) are not always
$+$-safe-to-bring-forward, but they preserve $+$-flexibility
due to the fact that they replace a boundary circuit by two mutually adjacent
boundary circuits one of which is $+$-negligible. So, the only `dangerous'
moves in the present context are type~II elimination moves.

Suppose that~$M_0\xmapsto{\eta_1}M_1$ is a type~II elimination move.
Let~$c_0\in\partial M_0$ be the modified boundary circuit of~$M_0$,
and let~$c_1$ be the corresponding circuit of~$M_1$.

If~$c_0$ is inessential, then~$c_1$ is inessential, too. Since no component of~$M_1$
is spherical, this implies that~$\tb_+(c_1)<0$. Indeed, otherwise one would
be able produce an overtwisted disc from a patching disc for~$\widehat c_1$,
which is impossible according to Bennequin's result~\cite{ben}.

If~$c_0$ is essential, then the inequality~$\tb_+(c_1)<0$ follows
from the assumption that the given decomposition is almost $C_0$-neat.
Indeed, let~$c_0$ be transformed to~$c_n$ in~$\partial M_n$.
We must have~$c_0\notin C_0$, $c_n\notin C_n$ as~$c_0$
is modified by~$M_0\xmapsto{\eta_1}M_1$.

We have~$\tb_+(c_0)=\tb_+(c_1)-1<0$ since~$M_0\xmapsto{\eta_1}M_1$
is a type~II elimination move, and~$\tb_+(c_n)<0$ since~$M_n$
is assumed to be very $+$-flexible relative to~$C_n$.
By Definition~\ref{almost-neat-def} we must have~$\tb_+(c_1)<0$.

Therefore, $M_1$ is $+$-flexible relative to~$C_1$.\end{proof}

Now we proceed with the proof of Theorem~\ref{commutation-2-thm}.

Let
\begin{equation}\label{sequence-eq}
M=M_0\xmapsto{\eta_1}M_1\xmapsto{\eta_2}\ldots
\xmapsto{\eta_n}M_n=M'
\end{equation}
be a $C$-delicate sequence of elementary moves and jump moves.
Let~$C=C_0\mapsto C_1\mapsto\ldots\mapsto C_n=C'$ be the induced
sequence of transformations of the collection of selected boundary circuits.

Suppose, for the time being, that~$M_0$ has no spherical components.

For every~$j=0,\ldots,n$ we can find transformations~$M_j\xmapsto{\zeta_j'}M_j'$
and~$M_j\xmapsto{\zeta_j''}M_j''$ preserving all boundary circuits in~$C_j$ such
that the following holds (see the scheme in Figure~\ref{saw-fig}):
\begin{figure}[ht]
\includegraphics{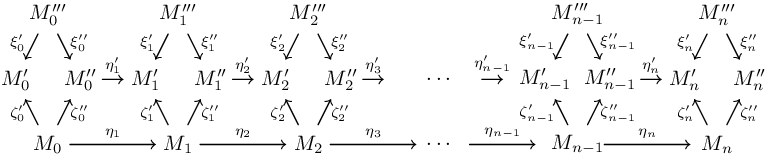}
\caption{Adding flexibility to intermediate diagrams}\label{saw-fig}
\end{figure}
\begin{enumerate}
\item
$M_j\xmapsto{\zeta_j'}M_j'$
and~$M_j\xmapsto{\zeta_j''}M_j''$ are compositions of extension moves for any~$j=0,\dots,n$;
\item
$M_0\xmapsto{\zeta_0'}M_0'$ is a composition of type~I extension moves;
\item
$M_n\xmapsto{\zeta_n''}M_n''$ is a composition of type~II extension moves;
\item
for any~$j=1,\ldots,n-1$, both diagrams~$M_j'$ and~$M_j''$ are very flexible relative to~$C_j$;
\item
the diagrams~$M_0'$ and~$M_0''$ are very $-$-flexible and very flexible, respectively, relative to~$C_0$;
\item
the diagrams~$M_n'$ and~$M_n''$ are very flexible and very $+$-flexible, respectively, relative to~$C_n$;
\item
if~$j<n$ the transformation~$M_j''\xmapsto{\eta_j'}M_{j+1}'$ with~$\eta_j'$ such that~$\eta_j'\circ\zeta_j''=\zeta_{j+1}'\circ\eta_j$
is an elementary move or a jump move;
\item
for any~$j=0,\ldots,n$ we have~$E_{M_j'}\cap E_{M_j''}=E_{M_j}$.
\end{enumerate}

For each~$j$, let~$M_j'''$ be the union~$M_j'\cup M_j''$ (a boundary circuit of~$M_j'''$
is declared essential if and only if it has a non-trivial interval in common with an essential
boundary circuit of~$M_j$). Both diagrams~$M_j'$ and~$M_j''$ can be obtained
from~$M_j'''$ by a composition of elimination moves. Let~$M_j'''\xmapsto{\xi_j'}M_j'$
and~$M_j'''\xmapsto{\xi_j''}M_j''$ be the obtained transformations.
We clearly have~$\xi_j''\circ{\xi_j'}^{-1}=\zeta_j''\circ{\zeta_j'}^{-1}$.

Thus, we have the following decomposition of the transformation~$M\xmapsto{\eta_n\circ\ldots\circ\eta_1}M'$:
\begin{equation}\label{saw-eq}
M_0\xmapsto{\zeta_0'}M_0'\xmapsto{{\xi_0'}^{-1}}M_0'''\xmapsto{\xi_0''}M_0''\xmapsto{\eta_1'}
M_1'\xmapsto{{\xi_1'}^{-1}}M_1'''\xmapsto{\xi_1''}M_1''\xmapsto{\eta_2'}\ldots\xmapsto{\eta_n'}
M_n'\xmapsto{{\xi_n'}^{-1}}M_n'''\xmapsto{\xi_n''}M_n''\xmapsto{{\zeta_n''}^{-1}}M_n,
\end{equation}
which starts from the composition~$M_0\xmapsto{\zeta_0'}M_0'$ of type~I extension moves
and ends up with the composition~$M_n''\xmapsto{{\zeta_n''}^{-1}}M_n$ of type~II elimination moves.
So, it suffices to prove the assertion of the theorem for the transition from~$M_0'$ to~$M_n''$.

All the diagrams in the truncated sequence
\begin{equation}\label{truncated-eq}
M_0'\xmapsto{{\xi_0'}^{-1}}M_0'''\xmapsto{\xi_0''}M_0''\xmapsto{\eta_1'}
M_1'\xmapsto{{\xi_1'}^{-1}}M_1'''\xmapsto{\xi_1''}M_1''\xmapsto{\eta_2'}\ldots\xmapsto{\eta_n'}
M_n'\xmapsto{{\xi_n'}^{-1}}M_n'''\xmapsto{\xi_n''}M_n''
\end{equation}
except for the first and the last ones are very flexible.
After decomposing each transformation~$M_j'\xmapsto{{{\xi_j'}^{-1}}}M_j'''$ in~\eqref{truncated-eq}
into a sequence of extension moves, and each transformation~$M_j'''\xmapsto{\xi_j''}M_j''$
into a sequence of elimination moves, we get an almost $C$-neat decomposition
of the transformation~$M_0'\xmapsto\chi M_n''$, where~$\chi=\xi_n''\circ{\xi_n'}^{-1}\circ\eta_n'\circ\ldots
\circ\eta_2'\circ\xi_1''\circ{\xi_1'}^{-1}\circ\eta_1'\circ\xi_0''\circ{\xi_0'}^{-1}$.
We also recall that the diagram~$M_0'$ is $+$-flexible and very $-$-flexible relative to~$C_0$,
whereas~$M_n''$ is $-$-flexible and very $+$-flexible relative to~$C_n$.

Thus, without loss of generality, we may assume the following from the beginning:
\begin{enumerate}
\item
the diagram~$M$ is $+$-flexible and very $-$-flexible relative to~$C_0=C$;
\item
the diagram~$M'$ is $-$-flexible and very $+$-flexible relative to~$C_n=C'$;
\item
the decomposition~\eqref{sequence-eq} is almost $C$-neat.
\end{enumerate}

In what follows we will modify this decomposition so that these conditions will always hold.
Due to Lemma~\ref{almost-c-neat=>flex-lem}, in which, clearly, the roles of~$M_0$ and~$M_n$
can be exchanged and $+$-flexibility can be replaced with $-$-flexibility,
guarantees that all diagrams which will arise in our decompositions will be flexible
relative to the corresponding collection of selected boundary circuits.

Now we use Lemmas~\ref{extension-via-flexibility-lem}, \ref{slide-bypass-neat-decomp-lem}, \ref{neutral-move-decomposition-lem},
and~\ref{split-into-bypas-decomp-lem} to modify the sequence~\eqref{sequence-eq}
so that it includes only jump moves, type~I elementary moves, type~II merge moves, and type~II elementary bypass removals.
Type~II merge moves and type~II elementary bypass removals are particular cases
of generalized type~II merge moves and generalized type~II bypass removals,
which are collectively called generalized type~II moves.

We are ready
to follow the strategy outlined in Subsection~\ref{strategy-subsec}.
Namely, we deal with sequences~$s$ of moves that have the following properties:
\begin{enumerate}
\item
$s$ transforms~$M$ to~$M'$ and induces the same morphism from~$\widehat M$ to~$\widehat M'$ as
the original sequence does;
\item
$s$ consists of two successive parts, $s_1$ and~$s_2$, such that~$s_1$ includes only jump moves,
type~I elementary moves, and generalized type~II moves, whereas~$s_2$ includes only type~II elementary moves;
\item
once all generalized type~II moves in~$s$ are neatly decomposed into elementary moves,
the obtained sequence will be $C$-delicate.
\end{enumerate}

\emph{The complexity} of such a sequence is defined as the pair~$(N_1,N_2)$ in which~$N_1$ is the number
of generalized type~II moves in~$s_1$, and~$N_2$ is the number of elementary type~I moves and jump moves in~$s_1$
that occur after all generalized type~II moves. We order such pairs lexicographically, that is,
$(N_1,N_2)<(N_1',N_2')$ if either~$N_1<N_1'$, or~$N_1=N_1'$ and~$N_2<N_2'$.

Suppose that~$N_2>0$ and~$N_1>0$. Let~$M_{j-1}\xmapsto{\eta_j}M_j$ be the last generalized type~II move
in~$s_1$. We apply:
\begin{itemize}
\item
Lemmas~\ref{split-commute-with-jump-lem} and~\ref{special-split-commute-with-jumpe-lem} and Proposition~\ref{similarity-prop}
if~$M_{j-1}\xmapsto{\eta_j}M_j$ is a generalized type~II merge move
and~$M_j\xmapsto{\eta_{j+1}}M_{j+1}$ is a jump move,
\item
Proposition~\ref{1st-comm-prop-of-merge-prop} if~$M_{j-1}\xmapsto{\eta_j}M_j$ is a generalized type~II merge move
and~$M_j\xmapsto{\eta_{j+1}}M_{j+1}$ is a type~I elementary move,
\item
Proposition~\ref{1st-comm-prop-for-bypass-2-prop} if~$M_{j-1}\xmapsto{\eta_j}M_j$ is a generalized type~II bypass removal
and~$M_j\xmapsto{\eta_{j+1}}M_{j+1}$ is a jump move,
\item
Proposition~\ref{1st-comm-prop-for-bypass-1-prop} if~$M_{j-1}\xmapsto{\eta_j}M_j$ is a generalized type~II bypass removal
and~$M_j\xmapsto{\eta_{j+1}}M_{j+1}$ is a type~I elementary move
\end{itemize}
to reduce~$N_2$ while keeping~$N_1$ fixed.

Suppose that~$N_2=0$ and~$N_1>0$. Let again~$M_{j-1}\xmapsto{\eta_j}M_j$ be the last generalized type~II move
in~$s_1$. We apply:
\begin{itemize}
\item
Proposition~\ref{2ndcommutation-for-merge-non-special-prop} if~$M_{j-1}\xmapsto{\eta_j}M_j$ is a non-special
generalized type~II merge move,
\item
Proposition~\ref{2ndcommutation-for-merge-special-prop} if~$M_{j-1}\xmapsto{\eta_j}M_j$ is a special
generalized type~II merge move,
\item
Proposition~\ref{2ndcommutation-for-bypass-prop} if~$M_{j-1}\xmapsto{\eta_j}M_j$ is a generalized type~II bypass removal
\end{itemize}
to reduce~$N_1$.

By following these rules we eventually get~$N_1=0$, in which case we are done.

We are left to treat the case when~$M$ has spherical components. This case does not seem
to be of any importance, so we will skip some details.

Let~$B_1,\ldots,B_k$ be all spherical
components of~$M$, and let~$B_1',\ldots,B_k'$ be the respective components of~$M'$.
Clearly, the morphism~$\eta=\eta_n\circ\ldots\circ\eta_1:\widehat M\rightarrow\widehat M'$
can be presented by a triple~$(F,F',h)$
such that, for any~$i=1,\ldots,k$, the connected component of~$F$ (respectively, of~$F'$)
containing~$\Gamma_{\widehat B_i}$ (respectively, $\Gamma_{\widehat B_i'}$)
is a two-disc, which we denote by~$D_i$ (respectively, by~$D_i'$).

Let~$M_*$ (respectively, $M_*'$) be the enhanced mirror diagram
obtained from~$M$ (respectively, from~$M'$) by declaring all the boundary
circuits corresponding to the boundaries of~$D_i$'s (respectively, of~$D_i'$'s)
essential, and let~$\eta_*$ be the morphism~$\widehat M_*\rightarrow\widehat M_*'$
represented by~$(F,F',h)$. The spherical connected components~$B_1,\ldots,B_k$
of~$M$ will turn into disc components of~$M_*$.

One can find a $C$-delicate decomposition of the transformation~$M_*\xmapsto{\eta_*}M_*'$
by modifying the original decomposition~\eqref{sequence-eq}.
This can be easily derived from Theorem~\ref{relative-stable-equivalence-th} and the obvious fact
that the addition of a disc component to an enhanced mirror diagram
is unique up to stable equivalence (relative to
the family of all boundary circuits not belonging to the added disc component).

The diagram~$M_*$ does not have spherical components, hence we can modify
the decomposition of~$M_*\xmapsto{\eta_*}M_*'$ so that all type~I elementary moves
occur before all type~II elementary moves. Finally, we convert the artificially created
disc components of~$M_*$ back to spherical components by declaring all their boundary circuits
inessential, and do so with all the respective components of the diagrams through
which we transform~$M_*$ into~$M_*'$, thus obtaining the sought-for decomposition
of the transformation~$M\xmapsto\eta M'$.

 This completes the proof of the commutation theorems.

\section*{Appendix A: Finding realizations}
\def\thesection{A}
\setcounter{lemm}{0}
\setcounter{equation}{0}
\setcounter{figure}{0}

Here we present a manual check that certain dividing codes have
only those realizations that are mentioned in the proof of Proposition~\ref{6-2-prop}.
In many cases of interest, such a manual check is infeasible, but an exhaustive
search of all realizations can be
implemented on a computer and takes reasonable time to run~\cite{dyn-script}.

\begin{lemm}\label{only-one-lemm}
Up to combinatorial equivalence, there exists only one realization of
an admissible dividing configuration having the dividing code
\begin{equation}\label{code6-2-1}\begin{matrix}\{(1,2,3,4),(5,6),(7,8,9,10),(11,12),(13,14),(15,16),(17,18)\},\\
\{(9,12,5,4,7,14,15,18,3,6,13,8,17,16,11,10,1,2)\}.\end{matrix}
\end{equation}
\end{lemm}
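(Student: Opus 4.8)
```latex
The plan is to carry out the exhaustive search that Proposition~\ref{finite-prop} guarantees is finite, but to organize it so that it can be done by hand. Recall from the discussion preceding Example~\ref{realizations-exam} that a realization of the given dividing code amounts to a rectangular diagram of a surface~$\Pi$ whose rectangles are numbered~$1,\ldots,18$ so that the top right vertex of the $i$th rectangle coincides with the bottom left vertex of the $j$th one exactly when~$(i,j)$ is a subword of one of the $\delta^+$-sequences, and the top left vertex of the $i$th rectangle coincides with the bottom right vertex of the $j$th one exactly when~$(i,j)$ is a subword of the single $\delta^-$-sequence. So the combinatorial data to be recovered are: the set~$\Theta$ of occupied meridians, the set~$\Phi$ of occupied longitudes, and the eighteen rectangles, together with the vertex-identification pattern dictated by~\eqref{code6-2-1}.

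First I would read off the local incidence structure. Each rectangle~$r_i$ has a $\diagdown$-diagonal running from its bottom left vertex to its top right vertex (the $\delta^+$-arc~$\mathring r_i$) and a $\diagup$-diagonal from bottom right to top left (the $\delta^-$-arc). The $\delta^+$-sequence~$(1,2,3,4)$ says that the top right corner of~$r_1$ is the bottom left corner of~$r_2$, etc.; the closed $\delta^+$-component~$(5,6)$ forces the top right corner of~$r_6$ to be the bottom left corner of~$r_5$; and the long $\delta^-$-cycle threads all eighteen bottom-right/top-left identifications into one loop. From this one determines, for each occupied level, which rectangle-corners lie on it and in what cyclic order. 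The key point is that at each vertex of~$\Pi$ exactly two rectangles meet (diagonally opposite corners), so the vertices are in bijection with the $44$ entries of the sequences read cyclically, grouped in pairs; this is forced and leaves no freedom.

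Next I would reconstruct the two circular orders on~$\Theta$ and on~$\Phi$. Knowing which corner of which rectangle sits at each vertex, and that a rectangle~$[\theta_a;\theta_b]\times[\varphi_c;\varphi_d]$ has its four corners at the four level-crossings~$(\theta_a,\varphi_c)$, $(\theta_a,\varphi_d)$, $(\theta_b,\varphi_c)$, $(\theta_b,\varphi_d)$, one gets a system of constraints of the form ``$\theta_a$ and $\theta_b$ are consecutive occupied meridians on the side where this rectangle lies'' and similarly for longitudes. I would propagate these constraints around the $\delta^-$-cycle and the $\delta^+$-components, checking at each step that no contradiction (a rectangle containing an occupied level in its interior, or two rectangles overlapping illegally) arises. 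The rigidity comes from the fact that once a starting meridian and longitude are fixed, the identification pattern propagates a unique candidate for every other level; any branch that does not close up consistently is discarded. After finitely many such steps one is left with exactly one combinatorial type, which is the diagram~$\Pi^*$ of Figure~\ref{seifert} (equivalently, the central diagram of Figure~\ref{fig8-example}-style construction for~$R_1$), and one verifies directly that it does realize~\eqref{code6-2-1}.

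The main obstacle I expect is bookkeeping: keeping track of the $44$ vertices, the eighteen rectangles and the two cyclic level-orders simultaneously without losing a case. The cycles in~\eqref{code6-2-1} are long, so the propagation has to be followed carefully, and one must be disciplined about the two possible ``sides'' (which of the two complementary arcs of an occupied level a rectangle occupies) at every branching. I would mitigate this by fixing an orientation and a basepoint at the outset (say, start the meridian order at the meridian through the vertex shared by~$r_1$ and~$r_{17}$), thereby killing the combinatorial-equivalence ambiguity, and by using the $\delta^-$-cycle as the backbone along which everything is read off in one pass; the $\delta^+$-components then serve only as consistency checks. Once the unique candidate survives, the verification that it is a genuine rectangular diagram of a surface realizing the code is the routine check already indicated in the body of the paper.
```
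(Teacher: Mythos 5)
Your approach is the same as the paper's: both proofs are an exhaustive reconstruction of the cyclic orders of the occupied meridians and longitudes, driven by the vertex identifications forced by the dividing code together with the constraint that no rectangle may contain an occupied level or vertex in its interior. The paper carries this out explicitly with a notation for cyclic-order constraints (\(\langle XYZ\rangle\)) and ``empty interior'' implications (\(\fbox{i}\ \&\ \langle XZY\rangle\Rightarrow\langle xyz\rangle\)), producing a linear chain of roughly a dozen deductions that pin down both cyclic orders uniquely. Your proposal correctly identifies this as the strategy and correctly identifies the two sources of rigidity (vertex identifications and interior-emptiness), but stops at the level of ``one propagates the constraints and discards inconsistent branches'' without performing the propagation itself; the bookkeeping you flag as the main obstacle \emph{is} the proof. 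One small caution about your phrasing: it is not literally true that ``once a starting meridian and longitude are fixed, the identification pattern propagates a unique candidate'' --- in general the propagation encounters genuine binary branches (as happens in the companion Lemma~\ref{only-two-lemm}, where one branch dies by contradiction and the other splits once more), so the argument must be presented as a case search in which branches are opened and closed, not as a single-threaded construction.
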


\begin{proof}
Figure~\ref{abcd1} shows again the realization that we already know from Figure~\ref{seifert}, with
vertical occupied levels labeled by uppercase letters and horizontal occupied levels by lowercase letters.
\begin{figure}[ht]
\includegraphics[scale=0.8]{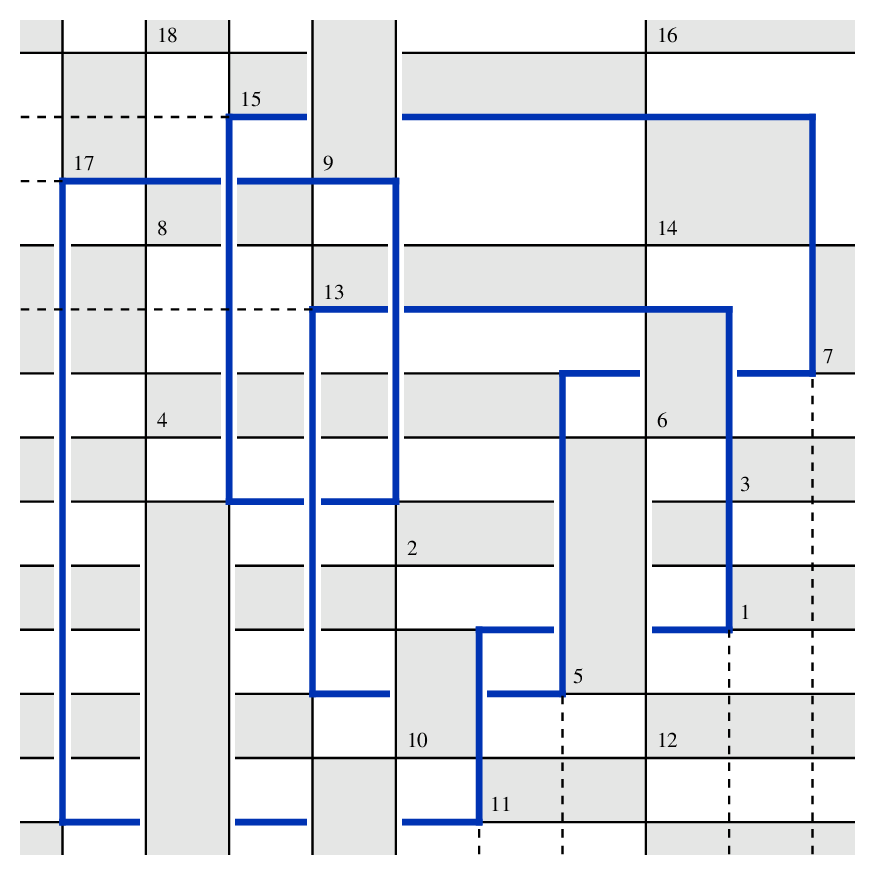}
\put(-318,-2){$A$}%
\put(-286,-2){$B$}%
\put(-254,-2){$C$}%
\put(-222,-2){$D$}%
\put(-190,-2){$E$}%
\put(-158,-2){$F$}%
\put(-126,-2){$G$}%
\put(-94,-2){$H$}%
\put(-60,-2){$I$}%
\put(-30,-2){$J$}%
\put(-339,18){$a$}%
\put(-339,42.6153846154){$b$}%
\put(-339,67.2307692308){$c$}%
\put(-339,91.8461538462){$d$}%
\put(-339,116.461538462){$e$}%
\put(-339,141.076923077){$f$}%
\put(-339,165.692307692){$g$}%
\put(-339,190.307692308){$h$}%
\put(-339,214.923076923){$i$}%
\put(-339,239.538461538){$j$}%
\put(-339,264.153846154){$k$}%
\put(-339,288.769230769){$l$}%
\put(-341,313.384615385){$m$}%
\caption{A realization of~\eqref{code6-2-1}}\label{abcd1}
\end{figure}

Any other realization of~\eqref{code6-2-1} must have the same number of vertical and horizontal
occupied levels as the one in Figure~\ref{abcd1}, and they must admit a labeling with the same
incidence between rectangles and occupied levels as in Figure~\ref{abcd1}: rectangle~$1$
must be $[I;E]\times[d;e]$, rectangle~$2$ must be $[E;I]\times[e;f]$, etc. To find all
realizations we have to search all possible ways to choose
cyclic orderings of horizontal and vertical labels so that all rectangles
are pairwise compatible.

To do so we introduce some notation. If $x,y,z,\ldots$ are labels of some occupied levels,
we denote by $\langle xyz\ldots\rangle$ the statement that these levels follow in  the
indicated cyclic order.

Some information about the cyclic ordering is known
from the beginning. For instance, the horizontal level~$a$ in Figure~\ref{abcd1}
contains three vertices of the diagram, $(A,a)$, $(F,a)$, $(H,a)$.
These three vertices must follow at the level~$a$
in this cyclic order for any realization of~$\eqref{code6-2-1}$, for otherwise the rectangles~$11$ and~$16$, which have
horizontal sides at the level~$a$, would not be compatible.
We abbreviate this implication as~$a\Rightarrow\langle AFH\rangle$. A similar
implication holds for each horizontal and vertical occupied level: the vertices at any level
must follow in the same cyclic order for all realizations.

By $\fbox{i}$, where $i$ is the number of a rectangle, we denote
the statement that there are no vertices inside this rectangle.
For instance, let this rectangle be $[X;Y]\times[x;y]$, and let there be a vertex at $(Z,z)$.
Assume that~$\langle XZY\rangle$ has already been established.
Then~$\langle xzy\rangle$ would imply that~$(Z,z)$ lies
inside the rectangle. Since this is forbidden, we conclude~$\langle xyz\rangle$.
We abbreviate this argument as $\fbox i\ \&\ \langle XZY\rangle\Rightarrow\langle xyz\rangle$.

With this notation at hand the proof is as follows:
\begin{align*}
&\text{argument}&\text{conclusion}\\
b&\Rightarrow\langle DEFH\rangle&\langle DEFH\rangle\\
i&\Rightarrow\langle DHI\rangle&\langle DEFHI\rangle\\
E&\Rightarrow\langle bdefk\rangle&\langle DEFHI\rangle&\ \&\ \langle bdefk\rangle\\
\fbox{12}\ \&\ \langle DHI\rangle&\Rightarrow\langle bcd\rangle&\langle DEFHI\rangle&\ \&\ \langle bcdefk\rangle\\
B&\Rightarrow\langle fghjkm\rangle&\langle DEFHI\rangle&\ \&\ \langle bcdefghjk\rangle\ \&\ \langle fkm\rangle\\
\fbox{5}\ \&\ \langle cdg\rangle&\Rightarrow\langle FGH\rangle&\langle DEFGHI\rangle&\ \&\ \langle bcdefghjk\rangle\ \&\ \langle fkm\rangle\\
\fbox{13}\ \&\ \langle DGH\rangle&\Rightarrow\langle hij\rangle&\langle DEFGHI\rangle&\ \&\ \langle bcdefghijk\rangle\ \&\ \langle fkm\rangle\\
H&\Rightarrow\langle abcjlm\rangle&\langle DEFGHI\rangle&\ \&\ \langle abcdefghijkm\rangle\ \&\ \langle jlm\rangle\\
\fbox{6}\ \&\ \langle ghi\rangle&\Rightarrow\langle HIJ\rangle&\langle DEFGHI\rangle\ \&\ \langle HIJ\rangle&\ \&\ \langle abcdefghijkm\rangle\ \&\ \langle jlm\rangle\\
j&\Rightarrow\langle BDHJ\rangle&\langle BDEFGHIJ\rangle&\ \&\ \langle abcdefghijkm\rangle\ \&\ \langle jlm\rangle\\
\fbox{9}\ \&\ \langle bkm\rangle&\Rightarrow\langle CDE\rangle&\langle BDEFGHIJ\rangle\ \&\ \langle CDE\rangle&\ \&\ \langle abcdefghijkm\rangle\ \&\ \langle jlm\rangle\\
f&\Rightarrow\langle BCE\rangle&\langle BCDEFGHIJ\rangle&\ \&\ \langle abcdefghijkm\rangle\ \&\ \langle jlm\rangle\\
\fbox{8}\ \&\ \langle BCD\rangle&\Rightarrow\langle jkl\rangle&\langle BCDEFGHIJ\rangle&\ \&\ \langle abcdefghijklm\rangle\\
m&\Rightarrow\langle ABH\rangle&\langle BCDEFGHIJ\rangle\ \&\ \langle ABH\rangle&\ \&\ \langle abcdefghijklm\rangle\\
\fbox{14}\ \&\ \langle jkl\rangle&\Rightarrow\langle AHJ\rangle&\langle ABCDEFGHIJ\rangle&\ \&\ \langle abcdefghijklm\rangle.
\end{align*}
We see that we are left with a single option, which is the one we started with.
\end{proof}

\begin{lemm}\label{only-two-lemm}
Up to combinatorial equivalence, there exist exactly two realizations of
an admissible dividing configuration having the dividing code
\begin{equation}\label{code6-2-2}\begin{matrix}
\{(14,9,22,1,18,5),(2,21,10,17),(16,11,20,3),(4,19),(6,13),(12,7),(8,15)\},\\
\{(1,2,3,4,5,6,7,8,9,10,11,12,13,14,15,16,17,18,19,20,21,22)\}.
\end{matrix}\end{equation}
\end{lemm}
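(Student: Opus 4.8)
The plan is to repeat, with one extra branch, the purely combinatorial enumeration carried out in the proof of Lemma~\ref{only-one-lemm}. First I would fix the two realizations already displayed in Figure~\ref{realizations-after-flip} and read off from either of them the incidence data between the $22$ rectangles and the occupied levels: labelling the vertical occupied levels by uppercase letters and the horizontal ones by lowercase letters, each rectangle is of the form $[X;Y]\times[x;y]$ for a pair of labels forced by the dividing code~\eqref{code6-2-2} through the standard adjacency rules relating consecutive points along a component of $\delta^+$ (respectively $\delta^-$) to coincidences of the top-right/bottom-left (respectively top-left/bottom-right) vertices of the corresponding rectangles. As in Lemma~\ref{only-one-lemm}, every realization of~\eqref{code6-2-2} must have the same numbers of vertical and of horizontal occupied levels and must admit a labelling realizing this same incidence pattern; the only remaining freedom is the choice of the cyclic order of the vertical labels and of the horizontal labels.

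Next I would set up the same bookkeeping as in the proof of Lemma~\ref{only-one-lemm}: write $\langle xyz\cdots\rangle$ for the assertion that the listed occupied levels follow in the indicated cyclic order, write \fbox{$i$} for the assertion that the $i$-th rectangle has no vertex of the diagram in its interior, and use the two elementary implications already exploited there, namely that the vertices on any given occupied level must follow in one fixed cyclic order (forced by compatibility of the rectangles sharing a side on that level), and that \fbox{$i$} together with $\langle X Z Y\rangle$ forces $\langle x y z\rangle$ whenever $(Z,z)$ is a vertex and $[X;Y]\times[x;y]$ is rectangle~$i$. Starting from the partial cyclic information immediate from~\eqref{code6-2-2}, I would propagate these implications step by step, pinning down one further relative position at each step, in a table entirely analogous to the one in the proof of Lemma~\ref{only-one-lemm}.

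The structural difference that produces two realizations rather than one is that at a certain stage the accumulated constraints leave exactly two ways to insert some occupied level into the partial cyclic order; this reflects the extra symmetry of the configuration $(\delta_+,\sigma(\delta_-))$, and is the reason both pictures in Figure~\ref{realizations-after-flip} occur. At that point I would split into two branches and continue the propagation in each separately; in each branch the remaining implications close up with no further choice and reconstruct the cyclic orders of one of the two diagrams of Figure~\ref{realizations-after-flip}, and in neither branch does a third option survive.

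The main obstacle I expect is bookkeeping rather than ideas: with $22$ rectangles the deduction chain is noticeably longer than the one for~\eqref{code6-2-1}, and one must (i) locate the branch point correctly, i.e.\ be certain that no choice was made tacitly before it, and (ii) check that each of the two completed cyclic orders is genuinely consistent --- all $22$ rectangles pairwise compatible and the resulting dividing configuration admissible --- rather than merely not excluded by the implications used along the way. Finally one verifies that the two diagrams so obtained are combinatorially inequivalent and coincide with those of Figure~\ref{realizations-after-flip}, which is immediate once the cyclic orders are written out. Apart from this, the argument is a routine, if lengthy, repetition of the method of Lemma~\ref{only-one-lemm}.
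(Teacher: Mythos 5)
Your proposal is correct and takes essentially the same approach as the paper: repeat the $\langle xyz\rangle$ / \fbox{$i$} bookkeeping of Lemma~\ref{only-one-lemm} and track down the residual freedom in the cyclic orders. The one small discrepancy is in the predicted case structure: in the paper's actual deduction there is first an intermediate dichotomy ($\langle CME\rangle$ vs.\ $\langle CEM\rangle$) in which one branch dies by contradiction, and the two realizations then emerge not from propagating two separate branches each to completion, but because after all implications are exhausted the partial cyclic order of the vertical levels is determined only up to a single binary choice (the position of one level relative to two others), giving exactly the two diagrams of Figure~\ref{realizations-after-flip}.
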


\begin{proof}
We use the same notation system as in the proof of Lemma~\ref{only-one-lemm}. The labeling
of the occupied levels is shown in Figure~\ref{abcd2}.
\begin{figure}[ht]
\includegraphics[scale=0.8]{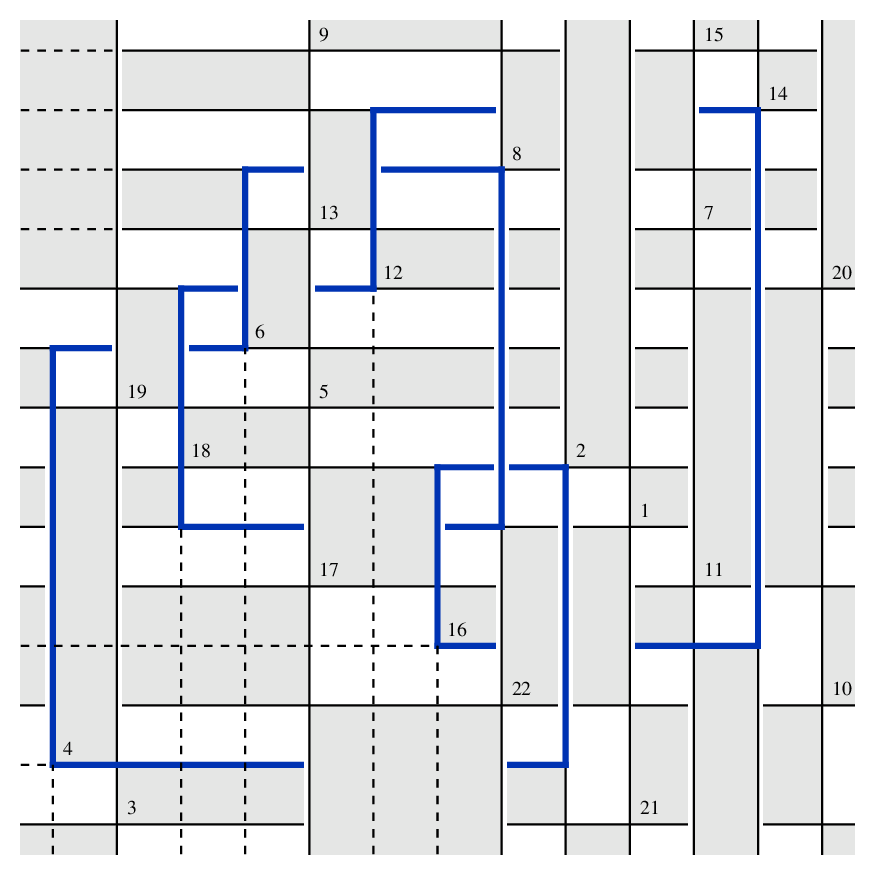}%
\put(-322,-2){$A$}%
\put(-297.384615385,-2){$B$}%
\put(-272.769230769,-2){$C$}%
\put(-248.153846154,-2){$D$}%
\put(-223.538461538,-2){$E$}%
\put(-198.923076923,-2){$F$}%
\put(-174.307692308,-2){$G$}%
\put(-149.692307692,-2){$H$}%
\put(-123,-2){$I$}%
\put(-99,-2){$J$}%
\put(-75.8461538462,-2){$K$}%
\put(-50,-2){$L$}%
\put(-26.6153846154,-2){$M$}%
\put(-339,17.5){$a$}%
\put(-339,40.3571428571){$b$}%
\put(-339,63.2142857143){$c$}%
\put(-339,86.0714285714){$d$}%
\put(-339,108.928571429){$e$}%
\put(-339,131.785714286){$f$}%
\put(-339,154.642857143){$g$}%
\put(-339,177.5){$h$}%
\put(-339,200.357142857){$i$}%
\put(-339,223.214285714){$j$}%
\put(-339,246.071428571){$k$}%
\put(-339,268.928571429){$l$}%
\put(-341,291.785714286){$m$}%
\put(-339,314.642857143){$n$}%
\caption{A realization of~\eqref{code6-2-2}}\label{abcd2}
\end{figure}

The reasoning here is slightly more complicated because at some point we need to consider two cases, $\langle CME\rangle$
and~$\langle CEM\rangle$. In the former we get a contradiction. Here is the complete proof:
\begin{align*}
&\text{argument}&\text{conclusion}\\
g&\Rightarrow\langle CEJ\rangle&\langle CEJ\rangle\\
c&\Rightarrow\langle EHJM\rangle&\langle CEHJ\rangle\ \&\ \langle EJM\rangle\\
\text{assume}&\quad\langle CME\rangle&\langle CMEHJ\rangle\\
h&\Rightarrow\langle BCE\rangle&\langle CMEHJ\rangle\ \&\ \langle BCME\rangle\\
K&\Rightarrow\langle ejk\rangle&\langle CMEHJ\rangle\ \&\ \langle BCME\rangle&\ \&\ \langle ejk\rangle\\
\fbox{20}\ \&\ \langle BME\rangle&\Rightarrow\langle akj\rangle&\langle CMEHJ\rangle\ \&\ \langle BCME\rangle&\ \&\ \langle akej\rangle\\
E&\Rightarrow\langle cek\rangle&\langle CMEHJ\rangle\ \&\ \langle BCME\rangle&\ \& \ \langle akcej\rangle\\
j&\Rightarrow\langle CFM\rangle&\langle CFMEHJ\rangle\ \&\ \langle BCME\rangle&\ \&\ \langle akcej\rangle\\
\fbox{21}\ \&\ \langle akc\rangle&\Rightarrow\langle FJM\rangle&\text{a contradiction}\\
\text{hence}&\quad\langle CEM\rangle&\langle CEHJM\rangle\\
J&\Rightarrow\langle acfg\rangle&\langle CEHJM\rangle&\ \& \ \langle acfg\rangle\\
\fbox{10}\ \&\ \langle CEM\rangle&\Rightarrow\langle cef\rangle&\langle CEHJM\rangle&\ \& \ \langle acefg\rangle\\
\fbox{22}\ \&\ \langle cef\rangle&\Rightarrow\langle GHJ\rangle&\langle CEHJM\rangle\ \&\ \langle GHJ\rangle&\ \& \ \langle acefg\rangle\\
g&\Rightarrow\langle EGIJ\rangle&\langle CEGHJM\rangle\ \&\ \langle GIJ\rangle&\ \& \ \langle acefg\rangle\\
j&\Rightarrow\langle BCM\rangle&\langle BCEGHJM\rangle\ \&\ \langle GIJ\rangle&\ \& \ \langle acefg\rangle\\
\fbox{3}\ \&\ \langle BEI\rangle&\Rightarrow\langle abc\rangle&\langle BCEGHJM\rangle\ \&\ \langle GIJ\rangle&\ \& \ \langle abcefg\rangle\\
e&\Rightarrow\langle GKM\rangle&\langle BCEGHJM\rangle\ \&\ \langle GIJ\rangle\ \&\ \langle GKM\rangle&\ \& \ \langle abcefg\rangle\\
n&\Rightarrow\langle EHK\rangle&\langle BCEGHJM\rangle\ \&\ \langle GIJ\rangle\ \&\ \langle HKM\rangle&\ \& \ \langle abcefg\rangle\\
\fbox{22}\ \&\ \langle cef\rangle&\Rightarrow\langle HJK\rangle&\langle BCEGHJKM\rangle\ \&\ \langle GIJ\rangle&\ \& \ \langle abcefg\rangle\\
E&\Rightarrow\langle cghkn\rangle&\langle BCEGHJKM\rangle\ \&\ \langle GIJ\rangle&\ \& \ \langle abcefg\rangle\ \&\ \langle cgn\rangle\\
\fbox{21}\ \&\ \langle JKM\rangle&\Rightarrow\langle acn\rangle&\langle BCEGHJKM\rangle\ \&\ \langle GIJ\rangle&\ \& \ \langle abcefghkn\rangle\\
\fbox{9}\ \&\ \langle acn\rangle&\Rightarrow\langle EHI\rangle&\langle BCEGHIJKM\rangle&\ \& \ \langle abcefghkn\rangle\\
\fbox{16}\ \&\ \langle GHK\rangle&\Rightarrow\langle cde\rangle&\langle BCEGHIJKM\rangle&\ \& \ \langle abcdefghkn\rangle\\
\fbox{15}\ \&\ \langle cdn\rangle&\Rightarrow\langle KLM\rangle&\langle BCEGHIJKLM\rangle&\ \& \ \langle abcdefghkn\rangle\\
K&\Rightarrow\langle ejkln\rangle&\langle BCEGHIJKLM\rangle&\ \& \ \langle abcdefghkln\rangle\ \&\ \langle ejk\rangle\\
\fbox{4}\ \&\ \langle bch\rangle&\Rightarrow\langle ABM\rangle&\langle ABCEGHIJKLM\rangle&\ \& \ \langle abcdefghkln\rangle\ \&\ \langle ejk\rangle\\
i&\Rightarrow\langle ADE\rangle&\langle ABCEGHIJKLM\rangle\ \&\ \langle ADE\rangle&\ \& \ \langle abcdefghkln\rangle\ \&\ \langle ejk\rangle\\
\fbox{14}\ \&\ \langle DEL\rangle&\Rightarrow\langle lmn\rangle&\langle ABCEGHIJKLM\rangle\ \&\ \langle ADE\rangle&\ \& \ \langle abcdefghklmn\rangle\ \&\ \langle ejk\rangle\\
\fbox{5}\ \&\ \langle AEH\rangle&\Rightarrow\langle hil\rangle&\langle ABCEGHIJKLM\rangle\ \&\ \langle ADE\rangle&\ \& \ \langle abcdefghklmn\rangle\ \&\ \langle hil\rangle\ \&\ \langle ejk\rangle\\
D&\Rightarrow\langle ikl\rangle&\langle ABCEGHIJKLM\rangle\ \&\ \langle ADE\rangle&\ \& \ \langle abcdefghiklmn\rangle\ \&\ \langle ejk\rangle\\
\fbox{20}\ \&\ \langle ABM\rangle&\Rightarrow\langle aij\rangle&\langle ABCEGHIJKLM\rangle\ \&\ \langle ADE\rangle&\ \& \ \langle abcdefghijklmn\rangle\\
\fbox{6}\ \&\ \langle ijk\rangle&\Rightarrow\langle CDE\rangle&\langle ABCDEGHIJKLM\rangle&\ \& \ \langle abcdefghijklmn\rangle\\
\fbox{13}\ \&\ \langle klm\rangle&\Rightarrow\langle EFH\rangle&\langle ABCDEGHIJKLM\rangle\ \&\ \langle EFH\rangle&\ \& \ \langle abcdefghijklmn\rangle.
\end{align*}
We are left with two options shown in Figure~\ref{realizations-after-flip}.
\end{proof}

\section*{Appendix B: Flypes}
\def\thesection{B}
\setcounter{defi}{0}
\setcounter{lemm}{0}
\setcounter{figure}{0}
\setcounter{prop}{0}

This section is included in the paper for completeness of the exposition.
It is moved to the very end of the paper because
flypes are not involved in establishing our main result.

\begin{defi}\label{flype-def}
Let $\Pi$ be a rectangular diagram of a surface, and let $\theta_1,\theta_2,\theta_3,\varphi_1,\varphi_2,\varphi_3\in\mathbb S^1$
be such that
\begin{enumerate}
\item
$\theta_2\in(\theta_1;\theta_3)$, $\varphi_2\in(\varphi_1;\varphi_3)$;
\item
points $v_1,v_2,v_3,v_4,v_5\in\mathbb T^2$ having coordinates
$(\theta_1,\varphi_3)$, $(\theta_2,\varphi_3)$, $(\theta_3,\varphi_3)$, $(\theta_3,\varphi_2)$,
$(\theta_3,\varphi_1)$, respectively, are vertices of $\Pi$, and, moreover, $v_1,v_3,v_5$ are $\diagup$-vertices,
and $v_2,v_4$ are $\diagdown$-vertices;
\item
$v_2$, $v_3$, and~$v_4$ do not belong to $\partial\Pi$;
\item
there are no more vertices of $\Pi$ in $[\theta_1;\theta_3]\times[\varphi_1;\varphi_3]$.
\end{enumerate}

These assumptions imply that $\Pi$ contains, among others, four rectangles of the following form:
$$r_1=[\theta_1;\theta_2]\times[\varphi';\varphi_3],\quad
r_2=[\theta_2;\theta_3]\times[\varphi_3;\varphi''],\quad
r_3=[\theta_3;\theta'']\times[\varphi_2;\varphi_3],\quad
r_4=[\theta';\theta_3]\times[\varphi_1;\varphi_2]$$
with some $\theta',\theta''\in(\theta_3;\theta_1)$, $\varphi',\varphi''\in(\varphi_3;\varphi_1)$.

Let $\Pi'$ be obtained from $\Pi$ by replacing these four rectangles with the following ones:
$$r_1'=[\theta_1;\theta_2]\times[\varphi';\varphi_1],\quad
r_2'=[\theta_2;\theta_3]\times[\varphi_1;\varphi''],\quad
r_3'=[\theta_1;\theta'']\times[\varphi_2;\varphi_3],\quad
r_4'=[\theta';\theta_1]\times[\varphi_1;\varphi_2];
$$
see Figure~\ref{flypefig}.
\begin{figure}[ht]
\includegraphics{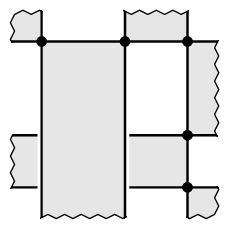}\put(-88,93){$v_1$}\put(-62,93){$v_2$}\put(-18,93){$v_3$}\put(-18,38){$v_4$}\put(-18,23){$v_5$}%
\put(-73,64){$r_1$}\put(-38,95){$r_2$}\put(-17,64){$r_3$}\put(-38,31){$r_4$}\put(-95,-2){$m_{\theta_1}$}\put(-55,-2){$m_{\theta_2}$}%
\put(-25,-2){$m_{\theta_3}$}\put(-120,18){$\ell_{\varphi_1}$}\put(-120,43){$\ell_{\varphi_2}$}\put(-120,88){$\ell_{\varphi_3}$}
\hskip1cm\raisebox{50pt}{$\longleftrightarrow$}\hskip1cm
\includegraphics{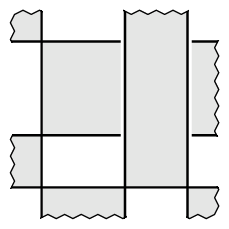}%
\put(-73,10){$r_1'$}\put(-38,31){$r_2'$}\put(-73,64){$r_3'$}\put(-101,31){$r_4'$}
\\[5pt]
\includegraphics{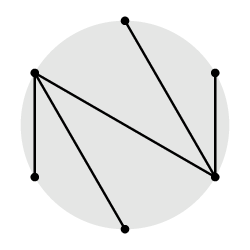}\put(-114,57){$\widehat v_1$}\put(-88,37){$\widehat v_2$}\put(-63,50){$\widehat v_3$}%
\put(-50,70){$\widehat v_4$}\put(-13,55){$\widehat v_5$}\put(-90,21){$\widehat r_1$}\put(-46,21){$\widehat r_2$}%
\put(-80,92){$\widehat r_3$}\put(-36,92){$\widehat r_4$}\put(-65,117){$\widehat\ell_{\varphi_2}$}%
\put(-13,87){$\widehat\ell_{\varphi_1}$}\put(-120,87){$\widehat\ell_{\varphi_3}$}\put(-13,30){$\widehat m_{\theta_3}$}%
\put(-65,-2){$\widehat m_{\theta_2}$}\put(-120,30){$\widehat m_{\theta_1}$}%
\hskip0.5cm\raisebox{58pt}{$\longleftrightarrow$}\hskip0.5cm\includegraphics{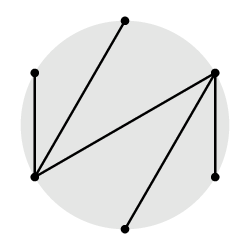}\put(-85,21){$\widehat r_1'$}\put(-41,21){$\widehat r_2'$}%
\put(-87,92){$\widehat r_3'$}\put(-43,92){$\widehat r_4'$}\\
\includegraphics{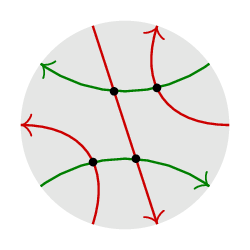}\put(-100,49){$\mathring v_1$}\put(-67,34){$\mathring v_2$}\put(-70,55){$\mathring v_3$}%
\put(-60,80){$\mathring v_4$}\put(-30,65){$\mathring v_5$}\put(-63,95){$\mathring\ell_{\varphi_2}$}\put(-67,17){$\mathring m_{\theta_2}$}%
\put(-83,33){$\mathring r_1$}\put(-52,46){$\mathring r_2$}\put(-72,67){$\mathring r_3$}\put(-45,83){$\mathring r_4$}%
\hskip0.5cm\raisebox{58pt}{$\longleftrightarrow$}\hskip0.5cm\includegraphics{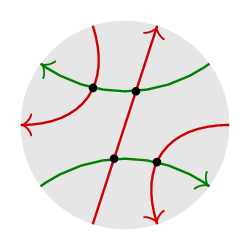}%
\put(-75,48){$\mathring r_1'$}\put(-53,48){$\mathring r_2'$}\put(-76,68){$\mathring r_3'$}\put(-52,68){$\mathring r_4'$}%
\caption{A type~I flype}\label{flypefig}
\end{figure}
\end{defi}
Then we say that the passage from $\Pi$ to $\Pi'$, or the other way, is \emph{a type~I flype}.
Note that the other rectangles of $\Pi$ and $\Pi'$, not
shown in Figure~\ref{flypefig}, are allowed to pass through
$[\theta_1;\theta_3]\times[\varphi_1;\varphi_3]$, the region where the modification occurs.

\emph{A type~II flype} is defined by reversing the $\theta$-direction in the
definition of a type~I flype and exchanging the types of vertices: $\diagdown\leftrightarrow\diagup$.

\begin{lemm}\label{flype-equivalence-lem}
Let $\Pi\mapsto\Pi'$ be a flype, and let~$(\delta_+,\delta_-)$,
$(\delta_+',\delta_-')$ be canonic dividing configurations of~$\widehat\Pi$ and~$\widehat\Pi'$,
respectively. Then there is an isotopy fixed on~$\widehat\Pi\cap\widehat\Pi'$ that brings~$(\widehat\Pi,\delta_+)$
to~$(\widehat\Pi',\delta_+')$ if the flype is of type~I,
and~$(\widehat\Pi,\delta_-)$ to~$(\widehat\Pi',\delta_-')$ if the flype
is of type~II. Moreover, the isotopy can be chosen to keep the surface in the class of
Giroux's convex surfaces with respect to~$\xi_+$ if the flype is of type~I, and with
respect to~$\xi_-$ if the flype is of type~II.
\end{lemm}

\begin{proof}
We use the notation from Definition~\ref{flype-def} and assume the flype is of type~I (the type~II case
is similar). The surface~$\widehat\Pi'$ is obtained
from~$\widehat\Pi$ by replacing the union of tiles~$d=\widehat r_1\cup\widehat r_2\cup
\widehat r_3\cup\widehat r_4$ with~$d'=\widehat r_1'\cup\widehat r_2'\cup\widehat r_3'\cup\widehat r_4'$.
One can see that both~$d$ and~$d'$ are two-discs tangent to
one another along their common boundary~$\partial d=\partial d'$.
One can also see that they enclose a three-ball~$B$ whose interior is disjoint from the
common part of the surfaces~$\widehat\Pi$ and~$\widehat\Pi'$. This implies the existence
of an isotopy from~$\widehat\Pi$ to~$\widehat\Pi'$ relative to~$\widehat\Pi\cap\widehat\Pi'$.
The fact that such an isotopy brings~$\delta_+$ to an abstract dividing set
isotopic to~$\delta_+'$ can be learned from Figure~\ref{flypefig}.

The second assertion of the lemma follows from the fact that the boundary~$\partial d$ is Legendrian,
and a(ny) dividing set of~$\widehat\Pi$
(with respect to~$\xi_+$) intersects~$d$ in two arcs. The technique of~\cite{gi1}
allows to show that an isotopy of such a disc preserving its boundary and the tangent
plane at every point of the boundary can be $C^0$-approximated by an isotopy within
the class of Giroux's convex surfaces.

For more detail see~\cite[Lemma~6]{dp17}, where a very little modification of
the proof is needed to obtain a proof of the second part of Lemma~\ref{flype-equivalence-lem}.
Namely, in the final part of the proof we conclude that
all singularities in the interior of the disc disappear because there is
just one dividing arc. This implies that $1$-arc also disappears.

In the present context we have two dividing arcs instead of just one,
which means that, after reducing all nodes in the interior of the disc, only one saddle
singularity is left, which is still not enough for the $1$-arc to survive.
So the concluding argument of the proof is still valid.
\end{proof}

Unlike (de)stabilizations, flypes of rectangular diagrams of surfaces do not
necessarily change the equivalence class of the corresponding surface
viewed as a Giroux's convex surface
with respect to one of the contact structures~$\xi_+$ or~$\xi_-$.
From the contact topology point of view flypes of rectangular diagrams of surfaces
are bypass attachments introduced by K.\,Honda in~\cite{honda00}.
The concept of an ineffective flype defined below is a combinatorial counterpart of
a trivial bypass attachment from~\cite{honda-trivial-bypass}.

\begin{defi}\label{effective-flype-def}
We use the notation from Definition~\ref{flype-def}. We also denote by~$(\delta_+,\delta_-)$ a canonic
dividing configuration of~$\widehat\Pi$.
The type~I flype~$\Pi\mapsto\Pi'$ is called \emph{ineffective} if
there is an embedded disc~$b\subset\widehat\Pi$ with boundary consisting of two arcs~$\alpha$ and~$\beta$ such that
(consult Figures~\ref{flypefig} and~\ref{ineffective-fig}):
\begin{enumerate}
\item
$\alpha\subset\delta_-\setminus(\mathring v_1\cup\mathring v_5)$;
\item
either~$\beta=\mathring v_4$ and~$b\supset\mathring\ell_{\varphi_2}$, or~$\beta=\mathring v_2$
and~$b\supset\mathring m_{\theta_2}$ (note that~$b$ can have more intersections with~$\delta_+$
than shown in Figure~\ref{ineffective-fig}, so the equality $b=\mathring\ell_{\varphi_2}$
or $b=\mathring m_{\theta_2}$ does not necessarily hold).
\end{enumerate}
\begin{figure}[ht]
\includegraphics{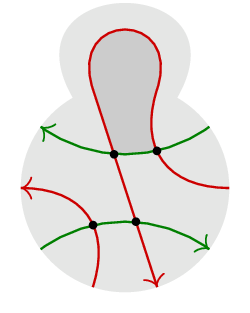}\put(-62,105){$b$}\put(-57,67){$\beta$}\put(-62,139){$\alpha$}
\hskip0.5cm\raisebox{56pt}{$\sim$}\hskip0.5cm
\includegraphics{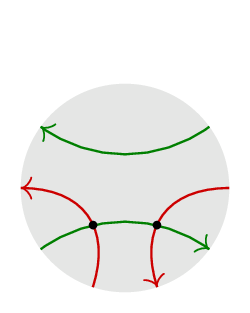}\hskip0.5cm\raisebox{56pt}{$\sim$}\hskip0.5cm
\includegraphics{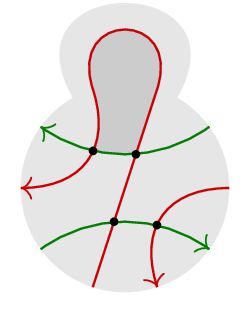}
\caption{Ineffective flype has no effect on the weak equivalence class
of a canonic dividing configuration}\label{ineffective-fig}
\end{figure}
Otherwise the flype is called \emph{effective}.

Effective and ineffective flypes of type~II are defined similarly with the roles of~$\delta_-$ and~$\delta_+$ exchanged.
\end{defi}

One can see that the inverse flype of an ineffective one is also ineffective.

\begin{prop}
If~$\Pi\mapsto\Pi'$ is an ineffective flype, then~$\Pi'$ can be obtained from~$\Pi$
be a finite sequence of exchange, wrinkle creation, and wrinkle reduction moves.

An ineffective flype has no effect on the
equivalence class of~$\widehat\Pi$ viewed as a Giroux's convex surface with respect to either of the contact
structures~$\xi_+$ and~$\xi_-$.
\end{prop}

\begin{proof}
We show how to modify~$\Pi$ and~$\Pi'$ by exchange and wrinkle reduction and creation moves
so that they eventually coincide. We use the notation from Definitions~\ref{flype-def} and~\ref{effective-flype-def}.
We may assume that $\beta=\mathring v_4$ and~$b\supset\mathring\ell_{\varphi_2}$, as
the other case is symmetric to this one.

The interior of the disc~$b$ may have nontrivial intersection with~$\delta_+$. In this case, there are bigons of~$\delta_+$
and~$\delta_-$ in~$b$ smaller than~$b$. Denote by~$D$
the dividing configuration obtained from~$(\delta_+,\delta_-)$ by reducing all bigons in~$b$
including~$b$ itself.

Suppose
for the moment that the dividing configuration~$D$ is admissible.

The proof is by induction in~$k$, where~$k$ is the number of intersections
of the interior of~$\alpha$ with~$\delta_+$. If~$k=0$, then~$b$ is a bigon
with corners at~$\mathring r_3$ and~$\mathring r_4$. The rectangles~$r_3$ and~$r_4$
share two vertices at the level~$\ell_{\varphi_2}$. The same holds for
the rectangles~$r_3'$ and~$r_4'$, and there is a respective bigon~$b'$ of a canonic
dividing configuration~$(\delta_+',\delta_-')$ of~$\widehat\Pi'$.

Reducing the bigons~$b$ and~$b'$ in~$\widehat\Pi$ and~$\widehat\Pi'$, respectively, as described
in the proof of Lemma~\ref{bigon-reduction-lemm} produces identical diagrams from~$\Pi$ and~$\Pi'$. So, the induction base~$k=0$
is settled.

If~$k>0$, then there must be a bigon~$b_0\subset b$ of~$(\delta_+,\delta_-)$
with corners~$\mathring r'$, $\mathring r''$ distinct from~$\mathring r_i$, $i=1,2,3,4$.
The rectangles~$r'$ and~$r''$ are not affected by the flype, so we have~$r',r''\in\Pi'$, and there is a respective
bigon~$b_0'$ of~$(\delta_+',\delta_-')$.

The reduction of the bigons~$b_0$ and~$b_0'$ made by following the recipe from the
proof of Lemma~\ref{bigon-reduction-lemm} produces diagrams that are still related by
a flype. Indeed, let~$S$ be the minimal strip of the form~$[\theta';\theta'']\times\mathbb S^1$ or~$\mathbb
S^1\times[\varphi';\varphi'']$ containing~$r'$ and~$r''$. If~$S$ is disjoint from the site of the flype,
which is~$R=[\theta_1;\theta_3]\times[\varphi_1;\varphi_3]$, then the bigon reductions
do not interfere with the flype.
\begin{figure}[ht]
{\includegraphics[scale=0.6]{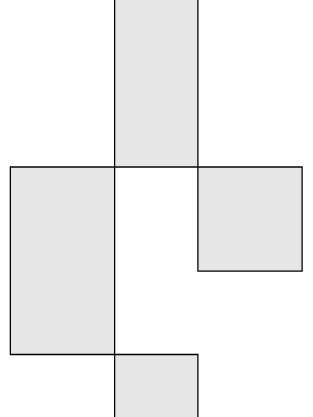}\put(-74,42){$r'$}\put(-47,95){$r''$}\put(-32,55){\parbox{1cm}{\begin{center}flype\\site\end{center}}}\hskip1cm
\includegraphics[scale=0.6]{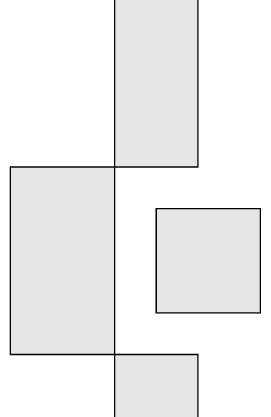}\put(-62,42){$r'$}\put(-35,95){$r''$}\put(-32,44){\parbox{1cm}{\begin{center}flype\\site\end{center}}}\hskip1cm
\includegraphics[scale=0.6]{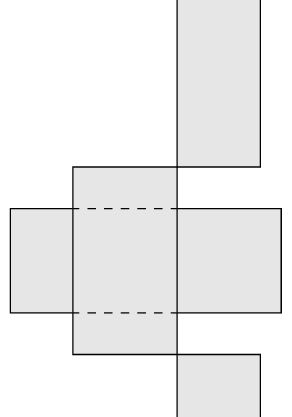}\put(-50,42){$r'$}\put(-23,95){$r''$}\put(-32,44){\parbox{1cm}{\begin{center}flype\\site\end{center}}}}
\caption{Possible mutual positions of the flype site and the ``large wrinkle''}\label{interference}
\end{figure}

An interference occurs when either~$v_1$ or~$v_5$ is a vertex of~$r'$ or~$r''$,
or some of the vertices~$v_i$, $i=1,2,3,4,5$, are contained in one of the blocks
that are exchanged in the course of the bigon reduction (they are denoted by~$X$ and~$Y$
in Figure~\ref{exchange-to-remove-wrinkle}), see~Figure~\ref{interference}. One can see
that this interference is not essential. The moves applied to reduce the bigon,
change only the shape of the region where the flype occurs
but preserve all the flype preconditions.

The flype we arrive at after reducing the bigons~$b_0$ and~$b_0'$ is still inessential,
and~$k$ is reduced. Hence the induction step follows.

Now we reduce the general case to the one considered above, where~$D$ was admissible.
Denote by~$v_6$ and~$v_7$ the vertices opposite to~$v_4$ in the rectangles~$r_3$ and~$r_4$,
respectively. Let~$D_1$ be a dividing configuration obtained
from~$(\delta_+,\delta_-)$ by replacing a subarc of~$\mathring v_5$ by an arc
that intersects~$\mathring v_7$, then goes `parallel' to~$\alpha$ until it intersects~$\mathring v_6$,
then turns around and goes back remaining close to~$\alpha$, intersects~$\mathring v_7$
again, and finally arrives at an endpoint of~$\mathring v_5$; see Figure~\ref{detour}.
\begin{figure}[ht]
\includegraphics{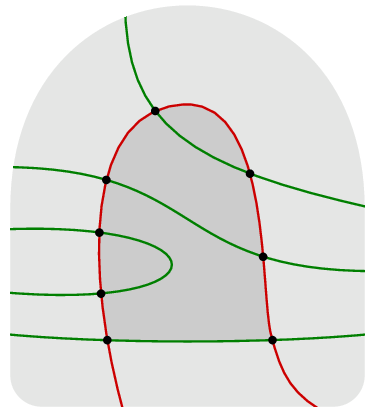}\put(-100,27){$\beta=\mathring v_4$}\put(-155,28){$\mathring v_6$}\put(-30,28){$\mathring v_7$}%
\put(-52,15){$\mathring v_5$}\put(-93,153){$\alpha$}\put(-93,80){$b$}
\hskip1cm\raisebox{98pt}{$\longrightarrow$}\hskip1cm
\includegraphics{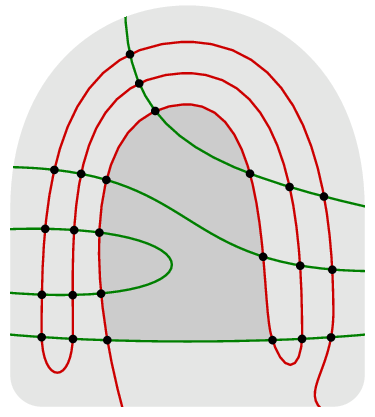}
\caption{Making the dividing configuration which is obtained by reducing all bigons in~$b$ admissible}\label{detour}
\end{figure}

The dividing configuration~$D_1$ is obtained from~$(\delta_+,\delta_-)$ by bigon creations, which,
by Lemma~\ref{bigon-creation-lemm}, can be realized by wrinkle creations in~$\Pi$. These wrinkle creations do not interfere with the flype we consider,
which means that the same wrinkle creations can be made in~$\Pi'$ so that the new~$\Pi$ and~$\Pi'$ will be still related
by an ineffective flype. Reducing all bigons of~$D_1$ contained in~$b$, including~$b$ itself, now produces an admissible
configuration, which completes the proof.
\end{proof}

\section*{Acknowledgements}
We are indebted to our anonymous referees for their careful reading of our paper and valuable suggestions for
improving the exposition.

\end{document}